\pgfplotsset{compat=1.18}
\setlist{nosep}
\global\long\def\TK#1{\textcolor{magenta}{\textbf{[TK:} #1\textbf{]}}}
\title[]{Advancing the R\"{o}dl Nibble: New bounds on matchings and the list chromatic index of hypergraphs}
\author[Stephen~Gould]{Stephen~Gould}
\email{stephengould1610@hotmail.co.uk}
\author[Tom~Kelly]{Tom~Kelly$^1$}
\address{$^1$Georgia Institute of Technology}
\email{tom.kelly@gatech.edu}
\thanks{Kelly's research supported by the National Science Foundation under Grant No. DMS-2247078.}
\date{November 14, 2025}
\newtheorem{theorem}{Theorem}[section]
\newtheorem{lemma}[theorem]{Lemma}
\newtheorem{cor}[theorem]{Corollary}
\theoremstyle{definition}
\newtheorem{defin}[theorem]{Definition}
\newtheorem{observation}[theorem]{Observation}
\newtheoremstyle{claimstyle}{5pt}{5pt}{\em}{5pt}{\em}{:}{5pt}{}
\theoremstyle{claimstyle}
\newtheorem{claim}{Claim}
\newtheoremstyle{stepstyle}{10pt}{5pt}{\em}{0pt}{\em}{:}{5pt}{}
\theoremstyle{stepstyle}
\numberwithin{equation}{section}
\definecolor{darkblue}{rgb}{0,0,0.5}
\def\noproof{{\unskip\nobreak\hfill\penalty50\hskip2em\hbox{}\nobreak\hfill%
       $\square$\parfillskip=0pt\finalhyphendemerits=0\par}\goodbreak}
\def\endproof{\noproof\bigskip}
\def\noclaimproof{{\unskip\nobreak\hfill\penalty50\hskip2em\hbox{}\nobreak\hfill%
       $-$\parfillskip=0pt\finalhyphendemerits=0\par}\goodbreak}
\def\endclaimproof{\noclaimproof\medskip}
\newdimen\margin
\def\textno#1&#2\par{
   \margin=\hsize
   \advance\margin by -4\parindent
          \setbox1=\hbox{\sl#1}
   \ifdim\wd1 < \margin
      $$\box1\eqno#2$$
   \else
      \bigbreak
      \hbox to \hsize{\indent$\vcenter{\advance\hsize by -3\parindent
      \it\noindent#1}\hfil#2$}
      \bigbreak
   \fi}
\def\lateproof#1{\removelastskip\penalty55\medskip\noindent\setcounter{claim}{0}\setcounter{step}{0}{\bf Proof of #1. }} 
\def\claimproof{\removelastskip\penalty55\medskip\noindent{\em Proof of claim: }}
\def\obsproof{\removelastskip\penalty55\medskip\noindent{\em Proof of observation: }}
\begin{document}

\newcommand{\new}[1]{\textcolor{red}{#1}}
\def\COMMENT#1{}
\def\TASK#1{}
\newcommand{\APPENDIX}[1]{}
\newcommand{\NOTAPPENDIX}[1]{#1}
\renewcommand{\APPENDIX}[1]{#1}                    
\renewcommand{\NOTAPPENDIX}[1]{}                   
\newcommand{\todo}[1]{\begin{center}\textbf{to do:} #1 \end{center}}

\def\eps{{\varepsilon}}
\newcommand{\ex}{\mathbb{E}}
\newcommand{\pr}{\mathbb{P}}
\newcommand{\cB}{\mathcal{B}}
\newcommand{\cA}{\mathcal{A}}
\newcommand{\cE}{\mathcal{E}}
\newcommand{\cS}{\mathcal{S}}
\newcommand{\cF}{\mathcal{F}}
\newcommand{\cG}{\mathcal{G}}
\newcommand{\bL}{\mathbb{L}}
\newcommand{\bF}{\mathbb{F}}
\newcommand{\bZ}{\mathbb{Z}}
\newcommand{\cH}{\mathcal{H}}
\newcommand{\cC}{\mathcal{C}}
\newcommand{\cM}{\mathcal{M}}
\newcommand{\bN}{\mathbb{N}}
\newcommand{\bR}{\mathbb{R}}
\def\O{\mathcal{O}}
\newcommand{\cP}{\mathcal{P}}
\newcommand{\cQ}{\mathcal{Q}}
\newcommand{\cR}{\mathcal{R}}
\newcommand{\cJ}{\mathcal{J}}
\newcommand{\cL}{\mathcal{L}}
\newcommand{\cK}{\mathcal{K}}
\newcommand{\cD}{\mathcal{D}}
\newcommand{\cI}{\mathcal{I}}
\newcommand{\cV}{\mathcal{V}}
\newcommand{\cT}{\mathcal{T}}
\newcommand{\cU}{\mathcal{U}}
\newcommand{\cW}{\mathcal{W}}
\newcommand{\cX}{\mathcal{X}}
\newcommand{\cY}{\mathcal{Y}}
\newcommand{\cZ}{\mathcal{Z}}
\newcommand{\1}{{\bf 1}_{n\not\equiv \delta}}
\newcommand{\eul}{{\rm e}}
\newcommand{\Erd}{Erd\H{o}s}
\newcommand{\cupdot}{\mathbin{\mathaccent\cdot\cup}}
\newcommand{\whp}{whp }
\newcommand{\bX}{\mathcal{X}}
\newcommand{\bV}{\mathcal{V}}
\newcommand{\ordsubs}[2]{(#1)_{#2}}
\newcommand{\unordsubs}[2]{\binom{#1}{#2}}
\newcommand{\ordelement}[2]{\overrightarrow{\mathbf{#1}}\left({#2}\right)}
\newcommand{\ordered}[1]{\overrightarrow{\mathbf{#1}}}
\newcommand{\reversed}[1]{\overleftarrow{\mathbf{#1}}}
\newcommand{\weighting}[1]{\mathbf{#1}}
\newcommand{\weightel}[2]{\mathbf{#1}\left({#2}\right)}
\newcommand{\unord}[1]{\mathbf{#1}}
\newcommand{\ordscript}[2]{\ordered{{#1}}_{{#2}}}
\newcommand{\revscript}[2]{\reversed{{#1}}_{{#2}}}
\newcommand{\dirK}{\overleftrightarrow{K^{\circ}_{n}}}

\newcommand{\doublesquig}{%
  \mathrel{%
    \vcenter{\offinterlineskip
      \ialign{##\cr$\rightsquigarrow$\cr\noalign{\kern-1.5pt}$\rightsquigarrow$\cr}%
    }%
  }%
}

\newcommand{\defn}{\emph}

\newcommand\restrict[1]{\raisebox{-.5ex}{$|$}_{#1}}

\newcommand{\prob}[1]{\mathrm{\mathbb{P}}\left[#1\right]}
\newcommand{\probb}[1]{\mathrm{\mathbb{P}}_{b}\left[#1\right]}
\newcommand{\expn}[1]{\mathrm{\mathbb{E}}\left[#1\right]}
\newcommand{\expnb}[1]{\mathrm{\mathbb{E}}_{b}\left[#1\right]}
\newcommand{\probstar}[1]{\mathrm{\mathbb{P}}^{*}\left[#1\right]}
\newcommand{\probxj}[1]{\mathrm{\mathbb{P}}_{x(j)}\left[#1\right]}
\newcommand{\expnxj}[1]{\mathrm{\mathbb{E}}_{x(j)}\left[#1\right]}
\def\gnp{G_{n,p}}
\def\G{\mathcal{G}}
\def\lflr{\left\lfloor}
\def\rflr{\right\rfloor}
\def\lcl{\left\lceil}
\def\rcl{\right\rceil}

\newcommand{\qbinom}[2]{\binom{#1}{#2}_{\!q}}
\newcommand{\binomdim}[2]{\binom{#1}{#2}_{\!\dim}}

\newcommand{\grass}{\mathrm{Gr}}

\newcommand{\brackets}[1]{\left(#1\right)}
\def\sm{\setminus}
\newcommand{\Set}[1]{\{#1\}}
\newcommand{\set}[2]{\{#1\,:\;#2\}}
\newcommand{\krq}[2]{K^{(#1)}_{#2}}
\newcommand{\ind}[1]{$\mathbf{S}(#1)$}
\newcommand{\indcov}[1]{$(\#)_{#1}$}
\def\In{\subseteq}

\begin{abstract}

Let $H$ be a $(k+1)$-uniform hypergraph which is nearly $D$-regular, such that any set of $i$ vertices is contained in at most $D_i$ edges of $H$ for each $i = 2, 3, \dots, k+1$.
Influential results of Pippenger and of Frankl and R\"odl show that the \textit{R\"odl Nibble} -- a probabilistic procedure which iteratively constructs a matching in small bits -- can produce an almost-perfect matching in $H$, provided $D_2$ is much smaller than $D$.
The quantitative aspects of this result were sharpened by several authors, with the previously best-known result due to Vu, whose result takes more of the codegree sequence $D_2, \dots, D_{k+1}$ into account.
We improve Vu's result, by showing the R\"odl Nibble can ``exhaust'' the full codegree sequence up to one of several natural bottlenecks, even tolerating extensive ``clustering'' of codegree values.
Up to a subpolynomial error term, we believe our result to be the optimal usage of pure nibble methodology.

We also show that our matching can be taken to be ``pseudorandom'' with respect to a set of weight functions on $V(H)$, and we use this result to derive other hypergraph matching results in partite settings, including a new bound on the list chromatic index which implies the best-known result of Molloy and Reed up to the error term, and is stronger when the hypergraph is not close to linear, i.e.\ \(D_2=\omega(1)\).
We also apply our results to obtain improved bounds on almost-spanning structures in Latin squares and designs, and the maximum diameter of a simplicial complex.
\end{abstract}

\maketitle

\section{Introduction}\label{section:intro}
A \textit{matching} in a hypergraph~\(H\) is a set~\(M\) of edges of~\(H\) such that no vertex of~\(H\) is in more than one element of~\(M\), and a matching is \emph{perfect} if every vertex of~\(H\) is in an element of~\(M\) (is \emph{covered} by~\(M\)).
The problem of finding a perfect matching in \emph{graphs} is quite well-understood; indeed, in 1965, Edmonds~\cite{E65} provided an efficient algorithm for finding a matching~\(M\) in any graph, which is \emph{maximum} (no matching with more edges exists).
For more on graph matchings, see for example the book of Lov\'asz and Plummer~\cite{LP09}.
In stark contrast, the problem of finding perfect matchings in hypergraphs of higher uniformity is known to be NP-complete, and the case of \(3\)-uniform 3-partite hypergraphs was one of Karp's original 21 NP-complete problems~\cite{K72}.
For surveys on the history and applications of hypergraph matching results, see for example~\cite{F88,Ka95,Ka97,K18survey,KKKMO,Ke24-nibble}.

The notion of hypergraph matchings captures something surprisingly general; results find wide applicability in combinatorics and related fields.
Indeed, F\"uredi~\cite{F88} commented that ``almost all combinatorial questions can be reformulated as either a matching or covering problem of a hypergraph''.
Although it is difficult to determine whether a hypergraph has a perfect matching, it is often useful to have results that guarantee a \textit{nearly perfect} matching, and it is interesting to minimize the number of vertices left uncovered by the matching.
As we discuss in more detail later, several such results have been proved with the \textit{R\"odl Nibble}, so named after R\"odl's proof \cite{R85} of the Erd\H os--Hanani Conjecture \cite{EH63} on the existence of nearly complete Steiner systems.

Our contribution in this paper is threefold. 
First, we push the R\"odl Nibble to what seems to be its theoretical limit for hypergraph matchings. We establish a nearly perfect hypergraph matching result (Theorem~\ref{theorem:maintheorem}) that guarantees a matching at least as large as any other result proved with a purely nibble-based approach, with heuristics suggesting that any matching found via nibble cannot be any larger.
Second, we show that, in addition, the matchings that we obtain can be guaranteed to satisfy certain pseudorandomness properties, and we use this result to derive other matching-based results (Theorems~\ref{theorem:withreserves} and \ref{thm:mainpartitetheorem}) that can more easily be used as a blackbox in other settings, as well as a new bound on the list chromatic index of hypergraphs (Theorem~\ref{theorem:maincolourtheorem}) which strengthens a result of Molloy and Reed \cite{MR00}.
Finally, using these results, we provide short proofs of improved bounds for several problems.
Namely, we
construct simplicial complexes of large diameter (Theorem~\ref{theorem:SimplicialComplex}, which improves the bound of Bohman and Newman \cite{BN22}), find almost-spanning rainbow directed cycles in directed versions of~\(K_n\), equivalently ``almost-Hamilton'' transversals in Latin squares (Theorem~\ref{theorem:rainbowlinks}, which improves the bound of Benzing, Pokrovskiy, and Sudakov \cite{BPS20}), find large rainbow partial triangle factors in directed and undirected complete graphs (Theorem~\ref{theorem:triangles}, Corollary~\ref{corollary:triangles}), and produce new bounds on matchings and (list) edge-colourings of $(t, r, n)$-Steiner systems and their partite analogues (Theorems~\ref{theorem:Steinercolorsmatchings}--\ref{theorem:PartiteSteinercolourmatchings}, which improve the bounds of Kang, K\"uhn, Methuku, and Osthus \cite{KKMO23} whenever $t > 2$).
We remark that the objects constructed in the first two of these applications cannot be purely described locally, and one of our main innovations (outside of Theorem~\ref{theorem:maintheorem}) is in applying the R\"odl Nibble to deduce strong quantitative results for this type of problem.
Though earlier papers (e.g.~\cite{KKKO20,GKMO21}) used the nibble in conjunction with connecting arguments to find global structures like long cycles, to our knowledge we are the first to derive results about this kind of global structure directly from a matching blackbox (Theorem~\ref{thm:mainpartitetheorem}).
\subsection{The R\"odl Nibble and previous results}
Possibly the most prominent tool for finding nearly perfect hypergraph matchings is the R\"{o}dl Nibble (also called the ``semi-random method''), which arose in the 1980s from the work of Ajtai, Koml\'os, and Szemer\'edi~\cite{AKS81}, R\"{o}dl~\cite{R85}, and others (for example~\cite{FR85,AKPSS82,KPS82}).
Put simply, in the context of hypergraph matchings (the same methodology applies directly in many other settings also), the strategy is to randomly choose a small number of edges (a ``nibble''), and use probabilistic analysis to find an outcome in which most of the chosen edges form a matching whose removal leaves the properties of the remaining hypergraph largely undamaged.
One can then iterate the nibble process, and show that the associated errors grow slowly enough that the accumulated matching covers almost all of the original vertex set, by the time we must halt.
We will discuss the R\"odl Nibble, especially in the context of asymptotic improvements, in depth, but for a deeper history of the method, see the recent survey of Kang, Kelly, K\"uhn, Methuku, and Osthus~\cite{KKKMO} or the chapter of Kelly \cite{Ke24-nibble}.

Throughout the paper, for a hypergraph~\(H\) and a vertex \(v\in V(H)\) we denote by~\(\text{deg}(v)\) (the \textit{degree} of~\(v\)) the number of edges of~\(H\) containing~\(v\), and we set \(\Delta(H)\coloneqq\max_{v}\text{deg}(v)\) and \(\delta(H)\coloneqq\min_v \text{deg}(v)\).
For a set \(U\subseteq V(H)\) we write~\(\text{codeg}(U)\) (the \textit{codegree} of~\(U\)) for the number of edges containing~\(U\), and for \(j\geq 2\) we set \(C_j(H)\coloneqq \max\text{codeg}(U)\), the maximum taken over \(j\)-sets of \(V(H)\).
A hypergraph is \((k+1)\)-\textit{uniform} if all edges have exactly~\(k+1\) vertices, and \((n,D,\eps)\)-\textit{regular} if it has~\(n\) vertices, each having degree \(\text{deg}(v)=(1\pm\eps)D\), that is, \((1-\eps)D\leq\delta(H)\leq\Delta(H)\leq(1+\eps)D\).

A centrally important notion in the semi-random method is the ``gap'' or ``space'' between the average degree~\(D\) and the (best-known upper bound~\(D_2\) on the) maximum \(2\)-codegree~\(C_2(H)\).
In some sense (if all vertex degrees are very close to~\(D\)) the ratio~\(D/D_2\) parameterizes the worst-case dependence between the vertex degrees of different vertices during a random nibble, and so, in the process of iterating the nibble, as~\(D_2\) remains fixed (say) and~\(D\) decreases as we remove an ever-increasing matching, we must halt once~\(D\) becomes too close to~\(D_2\), as after this point, we cannot effectively concentrate the vertex degrees.
Inspired by R\"odl's proof \cite{R85} of the Erd\H os--Hanani Conjecture \cite{EH63} on the existence of nearly complete Steiner systems, Frankl and R\"odl~\cite{FR85} and Pippenger (unpublished, but see~\cite[Theorem 8.4]{F88} and~\cite{PS89}) proved that, for a nearly $D$-regular hypergraph, if $D_2$ is much smaller than $D$, then there is an almost-perfect matching.
Building on results of Grable~\cite{G99} and Alon, Kim, and Spencer~\cite{AKS97} improving the size of the leftover by more efficiently using the space between~\(D\) and~\(D_2\), Kostochka and R\"{o}dl~\cite{KR97} proved the following result in 1997.
We use standard notation for hierarchies of constants throughout this paper, so that \(1/A, b\ll c\) means that for any \(c\in(0,1]\) there exist \(A_0=A_0(c), b_0=b_0(c) >0\) such that the subsequent statement holds for all \(0<b\leq b_0\) and integers \(A\geq A_0\), and hierarchies with more constants are interpreted similarly.

\begin{theorem}[Kostochka and R\"{o}dl~\cite{KR97}, 1997]\label{theorem:KR}
Suppose that \(1/D\ll1/K,1/k,\delta,\gamma<1\).\COMMENT{In particular, this enforces \(k\geq 2\).}
Let~\(H\) be a \((k+1)\)-uniform, \((n,D,\eps)\)-regular hypergraph with \(C_2(H)\leq D_2\leq D^{1-\gamma}\) and \(\eps\leq K\sqrt{\log D/D}\).
Then there is a matching~\(M\) of~\(H\) which covers all but at most~\(n(D/D_2)^{-1/k +\delta}\) vertices.
\end{theorem}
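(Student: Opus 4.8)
The plan is to iterate the R\"odl Nibble. We build~$M$ over a sequence of \emph{rounds}, maintaining a decreasing chain $H = H_0 \supseteq H_1 \supseteq \cdots$ so that, at the start of round~$i$, the hypergraph~$H_i$ is $(n_i, D_i, \eps_i)$-regular with $C_2(H_i) \le D_2$, where $n_0 = n$, $D_0 = D$, $\eps_0 = \eps$. Fixing a small constant $\beta = \beta(\delta, k) \in (0,1)$ (the nibble parameter), one round runs as follows: include each edge of~$H_i$ in a random set~$E_i$ independently with probability $\beta/D_i$; let~$M_i$ be the set of edges of~$E_i$ meeting no other edge of~$E_i$; add~$M_i$ to~$M$; and let~$H_{i+1}$ be~$H_i$ with every vertex covered by~$M_i$, together with every edge meeting such a vertex, deleted. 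As each~$M_i$ is a matching of~$H_i$ using only vertices deleted before the next round, $\bigcup_i M_i$ is a matching of~$H$ whose set of uncovered vertices is exactly $V(H_T)$, where~$T$ is the round at which we stop. Hence it suffices to run the process until $n_T := |V(H_T)| \le n(D/D_2)^{-1/k + \delta}$.

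The heart of the matter is the one-round analysis. Conditioning on~$H_i$ and using $C_2(H_i) \le D_2 = o(D_i)$ to discard the negligible contribution of pairs of edges sharing more than one vertex, a routine first-moment computation shows that a fixed vertex of~$H_i$ is covered by~$M_i$ with probability $q_i := (1 \pm o(1))\beta e^{-(k+1)\beta}$; consequently, for a vertex~$v$ surviving to~$H_{i+1}$, an edge $e \ni v$ survives --- meaning all~$k$ of its vertices other than~$v$ remain uncovered --- with probability $(1 \pm o(1))(1 - q_i)^k$. So in expectation $n_{i+1} \approx (1 - q_i)n_i$ and $D_{i+1} \approx (1 - q_i)^k D_i$, which yields the crucial relation $n_{i+1}/n_i \approx (D_{i+1}/D_i)^{1/k}$, valid up to a factor $1 \pm C(k)\beta$ reflecting the fact that~$q_i$ is not exactly~$\beta$. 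One then upgrades these expectations to concentration: with failure probability $\exp(-D_i^{\Omega(1)})$, the hypergraph~$H_{i+1}$ is $(n_{i+1}, D_{i+1}, \eps_{i+1})$-regular with $n_{i+1} = (1\pm o(1))(1-q_i)n_i$, $D_{i+1} = (1\pm o(1))(1-q_i)^k D_i$, and $\eps_{i+1} \le \eps_i + O(\sqrt{\log D_i/D_i}) + O(D_2/D_i)$, while $C_2(H_{i+1}) \le C_2(H_i) \le D_2$ is inherited for free since deletion only decreases codegrees.

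To finish we iterate. Because~$q_i$ is bounded away from~$0$, the degrees decay geometrically, so we run for $T = (1 \pm O(\beta))\tfrac{1}{k\beta}\log(D/D^\star)$ rounds, halting once~$D_i$ drops below~$D^\star$, a large polylogarithmic multiple of $\max\{D_2, D^{\delta/4}\}$ --- chosen so that $D_2/D_i = o(1)$ throughout (keeping the one-round estimates valid) and $D^\star \ge D^{\delta/4}$ (so that all error terms of the form $\sqrt{\log D_i/D_i}$ remain $o(1)$). Telescoping $n_{i+1}/n_i \approx (D_{i+1}/D_i)^{1/k}$ over the~$T$ rounds gives $n_T = n(D^\star/D)^{1/k}$ up to a factor $(D/D^\star)^{O(\beta)/k}$ absorbing the accumulated $1 \pm C(k)\beta$ errors; taking~$\beta$ sufficiently small in terms of~$\delta$, and using $D_2 \ge 1$, one checks that this is at most $n(D/D_2)^{-1/k + \delta}$. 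The single-round concentration failures over the $T = O(\log D)$ rounds are dispatched by a union bound, and the handful of vertices in each round violating the degree estimates are simply deleted and charged to the leftover: the expected number of such vertices over the whole process is $O(n \log D \cdot \exp(-(D^\star)^{\Omega(1)})) = o(n_T)$, so by Markov's inequality there is an outcome where their total is $o(n_T)$. Finally~$\eps_i$ stays $o(1)$ throughout, since $\sum_{i < T}(\sqrt{\log D_i/D_i} + D_2/D_i)$ is a geometric-type sum dominated by its last term, which is $o(1)$ by the choice of~$D^\star$ --- and this is exactly why the input tolerance $\eps \le K\sqrt{\log D/D}$ is the right one.

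The step I expect to be the main obstacle is the per-round concentration of the vertex degrees in~$H_{i+1}$. Although $D_{i+1}(v)$ is a function of the independent edge-selection indicators of round~$i$, a single such indicator can change it by as much as $\Theta(C_2(H_i)) = \Theta(D_2)$: flipping whether one edge lies in~$E_i$ alters~$M_i$ on only $O(k)$ edges, hence changes the coverage status of only $O(k^2)$ vertices, but each of those may lie in up to $C_2(H_i)$ of the edges through~$v$. A naive Azuma/McDiarmid bound with this Lipschitz constant is useless (it dwarfs~$D_i$); the point is that such large changes occur only for the few edges near~$v$ that actually land in, or adjacent to,~$M_i$, so the pairwise covariances of the survival indicators of the edges through~$v$ are tiny and the variance of $D_{i+1}(v)$ is small (of order~$D_i$ rather than~$D_i^2$) --- and to convert this into an exponential tail bound one must invoke a concentration inequality that exploits it, such as Talagrand's inequality with a suitable certificate or a bounded-variances/Freedman-type martingale argument in the spirit of the Pippenger--Spencer analysis. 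The concentration of~$n_{i+1}$, by contrast, is routine, since~$n_{i+1}$ is an $O(k)$-Lipschitz function of the indicators with mean $n_i - \Theta(\beta n_i)$, and the remaining first-moment and bookkeeping steps are standard.
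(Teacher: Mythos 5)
There is a genuine gap in the one-round error analysis, and it is the very ingredient the paper (and the original arguments of Alon--Kim--Spencer, Grable, and Kostochka--R\"odl) take pains to build in. Your nibble deletes exactly $V(M_i)$ and nothing else, so a vertex~$w$ survives the round with probability $1 - p(w)$, where $p(w) = \mathbb{P}[w \in V(M_i)]$ depends on $\deg(w)$ and hence ranges over an interval of relative width~$\Theta(\eps_i)$. Consequently the conditional expectation $\mathbb{E}[\deg_{H_{i+1}}(v)] \approx \sum_{e \ni v} \prod_{w \in e \setminus\{v\}}(1 - p(w))$ is \emph{not} $\deg(v) \cdot (1 - q_i)^k$ for every~$v$; it carries a $v$-dependent multiplicative correction of size $1 \pm \Theta(k\beta\eps_i)$ determined by the degrees of $v$'s neighbours. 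Worst case (e.g.\ a high-degree~$v$ whose neighbours all have below-average degree), this forces $\eps_{i+1} \geq \eps_i(1 + c\beta)$ with $c$ bounded away from~$0$, a \emph{multiplicative} growth that your claimed bound $\eps_{i+1} \leq \eps_i + O(\sqrt{\log D_i/D_i}) + O(D_2/D_i)$ simply does not record. Compounding $\eps_i(1+c\beta)$ over $T \approx \tfrac{1}{k\beta}\log(D/D^\star)$ rounds yields $\eps_T \gtrsim \eps_0 (D/D^\star)^{c/k}$, and if $c$ is as large as the worst-case allows (up to $\Theta(k)$) this blows up whenever $D^\star$ is much below~$\sqrt{D}$, i.e.\ precisely in the regime $\gamma > 1/2$, $D_2 \ll \sqrt{D}$, where the theorem is most interesting. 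The additive ``geometric-sum dominated by its last term'' argument in your final paragraph is therefore not applicable to~$\eps_i$. (A secondary issue: the per-round concentration of $\deg_{H_{i+1}}(v)$ is $\Theta(\sqrt{D_i D_2 \log D_i})$, not $\Theta(\sqrt{D_i\log D_i})$, exactly for the variance reason you identify at the end; this missing $\sqrt{D_2}$ factor happens not to be fatal because of your choice of $D^\star$, but it does not match what you wrote.)

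The standard fix, and the one this paper uses (see the discussion preceding and inside the proof of Lemma~\ref{lemma:nibble}, where the quantity $p^*$ and the set $\mathbf{W}$ appear), is to augment each round with an independent ``waste set'' $\mathbf{W}$: each vertex~$v$ is discarded independently with probability $w(v) := (p^* - p(v))/(1 - p(v))$ where $p^* := \max_u p(u)$, so that $\mathbb{P}[v \text{ survives}] = (1-p(v))(1-w(v)) = 1 - p^*$ is \emph{identical for every vertex}. This kills the vertex-dependent drift in the expectation at a cost of an extra $\Theta(\eps_i\theta n_i)$ wasted vertices per round, leaving only the concentration error, which is then controlled (with a Talagrand-with-exceptional-outcomes argument) under the hypothesis $\eps^2\theta D/D_2 \geq \log^5 D$. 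With the waste set, $\eps$ still grows multiplicatively by $(1+\theta)$ per round, but only so; the two effects $D_i \to D_i(1-k\theta)$ and $\eps_i \to \eps_i(1+\theta)$ keep $\eps_i^2 D_i/D_2$ nearly invariant, which is exactly what permits running down to $D_T$ a polylog factor above $D_2$ for every $k \geq 2$ and every $\gamma < 1$. Without the waste set (or some equivalent degree-equalising device) I do not see how to establish your per-round estimate, and the remainder of the argument --- the telescoping, the $T$-round union bound, the final bookkeeping --- is sound but cannot rescue it.
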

Heuristically, the leftover hypergraph after removal of the matching found by the semi-random method, if the leftover has size roughly~\(pn\), say, should resemble an induced subgraph where each vertex remains independently with probability~\(p\).
In the latter regime, the expected degree of a vertex is~\(Dp^k\), so that \(p=(D_2 /D)^{1/k}\) corresponds to average degree~\(D_2\) in the leftover.
Thus, as discussed above, the leftover given in Theorem~\ref{theorem:KR} is an essentially optimal use of the space between~\(D\) and~\(D_2\), using purely nibble methods.

In 2000, Vu~\cite{V00} made a crucial observation: Perhaps instead of treating~\(D_2\) as fixed, we could leverage the space between~\(D_2\) and~\(D_3\) (where \(C_3(H)\leq D_3\)) to examine the degradation of the 2-codegrees during a nibble, and thus show that~\(D_2\) drops during the iteration process also?
Then we could continue to iterate for longer before~\(D\) decreases to~\(D_2\),\COMMENT{which isn't to say that Vu continues his process until \(D\) hits the new \(D_2\); he doesn't, but it is still (generally) better to collapse \(D_{s-1}\) to \(D_s\). Chose not to be super precise here, I think Vu's result is slightly unwieldy/not always best, for instance for some codegree sequences the bottleneck for \(x\) will be some \(D_j/D_{j+1}\), so it's better to use Kostochka-Rodl than to naively apply Vu, but Vu is always at least as good if you take s=2, and can be better given good codegree sequences or tactical choices of \(s\) and \(D_j\) adjustments. Exploring this too deeply would just add confusion I think} as~\(D_2\) has dropped non-trivially below its initial value, and we obtain a larger matching.
Indeed, Vu realized that if we have some \(s\)-codegree\COMMENT{Now I'm leaving \(C_j(H)\leq D_j\) implicit, and sort of abusing terminology and conflating \(D_s\) with the ``\(s\)-codegree''} \(D_s\) with \(s>2\) say, such that each \(D_j / D_{j+1}\) for \(2\leq j<s\) was large enough, then we could show that each such~\(D_j\) decreased during the process, such that~\(D_2\) may be allowed to decrease much further before~\(D\) drops to the new value of~\(D_2\) (or some other ratio~\(D_j/D_{j+1}\) vanished, also stopping the process).
In a nutshell, \emph{utilizing the higher codegrees yields a larger matching}.
The following theorem represents the previous `state of the art' in terms of leftover size using purely nibble methods.
Here and throughout the paper, all logarithms are base~\(e\).
\begin{theorem}[Vu~\cite{V00}, 2000]\label{theorem:V00}
Suppose that $1/D\ll1/A\ll 1/k\leq 1$.
Let~$H$ be a $(k+1)$-uniform, \((n,D_1,\eps)\)-regular hypergraph, and suppose that there is $2\leq s\leq k+1$ and quantities~$D_{j}$ for $2\leq j\leq s$ such that $C_{j}(H)\leq D_{j}$ for all $2\leq j\leq s$, and $D_{s}\geq 1$.
Assume further that there is $x>0$ such that the following conditions hold:
\begin{enumerate}[label=\upshape(\roman*)]
\item $x^{2}\leq\frac{D_{j}}{D_{j+1}}$ for all $1\leq j\leq s-1$;\label{condition:otherratios}
\item $x^{k-s+2}\leq\frac{D_{s-1}}{D_{s}}$;\label{condition:finalratio}
\item \(\eps\leq\frac{1}{x}\).
\end{enumerate}
Then there is a matching~$M$ of~$H$ which covers all but at most~$n(\log^{A}n)/x$ vertices.\COMMENT{In fact, Vu shows \(\eps\) can be as large as \(\alpha x^{-1}\log^\beta D\) for some well chosen \(\alpha,\beta>0\), but he never actually specifies the \(\alpha, \beta\), and I feel it would just muddy the statement here. Clearly saying \(\eps\leq 1/x\) is permissible, would you agree it gets the essence of the statement across?}
\end{theorem}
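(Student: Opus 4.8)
The approach is the iterative R\"odl Nibble. Fix a small ``bite size'' $\gamma$ tending slowly to $0$ (say $\gamma=(\log n)^{-\Theta(A)}$) and run $T$ rounds: writing $\cH_0\coloneqq H$, and letting $d_1^{(i)}$ denote the current nominal degree, in round $i$ we include each edge of $\cH_i$ independently with probability $p_i\coloneqq\gamma/d_1^{(i)}$, set $M_i$ to be the included edges that meet no other included edge (so $M_i$ is a matching), and form $\cH_{i+1}$ from $\cH_i$ by deleting $V(M_i)$ and every edge meeting it. The output is $M\coloneqq M_0\cup\cdots\cup M_{T-1}$, and the number of uncovered vertices is exactly $|V(\cH_T)|$. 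Throughout, I would maintain for each $i\le T$ the invariant that $\cH_i$ is $(n_i,d_1^{(i)},\eps_i)$-regular, that $\mathrm{codeg}(U)\le d_j^{(i)}$ for every $j$-set $U\subseteq V(\cH_i)$ and every $2\le j\le s-1$, and that $C_s(\cH_i)\le C_s(H)\le D_s$ (the last being automatic since $\cH_i$ is a subhypergraph of $H$); this is the point of only tracking codegrees up to level $s-1$ and anchoring the top of the chain at the static bound $D_s$.

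Second I would fix the recursions and the stopping rule. In one round a given vertex is covered with probability $(1+o(1))\gamma$, so $n_{i+1}=(1\pm o(1))(1-\gamma)n_i$; a surviving vertex loses an edge precisely when one of its $k$ remaining endpoints is covered, so its degree is multiplied by $(1\pm o(1))(1-\gamma)^k$, and the codegree of a surviving $j$-set by $(1\pm o(1))(1-\gamma)^{k+1-j}$. This forces the nominal values $n_i\coloneqq n(1-\gamma)^i$ and $d_j^{(i)}\coloneqq D_j(1-\gamma)^{(k+1-j)i}$, and the quantities that actually drive the errors are the ratios $\rho_j^{(i)}\coloneqq d_{j+1}^{(i)}/d_j^{(i)}=(D_{j+1}/D_j)(1-\gamma)^{-i}$ for $1\le j\le s-2$ together with $\rho_{s-1}^{(i)}\coloneqq D_s/d_{s-1}^{(i)}=(D_s/D_{s-1})(1-\gamma)^{-(k-s+2)i}$, all of which grow with $i$. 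I would take $T$ maximal subject to every $\rho_j^{(i)}$ staying below a threshold of order $\gamma(\log n)^{-\Theta(A)}$ (which also keeps all degrees and codegrees polylogarithmically large). Unwinding, hypotheses \ref{condition:otherratios} and \ref{condition:finalratio} are exactly calibrated for this: $D_j/D_{j+1}\ge x^2$ keeps $\rho_j^{(i)}$ in range until $(1-\gamma)^i$ has fallen to $O(1/x^2)$, while $D_{s-1}/D_s\ge x^{k-s+2}$ keeps $\rho_{s-1}^{(i)}$ in range until $(1-\gamma)^i$ has fallen to $O((\log n)^{O(1)}/x)$, and the hypotheses together force $x\le D_1^{1/k}$, which guarantees $d_1^{(T)}$ is still polylogarithmic at that point. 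Hence $(1-\gamma)^T=O((\log n)^{O(1)}/x)$, so $|V(\cH_T)|=n_T\le n(\log^A n)/x$ and we are done with no separate endgame; hypothesis \ref{condition:finalratio} is the binding constraint in the extremal regime, and hypothesis~(iii), $\eps\le1/x$, ensures $\eps_0$ already lies below the final target scale and so is absorbed into the error term.

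Third is the engine, the single-round analysis: conditioned on $\cH_i$ obeying the invariants, I would show that with probability $1-n^{-\omega(1)}$ the hypergraph $\cH_{i+1}$ obeys them with the updated parameters. Each new degree $\deg_{\cH_{i+1}}(v)$ and each new codegree $\mathrm{codeg}_{\cH_{i+1}}(U)$ is a bounded-degree polynomial in the independent edge-inclusion indicators with mean as computed above, and its variance is governed by the codegree one level up — for instance $\mathrm{Var}\bigl(\deg_{\cH_{i+1}}(v)\bigr)=O\bigl(\gamma\,d_1^{(i)}d_2^{(i)}\bigr)$, the main term coming from pairs of edges through $v$ that share a further vertex. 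Polynomial concentration (the Kim--Vu inequality is tailor-made here, or one may use a Talagrand-type bound together with a correction for the rare ``bad'' outcomes in which an included edge destroys unusually many edges) then shows each such quantity lies within a factor $1\pm\delta_i$ of its mean, with $\delta_i=O\bigl((\log n)^{O(1)}\bigr)\cdot\sqrt{\gamma\,\rho_j^{(i)}}$ for the relevant level $j$ (and $j=s-1$ at the top). Together with a Chernoff bound for $n_{i+1}$ and for the number of edges lost to collisions, and a union bound over the at most $n^{s}$ relevant vertices and small sets, this yields $\eps_{i+1}\le\eps_i+\delta_i+O(\gamma^2)$ and that each $d_j^{(i+1)}$ agrees with its nominal value up to a factor $1+O(\delta_i)$.

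Finally, summing over rounds gives $\eps_T\le\eps_0+\sum_{i<T}\bigl(\delta_i+O(\gamma^2)\bigr)$; since each $\rho_j^{(i)}$ is geometric in $i$, the sum $\sum_{i<T}\sqrt{\gamma\rho_j^{(i)}}$ is dominated by its last term, which the choice of $T$ keeps $o(1)$, and $T\gamma^2=o(1)$ once $\gamma$ shrinks faster than $1/\log x$; the same geometric-decay reasoning keeps every $d_j^{(i)}$ within a $1+o(1)$ factor of its nominal value. A short induction on $i$ then confirms that the invariants propagate for all $i\le T$, provided the per-round errors are handled \emph{in the right order} along the chain $d_1\Leftarrow d_2\Leftarrow\cdots\Leftarrow d_{s-1}\Leftarrow D_s$, since the concentration of $d_j$ presupposes that $d_{j+1}$ is already a valid bound. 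I expect this coupled bookkeeping to be the main obstacle: one must simultaneously control $s-1$ codegree parameters whose accumulating errors cascade down the chain, and verify that the \emph{single} stopping time dictated by \ref{condition:otherratios}--\ref{condition:finalratio} keeps all of them — together with $\eps_i$ and $d_1^{(i)}$ — comfortably inside their thresholds at once. (As a consistency check, taking $s=2$ collapses the scheme to tracking the degree alone against the static bound $D_2$, and recovers Theorem~\ref{theorem:KR} up to the error term.)
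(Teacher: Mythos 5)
The paper does not prove this theorem (it is cited from Vu's 2000 paper, and the authors note that their Chomp Lemma implies it), so I assess your proposal on its merits. Your overall shape — iterated nibble tracking $d_1^{(i)},\dots,d_{s-1}^{(i)}$ with $D_s$ as a static anchor, stopping when the ratios $\rho_j^{(i)}$ get too large — is essentially Vu's scheme. But there is a genuine gap: you never introduce a waste (equalising) set, and without it the per-round degree-error recursion $\eps_{i+1}\le\eps_i+\delta_i+O(\gamma^2)$ that your argument rests on is false. In your round a vertex $u$ is matched with probability $p(u)$ proportional (to leading order) to $\deg(u)$, which varies across vertices by a factor $1\pm\Theta(\eps_i)$; so $u$ survives with probability $(1-p^*)\bigl(1\mp\Theta(\eps_i\gamma)\bigr)$, where $p^*$ is the nominal matched probability. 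The expected new degree of a surviving $v$ is then $\sum_{e\ni v}\prod_{u\in e\setminus\{v\}}(1-p(u))=\deg(v)(1-p^*)^k\bigl(1\pm\Theta(k\eps_i\gamma)\bigr)$, since the $k$ per-vertex biases can all align. This is a \emph{multiplicative} extra error of order $k\eps_i\gamma$ per round — not the fixed additive $O(\gamma^2)$ you wrote — so the correct recursion is $\eps_{i+1}\le\eps_i\bigl(1+\Theta(k\gamma)\bigr)+\delta_i$, and over $T\approx(\log x)/\gamma$ rounds the multiplicative part alone compounds to about $\eps_0\,x^{ck}$ for a constant $c>0$. With the hypothesis $\eps_0\le 1/x$ this exceeds $1$ for every $k\ge2$, forcing the process to halt at a leftover of order $nx^{-\Theta(1/k)}$, far short of $n(\log^A n)/x$.

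The missing device — used by Vu, by Alon--Kim--Spencer, and by this paper's Nibble Lemma — is to additionally place each vertex $v$ in a waste set $W$ with probability $w(v)=(p^*-p(v))/(1-p(v))$, so that every vertex survives with \emph{exactly} probability $1-p^*$. The bias then vanishes: the expected new degree of $v$ is $\deg(v)(1-p^*)^k$ up to lower-order correlation corrections, the per-round growth is driven by the concentration error alone, and one gets $\eps_{i+1}\le\eps_i(1+O(\gamma))+\delta_i$, hence $\eps_T=O(\eps_0 x)=O(1)$ under the hypothesis $\eps\le1/x$, which is what the argument needs. A smaller secondary issue: for the non-wasteful nibble you describe (delete only $V(M_i)$, retaining vertices in colliding bites), the new degree of $v$ is not a polynomial in the edge-inclusion indicators — whether a chosen edge actually matches its vertices depends on the collision pattern among all other chosen edges — so Kim--Vu does not apply to it directly. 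Vu circumvents this by using the wasteful nibble (discard every vertex touching a chosen edge), where the surviving degree is a genuine polynomial; your alternative of Talagrand with exceptional outcomes is indeed the right tool for the non-wasteful variant, but then one must also decompose the degree into auxiliary observable counters, as the paper's Nibble Lemma does, because the raw random variable does not admit a small enough certificate.
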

We remark that Vu states~\ref{condition:otherratios} with~\(x^3\) in place of~\(x^2\), but he claims that this can be improved to~\(x^2\), and indeed, this does follow from his proof.
Notice that putting \(s=2\) in Theorem~\ref{theorem:V00} yields Theorem~\ref{theorem:KR} (and with subpolynomial error improved to polylogarithmic), and for larger~$s$, $x^{k-s+2}<x^{k}$, whence condition~\ref{condition:finalratio} gives a less strict inequality on~$x$ (assuming~$D/D_{2}$ and~$D_{s-1}/D_{s}$ are comparable in size, as they often are in applications) and a larger~$x$ yields a smaller leftover.
To illustrate Theorem~\ref{theorem:V00}, suppose~\(k\geq 4\) is even (say) and consider a large \(n\)-vertex \((k+1)\)-uniform \(D\)-regular hypergraph~\(H^{*}\) which has \(D=\Theta(n^k)\) and \(D_j=\Theta(n^{k+1-j})\) for \(2\leq j\leq  k/2 +1\eqqcolon s\).
Theorem~\ref{theorem:KR} yields leftover roughly~\(n^{1-1/k}\).
Theorem~\ref{theorem:V00} yields an improvement of leftover roughly $n^{1-2/(k+2)}$, since
condition~\ref{condition:otherratios} of Theorem~\ref{theorem:V00} imposes the condition \(x\leq \sqrt{n}\), condition~\ref{condition:finalratio} imposes \(x\leq n^{2/(k+2)}\), and we may put \(\eps=0\).

As it will be useful for the purpose of comparison, we now introduce a much more recent result from 2023 which uses techniques above and beyond ``pure nibble methodology'' to obtain a matching with slightly smaller leftover (by the \((D/D_2)^{\eta}\) factor) than Theorem~\ref{theorem:KR}.
More specifically, during each nibble, the authors of~\cite{KKMO23} tracked (in addition to the degradation of~\(D\) and the vertex degree errors) the survival of a family of subgraphs called ``augmenting stars''.
When the pure nibble methodology reached the halting point as in Theorem~\ref{theorem:KR}, they showed that the surviving augmenting stars could be used to augment the matching given by the nibble.
\begin{theorem}[Kang, K\"uhn, Methuku, and Osthus~\cite{KKMO23}, 2023]\label{theorem:KKMO}
Suppose \(1/D\ll 1/k, \gamma, \mu, \eta, \delta<1\), where \(k\geq 3\) and \(\eta<f_{(\ref{theorem:KKMO})}(k)\coloneqq\frac{k-2}{k(k^3 + k^2 -2k +2)}\).
Suppose further that \(D\geq\exp(\log^{\mu}n)\).
Let~\(H\) be a \((k+1)\)-uniform, \((n,D,\eps)\)-regular hypergraph satisfying \(C_2(H)\leq D_2\leq D^{1-\gamma}\), and suppose \(\eps\leq(D_2/D)^{1/k +\delta}\).
Then~\(H\) has a matching covering all but at most~\(n(D/D_2)^{-1/k - \eta}\) vertices.\COMMENT{In particular they say there exists \(N_0\) such that it all works, whereas we assume \(D\) sufficiently large, but \(D/2\leq (1-\eps)D\leq\text{deg}(v)\leq nD_2\) so \(n\geq D/(2D_2)\geq D^{\gamma}/2\) so our hypotheses ensure \(n\) also suff large. They assume \(k\geq 4\) but their \(k\) is our \(k+1\). The value \(\eps\) comes from their Theorem 7.1, where they assume all vertices have degree \((1\pm(D_2/D)^{1-x})D\) with \(x<1-1/k\). If \(x=1-1/k\) then this would be \((1\pm (D_2/D)^{1/k})D\), and smaller \(x\) means raising \((D_2/D)<1\) to a higher power, so tighter control, which makes sense. My translation of the hypothesis ensures that \(x\leq1- 1/k-\delta\) for some \(\delta>0\).}
\end{theorem}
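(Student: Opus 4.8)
The plan is to combine the semi-random method with an augmentation step, as the surrounding discussion anticipates. A direct appeal to Theorem~\ref{theorem:KR} will not suffice on its own, since the augmentation step needs quantitative structural control of the hypergraph that survives once the matched vertices are deleted; so I would instead re-run the R\"odl nibble while carrying along several extra statistics.

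\emph{The nibble phase.} Over $t=\Theta(\log(D/D_2))$ rounds I would, in each round, select every surviving edge independently with a small probability $\theta$, retain those selected edges that happen to form a matching, add them to the accumulating matching, and delete all newly covered vertices together with their incident edges. Writing $p_i$ for the fraction of vertices surviving round $i$, the standard analysis --- bounded-differences concentration over the random choices within each round, fed through a union bound over all rounds --- would track that $p_i$ decreases predictably with the effective degree staying $\approx Dp_i^{k}$, that the maximum $2$-codegree remains $O(D_2)$, and that every surviving vertex keeps degree $(1\pm\eps_i)Dp_i^{k}$ with $\eps_i$ growing only slowly from $\eps$. The hypotheses $\eps\le(D_2/D)^{1/k+\delta}$ and $D\ge\exp(\log^{\mu}n)$ are precisely what supply the error budget to iterate until $p_t\approx(D_2/D)^{1/k}$, i.e.\ until the effective degree has dropped to $\approx D_2$, at which point the accumulated matching $M_0$ leaves an uncovered set $U_0$ of size roughly $n(D_2/D)^{1/k}$, as in Theorem~\ref{theorem:KR}.

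\emph{The augmentation phase.} Alongside the above I would also track a large family $\cS$ of pairwise vertex-disjoint \emph{augmenting stars}: bounded-size configurations of edges of $H$ whose presence in the final leftover permits a local swap against $M_0$ that strictly increases the number of covered vertices --- for instance, deleting a few matching edges and inserting more edges that re-cover the freed vertices while additionally absorbing vertices of $U_0$. I would set up $\cS$ at the outset (a counting argument exploiting $D_2\le D^{1-\gamma}$ yields an ample supply, and a greedy pruning makes them vertex-disjoint), and in each round track that a star is destroyed only when the nibble deletes one of its constantly many edges or covers one of its designated target vertices, an event of probability $O_k(\theta)$ per round; one then shows that enough stars survive all $t$ rounds to carry out the augmentation. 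Since the survivors are vertex-disjoint, all of their swaps can be performed simultaneously, each contributing $\Theta_k(1)$ newly covered vertices, and tuning the number and ``depth'' of the stars against the arithmetic of the nibble would improve the leftover from $n(D/D_2)^{-1/k}$ to $n(D/D_2)^{-1/k-\eta}$ for any $\eta$ below the stated threshold.

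The hard part will be the coupling of the two phases, which is also what produces the unwieldy value $f_{(\ref{theorem:KKMO})}(k)=\frac{k-2}{k(k^3+k^2-2k+2)}$. The survival of a star is not independent of the matching the nibble builds, so it must be followed as an additional family of random variables inside the same martingale framework, with the error terms of all the tracked quantities balanced against one another; and $\cS$ must at the same time be generic enough that a sufficient fraction survives \emph{every} round with \emph{all} of its edges and target vertices intact, numerous and disjoint enough to cover almost all of $U_0$, and structured so that the swaps do not interfere. The admissible $\eta$ is exactly what emerges from optimising how long the nibble may run before the degrees decohere against how many rounds an augmenting star can tolerate against the codegree overhead incurred in the initial count of stars; and the hypotheses $k\ge3$ (so that $k-2>0$) and $D_2\le D^{1-\gamma}$ are what make this optimisation deliver a genuine polynomial gain over Theorem~\ref{theorem:KR}.
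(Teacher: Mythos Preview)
Your proposal is essentially correct and aligns with the approach the paper attributes to~\cite{KKMO23}, but you should be aware that this theorem is not proved in the present paper at all: it is a cited result, stated only for comparison with the authors' own Theorem~\ref{theorem:maintheorem}. The paper's entire discussion of the proof is the sentence ``during each nibble, the authors of~\cite{KKMO23} tracked (in addition to the degradation of~\(D\) and the vertex degree errors) the survival of a family of subgraphs called `augmenting stars'. When the pure nibble methodology reached the halting point as in Theorem~\ref{theorem:KR}, they showed that the surviving augmenting stars could be used to augment the matching given by the nibble.'' Your two-phase outline (nibble until the effective degree reaches~\(D_2\), while simultaneously tracking the survival of a family of augmenting stars, then perform vertex-disjoint swaps) is exactly this, and your remarks about the coupling difficulty and the origin of the threshold~\(f_{(\ref{theorem:KKMO})}(k)\) are reasonable heuristics, though the paper gives no details against which to check them.
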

We remark that the authors of~\cite{KKMO23} assume~\(n\) is sufficiently large rather than~\(D\), but that these hypotheses are equivalent given the assumption \(D\geq\exp(\log^{\mu}n)\) and the fact \(D/2\leq(1-\eps)D\leq \text{deg}(v)\leq nD_2\leq nD^{1-\gamma}\) for all~\(v\).
The matching given by Theorem~\ref{theorem:KKMO} beats Theorem~\ref{theorem:KR}, and beats Theorem~\ref{theorem:V00} in the case that \(D_2= D_s\) (for example ``linear'' hypergraphs having \(C_2(H)=1\)), but is typically beaten by Theorem~\ref{theorem:V00} when \(D_2\gg D_s\) and we have sufficient information on the ``space'' available throughout the codegree sequence.
For example, with~\(H^{*}\) as above, Theorem~\ref{theorem:KKMO} gives leftover roughly \(n^{1-1/k-f_{(\ref{theorem:KKMO})}(k)}=\omega(n^{1-2/(k+2)})\) for all \(k\geq 4\),\COMMENT{Checked this on Desmos and quickly checking the algebra. The only positive range of \(k\) where \(1/k + f_{(\ref{theorem:KKMO})}(k) > 2/(k+2)\) is around \(k\in(4/3,2)\)} so the extra information here suffices for Theorem~\ref{theorem:V00} to beat Theorem~\ref{theorem:KKMO}.
We discuss further examples in Section~\ref{section:appstatements}.
\subsection{Our main result}
The initial inspiration for our main theorem was as follows: If Theorem~\ref{theorem:V00} applied to some~\(H\) is such that \(D_{s-1}/D_{s}\) is the bottleneck for~\(x\) (as occurs for~\(H^{*}\) above), so that~\(D_{s-1}\) has dropped to become basically~\(D_s\) at the end of the process, and~\(D\) and all other~\(D_j\) have dropped some amount, why not simply now redefine~\(s\) to be~\(s-1\), and apply Theorem~\ref{theorem:V00} again?
Indeed, if the codegree sequence is well-behaved, we could repeat this process until we have \emph{leveraged all of the space in the entire codegree sequence}, dropping~\(D\) until it has become essentially the original~\(D_s\), possibly even the multiplicity of the hypergraph \(D_{k+1}\coloneqq C_{k+1}(H)\) if \(D_s\approx D_{k+1}\).

Unfortunately, there were a number of obstacles to obtaining such a result so directly.
Most crucially, the machinery in the proof of Theorem~\ref{theorem:V00} requires that \(\eps \leq 1/x\), and cannot take full advantage of a better~\(\eps\) than this;
the proof necessitates setting \(\eps\approx1/x\) at the start, and this choice plays an important role in the probabilistic analysis\COMMENT{Because Vu uses the polynomial method, and truncates his polynomials at degree 2, which means he obtains error roughly \(\theta^2 D\) on the vertex degrees. This is OK for him only because he selects \(\eps\approx1/x\approx\theta\), so it is equivalent to error \(\eps\theta D\) in the vertex degrees, which is what you actually need in order for the degrees to degrade at the right rate. In order to use Vu's proof but not insist that \(\eps\approx\theta\), you would need to truncate the polynomials at a much higher degree \(r\) such that \(\theta^r D\approx \eps\theta D\), which is exactly what we used to do, and was a great deal of work above and beyond Vu.}, so any tighter control one had initially is lost.
If we wanted to apply such a result more than once, we therefore needed to find a way to begin with permissibly (much) smaller~\(\eps\) and much more tightly control the vertex degree errors in the course of the nibble process. 
Moreover, in the above na\"ive suggested iteration of Theorem~\ref{theorem:V00}, as soon as any pair of consecutive codegrees ``cluster'', which is to say either~\(D\) becomes too close to the current~\(D_2\), or some~\(D_j\) becomes too close to~\(D_{j+1}\), then the process must stop.
One of the main new features of our result and methodology is that the latter issue is \emph{entirely bypassed}.
Our methodology is robust enough to cope with any clustering of any number of consecutive codegrees other than~\(D/D_2\), at any point in the process, even at the start of the procedure.
By comparison, Theorem~\ref{theorem:V00} yields an empty matching if any codegrees are clustered, and so the best one can do is either take a smaller~\(s\) and throw away all of the ``space'' in the codegree sequence beyond this point, or `artificially' increase the values~\(D_j\) causing the problem, improving the ratio \(D_j/D_{j+1}\) but damaging the ratio \(D_{j-1}/D_j\).
We describe an example in which such `artificial adjustments' (of \(s, (D_j)\)) in the application of Theorem~\ref{theorem:V00} yield a better result than applying the theorem na\"ively, shortly after Theorem~\ref{theorem:triangles}.

The following is our main result.
We remark that in Theorem~\ref{theorem:maintheorem} (and Theorems~\ref{theorem:withreserves}, \ref{thm:mainpartitetheorem}, \ref{theorem:maincolourtheorem}) and throughout its proof, the hypergraph is allowed to be a multihypergraph, i.e.\ have \(C_{k+1}(H)>1\).
Two copies of an edge are treated as distinct, so that~\(D\) and \(C_j(H)\) count copies.
Here and throughout the paper, for a hypergraph~\(H\), a function \(\tau\colon V(H)\rightarrow\mathbb{R}_{\geq 0}\), and a set \(U\subseteq V(H)\), we define \(\tau(U)\coloneqq\sum_{u\in U}\tau(u)\).
\begin{restatable}{theorem}{mainThm}\label{theorem:maintheorem}
Suppose \(1/D\ll 1/A \ll \gamma \ll 1/k\leq 1\),  and let~\(H\) be an \((n,D,\eps)\)-regular, \((k+1)\)-uniform hypergraph with \(\eps>0\).\COMMENT{If the hypergraph is perfectly regular, then you could put \(\eps=0\), but then \(1/\eps\) is undefined. We could instead comment that we interpret \(1/0\) to be \(+\infty\) and the `minimum' operation then treats that term as arbitrarily large, but that seems a little messier than just forbidding \(\eps=0\)... Hopefully it doesn't come across like you're unable to input a perfectly regular hypergraph}
Suppose \(D_2\geq D_3\geq\dots\geq D_{k+1}\) are numbers satisfying \(C_j(H)\leq D_j\) for each \(j\in\{2,3,\dots,k+1\}\).
Suppose~\(B\) is a number satisfying\COMMENT{It is necessary to specify that \(B\) is positive, else the \(B^{-1+\gamma}\) in \(nB^{-1+\gamma}\log^A D\) may not even be defined. \(B\geq 1\) ensures the expression \(|U|B^{-1}\) at the end of the statement is at most \(|U|\), as otherwise evidently this wouldn't be satisfied. Related note, which do you feel is better or more natural, writing \(B\leq\min(\dots)\) or \(B\coloneqq\min(\dots)\)?}
\[
1\leq B\leq\min\left\{\sqrt{\frac{D}{D_2}}, \min_{j\in\{4,5,\dots,k+1\}}\left(\frac{D}{D_j}\right)^{\frac{1}{j-1}}, \frac{1}{\eps}\right\}.
\]
Then~\(H\) has a matching~\(M\) covering all but at most~\(nB^{-1+\gamma}\log^A D\) vertices.\COMMENT{If \(k\in\{1,2\}\) so that \(\{4,5,\dots,k+1\}\) is empty, then the middle term in \(B\) is the minimum over no terms, which means that \(B=\min\{\sqrt{D/D_2}, 1/\eps\}\). Is this clear enough?}

If, in addition,~\(\cT\) is a family of functions \(\tau\colon V(H)\rightarrow\mathbb{R}_{\geq0}\) such that
\begin{enumerate}[(P1), topsep = 6pt]
\item \(\tau(V(H))\geq B\cdot\max_{v\in V(H)}\tau(v)\) for all \(\tau\in\cT\);\label{mainpseud:lowerbound}
\item \(|\{v\in V(H)\colon\tau(v)>0\}|\leq D^{\log D}\) for all \(\tau\in\cT\);\label{mainpseud:supportsize}
\item Each vertex of~\(H\) is in~\(\{v\in V(H)\colon\tau(v)>0\}\) for at most~\(D^{\log D}\) of the functions \(\tau\in\cT\),\label{mainpseud:maxinvolvement}
\end{enumerate}
then there exists an~\(M\) as above additionally satisfying \(\tau(V(H))B^{-1}\leq \tau(V(H)\setminus V(M))\leq \tau(V(H))B^{-1+\gamma}\log^A D\) for each \(\tau\in\cT\).
\end{restatable}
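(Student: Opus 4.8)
The plan is to prove Theorem~\ref{theorem:maintheorem} by iterating the R\"odl nibble, encapsulated in a single \emph{semi-random step} lemma: given a \((k+1)\)-uniform hypergraph \(H_t\) on a vertex set \(V_t\) together with good control on its degrees, its codegree sequence, and the restrictions of the functions in \(\cT\), one activates each edge of \(H_t\) independently with probability \(\approx p/D_t\) (here \(D_t\) is the current target degree and \(p\) a small nibble parameter), keeps the matching \(M_t\) consisting of those activated edges meeting no other activated edge, and deletes \(V(M_t)\). I would run this for a carefully chosen number \(T\) of rounds and output \(M := M_0 \cup \dots \cup M_{T-1}\) together with, if useful, a final matching of the leftover \(H_T\). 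Throughout, the quantities to track are: the surviving fraction \(|V_t|/n \approx u_t\) for an explicit sequence with \(u_{t+1} \approx (1-p)u_t\); the degree of each surviving vertex, aiming for \(\deg_{H_t}(v) = (1 \pm \eps_t)\,D u_t^{k}\) with \(\eps_t\) growing only slowly from \(\eps\); for each \(j \in \{2,\dots,k+1\}\) a running bound \(D_{j,t}\) on the maximum \(j\)-codegree of \(H_t\), targeting \(D_{j,t} \approx D_j u_t^{\,k+1-j}\); and, for the pseudorandom conclusion, \(\tau(V_t)\) for each \(\tau \in \cT\), targeting \(\tau(V_t) \approx \tau(V(H))\,u_t\). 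The one-step conditional expectations are routine: a vertex survives with probability \(\approx 1-p\), a degree is multiplied by \(\approx(1-p)^k\), a \(j\)-codegree by \(\approx(1-p)^{k+1-j}\), and each \(\tau(V_t)\) by \(\approx 1-p\), with corrections controlled by the codegree bounds.

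The substance is in the concentration and in seeing which hypotheses on \(B\) are used where. For the degrees and codegrees I would reveal the edge-activation indicators one at a time and apply a Freedman-type martingale bound: a single flip changes a vertex degree or a \(j\)-codegree by at most (roughly) a lower-order codegree, and the relevant sums have \(\approx D_t\), respectively \(\approx D_{j,t}\), nearly independent terms, so the per-step deviation is small and accumulates acceptably. Crucially, to keep the conclusion free of \(\log n\) (only \(\log^A D\) is allowed), I would not demand that \emph{no} surviving vertex has a bad degree, but only that few do, moving the exceptional ones into the leftover; since the process is halted while \(D_t\) and every relevant \(D_{j,t}\) still exceed \(\log^C D\) for a large constant \(C=C(A,k)\), and since \(B \le D^{1/k}\), the mass deleted this way is \(o(nB^{-1})\). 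The bound \(B \le \sqrt{D/D_2}\) keeps the degree a factor \(\approx\sqrt{D/D_2}\) above the maximum \(2\)-codegree all the way down to the leftover, which is exactly what the one-step analysis (and a final greedy matching, or Theorem~\ref{theorem:KR}, applied to \(H_T\)) needs; the bounds \(B \le (D/D_j)^{1/(j-1)}\) for \(j \ge 4\) — together with the analogues for \(j=2,3\) that they imply automatically via \(D_2 \ge D_3\) — guarantee only the weaker consistency \(D_{j,t} \le D_t\) throughout, which is all that higher codegrees require, and in particular lets the argument absorb arbitrary clustering of consecutive codegrees below \(D_2\). Stopping when \(u_t\) (or \((\log^C D/D)^{1/k}\), if we are forced to halt early) has dropped to \(\approx 1/B\), the number of uncovered vertices is at most \(nB^{-1+\gamma}\log^A D\), the \(\log^A D\) and the \(B^{\gamma}\) slack absorbing the accumulated errors and the deleted vertices.

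For the pseudorandom strengthening I would carry the variables \(\tau(V_t)\) through the same iteration. Property~\ref{mainpseud:lowerbound} is precisely the input that makes the one-step variance small: no vertex holds more than a \(1/(u_t B)\)-fraction of \(\tau(V_t)\), so the diagonal term \(\sum_{v}\tau(v)^2\) is at most \(\tau(V_t)^2/(u_t B)\), while the off-diagonal term is governed by \(D_{2,t}/D_t\), again small by \(B \le \sqrt{D/D_2}\); a Freedman argument (with the rare event that a single round destroys an unusually large share of \(\tau(V_t)\) treated separately) keeps each \(\tau(V_t)\) in its target window. Properties~\ref{mainpseud:supportsize} and~\ref{mainpseud:maxinvolvement} give \(|\cT| \le n D^{\log D}\) by double counting, which is the size of union bound over \(\cT\) that the concentration can afford once \(p\) is taken small enough. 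Halting with a constant factor of slack, so that \(u_T \ge e/B\), then yields \(\tau(V(H))B^{-1} \le \tau(V(H)\setminus V(M)) \le \tau(V(H))B^{-1+\gamma}\log^A D\) for every \(\tau \in \cT\) simultaneously.

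The main obstacle is the tight control of the vertex degrees without the crutch \(\eps \approx 1/x\) that powers Vu's argument: one must propagate a relative error that starts at \(\eps\) and grows only mildly, so that the nibble can be pushed until \(D_t\) has collapsed essentially to the bottom of the codegree sequence, and one must do this uniformly over the whole codegree sequence (allowing plateaus anywhere below \(D_2\)) and over the family \(\cT\), keeping every error term inside the \(B^{\gamma}\log^A D\) budget — this bookkeeping, rather than any single estimate, is the heart of the argument.
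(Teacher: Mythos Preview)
Your high-level plan --- iterate a nibble, track degrees, codegrees, and weight functions --- is right, but two concrete mechanisms are missing, and without them the argument does not go through.

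\textbf{Codegree clustering.} Your plan to maintain \(D_{j,t}\approx D_j u_t^{\,k+1-j}\) for \emph{every} \(j\) fails as soon as two consecutive codegrees cluster. Concentrating the \(j\)-codegree in one nibble step requires the ratio \(D_{j,t}/D_{j+1,t}\) to be at least polylogarithmic (this controls the variance in whatever inequality you use); if \(D_j\approx D_{j+1}\), the bound gives nothing and you cannot show \(D_{j,t}\) decreases at all. The paper's fix is to track, at each step, only a subset \(J^*_t\subseteq\{2,\dots,k\}\) of indices for which the ratio is \emph{currently} large enough; for \(j\notin J^*_t\) one uses only the trivial monotone bound \(C_j(H_{t+1})\le C_j(H_t)\). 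The substance is then to show that, even though individual \(D_j^{(t)}\) may be ``stuck'' behind \(D_{j+1}^{(t)}\) for long stretches (and \(J^*_t\) keeps changing), \(D_2^{(t)}\) still decreases fast enough that \(D^{(t)}/D_2^{(t)}\) stays large until the leftover has shrunk to \(n/B\). This requires analysing how clusters of nearby codegrees merge and drift (the MCWA algorithm and the ``semi-stuck/super-stuck/slowpoke'' bookkeeping), and verifying that the process halts only when one of the expressions in the hypothesis on \(B\) is reached. You allude to this bookkeeping in your final paragraph but do not supply the idea that makes it work.

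\textbf{Union bound versus Local Lemma.} Your union bound over \(\cT\) fails. You correctly get \(|\cT|\le nD^{\log D}\), but the per-\(\tau\) failure probability your concentration can give is at best \(\exp(-\mathrm{poly}(\log D))\), and there is no hypothesis bounding \(n\) in terms of \(D\) (take a disjoint union of many copies of a small hypergraph). Making \(p\) smaller does not help: the number of steps scales as \((\log B)/p\), and the total error budget is independent of \(p\). The same obstruction applies to the vertex-degree and codegree events, and your ``tolerate a few bad vertices'' workaround for degrees does not extend to the weight functions, where you need \emph{every} \(\tau\) to land in its two-sided window. The paper instead applies the Lov\'asz Local Lemma inside each nibble: properties~\ref{mainpseud:supportsize}--\ref{mainpseud:maxinvolvement} ensure each event \(A_\tau\) depends on only \(D^{O(\log D)}\) others in the dependency graph (not on all of \(\cT\)), and likewise for the degree and codegree events, so LLL applies with room to spare regardless of \(n\).

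A secondary point: the paper uses a ``non-wasteful'' nibble with a small waste set \(W\) (chosen to equalize survival probabilities exactly) together with the \emph{linear} form of Talagrand's inequality with exceptional outcomes. A Freedman/Azuma approach with the naive Lipschitz constant for \(\textbf{deg}'(v)\) loses a factor of \(D_2\) in the exponent and would force a hypothesis closer to \(\eps^2 D/D_2^2\gtrsim 1\) rather than \(\eps^2 D/D_2\gtrsim 1\); getting the sharp bound requires either the \(d\)-not-\(d^2\) feature of linear Talagrand or an equivalent refinement, which your sketch does not indicate.
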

We now discuss the first portion of the result, regarding the size of the leftover, before returning to the latter part of the result about the family~\(\cT\) of functions in Sections~\ref{section:pseudo} and~\ref{section:bey}.
We remark that the `artificial adjustments' described above are never beneficial when applying Theorem~\ref{theorem:maintheorem}; raising any~\(D_j\) only decreases some \((D/D_j)^{1/(j-1)}\) or~\(\sqrt{D/D_2}\), further constraining~\(B\).
Theorem~\ref{theorem:maintheorem} is intended to be intuitive to apply, and gives the best result when the best upper bounds for~\(C_j(H)\) are used for $D_j$.
One can set \(D_j\coloneqq D_2\) for each \(j\geq 2\) (i.e.\ ignore any available ``space'' beyond~\(D_2\)) to see that Theorem~\ref{theorem:maintheorem} implies Theorem~\ref{theorem:KR}.
Moreover, if one uses the same \((D_j)\) for \(2\leq j\leq s\) and sets \(D_j\coloneqq D_s\) for ~\(j\geq s\), one can show that Theorem~\ref{theorem:maintheorem} implies Theorem~\ref{theorem:V00} up to the subpolynomial error term, though we leave the proof of this fact to the reader.
That is to say, up to the error term, Theorem~\ref{theorem:maintheorem} always yields a matching at least as large as those given by Theorems~\ref{theorem:KR} and~\ref{theorem:V00} (even given the optimal choice of \(s, (D_j)\) in the latter).\COMMENT{The full proof of both of these implications is in the code here, commented out with \%}

In fact, if~\(C_2(H)\) is larger than the multiplicity of the hypergraph~\(C_{k+1}(H)\), then Theorem~\ref{theorem:maintheorem} usually yields a matching which is (often considerably) larger than Theorems~\ref{theorem:KR} and~\ref{theorem:V00} (and~\ref{theorem:KKMO}).
To give a brief example (we will discuss many more in Section~\ref{section:appstatements}), if we apply Theorem~\ref{theorem:maintheorem} to~\(H^{*}\) as described earlier, we can set \(D_j\coloneqq D_{k/2 +1}=\Theta(n^{k/2})\) for all \(k/2 +1<j\leq k+1\), and \(\eps\coloneqq 1/n\) (say).\COMMENT{Here giving an example of taking a perfectly regular hypergraph so just choosing a non-bottlenecking value of \(\eps>0\)}
Then \(\sqrt{D/D_2}\approx\sqrt{n}\), \((D/D_{j})^{1/(j-1)}\approx n\) for all \(2\leq j\leq k/2 +1\), \((D/D_j)^{1/(j-1)}\approx n^{(k/2)/(j-1)}\geq n^{(k/2)/(k)}=\sqrt{n}\) for all \(k/2 +1 <j\leq k+1\), and \(1/\eps=n\), so we may take \(B\approx\sqrt{n}\), attaining leftover size approximately~\(\sqrt{n}\), a substantial improvement over the \(n^{1-1/k}, n^{1-2/(k+2)}, n^{1-1/k-f_{(\ref{theorem:KKMO})}(k)}\) from Theorems~\ref{theorem:KR},~\ref{theorem:V00},~\ref{theorem:KKMO} respectively, and independent of~\(k\).
The rough heuristic is that Theorem~\ref{theorem:KR} stops the nibble process after collapsing the ratio \(D/D_2\), Theorem~\ref{theorem:KKMO} pushes slightly past this boundary with extra techniques, Theorem~\ref{theorem:V00} instead collapses the ratio \(D_{s-1}/D_s\) if each~\(D_j/D_{j+1}\) is large enough (which is better if enough information on the codegrees is known as~\(D\) decreases further in the meantime, as discussed), and Theorem~\ref{theorem:maintheorem} continues to iterate the nibble process until all ratios are collapsed, i.e.\ \(D\) to~\(D_{k+1}\) (if the ``bottlenecks'' permit, as we discuss next), which aligns with Theorems~\ref{theorem:KR} and~\ref{theorem:V00} (and is beaten by Theorem~\ref{theorem:KKMO}) if \(D_2=D_s=D_{k+1}\) and is generally better otherwise, often substantially.

The limiting expression for~\(B\) is intended to indicate the bottleneck for the input hypergraph, and our proof methodology shows that we may nibble, decreasing~\(D\) and all~\(D_j\) towards the time at which~\(D\) has become~\(D_{k+1}\) (the multiplicity of the hypergraph, namely 1 if there are no edge copies), up until the true bottleneck for~\(H\) stops the process.
If \((D/D_{k+1})^{1/k}\) is the limiting expression, then the process was able to utilize all space in the codegree sequence (by the binomial heuristic, if \(p=(D_{k+1}/D)^{1/k}\)  and \(pn\) vertices remain, then the average degree in the leftover looks like \(Dp^k=D_{k+1}\)).
If the limiting expression is~\(1/\eps\), then the bottleneck was the initial control over the vertex degree error, and the process aborts when this error grows to essentially~\(\Theta(1)\).
Regarding the bottlenecks \((D/D_j)^{1/(j-1)}\) for~\(j<k+1\), if the leftover has size~\(pn\), then by the binomial heuristic,~\(D\) and \(D_j\) should have decreased to around~\(Dp^k\) and~\(D_j p^{k+1-j}\) respectively.
Observe that \(p=(D_j/D)^{1/(j-1)}\) yields \(Dp^k=D_j p^{k+1-j}\),\COMMENT{If \(p=(D_j/D)^{1/(j-1)}\) then we have \(Dp^k=D^{1-k/(j-1)}D_j^{k/(j-1)}\) and \(D_j p^{k+1-j}=D_j^{1+(k+1-j)/(j-1)}D^{-(k+1-j)/(j-1)}=D_j^{(j-1+k+1-j)/(j-1)}D^{(j-1-k)/(j-1)}=D_j^{k/(j-1)}D^{1-k/(j-1)}=Dp^k\)} so that when the nibble reaches leftover~\(pn\), the current value of~\(D\) has dropped to the current value of~\(D_j\), which means we can no longer concentrate the vertex degrees in a nibble and must halt.
The expression~\(\sqrt{D/D_2}\) appears because the pure nibble methodology is incapable of utilizing an initial~\(\eps\) better than~\(\sqrt{D_2 / D}\), in the sense that we cannot ask for tighter control in the vertex degrees after one nibble than this.
The ratios \((D/D_2)^{1/1}, (D/D_3)^{1/2}\) do not appear in Theorem~\ref{theorem:maintheorem} because~\(\sqrt{D/D_2}\) is already more limiting.
We remark that it does not strengthen Theorem~\ref{theorem:maintheorem} to allow the user to pick some~\(s<k+1\) and use only the codegrees up to~\(D_s\).
Indeed, since~\(D\) must remain above~\(D_s\), the best leftover that can be attained is~\(n(D_s/D)^{1/k}\), which is at least \(n(D_j/D)^{1/(j-1)}\) for all \(s\leq j\leq k+1\). 
For these reasons, we believe that, up to the \(B^{\gamma}\log^A D\) error term, Theorem~\ref{theorem:maintheorem} is the optimal usage of pure nibble methodology.
\subsection{Pseudorandom hypergraph matchings}\label{section:pseudo}
The latter portion of Theorem~\ref{theorem:maintheorem} concerning the family~\(\cT\) of functions we frequently call ``weight functions'' is an example of a ``pseudorandom'' hypergraph matching result, wherein one wishes to show the leftover hypergraph is not just small but also ``random-looking''.
This turns out to be very useful in many applications, for example if one is using an absorption strategy in parallel, as in, e.g. \cite{BK21,K14,GKLO23,GKMO21,GKKO20,CDGKO24,KKKMO23,KS25}.
Theorem~\ref{theorem:KKMO} can also be considered such an application, since the authors of~\cite{KKMO23} tracked and used the surviving ``augmenting star'' subgraphs to obtain an even larger matching.
Yet another application of this kind of pseudorandom matching result is the lower bound on gaps between prime numbers~\cite{FGKMT18} (the pseudorandom tool here is stated for ``coverings'' rather than matchings).

The first pseudorandom hypergraph matching result was given by Alon and Yuster~\cite{AY05} in 2005, who showed that roughly vertex-regular hypergraphs with reasonable gap~\(D/D_2\) admit a large matching such that, for any (permissibly quite large) pre-chosen family~\(\cF\) of (not too small) vertex subsets, the matching covers a proportion of each \(F\in\cF\) roughly in line with expectations.
Ehard, Glock, and Joos~\cite{EGJ20} strengthened this in 2020, both quantitatively and by instead considering the more general notion of a family of ``weight functions'' on the edges \(\omega\colon E(H)\rightarrow\mathbb{R}_{\geq0}\) (one obtains the \(\cF\)-notion of pseudorandomness by setting \(\omega_F(e)\coloneqq|e\cap F|\) for each \(F\in\cF\)) and even on $j$-tuples of edges.
More recently, some notions of pseudorandom matchings have been considered in, for example,~\cite{GJKKL24,DP24,JMS24}, where almost-perfect matchings are found which are ``conflict-free'', or avoiding ``forbidden configurations'', with applications including the existence of high-girth Steiner systems \cite{DP24girth} and new bounds on generalized Ramsey numbers~\cite{JM24}.

Our \(\cT\)-notion of pseudorandomness is stronger than the \(\cF\)-notion in~\cite{AY05} (for each \(F\in\cF\) set \(\tau_F(v)\coloneqq 1\) for all \(v\in F\)), but not as strong as the notions of Ehard, Glock, and Joos \cite{EGJ20}, and we do not consider ``conflict-free'' matchings or matchings avoiding ``forbidden configurations'' as in~\cite{GJKKL24,DP24,JMS24}.
We chose to study weight functions on the vertices for three reasons.
First, our primary focus is on the improvement to the size of the leftover in Theorem~\ref{theorem:maintheorem}; secondly, our notion of pseudorandomness requires a surprisingly small amount of extra work to obtain; and thirdly, this notion is sufficiently strong to ``bootstrap'' our main result to three generalizations (Theorems~\ref{theorem:withreserves}--\ref{theorem:maincolourtheorem}), which are sufficient for our applications, including a new bound for the list chromatic index of hypergraphs, which we discuss in the next subsection.
Naturally, it would be interesting to see if our result Theorem~\ref{theorem:maintheorem} on the size of the leftover can be combined with the even stronger pseudorandomness notions as in \cite{EGJ20} or used to construct ``conflict-free'' matchings / matchings avoiding ``forbidden configurations'' as in \cite{GJKKL24,DP24,JMS24}.
For further information on pseudorandom hypergraph matching results and their applicability, see \cite{KKKMO,Ke24-nibble}.

\subsection{Beyond matchings}\label{section:bey}
In this subsection, we state three new results (Theorems~\ref{theorem:withreserves}--\ref{theorem:maincolourtheorem}) which follow from Theorem~\ref{theorem:maintheorem} by using the weight functions \(\tau\in\cT\) to complete an almost-perfect matching into a perfect one, using some reserve set in an ``absorption'' argument.
(We remark that we use the term absorption more liberally here than other authors may, as in all of our results there is more ``wiggle room'' than a more narrow interpretation of absorption would permit.)
Theorems~\ref{theorem:withreserves} and~\ref{thm:mainpartitetheorem} (proven in Section~\ref{section:beyond}) are intended to make easier many applications of Theorem~\ref{theorem:maintheorem} in which one wishes to perform some parallel absorption argument, as these results essentially incorporate the absorption into their machinery.

For disjoint sets~\(X,Y\), we say that a hypergraph~\(H\) is \textit{bipartite with bipartition}~\((X,Y)\) if \(V(H)=X\cup Y\) and all edges of~\(H\) have exactly one vertex in~\(X\).
A matching~\(M\) in such a hypergraph is \(X\)-\textit{perfect} if every vertex of~\(X\) is covered by~\(M\).
Our first result in this subsection is the following theorem.
The reserve hypergraph is given, namely~\(H_2\), and Theorem~\ref{theorem:maintheorem} is applied to~\(H_1\), with the weight functions used to ensure the leftover can be absorbed by~\(H_2\).
Here and throughout the paper, a hypergraph is \((k+1)\)-\emph{bounded} if all edges have at most~\(k+1\) vertices.
\begin{restatable}{theorem}{reserves}\label{theorem:withreserves}
Suppose \(1/D\ll1/A\ll\gamma\ll1/k,1/\ell\leq 1\).
Let~\(H_1\) and~\(H_2\) be edge-disjoint hypergraphs, where~\(H_1\) is \((k+1)\)-bounded and~\(H_2\) is \((\ell+1)\)-bounded.\COMMENT{The boundedness value \(k+1\) on \(H_1\) is important as it directly impacts~\(B\). The boundedness value \(\ell+1\) on \(H_2\) is not important, but it would be bad to use \(k+1\) for both as in some applications \(H_2\) may have \(\ell>k\), and this would unnecessarily impact your constraints on~\(B\) if you just put \(\ell=k\) in this result. We still need \(\ell\) to be ``constant'' though, as we later apply Lemma~\ref{lemma:bipmopup} within~\(H_2\) and have left degrees a \(\log^{A/2}D\) factor larger than the right degrees. Really all we need is \(1/D\ll1/\ell\), i.e. we don't need \(1/A\ll\gamma\ll1/\ell\), but does the current presentation seem alright as is?}
Define \(V_i\coloneqq V(H_i)\) for each \(i\in[2]\), suppose that~\(H_2\) is bipartite with bipartition~\((X,Y)\), and suppose that \(V_1\cap V_2 = X\).
Suppose that:
\begin{enumerate}[label=\upshape(\roman*)]
\item \(\Delta(H_1)\leq D\) and \(D_2\geq D_3\geq\dots\geq D_{k+1}\) are numbers satisfying \(C_j(H_1)\leq D_j\) for each \(j\in\{2,3,\dots,k+1\}\);\label{withdeg}
\item \(\Delta_Y\) is a number satisfying \(\max_{y\in Y}\text{deg}_{H_2}(y)\leq\Delta_Y\leq D^{(\log D) - \gamma}\);\label{withmaxY}
\item \(q\) is a number satisfying \(\max_{x\in X, y\in Y}\text{codeg}_{H_2}(x,y)\leq q\);\label{withq}
\item \(B\) is a number satisfying
\[
1\leq B\leq \min\left\{\sqrt{\frac{D}{D_2}}, \min_{j\in\{4,5,\dots,k+1\}}\left(\frac{D}{D_j}\right)^{\frac{1}{j-1}},\frac{\Delta_Y}{q}\right\};
\]\label{withB}
\item \(\text{deg}_{H_1}(x)\geq(1-B^{-1})D\) for all \(x\in X\);\label{withminH1}
\item \(\text{deg}_{H_2}(x)\geq \Delta_Y B^{-1+\gamma}\log^A D\) for all \(x\in X\).\label{withminX}
\end{enumerate}
Then \(H\coloneqq H_1\cup H_2\) has an \(X\)-perfect matching.
\end{restatable}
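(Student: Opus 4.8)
The plan is a two-stage ``nibble-then-mop-up'' argument: run the R\"odl Nibble of Theorem~\ref{theorem:maintheorem} inside a regularised copy of~\(H_1\), choosing the family~\(\cT\) of weight functions so that the set~\(U\) of uncovered vertices of~\(X\) is not merely small but spread out with respect to~\(H_2\), and then match~\(U\) into~\(H_2\) by a bipartite clean-up lemma, with the edges of~\(H_2\) playing the role of the reserve that absorbs the leftover of the nibble.

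\textbf{Stage 1 (the nibble in \(H_1\)).} Since~\(H_1\) is only \((k+1)\)-bounded and vertices outside~\(X\) may have low degree, first I would embed~\(H_1\) into a \((k+1)\)-uniform, \((n',D,\eps')\)-regular hypergraph~\(H_1^+\) with \(C_j(H_1^+)\le D_j\) for each~\(j\) and \(\eps'\le B^{-1}\): pad each short edge with fresh dummy vertices and add a near-perfectly \(D\)-regular, low-codegree \((k+1)\)-uniform dummy hypergraph on the dummies together with the low-degree real vertices, so that, by \ref{withdeg} and \ref{withminH1}, the \(X\)-degrees of~\(H_1\) are the only source of irregularity. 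Deleting \(\Delta_Y/q\) from the~\(\min\) in \ref{withB} leaves precisely the constraints on~\(B\) permitted for~\(H_1^+\) in Theorem~\ref{theorem:maintheorem} (with \(1/\eps'\ge B\)). Let~\(\cT\) consist of the indicator function~\(\mathbf{1}_X\) of~\(X\) together with, for each \(y\in Y\), the function \(\tau_y(x)\coloneqq\text{codeg}_{H_2}(x,y)\) for \(x\in X\) and \(\tau_y\equiv0\) off~\(X\), so that \(\tau_y(V(H_1^+))=\text{deg}_{H_2}(y)\) with support the \(X\)-neighbourhood of~\(y\). Then \ref{mainpseud:supportsize} and \ref{mainpseud:maxinvolvement} are immediate from \ref{withmaxY} and the \((\ell+1)\)-boundedness of~\(H_2\); \ref{mainpseud:lowerbound} for~\(\tau_y\) reads \(\text{deg}_{H_2}(y)\ge B\max_x\text{codeg}_{H_2}(x,y)\), which follows from \ref{withq} and \ref{withB} except when \(\text{deg}_{H_2}(y)\) is unusually small, a case I would treat separately; and \ref{mainpseud:lowerbound} for~\(\mathbf{1}_X\) asks \(|X|\ge B\) (true, else \(|U|\le|X|\) already suffices below). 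Apply Theorem~\ref{theorem:maintheorem} to~\(H_1^+\) with this~\(\cT\) and with \(A/2\) in place of~\(A\); let~\(M_1\) be the matching of~\(H_1\) underlying the resulting matching~\(M_1^+\) (discard purely-dummy edges and un-pad the rest), and set \(U\coloneqq X\setminus V(M_1)=X\setminus V(M_1^+)\). Then \(|U|\le|X|B^{-1+\gamma}\log^{A/2}D\) and, for every \(y\in Y\),
\[
d_U(y)\coloneqq\sum_{x\in U}\text{codeg}_{H_2}(x,y)=\tau_y(U)\le\text{deg}_{H_2}(y)\,B^{-1+\gamma}\log^{A/2}D\le\Delta_Y\,B^{-1+\gamma}\log^{A/2}D.
\]

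\textbf{Stage 2 (absorbing \(U\) in \(H_2\)).} Let~\(H_2'\) be the bipartite \((\ell+1)\)-bounded hypergraph with parts \((U,Y)\) consisting of those edges of~\(H_2\) whose unique \(X\)-vertex lies in~\(U\). Every \(u\in U\) has \(\text{deg}_{H_2'}(u)=\text{deg}_{H_2}(u)\ge\Delta_Y B^{-1+\gamma}\log^A D\) by \ref{withminX}; every \(y\in Y\) has \(\text{deg}_{H_2'}(y)=d_U(y)\le\Delta_Y B^{-1+\gamma}\log^{A/2}D\) by Stage~1, a \(\log^{A/2}D\) factor smaller; and \(\text{codeg}_{H_2'}(u,y)\le q\) while \(\text{codeg}_{H_2'}(y,y')\le\min\{d_U(y),d_U(y')\}\), so all pairwise codegrees of~\(H_2'\) lie polylogarithmically below the minimum \(U\)-degree. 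A bipartite mop-up lemma --- which must use this spread-out structure of~\(H_2'\), not just the bound on~\(|U|\) --- then produces a \(U\)-perfect matching \(M_2\subseteq E(H_2')\subseteq E(H_2)\). Finally \(M_1\cup M_2\) is the desired \(X\)-perfect matching: it is edge-disjoint because \(H_1,H_2\) are, vertex-disjoint because \(V(M_1)\subseteq V_1\) meets \(X\cup Y\) exactly in \(X\setminus U\) while \(V(M_2)\) meets it in~\(U\), and together they cover all of~\(X\).

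\textbf{Main obstacle.} The fiddly-but-routine parts are the regularisation of~\(H_1\) (keeping every \(C_j\le D_j\) while attaining \(\eps'\le B^{-1}\), which is tight when \(B\approx\sqrt{D/D_2}\)) and verifying \ref{mainpseud:lowerbound} for the vertices of~\(Y\) of unusually small \(H_2\)-degree, where the plain codegree function need not be \(B\)-spread and one must truncate it or otherwise reorganise. The genuinely hard part is the bipartite mop-up lemma of Stage~2: a one-vertex-at-a-time greedy would consume \(\Theta(\ell|U|)\) vertices of~\(Y\) and could strip a not-yet-matched \(u'\) of \(\Theta(\ell|U|q)\) of its edges, which need not be \(o(\text{deg}_{H_2}(u'))\) when \(|U|\) is large; so the lemma has to exploit that~\(H_2'\) is spread out across~\(Y\) (via the \(d_U(y)\) bound) and match~\(U\) by a more global --- itself nibble-type --- argument. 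Carrying that out with the \(\log^{A/2}D\) degree gap available here is where essentially all the work beyond invoking Theorem~\ref{theorem:maintheorem} lies.
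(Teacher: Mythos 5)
Your overall strategy matches the paper's: regularize~\(H_1\) to a nearly-regular \((k+1)\)-uniform hypergraph, run Theorem~\ref{theorem:maintheorem} with weight functions \(\tau_y(x)=\text{codeg}_{H_2}(x,y)\), and then mop up the uncovered \(X\)-vertices inside~\(H_2\). But there is one genuine gap and one place where you substantially misjudge the difficulty.

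The genuine gap is exactly the one you flag and then defer: condition~\ref{mainpseud:lowerbound} for \(\tau_y\) needs \(\text{deg}_{H_2}(y)\geq B\cdot\max_{x}\text{codeg}_{H_2}(x,y)\), and this fails for vertices \(y\) of small \(H_2\)-degree (say \(\text{deg}_{H_2}(y)<Bq\)). Writing ``a case I would treat separately'' does not resolve it. The paper's device is a cloning trick: after regularizing \(H_1\) to \(H_1'\) (via Lemma~\ref{lemma:regularization}), it forms \(H_1''\) as \(\lceil B\rceil\) vertex-disjoint copies of \(H_1'\), keeps \(H_2\) attached only to the copy \(X_1=X\), and, for each low-degree \(y\), inflates \(\tau_y\) across the clones of some fixed \(x\) (each value capped at \(q\)) until \(\tau_y(V(H_1''))=Bq\). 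This forces \ref{mainpseud:lowerbound} to hold with \(\tau_{\max}\leq q\), while the conclusion of Theorem~\ref{theorem:maintheorem} still controls \(\tau_y\) restricted to the uncovered part of the true \(X\). This cloning-and-padding step is the one nontrivial idea beyond the skeleton you describe; without something like it your Stage~1 does not certify its claimed bound on \(d_U(y)\) for all \(y\in Y\).

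On the mop-up, your worry is misplaced. Lemma~\ref{lemma:bipmopup} is a one-shot Lov\'asz Local Lemma / Haxell-type statement: a bipartite \((\ell+1)\)-bounded hypergraph with \(\deg(x)\geq 2e\ell D'\) for all \(x\) and \(\deg(y)\leq D'\) for all \(y\) has an \(X\)-perfect matching, with \emph{no} codegree or size-of-\(U\) assumptions. Since \(\ell\) is a constant in the hierarchy, Stage~1 gives \(\deg_{H'_2}(y)\leq\Delta_Y B^{-1+\gamma}\log^{A/2}D\) and \ref{withminX} gives \(\deg_{H'_2}(x)\geq\Delta_Y B^{-1+\gamma}\log^{A}D\), so the required ratio \(\log^{A/2}D\gg 2e\ell\) holds immediately. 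The greedy-stripping pathology you describe is exactly what this lemma circumvents, without any nibble-type iteration; you have already identified the key structural input (the \(d_U(y)\) bound), but the tool needed to exploit it is far lighter than you assume. You also do not need \(\mathbf{1}_X\in\cT\): the mop-up only requires the \(Y\)-degree bound of \(H'_2\), not a bound on \(|U|\) itself.
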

To our knowledge, there are two results of this flavour in the literature.
The first is due to Delcourt and Postle~\cite[Theorem 2.8]{DP24res}, who showed that, assuming \(1/D\ll\alpha\ll\beta<1\), we get an \(X\)-perfect matching if \(\Delta(H_1)\leq D\), \(\text{deg}_{H_2}(y)\leq D\) for all \(y\in Y\), \(C_2(H_1)\leq D^{1-\beta}\) and we replace the right side of conditions~\ref{withminH1} and~\ref{withminX} above with~\(D(1-D^{-\beta})\) and~\(D^{1-\alpha}\) respectively.
They have no hypothesis in the style of~\ref{withq}.
They used this result to provide the most recent proof of the Existence Conjecture for designs (we return to this in Section~\ref{section:designs}).
Our additional hypotheses and the strength of our nibble blackbox (Theorem~\ref{theorem:maintheorem}) enable us to make the substantially weaker hypothesis~\ref{withminX} on the \(H_2\) \(X\)-degrees.
This turns out to be essential in our proof of Theorem~\ref{thm:mainpartitetheorem} (using Theorem~\ref{theorem:withreserves}) and its applications.
Notice that we have in some sense ``disentangled''~\(H_1\) and~\(H_2\) when comparing Theorem~\ref{theorem:withreserves} to~\cite[Theorem 2.8]{DP24res}; for example the latter result hypothesizes \(\Delta(H_1),\Delta_Y\leq D\), whereas we allow~\(\Delta_Y\) to be much smaller (or larger).
This is also important for us; in the proof of Theorem~\ref{thm:mainpartitetheorem}, we select~\(H_2\) to be a very sparse random subgraph, and using an appropriately tight bound for~\(\Delta_Y\) is required for obtaining~\ref{withminX}.

The second result in the flavour of Theorem~\ref{theorem:withreserves} that we wish to highlight is the main theorem of Joos, Mubayi, and Smith~\cite{JMS24}, which we do not state here as it requires the statement of many conditions.
They also ``disentangled''~\(H_1\) and~\(H_2\); indeed, their result is phrased as a ``tripartite'' matching theorem with a hypergraph~\(H_1\) between parts~\(P\) and~\(Q\) and a bipartite hypergraph~\(H_2\) between parts~\(P\) and~\(R\).
Here~\(P,Q,R\) essentially translate as \(X, V_1\setminus X, Y\) respectively, and there are a number of hypotheses on the individual bounds of vertex degrees in different parts (and a hypothesis in the style of our~\ref{withq}).
The theorem of Joos, Mubayi, and Smith incorporates strong notions of the resulting matching being ``conflict-free'' (Delcourt and Postle~\cite[Theorem 2.10]{DP24girth} also have such a version which they used in their proof of the High-girth Existence Conjecture), which we do not obtain in Theorem~\ref{theorem:withreserves}, but, similarly to~\cite[Theorem 2.8]{DP24res}, we greatly improve upon the degree bounds that one must satisfy.
For example, in place of their hypothesis \(\text{deg}_{H_2}(x)\geq \Delta_Y D^{-\eps^4}\) for small~\(\eps\) and all \(x\in X\), we have the hypothesis \(\text{deg}_{H_2}(x)\geq \Delta_Y B^{-1+\gamma}\log^A D\) (which can even look as small as roughly \(\Delta_Y n^{-1/2}\), for example in the proof of Theorem~\ref{theorem:rainbowlinks}).

Notice that the \((n,D,\eps)\)-regular assumption of Theorem~\ref{theorem:maintheorem} has been replaced in Theorem~\ref{theorem:withreserves} by the weaker assumption \(\Delta(H_1)\leq D\), and the uniform assumptions have been weakened to boundedness assumptions (and these weaker assumptions are essentially carried forward to Theorems~\ref{thm:mainpartitetheorem} and~\ref{theorem:maincolourtheorem}).

The following new theorem incorporates the finding of a suitable reservoir (the hypergraph~\(H_2\)) into its proof (and then applies Theorem~\ref{theorem:withreserves}), reducing the problem of finding an \(X\)-perfect matching to simply checking the bounds on the degrees and codegrees.
\begin{theorem}[Bipartite Matching Theorem]\label{thm:mainpartitetheorem}
Suppose \(1/D\ll 1/A \ll \gamma \ll 1/k\leq 1\),  and let~\(H\) be a \((k+1)\)-bounded bipartite hypergraph with bipartition $(X, Y)$. 
Suppose \(\text{deg}_H(y)\leq D\) for all \(y\in Y\), and that \(D_2\geq D_3\geq\dots\geq D_{k+1}\) are numbers satisfying \(C_j(H)\leq D_j\) for each \(j\in\{2,3,\dots,k+1\}\).
Suppose~\(B\) is a number satisfying
\[
1\leq B\leq\min\left\{\sqrt{\frac{D}{D_2}}, \min_{j\in\{4,5,\dots,k+1\}}\left(\frac{D}{D_j}\right)^{\frac{1}{j-1}}\right\}.
\]
Suppose further that \(\text{deg}_H(x)\geq (1+B^{-1+\gamma}\log^A D)D\) for all \(x\in X\).
Then~\(H\) has an $X$-perfect matching.
\end{theorem}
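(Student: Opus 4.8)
\textbf{Proof proposal for Theorem~\ref{thm:mainpartitetheorem}.} The plan is to deduce this from Theorem~\ref{theorem:withreserves}. I would split~$H$ (or a sub-hypergraph of it) into a ``bulk'' hypergraph~$H_1$ that is almost~$D$-regular at~$X$, to which Theorem~\ref{theorem:withreserves} (and hence the nibble of Theorem~\ref{theorem:maintheorem}) is applied, and a sparse ``reservoir'' hypergraph~$H_2$ that mops up the~$X$-vertices the nibble leaves uncovered. The arithmetic making this feasible is that, after a~$D$-regular bulk is set aside, each~$x\in X$ has~$\deg_H(x)-D\ge DB^{-1+\gamma}\log^A D$ spare edges, which are exactly enough to furnish a reservoir with~$\deg_{H_2}(x)\ge DB^{-1+\gamma}\log^A D$ for all~$x\in X$.

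Concretely, the steps I would carry out are: (1)~pass to the sub-hypergraph in which~$\deg_H(x)=\lceil(1+B^{-1+\gamma}\log^A D)D\rceil$ for every~$x\in X$, by discarding excess edges; (2)~construct~$H_1$ and~$H_2$, where a random (or greedy) procedure decides for each~$x$ which of its edges go to the reservoir and, crucially, how the~$Y$-vertices are apportioned between the two parts so that the structural requirement~$V(H_1)\cap V(H_2)=X$ of Theorem~\ref{theorem:withreserves} holds and an~$X$-perfect matching of~$H_1\cup H_2$ can be turned back into one of~$H$; (3)~take~$\Delta_Y$ and~$q$ to be the maximum~$Y$-degree and the maximum codegree of~$H_2$, and verify the hypotheses of Theorem~\ref{theorem:withreserves}: the codegree bounds~$C_j(H_1)\le C_j(H)\le D_j$ together with~$B\le\min\{\sqrt{D/D_2},\min_{j\ge4}(D/D_j)^{1/(j-1)}\}$ handle the first two terms of the minimum in~\ref{withB}, the codegree bound~$q$ combined with~$B\le\sqrt{D/D_2}\le D/D_2$ handles the term~$\Delta_Y/q$, capping the bulk~$X$-degrees gives~$\Delta(H_1)\le D$ and~\ref{withminH1}, and~\ref{withmaxY} and~\ref{withminX} follow from the sparsity of~$H_2$ and the~$\deg_H(x)$ budget, with Chernoff/bounded-difference concentration used to control the large~$X$-degrees; (4)~apply Theorem~\ref{theorem:withreserves} to~$H_1\cup H_2$.

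The hard part will be step~(2) together with the verification of~\ref{withmaxY}--\ref{withminX}: the reservoir must be simultaneously dense at~$X$ (relative to~$\Delta_Y$, for~\ref{withminX}) and sparse at~$Y$, while being extracted from~$H$ without spoiling the near-$D$-regularity of~$H_1$ at~$X$; since the budget at each~$x$ is only a~$B^{-1+\gamma}\log^A D$ fraction of~$D$, there is essentially no slack. Reconciling this with~$V(H_1)\cap V(H_2)=X$ — which restricts how hyperedges of~$H$ may be distributed between bulk and reservoir, and may force the reservoir edges to be replaced by sparser ``shadow'' edges that can be realized back in~$H$ only because the leftover of the nibble is spread out (here is where the weight functions of Theorem~\ref{theorem:maintheorem}, inside Theorem~\ref{theorem:withreserves}, are needed) — is the delicate point. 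A minor further nuisance is that the concentration arguments are weak for the low-degree~$Y$-vertices, so these will need a separate deterministic bound.
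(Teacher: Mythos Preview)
Your overall strategy---deduce from Theorem~\ref{theorem:withreserves} after trimming $X$-degrees to $\lceil(1+B^{-1+\gamma}\log^A D)D\rceil$---matches the paper. But you are missing the concrete construction for step~(2), and the tools you name are too weak.

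The paper's split is purely vertex-based: set $p\coloneqq B^{-1+2\gamma/3}\log^A D$, put each $y\in Y$ (after adding dummy vertices to make $H$ uniform) independently into a random $Y'$ with probability $p$, and take $H_2\coloneqq H[X\cup Y']$, $H_1\coloneqq H[X\cup(Y\setminus Y')]$. This gives $V(H_1)\cap V(H_2)=X$ for free; edges straddling $Y'$ and $Y\setminus Y'$ are simply discarded, which is harmless since only $O(pD)$ are lost per $x$. Your edge-based framing (``for each $x$ which of its edges go to the reservoir'') cannot achieve the disjointness constraint and leads you to the confused ``shadow edges'' paragraph---the weight functions are entirely internal to Theorem~\ref{theorem:withreserves} and do not surface in this proof. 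With the vertex split, $\deg_{H_2}(x)$, $\deg_{H_1}(x)$, $\deg_{H_2}(y)$ and $\text{codeg}_{H_2}(x,y)$ are polynomials of degree up to $k$ in the independent indicators for $y\in Y'$, so Chernoff/bounded-differences does not suffice; the paper uses Kim--Vu polynomial concentration (Theorem~\ref{theorem:kimvu}). The verification of $B\le\Delta_{Y'}/q$ requires every ratio $(D/D_j)^{1/(j-1)}$ (see (\ref{eq:convenient})--(\ref{eq:crucialratio})), not just $D/D_2$ as you suggest. Finally, $|X|$ and $|Y|$ may be superpolynomial in $D$, so a union bound over all the bad events fails; the paper instead uses the Lov\'asz Local Lemma, exploiting that each bad event depends on only $\mathrm{poly}(D)$ others.
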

In the absence of any information on the codegrees, a result of Haxell~\cite{Ha01} on ``independent transversals'' implies (via consideration of line graphs) a version of Theorem~\ref{thm:mainpartitetheorem} in which the \(X\)-degrees are bounded from below by~\(2kD\).
Delcourt and Postle~\cite[Theorem 2.6]{DP24} showed that we can improve this bound to~\((1+o(1))D\) if we assume that~\(D_2 \ll D\).
A little more specifically,~\cite[Theorem 2.6]{DP24} states that, if for some positive constant~\(\beta\) we have \(D_2\leq D^{1-\beta}\), then for some~\(\alpha>0\) sufficiently small with respect to~\(\beta\) and~\(k\), the result holds if the \(Y\)-degrees are at most~\(D\) and the \(X\)-degrees are at least \((1+D^{-\alpha})D\).
By using all of the information on the codegree sequence, we improve the~\(D^{-\alpha}\) term to a~\(B^{-1+\gamma}\log^A D\) term, which can be significantly smaller (even roughly~\(n^{-1/2}\), as discussed above).
We demonstrate the usage of Theorem~\ref{thm:mainpartitetheorem} when we prove Theorems~\ref{theorem:rainbowlinks} and~\ref{theorem:SimplicialComplex} in Section~\ref{Section:Applications}.

Theorem~\ref{thm:mainpartitetheorem} also very quickly yields a new state-of-the-art result on the list chromatic index of hypergraphs (Theorem~\ref{theorem:maincolourtheorem}).
A colouring of~\(E(H)\) is \textit{proper} if each vertex is in no more than one edge of each colour.
The \textit{chromatic index}~\(\chi'(H)\) is the minimum non-negative integer~\(r\) such that there is a proper colouring of~\(E(H)\) with~\(r\) colours.
If each edge has an associated list of colours, and a proper colouring assigns each edge a colour from its list, then that colouring is \textit{acceptable}, and the \textit{list chromatic index}~\(\chi_{\ell}'(H)\) is the minimum non-negative integer~\(r\) such that there exists an acceptable colouring whenever all edges are assigned lists of at least~\(r\) colours.
\begin{theorem}\label{theorem:maincolourtheorem}
Suppose \(1/D\ll 1/A\ll\gamma\ll1/k\leq 1\), and let~\(H\) be a \((k+1)\)-bounded hypergraph with \(\Delta(H)\leq D\).
Suppose \(D_2\geq D_3\geq\dots\geq D_{k+1}\) are numbers satisfying \(C_{j}(H)\leq D_j\) for each \(j\in\{2,3,\dots,k+1\}\), and that~\(B\) is a number satisfying
\[
1\leq B\leq\min\left\{D^{\frac{1}{k+1}}, \sqrt{\frac{D}{D_2}}, \min_{j\in\{4,5,\dots,k+1\}}\left(\frac{D}{D_j}\right)^{\frac{1}{j-1}}\right\}.
\]
Then \(\chi'_{\ell}(H)\leq(1+B^{-1+\gamma}\log^{A}D)D\).
\end{theorem}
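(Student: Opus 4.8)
The plan is to deduce Theorem~\ref{theorem:maincolourtheorem} from the Bipartite Matching Theorem (Theorem~\ref{thm:mainpartitetheorem}) via the standard encoding of a list edge-colouring as a matching in an auxiliary hypergraph. Set \(r\coloneqq\lceil(1+B^{-1+\gamma}\log^A D)D\rceil\); it suffices to show that if every edge \(e\) of \(H\) is given a list \(L(e)\) with \(|L(e)|\ge r\), then \(H\) has an acceptable colouring. Form the bipartite hypergraph \(\mathcal H\) with parts \(X\coloneqq E(H)\) and \(Y\coloneqq\{(v,c)\colon v\in V(H),\ c\in\bigcup_e L(e)\}\), whose edges are the sets \(f_{e,c}\coloneqq\{e\}\cup\{(v,c)\colon v\in e\}\) over all \(e\in E(H)\) and \(c\in L(e)\). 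Each \(f_{e,c}\) has exactly one vertex (namely \(e\)) in \(X\) and at most \(k+1\) vertices in \(Y\), so \(\mathcal H\) is \((k+2)\)-bounded and bipartite with bipartition \((X,Y)\); and an \(X\)-perfect matching \(M\) in \(\mathcal H\) is precisely an acceptable colouring of \(H\), since choosing \(f_{e,c}\in M\) amounts to colouring \(e\) with \(c\in L(e)\), the matching being \(X\)-perfect means every edge gets exactly one colour, and the matching condition at \((v,c)\) says exactly that at most one edge through \(v\) is coloured \(c\). (This works verbatim for multihypergraphs, distinct parallel copies of an edge being distinct elements of \(X\).)

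Next I would verify the hypotheses of Theorem~\ref{thm:mainpartitetheorem} applied to \(\mathcal H\) with its uniformity parameter equal to \(k+1\) (so its ``\((k+1)\)-bounded'' reads ``\((k+2)\)-bounded''). The \(Y\)-degrees satisfy \(\deg_{\mathcal H}((v,c))\le\deg_H(v)\le\Delta(H)\le D\), and the \(X\)-degrees satisfy \(\deg_{\mathcal H}(e)=|L(e)|\ge r\ge(1+B^{-1+\gamma}\log^A D)D\). For codegrees: two vertices \((v,c),(v',c')\) of \(Y\) have a common neighbour in \(\mathcal H\) only if \(c=c'\), and more generally a set of \(j\ge2\) vertices of \(Y\) lying in a common edge must share a single colour and corresponds to \(j\) distinct vertices of \(V(H)\), hence lies in at most \(C_j(H)\le D_j\) edges of \(\mathcal H\); any \(j\)-set containing the \(X\)-vertex of an edge lies in at most one edge of \(\mathcal H\). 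Thus \(C_j(\mathcal H)\le\max\{D_j,1\}\eqqcolon D_j^\ast\) for \(2\le j\le k+1\), and since a \((k+2)\)-subset of an edge must be all of some \(f_{e,c}\) — which determines both \(e\) and \(c\) — we get \(C_{k+2}(\mathcal H)\le1\eqqcolon D_{k+2}^\ast\). The \(D_j^\ast\) are non-increasing (using \(D_2\ge\dots\ge D_{k+1}\)), and the hypothesis \(B\le\min\{D^{1/(k+1)},\sqrt{D/D_2},\min_{4\le j\le k+1}(D/D_j)^{1/(j-1)}\}\) is exactly what yields \(B\le\min\{\sqrt{D/D_2^\ast},\min_{4\le j\le k+2}(D/D_j^\ast)^{1/(j-1)}\}\): the new term \(D^{1/(k+1)}\) handles the \(j=k+2\) constraint from \(D_{k+2}^\ast=1\), and it also dominates the constraints \(D^{1/(j-1)}\) with \(j\le k+1\) that appear when some \(D_j<1\) forces \(D_j^\ast=1\).

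Finally I would invoke Theorem~\ref{thm:mainpartitetheorem} — legitimately, after absorbing the change of uniformity parameter from \(k\) to \(k+1\) into the constant hierarchy \(1/D\ll1/A\ll\gamma\ll1/k\), since \(1/(k+1)\ge\tfrac12\cdot\tfrac1k\) — to obtain an \(X\)-perfect matching in \(\mathcal H\), hence an acceptable colouring of \(H\). This gives \(\chi'_\ell(H)\le r\); re-running the argument with \(A\) replaced by \(A-1\) and using \(B^{-1+\gamma}D\ge D^{k/(k+1)}\) lets one absorb the ceiling, replacing \(r\) by \((1+B^{-1+\gamma}\log^A D)D\) and giving the stated bound.

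With Theorem~\ref{thm:mainpartitetheorem} available as a black box there is no genuine obstacle here: the content lies entirely in setting up the reduction and, in particular, in observing that the auxiliary hypergraph has uniformity one larger than \(H\) but maximum \((k+2)\)-codegree \(1\), which is precisely what surfaces as the extra \(D^{1/(k+1)}\) entry in the bound on \(B\). The only places needing care are the codegree bookkeeping above and the routine constant- and rounding-adjustments; everything else is immediate.
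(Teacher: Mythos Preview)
Your proof is correct and follows essentially the same route as the paper: build the auxiliary \((k+2)\)-bounded bipartite hypergraph \(\mathcal H\) with \(X=E(H)\) and \(Y=\{(v,c)\}\), verify the degree and codegree bounds, observe that the extra constraint \(D^{1/(k+1)}\) comes from \(D_{k+2}^\ast=1\), and invoke Theorem~\ref{thm:mainpartitetheorem}. Your treatment is slightly more careful than the paper's (handling the possibility \(D_j<1\) via \(D_j^\ast=\max\{D_j,1\}\), and discussing the ceiling), but these are cosmetic: once \(H\) has an edge one has \(D_j\ge C_j(H)\ge 1\), and the list-colouring statement is about real thresholds anyway.
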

\begin{proof}
Suppose each edge \(e\in E(H)\) admits a list~\(L(e)\) of at least \((1+B^{-1+\gamma}\log^A D)D\) colours.
Construct a \((k+2)\)-bounded bipartite auxiliary hypergraph~\(\cH\) with parts \(X\coloneqq E(H)\) and \(Y\coloneqq V(H)\times\bigcup_{e\in E(H)}L(e)\), and \(E(\cH)=\{\{e\}\cup\{(v,c)\colon v\in e\}\colon e\in E(H), c\in L(e)\}\).
Then clearly \(\text{deg}_{\cH}(x)\geq (1+B^{-1+\gamma}\log^A D)D\) for all \(x\in X\) and \(\text{deg}_{\cH}(y)\leq D\) for all \(y\in Y\).
It is easy to check\COMMENT{If you take a set of 2 or more \(\cH\)-vertices and it includes 2 vertices in \(X\) then codegree 0. If 1 vertex in \(X\) then codegree \(\leq 1\) (1 only if the `vertices' you have on the right are all in this edge and the same colour). If 2 vertices on right then codeg only >0 if the same colour, but then its the same as their codeg in \(H\)} that \(C_j(\cH)\leq D_j\) for all \(2\leq j\leq k+1\), and we have \(C_{k+2}(\cH)=1\), so we may put \(D_{k+2}=1\).
Clearly the~\(B\) as in the hypothesis of the present theorem satisfies the hypotheses of Theorem~\ref{thm:mainpartitetheorem} (in particular notice \(B\leq D^{1/(k+1)}=(D/D_{k+2})^{1/(k+1)}\)), so Theorem~\ref{thm:mainpartitetheorem} yields an \(X\)-perfect matching of~\(\cH\), which corresponds to a proper colouring of~\(E(H)\) in which each edge receives a colour from its list.
\end{proof}
Before Theorem~\ref{theorem:maincolourtheorem}, the best-known result on the (non-list) chromatic index~\(\chi'(H)\) was the following statement of Kang, K\"uhn, Methuku, and Osthus~\cite[Corollary 1.6]{KKMO23}, obtained from their main matching theorem (Theorem~\ref{theorem:KKMO} in the current paper).
\begin{theorem}[Kang, K\"uhn, Methuku, and Osthus~\cite{KKMO23}]\label{KKMOcolouring}
Suppose that \(1/D\ll 1/k,\gamma,\mu,\eta,\delta<1\), where \(k\geq 2\) and \(\eta<f_{(\ref{theorem:KKMO})}(k+1)\).
Suppose further that \(D\geq\exp(\log^{\mu}n)\).
Let~\(H\) be a \((k+1)\)-uniform multihypergraph on~\(n\) vertices, \(\Delta(H)\leq D\), and \(C_2(H)\leq D_2\leq D^{1-\gamma}\).
Then \(\chi'(H)\leq(1+(D/D_2)^{-1/(k+1) - \eta})D\).
\end{theorem}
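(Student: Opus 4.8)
The plan is to follow the recipe in the proof of Theorem~\ref{theorem:maincolourtheorem}, but to feed the auxiliary hypergraph into Theorem~\ref{theorem:KKMO} rather than into the bipartite matching theorem. Write \(\alpha\coloneqq(D/D_2)^{-1/(k+1)-\eta}\) and \(r\coloneqq\lceil(1+\alpha/2)D\rceil\), and form the \((k+2)\)-uniform bipartite auxiliary hypergraph \(\cH\) with parts \(X\coloneqq E(H)\) and \(Y\coloneqq V(H)\times[r]\) whose edges are the sets \(\{e\}\cup\{(v,c)\colon v\in e\}\) over \(e\in E(H)\) and \(c\in[r]\). Exactly as in the proof of Theorem~\ref{theorem:maincolourtheorem}, an \(X\)-perfect matching of \(\cH\) is a proper \(r\)-edge-colouring of \(H\), and one has \(C_j(\cH)\le C_j(H)\le D_j\) for \(2\le j\le k+1\) while distinct pairs \((e,c)\) give distinct edges, so \(C_{k+2}(\cH)=1\).

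Theorem~\ref{theorem:KKMO} demands \((n,D,\eps)\)-regularity, and \(\cH\) is not regular: its \(X\)-degrees all equal \(r\), but its \(Y\)-degrees \(\text{deg}_H(v)\le D\) may be much smaller (even \(0\)). So first I would pad \(\cH\): by attaching, to each \(Y\)-vertex of deficient degree, dummy edges through a large pool of fresh vertices (kept pairwise nearly disjoint so that they contribute only \(O(1)\) to codegrees), one obtains a \((k+2)\)-uniform hypergraph \(\cH'\supseteq\cH\) that is \((n',r,\eps')\)-regular with \(n'=D^{O(1)}n\), with \(\eps'\) negligible (say \(\eps'\le D^{-10}\)), with \(C_j(\cH')\le D_j\) for \(2\le j\le k+1\), and with \(C_{k+2}(\cH')=1\), so that \(D_{k+2}\coloneqq1\) is admissible. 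Any \(X\)-perfect matching of \(\cH'\) is still a proper \(r\)-edge-colouring of \(H\), since dummy edges avoid \(X\). Because \(D_2\le D^{1-\gamma}\), \(\eps'\) is negligible, and \(r=\Theta(D)\ge\exp(\log^{\mu}n)\) with \(\log n'=O(\gamma^{-1}\log(nD))\), the hypotheses of Theorem~\ref{theorem:KKMO} applied to \(\cH'\) hold after shrinking \(\mu\) and \(\gamma\) slightly; here the uniformity is \(k+2\), so the role of its parameter ``\(k\)'' is played by \(k+1\), and this is legitimate precisely because \(\eta<f_{(\ref{theorem:KKMO})}(k+1)\).

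Fix \(\bar\eta\) with \(\eta<\bar\eta<f_{(\ref{theorem:KKMO})}(k+1)\) (possible by hypothesis), and apply the pseudorandom strengthening of Theorem~\ref{theorem:KKMO}, with parameter \(\bar\eta\), to \(\cH'\), with respect to the family of vertex subsets \(\{F_v\coloneqq\{e\in E(H)\colon v\in e\}\subseteq X:v\in V(H)\}\) (each of size \(\text{deg}_H(v)\le D\)). This yields a matching \(M^*\) of \(\cH'\) such that for every \(v\in V(H)\) the set \(F_v\setminus V(M^*)\) has size at most \(D\,(D/D_2)^{-1/(k+1)-\bar\eta+o(1)}\); equivalently, if \(E'\subseteq E(H)\) denotes the edges whose \(X\)-vertex is uncovered by \(M^*\), then \(H'\coloneqq(V(H),E')\) has \(\Delta(H')\le D\,(D/D_2)^{-1/(k+1)-\bar\eta+o(1)}\). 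Colour \(E(H)\setminus E'\) from \(M^*\) and colour \(H'\) greedily with a fresh palette, which costs at most \((k+1)\Delta(H')+1\) colours; since \((D/D_2)^{\bar\eta-\eta}\ge D^{\gamma(\bar\eta-\eta)}\) dominates the factor \((k+1)(D/D_2)^{o(1)}\), this is at most \(\tfrac{\alpha}{2}D\). Hence \(\chi'(H)\le r+\tfrac{\alpha}{2}D\le(1+\alpha)D\), as required.

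The main obstacle is the step invoking ``the pseudorandom strengthening of Theorem~\ref{theorem:KKMO}'': the version recorded in the excerpt controls only the \emph{total} size of the leftover, whereas the clean-up above needs the uncovered edges of \(H\) to be \emph{spread out}, i.e.\ a bound on \(\Delta(H')\) rather than on \(|E'|\) (otherwise the \(\Theta(nD)\cdot\alpha\) uncoloured edges could in principle all pass through one vertex, costing \(\Omega(nD\alpha)\gg\alpha D\) extra colours). This is exactly the sort of control that the proof of Theorem~\ref{theorem:KKMO} in~\cite{KKMO23} already maintains in order to locate surviving ``augmenting stars'', so specialising it to track the stars \(F_v\) is the natural route; alternatively, one can iterate the plain Theorem~\ref{theorem:KKMO}, colouring a near-perfect matching and recursing on the (still near-regular, low-codegree) leftover, in the manner of Pippenger--Spencer. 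The remaining points — the padding construction and the book-keeping that separates \(\eta\), \(\bar\eta\), and the \(o(1)\) error using the slack \(\eta<f_{(\ref{theorem:KKMO})}(k+1)\) — are routine.
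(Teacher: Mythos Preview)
This theorem is not proved in the paper at all: it is quoted as a known result, namely \cite[Corollary 1.6]{KKMO23}, and is included only for comparison with Theorem~\ref{theorem:maincolourtheorem}. The paper merely remarks that it is ``obtained from their main matching theorem (Theorem~\ref{theorem:KKMO} in the current paper)''. So there is no ``paper's own proof'' to compare against.

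That said, your sketch is in the right spirit and mirrors what \cite{KKMO23} actually do: reduce the edge-colouring problem to finding a near-perfect matching in the auxiliary \((k+2)\)-uniform hypergraph, regularize, and then use per-vertex control on the leftover to finish greedily. You have correctly identified the genuine obstacle: the version of Theorem~\ref{theorem:KKMO} stated here gives only a bound on the \emph{total} leftover, whereas you need the uncovered edges of \(H\) to be spread out over the vertices (a bound on \(\Delta(H')\), not \(|E'|\)). That is not a minor technicality---without it your greedy finish can cost far more than \(\alpha D/2\) colours---and it cannot be derived from Theorem~\ref{theorem:KKMO} as stated. In \cite{KKMO23} this step is handled by the pseudorandom version of their matching result (tracking test sets / augmenting stars), or equivalently by a Pippenger--Spencer-style iteration that maintains near-regularity of the residual hypergraph; both are substantial pieces of machinery beyond what you have invoked. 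Your padding step also needs care (this is essentially what Lemma~\ref{lemma:regularization} in the present paper does), but that part is routine by comparison.
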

The best-known result on the list chromatic index~\(\chi'_{\ell}(H)\) was still the following theorem of Molloy and Reed~\cite[Theorem 2]{MR00} from 2000, which asymptotically improved Kahn's~\cite{Ka96} bound.

\begin{theorem}[Molloy and Reed~\cite{MR00}]\label{MRcolouring}
Suppose \(1/D\ll1/k\leq 1\), and let~\(H\) be a \((k+1)\)-uniform hypergraph with \(\Delta(H)\leq D\) and \(C_2(H)\leq D_2\).
Then \(\chi'_{\ell}(H)\leq(1+(D/D_2)^{-1/(k+1)}\log^5 D)D\).
\end{theorem}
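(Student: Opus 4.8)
The plan is to prove this by the semi‑random (``R\"odl nibble'') method applied directly to edge colouring, which is the approach of Kahn and of Molloy and Reed; I will also indicate a matching‑based route. Set \(\lambda\coloneqq(D/D_2)^{-1/(k+1)}\log^5 D\), assume every edge \(e\) of \(H\) has a list \(L(e)\) with \(|L(e)|\ge r\coloneqq(1+\lambda)D\), and aim to produce an acceptable colouring. I would build the colouring over a sequence of rounds, maintaining after round \(i\) a partial proper list colouring, the subhypergraph \(H_i\) of still‑uncoloured edges, and for each edge \(e\in E(H_i)\) its residual list \(L_i(e)\) of colours not yet used at a vertex of \(e\). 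The invariant to preserve is that \(|L_i(e)|\ge(1+\lambda_i)\max_{v\in e}d_i(v)\) for every surviving \(e\), where \(d_i(v)\) is the number of uncoloured edges through \(v\), the slack \(\lambda_i\) erodes only slightly from \(\lambda_0\approx\lambda\), and \(\max_v d_i(v)\) decays geometrically.

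Each round is one nibble: activate every uncoloured edge independently with a small probability \(p\), give each activated edge a uniformly random colour from its residual list, and let an activated edge retain its colour unless another activated edge meeting it chose the same colour; then commit the retained colours, delete the newly coloured edges, and purge those colours from the residual lists at the incident vertices. The core of the proof is the one‑round analysis. One shows an activated edge retains its colour with probability \(1-O(p)\), so \(d_i(v)\) contracts by a factor \(\approx 1-p\); and that a fixed colour \(c\in L_i(e)\) is purged from \(L_i(e)\) with probability \(\approx(k+1)p\cdot d_i(v,c)/|L_i(e)|\), where \(d_i(v,c)\) counts uncoloured edges through \(v\) still having \(c\) available — the point being that at most \emph{one} edge through a vertex can retain a given colour (two would conflict), so colours are spent at rate only \(\approx p/(1+\lambda_i)\) per vertex rather than at a rate growing with the number of edges wanting them, and this is exactly where the codegree bound \(D_2\) enters, since it controls how tightly colour‑availabilities across edges through \(v\) can be coupled, hence controls \(d_i(v,c)\). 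Consequently \(|L_i(e)|\) and \(\max_{v\in e}d_i(v)\) contract by the same factor up to a multiplicative slippage \(1\pm\delta_i\) with \(\delta_i\) of order roughly \(\sqrt{\log D/(p\,|L_i(e)|)}\); to establish this for all edges, vertices and colours at once I would use a bounded‑differences / Talagrand concentration argument, exploiting that each of \(|L_i(e)|\), \(d_i(v)\), \(d_i(v,c)\) changes by \(O(1)\) when one edge's activation or colour choice is altered.

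After \(T=\Theta(\log(D/D_2))\) rounds the maximum residual degree has fallen to \(O(D_2\log^{c}D)\) for a small constant \(c\), while every surviving edge still has \(|L_i(e)|\ge(1+\Omega(\lambda))\max_{v\in e}d_i(v)\), so the partial colouring completes greedily (or by a Hall/Haxell‑type argument) using the remaining slack. \emph{The main obstacle is the accumulated‑error bookkeeping}: one needs \(\lambda_0-\sum_{i<T}\delta_i>0\) with room to spare, and since the \(\delta_i\) are largest in the final rounds (where the lists have shrunk most), pinning the total loss down to \(\lambda=(D/D_2)^{-1/(k+1)}\log^5 D\) — obtaining the exponent \(1/(k+1)\) exactly (this is the standard nibble exponent: in the equivalent matching formulation the relevant uniformity is \(k+2\), so the uncovered fraction is \((D/D_2)^{-1/(k+1)}\)) and a polylogarithm no worse than \(\log^5\) — is the delicate part, and it drives the choice of \(p\) and of \(T\). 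An alternative is the reduction used in the proof of Theorem~\ref{theorem:maincolourtheorem}: form the \((k+2)\)-uniform auxiliary bipartite hypergraph \(\cH\) with parts \(E(H)\) and \(V(H)\times\bigcup_e L(e)\), note \(C_2(\cH)\le D_2\) and \(C_{k+2}(\cH)=1\), and seek an \(E(H)\)-perfect matching; running the Kostochka--R\"odl nibble on \(\cH\) leaves a \((D/D_2)^{-1/(k+1)}\)-fraction of \(E(H)\) uncovered, which one absorbs using the surplus \(|L(e)|-D\ge\lambda D\) in the edge‑degrees — but extracting the clean \(\log^5\) (rather than a \(\log^A\)) from a matching black box, and avoiding the \(\gamma\)-loss of Theorem~\ref{thm:mainpartitetheorem}, essentially forces the same tailored nibble analysis, so this route does not simplify the crux.
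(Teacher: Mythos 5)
Theorem~\ref{MRcolouring} is \emph{cited}, not proved, in this paper: it is quoted from Molloy and Reed~\cite[Theorem 2]{MR00} and serves only as a benchmark for Theorem~\ref{theorem:maincolourtheorem}. There is therefore no proof in the paper to compare your sketch against, and the implicit suggestion that the reduction of Theorem~\ref{theorem:maincolourtheorem} reproves it is not quite right either, as you yourself note (the matching black boxes here carry a $\log^{A}D$ or $B^{\gamma}$ loss that is worse than $\log^{5}D$).

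Judged on its own terms, your sketch correctly names the Kahn/Molloy--Reed semi-random edge-colouring strategy, but the invariant and the finishing step as you state them do not close the argument. You maintain $|L_i(e)|\ge(1+\lambda_i)\max_{v\in e}d_i(v)$ with $\lambda_i$ eroding slightly from $\lambda_0\approx\lambda=o(1)$, assert that lists and degrees ``contract by the same factor up to a multiplicative slippage,'' and then finish ``greedily (or by a Hall/Haxell-type argument) using the remaining slack.'' But a greedy (or Haxell) completion of the list edge colouring needs the residual list of each surviving edge to exceed roughly $(k+1)$ times the residual maximum degree, since each edge meets about $(k+1)\cdot(\text{residual degree})$ others; a surplus of only $1+\lambda_i=1+o(1)$ is nowhere near enough, and if lists and degrees genuinely contract at the same rate that surplus never grows. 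The crux of the Kahn/Molloy--Reed proof — and the piece your sketch leaves out — is precisely that the residual lists must shrink \emph{strictly slower} than the residual degrees; the per-round gap in decay rates, compounded over $\Theta(\log(D/D_2))$ rounds, is what manufactures the $(k+1)$-fold surplus needed for the endgame, and the codegree bound $D_2$ (via the concentration analysis and the choice of $p$) controls how large that per-round gap can be made, which is where the exponent $-1/(k+1)$ and the $\log^{5}D$ enter. Establishing this asymmetric decay (which in Kahn's and Molloy--Reed's treatments is done by tracking colour-availability statistics or hard-core edge weights rather than raw list sizes) is the step you would need to supply.
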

Theorem~\ref{KKMOcolouring} beats (the non-list version of) Theorem~\ref{MRcolouring} in a similar fashion to the way in which Theorem~\ref{theorem:KKMO} beats Theorem~\ref{theorem:KR} for matchings, i.e.\ by the~\((D/D_2)^{\eta}\) factor.
If \(D_2\approx 1\), then Theorem~\ref{theorem:maincolourtheorem} yields the same result as Theorem~\ref{MRcolouring} up to the error term, and is beaten by Theorem~\ref{KKMOcolouring}.
If \(D_2\approx D_{k+1}=\omega(1)\), then the~\(B\) of Theorem~\ref{theorem:maincolourtheorem} is constrained by the minimum of~\(D^{1/(k+1)}\) and~\((D/D_2)^{1/k}\), each of which is larger than \((D/D_2)^{1/(k+1)}\), so Theorem~\ref{theorem:maincolourtheorem} beats the other two results (assuming~\(D_{k+1}\) is a large enough~\(\omega(1)\) to beat the~\(\eta\) factor).
If instead \(D_2\gg D_{k+1}\), then this only strengthens Theorem~\ref{theorem:maincolourtheorem}, without impacting the other two results.
For more dense hypergraphs, Theorem~\ref{theorem:maincolourtheorem} beats the other two results (often substantially).
Indeed, we have in general that \(D^{1/(k+1)}\geq (D/D_2)^{1/(k+1)}\), \(\sqrt{D/D_2}\geq (D/D_2)^{1/(k+1)}\) since \(k\geq 1\), and \((D/D_j)^{1/(j-1)}>(D/D_j)^{1/(k+1)}\geq (D/D_2)^{1/(k+1)}\).
We discuss more specific comparisons in the context of applications to design theory in Section~\ref{section:appstatements}.
\subsection{Applications}\label{section:appstatements}
To further demonstrate the power and usability of Theorems~\ref{theorem:maintheorem},~\ref{thm:mainpartitetheorem}, and~\ref{theorem:maincolourtheorem}, we now discuss a number of applications, most of which yield new, best-known results.
Each is proven in Section~\ref{Section:Applications}.
We note that in each of the applications of Theorems~\ref{theorem:maintheorem} and~\ref{thm:mainpartitetheorem}, the bottleneck for~\(B\) turns out to be~\((D/D_{k+1})^{1/k}\), so that the nibble process halts only once all the space in the codegree sequence has been used up, i.e.\ \(D\) has dropped to roughly the multiplicity~\(D_{k+1}\).
\subsubsection{Latin squares}
The first applications we wish to discuss are in the setting of \emph{Latin squares}, which are~\(n\times n\) arrays of~\(n\) symbols, such that each row and each column contains exactly one instance of each symbol.
The set~\(\cL_n\) of~\(n\times n\) Latin squares with symbol set~\([n]\) is in bijection with a set of colourings of a directed graph we describe now.
Let~\(\dirK\) be the digraph obtained from the complete graph~\(K_n\) by adding a directed loop at each vertex and replacing each undirected edge with two arcs; one in each direction.
A \textit{proper} \(n\)-\textit{arc colouring} of~\(\dirK\) is a colouring of~\(E(\dirK)\) using~\(n\) colours, such that each vertex is the tail of exactly one arc of each colour, and also the head of exactly one arc of each colour.
Let~\(\Phi(\dirK)\) be the set of proper \(n\)-arc colourings of~\(\dirK\) using the vertex set \(V=[n]\) and colour set \(C=[n]\).
We slightly abuse notation and write \(G\in\Phi(\dirK)\) to mean~\(G\) is a copy of~\(\dirK\) on vertex set~\(V\), equipped with a proper \(n\)-arc colouring using colour set~\(C\).
It is easy to see that \(f\colon \Phi(\dirK)\rightarrow\cL_n\) is a bijection, where~\(f\) puts the colour of arc~\(ij\) in cell~\((i,j)\).
The following is our first application.
Here, an edge-coloured graph is \textit{rainbow} if its edges have distinct colours.

\begin{theorem}\label{theorem:rainbowlinks}
Let \(\delta>0\) and suppose \(1/n\ll\delta\).
Let \(G\in\Phi(\dirK)\).
Then~\(G\) contains a rainbow directed cycle covering all but at most~\(n^{1/2 + \delta}\) vertices.
\end{theorem}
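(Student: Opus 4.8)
The plan is to encode the problem of finding a long rainbow directed cycle in $G \in \Phi(\dirK)$ as finding an $X$-perfect matching in an auxiliary bipartite hypergraph, so that Theorem~\ref{thm:mainpartitetheorem} applies. A rainbow directed cycle on a set $S$ of vertices is a cyclic ordering of $S$ together with a choice, for each consecutive pair $(i,j)$, of the arc $ij$, such that the colours used are all distinct; equivalently, it is a permutation $\pi$ of $S$ with a single cycle (covering all of $S$) such that the colour map $i \mapsto \text{col}(i,\pi(i))$ is injective on $S$. The first key idea is that, since $G$ is a proper arc-colouring, $\pi$ being rainbow is essentially automatic once we think of it the right way: a \emph{system of vertex-disjoint rainbow directed paths} covering almost all of $V$ can be found via matching, and then a short connecting/absorption argument closes up the paths into a single cycle. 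More precisely, I would first apply a hypergraph-matching blackbox to find a collection of many vertex-disjoint directed rainbow \emph{links} (paths of a bounded length $t$, for suitable constant $t$), using Theorem~\ref{thm:mainpartitetheorem} (or Theorem~\ref{theorem:withreserves}) with $X$ indexed by the vertices that must be ``used up'' and $Y$ recording which vertex/colour resources are consumed, so that an $X$-perfect matching is a partition of almost all of $V$ into rainbow links. The codegree bottleneck will be $(D/D_{k+1})^{1/k}$ as promised in Section~\ref{section:appstatements}, and it will come out to roughly $n^{1/2}$, which is exactly the source of the $n^{1/2+\delta}$ leftover.

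In more detail: fix a small constant $t$ and let the ground set of the auxiliary bipartite hypergraph $\cH$ have $X$-side consisting of the vertices of $V$ (or of a suitable ``anchor'' subset), and $Y$-side consisting of tokens recording (i) each vertex of $V$ and (ii) each colour of $C$, so that a hyperedge of $\cH$ corresponds to a rainbow directed path $P$ of length $t$ in $G$: the edge contains the $X$-anchor of $P$ together with the $Y$-tokens for every internal vertex of $P$ and every colour appearing on $P$. Properness of the arc-colouring controls the codegrees: any two tokens (two vertices, or a vertex and a colour, or two colours) lie in boundedly many common rainbow paths of length $t$, with the relevant $D_j$ decreasing geometrically in $j$; concretely $D \approx n^{t-1}$ (number of rainbow length-$t$ paths through a fixed start vertex) and $D_j \approx n^{t-j}$ roughly, so $(D/D_{k+1})^{1/k} \approx n^{1/2}$ for the appropriate uniformity $k+1 = t+1$. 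One checks $\deg_\cH(y) \le D$ for all $y \in Y$ and $\deg_\cH(x) \ge (1 + B^{-1+\gamma}\log^A D)D$ for all $x \in X$ — the lower bound on $X$-degrees holds because each vertex starts many rainbow length-$t$ paths and the required excess is only polynomially small in $n$ while $B^{-1} \approx n^{-1/2}$. Theorem~\ref{thm:mainpartitetheorem} then yields an $X$-perfect matching, i.e.\ a set of vertex-disjoint rainbow directed paths of length $t$ covering all but at most $n B^{-1+\gamma}\log^A n \le n^{1/2+\delta/2}$ vertices.

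It remains to stitch these $\Theta(n)$ disjoint rainbow paths into a single rainbow directed cycle while losing only a further $n^{\delta/2}$-ish vertices. Here I would use a greedy/absorption connecting argument: order the paths arbitrarily $P_1, P_2, \dots, P_m$ and connect the head of $P_i$ to the tail of $P_{i+1}$ by a short rainbow directed path through not-yet-used vertices and not-yet-used colours; since at every stage only $o(n)$ vertices and colours are forbidden, and since between any two vertices there are $\Omega(n)$ two-arc rainbow connectors avoiding any given $o(n)$-set, such a connector exists. To keep the global colour-injectivity one tracks the set of used colours, which has size $O(n)$ but is always $o(n)$ below $n$ until the very end; the handful of leftover vertices not in any $P_i$ and any connectors are simply discarded, and one closes the cycle at the end with one more connector. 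I expect the \textbf{main obstacle} to be the bookkeeping in this connecting phase, specifically ensuring simultaneously that (a) colours stay distinct globally, (b) vertices stay distinct, and (c) the total number of discarded vertices stays below $n^{1/2+\delta}$; the cleanest way around this is to reserve, before running the nibble, a small random ``reservoir'' of vertices (a $n^{-1/4}$-fraction, say) and a corresponding set of colours specifically for connectors, apply Theorem~\ref{thm:mainpartitetheorem} on the hypergraph avoiding this reservoir (so the matching uses none of it), and then do all connecting inside the reservoir where the relevant ``avoid an $o(n)$-set'' counts are still comfortably satisfied. Alternatively one can fold the connecting into an application of Theorem~\ref{theorem:withreserves}, taking $H_1$ to be the ``links'' hypergraph and $H_2$ to be a sparse random ``connectors'' hypergraph, which is exactly the kind of parallel-absorption setup that Theorem~\ref{theorem:withreserves} is designed to handle.
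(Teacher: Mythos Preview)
Your proposal has a fundamental gap in the connecting phase. With path length $t$ a fixed constant, the matching yields $m\approx n/(t+1)$ vertex-disjoint rainbow paths, which together use about $tm\approx tn/(t+1)$ of the $n$ colours. You then need $m=\Theta(n)$ connectors to stitch these into one cycle, each using at least one fresh colour (and, for length-$2$ connectors, a fresh vertex too); but only $\approx n/(t+1)$ colours remain, and the colour on any candidate connecting arc is determined by $G$, not chosen freely. Your assertion that ``at every stage only $o(n)$ vertices and colours are forbidden'' is therefore false: after the matching, $\Theta(n)$ colours are already used. A reservoir of size $n^{3/4}$ cannot supply $\Theta(n)$ connectors, and a $\Theta(n)$-sized reservoir would leave $\Theta(n)$ vertices outside the cycle. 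Folding the connecting into Theorem~\ref{theorem:withreserves} does not help for the same reason. In fact the residual problem---find a rainbow directed Hamilton cycle on the $m$ path-objects, where the arc $P\to P'$ is the $G$-arc from $\mathrm{head}(P)$ to $\mathrm{tail}(P')$, using only the $\approx m$ unused colours---is the original problem at scale $m=\Theta(n)$, so nothing has been reduced.

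The paper avoids any separate connecting step by building the cycle structure into the hypergraph. It fixes anchor vertices $W_0=\{x_1,\dots,x_m\}$ in a cyclic order and takes $X=\{(x_i,x_{i+1}):i\in[m]\}$; a hyperedge at $(x_i,x_{i+1})$ is a rainbow directed path from $x_i$ to $x_{i+1}$ with $\ell$ internal vertices, so an $X$-perfect matching is automatically a single rainbow cycle. The price is an asymmetry you do not encounter with one-anchor paths: each link has $\ell+1$ arcs but only $\ell$ internal vertices, so a colour has $\ell+1$ possible roles while a non-anchor vertex has only $\ell$, which pushes the $Y$-degree of a colour above $\deg_\cH((x_i,x_{i+1}))$ and violates the hypothesis $\deg_\cH(y)\le D$ of Theorem~\ref{thm:mainpartitetheorem} (the paper notes this explicitly in the remark after the proof). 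The fix is Lemma~\ref{slicinglemma}: partition the colours into $\ell+1$ slices $Q_0,\dots,Q_\ell$ (and the non-anchor vertices into $\ell$ slices $W_1,\dots,W_\ell$) and require the $s$-th arc of any admissible link to use a colour from $Q_{s-1}$ and the $s$-th internal vertex to lie in $W_s$. This restricts each colour to a single role, cutting its degree by a factor $\ell+1$ and restoring the required inequality. Two smaller points: the hyperedge records both vertex-tokens and colour-tokens, so the uniformity is $2\ell+2$, not $t+1$; and the $X$-degree is $\approx n^{\ell}$, not $n^{t-1}$.
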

The best result in this direction before Theorem~\ref{theorem:rainbowlinks} was a result of Benzing, Pokrovskiy, and Sudakov~\cite{BPS20}, who proved that each \(G\in\Phi(\dirK)\) admits a rainbow directed cycle covering all but at most~\(O(n^{4/5})\) vertices.
Theorem~\ref{theorem:rainbowlinks} thus constitutes the new best result towards a conjecture of Gy\'arf\'as and S\'ark\"ozy~\cite{GS14} on ``cycle-free partial transversals'' in Latin squares (removing one arc from the cycle obtained by Theorem~\ref{theorem:rainbowlinks} yields a cycle-free partial transversal in the corresponding Latin square), and the optimal colourings case of a conjecture of the current authors~\cite{GK23} on almost-spanning rainbow cycles in~\(G\in\Phi(\dirK)\) (named ``almost-Hamilton transversals'' in the Latin square setting). We note also that, up to the subpolynomial error term, Theorem~\ref{theorem:rainbowlinks} matches the result of Balogh and Molla \cite{BM19}  in the undirected setting (providing a different proof), which in turn improved a result of Alon, Pokrovskiy, and Sudakov \cite{APS17}.

If we run a random greedy self-avoiding rainbow walk on \(G\in\Phi(\dirK)\) until the leftover has~\(pn\) vertices and colours, then \(p=n^{-1/2}\) corresponds to the time at which we should expect to only have a constant number of candidates for the next vertex in a self-avoiding rainbow extension of the walk.
We remark that it may be possible to prove Theorem~\ref{theorem:rainbowlinks} this way, but the proof would likely be very technical.

We prove Theorem~\ref{theorem:rainbowlinks} in Section~\ref{section:raincycles} by first randomly partitioning the vertices and colours into different `slices', then constructing an auxiliary bipartite hypergraph~\(H\) (with bipartition~\((X,Y)\), say) in which edges correspond to short rainbow directed paths of~\(G\), having say~\(\ell\) internal vertices, and an \(X\)-perfect matching is the desired cycle.
A little more specifically, we select a random set~\(W_0=\{w_1,w_2,\dots,w_m\}\) of \(G\)'s vertices, with~\(m\) a little less than \(n/(\ell+1)\), and put \(X=\{(w_i,w_{i+1})\}\), indices modulo~\(m\), with an \(H\)-edge containing~\((w_i,w_{i+1})\) encoding a rainbow path in~\(G\) with~\(\ell\) internal vertices from~\(w_i\) to~\(w_{i+1}\).
Using the properties of the random partition, we check that the degrees and codegrees are such that Theorem~\ref{thm:mainpartitetheorem} can be applied, which then yields the result.
Up to constant factors, the codegree sequence \(D, D_2, \dots, D_{2\ell+2}\) for~\(H\) is \((n^{\ell}, n^{\ell-1}, n^{\ell-1}, \dots, n, n, 1,1,1)\). 
The trick is that choosing~\(m\) just a little less than~\(n/(\ell+1)\) ensures that the degrees of the~\(X\)-vertices in~\(H\) are larger than the degrees of the \(Y\)-vertices by just enough for Theorem~\ref{thm:mainpartitetheorem} to apply.
The improved asymptotics of Theorem~\ref{thm:mainpartitetheorem} (compared to~\cite[Theorem 2.6]{DP24}, say) thus directly lead to the improved asymptotics in Theorem~\ref{theorem:rainbowlinks}.

We note that the random partition does not include the finding of a small ``reservoir'' (which we would need if were instead using Theorem \ref{theorem:maintheorem} or \ref{theorem:withreserves}, which would require significantly more work), so that all ``slices'' are large, on the order~\(n\).
That is, we do not need to develop an absorption strategy, as this is ``built in'' to the machinery of Theorem~\ref{thm:mainpartitetheorem}.
In fact, we only need the partition to ensure that each colour cannot have too many ``roles'' in a rainbow path, which helps to keep the degrees of colours in~\(H\) low.
We leave the remaining details to Section~\ref{section:raincycles}.

Instead of finding one large rainbow cycle in \(G\in\Phi(\dirK)\), one may instead search for a rainbow collection of small cycles.
We say a `partial rainbow directed triangle factor' is a rainbow collection of disjoint triangles, each exhibiting consistent directions on the arcs.
\begin{theorem}\label{theorem:triangles}
Let \(\delta>0\) and suppose \(1/n\ll\delta\).
Let \(G\in\Phi(\dirK)\).
Then~\(G\) contains a partial rainbow directed triangle factor covering all but at most \(n^{3/5+\delta}\) vertices.
\end{theorem}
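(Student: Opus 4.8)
The plan is to mirror the proof of Theorem~\ref{theorem:rainbowlinks}, except that one invokes Theorem~\ref{theorem:maintheorem} directly (rather than Theorem~\ref{thm:mainpartitetheorem}) to find a large matching in an auxiliary hypergraph whose edges encode rainbow directed triangles, and no reservoir/absorption step is needed. Fix $k=5$ and a constant hierarchy $1/n\ll 1/A\ll\gamma\ll\delta$. Take a uniformly random partition $V(G)=V_1\cup V_2\cup V_3$ of the vertices of $G$ and a uniformly random partition $C=C_1\cup C_2\cup C_3$ of its colour set. Let $H$ be the $6$-uniform (indeed $6$-partite) hypergraph on $V_1\cup V_2\cup V_3\cup C_1\cup C_2\cup C_3$ whose edges are the sets $\{a,b,c,\chi(ab),\chi(bc),\chi(ca)\}$, one for each directed triangle $a\to b\to c\to a$ of $G$ with $a\in V_1$, $b\in V_2$, $c\in V_3$, $\chi(ab)\in C_1$, $\chi(bc)\in C_2$, and $\chi(ca)\in C_3$; the three colours then automatically lie in distinct parts, so each such triangle is rainbow. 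A matching of $H$ is precisely a partial rainbow directed triangle factor of $G$, so it suffices to find a matching of $H$ missing at most $n^{3/5+\delta}$ vertices of $H$ (which in particular misses at most that many vertices of $V(G)$).

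Next I would verify the hypotheses of Theorem~\ref{theorem:maintheorem} for $H$, noting $|V(H)|=2n$. The ``Latin'' structure of $G$ --- knowing two of the three vertices of a putative triangle, or knowing the colour on the ``right'' arc, determines the rest --- yields the \emph{deterministic} codegree bounds $C_2(H),C_3(H)\le n$ and $C_j(H)\le 1$ for $j\in\{4,5,6\}$, so one sets $D_2=D_3\coloneqq n$ and $D_4=D_5=D_6\coloneqq 1$. For near-regularity, each vertex of $V(G)$ and each colour has $H$-degree with expectation $\Theta(n^2)$ (roughly $(n-1)(n-2)/3^5$ and $n(n-2)/3^5$ respectively, using that every colour appears on exactly $n$ arcs of $\dirK$), and a bounded-differences concentration argument over the random partition together with a union bound over the $2n$ elements shows that with positive probability all $H$-degrees lie within a factor $1\pm\eps$ of a common $D=\Theta(n^2)$, with $\eps\le n^{-2/5}$ (a crude estimate already gives $\eps$ of order $n^{-1/2+o(1)}$). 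Fix such a partition; then $H$ is $(2n,D,\eps)$-regular, the hierarchy $1/D\ll 1/A\ll\gamma\ll 1/k$ holds since $D=\Theta(n^2)$ and $k=5$ is constant, and with $B$ a suitable constant multiple of $n^{2/5}$ we have $1\le B\le\min\{\sqrt{D/D_2},\,(D/D_4)^{1/3},\,(D/D_5)^{1/4},\,(D/D_6)^{1/5},\,1/\eps\}$, since these expressions are of order $n^{1/2}$, $n^{2/3}$, $n^{1/2}$, $n^{2/5}$, and at least $n^{2/5}$ respectively --- the bottleneck being $(D/D_{k+1})^{1/k}=\Theta(n^{2/5})$. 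Theorem~\ref{theorem:maintheorem} then produces a matching of $H$ missing at most $2n\cdot B^{-1+\gamma}\log^A D$ vertices, which is $n^{3/5+O(\gamma)}$ up to polylogarithmic factors, hence at most $n^{3/5+\delta}$ for $n$ large. (The weight-function conclusion of Theorem~\ref{theorem:maintheorem} is not needed here.)

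The main obstacle is the near-regularity step --- concretely, ruling out, for \emph{every} $G\in\Phi(\dirK)$, a colour whose $H$-degree is far below $D$. For generic $G$ this is immediate, but one must control pathological behaviour in which, for some colour $\gamma$, many triangles through a $\gamma$-arc $a\to b$ fail to be rainbow because $\chi(bc)=\chi(ca)$. The random partition is exactly what handles this: after partitioning, requiring $\chi(bc)\in C_2$ and $\chi(ca)\in C_3$ forbids $\chi(bc)=\chi(ca)$ outright, and any fixed colour's ``bad'' arcs get diluted among the parts; combined with the elementary fact that the total number of triples $(a,b,c)$ with $\chi(bc)=\chi(ca)$ is $O(n^2)$, a suitable concentration inequality then gives the claim. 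By comparison, the codegree bookkeeping and the final application of Theorem~\ref{theorem:maintheorem} are routine.
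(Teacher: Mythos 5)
Your overall architecture is a valid variant of the paper's: build an auxiliary $6$-uniform hypergraph whose edges are rainbow directed triangles, check the codegree sequence is $(n^2, \Theta(n), \Theta(n), O(1), O(1), O(1))$, observe the bottleneck for~$B$ in Theorem~\ref{theorem:maintheorem} is $(D/D_6)^{1/5}=\Theta(n^{2/5})$, and conclude. The paper does essentially this (applying Theorem~\ref{theorem:maintheorem} directly, no reservoir), but deterministically: it tolerates the fact that a triple of vertices supports two directed triangles by treating $H$ as a multihypergraph with $C_6(H)\le 2$, which Theorem~\ref{theorem:maintheorem} explicitly permits, so no random partition is needed at all. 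Your $6$-partite random partition is unnecessary overhead (and slightly lowers the constants in $D_j$, which is harmless but buys nothing asymptotically) while also requiring concentration arguments over $2n$ events. More seriously, the random partition does not do the repair work you ask of it.

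The genuine gap is the near-regularity of $H$ at colour vertices. First, loops: a colour $\chi$ appears on $n$ out-arcs, of which $\ell_\chi$ are loops, and only the $n-\ell_\chi$ non-loop $\chi$-arcs can sit inside a triangle with three distinct vertices. If $\ell_\chi=\Theta(n)$ --- which does happen, e.g.\ the Cayley-table colouring of $(\mathbb{Z}/2)^k$ has every loop receiving the identity colour, so $\ell_e=n$ --- then $\chi$'s expected $H$-degree is a constant factor (here: all the way to zero) below $D$, deterministically, for any partition. No concentration inequality can fix an expectation that is wrong. You never mention loops. Second, your resolution of the non-rainbow concern is backwards: "requiring $\chi(bc)\in C_2$ and $\chi(ca)\in C_3$ forbids $\chi(bc)=\chi(ca)$ outright" means those triangles simply do not become $H$-edges, which is precisely what \emph{lowers} the degree of the colour $\chi(ab)$; the partition does not restore the lost contribution. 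The fact that the total number of non-rainbow triples is $O(n^2)$ bounds the \emph{average} per colour, not the maximum --- a single colour could account for $\Omega(n^2)$ of them, and then its $H$-degree is again off by a constant factor in expectation. Both issues are handled in the paper by a preprocessing step you are missing: discard the at most $\sqrt n$ colours appearing on $\ge\sqrt n$ loops and the at most $\sqrt n$ colours appearing in $\ge n^{3/2}$ non-rainbow triangles (this costs only $O(\sqrt n)$ vertices of $H$), after which every remaining colour has degree $(1\pm O(n^{-1/2}))D$ deterministically. Once you add that step, and drop the random partition in favour of the multihypergraph, your proof collapses to the paper's.
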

Since each vertex is in roughly~\(n^2\) of these triangles, if each vertex and colour survived to the leftover independently with probability~\(p\), we would expect the number of surviving rainbow triangles containing a given vertex or colour to become roughly constant if \(p=n^{-2/5}\), matching Theorem~\ref{theorem:triangles}.
We remark that the expected total number of surviving rainbow triangles is~\(\Theta(n^3p^6)\), and so leftover approximately~\(\sqrt{n}\) may be attainable via analysis of a random rainbow-triangle removal process, but this would likely be hard due to the need to analyze the number of rainbow triangles within large vertex subsets.

The proof of Theorem~\ref{theorem:triangles} is fairly short and relies upon applying Theorem~\ref{theorem:maintheorem} to a natural auxiliary hypergraph~\(H\) in which a large matching corresponds to a large partial rainbow triangle factor of~\(G\).
The codegree sequence of this hypergraph is \((D,D_2,D_3,D_4,D_5,D_6)=(n^2,3n,3n,6,6,2)\).
Applying Theorem~\ref{theorem:KR} to~\(H\) instead would yield leftover~\(n^{4/5+\delta}\), Theorem~\ref{theorem:KKMO} would give roughly~\(n^{0.796}\), and Theorem~\ref{theorem:V00} does not directly apply with \(s>2\) due to clustering in the codegree sequence.
However, with \(s=2\), Theorem~\ref{theorem:V00} yields~\(n^{4/5}\log^A n\), and tactically damaging some ratios \(D_j/D_{j+1}\) and taking larger~\(s\) yields~\(n^{3/4}\log^A n\) (and no better).\COMMENT{Undamaged, the codegree sequence is \(n^2, n, n, 1, 1, 1\), starting with \(D=n^2\). If \(s=3\) then \(x^{k-s+2}=x^4\), so we should push \(D_2=n\) up by at least \(n^{4/5}\) if we want \(x\geq n^{1/5}\), but then \(\sqrt{D/D_2}=n^{1/5}\), so this is actually worse. If \(s=4\) then \(x^{k-s+2}=x^3\). We already have \(D_3/D_4=n\) so this gives \(x\leq n^{1/3}\). But we also need \(x^2\leq D/D_2, D_2/D_3\), and clearly the best we can do here is put \(D_2\) in the middle, i.e.\ \(D=n^2, D_2=n^{3/2}, D_3=n, D_4=1\) yielding best \(x=n^{1/4}\), as claimed. Using any larger \(s\), we are still limited by \(x^2\leq D/D_2, D/D_3\) and the fact we can't achieve any better than \(x=n^{1/4}\) from these constraints. This I think actually shows the value of what we achieved in the changed approach to proving the Chomp, where the constraints for intermediate ratios were changed to \(x\) rather than \(x^2\).}
To be a little more specific, the best one can do with Theorem~\ref{theorem:V00} is selecting \(s=4\) and adjusting \((D,D_2,D_3,D_4)=(n^2,n^{3/2},3n,6)\), which still falls short of Theorem~\ref{theorem:triangles}.

Theorem~\ref{theorem:triangles} easily yields the following undirected analogue, which provides the new best known bound towards a conjecture of M\"uyesser~\cite[Conjecture 7.3]{M23}, which posits that there is some absolute constant~\(C\) such that the~\(n^{3/5+\delta}\) in Corollary~\ref{corollary:triangles} can be replaced by~\(C\).
This conjecture can, in turn, be seen as an extension of the \(k=3\) case of the Friedlander-Gordon-Tannenbaum Conjecture~\cite{FGT81} from group theory.
\begin{cor}\label{corollary:triangles}
Let \(\delta>0\) and suppose \(1/n\ll\delta\).
Suppose that~\(K_n\) is properly edge-coloured with~\(n\) colours.\COMMENT{I'm mimicking the setting of M\"{u}yesser Conjecture 7.3. Is this what we want, rather than optimal edge colourings?}
Then there is a partial rainbow triangle factor of~\(K_n\) covering all but at most~\(n^{3/5 + \delta}\) vertices.
\end{cor}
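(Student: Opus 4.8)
The plan is to derive Corollary~\ref{corollary:triangles} directly from the directed statement Theorem~\ref{theorem:triangles}, by turning the given proper $n$-edge-colouring $\phi$ of $K_n$ into a proper $n$-arc-colouring of $\dirK$, that is, an element of $\Phi(\dirK)$. The only structural input I need is that, since $K_n$ is $(n-1)$-regular and $\phi$ is proper with colour set $[n]$, each vertex $v$ is incident to edges of exactly $n-1$ distinct colours, so there is a unique colour $\mu(v)\in[n]$ missing at $v$.

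First I would build a colouring of $\dirK$ on vertex set $V=[n]$ and colour set $C=[n]$ as follows: for every edge $uv$ of $K_n$ give both of the arcs between $u$ and $v$ the colour $\phi(uv)$, and for every vertex $v$ give the loop at $v$ the colour $\mu(v)$. The routine check is that this lies in $\Phi(\dirK)$. Fix a vertex $v$ and a colour $c$. If $c\neq\mu(v)$, then there is a unique edge $vw$ of $K_n$ with $\phi(vw)=c$, and the arc from $v$ to $w$ and the arc from $w$ to $v$ are the unique arcs of colour $c$ with tail, respectively head, equal to $v$ (the loop at $v$ carries the different colour $\mu(v)$). If $c=\mu(v)$, then no edge of $K_n$ at $v$ has colour $c$, so the loop at $v$ is the unique arc of colour $c$ with tail $v$ and the unique one with head $v$. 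Hence $v$ is the tail of exactly one arc of each colour and the head of exactly one arc of each colour, so the resulting $G\in\Phi(\dirK)$.

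Next I would apply Theorem~\ref{theorem:triangles} to this $G$ with the same $\delta$, obtaining a partial rainbow directed triangle factor $\mathcal{F}$ of $G$ covering all but at most $n^{3/5+\delta}$ vertices. Each triangle of $\mathcal{F}$ has three distinct vertices, hence uses no loop; writing its vertices as $u,v,w$, it is built from three arcs whose colours are $\phi(uv),\phi(vw),\phi(wu)$, and since $\mathcal{F}$ is rainbow these colours are pairwise distinct (indeed all edge-colours appearing across the triangles of $\mathcal{F}$ are pairwise distinct). Thus the triangles $\{u,v,w\}$ of $K_n$ corresponding to the triangles of $\mathcal{F}$ are vertex-disjoint, rainbow as a collection, and cover the same vertex set, i.e.\ they form a partial rainbow triangle factor of $K_n$ covering all but at most $n^{3/5+\delta}$ vertices, as required.

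I do not expect any real obstacle: the deduction is essentially a bookkeeping exercise. The only points that need a moment's care are verifying that the ``missing colour'' loops make the construction genuinely land inside $\Phi(\dirK)$, and the (immediate) observation that a directed triangle, having three distinct vertices, can never use a loop and so projects onto an honest rainbow triangle of $K_n$.
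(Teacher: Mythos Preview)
Your proposal is correct and follows essentially the same approach as the paper: both construct an element of $\Phi(\dirK)$ by replacing each $c$-edge with $c$-arcs in both directions and colouring each loop with the unique colour missing at that vertex, then apply Theorem~\ref{theorem:triangles} and forget the orientations. Your write-up simply spells out the verification that the construction lands in $\Phi(\dirK)$ and that directed triangles project to genuine rainbow triangles in $K_n$, which the paper leaves implicit.
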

\begin{proof}
Relabel the vertex set and colour set \(V,C=[n]\).
Use the colouring on~\(K_n\) to construct an element \(G\in\Phi(\dirK)\) by replacing each \(c\)-edge with a \(c\)-arc in both directions, and colouring the loops with the only unused colour at each vertex.
The result follows by applying Theorem~\ref{theorem:triangles} to~\(G\), then ignoring the directions.
\end{proof}
\subsubsection{Simplicial complexes}
Our next application concerns the maximum diameter of a simplicial complex.
Let~\(n\) be a positive integer.
A (pure) \textit{simplicial complex}~\(\cK\) (on ``vertex set''~\([n]\)) is a collection of non-empty subsets of~\([n]\) such that if \(U\in\cK\), then every non-empty subset of~\(U\) is in~\(\cK\).
A simplicial complex~\(\cK\) has \textit{dimension}~\(d\) if the maximum size of a set in~\(\cK\) is~\(d+1\).
Those sets \(U\in\cK\) having \(|U|=d+1\) are the \textit{facets} of~\(\cK\).
Let~\(F(\cK)\) denote the set of facets of~\(\cK\).
The \textit{dual graph}~\(G(\cK)\) of~\(\cK\) is the graph with vertex set~\(V(G(\cK))=F(\cK)\) and an edge between~\(F,F'\) if \(|F\cap F'|=d\).
If~\(G(\cK)\) is connected, then we say that~\(\cK\) is \textit{strongly connected}.
If~\(\cK\) is strongly connected, then we say that the (combinatorial) \textit{diameter} of~\(\cK\) is the graph diameter of~\(G(\cK)\).
We let~\(H_s(n,d)\) denote the maximum diameter over all \(d\)-dimensional strongly connected simplicial complexes~\(\cK\) on~\([n]\).

A simple volume argument given by Santos~\cite[Corollary 2.7]{S13} shows that \(H_s(n,d)\leq (\binom{n}{d} - (d+1))/d \leq n^d/(d\cdot d!)\), and this is still the best-known upper bound on~\(H_s(n,d)\).
The first and currently only lower bound on~\(H_s(n,d)\) obtaining \(H_s(n,d)\geq(1-o(1))n^d/(d\cdot d!)\) was given by Bohman and Newman~\cite{BN22} in 2022, who used a random greedy approach to construct a strongly connected~\(\cK\) with diameter at least \((1/(d\cdot d!) - (\log n)^{-\eps})n^d\) for \(\eps<1/d^2\) (and \(d\geq 2\)).
This improved on earlier lower bounds~\cite{S12,CS17,CN21} which were not asymptotic to the upper bound.
In Section~\ref{Section:Applications}, we apply Theorem~\ref{thm:mainpartitetheorem} to obtain the following result, significantly improving the asymptotics of the lower bound given by Bohman and Newman~\cite{BN22}.
\begin{theorem}\label{theorem:SimplicialComplex}
Suppose that \(1/n\ll\delta\ll1/d\leq 1/2\).
Then there is a pure \(d\)-dimensional simplicial complex~\(\cK\) for which~\(G(\cK)\) is a cycle of length at least \((1-n^{-1/d + \delta})n^d/(d\cdot d!)\).
In particular, 
\[
H_s(n,d)\geq(1-n^{-1/d + \delta})\frac{n^d}{d\cdot d!}.
\]
\end{theorem}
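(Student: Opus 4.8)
The plan is to realise the required complex as the one \emph{generated} by the facet set of a long \emph{induced} cycle in the ``swap graph'' \(J\) on the \((d+1)\)-subsets of \([n]\), in which two \((d+1)\)-subsets are adjacent precisely when they intersect in \(d\) elements. Indeed, if \(\cF=\{F_1,\dots,F_L\}\) is a set of \((d+1)\)-subsets of \([n]\) which induces a cycle in \(J\), then the pure complex \(\cK\) with facet set \(\cF\) (every facet has size \(d+1\), hence is maximal) has \(G(\cK)\) equal to that cycle; since on an induced cycle every \(d\)-subset (``ridge'') lies in at most two — necessarily consecutive — of the \(F_i\), deleting any single facet \(F_0\) leaves a complex whose maximal faces are exactly \(\cF\setminus\{F_0\}\) (so it is still pure and \(d\)-dimensional) and whose dual graph is the path on \(L-1\) vertices, which is connected with diameter \(L-2\). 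Hence it suffices to exhibit such an \(\cF\) with \(L\ge(1-n^{-1/d+\delta})\,n^d/(d\cdot d!)\). Note that \(\cF\) \emph{induces} — rather than merely contains — a cycle precisely when no two non-consecutive cycle facets share a ridge, since \(|F\cap F'|=d\) exactly when \(F,F'\) have a common ridge; this ``no chord'' requirement is the crux of the construction.

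To build the cycle I follow the strategy used for Theorem~\ref{theorem:rainbowlinks}. After a random partition of \([n]\) into slices (which will tame the codegrees below), fix a large constant \(\ell=\ell(d,\delta)\) and choose at random a cyclic sequence of ``anchor facets'' \(G_1,\dots,G_m\), with \(m\) a little below \(n^d/((\ell+1)\,d\cdot d!)\), arranged so that consecutive anchors lie at bounded — but at least \(2\) — distance in \(J\), the sequence closes up, and with high probability no ridge lies in too many anchors and the anchors are otherwise well spread. Form an auxiliary bipartite hypergraph \(H\) with parts \(X=\{(G_i,G_{i+1}):i\in\bZ_m\}\) and a resource set \(Y\) consisting of potential internal facets \emph{together with} the ridges. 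An edge of \(H\) through \((G_i,G_{i+1})\) encodes an induced path \(G_i=F_0,F_1,\dots,F_{\ell+1}=G_{i+1}\) of length \(\ell+1\) in \(J\) avoiding the other anchors; read as a subset of \(Y\), it consists of the facets \(G_i,F_1,\dots,F_\ell\) together with all of their ridges, except the single ridge \(F_\ell\cap G_{i+1}\) shared with the next anchor. With this bookkeeping at the shared anchors, an \(X\)-perfect matching of \(H\) selects one such path per consecutive pair, pairwise disjoint in \(Y\): disjointness in the facet-coordinates prevents a facet being repeated, while disjointness in the ridge-coordinates forces any two facets from different segments (apart from the single legitimate shared ridge across each pair of consecutive segments) to have no common ridge. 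So the union of all segments is an induced cycle on \(m(\ell+1)\) facets, as required.

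It then remains to check that \(H\) satisfies the hypotheses of Theorem~\ref{thm:mainpartitetheorem}. The hypergraph \(H\) is \((k+1)\)-bounded for a constant \(k+1\) (of order \(\ell d\)), and its multiplicity is \(1\) since a segment is determined by its path, so \(D_{k+1}=1\). Using the spread properties of the random partition and the random anchor sequence, a counting argument — routine in spirit but heavier than its analogue for Theorem~\ref{theorem:rainbowlinks} because of the ridge-coordinates — shows that the codegree sequence \(D_2\ge\dots\ge D_{k+1}\) of \(H\) decays geometrically, so that every term in the minimum defining \(B\) in Theorem~\ref{thm:mainpartitetheorem} is at least \((D/D_{k+1})^{1/k}=D^{1/k}\), and that \(D^{1/k}\ge n^{1/d-\delta/2}\) once \(\ell\) is fixed large enough in terms of \(d\) and \(\delta\). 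We may therefore take \(B=n^{1/d-\delta/2}\), so that \(n^{-1/d+\delta}\ge B^{-1+\gamma}\log^A D\) for \(n\) large. Finally, taking \(m\) equal to \((1-n^{-1/d+\delta})\) times its largest feasible value arranges that \(\text{deg}_H(x)\ge(1+B^{-1+\gamma}\log^A D)D\) for all \(x\in X\) while \(\text{deg}_H(y)\le D\) for all \(y\in Y\) — the standard ``slightly undersized skeleton'' device used in the proof of Theorem~\ref{theorem:rainbowlinks}. Theorem~\ref{thm:mainpartitetheorem} now yields an \(X\)-perfect matching, hence a complex whose dual graph is a cycle of length \(m(\ell+1)\ge(1-n^{-1/d+\delta})\,n^d/(d\cdot d!)\); deleting one facet gives the path and thus the stated bound on \(H_s(n,d)\).

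The main obstacle is engineering the resource set \(Y\) and the edge set of \(H\) so that ``the cycle has no chords'' becomes a \(Y\)-disjointness condition to which Theorem~\ref{thm:mainpartitetheorem} applies, while keeping the codegree sequence geometric with bottleneck \((D/D_{k+1})^{1/k}\): the ridge-coordinates multiply the uniformity of \(H\) by roughly \(d\), and one must check that no small tuple of them — in particular no tuple of ridges all belonging to a common facet — has an unexpectedly large codegree, and that the ridges belonging to the anchors themselves, together with the one ridge legitimately shared across each consecutive pair of segments, are allocated so that consecutive segments never conflict and yet every potential chord is killed. Alongside this sits the usual overhead: showing, via concentration inequalities, that a random partition and a random cyclic sequence of anchors with all the required spread and degree properties exist, and tuning \(\ell\) and \(m\) finely enough to land the \(n^{-1/d+\delta}\) error term. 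These steps are analogous to, but somewhat more delicate than, the corresponding parts of the proof of Theorem~\ref{theorem:rainbowlinks}.
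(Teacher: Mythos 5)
Your opening paragraph is correct, and the general strategy (random preprocessing, auxiliary bipartite hypergraph, apply Theorem~\ref{thm:mainpartitetheorem} with a slightly undersized skeleton) is the right one. The construction of~\(H\), however, has two concrete problems that prevent the stated codegree claims from holding, and hence prevent Theorem~\ref{thm:mainpartitetheorem} from being applied with \(B\approx n^{1/d-\delta/2}\).

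\textbf{Shared anchor ridges force \(D_2\geq D\).} You put \emph{all} \(d+1\) ridges of the anchor facet \(G_i\) into every \(H\)-edge through \((G_i,G_{i+1})\). Thus if \(R,R'\) are two distinct ridges of \(G_i\), then every \(H\)-edge through \((G_i,G_{i+1})\) contains both, and no other \(H\)-edge does (two distinct \(d\)-subsets of a \((d+1)\)-set can only be simultaneous ridges of that one \((d+1)\)-set, and internal facets avoid the anchors). So \(\text{codeg}_H(\{R,R'\})=\deg_H\bigl((G_i,G_{i+1})\bigr)\), which you are arranging to be \emph{at least} \((1+\cdots)D\). Hence \(C_2(H)\geq D\), so \(\sqrt{D/D_2}\leq 1\) and the \(B\)-hypothesis of Theorem~\ref{thm:mainpartitetheorem} collapses; the same ridges also have \(\deg_H(y)>D\), violating the \(Y\)-degree hypothesis. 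The paper sidesteps this entirely by making the anchors \emph{ordered \(d\)-sets} in \(\binom{[n]}{d}\) and declaring them to be the \(X\)-part of the bipartition: they are not in \(Y\) at all, and the only \(d\)-sets appearing in an \(\cH\)-edge are the ones in \(\cU^*(\nu,S)\), which genuinely vary with the segment.

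\textbf{Including the facets in \(Y\) inflates the uniformity past what the bound can tolerate.} Even ignoring the anchor issue, each internal facet contributes \emph{itself} plus \(d\) new ridges to the \(H\)-edge, so the uniformity is \(k+1\approx(\ell+1)(d+1)\), whereas the number of choices of segment is \(D\approx c(d,\ell)\,n^{\ell}\) (or less, once the landing constraint at \(G_{i+1}\) is accounted for) --- one degree of freedom of order \(n\) per internal facet. So
\[
\Bigl(\tfrac{D}{D_{k+1}}\Bigr)^{1/k}=D^{1/k}\leq n^{\ell/((\ell+1)(d+1))}\xrightarrow{\ \ell\to\infty\ }n^{1/(d+1)},
\]
which is strictly short of \(n^{1/d-\delta/2}\) for every \(\ell\); your claim that \(D^{1/k}\geq n^{1/d-\delta/2}\) for large \(\ell\) therefore cannot hold. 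The same redundancy also produces a hard cluster at the front of the codegree sequence: the \((d+2)\)-set consisting of a facet \(F\) together with all \(d+1\) of its ridges has \(\text{codeg}_H=\text{codeg}_H(\{F\})\), forcing \(D_{d+2}\approx D_2\), so \((D/D_{d+2})^{1/(d+1)}\approx n^{1/(d+1)}\) as well. Intuitively, the facets are determined by their ridges, so they carry no new constraint but still ``cost'' one coordinate of uniformity each.

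Both issues disappear once you (i) drop the facets from \(Y\) and keep only the \(d\)-sets, and (ii) make the anchors \(d\)-sets living in \(X\) rather than \((d+1)\)-sets whose ridges live in \(Y\) --- which is exactly the paper's construction. There the \(\cH\)-edges consist purely of the \(d\)-sets ``generated'' by a cyclic sequence of vertices of \([n]\); each internal vertex contributes exactly \(d\) new \(d\)-sets (so \(k+1=d(\ell+d)\), and \(D^{1/k}\to n^{1/d}\)), and the anchor \(d\)-sets, being in \(X\) rather than \(Y\), are avoided by the \(\nu\)-good condition and hence covered by exactly the two consecutive facets meeting at each anchor. Your ``induced cycle in the swap graph'' viewpoint is a nice reformulation, and the claim about the dual graph of the facet set of an induced cycle is correct, but once the two bookkeeping problems above are repaired the auxiliary hypergraph you get is essentially the paper's \(\cH\), not a genuinely different route.
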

Notice the second part of the result follows by removing any one facet from~\(\cK\) (after applying the first part with~\(\delta/2\) in the role of~\(\delta\), say).
Up to the~\(n^\delta\) factor, this attains the asymptotics that Bohman and Newman~\cite{BN22}, in their closing remarks, argued heuristically may hold, but stated that ``the proof of such a statement would likely be intricate if using currently available techniques'' and that one would likely need to ``develop sophisticated self-correcting estimates'' in conjunction with the random greedy approach to prove this result.

We include the portion of the statement concerning the Hamilton cycle~\(G(\cK)\) (rather than a path) as we believe this to invoke an interesting question:
Say a complex~\(\cK\) is \textit{cyclical} if~\(G(\cK)\) is a (Hamilton) cycle, and consider the underlying cyclic arrangement of the vertices of~\([n]\) used by~\(\cK\).
In this setting, say a \(d\)-set from~\([n]\) is \textit{covered} by~\(\cK\) if it a subset of exactly one facet, or of exactly two, consecutive facets.\COMMENT{Clearly a \(d\)-set \(U\) cannot be a subset of two non-consecutive facets, as then \(G(\cK)\) would include an edge between them, and \(U\) cannot be a subset of three or more facets, as then it is a subset of two non-consecutives}
Proceeding round the cycle, each new~\([n]\)-vertex corresponds to a new facet, which introduces~\(d\) new covered \(d\)-sets (all \(d\)-subsets of the newest~\(d+1\) vertices, except the one omitting the new vertex).
The volume argument of Santos thus implies that a cyclical complex has at most $\binom{n}{d}/d$ facets.
Provided~\(d\) divides~\(\binom{n}{d}\), is there a cyclical~\(\cK\) in which every \(d\)-set in~\([n]\) is covered? Recently, Parczyk, Rathke, and Szab\'{o}~\cite{PRS25} (see also~\cite{PRS25b,GPRS25} in their paper for further progress) posed a similar question and proved Santos' upper bound for $H_s(n,2)$ is tight for all $n\neq6$.
\COMMENT{Since each facet going round the cycle introduces \(d\) new \(d\)-sets (you could say you get \(d+1\) from the first you consider, but then it'll only give you a final \(d-1\) when you wrap all the way round), and each of the~\(\binom{n}{d}\) \(d\)-sets become covered, there must be \(\binom{n}{d}/d\) facets.} 

Similarly to the proof of Theorem~\ref{theorem:rainbowlinks}, we prove Theorem~\ref{theorem:SimplicialComplex} by using probabilistic techniques to first find a good choice~\(X\) of~\(m\) \(d\)-sets from~\([n]\), where~\(m\) is a little less than \(n^d/(d\cdot d!(\ell+d))\) for a large integer~\(\ell\).
We construct an auxiliary bipartite hypergraph~\(\cH\) in which edges correspond to (the \(d\)-sets ``covered'' by) sequences of~\(\ell\) ``internal'' vertices between a pair \((X_i,X_{i+1})\in\cX\) of consecutive elements of~\(X\), and an \(\cX\)-perfect matching yields a sequence of vertices in~\([n]\)  which induces a cyclic ordering on $(d+1)$-sets covering each of the \(X\)-elements in turn, with~\(\ell\) vertices in between each consecutive pair, with some useful additional properties.
The \(\cX\)-perfect matching can be thought of as a sequence of \(m\) ``corridors'' whose union is cyclical.
The choice of~\(m\) a little less than \(n^d/(d\cdot d!(\ell+d))\) ensures that the degrees of the \(X\)-pairs in~\(\cH\) are a little larger than the degrees of the other \(d\)-sets, so that Theorem~\ref{thm:mainpartitetheorem} yields the desired matching.
We construct the facets of~\(\cK\) to be the collection of \((d+1)\)-sets appearing consecutively in the ordering, and the additional properties ensure these \(m(\ell+d)\approx n^d/(d\cdot d!)\) facets are distinct and~\(G(\cK)\) contains only the desired Hamilton cycle.
\subsubsection{Designs}\label{section:designs}
Our remaining applications of Theorem~\ref{theorem:maintheorem} (and Theorem~\ref{theorem:maincolourtheorem}) pertain to design theory.
A \((t,r,n)\)-\textit{Steiner system} is a collection~\(S\) of \(r\)-subsets of an \(n\)-set (say \([n]\coloneqq\{1,2,\dots,n\}\)), such that each \(t\)-subset of~\([n]\) is contained in exactly one element of~\(S\).
Clearly this can be viewed as an \(r\)-uniform hypergraph on vertex set~\([n]\), and so Theorems~\ref{theorem:maintheorem} and~\ref{theorem:maincolourtheorem} apply.
We first discuss the literature.
The case of \((2,3,n)\)-Steiner systems (\emph{Steiner triple systems}) has received particular attention.
Regarding matchings in Steiner triple systems, Montgomery~\cite{Mont23} recently proved in a breakthrough paper that all sufficiently large Steiner triple systems have a matching of size at least~\((n-4)/3\), confirming a longstanding conjecture of Brouwer~\cite{B81} for large~\(n\). 
For \(1/n\ll1/r<1/t\leq 1/2\) other than \(t=2, r=3\), little is known, with the best bound in general previously coming from either Theorem~\ref{theorem:V00} or \ref{theorem:KKMO}.

The topic of edge colouring Steiner systems is closely related to the ``resolvability'' of designs.
A hypergraph~\(H\) is \emph{resolvable} if \(\chi'(H)=\Delta(H)\), and if a resolvable~\(H\) is \(D\)-regular, then~\(H\) admits a decomposition into perfect matchings.
Note that \((t,r,n)\)-Steiner systems can be viewed as a class of \((\binom{n-1}{t-1}/\binom{r-1}{t-1})\)-regular hypergraphs.
Thus, the 2006 conjecture of Meszka, Nedela, and Rosa~\cite{MNR06} that if \(n>7\), then every \((2,3,n)\)-Steiner system~\(S\) has \(\chi'(S)\leq (n-1)/2 + 3\) is akin to saying that such Steiner triple systems are ``almost resolvable''.\COMMENT{Considered mentioning Ferber and Kwan's "Almost all Steiner triple systems are almost resolvable" paper. But they're approaching it differently, more directly from the resolvability angle - they prove that most STS have \((1/2 - o(1))n\) disjoint PMs. You can't obtain an asymptotic bound on the chromatic index from their result by lazily making every remaining edge its own colour class since we have \(n^2/6 - o(n)\) already coloured edges and \(n^2/6 - n/6\) edges total, so you'd need another \(\Theta(n)\) colours in this way.}
We remark that Keevash~\cite{K18} proved that for all~\(r,t\) and sufficiently large~\(n\) satisfying some necessary divisibility conditions, there exist resolvable \((t,r,n)\)-Steiner systems, answering one of the oldest questions in combinatorics in the affirmative (for large~\(n\)).
Again, not many general bounds for~\(\chi'(S)\) for such \((t,r,n)\) are known, with the best bounds previously coming from Theorem~\ref{KKMOcolouring}.
The second author \cite{Ke24-nibble} conjectured that, for fixed $r$ and $t$, every $(t, r, n)$-Steiner system has chromatic index $\binom{n+O(1)}{t-1}\big/\binom{r-1}{t-1}$ and consequently a matching of size at least $n/r - O(1)$.

We apply Theorems~\ref{theorem:maintheorem} and~\ref{theorem:maincolourtheorem} to obtain (quickly) the following bounds on matchings and the list chromatic index in general \((t,r,n)\)-Steiner systems.
\begin{theorem}\label{theorem:Steinercolorsmatchings}
Suppose \(1/n\ll1/r<1/t\leq 1/2\), and suppose \(\delta>0\) satisfies \(1/n\ll\delta\).
Let~\(S\) be a (full) \((t,r,n)\)-Steiner system, and let~\(U(S)\) denote the number of vertices uncovered by a maximum matching of~\(S\).
Set \(D\coloneqq\binom{n-1}{t-1}/\binom{r-1}{t-1}\).
\begin{enumerate}[label=\upshape(\roman*)]
\item If \(t<(r+1)/2\) then \(U(S)\leq n^{\frac{r-t}{r-1}+\delta}\), and if \(t\geq(r+1)/2\) then \(U(S)\leq n^{1/2+\delta}\);
\item If \(t< \frac{r}{2}+1\) then \(\chi'_{\ell}(S)\leq\left(1+n^{-\frac{t-1}{r}+\delta}\right)D\), and if \(t\geq \frac{r}{2}+1\) then \(\chi'_{\ell}(S)\leq(1+n^{-1/2+\delta})D\).
\end{enumerate}
\end{theorem}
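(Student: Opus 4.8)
The plan is to view the Steiner system \(S\) as an \(r\)-uniform hypergraph on \([n]\) and apply Theorem~\ref{theorem:maintheorem} for part~(i) and Theorem~\ref{theorem:maincolourtheorem} for part~(ii). The first step is to read off the degree and codegree sequence of \(S\). Since \(S\) is full, it is exactly \(D\)-regular with \(D=\binom{n-1}{t-1}/\binom{r-1}{t-1}\), and a routine double-count shows that every \(j\)-set with \(2\le j\le t\) lies in exactly \(\binom{n-j}{t-j}/\binom{r-j}{t-j}=\Theta_{r,t}(n^{t-j})\) edges, while every \(j\)-set with \(t<j\le r\) lies in at most one edge. Accordingly I would set \(D_j:=\binom{n-j}{t-j}/\binom{r-j}{t-j}\) for \(2\le j\le t-1\) and \(D_j:=1\) for \(t\le j\le r\), and verify the monotonicity \(D_2\ge D_3\ge\cdots\ge D_r\), which reduces to \(D_{j+1}/D_j=(r-j)/(n-j)<1\) for \(2\le j\le t-2\) and \(D_{t-1}=(n-t+1)/(r-t+1)>1\), both holding once \(n\) is large, as guaranteed by \(1/n\ll 1/r\).

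The second step is to identify the bottleneck \(B\). One has \(D/D_2=(n-1)/(r-1)\), so \(\sqrt{D/D_2}=\Theta_{r,t}(n^{1/2})\); for \(4\le j\le t-1\) one has \(D/D_j=\Theta_{r,t}(n^{j-1})\), so \((D/D_j)^{1/(j-1)}=\Theta_{r,t}(n)\); and for \(t\le j\le r\) one has \(D_j=1\), so \((D/D_j)^{1/(j-1)}=D^{1/(j-1)}\), which is smallest at \(j=r\), giving \(D^{1/(r-1)}=\Theta_{r,t}(n^{(t-1)/(r-1)})\). Since the latter two families each dominate \(\sqrt{D/D_2}\) for large \(n\) (as does \(1/\eps\) for the choice \(\eps:=1/n\)), I would take \(B:=\min\{\sqrt{D/D_2},\,D^{1/(r-1)}\}=\Theta_{r,t}(n^{\min\{1/2,(t-1)/(r-1)\}})\) for the matching, and \(B:=\min\{D^{1/r},\,\sqrt{D/D_2}\}=\Theta_{r,t}(n^{\min\{1/2,(t-1)/r\}})\) for the colouring (using \(D^{1/r}<D^{1/(r-1)}\) for the latter). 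Theorem~\ref{theorem:maintheorem} then produces a matching missing at most \(nB^{-1+\gamma}\log^A D\) vertices, and Theorem~\ref{theorem:maincolourtheorem} gives \(\chi'_\ell(S)\le(1+B^{-1+\gamma}\log^A D)D\).

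The final step is bookkeeping. Because \(D=\Theta_{r,t}(n^{t-1})\), the factor \(\log^A D\) is polylogarithmic in \(n\), so choosing \(\gamma=\gamma(\delta,r,t)\) sufficiently small and \(A=A(\delta,r,t)\) sufficiently large ensures \(nB^{-1+\gamma}\log^A D\le n^{\,1-\min\{1/2,(t-1)/(r-1)\}+\delta}\) and \((1+B^{-1+\gamma}\log^A D)D\le(1+n^{-\min\{1/2,(t-1)/r\}+\delta})D\) for all sufficiently large \(n\). Since \(1-\min\{1/2,(t-1)/(r-1)\}\) equals \((r-t)/(r-1)\) when \(t<(r+1)/2\) and equals \(1/2\) otherwise, and \(\min\{1/2,(t-1)/r\}\) equals \((t-1)/r\) when \(t<r/2+1\) and equals \(1/2\) otherwise, these are exactly the claimed bounds on \(U(S)\) and \(\chi'_\ell(S)\).

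There is no deep obstacle here: by design this is a short deduction from the main theorems. The only points requiring care are the exact codegree computation together with the monotonicity check on the \(D_j\), and ensuring that the single constant \(\gamma\) can be chosen to satisfy both \(\gamma\ll 1/(r-1)\) (as Theorems~\ref{theorem:maintheorem} and~\ref{theorem:maincolourtheorem} require) and \(\gamma<\delta\) (so that the \(\log^A D\) and \(B^{\gamma}\) factors are absorbed into the \(n^{\delta}\) error term); the latter forces \(A\) and the threshold for \(n\) to depend on \(\delta\), which is consistent with the hypothesis \(1/n\ll\delta\).
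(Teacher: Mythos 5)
Your proposal is correct and follows essentially the same route as the paper: read off the degree $D$ and codegrees $D_j=\binom{n-j}{t-j}/\binom{r-j}{t-j}$ for $j\le t$ and $D_j=1$ thereafter, observe that $(D/D_j)^{1/(j-1)}\approx n$ for $4\le j\le t$ and that $D^{1/(j-1)}$ is minimised at $j=r$, and feed $B=\min\{\sqrt{D/D_2},D^{1/(r-1)}\}$ (resp.\ $\min\{\sqrt{D/D_2},D^{1/r}\}$) into Theorem~\ref{theorem:maintheorem} (resp.\ Theorem~\ref{theorem:maincolourtheorem}), absorbing $B^{\gamma}\log^A D$ into $n^{\delta}$. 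The small cosmetic difference (you set $D_t:=1$ directly rather than via the binomial formula, which evaluates to $1$ anyway) is immaterial, and your case split on $(t-1)/(r-1)$ vs.\ $1/2$ and $(t-1)/r$ vs.\ $1/2$ matches the paper's.
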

We remark that Theorem~\ref{theorem:maintheorem} can be replaced in the proof of Theorem~\ref{theorem:Steinercolorsmatchings} by Theorems~\ref{theorem:KR},~\ref{theorem:V00}, or~\ref{theorem:KKMO}, which yield \(U(S)\leq n^{(r-2)/(r-1)+\delta}\), \(U(S)\leq n^{(r-t)/(r-t+1)}\log^C n\), \(U(S)\leq n^{(r-2)/(r-1)-f_{(\ref{theorem:KKMO})}(r-1)}\) (each for any~\(t\)) respectively (though we leave the verification of these bounds to the reader).\COMMENT{From proof of Theorem~\ref{theorem:Steinercolorsmatchings}, we have \(D=\binom{n-1}{t-1}/\binom{r-1}{t-1}\), \(D_2=\binom{n-2}{t-2}/\binom{r-2}{t-2}\), and uniformity \(k+1=r\). Then \(D/D_2=(n-1)/(r-1)\). Then \(n(D/D_2)^{-1/k}\approx n^{1-1/(r-1)}=n^{(r-2)/(r-1)}\). Meanwhile for Vu, we have \(D=\Theta(n^{t-1})\) and \(D_j=\Theta(n^{t-j})\) for \(2\leq j\leq t\). Take \(s=t\). Then~\(x\) is constrained by \(\sqrt{n}\) from the intermediate ratios \(D_j/D_{j+1}\), and by \(x^{r-1-t+2}\leq n\) from the final ratio, i.e.\ \(x\leq n^{1/(r-t+1)}\), which is more limiting since \(r-t+1\geq 2\), so leftover essentially \(n^{1-1/(r-t+1)}=n^{(r-t)/(r-t+1)}\).}
Similarly, Theorem~\ref{MRcolouring} yields \(\chi'_{\ell}(S)\leq(1+n^{-1/r}\log^6 n)D\) (for all~\(t\)), and Theorem~\ref{KKMOcolouring} yields \(\chi'(S)\leq(1+n^{-1/r - \eta})D\) for any~\(t\) and any \(0<\eta<f_{(\ref{theorem:KKMO})}(r)\).\COMMENT{Again \(D=\Theta(n^{t-1})\) and \(D_2=\Theta(n^{t-2})\) and \(k+1=r\). So \((D/D_2)^{1/(k+1)}=\Theta(n^{1/r})\) and the error terms deal with the rest.}
In particular, our result gives an improvement over all previously known results whenever $t > 2$.

A \textit{partite} \((t,r,n)\)-\textit{Steiner system} is an \(r\)-uniform, \(r\)-partite hypergraph where each part has size~\(n\) and each \(t\)-set of vertices in distinct parts is contained in exactly one edge.
Again, the case of \((2,3,n)\)-partite Steiner systems has received particular attention, as this case concerns \(n\times n\) Latin squares.
A matching in such a system is equivalent to a Latin square \textit{partial transversal} (a set of cells containing distinct symbols, at most one from each row and column).
Montgomery's~\cite{Mont23} breakthrough paper also shows that if~\(n\) is large and even, then~\(n\times n\) Latin squares have a partial transversal of size~\(n-1\), which resolves the famous Ryser-Brualdi-Stein Conjecture~\cite{R67,S75,BR91} for such~\(n\).
Bowtell and Montgomery~\cite{BM25} also showed that, as $n \rightarrow \infty$, almost all Latin squares are resolvable.
For other values \(r>t\geq 2\), much less is known, with, again, the previously best known bounds in general arising from either Theorem~\ref{theorem:V00} or~\ref{theorem:KKMO}.
Here, Wanless~\cite{Wa11} conjectured that every partite $(d, d+1, n)$-Steiner system has a perfect matching if~$n$ or~$d$ is even, and the second author \cite{Ke24-nibble} conjectured that, for fixed~$r,t$, every partite $(t, r, n)$-Steiner system has chromatic index~$(n + O(1))^{t-1}$ and consequently a matching of size at least~$n - O(1)$.
Regarding edge colouring, the result of Keevash~\cite{K18} also extends to the partite setting, showing that there exist resolvable partite \((t,r,n)\)-Steiner systems for large~\(n\) satisfying some necessary divisibility conditions.
However, for general Latin squares (and \((t,r,n)\)-partite Steiner systems for other \(r>t\geq 2\)), the previously best known bounds we have come again from Theorem~\ref{KKMOcolouring}.

Theorems~\ref{theorem:maintheorem} and~\ref{theorem:maincolourtheorem} can also be applied in this setting, yielding the following.
\begin{theorem}\label{theorem:PartiteSteinercolourmatchings}
Suppose \(1/n\ll1/r<1/t\leq 1/2\), and suppose \(\delta>0\) satisfies \(1/n\ll\delta\).
Let~\(S\) be a partite \((t,r,n)\)-Steiner system, and let~\(U(S)\) denote the number of vertices uncovered by a maximum matching of~\(S\).
\begin{enumerate}[label=\upshape(\roman*)]
\item If \(t<(r+1)/2\) then \(U(S)\leq n^{\frac{r-t}{r-1}+\delta}\), and if \(t\geq(r+1)/2\) then \(U(S)\leq n^{1/2+\delta}\);
\item If \(t< \frac{r}{2}+1\) then \(\chi'_{\ell}(S)\leq\left(1+n^{-\frac{t-1}{r}+\delta}\right)n^{t-1}\), and if \(t\geq\frac{r}{2}+1\) then \(\chi'_{\ell}(S)\leq(1+n^{-1/2+\delta})n^{t-1}\).
\end{enumerate}
\end{theorem}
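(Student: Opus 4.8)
The plan is to apply Theorem~\ref{theorem:maintheorem} for part (i) and Theorem~\ref{theorem:maincolourtheorem} for part (ii) directly to $S$, regarded as an $r$-uniform hypergraph on its $rn$ vertices, following the same template as the proof of Theorem~\ref{theorem:Steinercolorsmatchings}; here the partite structure just makes the degree and codegree data exact rather than merely asymptotic. First I would record this data by routine double counting: every vertex of $S$ lies in exactly $n^{t-1}$ edges, so $S$ is $D$-regular with $D:=n^{t-1}$ (for Theorem~\ref{theorem:maintheorem} take $\eps$ to be any sufficiently small positive number, say $\eps:=1/D$, so that the $1/\eps$ term never binds; for Theorem~\ref{theorem:maincolourtheorem} just use $\Delta(S)=D$). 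A set of $j$ vertices lying in $j$ distinct parts is contained in exactly $n^{t-j}$ edges when $2\le j\le t$ and in at most one edge when $t<j\le r$, while a set meeting some part twice is in no edge; so I would set $D_j:=n^{t-j}$ for $2\le j\le t$ and $D_j:=1$ for $t<j\le r$, which is weakly decreasing as required.

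With these values, $\sqrt{D/D_2}=\sqrt n$, while $(D/D_j)^{1/(j-1)}$ equals $n$ for $j\in\{4,\dots,t\}$ (if any) and equals $n^{(t-1)/(j-1)}$ for $t<j\le r$, the latter decreasing in $j$ and hence minimized at $j=r$ with value $n^{(t-1)/(r-1)}$, so that the bottleneck in Theorem~\ref{theorem:maintheorem} is $B:=\min\{\sqrt n,\,n^{(t-1)/(r-1)}\}$ (the case $(t,r)=(2,3)$, where the index set $\{4,\dots,r\}$ is empty, is consistent with this since then $n^{(t-1)/(r-1)}=\sqrt n$). As $n^{(t-1)/(r-1)}\le\sqrt n$ exactly when $t\le(r+1)/2$, Theorem~\ref{theorem:maintheorem} leaves at most $rn\cdot B^{-1+\gamma}\log^A D$ vertices uncovered, which for large $n$ is at most $n^{(r-t)/(r-1)+\delta}$ when $t<(r+1)/2$ (using $n\cdot n^{-(t-1)/(r-1)}=n^{(r-t)/(r-1)}$) and at most $n^{1/2+\delta}$ when $t\ge(r+1)/2$, the factors $r$, $n^\gamma$, $\log^A D$ being absorbed into $n^\delta$ via $1/n\ll\delta$ after choosing $\gamma$ small compared with $\delta$ and $A$ suitably. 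For part (ii) the only new ingredient is the extra term $D^{1/(k+1)}=n^{(t-1)/r}$ in the constraint on $B$ in Theorem~\ref{theorem:maincolourtheorem}; since $n^{(t-1)/r}$ is at most both $\sqrt n$ (precisely when $t\le r/2+1$) and $n^{(t-1)/(r-1)}$, the bottleneck is $B:=\min\{n^{(t-1)/r},\sqrt n\}$, and Theorem~\ref{theorem:maincolourtheorem} gives $\chi'_{\ell}(S)\le(1+B^{-1+\gamma}\log^A D)D$, which is at most $(1+n^{-(t-1)/r+\delta})n^{t-1}$ when $t<r/2+1$ and at most $(1+n^{-1/2+\delta})n^{t-1}$ when $t\ge r/2+1$.

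I do not expect a genuine obstacle here: the entire content is in computing the codegree sequence of a partite Steiner system correctly and, in each of the two regimes, identifying which of the several expressions in the minimum defining $B$ is the true bottleneck — and then absorbing the subpolynomial and constant factors into $n^\delta$, exactly as in the proof of Theorem~\ref{theorem:Steinercolorsmatchings}. The one mild technical wrinkle is that $S$ is perfectly regular whereas Theorem~\ref{theorem:maintheorem} forbids $\eps=0$, which is handled by taking a negligibly small positive $\eps$.
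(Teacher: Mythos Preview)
Your proposal is correct and takes essentially the same approach as the paper: compute the exact codegree sequence $D=n^{t-1}$, $D_j=n^{t-j}$ for $2\le j\le t$, $D_j=1$ for $t<j\le r$, identify the bottleneck for $B$ as $\min\{\sqrt{n},\,n^{(t-1)/(r-1)}\}$ for Theorem~\ref{theorem:maintheorem} and $\min\{\sqrt{n},\,n^{(t-1)/r}\}$ for Theorem~\ref{theorem:maincolourtheorem}, and absorb the subpolynomial factors into $n^\delta$. The only cosmetic difference is that the paper takes $\eps=1/\sqrt{n}$ rather than $\eps=1/D$, but either choice makes $1/\eps$ non-binding.
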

Using Theorems~\ref{theorem:KR},~\ref{theorem:V00},~\ref{theorem:KKMO} instead yield the same bounds on~\(U(S)\) as they do for the non-partite setting.\COMMENT{From the proof of Theorem~\ref{theorem:PartiteSteinercolourmatchings}, we have \(D=n^{t-1}\) and \(D_j=n^{t-j}\) for \(j=2,...,t\) and \(k+1=r\) so all these theorems do the same thing here as they did for the non-partite case.}
Theorems~\ref{KKMOcolouring} and~\ref{MRcolouring} also yield the same error bounds on~\(\chi'(H)\) and~\(\chi'_{\ell}(H)\) respectively as they do for the non-partite setting.
So again, our result gives an improvement over all previously known results whenever $t > 2$.

We remark that a (non-partite, though similar holds in the partite setting) Steiner system can itself be viewed as a matching in a hypergraph in which vertices are \(t\)-sets of~\([n]\) and edges are those \(t\)-sets in a given \(r\)-set.\COMMENT{I haven't yet defined the notation \(\binom{L}{t}\) and would rather delay it to the prelims section, so I tried to avoid it here}
Theorem~\ref{theorem:maintheorem} can therefore be used to find large partial Steiner systems (that is, almost full subsets of a Steiner system, which is a collection of exactly \(\binom{n}{t}/\binom{r}{t}\) \(r\)-sets).
The existence of a (full) \((t,r,n)\)-Steiner system is one of the oldest questions in combinatorics, dating back to the mid-19th century, and was finally resolved for large~\(n\) in 2014 by Keevash~\cite{K14}, who showed that for such~\(n\) satisfying the necessary divisibility conditions relating to~\(t,r\), there exist \((t,r,n)\)-Steiner systems, proving further there exist \((t,r,n,\lambda)\)-\emph{Steiner systems} (each \(t\)-set is in exactly~\(\lambda\) of the \(r\)-sets), which resolves the `Existence Conjecture' for large~\(n\).
Alternative proofs have since been presented by Glock, K\"uhn, Lo, and Osthus~\cite{GKLO23} and Delcourt and Postle~\cite{DP24res}.

Though the Existence Conjecture is now resolved, we still present the following application of Theorem~\ref{theorem:maintheorem}.
Our primary motivating factor for this is a conference proceedings from Kim~\cite{K01} in 2001, in which Kim claimed the same result as Theorem~\ref{theorem:Steiner} (up to the error term~\(n^{\delta}\)),\COMMENT{In fact, his is logarithmic. It's plausible we could get logarithmic in this specific setting too with a tailor-made sequence of Chomp applications, using the specific codegree sequence afforded by the Steiner setting} though a proof never appeared.
Interestingly, Kim mentioned his proof used the R\"{o}dl Nibble, and commented that he believed ``the nibble method gives no better bound'' up to the error term.
\begin{theorem}\label{theorem:Steiner}
Suppose \(1/n\ll 1/r < 1/t \leq 1/2\), and suppose \(\delta>0\) satisfies \(1/n\ll\delta\).
There exists a partial \((t,r,n)\)-Steiner system of size at least
\[
\left(1-n^{-(r-t)/(\binom{r}{t}-1)+\delta}\right)\binom{n}{t}/\binom{r}{t}.
\]
\end{theorem}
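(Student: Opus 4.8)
The plan is to realise a partial $(t,r,n)$-Steiner system as a matching in an auxiliary hypergraph and apply Theorem~\ref{theorem:maintheorem}, as hinted in the discussion above. Let $H$ be the hypergraph whose vertices are the $t$-subsets of $[n]$ and whose edges are the sets $\{T\subseteq R : |T|=t\}$, one for each $r$-subset $R$ of $[n]$. Then $H$ is $\binom{r}{t}$-uniform (so $k+1=\binom{r}{t}$) on $N:=\binom{n}{t}$ vertices; since $r>t$, distinct $r$-sets yield distinct edge sets, so $H$ has multiplicity $1$. Crucially, a matching in $H$ of size $M$ is precisely a partial $(t,r,n)$-Steiner system with $M$ blocks, and it leaves exactly $\binom{n}{t}-M\binom{r}{t}$ of the $t$-sets uncovered. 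Every $t$-set lies in exactly $D:=\binom{n-t}{r-t}=\Theta(n^{r-t})$ edges, so $H$ is $(N,D,\eps)$-regular for every $\eps>0$; I would take $\eps$ tiny (say $\eps:=n^{-r}$) so that the bottleneck $1/\eps$ never binds.

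Next I would read off the codegree sequence. For $2\le j\le\binom{r}{t}$, a family of $j$ distinct $t$-subsets of $[n]$ has union of size at least $s_j:=\min\{s:\binom{s}{t}\ge j\}$, with equality attainable, so $C_j(H)=\binom{n-s_j}{r-s_j}$; set $D_j:=\binom{n-s_j}{r-s_j}$. As $s_j$ is non-decreasing in $j$ we get $D_2\ge D_3\ge\cdots\ge D_{k+1}$, and since $s_{k+1}=r$ we have $D_{k+1}=1$. Hence $D/D_j=\Theta(n^{s_j-t})$ for each $j$, while $D/D_2=(n-t)/(r-t)=\Theta(n)$. I claim that $B:=n^{(r-t)/(\binom{r}{t}-1)-\delta/2}$ satisfies
\[
B\le\min\left\{\sqrt{D/D_2},\ \min_{4\le j\le k+1}(D/D_j)^{1/(j-1)},\ \frac1\eps\right\}.
\]
Using $D/D_j=\Theta(n^{s_j-t})$ and that the constant $\delta/2$ dominates lower-order terms for large $n$, this reduces to the elementary inequalities $\frac{s_j-t}{j-1}\ge\frac{r-t}{\binom{r}{t}-1}$ for $4\le j\le\binom{r}{t}$ together with $\frac{r-t}{\binom{r}{t}-1}\le\frac12$. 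Since $j\le\binom{s_j}{t}$, both follow once one checks that $g(s):=\tfrac{\binom{s}{t}-1}{s-t}$ is non-decreasing on integers $t<s\le r$ (so $\tfrac{s_j-t}{\binom{s_j}{t}-1}\ge\tfrac{r-t}{\binom{r}{t}-1}$ as $s_j\le r$) and that $g(t+1)=t\ge2$. This monotonicity check is a routine calculation, and it is the only point in the argument requiring genuine care — it is where I expect the main (modest) obstacle to lie.

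Finally I would invoke Theorem~\ref{theorem:maintheorem} with the hierarchy $1/n\ll1/A\ll\gamma\ll\min\{\delta,1/k\}$ — consistent, since $k=\binom{r}{t}-1$ is a constant and $1/n\ll\delta$ — obtaining a matching $M$ of $H$ covering all but at most $NB^{-1+\gamma}\log^A D$ vertices. Now $N=\binom{n}{t}$, $\log^A D=n^{o(1)}$, and $B^{-1+\gamma}\le n^{-(r-t)/(\binom{r}{t}-1)+\delta/2+\gamma}$, so choosing $\gamma$ small (e.g.\ $\gamma<\delta/4$) gives $NB^{-1+\gamma}\log^A D\le\binom{n}{t}\,n^{-(r-t)/(\binom{r}{t}-1)+\delta}$ for $n$ large. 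Translating back, $|M|=\bigl(\binom{n}{t}-\#\{\text{uncovered }t\text{-sets}\}\bigr)\big/\binom{r}{t}\ge\bigl(1-n^{-(r-t)/(\binom{r}{t}-1)+\delta}\bigr)\binom{n}{t}\big/\binom{r}{t}$, which is a partial $(t,r,n)$-Steiner system of the required size.
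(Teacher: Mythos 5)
Your proposal is correct and follows essentially the same route as the paper: the same auxiliary hypergraph on the $t$-subsets of $[n]$, the same codegree sequence $D_j=\binom{n-s_j}{r-s_j}$ (the paper indexes by $i=s_j-t-1$ rather than by $s_j$, but the values agree), and an application of Theorem~\ref{theorem:maintheorem} that reduces to the monotonicity of $s\mapsto(\binom{s}{t}-1)/(s-t)$, which the paper verifies in the equivalent form that $i\mapsto(i+1)/(\binom{t+i+1}{t}-1)$ is decreasing. Your derivation of $(r-t)/(\binom{r}{t}-1)\le 1/2$ from $g(t+1)=t\ge 2$ plus monotonicity is in fact slightly cleaner than the short case analysis in the paper, which otherwise coincides with your argument.
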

Theorem~\ref{theorem:KR} used in place of Theorem~\ref{theorem:maintheorem} in the proof of Theorem~\ref{theorem:Steiner} would yield error term~\(n^{-1/(\binom{r}{t}-1)+\delta}\).
This is slightly improved upon by Theorem~\ref{theorem:KKMO}, by a factor of around \(n^{-f_{(\ref{theorem:KKMO})}(k)}\), where \(k=\binom{r}{t}-1\).
Again, the naturally arising auxiliary hypergraph exhibits heavy ``clustering'' of the codegree sequence that prohibits directly applying Theorem~\ref{theorem:V00} with \(s>2\), but no choice of \((D_j)_{j=2}^s\) with Theorem~\ref{theorem:V00} would yield Theorem~\ref{theorem:Steiner}.\COMMENT{I think I can illustrate this: We have \(D=n^{r-t}\) and \(D_j=n^{r-t-1}\) for \(2\leq j\leq t+1\). Spreading these values out as much as possible to get the least constraining version of Vu's hypothesis on the intermediate ratios, we would put each \(D_j=n^{1/t}\) larger than the next, and would get \(x\leq n^{1/(2t)}\). Any choice of the \((D_j)\) would have \(x\) constrained by this or worse by these intermediate ratios. Now it suffices to find any value of \(r,t\) for which our \(B=n^{(r-t)/(\binom{r}{t}-1)}\) is strictly greater than \(n^{1/(2t)}\) to see that Vu cannot reach our result in general (and in fact, I suspect the last ratio will present the worst constraint for Vu anyway so I dont think Vu's result achieves \(x=n^{1/(2t)}\) in general). \(r=10, t=9\) suffices.
We already know our result implies Vu in general too, so his result can never be the stronger of the two.}

Theorem~\ref{theorem:Steiner} is also related to a folklore conjecture (see \cite[Conjecture 1.2]{BB19}) that asymptotically almost surely, the random greedy algorithm produces a partial $(t, r, n)$-Steiner system of the same size as in Theorem~\ref{theorem:Steiner}.
For the case $t = 2$ and $r = 3$, which corresponds to the \textit{triangle removal process}, Joel Spencer offered \$200 for a proof. 
This case was solved by Bohman, Frieze, and Lubetzky \cite{BFL15} in 2015, and the conjecture was recently proved in full generality by Joos and K\"uhn \cite{JK25}.

\subsection{Organisation of the paper}
In Section~\ref{section:sketch}, we sketch the proof of Theorem~\ref{theorem:maintheorem}.
In Section~\ref{section:prelims}, we cover the notation we will use throughout the remainder of the paper and state a number of probabilistic results that will be useful in our random nibble machinery, and in the generalizations and applications.
In Section~\ref{section:beyond}, we use Theorem~\ref{theorem:maintheorem} to prove Theorems~\ref{theorem:withreserves} and~\ref{thm:mainpartitetheorem}.
Section~\ref{Section:Applications} covers the proofs of each of the applications of Theorems~\ref{theorem:maintheorem},~\ref{thm:mainpartitetheorem}, and~\ref{theorem:maincolourtheorem} discussed in Section~\ref{section:appstatements}.
Sections~\ref{Section:Nibble}--\ref{section:exhausting} collectively prove Theorem~\ref{theorem:maintheorem}, with Section~\ref{Section:Nibble} covering the analysis of a single random nibble, Section~\ref{section:chomp} showing how to iterate this nibble to a ``Chomp'' which yields a matching with vanishing leftover, and Section~\ref{section:exhausting} demonstrating how to tactically iterate many Chomps, so as to tolerate any codegree clustering and still exhaust the whole codegree sequence up to the bottleneck as in Theorem~\ref{theorem:maintheorem}.
\section{Sketch of the proof of Theorem~\ref{theorem:maintheorem}}\label{section:sketch}
The proof of Theorem~\ref{theorem:maintheorem} splits naturally into three stages: the ``Nibble'', which finds a matching covering just~\(o(n)\) vertices; the ``Chomp'', which iterates many nibbles to find a matching covering all but~\(n/x=o(n)\) vertices; and finally we iterate many chomps with ``small'' \(x=\omega(1)\) to tactically exhaust as much of the input hypergraph codegree sequence as possible.
We describe each of these stages in turn, though first we discuss some different approaches to the semi-random method, in order to motivate our strategy and parameter selections.

Our nibble statement is Lemma~\ref{lemma:nibble}, and all of Section~\ref{Section:Nibble} is dedicated to its proof.
The interplay between the parameters~\(\eps\) and~\(\theta\) is of central importance, and guides our strategy.
Here,~\(\eps\) is the error in the vertex degrees, and~\(\theta/D\) is the probability with which a single edge is chosen for the matching in a nibble. 
It is useful to distinguish between two common nibble strategies.
In the ``wasteful'' nibble (e.g.~\cite{G99,V00}), one chooses edges independently with probability \(\theta/D\), sets the matching to be the subset of these edges not intersecting any others, and simply throws away the vertices in the intersecting edges, so they do not take part in the next iteration of the nibble.
This involves roughly~\(\theta^2 n\) vertices being ``wasted'' in each step, as they are not covered by the matching nor are they available to be covered in the future, and introduces error~\(\theta^2 D\) in the vertex degrees.
In contrast, in the ``non-wasteful'' nibble (e.g.~\cite{AKS97}), vertices in chosen but intersecting edges are not thrown away.
To help further control vertex degree errors (in either regime, e.g.~\cite{AKS97,V00}), one (initially counter-intuitively) independently puts some vertices in a ``waste set''~\(W\), with different probabilities for each vertex (that turn out to be on the order~\(\eps\theta\)), chosen carefully to counteract the slightly different vertex degrees such that every vertex is exactly equally likely to survive to the next iteration (that is, be uncovered by~\(M\) \textit{and} not be put in~\(W\)).
In this regime, the vertices uncovered by~\(M\) that are also unavailable to be covered in the future are precisely those in~\(W\setminus V(M)\), of which there are around~\(\eps\theta n\), and this introduces error~\(\eps\theta D\) in the vertex degrees.

This turns out to be very convenient, as this means the parameter~\(\eps\) grows in each iteration of the nibble by a factor~\((1+\theta)\), essentially the same rate by which the parameter~\(n\) decreases, so that each ``waste set''~\(W\) has size \(\eps'\theta n'\approx\eps\theta n\).
Thus, provided one only ever takes say logarithmically many nibbles, the total number of vertices ``wasted'' over the process is only around~\(\eps\theta n\).
In order to ensure the vertex degrees are concentrated in a single random nibble, within the error as described above, the `right' hypothesis on the ratio~\(D/D_2\) turns out to be that~\(\eps^{2}\theta D/D_2\) should be at least polylogarithmic in~\(D\) (see Lemma~\ref{lemma:nibble},~\ref{hypothesis:degratio}).
As discussed in Section~\ref{section:intro}, starting the whole iteration process with~\(\eps\) as small as possible is ideal, to maximise the number of times we may iterate.
We thus wish to permit~\(\eps\) to be essentially as small as~\(\sqrt{D_2/D}\) if the host hypergraph allows, and this suggests we should set~\(\theta\) to be ``large'' (in the sense that \(1/\theta\) should be a small~\(\omega(1)\)), and indeed we later set \(\theta\coloneqq 1/\log^2 D\) in the proof of the Chomp Lemma (Lemma~\ref{lemma:chomp}).
As a result, the total number of vertices we plan to ever throw in the ``waste'' is around \(\eps\theta n \approx \eps n\).

It is therefore clear that we should adopt a non-wasteful nibble, as errors on the order~\(\theta^2 D\) are much too large to tolerate if \(\eps\ll\theta\).
To concentrate the vertex degrees within a single non-wasteful nibble, within error~\(\eps\theta D\), we turn to a recent, powerful version of Talagrand's Inequality given by Delcourt and Postle~\cite{DP24} called ``Linear Talagrand's with Exceptional Outcomes'' (Theorem~\ref{theorem:lintal} in the current paper).
Though the random variable~\(\textbf{deg}\mathbf{'}(v)\) which tracks the ``surviving degree'' of~\(v\) after the nibble is not suitable for a Talagrand-type inequality directly, we show that it may be split up into a combination of random variables which are.
The ``exceptional outcomes'' aspect of Theorem~\ref{theorem:lintal} is also needed, as there are outcomes of the random nibble in which too many random choices have too much effect on~\(\textbf{deg}\mathbf{'}(v)\), but crucially, we show that these outcomes are very unlikely.
The approach to this part of the argument is discussed further shortly before it is implemented in the proof of Lemma~\ref{lemma:concanalysis}\ref{concvertexdegree}.

Another crucial ingredient of our overall strategy is the role of the set \(J^{*}\subseteq [k]\setminus\{1\}\) first appearing in the statement of Lemma~\ref{lemma:nibble}.
In his corresponding Nibble Lemma, Vu~\cite[Lemma 5.2]{V00} hypothesises that all ratios \(D_j/D_{j+1}\) for \(2\leq j<s\) (with~\(s\) as in Theorem~\ref{theorem:V00}) are large enough that we may successfully track the expected degradation of all such~\(D_j\).
In our overall approach, we simply wish to strategically choose different \(J^{*}\subseteq[k]\setminus\{1\}\) at each timestep (nibble) for which we hypothesise that those \(j\in J^{*}\) are such that \(D_j/D_{j+1}\) is large enough to track the degradation of~\(D_j\).
We make no hypotheses nor conclusions for those \(j\notin J^{*}\).
We will describe in the later stages how to strategically choose~\(J^{*}\) at each timestep to accomplish our goals.
To summarize, we perform a non-wasteful nibble, and we seek to track the degradation of~\(D\), the degradation of~\(D_j\) for those \(j\in J^{*}\), and the growth of~\(\eps\) within an acceptably slow rate.
Theorem~\ref{theorem:lintal} suffices for all of these purposes (and also for showing the nibble is pseudorandom with respect to some well-behaved family~\(\cT\) of weight functions), with simpler concentration inequalities (Lemmas~\ref{chernoff},~\ref{lemma:martingale}) covering the other necessaries like the size of~\(W\) and the remaining vertex set.
An application of the asymmetric Lov\'{a}sz Local Lemma (Lemma~\ref{lemma:LLL}) shows there is some outcome satisfying all of the above simultaneously.

In the Chomp Lemma (Lemma~\ref{lemma:chomp}), we seek to iterate many nibbles (roughly~\(\log^3 D\)), such that the remaining hypergraph has size~\(n/x=o(n)\), where~\(x=\omega(1)\) is constrained by the codegree ratios in a similar way to the~\(x\) of Theorem~\ref{theorem:V00}.
For convenience, all of the nibbles within a chomp use the same~\(J^{*}\).
If we iterate the nibble until the leftover hypergraph has~\(n/x\) vertices, then we expect that~\(D\) has dropped to roughly~\(D/x^k\), and each~\(D_j\) for \(j\in J^{*}\) has dropped to~\(D_j/x^{k-j+1}\).
This causes a phenomenon that will be essential later:\ earlier codegrees (\(D_j\) for smaller~\(j\)) degrade faster than later ones, assuming both are in~\(J^{*}\).
Those~\(D_j\) for \(j\notin J^{*}\) (including~\(D_{k+1}\)) are unaffected; we do not track their degradation in any of the nibbles, so the best we can say for these~\(j\) is that \(C_j(H')\leq C_j(H)\leq D_j\), where \(H, H'\) are the hypergraphs before and after this chomp, respectively.
Our approach to proving the Chomp Lemma via iteration of the Nibble Lemma is relatively straightforward: simply iterate with \(\theta\coloneqq1/\log^2 D\) (always the original~\(D\), that is), \(\lflr\log^2 D\log x\rflr\) times.

We remark that the Chomp Lemma implies Theorem~\ref{theorem:V00},\COMMENT{I prove this in a COMMENT just before the statement of the Chomp} and has some important additional strengths.\COMMENT{Discussed further in a new spiel at the start of Section~\ref{section:chomp}}
The most essential differences are the freedom to choose~\(J^{*}\) and, as discussed, the Chomp Lemma ensures that any input~\(\eps\geq\sqrt{D_2/D}\) degrades at the expected rate, allowing one to input an~\(\eps\) much smaller than the required~\(1/x\) if desired and still get out something useful, potentially allowing for many future Chomps.
Our strategy of iterating the Nibble also makes better use of the ``space'' between codegrees, and so the~\(x\) of the Chomp Lemma is only constrained by~\(D_j/D_{j+1}\) when \(j,j+1\in J^{*}\) (as is effectively the case for all \(2\leq j<s-1\) in Theorem~\ref{theorem:V00}), rather than \(\sqrt{D_j / D_{j+1}}\).
This occurs because such~\(D_j, D_{j+1}\) will decrease to \(D_j/x^{k-j+1}, D_{j+1}/x^{k-j}\) respectively, so the ratio \(D_j/D_{j+1}\) becomes smaller by factor~\(x\) (so~\(x\) must initially ``fit into'' this ratio in order for the ratio to still be large enough at each timestep for the nibbles to work).
Our~\(x\) is also constrained by \(x^{k-j+1}\leq D_j/D_{j+1}\) whenever \(j\in J^{*}\), \(j+1\notin J^{*}\), as these~\(D_j\) will degrade to~\(D_j/x^{k-j+1}\) while~\(D_{j+1}\) does not move.
Note that substituting \(j=s-1\) yields condition~\ref{condition:finalratio} of Theorem~\ref{theorem:V00}.

It remains only to discuss Section~\ref{section:exhausting}, in which we iterate the Chomp Lemma, strategically choosing~\(J^{*}\) at each iteration so as to exhaust all the space in the codegree sequence of~\(H\) that we can before hitting some bottleneck.
With~\(B, \gamma\) as in Theorem~\ref{theorem:maintheorem}, we set \(x\coloneqq B^{\gamma^4}\), and seek to iterate the Chomp Lemma, always with this~\(x\), roughly~\(\gamma^{-4}\) times, as this will result in leftover size roughly \(n/x^{\gamma^{-4}}=n/B\) as desired (we return to the waste~\(W\) later).
The rough idea is that, at each timestep~\(t\), we simply check which ratios \(D_j/D_{j+1}\) for \(2\leq j\leq k\) are currently large enough to tolerate putting \(j\in J^{*}_t\) (in practice we put \(j\in J^{*}_t\) if \(D_j^{(t)}/D_{j+1}^{(t)}\geq x^k\), so this is enough space if \(j+1\in J^{*}_t\) or otherwise).
These~\(j\) become precisely the indices of~\(J^{*}_t\), so that~\(j\notin J^{*}_t\) implies that \(D_j^{(t)}/D_{j+1}^{(t)}<x^k\), and we call such~\(j\) ``super-stuck'' (at time~\(t\)).
The backbone of the proof is an algorithm we call ``MINI-CHOMP-WHERE-ABLE'', or ``MCWA'' for short, which simply identifies which~\(j\) are not super-stuck at time~\(t\) and puts these \(j\in J^{*}_t\) for an application of the Chomp Lemma with the ``mini'' value~\(x\).
Then the Chomp Lemma finds a matching (and waste set~\(W\)) of the current hypergraph whose removal sends \(n_t\rightarrow n_t/x\), \(\eps_t\rightarrow \eps_t x\), \(D^{(t)}\rightarrow D^{(t)}/x^k\), and \(D_j^{(t)}\rightarrow D_{j}^{(t)}/x^{k-j+1}\) for \(j\in J^{*}_t\), \(D_j^{(t)}\rightarrow D_j^{(t)}\) for \(j\notin J^{*}_t\), becoming the values~\(n_{t+1}\) and so on.

Before we look at the behaviour of the whole codegree sequence during MCWA, it is useful to examine the behaviour of two consecutive codegrees \((D_j^{(t)}), (D_{j+1}^{(t)})\).
We say an index~\(j\) is ``semi-stuck'' if \(D_j^{(t)}/D_{j+1}^{(t)}<B^{\gamma^3}\), which is a weaker condition than being super-stuck. 
For as long as the ratio \(D_j^{(t)}/D_{j+1}^{(t)}\) is larger than~\(x^k\), MCWA will continue putting \(j\in J^{*}_t\), so that~\(D_j^{(t)}\) continues to degrade.
In each of these timesteps, either \(j+1\in J^{*}_t\) or not.
If not, then~\(D_j^{(t)}\) ``closes the gap'' considerably.
If \(j+1\in J^{*}_t\), then \(D_j^{(t)}\rightarrow D_j^{(t)}/x^{k-j+1}\) and \(D_{j+1}^{(t)}\rightarrow D_{j+1}^{(t)}/x^{k-j}\) so that the ratio \(D_j/D_{j+1}\) will still shrink by~\(x\), i.e.\ \(D_j^{(t)}\) \textit{still closes the gap}.
Eventually, there will come a point when~\(j\) is now semi-stuck, but is still not super-stuck, and at a later timestep still,~\(j\) becomes super-stuck.
When this happens,~\(j\) is no longer put in~\(J^{*}_t\), and~\(D_j^{(t)}\) and~\(D_{j+1}^{(t)}\) remain together for as long as \(j+1\notin J^{*}_t\), but begin to drift apart when~\(j+1\) has been put in~\(J^{*}_t\) for a few iterations.
However, because we make such a small amount of ``progress'' with each Chomp (\(x\) is chosen small),~\(D_{j+1}^{(t)}\) cannot fall far enough below~\(D_{j}^{(t)}\) for~\(j\) not to be semi-stuck at any timestep before~\(j\) becomes not super-stuck, at which point~\(D_j^{(t)}\) closes the gap again.
The key behaviour is, though indices~\(j\) may frequently change their state of being super-stuck, \emph{once an index becomes semi-stuck, it remains semi-stuck}.

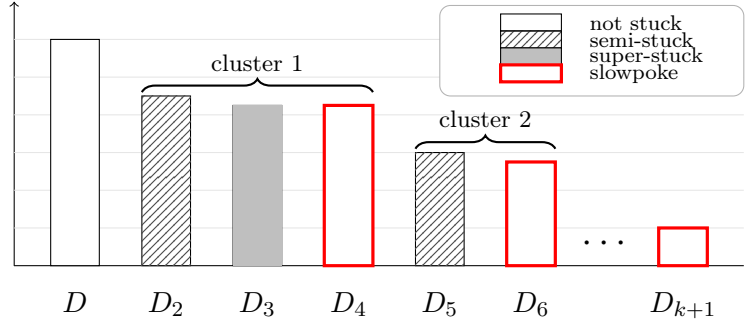
\begin{figure}
  \centering

  \begin{tikzpicture}[x=0.8cm,y=0.25cm]

    \def\barw{0.8} 

    \def\xD{0}  \def\xDtwo{1.5}  \def\xDthree{3}  \def\xDfour{4.5}
    \def\xDfive{6}  \def\xDsix{7.5}

    \def\xDots{8.75}   
    \def\xDkone{10}

    \def\hD{12} \def\hDtwo{9} \def\hDthree{8.5} \def\hDfour{8.5}
    \def\hDfive{6} \def\hDsix{5.5} \def\hDkone{2}

    \def\yTopOne{9.2}   
    \def\yTopTwo{6.2}   

    \draw[] (-1,0) -- (11,0); 
    \draw[->,line cap=round] (-1,0) -- (-1,14);
    \foreach \y in {2,4,6,8,10,12}{
      \draw[gray!20] (-1,\y) -- (11,\y);
    }

    \foreach \x/\h in {
      \xD/\hD, \xDtwo/\hDtwo, \xDthree/\hDthree,
      \xDfour/\hDfour, \xDfive/\hDfive, \xDsix/\hDsix}{
      \draw[fill=white,draw=black] (\x-\barw/2,0) rectangle (\x+\barw/2,\h);
    }

    \foreach \x/\h in {\xDtwo/\hDtwo, \xDfive/\hDfive}{
      \draw[pattern=north east lines,pattern color=black!70,draw=none]
      (\x-\barw/2,0) rectangle (\x+\barw/2,\h);
    }
    \foreach \x/\h in {\xDthree/\hDthree}{
      \draw[fill=black!25,draw=none]
      (\x-\barw/2,0) rectangle (\x+\barw/2,\h);
    }
    \foreach \x/\h in {\xDfour/\hDfour, \xDsix/\hDsix, \xDkone/\hDkone}{
      \draw[draw=red,very thick,fill=none]
      (\x-\barw/2,0) rectangle (\x+\barw/2,\h);
    }

    \node at (\xDots, 1.2) {\Large $\cdots$};

    \node[below=6pt] at (\xD,0) {$D$};
    \node[below=6pt] at (\xDtwo,0) {$D_2$};
    \node[below=6pt] at (\xDthree,0) {$D_3$};
    \node[below=6pt] at (\xDfour,0) {$D_4$};
    \node[below=6pt] at (\xDfive,0) {$D_5$};
    \node[below=6pt] at (\xDsix,0) {$D_6$};
    \node[below=6pt] at (\xDkone,0) {$D_{k+1}$};

    \draw[decorate,decoration={brace,amplitude=5pt},thick]
    (\xDtwo-\barw/2,\yTopOne) -- node[above=4pt,midway,font=\footnotesize]{cluster 1}
    (\xDfour+\barw/2,\yTopOne);

    \draw[decorate,decoration={brace,amplitude=5pt},thick]
    (\xDfive-\barw/2,\yTopTwo) -- node[above=4pt,midway,font=\footnotesize]{cluster 2}
    (\xDsix+\barw/2,\yTopTwo);

    \begin{scope}[shift={(7,12)}] 
      \draw[rounded corners,fill=white,draw=black!30] (-1.0,-3.0) rectangle (4.0,1.8);

      \def\lh{0.9} 
      \draw[fill=white,draw=black] (0,0.9-\lh/2) rectangle ++(1.0,\lh);
      \node[anchor=west] at (1.3,0.9) {\scriptsize not stuck};

      \draw[pattern=north east lines,pattern color=black!70,draw=black]
      (0,0-\lh/2) rectangle ++(1.0,\lh);
      \node[anchor=west] at (1.3,0.0) {\scriptsize semi-stuck};

      \draw[fill=black!25,draw=black] (0,-0.9-\lh/2) rectangle ++(1.0,\lh);
      \node[anchor=west] at (1.3,-0.9) {\scriptsize super-stuck};

      \draw[draw=red,very thick,fill=none] (0,-1.8-\lh/2) rectangle ++(1.0,\lh);
      \node[anchor=west] at (1.3,-1.8) {\scriptsize slowpoke};
\end{scope}
\end{tikzpicture}
\caption{An illustration of semi-stuck, super-stuck, and slowpoke indices, and clustering of the degree sequence.}
\label{fig:degree-clusters}
\end{figure}

Now suppose we begin with some~\(H\), having, say, a monotone decreasing codegree sequence \(D,D_2,\dots,D_{k+1}\), and assume \(D\gg D_2\) (see, e.g., Figure \ref{fig:degree-clusters}).
The indices \(2,3,\dots,k+1\) partition into subsets of consecutive indices such that all but the largest index in the subset is semi-stuck.
We call such subsets ``clusters'', and the largest index in each cluster is the ``slowpoke''.
(If all codegrees are meaningfully different then each cluster has size 1, and otherwise the codegree sequence exhibits at least some stepping/clustering behaviour.)
Clearly, at each timestep for as long as the clusters remain the same, MCWA will put each slowpoke (except~\(k+1\)) in~\(J^{*}_t\), so the codegree~\(D_z\) corresponding to the slowpoke~\(z\) degrades at the expected rate.
All other indices~\(j\) in the cluster are semi-stuck, and so by the above observation, continue to be semi-stuck at each timestep, and so remain in this cluster by definition.
Since these~\(j\) remain semi-stuck, each such~\(D_j^{(t)}\) remains not far above the value \(D_z^{(t)}\).
That is to say, the macro-behaviour is that \emph{the whole cluster degrades in unison, at the rate corresponding to the slowpoke}.
The slowpoke receives that name because all other indices in the cluster ``want'' their~\(D_j\) to decrease faster than the slowpoke allows, but instead must always lag behind the slowpoke.
Each cluster degrades during MCWA at a rate faster than the clusters corresponding to larger indices, and so eventually, one cluster may catch up to the next, at which point the slowpoke of the faster cluster becomes semi-stuck, and the clusters merge into one supercluster, now all degrading at the rate corresponding to the slower slowpoke, and MCWA continues.
Since concentrating the vertex degrees during a nibble is not optional (whereas tracking the degradation of some codegree is), we need~\(D/D_2\) to remain large for as long as possible.
The above behaviour of the codegree clusters during MCWA ensures that~\(D_2^{(t)}\) is always being ``pushed down'' as much as the codegree sequence allows, prolonging the time when we must halt due to \(D^{(t)}\approx D_2^{(t)}\). 

To initialize MCWA, we set \(\eps^{*}\approx B^{-1}\), so~\(H\) is \((n,D,\eps^{*})\)-regular since \(B\leq1/\eps\) by hypothesis, and we require \((\eps^{*})^2 D/D_2\) to be roughly at least~\(x^k=B^{k\gamma^4}\) (see Lemma~\ref{lemma:chomp},~\ref{chomphyp:trackingdegratio}--\ref{chomphyp:nontrackingdegratio}).
This holds because \(\eps^{*}\approx B^{-1}\geq\sqrt{D_2/D}\) by hypothesis.
Since, by the above, any amount of clustering of the codegrees \(D_2, D_3, \dots, D_{k+1}\) does not stop MCWA, there are only two reasons for MCWA to abort before completing the desired~\(\gamma^{-4}\) iterations: either~\(\eps_t\) has grown to almost~\(\Theta(1)\), or~\(D\) (which degrades with every timestep) has caught up to~\(D_2^{(t)}\).
The former issue does not occur too early because \(\eps^{*}\approx B^{-1}\) takes~\(\gamma^{-4}\) timesteps for \(\eps_t=\eps^{*}x^t=\eps^{*}B^{t\gamma^4}\) to grow to~\(\Theta(1)\).
The latter issue requires a more careful analysis of the evolution of the whole codegree sequence to understand the evolution of~\(D_2^{(t)}\) as it gets stuck behind progressively slower slowpokes, and we defer the details to Section~\ref{section:exhausting}, but the main observation is that~\(D_2^{(t)}\) is always roughly equal to \(\max_{j\geq 2} (D_j/x^{t(k-j+1)})\), as this is the largest codegree of any slowpoke at time~\(t\).
It turns out that~\(D\) will catch up to~\(D_2^{(t)}\) at time~\(t\) if \(x^t\approx(D/D_j)^{1/(j-1)}\), where~\(j\) attains the maximum expression above at time~\(t\) (i.e.\ the index~\(2\) is stuck behind this slowpoke~\(j\) at time~\(t\)), so since~\(B\) is chosen to be smaller than all these expressions, this does not occur before MCWA completes the desired number of iterations.
When MCWA is finished, the leftover hypergraph has around~\(n/x^{\gamma^{-4}}=n/B\) vertices, and the total number of vertices ever thrown in the waste~\(W\), as discussed earlier, is around \(\eps^{*}n\approx n/B\).\COMMENT{I sort of feel an inclination to flip the script here and explain that our proof really shows that if you nibble with no endgoal in mind and proceed until you are stopped by something at time \(t\) having made progress \(x^t\), you are stopped because \(x^t=\) one of the bottlenecks, and explain what that means for each bottleneck. I don't know if it's necessary though, it's clearly already at least implicit, even if that's not the form our proof takes, and the sketch already leans long...}

We remark that instead of saying the process aborts when~\(D^{(t)}\) reaches~\(D_2^{(t)}\) (the ``simplified heuristic''), it would be more accurate to say that~\((\eps_t)^2 D^{(t)}/D_2^{(t)}\) must remain~\(\omega(1)\).
This clearly fails if~\(D^{(t)}\) becomes~\(D_2^{(t)}\), but may fail sooner if~\(\eps^{*}\) is taken too small.
However, our choice of~\(\eps^{*}\) ensures that~\(\eps_t\) reaches~\(\Theta(1)\) from below at roughly the same time that \(D^{(t)}/D_2^{(t)}\) reaches~\(\Theta(1)\) from above (assuming the bottleneck for~\(B\) was not the original~\(1/\eps\), as in that case the process stops potentially long before~\(D^{(t)}\) becomes~\(D_2^{(t)}\)), so the simplified heuristic is accurate for us.
We defer the precise details to Section~\ref{section:exhausting}.

\section{Preliminaries}\label{section:prelims}
In this section, we discuss the notation we will use throughout the paper, and some results we will use in the proofs in Sections~\ref{section:beyond},~\ref{Section:Applications}, and~\ref{Section:Nibble}.
\subsection{Notation}\label{section:notation}
We write \(a=(1\pm b)c\) to mean \((1-b)c\leq a\leq (1+b)c\).
For a positive integer~\(n\), we define \([n]\coloneqq\{1,2,\dots,n\}\) and \([n]_0\coloneqq[n]\cup\{0\}\).
For a set \(A\) and an integer \(i\in[|A|]_0\), we define \(\binom{A}{i}\coloneqq\{B\subseteq A\colon |B|=i\}\).
Let~\(H\) be a hypergraph.
For a set \(Z\subseteq V(H)\), we define \(\partial_{H}(Z)\coloneqq\{e\in E(H)\colon e\supseteq Z\}\), dropping the subscript~\(H\) when~\(H\) is clear from context.
We write~\(\partial_{H}(v)\) for~\(\partial_{H}(\{v\})\).
For distinct $v,v'\in V(H)$, we denote by~$\text{dist}_{H}(v,v')$ (the \textit{distance} from~$v$ to~$v'$ in~$H$) the minimum~$i$ (or~$\infty$ if there is no such~$i$) such that there are edges~$\{e_{j}\}_{j\in[i]}$ in~$H$ for which $v\in e_{1}$, $v'\in e_{i}$, and $e_{j}\cap e_{j+1}\neq\emptyset$ for all $j\in[i-1]$.
If $v=v'$ then define $\text{dist}_{H}(v,v')=0$.
The edge set~\(E(H)\) of a \textit{multihypergraph}~\(H\) is permitted to be a multiset, with copies of an edge each being treated as a distinct member of~\(E(H)\).\COMMENT{Though I go on to make more multihypergraph definition clarifications in Section~\ref{section:multi}, I felt there needed to be at least something initially to explain what exactly is meant by multihypergraph in the statement of Lemma~\ref{lemma:nibble}, which will be read before Section~\ref{section:multi}.}
As such,~\(\partial(v)\) and~\(\partial(Z)\) are allowed to be multisets, and~\(\text{deg}(v)\) and~\(\text{codeg}(Z)\) count copies.
We use bold lettering for random variables and random sets.
We consider empty sums to be~\(0\), empty products to be~\(1\), and identify intersections of no events with the sample space. 
\subsection{Tools}\label{section:concineqs}
Throughout the paper, we make frequent use of the inequalities \(1+x\leq(1+x/n)^n \leq e^x\) for all \(n\geq 1\), \(|x|\leq n\) and \(e^x\leq 1+x+x^2\) for all \(x\in(-\infty,1.79)\).  

We will use the following version of the ``asymmetric'' Lov\'{a}sz Local Lemma (see for instance~\cite{AS92}) when we collect failure probabilities in our random nibble procedure in Section~\ref{Section:Nibble}.
We also use it to find an appropriate reservoir in the proof of Theorem~\ref{thm:mainpartitetheorem}.
Where \(A_1, A_2, \dots, A_m\) are events in a probability space, we say that a directed graph~\(G\) is a \textit{dependency digraph} for \(\{A_1, A_2, \dots, A_m\}\) if \(V(G)=\{a_i\colon i\in[m]\}\) and each~\(A_i\) is mutually independent of \(\{A_j\colon a_{i}a_{j}\notin E(G)\}\).\COMMENT{Leave it unsaid that \(a_{i}a_{j}, a_{j}a_{i}\in E(G)\) permissible?}
\begin{lemma}[Lov\'{a}sz Local Lemma]\label{lemma:LLL}
Suppose that~$G$ is a dependency digraph for~$\{A_{i}\colon i\in[m]\}$, and suppose that \(x_1,x_2,\dots,x_m\) are real numbers satisfying \(0\leq x_i < 1\) and \(\prob{A_i}\leq x_i\prod_{j\colon a_{i}a_{j}\in E(G)}(1-x_j)\) for all \(i\in[m]\).
Then \(\prob{\bigcap_{i\in[m]}\overline{A_i}}>0\).
\end{lemma}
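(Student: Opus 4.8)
The plan is to prove the standard strengthening of the statement and then read the conclusion off it. The strengthening is: \emph{for every $i \in [m]$ and every $S \subseteq [m] \setminus \{i\}$ with $\pr\left[\bigcap_{j \in S}\overline{A_j}\right] > 0$, we have $\pr\left[A_i \mid \bigcap_{j \in S}\overline{A_j}\right] \le x_i$.} Granting this, the conclusion follows from the chain rule:
\[
\pr\left[\bigcap_{i \in [m]}\overline{A_i}\right] = \prod_{i=1}^{m}\pr\left[\overline{A_i} \ \Big|\ \bigcap_{j<i}\overline{A_j}\right] \ge \prod_{i=1}^{m}(1 - x_i) > 0,
\]
where, processing $i = 1, 2, \dots, m$ in order, one uses the strengthening with $S = \{1,\dots,i-1\}$ to bound the $i$-th factor below by $1 - x_i$, and simultaneously sees inductively that $\pr\left[\bigcap_{j<i}\overline{A_j}\right] \ge \prod_{j<i}(1-x_j) > 0$, so every conditional probability above is well-defined.

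I would prove the strengthening by induction on $|S|$, carrying along the positivity statement $\pr\left[\bigcap_{j \in S}\overline{A_j}\right] > 0$ as part of the hypothesis. The base case $S = \emptyset$ is exactly the given bound, since $\pr[A_i] \le x_i\prod_{j : a_i a_j \in E(G)}(1-x_j) \le x_i$. For the inductive step, split $S = S_1 \cup S_2$ where $S_1 \coloneqq \{j \in S : a_i a_j \in E(G)\}$ collects the out-neighbours of $a_i$ in $S$ and $S_2 \coloneqq S \setminus S_1$, and write
\[
\pr\left[A_i \ \Big|\ \bigcap_{j \in S}\overline{A_j}\right] = \frac{\pr\left[A_i \cap \bigcap_{j \in S_1}\overline{A_j} \ \big|\ \bigcap_{j \in S_2}\overline{A_j}\right]}{\pr\left[\bigcap_{j \in S_1}\overline{A_j} \ \big|\ \bigcap_{j \in S_2}\overline{A_j}\right]}.
\]
The numerator is at most $\pr\left[A_i \mid \bigcap_{j \in S_2}\overline{A_j}\right]$, and since $\{A_j : j \in S_2\} \subseteq \{A_j : a_i a_j \notin E(G)\}$ and $A_i$ is mutually independent of the latter family, this equals $\pr[A_i] \le x_i\prod_{j : a_i a_j \in E(G)}(1-x_j)$. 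For the denominator, enumerate $S_1 = \{j_1,\dots,j_r\}$ in any order and expand by the chain rule into $\prod_{\ell=1}^{r}\pr\left[\overline{A_{j_\ell}} \mid \bigcap_{\ell'<\ell}\overline{A_{j_{\ell'}}} \cap \bigcap_{j \in S_2}\overline{A_j}\right]$: each conditioning event here has the form $\bigcap_{j \in T}\overline{A_j}$ with $|T| \le |S| - 1$, so the inductive hypothesis gives both that it has positive probability and that $\pr[A_{j_\ell} \mid \cdots] \le x_{j_\ell} < 1$; hence each factor is at least $1 - x_{j_\ell} > 0$, and the denominator is at least $\prod_{\ell=1}^{r}(1-x_{j_\ell}) \ge \prod_{j : a_i a_j \in E(G)}(1-x_j)$, the latter product having at least as many factors, all lying in $(0,1]$. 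Dividing, the $\prod(1-x_j)$ terms cancel, leaving the desired bound $x_i$; and positivity of $\pr\left[\bigcap_{j \in S}\overline{A_j}\right]$ likewise follows from a chain-rule expansion with all factors bounded below by $1 - x_j > 0$.

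There is essentially no combinatorial content here; the work is careful bookkeeping of conditional probabilities, and the step I would be most wary of is exactly that bookkeeping — ensuring every conditioning event has strictly positive probability before it is used as a divisor, which is why the positivity claim must be threaded through the induction rather than proved afterwards. The only genuinely probabilistic input is the observation that mutual independence of $A_i$ from its non-out-neighbours lets us replace the event $\bigcap_{j \in S_2}\overline{A_j}$ by the whole sample space in the numerator; everything else is the chain rule together with monotonicity of probability. The choice of ordering used to enumerate $S_1$ is immaterial, since the chain rule yields the same product in any order.
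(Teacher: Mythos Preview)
Your proof is correct and is essentially the standard textbook argument (as in Alon--Spencer). Note, however, that the paper does not prove this lemma at all: it is stated as a known tool with a citation to \cite{AS92}, so there is no ``paper's own proof'' to compare against. Your write-up would serve perfectly well as a self-contained proof; the only minor wording issue is that ``carrying along the positivity statement as part of the hypothesis'' initially reads as though positivity is being assumed rather than proved, whereas you (correctly) do establish it at the end of the inductive step --- it would be cleaner to state up front that the induction simultaneously proves both (a) $\pr\bigl[\bigcap_{j\in S}\overline{A_j}\bigr]>0$ and (b) $\pr\bigl[A_i \mid \bigcap_{j\in S}\overline{A_j}\bigr]\le x_i$ for all $i\notin S$.
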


We will need a variety of commonly used concentration inequalities, the first of these being the following versions of Chernoff's inequality (see for example~\cite{JLR11}).
\begin{lemma}[Chernoff's Inequality]\label{chernoff}
Suppose $\mathbf{X}\sim\text{Bin}(n,p)$.
\begin{enumerate}[label=\upshape(\roman*)]
\item\label{lemma:chernoffbigexp} For $0<\eps\leq3/2$, we have
\[
\prob{|\mathbf{X}-\expn{\mathbf{X}}|\geq \eps\expn{\mathbf{X}}}\leq 2\exp\left(-\frac{\eps^{2}}{3}\expn{\mathbf{X}}\right).
\]
\item\label{lemma:chernoffsmallexp}
For \(\beta>1\), we have
\[
\prob{\mathbf{X}>\beta\expn{\mathbf{X}}}\leq(\beta/e)^{-\beta\expn{\mathbf{X}}}.
\]
\end{enumerate}
\end{lemma}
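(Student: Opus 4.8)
The plan is to prove both parts by the exponential moment method (the Cram\'er--Chernoff trick). Write $\mathbf{X}=\sum_{i=1}^n\mathbf{X}_i$ where the $\mathbf{X}_i$ are independent Bernoulli$(p)$ variables, and set $\mu\coloneqq\expn{\mathbf{X}}=np$. For any $t>0$, Markov's inequality applied to the nonnegative variable $e^{t\mathbf{X}}$ gives
\[
\prob{\mathbf{X}\geq a}\;\leq\; e^{-ta}\,\expn{e^{t\mathbf{X}}}\;=\;e^{-ta}\prod_{i=1}^n\expn{e^{t\mathbf{X}_i}}\;=\;e^{-ta}\,(1-p+pe^t)^n\;\leq\;e^{-ta}\exp\!\big(\mu(e^t-1)\big),
\]
using independence, then $\expn{e^{t\mathbf{X}_i}}=1-p+pe^t$, then the inequality $1+x\leq e^x$ (quoted in the excerpt) with $x=p(e^t-1)$. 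Every tail bound we want will come from choosing $a$ and the optimal $t$ in this estimate, with $t<0$ in the two analogous lines for a lower tail.

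For part (ii) I would put $a=\beta\mu$ and minimise the exponent $-t\beta\mu+\mu(e^t-1)$ over $t$, which is attained at $e^t=\beta$, i.e.\ $t=\ln\beta>0$ since $\beta>1$. This yields $\prob{\mathbf{X}>\beta\mu}\leq\exp(\mu(\beta-1-\beta\ln\beta))\leq\exp(\mu(\beta-\beta\ln\beta))=(\beta/e)^{-\beta\mu}$, where the middle step just drops the $-1$. For the upper half of part (i), taking $a=(1+\eps)\mu$ and $t=\ln(1+\eps)$ gives
\[
\prob{\mathbf{X}\geq(1+\eps)\mu}\;\leq\;\left(\frac{e^{\eps}}{(1+\eps)^{1+\eps}}\right)^{\!\mu},
\]
and it remains to check this is at most $\exp(-\eps^2\mu/3)$, equivalently the one-variable inequality $(1+\eps)\ln(1+\eps)-\eps\geq \eps^2/3$ for $0<\eps\leq 3/2$; I would verify this by noting the left side minus $\eps^2/3$ vanishes together with its first derivative at $\eps=0$, has second derivative $\tfrac1{1+\eps}-\tfrac23$ (positive then negative), and stays nonnegative at the endpoint $\eps=3/2$, hence is nonnegative on the whole interval.

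For the lower half of part (i), the $t<0$ version of the first display gives, with $a=(1-\eps)\mu$ and $t=\ln(1-\eps)<0$,
\[
\prob{\mathbf{X}\leq(1-\eps)\mu}\;\leq\;\left(\frac{e^{-\eps}}{(1-\eps)^{1-\eps}}\right)^{\!\mu}\;\leq\;\exp\!\left(-\frac{\eps^2\mu}{2}\right),
\]
where the last step is the elementary estimate $(1-\eps)\ln(1-\eps)\geq-\eps+\eps^2/2$, which follows because the difference of the two sides is $0$ at $\eps=0$ and has derivative $-\ln(1-\eps)-\eps\geq0$; the cases $\eps\geq1$ are trivial since then $(1-\eps)\mu\leq 0$. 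Adding the two tail bounds and using $\exp(-\eps^2\mu/2)\leq\exp(-\eps^2\mu/3)$ yields $\prob{|\mathbf{X}-\mu|\geq\eps\mu}\leq 2\exp(-\eps^2\mu/3)$, which is part (i). The only non-mechanical ingredients are the two one-variable inequalities above, so there is no genuine obstacle here; indeed in the paper this lemma is simply cited to \cite{JLR11}, and the argument sketched here is the standard one.
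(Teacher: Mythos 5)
The paper gives no proof of this lemma and simply cites~\cite{JLR11}, so there is nothing in the source to compare against; your derivation is the standard Cram\'er--Chernoff argument and is correct as written. The optimisation of the exponential moment bound for parts (i) and (ii) is carried out properly, the elementary one-variable inequalities $(1+\eps)\ln(1+\eps)-\eps\geq\eps^2/3$ on $(0,3/2]$ and $(1-\eps)\ln(1-\eps)\geq-\eps+\eps^2/2$ on $(0,1)$ are verified by sound calculus reasoning, and dropping the $-\mu$ term for part~(ii) is a legitimate (wasteful but harmless) weakening.
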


We will use McDiarmid's Inequality~\cite{M89} when randomly partitioning \(G\in\Phi(\dirK)\) in Section~\ref{section:raincycles} (see Lemma~\ref{slicinglemma}).
We first need the following definition.
Let $\mathbf{X}_{1}, \mathbf{X}_{2},\dots,\mathbf{X}_{m}$ be mutually independent random variables taking values in a set~$\cX$, let $f:\cX^{m}\rightarrow\mathbb{R}$, and define $\mathbf{Y}\coloneqq f(\mathbf{X}_{1},\dots,\mathbf{X}_{m})$.
Fix $i\in[m]$.
If for all $x_{1},x_{2},\dots,x_{m},x'_{i}\in\cX$ we have
\[
|f(x_{1},\dots,x_{i-1},x_{i},x_{i+1},\dots,x_{m})-f(x_{1},\dots,x_{i-1},x'_{i},x_{i+1},\dots,x_{m})|\leq c_{i},
\]
then we say that~$\mathbf{X}_{i}$ \emph{affects}~$\mathbf{Y}$ by at most~$c_{i}$.
\begin{lemma}[McDiarmid's Inequality]\label{lemma:mcd}
Suppose \(\mathbf{X}_1, \mathbf{X}_2, \dots, \mathbf{X}_m\) are mutually independent random variables taking values in a set~\(\cX\), let \(f\colon \cX^m \rightarrow\mathbb{R}\) and set \(\mathbf{Y}\coloneqq f(\mathbf{X}_1,\mathbf{X}_2,\dots,\mathbf{X}_m)\).
Suppose that~\(\mathbf{X}_i\) affects~\(\mathbf{Y}\) by at most~\(c_i\), for each \(i\in[m]\).
Then for all \(t>0\),
\[
\prob{|\mathbf{Y}-\expn{\mathbf{Y}}|\geq t}\leq\exp\left(-\frac{2t^2}{\sum_{i=1}^m c_i^2}\right).
\]
\end{lemma}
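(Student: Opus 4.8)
The plan is to derive this from the exponential-moment (Azuma--Hoeffding) method applied to the Doob martingale of \(\mathbf{Y}\). Let \(\mathcal{F}_0\) be the trivial \(\sigma\)-algebra and, for \(i\in[m]\), let \(\mathcal{F}_i\) be the \(\sigma\)-algebra generated by \(\mathbf{X}_1,\dots,\mathbf{X}_i\); set \(\mathbf{Z}_i\coloneqq\expn{\mathbf{Y}\mid\mathcal{F}_i}\), so that \(\mathbf{Z}_0=\expn{\mathbf{Y}}\), \(\mathbf{Z}_m=\mathbf{Y}\), and \((\mathbf{Z}_i)_{i=0}^m\) is a martingale. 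Writing \(g_i(x_1,\dots,x_i)\coloneqq\expn{f(x_1,\dots,x_i,\mathbf{X}_{i+1},\dots,\mathbf{X}_m)}\) for the expectation over the remaining, independent coordinates, mutual independence of the \(\mathbf{X}_j\) gives \(\mathbf{Z}_i=g_i(\mathbf{X}_1,\dots,\mathbf{X}_i)\) and hence \(\mathbf{D}_i\coloneqq\mathbf{Z}_i-\mathbf{Z}_{i-1}=g_i(\mathbf{X}_1,\dots,\mathbf{X}_i)-\expn{g_i(\mathbf{X}_1,\dots,\mathbf{X}_{i-1},\mathbf{X}_i)\mid\mathcal{F}_{i-1}}\).

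The crucial step is to show that, conditionally on \(\mathcal{F}_{i-1}\), the increment \(\mathbf{D}_i\) is a centred random variable supported on an interval of length at most \(c_i\). Fixing \(x_1,\dots,x_{i-1}\), the hypothesis that \(\mathbf{X}_i\) affects \(\mathbf{Y}\) by at most \(c_i\) says \(|f(x_1,\dots,x_{i-1},a,x_{i+1},\dots,x_m)-f(x_1,\dots,x_{i-1},b,x_{i+1},\dots,x_m)|\leq c_i\) for all \(a,b\in\cX\) and all choices of the other coordinates; averaging over \(\mathbf{X}_{i+1},\dots,\mathbf{X}_m\) preserves this, so \(|g_i(x_1,\dots,x_{i-1},a)-g_i(x_1,\dots,x_{i-1},b)|\leq c_i\) for all \(a,b\in\cX\). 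Thus, given \(\mathcal{F}_{i-1}\), the quantity \(g_i(\mathbf{X}_1,\dots,\mathbf{X}_{i-1},\mathbf{X}_i)\) ranges over an interval of length \(\leq c_i\), and subtracting its conditional mean yields the claimed centred, bounded-range increment.

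With this in hand I would invoke Hoeffding's lemma: a random variable \(V\) with \(\expn V=0\) taking values in an interval of length \(c\) satisfies \(\expn{e^{\lambda V}}\leq e^{\lambda^2 c^2/8}\) for every \(\lambda\in\bR\). Applied conditionally, \(\expn{e^{\lambda\mathbf{D}_i}\mid\mathcal{F}_{i-1}}\leq e^{\lambda^2 c_i^2/8}\), and iterating the tower property over \(i=m,m-1,\dots,1\) gives \(\expn{e^{\lambda(\mathbf{Y}-\expn{\mathbf{Y}})}}=\expn{e^{\lambda\sum_{i=1}^m\mathbf{D}_i}}\leq\exp\!\big(\tfrac{\lambda^2}{8}\sum_{i=1}^m c_i^2\big)\). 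Markov's inequality then yields \(\prob{\mathbf{Y}-\expn{\mathbf{Y}}\geq t}\leq\exp\!\big(-\lambda t+\tfrac{\lambda^2}{8}\sum_{i=1}^m c_i^2\big)\), and taking \(\lambda=4t/\sum_{i=1}^m c_i^2\) gives the one-sided bound \(\exp(-2t^2/\sum_{i=1}^m c_i^2)\). Running the same argument with \(-\mathbf{Y}\) in place of \(\mathbf{Y}\) (its increments obey the same length bound) controls the lower tail, and combining the two one-sided estimates gives the stated inequality.

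I expect the only real subtlety to be the conditional increment bound of the second paragraph, i.e.\ the use of mutual independence of \(\mathbf{X}_1,\dots,\mathbf{X}_m\) both to represent \(\mathbf{Z}_i\) as the explicit function \(g_i(\mathbf{X}_1,\dots,\mathbf{X}_i)\) and to see that averaging over the later coordinates does not inflate the oscillation inherited from the affecting hypothesis; after that, Hoeffding's lemma together with the martingale tower property is entirely routine. (One could instead quote Azuma's inequality directly, but this typically produces the weaker exponent \(-t^2/(2\sum_{i=1}^m c_i^2)\), so routing through Hoeffding's lemma is the clean way to land the constant in the statement.)
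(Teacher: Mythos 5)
The paper does not prove this lemma; it cites McDiarmid~\cite{M89} and states it as a known tool. Your proof is the canonical one: Doob martingale of~\(\mathbf{Y}\), conditional bounded-range increments via the ``affects'' hypothesis plus independence of the remaining coordinates, Hoeffding's lemma, tower property, and Chernoff optimization. All of these steps are correct, and your identification of the ``real subtlety'' (representing \(\mathbf{Z}_i\) explicitly as \(g_i(\mathbf{X}_1,\dots,\mathbf{X}_i)\) and showing averaging over later coordinates preserves the oscillation bound) is exactly right.

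One small flag on the final sentence: combining the two one-sided estimates by a union bound gives \(\prob{|\mathbf{Y}-\expn{\mathbf{Y}}|\geq t}\leq 2\exp(-2t^2/\sum_i c_i^2)\), with a factor of~\(2\) in front. The paper's statement of the lemma omits this factor, and your argument does not supply a way to remove it (nor should it; a single fair coin with \(c_1=1\) and \(t=1/2\) shows the two-sided bound without the~\(2\) is false). This is a minor imprecision in the paper's statement that you have implicitly inherited by claiming the union bound ``gives the stated inequality.'' It is harmless for every application in the paper, since the failure probabilities are taken up to constant factors anyway, but it is worth noting so you do not think your proof is short of the claim for any deeper reason.
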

The following result is a straightforward corollary\COMMENT{In the language of~\cite{AKS97}, it is clear that a decision tree formed of a single path of queries regarding the outcome of each of the $\mathbf{X}_{i}$ in turn (in an arbitrary order) is a strategy for finding~$\mathbf{Y}$.
Further, the only line of questioning in this strategy has total variance~$\sum_{i\in[m]}p_{i}(1-p_{i})c_{i}^{2}$, which is at most~$\sigma^{2}$ by hypothesis.
Now directly apply the ``Martingale Inequality'' in~\cite{AKS97}, with $\eps=\delta=1$ (which the authors remark is a permissible choice of these parameters).} of the ``Martingale Inequality'' from~\cite{AKS97}.
We will use this lemma to concentrate the number of vertices in the leftover after a nibble (see the proof of Lemma~\ref{lemma:concanalysis}\ref{concleftover}).
\begin{lemma}\label{lemma:martingale}
Let $\mathbf{X}_{1},\mathbf{X}_{2},\dots,\mathbf{X}_{m}$ be mutually independent indicator random variables, let $f:\{0,1\}^{m}\rightarrow\mathbb{R}$, and let $\mathbf{Y}\coloneqq f(\mathbf{X}_{1},\dots,\mathbf{X}_{m})$.
For each $i\in[m]$, set $p_{i}\coloneqq\prob{\mathbf{X}_{i}=1}$ and suppose that~$\mathbf{X}_{i}$ affects~$\mathbf{Y}$ by at most~$c_{i}$.
Let~$C \coloneqq \max\{c_i : i \in [m]\}$.
Suppose further that $\sqrt{\sum_{i\in[m]}p_{i}(1-p_{i})c_{i}^{2}}\leq\sigma$.
Then for all $0<\alpha<2\sigma/C$, we have
\[
\prob{|\mathbf{Y}-\expn{\mathbf{Y}}|>\alpha\sigma}\leq 2\exp\left(-\frac{\alpha^{2}}{4}\right).
\]
\end{lemma}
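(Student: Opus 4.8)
The plan is to deduce Lemma~\ref{lemma:martingale} from the ``Martingale Inequality'' of Alon, Kim, and Spencer~\cite{AKS97}, by packaging $\mathbf{Y}$ as the output of a suitable \emph{strategy} (decision tree) on the independent variables $\mathbf{X}_1,\dots,\mathbf{X}_m$ and verifying that this strategy satisfies the hypotheses of their inequality.

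First I would recall the set-up of~\cite{AKS97}: one considers adaptive procedures that reveal the values of mutually independent variables one query at a time and eventually output a real number, and their inequality bounds the deviation of this output from its expectation in terms of (i) the maximal ``effect'' a single query can have on the output, and (ii) the maximal ``variance'' accumulated along any root-to-leaf line of questioning, with two free parameters that the authors note may both be taken equal to $1$.

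The key observation is that the \emph{non-adaptive} strategy which queries $\mathbf{X}_1,\mathbf{X}_2,\dots,\mathbf{X}_m$ in this fixed order is a valid strategy for determining $\mathbf{Y}$: the decision tree is a single path with no branching on which variable to query next, and once all $m$ values are revealed the output $\mathbf{Y}=f(\mathbf{X}_1,\dots,\mathbf{X}_m)$ is determined. For this strategy, the ``effect'' of the $i$-th query is at most $c_i$ --- this is exactly the hypothesis that $\mathbf{X}_i$ affects $\mathbf{Y}$ by at most $c_i$, since changing the answer to that query while fixing the others changes the eventual output by at most $c_i$ --- so the maximal effect over all queries equals $C=\max_i c_i$; and since $\mathbf{X}_i$ is an indicator with variance $p_i(1-p_i)$, the variance of the unique line of questioning is $\sum_{i\in[m]} p_i(1-p_i)c_i^2\leq\sigma^2$ by hypothesis. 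Feeding these two bounds into the Martingale Inequality of~\cite{AKS97} with both tuning parameters set to $1$ then yields precisely $\prob{|\mathbf{Y}-\expn{\mathbf{Y}}|>\alpha\sigma}\leq 2\exp(-\alpha^2/4)$ for every $0<\alpha<2\sigma/C$, as required.

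I do not expect a genuine obstacle here, since this is essentially a specialization of an off-the-shelf result; the only points needing care are bookkeeping ones. One must match the precise normalizations of ``effect'' and ``variance'' in~\cite{AKS97} to the formulation above, confirm that the choice of unit parameters lies in the admissible range and reproduces exactly the constants in the displayed bound, and dispose of the degenerate cases ($C=0$, i.e.\ $\mathbf{Y}$ almost surely constant, or $\sigma=0$), in which the interval $(0,2\sigma/C)$ is empty or the conclusion is trivial so that nothing is asserted. A reader wanting a self-contained proof could instead run the Azuma--Hoeffding inequality on the Doob martingale of $\mathbf{Y}$ relative to the filtration generated by $\mathbf{X}_1,\dots,\mathbf{X}_m$, bounding the conditional increments using the $c_i$ together with the factors $p_i(1-p_i)$; but citing~\cite{AKS97} as above is the shortest route.
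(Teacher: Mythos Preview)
Your proposal is correct and matches the paper's approach essentially verbatim: the paper also derives the lemma as a direct corollary of the Martingale Inequality of~\cite{AKS97} by taking the single-path (non-adaptive) decision tree that queries the $\mathbf{X}_i$ in turn, noting that its unique line of questioning has total variance $\sum_i p_i(1-p_i)c_i^2\le\sigma^2$, and applying the inequality with both free parameters set to~$1$.
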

We will use the following version of Talagrand's~\cite{T95} Inequality (sometimes also called McDiarmid's Inequality, though distinct from Lemma~\ref{lemma:mcd}) when we both randomly select and randomly order \(d\)-sets of~\([n]\) in the construction of a large-diameter simplicial complex in Section~\ref{section:SC}.
\begin{lemma}\label{lemma:permutations}
Let~\(\mathbf{Z}\) be a non-negative random variable which is determined by~\(n\) independent random variables \(\mathbf{T}_1, \mathbf{T}_2, \dots, \mathbf{T}_n\) and~\(m\) independent uniformly random permutations \(\mathbf{f}_1\colon X_1\rightarrow X_1,\mathbf{f}_2\colon X_2\rightarrow X_2,\dots,\mathbf{f}_m\colon X_m\rightarrow X_m\), where each~\(X_i\) is finite and non-empty.
Assume moreover that \(\mathbf{T}=(\mathbf{T}_1,\mathbf{T}_2,\dots,\mathbf{T}_n)\) and \(\mathbf{f}=(\mathbf{f}_1,\mathbf{f}_2,\dots,\mathbf{f}_m)\) are independent.
Let \(c,r>0\) and assume the following conditions hold for any outcome \((T=(T_1,T_2,\dots,T_n), f=(f_1,f_2,\dots,f_m))\) of~\((\mathbf{T}, \mathbf{f})\) (and put \(Z=\mathbf{Z}(T,f)\)):
\begin{itemize}
\item Changing the outcome of a single~\(T_i\) changes~\(Z\) by at most~\(2c\);
\item Swapping two co-ordinates of a single~\(f_j\) changes~\(Z\) by at most~\(c\);
\item If \(Z=s\) then there is a set~\(\{T_{i_1}, T_{i_2},\dots,T_{i_k}\}\) and a set \(\{f_{j_1}, f_{j_2}, \dots, f_{j_{\ell}}\}\) such that \(k+\ell\leq rs\) and \(\mathbf{Z}\geq s\) holds whenever \(\mathbf{T}_{i_a}=T_{i_a}\) and \(\mathbf{f}_{j_b}=f_{j_b}\) holds for every \(a\in[k]\) and \(b\in[\ell]\). 
\end{itemize}
Then, for any \(t\geq 128c\sqrt{r\expn{\mathbf{Z}}}+512rc^2\), we have
\[
\prob{|\mathbf{Z}-\expn{\mathbf{Z}}|>t}\leq4\exp\left(-\frac{t^2}{32rc^2(4\expn{\mathbf{Z}}+t)}\right).
\]
\end{lemma}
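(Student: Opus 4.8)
The plan is to deduce Lemma~\ref{lemma:permutations} from the standard ``certifiable Lipschitz'' form of Talagrand's inequality, in the version that handles finitely many independent random variables together with finitely many independent uniformly random permutations (a transposition being the elementary perturbation of each permutation). In that form one has: if a non-negative $Z$ is $c'$-Lipschitz (changing one independent coordinate, or transposing two coordinates of one permutation, changes $Z$ by at most $c'$) and $r'$-certifiable (whenever $Z=s$, some set of at most $r's$ coordinates and transposition-positions witnesses $Z\ge s$), then $\prob{|Z-\expn{Z}|>t'}\le C_1\exp(-t'^2/(C_2 c'^2 r'(\expn{Z}+t')))$ for every $t'\ge C_3 c'\sqrt{r'\expn{Z}}+C_4 r'c'^2$, with $C_1,\dots,C_4$ absolute constants. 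The whole proof is then a matching of hypotheses and a reconciliation of constants; if only a permutation-only version of this inequality is to hand, one first encodes each independent $\mathbf{T}_i$ as (a coordinate of) an auxiliary uniform random permutation, which turns a $\mathbf{T}_i$-change into at most one transposition of that auxiliary permutation but with Lipschitz weight $2c$ rather than $c$ --- which is precisely why the statement distinguishes the two bullet constants.

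First I would check that the three bullets are exactly the hypotheses above with $c'=2c$ and $r'=r$: bullets one and two give the $2c$-Lipschitz property (the sharper transposition bound $c$ being harmlessly weakened), and bullet three is verbatim the $r$-certifiability hypothesis, invoked at the realised value $s=Z$. Then, given $t\ge 128c\sqrt{r\expn{Z}}+512rc^2$, I would split $t=\tfrac12 t+\tfrac12 t$, note that $\tfrac12 t\ge 64c\sqrt{r\expn{Z}}+256rc^2$ dominates the required shift $C_3(2c)\sqrt{r\expn{Z}}+C_4 r(2c)^2$ for the (small) concrete constants of the cited inequality, and apply that inequality at deviation $\tfrac12 t$. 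Since $(\tfrac12 t)^2=\tfrac14 t^2$ and $\expn{Z}+\tfrac12 t\le 4\expn{Z}+t$, the exponent obtained is at least $t^2/(16 C_2 c^2 r(4\expn{Z}+t))\ge t^2/(32 rc^2(4\expn{Z}+t))$ once $16 C_2\le 32$ is confirmed (enlarging the split constant $256$ towards $512$ if a looser $C_2$ forces it), while the prefactor is $C_1\le 4$; this is exactly the claimed bound.

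The main obstacle is purely bookkeeping: fixing the precise $C_1,\dots,C_4$ in the chosen reference so the arithmetic closes with the advertised $128$, $512$, $32$, $4$, and --- more importantly --- confirming that the reference is of the ``for all $t$'' Bernstein-type shape, with denominator $\expn{Z}+t'$ rather than merely $\expn{Z}$; the presence of $4\expn{Z}+t$ (not $\expn{Z}$) in the target denominator is the signal that this stronger form is needed, since the textbook versions (e.g.\ the Molloy--Reed form) are typically restricted to $t\le\expn{Z}$. If only the restricted form is available I would first upgrade it to the all-$t$ form (a routine step in the Talagrand machinery), or else prove the lemma directly via a Doob martingale that reveals the independent coordinates and then $\mathbf{f}_1(1),\mathbf{f}_1(2),\dots$ and so on in turn, with the certifiability hypothesis bounding the rare large increments --- but citing the known combined inequality and adjusting constants is by far the cleaner route.
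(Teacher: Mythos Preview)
Your proposal is essentially correct and is in the same spirit as the paper's treatment: the paper does not give a self-contained proof of this lemma either, but simply records that it ``can be proven with the same reasoning as used in the appendix of~\cite{KP20}'' (noting that the median-based version is due to McDiarmid~\cite{M02}, the expectation version in Molloy--Reed~\cite{MR02} has a known flaw in their Fact~20.1, and a corrected variant appears in~\cite{MR10}). Your outline --- cite a combined independent-coordinates-plus-permutations Talagrand inequality in Bernstein form, take the Lipschitz constant to be $2c$, verify $r$-certifiability, and bookkeep the constants --- is exactly the shape of what that appendix does, and indeed more explicit than what the paper writes here.

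One small cautionary note: your heuristic that the $2c$ versus $c$ distinction arises from ``encoding each $\mathbf{T}_i$ as a coordinate of an auxiliary permutation'' is a plausible mechanism, but it only works directly when each $\mathbf{T}_i$ takes finitely many values (so that a uniform permutation on a suitable set can simulate it); in general the combined inequality is usually proved by running Talagrand's convex-distance argument directly on the product of the independent coordinate spaces and the symmetric groups, rather than by a reduction to the permutation-only case. This does not affect the correctness of your overall plan, since you also note the direct-martingale alternative, but it is worth being precise about which route the cited reference actually takes before committing to the encoding trick.
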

The first result of this flavour was due to McDiarmid~\cite{M02}, with concentration about the \textit{median} rather than the expectation.
A version with concentration about the expectation first appeared in the book of Molloy and Reed~\cite{MR02}, though it is known that the proof contained a flaw (in~\cite[Fact 20.1]{MR02}).
An updated version, which is very similar to Lemma~\ref{lemma:permutations}, appears in a later paper of Molloy and Reed~\cite{MR10}.
Our version can be proven with the same reasoning as used in the appendix of~\cite{KP20} (which also describes the flaw in~\cite[Fact 20.1]{MR02}).

The inequality that we use to analyze the concentration of the vertex degrees and codegrees during a random nibble, as well as the degradation of a given family~\(\cT\) of weight functions (see the proof of Lemma~\ref{lemma:concanalysis}\ref{concvertexdegree}--\ref{conctestwaste}), is the following powerful form of Talagrand's inequality due to Delcourt and Postle~\cite[Theorem 4.4]{DP24}.
To state it, we first need some definitions.
\begin{defin}
Let~\(((\Omega_i, \Sigma_i, \mathbb{P}_i))_{i=1}^{n}\) be probability spaces and~\((\Omega, \Sigma, \mathbb{P})\) their product space.
Let~\(\Omega^{*}\subseteq\Omega\), and let \(\mathbf{X}\colon\Omega\rightarrow\mathbb{R}_{\geq 0}\) be a non-negative random variable.
Let \(r,d>0\).
\begin{itemize}
\item Suppose \(\omega=(\omega_1,\omega_2,\dots,\omega_n)\in\Omega\) and \(s\geq 0\).
An \((r,d)\)-\textit{certificate} for~\(\mathbf{X}, \omega, s\), and~\(\Omega^{*}\) is an index set \(I\subseteq[n]\) and a vector~\((c_i \colon i\in I)\) which satisfy \(\sum_{i\in I}c_i \leq rs\) and \(\max_{i\in I}c_i \leq d\), such that for all \(I'\subseteq I\), we have
\[
\mathbf{X}(\omega) \geq s - \sum_{i\in I'}c_i
\]
for all outcomes \(\omega'=(\omega'_1, \omega'_2, \dots, \omega'_n)\in\Omega\setminus\Omega^{*}\) such that \(\omega_i = \omega'_i\) for all \(i\in I\setminus I'\).
\item If for every outcome \(\omega\in\Omega\setminus\Omega^{*}\), there exists an \((r,d)\)-certificate for~\(\mathbf{X}, \omega, s\coloneqq\mathbf{X}(\omega)\), and~\(\Omega^{*}\), then we say~\(\mathbf{X}\) is \((r,d)\)-\textit{observable} with respect to~\(\Omega^{*}\).
\end{itemize}
\end{defin}
The set~\(\Omega^{*}\) is often called the ``exceptional outcomes'' in the literature.
The idea is a natural one; in a setting where the Lipschitz constants~\(c_i\) may be large enough to frustrate an attempt to concentrate a random variable of interest (or there is some other obstacle to being appropriately observable), we instead show that the random variable admits the desired behaviour in all but a very unlikely set of bad outcomes~\(\Omega^{*}\).
For earlier examples of Talagrand's incorporating some notion of exceptional outcomes, see for example~\cite{KP20, MR02, BJ18}.
\begin{theorem}[Linear Talagrand's with Exceptional Outcomes~\cite{DP24}]\label{theorem:lintal}
Let~\(((\Omega_i, \Sigma_i, \mathbb{P}_i))_{i=1}^{n}\) be probability spaces and~\((\Omega, \Sigma, \mathbb{P})\) their product space.
Let~\(\Omega^{*}\subseteq\Omega\), and let \(\mathbf{X}\colon\Omega\rightarrow\mathbb{R}_{\geq 0}\) be a non-negative random variable.
Let \(r,d>0\).

If~\(\mathbf{X}\) is \((r,d)\)-observable with respect to~\(\Omega^{*}\), then for \(t> 96\sqrt{rd\expn{\mathbf{X}}} + 128rd + 8\prob{\Omega^{*}}(\mathrm{sup}\mathbf{X})\), we have
\[
\prob{|\mathbf{X}-\expn{\mathbf{X}}|>t}\leq 4\exp\left(\frac{-t^2}{8rd(4\expn{\mathbf{X}}+t)}\right)+4\prob{\Omega^{*}}.
\]
\end{theorem}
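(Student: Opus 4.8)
We do not prove Theorem~\ref{theorem:lintal} here --- it is Theorem~4.4 of Delcourt and Postle~\cite{DP24} --- but it is worth indicating the shape of the argument. The plan is to deduce the bound from Talagrand's convex distance inequality on product spaces, which states that for any measurable $A\subseteq\Omega$ with $\prob{A}>0$ and any $u\geq 0$ one has $\prob{A}\cdot\prob{d_{T}(\cdot,A)\geq u}\leq e^{-u^{2}/4}$, where $d_{T}(\omega,A)\coloneqq\sup_{\alpha\colon\|\alpha\|_{2}\leq 1}\inf_{\omega'\in A}\sum_{i\colon\omega_i\neq\omega'_i}\alpha_i$ is the convex distance. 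First I would note that, since an $(r,d)$-certificate only controls $\mathbf{X}$ at outcomes outside $\Omega^{*}$, every sub- or super-level set to which Talagrand's inequality is applied must first be intersected with $\Omega\setminus\Omega^{*}$; the mass $\prob{\Omega^{*}}$ that this discards, together with the crude bound $\mathbf{X}\leq\sup\mathbf{X}$ that still holds on $\Omega^{*}$, is what produces the $8\prob{\Omega^{*}}(\sup\mathbf{X})$ term in the hypothesis on $t$ and the $+4\prob{\Omega^{*}}$ term in the conclusion. It then remains to prove one-sided tail bounds for $\mathbf{X}$ about a reference value $b$ chosen within $O(\sqrt{rd\expn{\mathbf{X}}}+rd)$ of a median, and to convert from $b$ to $\expn{\mathbf{X}}$ by integrating the tails; this conversion supplies the $96\sqrt{rd\expn{\mathbf{X}}}+128rd$ correction.

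For the upper tail I would fix $b$ and put $A\coloneqq\{\mathbf{X}\leq b\}\setminus\Omega^{*}$. Given $\omega\notin\Omega^{*}$ with $\mathbf{X}(\omega)=s>b$, take an $(r,d)$-certificate $(I,(c_i)_{i\in I})$ for $\mathbf{X},\omega,s$, and $\Omega^{*}$, so that $\sum_{i\in I}c_i\leq rs$ and $\max_{i\in I}c_i\leq d$, and use the weight vector $\alpha_i\coloneqq c_i/\|c\|_{2}$ supported on $I$. For any $\omega'\in A$, writing $I'$ for the set of coordinates of $I$ on which $\omega$ and $\omega'$ differ, the certificate applies at $\omega'$ (as $\omega'\notin\Omega^{*}$) and gives $b\geq\mathbf{X}(\omega')\geq s-\sum_{i\in I'}c_i$, so $\sum_{i\colon\omega_i\neq\omega'_i}\alpha_i\geq(s-b)/\|c\|_{2}$; combined with $\|c\|_{2}^{2}\leq\|c\|_{1}\|c\|_{\infty}\leq rsd$ this gives $d_{T}(\omega,A)\geq(s-b)/\sqrt{rsd}$. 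Since $s\mapsto(s-b)/\sqrt{s}$ is increasing on $[b,\infty)$, applying Talagrand's inequality on each dyadic band $\{2^{j}(b+t)\leq\mathbf{X}<2^{j+1}(b+t)\}$ and summing the resulting geometric series bounds $\prob{\mathbf{X}\geq b+t}$ in terms of $\prob{A}^{-1}$ and $\exp(-t^{2}/(O(rd)(b+t)))$; as $b$ is near a median, $\prob{A}\geq\tfrac12-\prob{\Omega^{*}}$ is bounded below, which (after adding $\prob{\Omega^{*}}$ back, since $\{\mathbf{X}\geq b+t\}$ is unconstrained on $\Omega^{*}$) yields the stated form.

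The lower tail $\prob{\mathbf{X}\leq b-t}$ is the symmetric statement, but now one must instead use the certificates of the points $\omega'$ of the superlevel set $A\coloneqq\{\mathbf{X}\geq b\}\setminus\Omega^{*}$: such a certificate witnesses that any $\omega$ agreeing with $\omega'$ on $I_{\omega'}$ (and lying outside $\Omega^{*}$) has $\mathbf{X}(\omega)\geq b-\sum_i c^{\omega'}_i$, so an $\omega$ with $\mathbf{X}(\omega)$ far below $b$ must differ, in the weighted Hamming sense, from every point of $A$; upgrading ``for each $\omega'\in A$ there is a good weight vector'' to ``there is a single weight vector good against all of $A$'' is the Sion-type min--max step underlying Talagrand's inequality for certifiable functions, after which the estimate mirrors the upper tail. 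I expect the main obstacle to be precisely this bookkeeping around $\Omega^{*}$ --- intersecting all the level sets with $\Omega\setminus\Omega^{*}$, checking Talagrand's inequality still applies, and verifying that the accumulated lost mass collapses into exactly the clean pair of correction terms stated --- together with obtaining the lower tail with the same absolute constants despite the detour through the min--max form of the convex distance.
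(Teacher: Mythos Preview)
Your proposal correctly identifies that Theorem~\ref{theorem:lintal} is not proved in the paper: it is quoted as a blackbox from Delcourt and Postle~\cite[Theorem 4.4]{DP24}, and the paper supplies no argument for it whatsoever. Your decision not to prove it is therefore exactly aligned with the paper, and your additional sketch of the convex-distance argument is a reasonable (and accurate) description of the standard route, going beyond what the paper itself provides.
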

The ``linear'' in ``Linear Talagrand's'' refers to the power of~\(d\) in the denominator of the exponent.
Delcourt and Postle~\cite{DP24} showed that the~\(d^2\) in many classical statements of Talagrand's (see for example~\cite{MR02}) can be improved to simply~\(d\).
In our applications in Section~\ref{Section:Nibble}, this improvement is essential.

We will use a polynomial concentration inequality of Kim and Vu~\cite{KV00} to analyze the properties of a sparse random reservoir in the proof of Theorem~\ref{thm:mainpartitetheorem}.
We first require some definitions.
If \(\mathbf{X}_1, \mathbf{X}_2, \dots, \mathbf{X}_m\) are mutually independent indicator random variables and~\(H\) is a \(k\)-bounded hypergraph having \(V(H)=[m]\) such that every edge \(e\in E(H)\) has associated positive weight~\(w(e)\), we say the \((\mathbf{X}_1,\mathbf{X}_2,\dots,\mathbf{X}_m)\)-\textit{associated polynomial}~\(\mathbf{Z}_H\) of~\(H\) is \(\mathbf{Z}_H\coloneqq\sum_{e\in E(H)}w(e)\prod_{i\in e}\mathbf{X}_i\) (dropping the \((\mathbf{X}_1,\mathbf{X}_2,\dots,\mathbf{X}_m)\) when clear from context).
For a set \(A\subseteq V(H)\), we define the \(A\)-\textit{truncated polynomial} as
\begin{equation}\label{eq:Atruncated}
\mathbf{Z}_{H_A}\coloneqq\sum_{\substack{e\in E(H)\colon\\ A\subseteq e}}w(e)\prod_{i\in e\setminus A}\mathbf{X}_i.
\end{equation}
Set \(\mathbb{E}_i[\mathbf{Z}_H]\coloneqq\max_{A\in \binom{V(H)}{i}}\expn{\mathbf{Z}_{H_A}}\), \(\mathbb{E}'[\mathbf{Z}_H]\coloneqq \max_{i\geq 0}\mathbb{E}_i[\mathbf{Z}_H]\), \(\mathbb{E}''[\mathbf{Z}_H]\coloneqq \max_{i\geq 1}\mathbb{E}_i[\mathbf{Z}_H]\).
Observe that \(\mathbb{E}_0[\mathbf{Z}_H]=\expn{\mathbf{Z}_H}\).
\begin{theorem}[Kim-Vu Polynomial Concentration \cite{KV00}]\label{theorem:kimvu}
Suppose \(\mathbf{X}_1,\mathbf{X}_2,\dots,\mathbf{X}_m\) are mutually independent indicator random variables, \(k\geq 1\) is an integer,\COMMENT{The statement in Kim-Vu's paper is a bit vague with regards to this but it's clear from their discussion beforehand, and also their proof (as it's by induction on k and they include the case \(k=1\)) that you can use \(k=1\), which actually becomes very useful for us in the proof of the bipartite matching theorem} and~\(H\) is a \(k\)-bounded hypergraph with \(V(H)=[m]\), and \(w\colon E(H)\rightarrow\mathbb{R}_{>0}\).
Consider the associated polynomial \(\mathbf{Z}_H=\sum_{e\in E(H)}w(e)\prod_{i\in e}\mathbf{X}_i\) of~\(H\).
For any \(\lambda>1\),
\[
\prob{|\mathbf{Z}_H-\expn{\mathbf{Z}_H}|\geq (8\lambda)^k \sqrt{k!\mathbb{E}'[\mathbf{Z}_H]\mathbb{E}''[\mathbf{Z}_H]}}\leq 2e^2\exp(-\lambda +(k-1)\log m).\COMMENT{Kim-Vu express the \(2e^2\) as big \(O\) in their main theorem, but they clarify at the start of their main proof (bottom of page 7) that the big \(O\) means \(2e^2\). I'm trying to avoid mixing hierarchies and asymptotic notation throughout the paper (by only using hierarchies), so I felt this was cleaner.}
\]
\end{theorem}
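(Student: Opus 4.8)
The plan is to prove the inequality by induction on the degree $k$ of the associated polynomial $\mathbf{Z}_H$, revealing the independent indicators $\mathbf{X}_1,\dots,\mathbf{X}_m$ one coordinate at a time and controlling the resulting Doob martingale by a variance-sensitive martingale inequality; the inductive hypothesis is what bounds the martingale increments outside a small set of ``exceptional'' outcomes, and a union bound over the $m$ coordinates is exactly what produces the extra $\log m$ per level, hence the $(k-1)\log m$ in the exponent.

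For the base case $k=1$, $\mathbf{Z}_H=\sum_e w(e)\prod_{i\in e}\mathbf{X}_i$ is a sum of independent random variables, the summand for $e$ lying in $[0,w(e)]\subseteq[0,\mathbb{E}_1[\mathbf{Z}_H]]$, with mean $\mathbb{E}_0[\mathbf{Z}_H]=\expn{\mathbf{Z}_H}$ and variance $\sum_e w(e)^2 p_e(1-p_e)\le\mathbb{E}_1[\mathbf{Z}_H]\expn{\mathbf{Z}_H}\le\mathbb{E}'[\mathbf{Z}_H]\,\mathbb{E}''[\mathbf{Z}_H]$ (here $p_e=\prod_{i\in e}\expn{\mathbf{X}_i}$, and note $\mathbb{E}''[\mathbf{Z}_H]\le\mathbb{E}'[\mathbf{Z}_H]$ always). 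A variance-sensitive Chernoff/Bernstein bound --- or Lemma~\ref{lemma:martingale} applied to the single line of queries that exposes the $\mathbf{X}_i$ in turn --- then gives $\prob{|\mathbf{Z}_H-\expn{\mathbf{Z}_H}|\ge 8\lambda\sqrt{\mathbb{E}'[\mathbf{Z}_H]\,\mathbb{E}''[\mathbf{Z}_H]}}\le 2e^2e^{-\lambda}$, i.e.\ the claim with the $(k-1)\log m$ term absent, the constant $8$ leaving ample slack.

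For the inductive step, fix an ordering of the coordinates and put $\mathbf{Y}_t=\expn{\mathbf{Z}_H\mid\mathbf{X}_1,\dots,\mathbf{X}_t}$, so $\mathbf{Y}_0=\expn{\mathbf{Z}_H}$ and $\mathbf{Y}_m=\mathbf{Z}_H$. Writing $\mathbf{Z}_H=(\text{edges avoiding }t)+\mathbf{X}_t\,\mathbf{Z}_{H_{\{t\}}}$ and using that $\mathbf{Z}_{H_{\{t\}}}$ is independent of $\mathbf{X}_t$, one computes $\mathbf{Y}_t-\mathbf{Y}_{t-1}=(\mathbf{X}_t-p_t)\,\expn{\mathbf{Z}_{H_{\{t\}}}\mid\mathbf{X}_1,\dots,\mathbf{X}_{t-1}}$ with $p_t=\expn{\mathbf{X}_t}$, so the increment is at most $\mathbf{W}_t:=\expn{\mathbf{Z}_{H_{\{t\}}}\mid\mathbf{X}_1,\dots,\mathbf{X}_{t-1}}\ge 0$ in absolute value, with conditional variance $p_t(1-p_t)\mathbf{W}_t^2$. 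Crucially, $\mathbf{Z}_{H_{\{t\}}}$ is an associated polynomial of degree at most $k-1$, and the truncation identity $\mathbb{E}_i[\mathbf{Z}_{H_{\{t\}}}]\le\mathbb{E}_{i+1}[\mathbf{Z}_H]$ forces $\expn{\mathbf{Z}_{H_{\{t\}}}}\le\mathbb{E}_1[\mathbf{Z}_H]$ and $\mathbb{E}'[\mathbf{Z}_{H_{\{t\}}}],\mathbb{E}''[\mathbf{Z}_{H_{\{t\}}}]\le\mathbb{E}''[\mathbf{Z}_H]$: \emph{the scale of a derivative is governed by the smaller quantity} $\mathbb{E}''[\mathbf{Z}_H]$. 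By the inductive hypothesis, outside an event $\Omega^{*}_t$ of probability at most $2e^2\exp(-\lambda+(k-2)\log m)$ we have $\mathbf{Z}_{H_{\{t\}}}\le(1+(8\lambda)^{k-1}\sqrt{(k-1)!})\,\mathbb{E}''[\mathbf{Z}_H]=:\Gamma$, hence $\mathbf{W}_t\le\Gamma$ off $\Omega^{*}:=\bigcup_t\Omega^{*}_t$, where $\prob{\Omega^{*}}\le 2e^2\exp(-\lambda+(k-1)\log m)$. On the complement of $\Omega^{*}$ --- after also absorbing into the exceptional event the deviation of $\sum_t p_t\mathbf{W}_t$, whose expectation $\sum_e|e|w(e)p_e$ is at most $k\expn{\mathbf{Z}_H}\le k\,\mathbb{E}'[\mathbf{Z}_H]$ --- the martingale has increments at most $\Gamma$ and quadratic variation at most $\Gamma\cdot k\,\mathbb{E}'[\mathbf{Z}_H]$; a variance-sensitive martingale inequality (Lemma~\ref{lemma:martingale} applied to a copy of $\mathbf{Z}_H$ frozen on $\Omega^{*}$, or Theorem~\ref{theorem:lintal} with the certificate $I=\{i:\mathbf{X}_i=1\}$, $c_i=\mathbf{Z}_{H_{\{i\}}}$, under which $\mathbf{Z}_H$ is $(k,\Gamma)$-observable with respect to $\Omega^{*}$) then yields a tail of order $\exp(-\lambda)$ at scale $t=(8\lambda)^k\sqrt{k!\,\mathbb{E}'[\mathbf{Z}_H]\,\mathbb{E}''[\mathbf{Z}_H]}$, and adding $\prob{\Omega^{*}}$ gives the stated bound.

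The main obstacle is the constant bookkeeping through this recursion. One has to check that feeding an increment bound $\Gamma\approx(8\lambda)^{k-1}\sqrt{(k-1)!}\,\mathbb{E}''[\mathbf{Z}_H]$ together with a quadratic-variation bound $\approx\Gamma\cdot k\,\mathbb{E}'[\mathbf{Z}_H]$ into the martingale step produces exactly an extra factor $8\lambda$ and $\sqrt{k}$ (so that $(8\lambda)^k$ and $\sqrt{k!}$ accumulate correctly), and --- more delicately --- that the exceptional probabilities telescope to precisely $2e^2\exp(-\lambda+(k-1)\log m)$ with \emph{no} constant loss per level; this is why the inductive statement must carry the slack that the constants $8$ and $2e^2$ afford, and why one should feed a slightly inflated parameter in place of $\lambda$ into the inductive call. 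Invoking Theorem~\ref{theorem:lintal} as a black box packages the ``frozen off $\Omega^{*}$'' step cleanly, but since the natural certificate forces $r=k$ it reproduces only the \emph{shape} of the bound with a constant larger than $8$; obtaining the precise constants in the statement requires the hands-on martingale recursion, essentially as carried out by Kim and Vu~\cite{KV00}.
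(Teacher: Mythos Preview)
The paper does not prove this theorem: it is stated in Section~\ref{section:concineqs} as a known tool and cited to Kim and Vu~\cite{KV00}, with no proof given. So there is no ``paper's own proof'' to compare against; your sketch is supplying something the paper deliberately omits.

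That said, your outline is faithful to the argument in~\cite{KV00}: induction on the degree $k$, exposing the coordinates one at a time to form the Doob martingale, observing that the increment at step $t$ is governed by the derivative polynomial $\mathbf{Z}_{H_{\{t\}}}$ of degree $\leq k-1$, and using the truncation inequality $\mathbb{E}_i[\mathbf{Z}_{H_{\{t\}}}]\leq\mathbb{E}_{i+1}[\mathbf{Z}_H]$ to feed the inductive hypothesis. You are also right that the union bound over the $m$ coordinates is what generates the extra $\log m$ per level. Your own caveat is the honest one: the sketch is structurally sound, but the passage from ``increments bounded by $\Gamma$ off $\Omega^*$ and quadratic variation $\lesssim k\Gamma\,\mathbb{E}'[\mathbf{Z}_H]$'' to the precise constants $(8\lambda)^k\sqrt{k!}$ and $2e^2$ requires the careful recursion in~\cite{KV00}, including a separate inductive control on the random sum $\sum_t p_t\mathbf{W}_t$ (not just its expectation), which you gesture at but do not carry out. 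The alternative route via Theorem~\ref{theorem:lintal} that you mention would indeed only recover the shape of the bound with worse constants, so it is not a substitute here.
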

We will use the following result to ``mop up'' the leftover after applying Theorem~\ref{theorem:maintheorem} to find an almost-perfect matching of~\(H_1\) in the proof of Theorem~\ref{theorem:withreserves}.
\begin{lemma}\label{lemma:bipmopup}
Suppose~\(H\) is a \((k+1)\)-bounded bipartite hypergraph with bipartition \((X,Y)\) which satisfies \(\text{deg}(x)\geq 2ekD\) for all \(x\in X\) and \(\text{deg}(y)\leq D\) for all \(y\in Y\).
Then~\(H\) has an \(X\)-perfect matching.
\end{lemma}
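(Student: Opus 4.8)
The plan is to deduce this from Haxell's theorem on independent transversals~\cite{Ha01}, applied to a line graph. Let $L$ be the graph with vertex set $E(H)$ in which two distinct edges $e,f$ of $H$ are adjacent exactly when $(e\cap f)\cap Y\neq\emptyset$. Since $H$ is bipartite with bipartition $(X,Y)$, every edge of $H$ contains exactly one vertex of $X$, so $\{\partial_H(x):x\in X\}$ is a partition of $V(L)$, and any independent set of $L$ that meets every part in exactly one vertex is precisely an $X$-perfect matching of $H$: such edges have pairwise distinct $X$-vertices (being drawn from distinct parts), and by independence in $L$ they share no vertex of $Y$, hence are pairwise disjoint, and together they cover all of $X$. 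Each part has size $\text{deg}_H(x)\ge 2ekD$, whereas for every $e\in E(H)$ we have $|e\cap Y|=|e|-1\le k$ and therefore $\text{deg}_L(e)\le\sum_{y\in e\cap Y}(\text{deg}_H(y)-1)\le k(D-1)$, so $2\Delta(L)\le 2k(D-1)<2ekD\le\text{deg}_H(x)$ for every $x\in X$. Thus every part of $V(L)$ is at least twice the maximum degree of $L$, and Haxell's theorem supplies the required independent transversal.

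If a self-contained argument is preferred, the same conclusion follows from the asymmetric Lov\'asz Local Lemma (Lemma~\ref{lemma:LLL}): first discard edges so that $\text{deg}_H(x)=\lceil 2ekD\rceil$ for every $x\in X$ (possible as each $\text{deg}_H(x)$ is an integer at least $2ekD$, and an $X$-perfect matching of the reduced hypergraph is one of $H$); then choose, independently and uniformly, an edge $\mathbf{e}_x\in\partial_H(x)$ for each $x\in X$, and for each intersecting pair $e\in\partial_H(x)$, $f\in\partial_H(x')$ with $x\neq x'$ declare the bad event $\{\mathbf{e}_x=e\}\cap\{\mathbf{e}_{x'}=f\}$, so that avoiding all bad events gives an $X$-perfect matching. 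Each bad event has probability $\lceil 2ekD\rceil^{-2}$, is mutually independent of all bad events referring to neither $\partial_H(x)$ nor $\partial_H(x')$, and (using $|e\cap Y|\le k$ and $\text{deg}_H(y)\le D$) meets fewer than $2\lceil 2ekD\rceil\,k(D-1)$ others; setting each $x_A$ to a fixed constant multiple of $\lceil 2ekD\rceil^{-2}$ then verifies the hypotheses of Lemma~\ref{lemma:LLL}.

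There is no real obstacle here: the statement is a packaged, standard consequence of the independent-transversal machinery, and the only things to check are the two routine estimates feeding into it — that the parts $\partial_H(x)$ have size at least $2ekD$, which is the hypothesis, and that each edge of $H$ conflicts (on $Y$) with fewer than $kD$ others, which is immediate from $|e\cap Y|\le k$ and the upper bound on $Y$-degrees. The factor $2ek$ in the hypothesis is exactly what makes the local-lemma route close (the estimate $\text{deg}_H(x)^{-1}\cdot(\text{dependency count})<1$ is tight there), whereas the Haxell route would already go through with the weaker bound $\text{deg}_H(x)\ge 2kD+O(1)$.
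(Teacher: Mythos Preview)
Your proposal is correct and matches the paper's treatment: the paper does not give a proof but simply remarks that the lemma follows from a simple application of the Lov\'asz Local Lemma (citing Reed~\cite{Re99}) and notes that Haxell's theorem~\cite{Ha01} yields the stronger bound $2kD$ in place of $2ekD$. You have supplied both of these arguments explicitly, and the details you give are sound.
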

We remark that a simple application of the Lov\'asz Local Lemma suffices to prove Lemma~\ref{lemma:bipmopup}, and indeed, such a proof appears in~\cite{Re99}.
As discussed in Section~\ref{section:bey}, Haxell~\cite{Ha01} used a topological proof to show that the bound~\(2ekD\) on~\(\text{deg}(x)\) can be replaced by simply~\(2kD\).
Notice that Theorem~\ref{thm:mainpartitetheorem} is essentially a further improvement of this bound to a~\((1+o(1))D\) term under the additional assumption we have good information on the codegree sequence.
\section{Beyond matchings}\label{section:beyond}
In this section, we apply Theorem~\ref{theorem:maintheorem} to prove Theorems~\ref{theorem:withreserves} and~\ref{thm:mainpartitetheorem}.
As there are a number of hypotheses in the statement of Theorem~\ref{theorem:withreserves}, which then also need to be checked in the proof of Theorem~\ref{thm:mainpartitetheorem}, we restate Theorem~\ref{theorem:withreserves} now for convenience.
\reserves*
The core of the proof of Theorem~\ref{theorem:withreserves} is of course applying Theorem~\ref{theorem:maintheorem} to~\(H_1\), for which we first need to `regularize'~\(H_1\) by adding edges (without increasing any of the codegrees), as those vertices in~\(V_1\setminus X\) may have degree too small.
We also need to convert the \((k+1)\)-bounded assumption on~\(H_1\) to a uniform one, which can be done easily by adding dummy vertices outside of~\(X\) before the regularization step.
To do this, we need an analogue of a lemma of Kang, K\"{u}hn, Methuku, and Osthus~\cite[Lemma 8.1]{KKMO23}.
\begin{lemma}\label{lemma:regularization}
Let~\(H_1\) be as in the hypotheses of Theorem~\ref{theorem:withreserves}.
Then there exists a \((k+1)\)-uniform superhypergraph \(H'_1\supseteq H_1\) in which all vertices have degree~\((1\pm B^{-1})D\), such that for every \(x\in X\) there is a bijection \(f_x\colon \partial_{H_1}(x)\rightarrow\partial_{H'_1}(x)\) such that \(e\subseteq f_x(e)\) and \((f_x(e)\setminus e)\cap X=\emptyset\) for each \(e\in\partial_{H_1}(x)\).
Moreover, \(C_j(H'_1)=C_j(H_1)\) for all \(j\in\{2,3,\dots,k+1\}\).
\end{lemma}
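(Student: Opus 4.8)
The plan is to build $H_1'$ from $H_1$ in two stages: first pad the small edges with fresh dummy vertices to make the hypergraph $(k+1)$-uniform, then regularize by adding new $(k+1)$-edges among the low-degree vertices until every vertex has degree $(1\pm B^{-1})D$, being careful throughout that no $j$-codegree increases for $j\ge 2$. The key point that makes this possible is hypothesis \ref{withminH1}: every $x\in X$ already has $\deg_{H_1}(x)\ge(1-B^{-1})D$, so the vertices needing attention are confined to $V_1\setminus X$, and since all edges of $H_1$ meet $X$ in exactly one vertex, we have complete freedom to add edges avoiding $X$ entirely.

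First I would do the uniformization. For each edge $e\in E(H_1)$ with $|e|<k+1$, introduce $k+1-|e|$ brand-new vertices (disjoint from everything and from each other, one private batch per edge) and add them to $e$; call the resulting $(k+1)$-uniform hypergraph $\widetilde H_1$. Define $f_x$ on $\partial_{H_1}(x)$ by sending each $e$ to its enlarged copy $\widetilde e$; this is a bijection $\partial_{H_1}(x)\to\partial_{\widetilde H_1}(x)$ with $e\subseteq f_x(e)$ and $(f_x(e)\setminus e)\cap X=\emptyset$ since the new vertices lie outside $X$ (indeed outside $V_1$). Because every added vertex lies in exactly one edge, no codegree of a $j$-set with $j\ge 2$ can exceed what it was in $H_1$: a $j$-set containing a new vertex has codegree $\le 1$, and a $j$-set of old vertices has the same codegree as in $H_1$. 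So $C_j(\widetilde H_1)=C_j(H_1)$ for all $j\in\{2,\dots,k+1\}$, and all of $X$ still has degree $\ge(1-B^{-1})D$ in $\widetilde H_1$.

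Next I would regularize. Every vertex of $\widetilde H_1$ has degree $\le\Delta(H_1)\le D$; let $V^-$ be the set of vertices with degree $<(1-B^{-1})D$ — by the above, $V^-\cap X=\emptyset$. Add a large disjoint "auxiliary" vertex set $A$ (also disjoint from $X$), and then greedily add $(k+1)$-edges, each using at most one vertex from $V^-$ and the remaining $\ge k$ vertices from $A$ (with $A$ large enough, say $|A|=D$, that we never run out), raising the degree of each vertex of $V^-$ one at a time until it reaches exactly $\lceil(1-B^{-1})D\rceil$; finally, since the auxiliary vertices in $A$ now have some positive degree $\le D$ and we can pad them similarly, repeat the process on them (their degrees are uniformly bounded, and each round uses a fresh auxiliary layer). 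Since $1/D\ll\gamma\ll 1/k$, a constant number of such rounds suffices to reach a hypergraph $H_1'$ in which every vertex has degree $(1\pm B^{-1})D$ — formally one chooses $|A|$ and the number of rounds depending only on $k$, exactly as in \cite[Lemma 8.1]{KKMO23}. Each newly added edge has $\ge k\ge 2$ vertices in an auxiliary layer, so any $j$-set ($j\ge2$) of the new edges with two auxiliary vertices has codegree $\le 1$, a $j$-set with one vertex in $V^-$ (or a later layer) and one auxiliary vertex has codegree $\le 1$, and no $j$-set of two or more \emph{old} vertices lies in any new edge; hence again $C_j(H_1')=C_j(H_1)$ for all $j$. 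The maps $f_x$ are unchanged by this stage, so $H_1'\supseteq\widetilde H_1\supseteq H_1$ and the bijections $f_x$ retain all the required properties.

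The main thing to be careful about — and the only place the argument is not completely routine — is the bookkeeping that \emph{every} added edge is designed so that all $j$-sets ($j\ge 2$) it introduces already had codegree $0$ in $H_1$ and get codegree at most $1$ afterwards; this is guaranteed by the rule "each new edge has at most one vertex outside the current auxiliary layer." The degree arithmetic (choosing $|A|$, bounding the number of rounds, checking $(1\pm B^{-1})D$ holds simultaneously for all vertices at the end) is standard and identical in spirit to \cite[Lemma 8.1]{KKMO23}; I would cite that lemma for the quantitative part and only spell out the codegree-preservation bookkeeping, since that is what differs in our setting (we must preserve the entire sequence $D_2\ge\cdots\ge D_{k+1}$, not merely $D_2$).
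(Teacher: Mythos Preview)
Your uniformization step (padding small edges with fresh dummy vertices, defining $f_x$ accordingly, and checking that no $C_j$ increases) is correct and matches the paper's approach exactly.

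The regularization step, however, has a genuine gap. The rule ``each new edge has at most one vertex outside the current auxiliary layer'' does \emph{not} by itself guarantee that every $j$-set ($j\ge 2$) with two auxiliary vertices has codegree at most $1$: two vertices $a,a'\in A$ can lie together in one new edge for each $v\in V^-$ they are paired with, so their codegree could be as large as $|V^-|$. If instead you intend that each $A$-vertex is used in at most one new edge (which would force all such codegrees to be $\le 1$), then (i) you need $|A|\gtrsim kD|V^-|$, not $|A|=D$, and (ii) after round~1 every $A$-vertex has degree exactly~$1$, so the next round must raise each of them by $\sim D$ and the layer size multiplies by $\sim kD$ per round; this does not terminate in a number of rounds depending only on $k$. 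Your appeal to \cite[Lemma 8.1]{KKMO23} for the quantitative part is also misplaced: that lemma does not use a layered greedy construction.

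The paper's approach (and that of \cite[Lemma 8.1]{KKMO23}) is different. After uniformizing, take $k^2D^2$ vertex-disjoint copies of the resulting hypergraph, and for each low-degree vertex $v\in V_1\setminus X$, place (vertex-disjointly across different $v$) a $(2,k+1,z)$-Steiner system on $v$ together with its clones, with $z$ chosen so that the added degree brings $v$'s total into the range $[D-(k+1)(k+3),D]$. Because a $(2,k+1,z)$-Steiner system has all $2$-codegrees equal to $1$, and each added edge contains at most one vertex from any given copy of $H_1$, no $C_j$ for $j\ge 2$ increases. The slack $(k+1)(k+3)\le\sqrt{D}\le B^{-1}D$ (using $B\le\sqrt{D/D_2}\le\sqrt{D}$) yields the claimed $(1\pm B^{-1})D$-regularity. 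The cloning is precisely what furnishes, for each low-degree vertex, a private supply of $\Theta(D^2)$ partners on which a Steiner system of the required degree can be built; your auxiliary layer $A$ is shared across all of $V^-$, which is why the codegree/termination bookkeeping breaks down.
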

The authors of~\cite{KKMO23} sought to regularize a hypergraph in order to bootstrap their main matching result (Theorem~\ref{theorem:KKMO} in the current paper) to an edge colouring version (Theorem~\ref{KKMOcolouring} in the current paper) with a maximum degree hypothesis (rather than an `approximately regular' hypothesis).
Our situation is slightly different, primarily because we need to ensure we do not add edges to the~\(X\) vertices (so the subset of the obtained matching covering the vertices in~\(X\) exists in~\(H_1\)), we must verify that none of the codegrees~\(C_j(H)\) are increased, we must convert the bounded assumption to a uniform one, and we also permit smaller~\(k\), but this changes the regularization argument very little.
As the proof of~\cite[Lemma 8.1]{KKMO23} is short, and it is easy to make the small changes needed to prove Lemma~\ref{lemma:regularization}, we omit the details here.
The main idea is to:
For all edges which have say~\(r<k+1\) vertices, add \(k+1-r\) dummy vertices outside of~\(X\) (disjoint for each edge), and adjust the edge to include these vertices, so the resulting hypergraph, say~\(H''_1\), is \((k+1)\)-uniform; clearly the codegrees are undamaged, and we have potentially introduced some low-degree vertices, but only outside of~\(X\).
Now copy~\(H''_1\), \(k^2 D^2\) times, and for each low-degree vertex \(v\in V_1\setminus X\), put disjoint \((2,k+1,z)\)-Steiner systems on~\(v\) and its clones, for well-chosen~\(z\) which brings the degree of those vertices into the range~\([D-(k+1)(k+3),D]\).
In particular, \((k+1)(k+3)\leq\sqrt{D}\leq B^{-1}D\), using the hypothesis \(B\leq\sqrt{D/D_2}\).
Since all added edges come from (pairwise-disjoint)~\((2,k+1,z)\)-Steiner systems each using at most one vertex from any copy of~\(H_1\), this does not damage any of the codegrees.\COMMENT{Proof of Lemma~\ref{lemma:regularization} (though before the bounded -> uniform bit, but it is clear that makes no difference): Introduce a new constant~\(Z\) into the hierarchy such that \(1/D\ll1/Z\ll1/k\).
With such a~\(Z\), it is well known that for any integer~\(z\geq Z\) such that~\(z-1\) and~\(z(z-1)\) are divisible by~\(k\) and~\(k(k+1)\) respectively, there exists a \((2,k+1,z)\)-Steiner system.
Notice that for any integer \(z\geq Z\), there exists an integer \(0\leq t(z)<k(k+1)\) for which~\(z+t(z)\) satisfies the above divisibility conditions. [Suppose \(z\) satisfies \(k\mid z-1\) and \(k(k+1)\mid z(z-1)\). We seek to show \(z+k(k+1)\) then also satisfies \(k\mid z+k(k+1)-1\) and \(k(k+1)\mid (z+k(k+1))(z+k(k+1)-1)\). Well \(z+k(k+1)-1 = (z-1) + k(k+1)\), and \(k\) is a factor of both these terms. And \((z+k(k+1))(z+k(k+1)-1) = z(z-1) + zk(k+1) + k(k+1)(z+k(k+1)-1)\), and \(k(k+1)\) is a factor of all three of these terms.]
Let~\(S(z)\) denote the \(z\)-vertex hypergraph obtained from \(S=S(2,k+1,z+t(z))\) by deleting an arbitrary set of~\(t(z)\) vertices.
Notice that~\(S(z)\) satisfies \(C_j(S(z))=1\) for all \(2\leq j\leq k+1\), and, since a \((t,\ell,n)\)-Steiner system is \(\binom{n-1}{t-1}/\binom{\ell-1}{t-1}\)-regular, each vertex of~\(S(z)\) has degree [Each vertex has degree in \(S(2,k+1,z+t(z))\) exactly \((z+t(z)-1)/k\). For the lower bound, this is at least \((z-1)/k\), and then we remove \(t(z)\leq k(k+1)\) vertices. For each surviving vertex, each deleted vertex reduces the degree by exactly 1 by the original Steiner condition, so the surviving degrees are at least \((z-1)/k - k(k+1)\). For the upper bound, we simply bound the original degree from above as \((z+t(z)-1)/k\leq (z+k(k+1)-1)/k = (z-1)/k +k+1\).] \(\frac{z-1}{k}-k(k+1)\leq \text{deg}_{S(z)}(v)\leq\frac{z-1}{k}+k+1\).
For each integer \(Z\leq z\leq D\), we construct an approximately \(z\)-regular hypergraph~\(H^{(z)}\) with exactly~\(k^2D^2\) vertices [By my definition of hierarchies we have that \(D\) is integer] as follows:
Put \(\ell(z)\coloneqq\lceil kD^2/(z-k-1)\rceil\) and let \(a_1\geq\dots\geq a_{\ell(z)}\) be a sequence [Firstly it's clear that \(\ell(z)\) is a large integer, between \(\Omega(D)\) and \(O(D^2)\) depending on~\(z\). We have \(\frac{kD^2}{z-k-1} \leq \ell(z) \leq \frac{kD^2}{z-k-1}+1\) so \(\ell(z)(k(z-k-1)-1)=k\ell(z)(z-k-1) -\ell(z)\leq (\frac{kD^2}{z-k-1}+1)k(z-k-1) - \ell(z)=k^2 D^2 +k(z-k-1) - \ell(z)\) and \(k(z-k-1)-\ell(z)\leq kD - \frac{kD^2}{z-k-1}\leq kD-kD=0\) using \(z\leq D\), so \(\ell(z)(k(z-k-1)-1)\leq k^2 D^2\). On the other hand, \(\ell(z)k(z-k-1)\geq\frac{kD^2}{z-k-1}k(z-k-1)=k^2D^2\). So clearly there is such a sequence.] such that each~\(a_i\) is either~\(k(z-k-1)-1\) or~\(k(z-k-1)\), and \(\sum_{i=1}^{\ell}a_i=k^2D^2\).
Set~\(H^{(z)}\) to be the vertex-disjoint union of \(S(a_1), S(a_2), \dots, S(a_{\ell(z)})\).
It follows that each vertex of~\(H^{(z)}\) has degree at least \(\frac{k(z-k-1)-2}{k}-k(k+1)\geq z-(k+1)(k+3)\), and at most \(\frac{k(z-k-1)-1}{k}+k+1\leq z\). [\(\frac{k(z-k-1)-2}{k}-k(k+1) = z-k-1-2/k -k^2 -k \geq z-k-3-k^2-k \geq z-k^2 - 4k - 3 = z-(k+1)(k+3)\) using \(k\geq 1\), and \(\frac{k(z-k-1)-1}{k}+k+1 = z-k-1-1/k + k + 1\leq z\) is clear.]
Now construct~\(\cH\) by taking \(T\coloneqq k^2D^2\) vertex-disjoint copies \(\cH_1, \dots, \cH_T\) of~\(H_1\), identifying \(H_1=\cH_1\).
For each vertex \(v\in V(H_1)\), let~\(v^i\) denote the clone vertex of~\(v\) in~\(\cH_i\).
For each vertex~\(v\) of~\(H_1\) having \(\text{deg}_{H_1}(v)<(1-B^{-1})D\) (each of which must be in~\(V_1\setminus X\) by hypothesis), we now extend~\(\cH\) by making \(\cH[v^1,\dots,v^T]\) induce a copy of \(H^{(D-\text{deg}_{H_1}(v))}\).
(Here, clearly \(D-\text{deg}_{H_1}(v)\geq B^{-1}D\geq \sqrt{DD_2}\geq\sqrt{D}\geq Z\) by hypothesis on~\(B\).)
Then the \(\cH\)-degrees of all clones of~\(V_1\setminus X\) vertices are at least~\(D-(k+1)(k+3)\) and at most~\(D\).
Since \((k+1)(k+3)\leq \sqrt{D}\leq B^{-1}D\), we deduce that~\(\cH\) is \((T|V_1|,D,B^{-1})\)-regular, and moreover clearly satisfies \(C_j(\cH)=C_j(H_1)\leq D_j\) for all \(2\leq j\leq k+1\).}

We now show that Theorem~\ref{theorem:withreserves} follows quickly from Theorem~\ref{theorem:maintheorem}, by using weight functions~\(\tau_y\) which ensure the surviving \(H_2\)-degree of each \(y\in Y\) is averagely affected by the nibble on~\(H'_1\).
In order to spread out the~\(\tau_y\)-values so that no~\(\tau_y\) is too concentrated on a small number of vertices (i.e.\ so that~\ref{mainpseud:lowerbound} is satisfied, which is required for successful concentration of~\(\tau_y\) within a nibble), we use another cloning trick.
\lateproof{Theorem~\ref{theorem:withreserves}}
Deleting \(H_2\)-edges if necessary, we may assume that each \(x\in X\) satisfies \(\Delta_Y B^{-1+\gamma}\log^A D \leq \text{deg}_{H_2}(x)\leq D^{\log D}/k\) (here using~\ref{withmaxY}).\COMMENT{Need \(\Delta_Y B^{-1+\gamma}\log^A D\leq D^{\log D}/k\) for this range to exist. But \(1\leq B\leq\sqrt{D/D_2}\leq \sqrt{D}\) so LHS \(\leq \Delta_Y B^\gamma \log^A D\leq\Delta_Y D^{\gamma/2}\log^A D\leq\Delta_Y D^{3\gamma/4}\stackrel{(ii)}{\leq}D^{(\log D)-\gamma/4}\leq D^{\log D}/k.\)}
We may also assume that each \(y\in Y\) is not isolated in~\(H_2\).\COMMENT{Otherwise delete them}
Let \(H'_1\supseteq H_1\) be as given by Lemma~\ref{lemma:regularization}, and construct~\(H''_1\) to be~\(\lcl B\rcl\) vertex-disjoint copies \(\cH_1, \cH_2,\dots, \cH_{\lcl B\rcl}\) of~\(H'_1\), identifying \(\cH_1=H'_1\).
Clearly~\(H''_1\) has the same vertex degrees and~\(C_j\) values as~\(H'_1\).
Let~\(X_i\subseteq V(\cH_i)\) be the copy of~\(X\) in~\(\cH_i\), so that~\(H_2\) is bipartite with bipartition~\((X_1, Y)\) (we do not add edges to~\(H_2\) between~\(Y\) and any other~\(X_j\)).

For each \(y\in Y\) having \(\text{deg}_{H_2}(y)\geq Bq\), we define the weight function \(\tau_y\colon V(H''_1)\rightarrow\mathbb{R}_{\geq0}\) by putting \(\tau_y(x)\coloneqq\text{codeg}_{H_2}(x,y)\) for all \(x\in X\,(=X_1)\) and \(\tau_y(v)\coloneqq 0\) for all other \(v\in V(H''_1)\).
For those \(y\in Y\) having \(\text{deg}_{H_2}(y)<Bq\), select an \(x\in X\) having \(1\leq\text{codeg}_{H_2}(x,y)\leq q\) arbitrarily\COMMENT{Such \(x\) must exist since \(y\) not isolated, and the hypothesis on \(q\)}.
Let \(x_i\in X_i\) be the clones of~\(x\) for \(i\in\{2,\dots,\lcl B\rcl\}\).
Put \(\tau_y(x')\coloneqq\text{codeg}_{H_2}(x',y)\) for all \(x'\in X_1\setminus \{x\}\) and \(\tau_y(v)\coloneqq 0\) for all \(v\in V(H''_1)\setminus(X_1\cup\{x_2,\dots,x_{\lcl B\rcl}\})\).
Beginning with \(\tau_y(x)=\text{codeg}_{H_2}(x,y)\) and \(\tau_y(x_i)=0\) for \(i\in\{2,\dots,\lcl B\rcl\}\), inflate the~\(\tau_y\)-values (without actually adding any edges) of~\(x\) first, then each~\(x_i\) in order, each to a maximum value of~\(q\), until we have \(\tau_y(V(H''_1))=Bq\).
We seek to apply Theorem~\ref{theorem:maintheorem} to~\(H''_1\) with weight functions \(\{\tau_y\colon y\in Y\}\).
Notice each~\(\tau_y\) satisfies \(B\cdot\max_{v\in V(H''_1)}\tau_y(v)\leq Bq\leq\tau_y(V(H''_1))\leq\Delta_Y\) (here the hypothesis \(B\leq\Delta_Y/q\) ensures this range exists), so, in particular,~\ref{mainpseud:lowerbound} holds.
Clearly each~\(\tau_y\) satisfies \(|\{v\in V(H''_1)\colon\tau_y(v)>0\}|\leq\Delta_Y+B\leq D^{(\log D)-\gamma}+D\leq D^{\log D}\) so~\ref{mainpseud:supportsize} holds.
Further, each \(x\in X\) (or clone) is only in (at most)\COMMENT{For those \(y\) that already had high enough degree, the original is in the support of those \(\tau_y\), but the clones aren't} as many of the sets~\(\{v\in V(H''_1)\colon\tau_y(v)>0\}\) as the number of vertices~\(y\in Y\) that~\(x\) (the non-clone version) is in an~\(H_2\)-edge with, which is at most \(k\cdot D^{\log D}/k=D^{\log D}\), so~\ref{mainpseud:maxinvolvement} holds. 
Notice that we may thus apply Theorem~\ref{theorem:maintheorem} to~\(H''_1\) with the~\(B\) as in Theorem~\ref{theorem:withreserves}\COMMENT{An extra constraint can only make \(B\) smaller, so, fine. We already checked \(H''_1\) inherits all the values \(D, D_j, \eps=1/B\) from \(H'_1\).} (in particular, put \(\eps=1/B\)), and~\(A/2\) playing the role of~\(A\), say.

Let~\(M\) be the obtained matching of~\(H''_1\), and let~\(M'\subseteq M\) be those edges covering a vertex in \(X=X_1\).
By the properties of the bijections \(f_x\colon\partial_{H_1}(x)\rightarrow\partial_{H'_1}(x)\), for every \(M'\)-edge~\(e'\), there is an \(H_1\)-edge \(e''\subseteq e'\) covering the same vertices of~\(X\), so the set~\(M''\) of these~\(e''\) is a matching of~\(H_1\), and with \(X'\coloneqq X\setminus V(M'')\), we have \(\tau_y(X')\leq\tau(V(H''_1)\setminus V(M))\leq\tau_y(V(H''_1))B^{-1+\gamma}\log^{A/2}D\leq\Delta_Y B^{-1+\gamma}\log^{A/2}D\) for all \(y\in Y\) by Theorem~\ref{theorem:maintheorem}.
Consider \(H'_2\coloneqq H_2[X'\cup Y]\).
Notice that, since~\(H_2\) is bipartite with bipartition~\((X,Y)\), each \(y\in Y\) satisfies\COMMENT{If \(y\) is a vertex originally having high degree, so \(\tau_y\) untampered with, then the first inequality is an equality. Otherwise, we still only ever increased the \(\tau_y\)-values (and for only one \(x\in X\) for this \(y\)), i.e.\ \(\text{codeg}_{H_2}(x,y)\leq\tau_y(x)\)} \(\text{deg}_{H'_2}(y)=\sum_{x\in X'}\text{codeg}_{H_2}(x,y)\leq\tau_y(X')\leq\Delta_Y B^{-1+\gamma}\log^{A/2}D\).
Since \(\text{deg}_{H'_2}(x)=\text{deg}_{H_2}(x)\geq \Delta_Y B^{-1+\gamma}\log^A D\) for all \(x\in X\) by~\ref{withminX},~\(H'_2\) has an \(X'\)-perfect matching~\(M'''\) by Lemma~\ref{lemma:bipmopup},\COMMENT{Here is where it matters that \(\ell\) is constant (well, at most some polylog would suffice, but still)} so \(M''\cup M'''\) is an \(X\)-perfect matching of~\(H=H_1\cup H_2\), as required.
\endproof
We now use Theorem~\ref{theorem:withreserves} to prove Theorem~\ref{thm:mainpartitetheorem} by first taking a random~\(\mathbf{Y'}\subseteq Y\) of an appropriate size, and using Theorem~\ref{theorem:kimvu} and the Lov\'asz Local Lemma (Lemma~\ref{lemma:LLL}) to show there exists an outcome~\(Y'\) of~\(\mathbf{Y'}\) in which \(H_1\coloneqq H[X\cup(Y\setminus Y')]\) and \(H_2\coloneqq H[X\cup Y']\) satisfy the hypotheses of Theorem~\ref{theorem:withreserves}.
\lateproof{Theorem~\ref{thm:mainpartitetheorem}}
The result holds by Hall's Marriage Theorem if \(k=1\),\COMMENT{If it's 2-uniform, it holds without the \(B^{-1+\gamma}\log^A D\) on the left; indeed suppose \(G\) is a bipartite graph with parts \(X\) and \(Y\) and \(\text{deg}(x)\geq D\) and \(\text{deg}(y)\leq D\). Let \(S\subseteq X\). Counting the edges between \(S\) and \(N(S)\) in two ways, \(|S|D\leq \sum_{x\in S}\text{deg}(x)=e(S,N(S))\leq \sum_{y\in N(S)}\text{deg}(y)\leq |N(S)|D\). If it has some 1-edges and they appear only in \(Y\) then just ignore them; if some appear in~\(X\) then take each of these as edges in the matching and apply Hall's to what remains} so assume \(k\geq 2\) throughout.
If \(X=\emptyset\) then the result is trivial, so assume \(X\neq\emptyset\).\COMMENT{Not precluded by the hypotheses since each element of an empty set can satisfy the lower bound on the \(X\)-degrees. But if \(X\) is empty then the empty matching covers it}
If~\(H\) is \(1\)-uniform then clearly~\(\{\{x\}\colon x\in X\}\) is an \(X\)-perfect matching, and otherwise we have \(|Y|\geq D/D_2\geq B^2\).
It is easy to see that if \(B^{-1+2\gamma/3}\geq 1/\log^A D\), then the result holds by Lemma~\ref{lemma:bipmopup},\COMMENT{Firstly if \(B\leq\log D\) then clearly \(B^{-1+\gamma}\log^A D=\omega(1)\) so done. So \(B\geq\log D\). But then if \(B^{-1+2\gamma/3}\geq1/\log^A D\) then \((1+B^{-1+\gamma}\log^A D)D\geq (1+B^{\gamma/3})D=\omega(D)\) so done} so assume \(B^{1-2\gamma/3}>\log^A D\) throughout.
By deleting edges, we may assume that \(\deg_H(x)=\lceil(1+B^{-1+\gamma}\log^A D)D\rceil\) for all \(x\in X\).

Set \(p\coloneqq B^{-1+2\gamma/3}\log^A D\) (whence \(0<p<1\)).
Construct a \((k+1)\)-uniform \(H'\supseteq H\) by adding, for each edge~\(e\) having \(r<k+1\) vertices, a set~\(W_e\) of \(k+1-r\) dummy vertices (these~\(W_e\) disjoint for different~\(e\)) and adjusting~\(e\) to include~\(W_e\).
If \(|e|=k+1\) in~\(H\) then put \(W_e=\emptyset\).
Put \(Y^{*}\coloneqq Y\cup\bigcup_{e\in E(H)}W_e\) and notice this does not increase any of the values~\(C_j(H)\), and the dummy vertices~\(w\) have \(\text{deg}(w)=1\leq D\), and~\(H'\) is \((k+1)\)-uniform and bipartite with bipartition~\((X,Y^{*})\).
Construct a random set \(\mathbf{Y'}\subseteq Y^{*}\) by putting each \(y\in Y^{*}\) in~\(\mathbf{Y'}\) independently with probability~\(p\).
Put \(\mathbf{H}_2\coloneqq H'[X\cup\mathbf{Y'}]\) and \(\mathbf{H}_1\coloneqq H'[X\cup (Y^{*}\setminus \mathbf{Y'})]\).
For \(y\in Y^{*}\), set \(\textbf{deg}\mathbf{'}(y)\coloneqq |\{e\in\partial_{H'}(y)\colon ((e\cap Y^{*})\setminus\{y\})\subseteq\mathbf{Y'}\}|\), and for \(x\in X, y\in Y^{*}\), set \(\textbf{codeg}\mathbf{'}(x,y)\coloneqq|\{e\in\partial_{H'}(\{x,y\})\colon ((e\cap Y^{*})\setminus\{y\})\subseteq\mathbf{Y'}\}|\), and notice that if \(y\in\mathbf{Y'}\), then~\(\textbf{deg}\mathbf{'}(y)\) and~\(\textbf{codeg}\mathbf{'}(x,y)\) are simply~\(\text{deg}_{\mathbf{H}_2}(y)\) and~\(\text{codeg}_{\mathbf{H}_2}(\{x,y\})\) respectively.
Set\COMMENT{It turns out it does not suffice to just put \(D'=\lceil(1+B^{-1+\gamma}\log^A D)D\rceil\) for us. This is too wasteful. Indeed if you do so, then the \(H_1\)-degrees end up being \(D'(1-p)^k\geq D'(1-pk)\) ish, and \(pk=kB^{-1+2\gamma/3}\log^A D\) is clearly more than \(1/B\). I thought perhaps use \(B'\) slightly smaller than B, but it will not work. You need essentially \(pk\leq (1/B')\), but you also need that \(Dp^k\geq (B')^{-1+\gamma/3}\log^A D \cdot Dp^{k-1}\), i.e.\ you need \(p\gg(1/B')\). What I've done to circumnavigate this is really be more careful to concentrate the \(H_1\)-degrees close to their expectation. This then yields an error term which is only logarithmically larger than \(B^{-1}\), independent of the power of \(\gamma\) in the \(p\), and it now does suffice to take a slightly smaller \(B'<B\).} \(D'\coloneqq \lceil(1+B^{-1+\gamma}\log^A D)D\rceil (1-p)^k +(\log^{6k}D)DB^{-1}\) and observe that\COMMENT{\((1-p)^k=(1-kp/k)^k\geq 1-kp\) by \((1+x/n)^n\geq 1+x\) which holds for \(n\geq 1\), \(|x|\leq n\), with \(n=k\), \(x=kp\)}
\begin{equation}\label{eq:D'geqD}
D'\geq(1+B^{-1+\gamma}\log^A D)(1-kp)D\geq (1+B^{-1+\gamma}\log^A D - 2kp)D\geq D,
\end{equation}
where in the last inequality we used that \(2kp=2kB^{-1+2\gamma/3}\log^A D\leq B^{-1+\gamma}\log^A D\), since \(B^{\gamma/3}\geq 2k\).
Define \(Q\coloneqq\max_{j\in\{2,3,\dots,k+1\}}(D_j p^{k+1-j})\).
We will show each of the following:
\begin{enumerate}[label=\upshape(\roman*)]
\item \(C_j(\mathbf{H}_1)\leq D_j\) for all \(2\leq j\leq k+1\), deterministically;\label{determsequence}
\item Fix \(x\in X\). Then \(\prob{\text{deg}_{\mathbf{H}_2}(x)<\frac{1}{2}Dp^k}\leq D^{-\frac{1}{2}\log^4 D}\);\label{degxH2bound}
\item Fix \(x\in X\). Then \(\prob{\text{deg}_{\mathbf{H}_1}(x)>D'}\leq D^{-\frac{1}{2}\log^4 D}\);\label{degxH1up}
\item Fix \(x\in X\). Then \(\prob{\text{deg}_{\mathbf{H}_1}(x)<D'(1-2B^{-1}\log^{6k}D)}\leq D^{-\frac{1}{2}\log^4 D}\);\label{degxH1down}
\item Fix \(y\in Y^*\). Then \(\prob{\textbf{deg}\mathbf{'}(y)>2Dp^{k-1}}\leq D^{-\frac{1}{2}\log^4 D}\);\label{degybound}
\item Fix \(x\in X, y\in Y^*\). Then\COMMENT{This one is interesting. I'm glad this popped out because we can't always expect it to be close to its expectation of \(D_2 p^{k-1}\). Indeed, in rainbow links \(D_2 p^{k-1}\approx n^{-1/2}\). As it stands notice that \(D_{k+1}p^0\) is one of the terms, so this is always a more sensible statement. More on this later} \(\prob{\textbf{codeg}\mathbf{'}(x,y)>2Q\log^{5k}D}\leq D^{-\frac{1}{2}\log^4 D}\).\label{codegxybound}
\end{enumerate}
Indeed,~\ref{determsequence} clearly holds for any choice of~\(\mathbf{Y'}\).
Suppose for now that none of the bad events in \ref{degxH2bound}--\ref{codegxybound} occur for some outcome~\(Y'\) of~\(\mathbf{Y'}\) (and \(H_1, H_2\) of \(\mathbf{H}_1, \mathbf{H}_2\) respectively, say); that is,
\begin{enumerate}[label=(\alph*)]
\item $C_j(H_1) \leq D_j$ for all $2 \leq j \leq k + 1$; \label{bipartite-pf-H1codeg}
\item $\deg_{H_2}(x) \geq \frac{1}{2}Dp^k$ for every $x \in X$;\label{bipartite-pf-H2xdeg}
\item $(1 - 2B^{-1}\log^{6k}D)D' \leq \deg_{H_1}(x) \leq D'$ for every $x \in X$; \label{bipartite-pf-H1xdeg}
\item $\deg_{H_2}(y) \leq 2Dp^{k-1}$ for every $y \in Y'$;\label{bipartite-pf-H2ydeg}
\item $\text{codeg}_{H_2}(\{x,y\}) \leq 2Q\log^{5k}D$ for every $x \in X$ and $y \in Y^*$.\label{bipartite-pf-H2codeg}
\end{enumerate}

Set \(\Delta_{Y'}\coloneqq 2Dp^{k-1}\) (and note \(\Delta_{Y'}\leq (D')^{(\log D')-\gamma})\) and \(q\coloneqq2Q\log^{5k}D\).
We seek to apply Theorem~\ref{theorem:withreserves} with~\(D',\gamma/3, B'\coloneqq B/(2\log^{6k}D)\) playing the role of~\(D,\gamma, B\) respectively.
To that end, first observe that \ref{withdeg} of Theorem~\ref{theorem:withreserves} holds by \ref{bipartite-pf-H1codeg} and \ref{bipartite-pf-H1xdeg}, \ref{withmaxY} of Theorem~\ref{theorem:withreserves} holds by \ref{bipartite-pf-H2ydeg}, \ref{withq} of Theorem~\ref{theorem:withreserves} holds by \ref{bipartite-pf-H2codeg},  and \ref{withminH1} of Theorem~\ref{theorem:withreserves} holds by \ref{bipartite-pf-H1xdeg}.
To show \ref{withB} of Theorem~\ref{theorem:withreserves} observe that \(B'\geq 1\) and \(B'\leq B\leq\sqrt{D/D_2}\leq\sqrt{D'/D_2}\) by~(\ref{eq:D'geqD}), and similarly for each \(j\in\{4,5,\dots,k+1\}\) we have \(B'\leq B\leq (D/D_j)^{1/(j-1)}\leq (D'/D_j)^{1/(j-1)}\).
We show \(B\leq\Delta_{Y'}/q\), from which \(B'\leq \Delta_{Y'}/q\) follows.
To that end, we firstly claim that
\begin{equation}\label{eq:convenient}
\frac{Dp^{j-2}}{D_j}\geq B\log^A D\,\,\,\text{for all}\,\,j\in\{2,3,\dots,k+1\}.
\end{equation}
Indeed, for \(j\in\{2,3\}\) we have \(Dp^{j-2}/D_j \geq Dp/D_2\geq D\log^A D/(D_2 B)\geq B\log^A D\) by the hypothesis \(B\leq\sqrt{D/D_2}\), and for all other~\(j\) we have \(Dp^{j-2}/D_j\geq D\log^A D/(D_j B^{j-2}) = DB\log^A D/(D_j B^{j-1})\geq B\log^A D\) by the hypothesis \(B\leq (D/D_j)^{1/(j-1)}\), so~(\ref{eq:convenient}) holds.
Set (throughout the proof) \(J(Q)\) to be the smallest \(j\in\{2,3,\dots,k+1\}\) which attains \(Q=D_{J(Q)}p^{k+1-J(Q)}\).
Notice that
\begin{equation}\label{eq:crucialratio}
\frac{Dp^k}{Q} = \frac{Dp^{J(Q)-1}}{D_{J(Q)}}\stackrel{(\ref{eq:convenient})}{\geq}pB\log^A D\geq\log^A D.
\end{equation}
Thus \(\Delta_{Y'}/q=Dp^{k-1}/(Q\log^{5k}D)\geq pB/p=B\), as desired.
Finally, to show \ref{withminX} of Theorem~\ref{theorem:withreserves}, we use \ref{bipartite-pf-H2xdeg}; we need to check that \((1/2)Dp^{k}\geq \Delta_{Y'}(B')^{-1+\gamma/3}\log^A (D')\), for which it suffices to show \(p\geq 4(B')^{-1+\gamma/3}\log^A (D')\).
Indeed, we have \(D'\leq D^2\) and \(8(B')^{-1+\gamma/3}\log^A D= 8B^{-1+\gamma/3}\log^A D \cdot (2\log^{6k}D)^{1-\gamma/3}\leq B^{-1+2\gamma/3}\log^A D=p\), using \(B^{\gamma/3}\geq \log^{\gamma A/6}D\geq 16\log^{6k}D\), say.
We conclude that Theorem~\ref{theorem:withreserves} yields an \(X\)-perfect matching of \(H_1\cup H_2\subseteq H'\).\COMMENT{Out of interest, edges with a vertex in \(\mathbf{Y'}\) and a vertex in \(Y\setminus\mathbf{Y'}\) don't make it into \(\mathbf{H}_1\) or \(\mathbf{H}_2\).}
It is clear that this translates to a matching of~\(H\) by ignoring the dummy vertices in any edge of~\(M\).

It remains to show each of \ref{degxH2bound}--\ref{codegxybound}, which we do in turn using Theorem~\ref{theorem:kimvu}, and to show there is an outcome in which all of these bad events simultaneously do not occur, for which we use Lemma~\ref{lemma:LLL}.
For \(y\in Y^{*}\), let~\(\mathbf{I}_y\) be the indicator random variable for the event \(y\in\mathbf{Y'}\) and set~\(\mathbf{I'}_y\coloneqq 1-\mathbf{I}_y\).
Set \(D''\coloneqq \lceil(1+B^{-1+\gamma}\log^A D)D\rceil\).
\vspace{1mm}
\newline\noindent\ref{degxH2bound}: Fix \(x\in X\).
Clearly \(\expn{\text{deg}_{\mathbf{H}_2}(x)}=D''p^k\geq Dp^k\).
Construct the auxiliary hypergraph~\(\cH\) with \(V(\cH)=Y^{*}\), and the edges of~\(\cH\) are those \(k\)-sets \(U\subseteq Y^{*}\) for which \(\text{codeg}_{H'}(U\cup\{x\})>0\).
For each \(U\in E(\cH)\), put \(w(U)\coloneqq\text{codeg}_{H'}(U\cup\{x\})\).\COMMENT{Formulated this way to permit multiplicity}
We have \(\mathbf{Z}_{\cH}\coloneqq\sum_{e\in E(\cH)}w(e)\prod_{y\in e}\mathbf{I}_y=\text{deg}_{\mathbf{H}_2}(x)\).
For \(i\in[k]\) and \(W\in\binom{Y^*}{i}\) we have \(\expn{\mathbf{Z}_{\cH_W}}=\text{codeg}_{H'}(W\cup\{x\})p^{k-i}\leq D_{i+1}p^{k-i}\leq Q\),\COMMENT{Notice if you change index \(j=i+1\) then this is one of the terms in \(Q\)} so \(\mathbb{E}''[\mathbf{Z}_{\cH}]\leq Q\).
Further, \(\mathbb{E}'[\mathbf{Z}_{\cH}]\leq\max(D''p^k, Q)=D''p^k\) by~(\ref{eq:crucialratio}).
We apply Theorem~\ref{theorem:kimvu} with \(\lambda\coloneqq\log^5 D\) and use the fact that~\(\text{deg}_{\mathbf{H}_2}(x)\) is decided by at most~\(2Dk\) random variables~\(\mathbf{I}_y\) to deduce that
\[
\prob{\left|\text{deg}_{\mathbf{H}_2}(x)-\expn{\text{deg}_{\mathbf{H}_2}(x)}\right|\geq f}\leq 2e^2\exp(-\log^5 D + (k-1)\log(2Dk))\leq D^{-\frac{1}{2}\log^4 D},
\]
where \(f=(8\log^5 D)^k \sqrt{k! D''p^k Q}\leq (\log^{A/4}D)Dp^k\sqrt{Q/Dp^k}\leq (1/2)Dp^k\) by~(\ref{eq:crucialratio}).
\vspace{1mm}
\newline\noindent\ref{degxH1up}--\ref{degxH1down}: Fix \(x\in X\).
We have \(\expn{\text{deg}_{\mathbf{H}_1}(x)}=D''(1-p)^k\).
Construct the auxiliary hypergraph~\(\cH\) with \(V(\cH)\coloneqq Y^{*}\), and the edges of~\(\cH\) are those \(k\)-sets \(U\subseteq Y^{*}\) with \(\text{codeg}_{H'}(U\cup\{x\})>0\).
For each \(U\in E(\cH)\), put \(w(U)\coloneqq \text{codeg}_{H'}(U\cup\{x\})\).
We have \(\mathbf{Z}_{\cH}\coloneqq\sum_{e\in E(\cH)}w(e)\prod_{y\in e}\mathbf{I'}_y=\text{deg}_{\mathbf{H}_1}(y)\).
For \(i\in[k]\) and \(W\in\binom{Y^*}{i}\) we have \(\expn{\mathbf{Z}_{\cH_{W}}}=\text{codeg}_{H'}(W\cup\{x\})(1-p)^{k-i}\leq D_{i+1}\).
Thus, it is clear that \(\mathbb{E}'[\mathbf{Z}_{\cH}]\leq 2D\), and \(\mathbb{E}''[\mathbf{Z}_{\cH}]\leq D_2\).
We apply Theorem~\ref{theorem:kimvu} with \(\lambda\coloneqq\log^5 D\) to deduce that
\[
\prob{\left|\text{deg}_{\mathbf{H}_1}(x)-\expn{\text{deg}_{\mathbf{H}_1}(x)}\right|\geq(8\log^5 D)^k \sqrt{k!\cdot 2DD_2}}\leq D^{-\frac{1}{2}\log^4 D},
\]
and we have that \(\sqrt{DD_2}=D\sqrt{D_2/D}\leq D/B\), so with probability at least \(1-D^{-(1/2)\log^4 D}\), \(\text{deg}_{\mathbf{H}_1}(x)\) is within \((\log^{6k}D)\cdot (D/B)\) of its expectation, so that \(\text{deg}_{\mathbf{H}_1}(x)\leq D''(1-p)^k + (\log^{6k}D)DB^{-1} = D'\), and
\begin{eqnarray*}
\text{deg}_{\mathbf{H}_1}(x) & \geq & D''(1-p)^k - \frac{D\log^{6k}D}{B} = D'\frac{D''(1-p)^k - DB^{-1}\log^{6k}D}{D''(1-p)^k + DB^{-1}\log^{6k}D}\\ & = & D'(1-2(D')^{-1}DB^{-1}\log^{6k}D)\stackrel{(\ref{eq:D'geqD})}{\geq}D'(1-2B^{-1}\log^{6k}D).
\end{eqnarray*}
\vspace{1mm}
\newline\noindent\ref{degybound}: Fix \(y\in Y^*\). We have \(\expn{\textbf{deg}\mathbf{'}(y)}=\text{deg}_{H'}(y)p^{k-1}\leq Dp^{k-1}\).
Construct the auxiliary hypergraph~\(\cH\) with \(V(\cH)\coloneqq Y^*\setminus\{y\}\), and the edges of~\(\cH\) are those \((k-1)\)-sets\COMMENT{Recall we assume \(k\geq 2\). The minimum permissible uniformity in Kim-Vu is 1, and we have here uniformity \(k-1\geq 1\) so this is OK.} \(U\subseteq Y^*\setminus\{y\}\) with \(\text{codeg}_{H'}(U\cup\{y\})>0\).
For each \(U\in E(\cH)\), put \(w(U)\coloneqq\text{codeg}_{H'}(U\cup\{y\})\).
We have \(\mathbf{Z}_{\cH}\coloneqq\sum_{e\in E(\cH)}w(e)\prod_{u\in e}\mathbf{I}_u=\textbf{deg}\mathbf{'}(y)\).
For \(i\in[k-1]\) and \(W\in\binom{Y^*\setminus\{y\}}{i}\), we have \(\expn{\mathbf{Z}_{\cH_W}}\leq D_{i+1}p^{k-1-i}\leq Q/p\).
Then \(\mathbb{E}'[\mathbf{Z}_{\cH}]\leq\max(Dp^{k-1},Q/p)=Dp^{k-1}\) by~(\ref{eq:crucialratio}), and \(\mathbb{E}''[\mathbf{Z}_{\cH}]\leq Q/p\).
We apply Theorem~\ref{theorem:kimvu} with \(\lambda\coloneqq\log^5 D\) to deduce that
\[
\prob{\left|\textbf{deg}\mathbf{'}(y)-\expn{\textbf{deg}\mathbf{'}(y)}\right|\geq f}\leq D^{-\frac{1}{2}\log^4 D},
\]
where \(f=(8\log^5 D)^{k-1}\sqrt{(k-1)!Dp^{k-1}(Q/p)}\leq(\log^{A/2}D)Dp^{k-1}\sqrt{Q/(pDp^{k-1})}\), which is at most~\(Dp^{k-1}\) by~(\ref{eq:crucialratio}).
\vspace{1mm}
\newline\noindent\ref{codegxybound}: Fix \(x\in X, y\in Y^*\).
We have \(\expn{\textbf{codeg}\mathbf{'}(x,y)}=\text{codeg}_{H'}(\{x,y\})p^{k-1}\leq D_2 p^{k-1}\leq Q\).
Construct the auxiliary hypergraph~\(\cH\) with \(V(H)\coloneqq Y^*\setminus\{y\}\), and the edges of~\(\cH\) are those \((k-1)\)-sets \(U\subseteq Y^*\setminus\{y\}\) with \(\text{codeg}_{H'}(U\cup\{x,y\})>0\).
For each \(U\in E(\cH)\), put \(w(U)\coloneqq\text{codeg}_{H'}(U\cup\{x,y\})\).
We have \(\mathbf{Z}_{\cH}\coloneqq\sum_{e\in E(\cH)}w(e)\prod_{u\in e}\mathbf{I}_u=\textbf{codeg}\mathbf{'}(x,y)\).
For \(i\in[k-1]\) and \(W\in\binom{Y^*\setminus\{y\}}{i}\), we have \(\expn{\mathbf{Z}_{\cH_W}}\leq D_{i+2}p^{k-1-i}\leq Q\).\COMMENT{Change index \(j=i+2\)}
We deduce that \(\mathbb{E}'[\mathbf{Z}_{\cH}], \mathbb{E}''[\mathbf{Z}_{\cH}]\leq Q\).
Then by Theorem~\ref{theorem:kimvu} with \(\lambda\coloneqq\log^5 D\), we have
\[
\prob{\left|\textbf{codeg}\mathbf{'}(x,y)-\expn{\textbf{codeg}\mathbf{'}(x,y)}\right|\geq (8\log^5 D)^{k-1}Q\sqrt{(k-1)!}}\leq D^{-\frac{1}{2}\log^4 D},
\]
and otherwise \(\textbf{codeg}\mathbf{'}(x,y)\leq D_2 p^{k-1}+(\log^{5k}D)Q\leq2Q\log^{5k}D\).\COMMENT{Here is where something interesting happens in my opinion; there is no guarantee that \(D_2p^{k-1}\) is the largest expression \(D_jp^{k+1-j}\) in \(Q\), and indeed for rainbow links it really isn't - there we have \(D_2p^{k-1}\approx n^{-1/2}\) and \(D_{k+1}p^{k+1-(k+1)}=D_{k+1}\geq 1\). So we just bound the expectation part above by \(Q\) as well as the tail. In the other Kim-Vu applications, it was guaranteed that the expectation part \(Dp^k\) was larger than \(Q\) (or \(Dp^{k-1}\) larger than \(Q/p\))}

It remains only to show there exists an outcome in which all of the bad events in \ref{degxH2bound}--\ref{codegxybound} simultaneously do not occur.
To that end, for \(x\in X, y\in Y^*\), let \(A^{(1)}_x, A^{(2)}_x, A^{(3)}_x, A_y, A_{(x,y)}\) denote the bad events in \ref{degxH2bound}--\ref{codegxybound} respectively.
Construct an auxiliary digraph~\(G\) with a vertex for each of these events, and an arc in both directions between two distinct vertices if the corresponding events are not decided by disjoint sets of trials \(y'\in\mathbf{Y'}\).
Clearly~\(G\) is a dependency digraph.\COMMENT{If a vertex \(a\) has no out-arcs to a whole set \(U\subseteq V(G)\) then for each \(u\in U\), the event corresponding to \(a\) is decided by a disjoint set of trials to the event corresponding to \(u\), whence it is decided by a set of trials disjoint to the entire set of trials determing all the events for \(u\in U\), i.e. \(A\) mut ind of all \(U\)}
One checks easily\COMMENT{For arcs to other \(A^{(j)}_{x'}\), 3 choices for~\(j\), \(D''k\) choices for the \(y\) whose outcomes interacts with both \(A^{(i)}_x\) and \(A^{(j)}_{x'}\), then \(D\) choices for an edge containing~\(y\), which determines \(x'\). To \(A_y\), \(D''k\) choices for the \(y'\) interfering with both, then \(D(k-1)\) choices for an edge containing \(y'\) and a non-\(y'\) vertex \(y\) of that edge. To \(A_{(x,y)}\), \(D''k\) choices for the \(y'\) interfering with both, then \(D(k-1)\) choices for an edge containing \(y'\) and a non-\(y'\) vertex \(y\) of that edge (and the \(x\) is uniquely determined by this last edge)} that any vertex corresponding to an event~\(A^{(i)}_x\) sends out at most \(3D''k D + 2D''kD(k-1)\leq D^3\) arcs to other \(G\)-vertices, and similarly the other \(G\)-vertices also\COMMENT{For \(A_y\) out to \(A^{(j)}_x\), there's 3 choices for \(j\), \(D(k-1)\) choices for the interfering \(y'\), then \(D\) choices for \(x\). Out to \(A_{y'}\) there's \(D(k-1)\) choices for the interfering \(y''\), then \(D(k-1)\) for \(y'\). Out to \(A_{(x,y')}\) the same as for out to \(A_{y'}\) since \(x\) determined by the last edge. So \(\leq 3D^2(k-1)+2D^2(k-1)^2\leq D^3\). For \(A_{(x,y)}\) out, there's at most \(D_2(k-1)\leq D(k-1)\) choices for the interfering \(y'\) then proceed as usual.} have out-degree at most~\(D^3\).
Setting \(f_v\coloneqq (1/2)D^{-\log^3 D}\)  for each \(v\in V(G)\) and letting~\(A_v\) denote the bad event corresponding to~\(v\), we then have \(f_v \prod_{w\in V(G)\colon vw\notin E(G)}(1-f_w)\geq (1/2)D^{-\log^3 D}\exp(-D^{-\log^{3}D}\cdot D^3)\geq (1/4)D^{-\log^3 D}\geq D^{-(1/2)\log^4 D}\geq\prob{A_v}\), so Lemma~\ref{lemma:LLL} finishes the proof. 
\endproof
\section{Applications}\label{Section:Applications}
In this section, we use Theorems~\ref{theorem:maintheorem},~\ref{thm:mainpartitetheorem}, and~\ref{theorem:maincolourtheorem} to prove each of the results stated in Section~\ref{section:appstatements} in turn.
Of these, the most involved proofs are those of Theorems~\ref{theorem:rainbowlinks} and~\ref{theorem:SimplicialComplex}, which both require a preliminary random partition.
For brief sketches of these two proofs, refer to Section~\ref{section:intro}.
\subsection{Rainbow directed cycles}\label{section:raincycles}
The aim of this subsection is to prove Theorem~\ref{theorem:rainbowlinks}, for which we first need some definitions.
For a set~\(Z\) and non-negative real numbers \(p_0, p_1, \dots, p_m\) satisfying \(\sum_{i\in[m]_0}p_i=1\), a \((p_0, p_1 \dots, p_m)\)-\textit{partition} \((\mathbf{Z}_0, \mathbf{Z}_1, \dots, \mathbf{Z}_m)\) of~\(Z\) is defined as follows:
Set \(q_0=0\) and for \(i\in[m+1]\) set \(q_{i}=\sum_{j\in[i-1]_0}p_j\), and for each \(z\in Z\) independently, choose a uniformly random label \(\ell_z\in[0,1)\) and put~\(z\in\mathbf{Z}_i\), where \(\ell_z\in [q_{i},q_{i+1})\).
Clearly~\((\mathbf{Z}_0, \mathbf{Z}_1, \dots, \mathbf{Z}_m)\) is indeed an (ordered) partition of~\(Z\), and we have that each \(z\in Z\) is placed, independently, in~\(\mathbf{Z}_i\) with probability~\(p_i\).

Recall that~\(\Phi(\dirK)\) is the set of proper \(n\)-arc colourings of~\(\dirK\) on vertex set \(V\coloneqq[n]\) and colour set \(C\coloneqq [n]\).
For a coloured digraph \(G\in\Phi(\dirK)\) and an arc \(e\in E(G)\), we denote by~\(\phi_G(e)\) (dropping the~\(G\) when~\(G\) is clear from context) the colour of~\(e\) in~\(G\).
For a vertex \(v\in V\) and a colour subset \(Q\subseteq C\), we denote by~\(N^{+}_{G,Q}(v)\) (respectively~\(N^{-}_{G,Q}(v)\)) the set of outneighbours (inneighbours) of~\(v\) in~\(G\) via arcs with colour in~\(Q\) (dropping~\(G\) when clear from context), and we denote by~\(N^{+}_{c}(v)\) (respectively~\(N^{-}_{c}(v)\)) the unique vertex in~\(N^{+}_{\{c\}}(v)\) (\(N^{-}_{\{c\}}(v)\)).
For a colour \(c\in C\) and sets \(V_1, V_2\subseteq V\), we denote by \(E_c(V_1,V_2)\) the set of arcs in~\(G\) with colour~\(c\), tail in~\(V_1\), and head in~\(V_2\).
A \textit{directed path} is a directed graph~\(P\) having (distinct) vertices \(v_1, v_2, \dots, v_r\) and arc set \(E(P)=\{v_iv_{i+1}\colon i\in[r-1]\}\).
We sometimes denote \(P=v_1 v_2 \dots v_r\).
The \textit{tail} of~\(P\) is~\(v_1\), the \textit{head} of~\(P\) is~\(v_r\), and the \textit{internal} vertices of~\(P\) are \(v_2, v_3, \dots, v_{r-1}\).
Suppose~\(G\in\Phi(\dirK)\) and \(u,v\in V\) are distinct.
A \((u,v)\)-\textit{rainbow link} is a directed path in~\(G\) which is rainbow and has tail~\(u\) and head~\(v\).
For sets \(V_1, V_2\subseteq V\) and \(C_1, C_2, C_3\subseteq C\), we define~\(\cL_G(u,C_1,V_1,C_2,V_2,C_3,v)\) to be the set of \((u,v)\)-rainbow links \(uv_1v_2v\) in~\(G\) (dropping~\(G\) when~\(G\) is clear from context) where \(v_i\in V_i\), \(\phi(uv_1)\in C_1, \phi(v_1v_2)\in C_2, \phi(v_2v)\in C_3\).
Define~\(C_{\text{bad}}(G)\) to be the set of those colours \(c\in C\) appearing on at least~\(\sqrt{n}\) loops of~\(G\).

We now give an easy lemma which finds partitions of~\(V\) and~\(C\setminus C_{\text{bad}}(G)\) with properties that will be useful in the proof of Theorem~\ref{theorem:rainbowlinks}.
\begin{lemma}\label{slicinglemma}
Suppose that \(1/n\ll1/\ell\ll1\) with~\(\ell\) even\COMMENT{It helps to never have to do any parity case analysis}, and fix \(G\in\Phi(\dirK)\).
Set \(\alpha\coloneqq(1-n^{-1/2+1/\ell})/(\ell+1)\), \(\beta\coloneqq (1-\alpha)/\ell\), \(\eps\coloneqq n^{-1/2}\log^2 n\).
Then there exist (ordered) partitions~\((W_0, W_1, \dots, W_{\ell})\) and~\((Q_0,Q_1,\dots,Q_{\ell})\) of~\(V\) and~\(C\setminus C_{\text{bad}}\) respectively such that the following conditions all hold:
\begin{enumerate}[(F1), topsep = 6pt]
\item \(|W_0|=(1\pm\eps)\alpha n\); \(|W_i|=(1\pm\eps)\beta n\) for all \(i\in[\ell]\); and \(|Q_i|=(1\pm\eps)\frac{n}{\ell+1}\) for all \(i\in[\ell]_0\);\label{MAIN1}
\item \(|(N^{*}_{Q_i}(v)\cap W_j)\setminus\{v\}|\COMMENT{In fact, we could get away without the "setminus v" here - it doesn't change the fact that this is a binomially distributed random variable, and we only ever apply \ref{MAIN2} in the main proof to find neighbours of \(v\) in a different set \(W_i\) to the one that \(v\) is in, so necessarily can't include \(v\) itself. Still, this shows we're taking some care to reject the loops... what do you think?}=(1\pm\eps)\frac{\beta}{\ell+1}n\) for all \(*\in\{+,-\}\), \(v\in V\), \(i\in[\ell]_0\), \(j\in[\ell]\); and \(|(N^{*}_{Q_i}(v)\cap W_0)\setminus\{v\}|=(1\pm\eps)\frac{\alpha}{\ell+1}n\) for all \(*\in\{+,-\}\), \(v\in V\), \(i\in[\ell]_0\);\label{MAIN2}
\item \(|E_c(W_i,W_{i+1})|=(1\pm\eps)\beta^2 n\) for all \(c\in C\setminus C_{\text{bad}}(G)\), \(i\in[\ell-1]\); and \(|E_c(W_0,W_1)|\), \(|E_c(W_{\ell},W_0)|=(1\pm\eps)\alpha\beta n\) for all \(c\in C\setminus C_{\text{bad}}(G)\);\label{MAIN3}
\item \(|\cL(u,Q_i,W_{i+1},Q_{i+1},W_{i+2},Q_{i+2},v)|=(1\pm\eps)\frac{\beta^2}{(\ell+1)^3}n^2\) for all \(i\in[\ell-2]_0\) and distinct \(u,v\in V\).\label{MAIN4}
\end{enumerate}
\end{lemma}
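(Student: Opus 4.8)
The plan is to build the two partitions from a single family of independent uniform labels and to show that each of \ref{MAIN1}--\ref{MAIN4} fails with probability \(n^{-\omega(1)}\), so that a union bound over the polynomially many ``bad events'' --- one for each pair \((c,i)\), each triple \((v,i,j)\), and each triple \((i,u,v)\) --- leaves positive probability that all four hold simultaneously. Concretely, I would take \((W_0,W_1,\dots,W_\ell)\) to be the \((\alpha,\beta,\beta,\dots,\beta)\)-partition of \(V\) (note \(\alpha+\ell\beta=1\), so this is a valid partition) and \((Q_0,Q_1,\dots,Q_\ell)\) the \((\frac1{\ell+1},\dots,\frac1{\ell+1})\)-partition of \(C\setminus C_{\text{bad}}(G)\); together these are determined by \(n+|C\setminus C_{\text{bad}}(G)|\le 2n\) mutually independent uniform labels. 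Since there are exactly \(n\) loops in \(\dirK\), every colour of \(C_{\text{bad}}(G)\) uses at least \(\sqrt n\) of them, whence \(|C_{\text{bad}}(G)|\le\sqrt n\); in particular deleting these colours perturbs every expectation below by a relative \(O(n^{-1/2})\), well inside the slack afforded by \(\eps=n^{-1/2}\log^2 n\). For each item I would first compute the expectation of the relevant count by a short deterministic argument, checking it lies within a factor \(1\pm\eps/2\) of the nominal value, and then apply a concentration inequality; the failure probabilities will all be of the form \(\exp(-\Omega(\log^4 n))\), with the implied constant depending only on \(\ell\), which beats the polynomially many bad events.

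For \ref{MAIN1} and \ref{MAIN3} only coarse tools are needed. In \ref{MAIN1}, each of \(|W_0|,|W_i|\) is binomial and \(|Q_i|\) is binomial on \(|C\setminus C_{\text{bad}}(G)|\) trials, so Chernoff's inequality (Lemma~\ref{chernoff}\ref{lemma:chernoffbigexp}) with relative error \(\eps\) gives deviation probability \(2\exp(-\Omega(\eps^2 n))=2\exp(-\Omega(\log^4 n))\). For \ref{MAIN3}, fix \(c\in C\setminus C_{\text{bad}}(G)\) and \(i\): the colour-\(c\) arcs form a permutation of \(V\), so there are exactly \(n\) of them, of which fewer than \(\sqrt n\) are loops, whence \(\expn{|E_c(W_i,W_{i+1})|}=(1\pm O(n^{-1/2}))\beta^2 n\) (and \((1\pm O(n^{-1/2}))\alpha\beta n\) in the two boundary ranges). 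This count depends only on the vertex labels, and reassigning the label of one vertex changes the contribution of at most two \(c\)-arcs (the in- and out-\(c\)-arc at that vertex), so McDiarmid's inequality (Lemma~\ref{lemma:mcd}) with \(\sum_z c_z^2\le 4n\) and \(t=\Theta(\eps n)\) gives \(\exp(-\Omega(\log^4 n))\).

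Item \ref{MAIN2} involves both partitions. Since \(G\) is a proper \(n\)-arc colouring of \(\dirK\), the map \(c\mapsto N^{+}_c(v)\) is a bijection from \(C\) to \(V\), so \(|(N^{+}_{Q_i}(v)\cap W_j)\setminus\{v\}|=|\{c\in Q_i:\ N^{+}_c(v)\in W_j\}|\) up to an additive error \(1\). Here the colour event \(\{c\in Q_i\}\) and the vertex event \(\{N^{+}_c(v)\in W_j\}\) are independent, with probabilities \(\frac1{\ell+1}\) (given \(c\notin C_{\text{bad}}(G)\)) and \(\beta\) (or \(\alpha\) when \(j=0\)), so the expectation is \((1\pm O(n^{-1/2}))\frac{\beta}{\ell+1}n\) (respectively \(\frac{\alpha}{\ell+1}n\)). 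Each of the \(\le 2n\) labels affects this count by at most \(1\): changing the label of \(z\) moves only the unique colour \(c\) with \(z=N^{+}_c(v)\), and changing the label of \(c\) moves only \(c\). So McDiarmid with \(\sum c_i^2\le 2n\) and \(t=\Theta(\eps n)\) again gives \(\exp(-\Omega(\log^4 n))\), and the statements for \(N^{-}\) are identical.

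The main obstacle is \ref{MAIN4}. The key simplification is that \(Q_i,Q_{i+1},Q_{i+2}\) are pairwise disjoint, so any directed path \(uv_1v_2v\) with \(\phi(uv_1)\in Q_i\), \(\phi(v_1v_2)\in Q_{i+1}\), \(\phi(v_2v)\in Q_{i+2}\) is automatically rainbow; thus \(|\cL(u,Q_i,W_{i+1},Q_{i+1},W_{i+2},Q_{i+2},v)|\) counts ordered pairs \((v_1,v_2)\) with \(u,v_1,v_2,v\) distinct, \(v_1\in W_{i+1}\), \(v_2\in W_{i+2}\), and those three colour conditions. For all but \(O(n)\) of the pairs \((v_1,v_2)\) the colours \(\phi(uv_1),\phi(v_1v_2),\phi(v_2v)\) are pairwise distinct --- for each colour \(c\) there is at most one pair making any prescribed two of them equal to \(c\) --- and for such a pair the two vertex events and the three colour events are mutually independent with probabilities \(\beta,\beta,\frac1{\ell+1},\frac1{\ell+1},\frac1{\ell+1}\) up to the \(O(n^{-1/2})\) correction from \(C_{\text{bad}}(G)\), while the exceptional pairs contribute \(0\) (a colour cannot lie in two disjoint \(Q\)'s). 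Summing gives \(\expn{|\cL(\cdots)|}=(1\pm o(1))\frac{\beta^2}{(\ell+1)^3}n^2\), within a factor \(1\pm\eps/2\) of the nominal value. For concentration, reassigning the label of one vertex \(z\) changes the count by \(O(n)\) (fixing \(v_1=z\) or \(v_2=z\) leaves at most \(n\) choices for the other), and reassigning the label of one colour also changes it by \(O(n)\) (for the middle slot one again uses that the \(c\)-arcs form a permutation), so \(\sum c_i^2=O(n^3)\); McDiarmid with \(t=\Theta(\eps n^2)=\Theta(n^{3/2}\log^2 n)\) then yields \(\exp(-\Omega(\log^4 n))\). Since there are at most \(\ell n^2\) triples \((i,u,v)\), and polynomially many bad events in total, the union bound leaves positive probability that \ref{MAIN1}--\ref{MAIN4} all hold, proving the lemma. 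The only delicate point is bookkeeping the \(\Theta(n)\) Lipschitz constants in \ref{MAIN4} and confirming that \(t^2/\sum c_i^2=\Omega(\log^4 n)\) still holds; everything else is routine.
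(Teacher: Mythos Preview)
Your proposal is correct and follows essentially the same route as the paper: the same random \((\alpha,\beta,\dots,\beta)\)- and \((\tfrac1{\ell+1},\dots,\tfrac1{\ell+1})\)-partitions, Chernoff for \ref{MAIN1}, McDiarmid with \(O(1)\) Lipschitz constants for \ref{MAIN3}, and McDiarmid with \(O(n)\) Lipschitz constants for \ref{MAIN4}. The only cosmetic difference is that for \ref{MAIN2} the paper observes directly that the count is \(\text{Bin}(n',\tfrac{\beta}{\ell+1})\) (via the bijection \(c\mapsto N^+_c(v)\)) and applies Chernoff, whereas you use McDiarmid; both give \(\exp(-\Omega(\log^4 n))\) and the union bound goes through identically.
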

\begin{proof}
Set \(p_0\coloneqq\alpha\) and \(p_i\coloneqq\beta\) for \(i\in[\ell]\) (and notice \(\sum_{i\in[\ell]_0}p_i=1\)), and \(p'_j\coloneqq1/(\ell+1)\) for \(j\in[\ell]_0\).
Let \((\mathbf{W}_0,\mathbf{W}_1,\dots,\mathbf{W}_{\ell})\) be a \((p_0,p_1,\dots,p_{\ell})\) partition of~\(V\), and let \((\mathbf{Q}_0,\mathbf{Q}_1,\dots,\mathbf{Q}_{\ell})\) be a \((p'_0,p'_1,\dots,p'_{\ell})\)-partition of~\(C\setminus C_{\text{bad}}\) (these partitions independent of each other).
We will show that these random partitions satisfy each of \ref{MAIN1}--\ref{MAIN4} individually with high probability, from which the result follows.
Notice \(|C_{\text{bad}}(G)|\leq\sqrt{n}\).
\newline\noindent\ref{MAIN1}: This clearly holds with high probability by a simple application of Lemma~\ref{chernoff}\ref{lemma:chernoffbigexp}.
We omit the details.\COMMENT{Let \(n'\coloneqq|C\setminus C_{\text{bad}}(G)|\) so that \(n-\sqrt{n}\leq n''\leq n\). Clearly each of these r.v.s is binomial with the obvious parameters (in particular \(|Q_i|\sim\text{Bin}(n'',1/(\ell+1))\)).
Apply Lemma~\ref{chernoff}\ref{lemma:chernoffbigexp} with \(\eps\coloneqq(\sqrt{n}\log n)/\expn{|\mathbf{W}_i|}=\log n/ (\beta\sqrt{n})\) yields
\[
\prob{||\mathbf{W}_i|-\expn{|\mathbf{W}_i|}|\geq \sqrt{n}\log n}\leq 2\exp\left(-\frac{\log^2 n}{3\beta}\right) =o(1)
\]
and we have \(\sqrt{n}\log n \leq n^{-1/2}\log^2 n \cdot \beta n\). For \(|\mathbf{W}_0|\) do exactly the same thing except replace \(\beta\) with \(\alpha\).
For \(|\mathbf{Q}_i|\), we have \(|\mathbf{Q}_i|\sim\text{Bin}(n',\beta)\), where \(n(1-n^{-1/2})\leq n'\leq n\). One easily ends up with the same probability bound. Then the upper tail bound still applies, and for the lower tail, \(|\mathbf{Q}_i|\geq (1/(\ell+1)) n' - \sqrt{n}\log n \geq (1/(\ell+1)) n(1-n^{-1/2}-(\ell+1)n^{-1/2}\log n)\geq (1/(\ell+1)) n(1-n^{-1/2}\log^2 n)\) as required.}
\newline\noindent\ref{MAIN2}: For fixed \(v, *, i, j\), the fact that the colouring on~\(G\) is proper implies that \(|N^{*}_{\mathbf{Q}_i}(v)\cap\mathbf{W}_0|\sim\text{Bin}(n',\alpha/(\ell+1))\) and \(|N^{*}_{\mathbf{Q}_i}(v)\cap \mathbf{W}_j|\sim\text{Bin}(n',\beta/(\ell+1))\), where \(n-\sqrt{n}\leq n'\coloneqq |C\setminus C_{\text{bad}}(G)|\leq n\), so again this property holds with high probability by Lemma~\ref{chernoff}\ref{lemma:chernoffbigexp}.\COMMENT{I've left it as implicit that a vertex is its own inneighbour/outneighbour due to the loop. 
Consider \(\mathbf{Z}=|N^{*}_{\mathbf{Q}_i}(v)\cap \mathbf{W}_0|\) (do exactly the same for \(\mathbf{W}_j\) but replace \(\alpha\) with \(\beta\)). For now allowing \(v\) itself in the set, this means \(v\) starts with \(n\) candidates to be in \(N_{\mathbf{Q}_{i}^{*}}(v)\cap \mathbf{W}_0\), but then we must remove those via bad arc colours, as these can never be in any~\(\mathbf{Q}_i\). Now each of the remaining \(n'\) vertices is independently in \(N_{\mathbf{Q}_{i}^{*}}(v)\cap \mathbf{W}_0\) with probability \(\alpha/(\ell+1)\). Put \(\eps=\sqrt{n}\log n/\expn{\mathbf{Z}}=(\ell+1)\sqrt{n}\log n/(\alpha n')\) yields
\[
\prob{|\mathbf{Z}-\expn{\mathbf{Z}}|\geq\sqrt{n}\log n}\leq 2\exp\left(-\frac{n\log^2 n}{3\expn{\mathbf{Z}}}\right) \leq 2\exp\left(-\frac{(\ell+1)\log^2 n}{3\alpha}\right)=o(1/n),
\]
and there are \(2(\ell+1) n\) events to take a UB over so the failure probability is \(o(1)\). Simple to show that \(\expn{\mathbf{Z}}+\sqrt{n}\log n \leq (1+n^{-1/2}\log^2 n)(\alpha/(\ell+1)) n\) and \(\expn{\mathbf{Z}}-\sqrt{n}\log n =(\alpha/(\ell+1)) n' -\sqrt{n}\log n \geq(\alpha/(\ell+1)) n(1-n^{-1/2}-((\ell+1)/\alpha)n^{-1/2}\log n)\geq (\alpha/(\ell+1)) n(1-(1/2)n^{-1/2}\log^2 n)\), so now also possibly remove the extra 1 vertex \(v\) itself.}
\newline\noindent\ref{MAIN3}: Let \(i\in[\ell-1]\), \(c\in C\setminus C_{\text{bad}}(G)\), and notice \(\mathbf{Z}_c\coloneqq|E_c(\mathbf{W}_i,\mathbf{W}_{i+1})|\) is a function of the mutually independent random variables~\(\{\mathbf{I}_v\}_{v\in V}\) having (say) \(\mathbf{I}_v=i\) if \(v\in\mathbf{W}_i\).
Note that \(\expn{\mathbf{Z}_c}=\beta^2 n''\), where \(n-\sqrt{n}\leq n''\leq n\) is the number of non-loop \(c\)-arcs (recall \(c\notin C_{\text{bad}}(G)\)), and each~\(\mathbf{I}_v\) affects~\(\mathbf{Z}_c\) by at most~\(1\).\COMMENT{All the \(v\) hosting a \(c\)-loop have no affect at all because that vertex cannot simultaneously be in~\(\mathbf{W}_i\) and \(\mathbf{W}_{i+1}\). Otherwise \(v\) has a \(c\)-arc in and a \(c\)-arc out.
Only one of these arcs can count towards \(\mathbf{Z}_c\) at a time; the out-arc only if \(v\in\mathbf{W}_i\) (and the out-neighbour happens to be in \(\mathbf{W}_{i+1}\)), in which case the in-arc cannot count due to the asymmetry of \(E_c(\mathbf{W}_i,\mathbf{W}_{i+1})\), and similarly the other way around. If changing \(\mathbf{I}_v\) causes one of the two arcs to no longer count, and the other to count, then there has been no change. Otherwise the largest change that can happen is that changing \(\mathbf{I}_v\) causes a counting arc to no longer count (and the other one never did) or a non-counting arc to count (and the other one never did). We still need McDiarmid's rather than Chernoff though as the events \(e\in E_c(\mathbf{W}_i,\mathbf{W}_{i+1})\) are not mutually independent for \(e\) in the colour class of \(c\) in \(G\) - two of these events are clearly dependent if the two arcs share an endpoint, which can and does happen.}
This property thus holds with high probability for~\(\mathbf{Z}_c=|E_c(\mathbf{W}_i,\mathbf{W}_{i+1})|\) by a simple application of Lemma~\ref{lemma:mcd}.
The analyses of~\(|E_c(\mathbf{W}_0,\mathbf{W}_1)|\) and~\(|E_c(\mathbf{W}_{\ell}, \mathbf{W}_0)|\) are analogous, these latter random variables having expectation~\(\alpha\beta n''\) rather than~\(\beta^2 n''\).
We omit the details.\COMMENT{Focus on~\(\mathbf{Z}_c\) (and just replace one of the \(\beta\) with \(\alpha\) for the other r.v.s).
In the application of McDiarmid's, the number \(m\) of random variables in play is the number of vertices not hosting a \(c\)-loop, i.e.\ \(n''\).
\[
\prob{|\mathbf{Z}_c - \expn{\mathbf{Z}_c}|\geq \sqrt{n}\log n}\leq\exp\left(-\frac{2n\log^2 n}{n}\right)=\exp(-2\log^2 n)=n^{-2\log n},
\]
so UB this over \((\ell-1)n'\) events (recall \(n'\) the number of good colours) still yields probability \(o(1)\). Then similarly to before \(\beta^{2}n'' \pm \sqrt{n}\log n =\beta^{2} n(1\pm n^{-1/2}\log^2 n)\).}
\newline\noindent\ref{MAIN4}: Fix distinct \(u,v\in V\), \(i\in[\ell-2]_0\). \(\mathbf{Z}\coloneqq |\cL(u,\mathbf{Q}_i, \mathbf{W}_{i+1},\mathbf{Q}_{i+1},\mathbf{W}_{i+2},\mathbf{Q}_{i+2},v)|\) is a function of the mutually independent random variables~\(\{\mathbf{I}_{z}\}_{z\in V\cup C\setminus C_{\text{bad}}(G)}\) having (say) \(\mathbf{I}_w=i\) (respectively \(\mathbf{I}_c=i\)) if \(w\in\mathbf{W}_i\) (\(c\in \mathbf{Q}_i\)).
For a fixed \(w\in V\), note that there are at most~\(n\) \((u,v)\)-rainbow links in~\(G\) with two internal vertices, the ``first'' of which being~\(w\),\COMMENT{The whole link is fixed by choosing the second internal vertex.} and similarly at most~\(n\) links with two internal vertices, the second being~\(w\).
For a fixed \(c\in C\setminus C_{\text{bad}}(G)\), there are at most~\(n\) \((u,v)\)-rainbow links with two internal vertices, in which~\(c\) is the colour of the out-arc at~\(u\)\COMMENT{The first internal vertex is fixed as the \(c\)-outneighbour of~\(u\). \(v\) is already fixed. Fixing the second internal vertex identifies the whole link, and it may or may not be rainbow.}, and similarly at most~\(n\) links with two internal vertices in which~\(c\) is the colour of the in-arc at~\(v\).
Finally, since there are~\(n\) arcs coloured~\(c\) in~\(G\), there are at most~\(n\) \((u,v)\)-rainbow links with two internal vertices with the arc not incident to~\(u\) nor~\(v\) having colour~\(c\).
We deduce that~\(\mathbf{I}_z\) affects~\(\mathbf{Z}\) by at most~\(3n\) for each \(z\in V\cup C\setminus C_{\text{bad}}(G)\).\COMMENT{In fact, as before, due to the partitioning, \(w\) cannot contribute the links in which it's the first vertex whilst also contributing the links in which it's the second vertex - it can only play the role of one of these at a time, depending on whether \(v\in\mathbf{W}_{i+1}\) or \(v\in\mathbf{W}_{i+2}\). Similarly for the colours. So in fact they each affect by at most~\(n\), but it seems simpler not to both with clarifying this. Not sure what you think?}
There are \((n-2)(n-3)\) directed paths from~\(u\) to~\(v\) in~\(G\) with two internal vertices; at most~\(3n\) of these are not rainbow,\COMMENT{Fix a colour to be repeated, so there's at most \(n\) choices of this \(c\), 3 choices of which pair of arcs share this colour, and doing so fixes the whole link.} and at most~\(3n^{3/2}\) use a colour in~\(C_{\text{bad}}(G)\),\COMMENT{Pick one of the \(\leq\sqrt{n}\) bad colours, and one of the three arc roles. If it isn't the middle arc role, then there's \(\leq n\) choices for the remaining vertex, and if it is the middle arc, there's at most \(n\) choices for that arc, deciding the link} so that\COMMENT{Each rainbow link with 2 internal vertices and all good colours has probability \(\beta^2/(\ell+1)^3\) of counting towards~\(\mathbf{Z}\). Clearly \((n-2)(n-3)\leq n^2(1+4n^{-1/2})\) and \((n-2)(n-3)-3n - 3n^{3/2}= n^{2}-3n^{3/2}-8n+6\geq n^2 - 4n^{3/2}=n^2 (1-4n^{-1/2})\).} \(\expn{\mathbf{Z}}=(1\pm 4n^{-1/2})(\beta^{2}/(\ell+1)^3)n^2\).
A simple application of Lemma~\ref{lemma:mcd} now suffices, so we omit the details.\COMMENT{\(\prob{|\mathbf{Z}-\expn{\mathbf{Z}}|\geq n^{3/2}\log n}\leq\exp(-2n^3\log^2 n/(n+n')9n^2)\leq \exp(-(1/9)\log^2 n)=n^{-(1/9)\log n}\), so UB over at most \((\ell-1) n^2\) events give total failure probability \(o(1)\). Then \(\mathbf{Z}=\expn{\mathbf{Z}}\pm n^{3/2}\log n = (\beta^{2}/(\ell+1)^3)n^2(1\pm 4n^{-1/2} \pm ((\ell+1)^3/\beta^2)n^{-1/2}\log n) =(\beta^2/(\ell+1)^3) n^2(1\pm n^{-1/2}\log^2 n)\).}
\end{proof}
Equipped with Theorem~\ref{thm:mainpartitetheorem} and Lemma~\ref{slicinglemma}, we can now prove Theorem~\ref{theorem:rainbowlinks}.
\lateproof{Theorem~\ref{theorem:rainbowlinks}}
Clearly we may assume that \(1/n\ll\delta\ll1\).
Let~\(\ell\) be the smallest even integer strictly larger than~\(3/(2\delta)\), say.
Fix \(G\in\Phi(\dirK)\), set \(\alpha\coloneqq (1-n^{-1/2+1/\ell})/(\ell+1)\), \(\beta\coloneqq (1-\alpha)/\ell\), \(\eps\coloneqq n^{-1/2}\log^2 n\).
By Lemma~\ref{slicinglemma}, there exist partitions~\((W_0,W_1,\dots,W_{\ell})\) and~\((Q_0,Q_1,\dots,Q_{\ell})\) of~\(V\) and~\(C\setminus C_{\text{bad}}(G)\) respectively which satisfy~\ref{MAIN1}--\ref{MAIN4}.
Define \(m\coloneqq|W_0|\,(=(1\pm\eps)\alpha n\) by~\ref{MAIN1}).
Arbitrarily order \(W_0=\{x_1,x_2,\dots,x_m\}\).
Construct an auxiliary bipartite hypergraph~\(H\) with parts \(X\coloneqq\{(x_i,x_{i+1})\colon i\in[m-1]\}\cup\{(x_m,x_1)\}\) and \(Y\coloneqq\bigcup_{r\in[\ell]}W_r \cup\bigcup_{s\in[\ell]_0}Q_s\), and put the edge~\(\{(x_i,x_{i+1}), w_1, w_2, \dots, w_{\ell}, q_0, q_1, \dots, q_{\ell}\}\) (indices~\(i\) modulo~\(m\)) in~\(H\) if each \(w_r\in W_r\), \(q_s\in Q_s\), and \(x_iw_1w_2\dots w_{\ell}x_{i+1}\) is an \((x_i, x_{i+1})\)-rainbow link in~\(G\) with \(\phi(x_iw_1)=q_0\), \(\phi(w_rw_{r+1})=q_r\) for \(r\in[\ell-1]\), and \(\phi(w_{\ell}x_{i+1})=q_{\ell}\).
Then~\(H\) is \((2\ell+2)\)-uniform and bipartite with bipartition~\((X,Y)\).
Notice that an \(X\)-perfect matching of~\(H\) corresponds to a rainbow directed cycle of~\(G\) on~\((\ell+1)m\) vertices, and we have \((\ell+1)m\geq (1-\eps)(\ell+1)\alpha n = (1-\eps)(1-n^{-1/2+1/\ell})n\geq n(1-2n^{-1/2+1/\ell})\geq n-n^{1/2+\delta}\) as required.\COMMENT{\(2n^{1/\ell}\leq n^{\delta}\) since \(2/\delta > \ell > 3/(2\delta) > 1/\delta\).}
It remains only to check we may apply Theorem~\ref{thm:mainpartitetheorem} to~\(H\), for which we need to bound the degrees in~\(Y\) and~\(X\), and examine the codegree sequence.
Set \(D\coloneqq (1+n^{-1/2}\log^3 n)\alpha(\beta/(\ell+1))^{\ell}n^{\ell}\).
\begin{claim}\label{degYcheck}
\(\text{deg}_{H}(y)\leq D\) for all \(y\in Y\).
\end{claim}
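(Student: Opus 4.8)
The plan is to bound, for each $y\in Y$, the degree $\text{deg}_H(y)$ by the number of rainbow directed links of $G$ that meet the partition constraints and pass through $y$ in the appropriate slot, and then to estimate this count greedily using properties \ref{MAIN2}, \ref{MAIN3}, \ref{MAIN4} of Lemma~\ref{slicinglemma}. First I would fix notation: a generic edge of $H$ has the form $\{(v_0,v_{\ell+1}),v_1,\dots,v_\ell,q_0,\dots,q_\ell\}$ where $v_0\in W_0$, $v_j\in W_j$ for $j\in[\ell]$, $v_{\ell+1}\in W_0$ is the cyclic successor $\sigma(v_0)$ of $v_0$ in the ordering $x_1,\dots,x_m$ of $W_0$, $q_j=\phi(v_jv_{j+1})\in Q_j$ for $j\in[\ell]_0$, and $v_0v_1\cdots v_{\ell+1}$ is a rainbow directed path. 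The colours $q_j$ and the vertex $v_{\ell+1}$ are determined by $(v_0,\dots,v_\ell)$, and distinct such tuples can yield the same edge-set, so $\text{deg}_H(y)$ is at most the number of valid tuples with $y$ in the correct slot; I will bound this in two cases.

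For $y\in W_r$ with $r\in[\ell]$ we need $v_r=y$, and I would build the link outwards from $v_r$ one vertex at a time. By \ref{MAIN2}, given a partially built link, the number of choices for the next vertex (a neighbour of the current endpoint lying in the prescribed part and joined to it by an arc whose colour lies in the prescribed $Q$-class) is at most $(1+\eps)\tfrac{\beta}{\ell+1}n$ if the prescribed part is some $W_{j'}$ with $j'\in[\ell]$, and at most $(1+\eps)\tfrac{\alpha}{\ell+1}n$ if it is $W_0$; each such step uses up one colour constraint. The key structural point is that once $v_0$ is chosen, $v_{\ell+1}=\sigma(v_0)$ is forced, contributing a factor $1$; and \ref{MAIN4} lets us choose the two internal vertices of a length-three sub-link at the cost of a single factor $(1+\eps)\tfrac{\beta^2}{(\ell+1)^3}n^2$ while using up \emph{three} colour constraints. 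Concretely, if $r\le\ell-2$ I would build backward $v_{r-1},\dots,v_0$ (using $q_{r-1},\dots,q_0$; this fixes $v_{\ell+1}$), then forward $v_{r+1},\dots,v_{\ell-2}$ (using $q_r,\dots,q_{\ell-3}$), then apply \ref{MAIN4} with $i=\ell-2$ to count the pairs $(v_{\ell-1},v_\ell)$ completing $v_{\ell-2}v_{\ell-1}v_\ell v_{\ell+1}$ (using $q_{\ell-2},q_{\ell-1},q_\ell$; its endpoints $v_{\ell-2}\in W_{\ell-2}$ and $v_{\ell+1}\in W_0$ are distinct since $\ell\ge3$); and if $r\ge\ell-1$ I would use the mirror routing (build forward to $v_{\ell+1}$, which fixes $v_0$, then backward to $v_3$, then \ref{MAIN4} with $i=0$ on $(v_1,v_2)$), which needs only $\ell\ge4$. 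In either sub-case there are $\ell-1$ factors $(1+\eps)$ and the product telescopes to
\[
\text{deg}_H(y)\le(1+\eps)^{\ell-1}\cdot\frac{\alpha}{\ell+1}n\cdot\Bigl(\frac{\beta}{\ell+1}n\Bigr)^{\ell-3}\cdot\frac{\beta^2}{(\ell+1)^3}n^2=(1+\eps)^{\ell-1}\frac{\alpha\beta^{\ell-1}}{(\ell+1)^{\ell+1}}n^\ell.
\]

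For $y\in Q_s$ with $s\in[\ell]_0$ we need $\phi(v_sv_{s+1})=y$. Since $y\notin C_{\text{bad}}(G)$, property \ref{MAIN3} bounds the number of $y$-coloured arcs from the part containing $v_s$ to the part containing $v_{s+1}$ by $(1+\eps)\beta^2n$ (or $(1+\eps)\alpha\beta n$ when $s\in\{0,\ell\}$, one of those parts then being $W_0$); fixing such an arc pins down $v_s,v_{s+1}$ and uses up the constraint $q_s=y$. I would then complete the link exactly as before — build out from $v_s$/$v_{s+1}$ toward the nearer end, use $v_{\ell+1}=\sigma(v_0)$ to fix the other endpoint, and apply \ref{MAIN4} once near the far end — again with $\ell-1$ factors $(1+\eps)$, so that telescoping (the extra $\alpha$ now coming from the $\alpha\beta n$ arc count or from a $W_0$-step) gives in every sub-case $\text{deg}_H(y)\le(1+\eps)^{\ell-1}\tfrac{\alpha\beta^\ell}{(\ell+1)^\ell}n^\ell$. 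To finish, note $\alpha=(1-n^{-1/2+1/\ell})/(\ell+1)$ and $\beta=(1-\alpha)/\ell$ give $\beta(\ell+1)=1+n^{-1/2+1/\ell}/\ell\ge1$, so $\tfrac{\alpha\beta^{\ell-1}}{(\ell+1)^{\ell+1}}n^\ell=\tfrac{1}{\beta(\ell+1)}\tfrac{\alpha\beta^\ell}{(\ell+1)^\ell}n^\ell\le\tfrac{\alpha\beta^\ell}{(\ell+1)^\ell}n^\ell$; and since $\eps=n^{-1/2}\log^2n$ and $\ell$ is a fixed constant, $(1+\eps)^{\ell-1}\le1+n^{-1/2}\log^3n$ for $n$ large. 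Hence in both cases $\text{deg}_H(y)\le(1+n^{-1/2}\log^3n)\tfrac{\alpha\beta^\ell}{(\ell+1)^\ell}n^\ell=D$.

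The main obstacle is exactly the bookkeeping forced by the fact that the link carries $\ell+1$ colour constraints but, because the cyclic relation $v_{\ell+1}=\sigma(v_0)$ ties the two $W_0$-endpoints together, only $\ell$ effective degrees of freedom; a naive greedy walk absorbs only $\ell$ of the colour constraints and therefore overcounts by a factor $\approx\ell+1$. The resolution — using \ref{MAIN4} exactly once to absorb three constraints at the cost of two vertex choices, and, since $\sigma$ is an arbitrary permutation that interacts badly with the colouring, always placing that two-internal-vertex gadget at the end of the link opposite the special slot, so that both its endpoints are already determined (one as a chain endpoint, the other as $\sigma(v_0)$ or $\sigma^{-1}(v_{\ell+1})$) and automatically lie in distinct parts — is where the care lies; the accompanying case analysis on the position of the special slot is routine, provided $\ell$ is a sufficiently large constant, which it is.
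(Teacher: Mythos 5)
Your proof is correct and takes essentially the same approach as the paper: bound the degree by a greedy count of valid tuples using \ref{MAIN2} and \ref{MAIN3} for single-vertex steps and \ref{MAIN4} exactly once to absorb the extra colour constraint created by the cyclic relation $v_{\ell+1}=\sigma(v_0)$, then telescope. The only cosmetic difference is routing: you place the \ref{MAIN4} gadget at the end of the link opposite the special slot, whereas the paper places it adjacent to the special slot (and splits cases at $r\le\ell/2$ rather than $r\le\ell-2$); both give identical bounds $(1+\eps)^{\ell-1}\frac{\alpha\beta^{\ell-1}}{(\ell+1)^{\ell+1}}n^{\ell}$ for $W_r$-vertices and $(1+\eps)^{\ell-1}\frac{\alpha\beta^{\ell}}{(\ell+1)^{\ell}}n^{\ell}$ for $Q_s$-colours, each at most $D$.
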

\claimproof
First, consider some \(q\in Q_s\) and suppose without loss of generality that \(s\leq\ell/2\).
If \(s\geq 1\) then we have \(|E_{q}(W_s, W_{s+1})|\leq(1+\eps)\beta^2 n\) by~\ref{MAIN3}.
Fixing such an arc determines the vertices \(w_s, w_{s+1}\) in a rainbow link ``using'' \(q\in Q_s\).
Applying~\ref{MAIN2} repeatedly, proceeding through inneighbours from~\(w_s\), there are at most \(((1+\eps)(\beta/(\ell+1))n)^{s-1}\) valid choices for the vertices \(w_{s-1},w_{s-2},\dots,w_1\) via arcs with colours in the appropriate sets.
Then by~\ref{MAIN2} there are at most \((1+\eps)(\alpha/(\ell+1))n\) choices for an inneighbour~\(x_i\) of~\(w_1\) in~\(W_0\) via an arc with colour in~\(Q_0\).
Altogether there are at most \((1+\eps)^{s+1}\frac{\alpha \beta^{s+1}}{(\ell+1)^s}n^{s+1}\) valid choices for the directed path \(x_iw_1w_2\dots w_sw_{s+1}\).
If instead \(s=0\) then by~\ref{MAIN3} there are at most~\((1+\eps)\alpha\beta n = (1+\eps)^{s+1}\frac{\alpha\beta^{s+1}}{(\ell+1)^s}n^{s+1}\) valid choices for the arc \(x_iw_0\) with colour~\(q\).
Whether \(s=0\) or \(s\geq 1\), the choice of~\(x_i\) now fixes the~\(x_{i+1}\in W_0\) such that \((x_i,x_{i+1})\in X\).
Applying~\ref{MAIN2} repeatedly, proceeding through inneighbours from~\(x_{i+1}\), there are at most \(((1+\eps)(\beta/(\ell+1))n)^{\ell-s-3}\) valid choices for the vertices \(w_{\ell}, w_{\ell-1}, \dots, w_{s+4}\), and finally at most~\((1+\eps)(\beta^2/(\ell+1)^3)n^2\) valid choices for the remaining vertices \(w_{s+3}, w_{s+2}\) by~\ref{MAIN4}.
Altogether, \(\text{deg}_{H}(q)\leq(1+\eps)^{\ell-1}\frac{\alpha\beta^{\ell}}{(\ell+1)^{\ell}}n^{\ell}\leq D\).\COMMENT{\((1+n^{-1/2}\log^2 n)^{\ell-1}\leq 1+n^{-1/2}\log^3 n\)}

One proceeds similarly for \(w\in W_r\) (\(r\in[\ell]\)).
For any \(r\leq \ell/2\) (say), we obtain by repeated application of~\ref{MAIN2} that there are at most~\((1+\eps)^r \frac{\alpha\beta^{r-1}}{(\ell+1)^{r}}n^r\) choices for the directed path \(x_iw_1w_2\dots w_r\), where \(w_r=w\).
Then applying~\ref{MAIN2} repeatedly from~\(x_{i+1}\) there are at most~\(((1+\eps)(\beta/(\ell+1))n)^{\ell-r-2}\) valid choices for \(w_{\ell}, w_{\ell-1}, \dots, w_{r+3}\), and finally \((1+\eps)(\beta^2/(\ell+1)^3)n^2\) choices for \(w_{r+2}, w_{r+1}\) by~\ref{MAIN4}.
Thus \(\text{deg}_H(w)\leq (1+\eps)^{\ell-1}\frac{\alpha\beta^{\ell-1}}{(\ell+1)^{\ell+1}}n^{\ell}\leq (1+n^{-1/2}\log^3 n)\frac{\alpha\beta^{\ell-1}}{(\ell+1)^{\ell+1}}n^{\ell}=\frac{D}{(\ell+1)\beta}\leq D\).\COMMENT{\((\ell+1)\beta=(\ell+1)\frac{1-\alpha}{\ell}=(\ell+1)\frac{1-\frac{1-n^{-1/2+1/\ell}}{\ell+1}}{\ell}=(\ell+1)\frac{\frac{\ell+n^{-1/2+1/\ell}}{\ell+1}}{\ell}=\frac{\ell+n^{-1/2+1/\ell}}{\ell}>1\).}
\endclaimproof
Set \(D'\coloneqq (1-n^{-1/2}\log^3 n)(\beta^{\ell}/(\ell+1)^{\ell+1})n^{\ell}\).
\begin{claim}\label{degXcheck}
\(\text{deg}_H((x_i,x_{i+1}))\geq D'\) for all \((x_i,x_{i+1})\in X\) (indices~\(i\) modulo~\(m\)).
\end{claim}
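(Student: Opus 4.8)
The plan is to count the relevant rainbow links directly, building each one from $x_i$ forward and closing it to $x_{i+1}$ with \ref{MAIN4}. By construction, $\text{deg}_H((x_i,x_{i+1}))$ equals the number of directed paths $x_iw_1w_2\cdots w_\ell x_{i+1}$ in $G$ with $w_r\in W_r$ for all $r\in[\ell]$ and with $\phi(x_iw_1)\in Q_0$, $\phi(w_rw_{r+1})\in Q_r$ for $r\in[\ell-1]$, and $\phi(w_\ell x_{i+1})\in Q_\ell$. The first thing I would note is that no ``bad-case'' correction is needed here: since $Q_0,Q_1,\dots,Q_\ell$ are pairwise disjoint, any arc-colour sequence meeting these constraints is automatically rainbow, and since each $Q_s\subseteq C\setminus C_{\text{bad}}(G)$, no loop-heavy colour is used; likewise, since $W_0,W_1,\dots,W_\ell$ are pairwise disjoint and $x_i\ne x_{i+1}$ (both lie in $W_0$, and $m=|W_0|\ge 2$ for $n$ large), the vertices $x_i,w_1,\dots,w_\ell,x_{i+1}$ are automatically distinct. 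So it suffices to lower-bound the number of vertex sequences $(w_1,\dots,w_\ell)$ obeying the constraints.

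To count these I would build a short directed path forward from $x_i$ using \ref{MAIN2}, and bridge the remaining three arcs to the \emph{fixed} endpoint $x_{i+1}$ using \ref{MAIN4}. Concretely: choose $w_1$ as an outneighbour of $x_i$ in $W_1$ along an arc coloured in $Q_0$; by \ref{MAIN2} there are at least $(1-\eps)\tfrac{\beta}{\ell+1}n$ choices. Having fixed $w_1,\dots,w_{r-1}$, choose $w_r$ as an outneighbour of $w_{r-1}$ in $W_r$ along an arc coloured in $Q_{r-1}$; \ref{MAIN2} again gives at least $(1-\eps)\tfrac{\beta}{\ell+1}n$ choices, and no collision with $x_i,w_1,\dots,w_{r-1}$ is possible because $W_r$ is disjoint from $W_0,\dots,W_{r-1}$. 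Iterating this for $r=2,\dots,\ell-2$ makes $\ell-2$ applications of \ref{MAIN2} in all and fixes the directed path $x_iw_1\cdots w_{\ell-2}$ (which has $\ell-2\ge 2$ vertices, as $\ell$ is large). For each such head path, the number of completions to a valid link is exactly $|\cL(w_{\ell-2},Q_{\ell-2},W_{\ell-1},Q_{\ell-1},W_\ell,Q_\ell,x_{i+1})|\ge(1-\eps)\tfrac{\beta^2}{(\ell+1)^3}n^2$ by \ref{MAIN4} applied with $\ell-2$ in the role of~$i$ (note $w_{\ell-2}\ne x_{i+1}$, since they lie in disjoint parts), and the internal vertices $w_{\ell-1},w_\ell$ of any such completion lie in $W_{\ell-1},W_\ell$ and so cannot collide with the head. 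Distinct (head, completion) pairs yield distinct sequences $(w_1,\dots,w_\ell)$, so there is no overcounting.

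Multiplying the above bounds gives
\[
\text{deg}_H((x_i,x_{i+1}))\ \ge\ \left((1-\eps)\tfrac{\beta}{\ell+1}n\right)^{\ell-2}(1-\eps)\tfrac{\beta^2}{(\ell+1)^3}n^2\ =\ (1-\eps)^{\ell-1}\tfrac{\beta^\ell}{(\ell+1)^{\ell+1}}n^\ell,
\]
and since $\eps=n^{-1/2}\log^2 n$ and $\ell$ is a constant, $(1-\eps)^{\ell-1}\ge 1-(\ell-1)\eps\ge 1-n^{-1/2}\log^3 n$, whence $\text{deg}_H((x_i,x_{i+1}))\ge(1-n^{-1/2}\log^3 n)\tfrac{\beta^\ell}{(\ell+1)^{\ell+1}}n^\ell=D'$, as required; the argument is identical when $i=m$ (so $x_{i+1}=x_1$). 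The computation is routine, and the only point that genuinely needs thought is that one \emph{cannot} extend greedily all the way to $w_\ell$: the last arc must land on the prescribed vertex $x_{i+1}$ with colour in $Q_\ell$, a pinned target that a forward greedy count cannot see. It is precisely to absorb this that \ref{MAIN4} (which counts length-three rainbow links between two \emph{fixed} endpoints) is used for the tail, and splitting the path so that exactly two internal vertices are left for \ref{MAIN4} is the one load-bearing choice in the argument.
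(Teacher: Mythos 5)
Your proof is correct and takes essentially the same route as the paper's: apply (F2) $\ell-2$ times proceeding through outneighbours from $x_i$ to pin down $w_1,\dots,w_{\ell-2}$, then close the link with a single application of (F4) with $i=\ell-2$, and multiply the bounds. The extra remarks you add (that the disjointness of the $W_r$ and $Q_s$ makes the rainbow and distinctness conditions automatic, and the observation that the pinned endpoint $x_{i+1}$ is precisely why (F4) is needed for the tail) are accurate and a useful clarification, but not a departure from the paper's argument.
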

\claimproof
Fix \((x_i,x_{i+1})\in X\).
One applies~\ref{MAIN2} repeatedly, proceeding through outneighbours from~\(x_{i}\), to see there are at least \(((1-\eps)(\beta/(\ell+1))n)^{\ell-2}\) valid choices for \(w_1,w_2, \dots, w_{\ell-2}\), and finally at least~\((1-\eps)(\beta^2/(\ell+1)^3)n^2\) valid choices for \(w_{\ell-1}, w_{\ell}\) by~\ref{MAIN4}.
The claim follows.
\endclaimproof
\begin{claim}\label{codegreesequencecheck}
Let \(j=2i\in[2\ell]\) be even.
Then \(C_j(H), C_{j+1}(H)\leq n^{\ell-i}\), and \(C_{2\ell+2}(H)\leq 1\).
\end{claim}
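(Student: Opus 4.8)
The plan is to identify an edge of $H$ with a ``link sequence'', encode the constraints defining such sequences as a matroid, and bound the codegree of a set $U$ by $n$ to the power of the corank of the slots that $U$ pins.

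First I would record that an edge of $H$ is precisely a tuple $(v_0,v_1,\dots,v_{\ell+1})$ in which $v_0=x_i$ and $v_{\ell+1}=x_{i+1}$ are consecutive in the fixed cyclic order of $W_0$, $v_r\in W_r$ for $r\in[\ell]$, the colour $c_r:=\phi(v_rv_{r+1})$ lies in $Q_r$ for $0\le r\le\ell$, and the $c_r$ are distinct; given the edge as a set, this tuple (and the induced colours) is recovered because the partitions tell us which $W_r$, resp.\ $Q_r$, each vertex, resp.\ colour, belongs to. In particular $H$ is simple, and since it is $(2\ell+2)$-uniform this gives $C_{2\ell+2}(H)\le 1$ immediately. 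Moreover, as any $(2i+1)$-set contains a $2i$-set and codegree is monotone, it suffices to prove $C_{2i}(H)\le n^{\ell-i}$ for every $i\in\{1,\dots,\ell\}$.

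Fix $U$ with $|U|=2i$; we may assume $U$ contains at most one $X$-vertex, else $\text{codeg}(U)=0$. Consider the $2\ell+2$ ``slots'' $u_0,u_1,\dots,u_\ell,c_0,\dots,c_\ell$, where $u_0$ records the pair $(v_0,v_{\ell+1})$ (which determine one another), $u_r$ records $v_r$ for $r\in[\ell]$, and $c_r$ records $c_r$. Since $G\in\Phi(\dirK)$, in each triple $\{u_r,c_r,u_{r+1}\}$ (indices mod $\ell+1$, with $u_{\ell+1}$ meaning $u_0$) any two slots determine the third. Matching $c_r$ with the edge $z_rz_{r+1}$ and $u_r$ with the vertex $z_r$ of a cycle $C_{\ell+1}$, these triples are exactly the $3$-element circuits of the matroid $M$ on $\{\chi_{z_0},\dots,\chi_{z_\ell}\}\cup\{\chi_{z_r}-\chi_{z_{r+1}}:r\in\mathbb{Z}_{\ell+1}\}\subseteq\mathbb{R}^{\ell+1}$, which has rank $\ell+1$, the single linear relation among the edge-vectors corresponding to the wraparound identification $v_0\leftrightarrow v_{\ell+1}$. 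Now $U$ pins $2i$ slots (one per element of $U$); calling this set $P$ and completing a maximal independent $P'\subseteq P$ to a basis $P'\cup F$ of $M$, any edge containing $U$ is determined by its values on $P'\cup F$ — the slots of $P'\subseteq P$ are forced by $U$, the propagation steps corresponding to circuits of $M$ are valid for every $G$, and each of the $|F|=(\ell+1)-\text{rank}(P)$ slots in $F$ takes at most $n$ values — so $\text{codeg}(U)\le n^{(\ell+1)-\text{rank}(P)}$.

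It remains to show $\text{rank}(P)\ge i+1$, equivalently that a subspace $V\subseteq\mathbb{R}^{\ell+1}$ of dimension $d\le\ell$ contains at most $2d-1$ slot-vectors; this is the crux. Let $A$ be the set of cycle-vertices with $\chi_{z_r}\in V$ and $B$ the set of cycle-edges with $\chi_{z_r}-\chi_{z_{r+1}}\in V$. Since $d\le\ell$ forces $B\ne E(C_{\ell+1})$, the graph $B$ is a forest and its edge-vectors span a space of dimension $|B|$; adjoining the vertex-vectors $\chi_{z_r}$ for $z_r\in A$ raises the dimension by exactly $a_1+a_2$, where $a_1$ is the number of components of $B$ meeting $A$ and $a_2$ the number of vertices of $A$ not covered by $B$. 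Since $|A|+|B|\le 2|B|+a_1+a_2$ while $d\ge|B|+a_1+a_2$, one gets $|A|+|B|\le 2d-1$ for $d\ge1$ (the only not-quite-tight case $A=\emptyset$ being handled by $|B|\le d$). Taking $V=\text{span}(P)$, so $d=\text{rank}(P)$, and noting $|P|=2i\le|A|+|B|$ gives $\text{rank}(P)\ge i+1$, completing the proof. The main work is in this last counting lemma; the rest is careful bookkeeping, the one genuinely load-bearing subtlety being that the wraparound relation $v_0\leftrightarrow v_{\ell+1}$ must be exploited, since omitting it weakens the bound by a factor of $n$.
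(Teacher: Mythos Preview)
Your matroid approach is correct and genuinely different from the paper's. The paper argues by case analysis: it splits on whether $U$ contains an $X$-element and on how many vertex-slots versus colour-slots $U$ pins, and in each case explicitly propagates from the known slots to bound the number of completions. You instead encode the slot dependencies as the linear matroid on $\{\chi_r\}\cup\{\chi_r-\chi_{r+1}\}\subseteq\mathbb{R}^{\ell+1}$, uniformly reducing the codegree bound to the rank inequality $\mathrm{rank}(P)\ge i+1$, which follows from your counting lemma. This is cleaner and avoids case analysis entirely; the paper's route is more elementary but longer. The observation that the pair $(x_i,x_{i+1})$ acts as a single slot (rather than two separate endpoints) is exactly what makes the dependency structure cyclic rather than a path, buying the extra $1$ in the rank bound, and your framing isolates this nicely.

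There is, however, a small gap in your proof of the counting lemma. The assertion ``$d\le\ell$ forces $B\ne E(C_{\ell+1})$'' is false: if $V$ is the sum-zero hyperplane (the span of all edge-vectors) then $d=\ell$, $B=E(C_{\ell+1})$, and $A=\emptyset$. This case does arise in the application --- if $P$ consists of any $\ell$ colour-slots, the cycle relation places the remaining edge-vector in $\mathrm{span}(P)$ as well. Your formula $d\ge|B|+a_1+a_2$ then reads $\ell\ge\ell+1$, and the fallback ``$|B|\le d$'' for $A=\emptyset$ also fails. The fix is a one-line special case: here $|A|+|B|=\ell+1\le 2\ell-1=2d-1$ since $\ell\ge2$. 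With this patch the argument is complete. (One could also add a sentence justifying why a basis determines all slots: bases of $M$ are exactly pairs $(E_B,A_B)$ with $E_B\subsetneq E(C_{\ell+1})$ a forest and $A_B$ a transversal of its components, whence one propagates along each path from its $A_B$-vertex and then reads off the remaining colours from the now-known endpoints.)
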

\claimproof
Suppose \(U\in\binom{V(H)}{j}\).
We first prove \(\text{codeg}_{H}(U)\leq n^{\ell-i}\).
By construction, it suffices to only consider such~\(U\) with at most one element of~\(X\) and each of the \(W_r, Q_s\).
If~\(U\) contains an element \((x_t,x_{t+1})\in X\), then~\(U\) contains~\(j-1=2i-1\) vertices and colours from among the sets \(W_r, Q_s\), and so contains at least~\(i\) vertices or at least~\(i\) colours.
If the former, then there are\COMMENT{We've even already fixed the ``roles'' of the known vertices, and now we can just go through the unknown roles, one by one in order, choosing a vertex one at a time} at most~\(n^{\ell-i}\) choices of the other (at most)~\(\ell-i\) internal vertices, and each such choice fixes the whole rainbow link.
If the latter, then there are (at most) \(\ell+1-i\) indices, say \(s_1,s_2,\dots,s_{\ell+1-i}\in[\ell]_0\) in increasing order, such that \(U\cap Q_{s_z}=\emptyset\) for each \(z\in[\ell+1-i]\), and we claim that any of the at most~\(n^{\ell-i}\) choices of the colours \(q_{s_1},q_{s_2},\dots,q_{s_{\ell-i}}\) now determines a unique (candidate)\COMMENT{Some choices of the colours may lead to vertices not in the appropriate sets, but we're after an upper bound for the codegree so that's fine} \(H\)-edge using the elements of~\(U\).
Indeed, the endpoints \(x_t,x_{t+1}\) of the rainbow link are known, and the endpoints \(w_{s_{\ell+1-i}}, w_{s_{\ell+1-i}+1}\) (here identifying \(w_0=x_t\) and \(w_{\ell+1}=x_{t+1}\) if necessary) of the only arc with unassigned colour can be determined by following the out-arcs of the assigned colours from~\(x_t\) and the in-arcs of the assigned colours from~\(x_{t+1}\), and this determines all vertices of the link.
We deduce that \(\text{codeg}(U)\leq n^{\ell-i}\) if \(U\cap X\neq\emptyset\).

Suppose instead that \(U\in\binom{V(H)}{j}=\binom{V(H)}{2i}\) does not intersect~\(X\).
We split this possibility into three cases: either~\(U\) contains at least \(i+1\) vertices from among (different)\COMMENT{Or at least, as discussed at the start, it suffices to only consider those \(U\) with at most one element from each of the sets} sets~\(W_r\), or~\(U\) contains at least \(i+1\) colours from among (different) sets~\(Q_s\), or~\(U\) contains exactly~\(i\) of each.
In the first of these three cases, there are clearly at most~\(n^{\ell-i}\) choices of the other~\(\ell-i-1\) internal vertices (and their roles) of a link ``containing''~\(U\) and an element~\((x_t,x_{t+1})\in X\).
If instead~\(U\) contains at least~\(i+1\) colours, then let \(U'\subseteq U\) be a set of~\(i+1\) colours, and let \(s_1,s_2,\dots,s_{\ell-i}\in[\ell]_0\) be\COMMENT{\(\ell+1-(i+1)=\ell-i\)}, in increasing order, the indices~\(s\in[\ell]_0\) such that \(U'\cap Q_s=\emptyset\).
There are at most~\(n^{\ell-i}\) choices of \(\ell-i-1\) colours \(\{q_{s_z}\in Q_{s_z}\}_{z\in[\ell-i-1]}\) and a pair \((x_t,x_{t+1})\in\cX\), which, as discussed earlier, determines the whole link.\COMMENT{We have an element \((x_t,x_{t+1})\) (the endpoints of the link) and all but one colour assigned, so as in the previous paragraph, all vertices are determined by following the arcs of the assigned colours either from~\(x_t\) or from \(x_{t+1}\)}
Finally, suppose~\(U\cap X=\emptyset\) and \(U\) contains exactly~\(i\) vertices from among different~\(W_r\), and exactly~\(i\) colours from among different~\(Q_s\).
Let \(r_1,r_2,\dots,r_i\in[\ell]\) be, in increasing order, the indices~\(r\) for which~\(U\) contains a vertex~\(w_r\in W_r\), and similarly, let \(s_1,s_2,\dots,s_i\in[\ell]_0\) be, in increasing order, the indices~\(s\) for which~\(U\) contains a colour~\(q_s\in Q_s\).
There must exist an index \(s_z\) such that at least one of \(s_z,s_{z}+1\) is not in \(R\coloneqq\{r_1,\dots,r_i\}\).\COMMENT{Otherwise every arc whose colour we know has both endpoints known, but then you must know at least \(i+1\) vertices. Also it's fine if it's \(s_z=0\notin[\ell]\) or \(s_{z}+1=\ell+1\notin[\ell]\)}
If \(|\{s_z,s_{z}+1\}\cap R|=1\) then without loss of generality, \(s_z\notin R,s_{z}+1\in R\).
Then the~\(i\) vertices \(\{w_r\colon r\in R\}\) are known in any link ``containing''~\(U\), and the additional vertex~\(w_{s_{z}}\) is determined as~\(N^{-}_{q_{s_z}}(w_{s_{z}+1})\).
If \(s_z\geq1\) then there are at most~\(n^{\ell-i}\) choices of the remaining~\(\ell-i-1\) internal vertices of a rainbow link ``containing''~\(U\) and a pair~\((x_t,x_{t+1})\in X\).
If \(s_z=0\), then \(w_0=x_t\) and thus~\(x_{t+1}\) are determined, and there are at most~\(n^{\ell-i}\) choices for the remaining~\(\ell-i\) internal vertices of the link.
If \(s_z,s_{z}+1\notin R\), then there are at most~\(n\) choices for the arc~\(e_{s_z}=w_{s_z}w_{s_{z}+1}\) with colour~\(q_{s_z}\in Q_{s_z}\).\COMMENT{Here and several times throughout the proof already, I could have said ``by~\ref{MAIN3}'' for example, but actually I'm only using the weaker upper bound from the fact there's at most \(n\) arcs of any colour originally in all of \(G\)}
If \(1\leq s_z\leq \ell-1\) then there are at most~\(n^{\ell-i-1}\) choices for the remaining~\(\ell-i-2\) internal\COMMENT{\(1\leq s_z\leq\ell-1\) means both endpoints of \(e_{s_z}\) are internal. We already knew \(i\) internal vertices directly from \(U\), and since \(s_z,s_{z}+1\) are both \(\notin R\), that's 2 more internal vertices known} vertices and a pair~\((x_t,x_{t+1})\), and if \(s_z=0\) (the case \(s_z=\ell\) handled analogously) then \(w_1, x_t, x_{t+1}\) are determined and there are at most~\(n^{\ell-i-1}\) choices for the remaining~\(\ell-i-1\) internal\COMMENT{\(i\) internal vertices known directly from~\(U\). We have \(s_{z}+1=1\notin R\) so that's one more internal vertex known} vertices of the link.
We conclude that \(\text{codeg}_{H}(U)\leq n^{\ell-i}\).

The claim now follows from the simple facts \(C_{j+1}(H)\leq C_{j}(H)\) and \(C_{2\ell+2}(H)\leq C_{2\ell}(H) (\leq n^{\ell-\ell}=1)\).
\endclaimproof
Setting \(D_j\coloneqq n^{\ell-j/2}\) for even \(j\in[2\ell]\), \(D_j\coloneqq n^{\ell-(j-1)/2}\) for odd \(j\in[2\ell+1]\), and \(D_{2\ell+2}\coloneqq 1\), we have \(C_j(H)\leq D_j\) for all \(2\leq j\leq2\ell+2\) by Claim~\ref{codegreesequencecheck}.
Set \(B\coloneqq n^{1/2-1/(4\ell)}\,\,(\geq 1)\).
Set \(\zeta\coloneqq(1+n^{-1/2}\log^3 n)\alpha(\beta/(\ell+1))^{\ell}\), so that \(D=\zeta n^{\ell}\).
Then \(\sqrt{D/D_2}=\zeta^{1/2}n^{1/2}> B\), for even \(j\in[2\ell]\) we have \((D/D_j)^{1/(j-1)}=\zeta^{1/(j-1)}n^{j/(2(j-1))}>\zeta^{1/(j-1)}n^{1/2}>B\), for odd \(j\in[2\ell+1]\) we have \((D/D_j)^{1/(j-1)}=\zeta^{1/(j-1)}n^{(j-1)/(2(j-1))}>B\), and \((D/D_{2\ell+2})^{1/(2\ell+1)}=\zeta^{1/(2\ell+1)}n^{1/2 - 1/(4\ell+2)}\geq B\).
Observe moreover that
\begin{eqnarray*}
D' & = & D\frac{1-n^{-1/2}\log^3 n}{1+n^{-1/2}\log^3 n}\cdot\frac{1}{(\ell+1)\alpha}=D\frac{1-n^{-1/2}\log^3 n}{1+n^{-1/2}\log^3 n}\cdot\frac{1}{1-n^{-1/2+1/\ell}}\\ & \geq & D(1-2n^{-1/2}\log^3 n)(1+n^{-1/2+1/\ell})\geq D(1+(1/2)n^{-1/2+1/\ell}),
\end{eqnarray*}
and that, with \(1/n\ll1/A\ll\gamma\ll\delta\ll1\)\COMMENT{And the uniformity of~\(H\) yields \(k=2\ell+1\) on the order of \(1/\delta\), so \(\gamma\ll1/k\)} (and recalling the uniformity of~\(H\) is \(k+1\coloneqq(2\ell+1)+1\) and \(1/\delta<\ell<2/\delta\)), we have \(B^{-1+\gamma}\log^A D\leq n^{\gamma/2}\log^A (n^{\ell})\cdot n^{-1/2+1/(4\ell)}\leq n^{\delta/4}\cdot n^{-1/2+1/(4\ell)}\leq n^{-1/2+3/(4\ell)}\leq (1/2)n^{-1/2+1/\ell}\),\COMMENT{Used \(\delta\leq2/\ell\) so \(n^{\delta/4}\leq n^{1/(2\ell)}\)} so \(D'\geq(1+B^{-1+\gamma}\log^A D)D\), and Theorem~\ref{thm:mainpartitetheorem} finishes the proof.
\endproof
We remark that one may initially hope to avoid the need for a random partition (Lemma~\ref{slicinglemma}) of \(G\in\Phi(\dirK)\), by simply selecting vertices for~\(W_0\) randomly with probability~\(\alpha\) and defining~\(H\) in terms of rainbow links with endpoints in~\(W_0\) and internal vertices in~\(V\setminus W_0\).
However, this does not appear to work; each~\((x_i,x_{i+1})\) admits roughly \(((1-\alpha)n)^{\ell}\) rainbow links, and for each of the ``roles'' a colour or a non-\(W_0\) vertex can have, there turn out to be around \(\alpha n ((1-\alpha)n)^{\ell-1}\) rainbow links (ranging over \((x_i,x_{i+1})\in X\)) in which that vertex or colour performs the given role.
The issue is that the colours have too many roles, since \((\ell+1)\alpha n((1-\alpha)n)^{\ell-1}>((1-\alpha)n)^{\ell}\),\COMMENT{Equivalent to \((\ell+1)\alpha>(1-\alpha) \Leftrightarrow 1-n^{-1/2+1/\ell}>\frac{\ell+n^{-1/2+1/\ell}}{\ell+1}\), which is clear since LHS arbitrarily close to 1 and RHS arbitrarily close to \(\ell/(\ell+1)\).
Having \(\ell\) roles is fine since we do have \(\ell\alpha n((1-\alpha)n)^{\ell-1}<((1-\alpha)n)^{\ell}\), as this rearranges to \((1-\alpha)/\alpha > \ell\), i.e.\ \( \frac{\ell+n^{-1/2+1/\ell}}{1-n^{-1/2+1/\ell}}>\ell\), and we win by the required amount} so the full partitioning seems to be important.
\subsection{Partial rainbow directed triangle factors}\label{section:triangles}
In this subsection we prove Theorem~\ref{theorem:triangles} by removing those colours involved in too many loops or non-rainbow triangles, and then applying Theorem~\ref{theorem:maintheorem}.
\lateproof{Theorem~\ref{theorem:triangles}}
Fix \(G\in\Phi(\dirK)\).
We say a colour \(c\in C\) is \(\Delta\)-\textit{bad} (in~\(G\)) if the number~\(N(c)\) of non-rainbow directed triangles with exactly one \(c\)-arc\COMMENT{So, necessarily, the other two arcs have the same, non-\(c\) colour.} satisfies \(N(c)\geq n^{3/2}\), and set~\(C_{\text{bad}}^{\Delta}\) to be the \(\Delta\)-bad colours.
Notice there are at most~\(n^2\) non-rainbow directed triangles in~\(G\)\COMMENT{Let \(A\) be the set of pairs (vertex, colour not on loop at that vertex) and \(B\) the set of non-rainbow triangles in \(G\). Define \(f\colon A\rightarrow B\) by looking at that vertex as the middle vertex of a mono 2-path of that colour, and this identifies a single output non-rainbow triangle (just fill in the missing edge - it has some colour we don't get to choose). Then \(f\) is surjective. Indeed, if we have a non-rainbow triangle then it either has exactly 2 arcs the same colour or all 3; if the former then clearly there's exactly one preimage of this triangle, and if the latter then there's 3. Thus \(|B|\leq |A|=n(n-1)\)}.
Those whose every arc is the same colour do not contribute towards~\(N(c)\) for any~\(c\), and those with exactly two arcs of a common colour contribute one triangle towards~\(N(c)\) for one~\(c\).
We deduce that \(|C_{\text{bad}}^{\Delta}|\leq\sqrt{n}\).
We say~\(c\) is \textit{loop-bad} if~\(G\) has at least~\(\sqrt{n}\) loops coloured~\(c\), and set~\(C_{\text{bad}}^{\circ}\) to be the loop-bad colours, so clearly \(|C_{\text{bad}}^{\circ}|\leq\sqrt{n}\).
Define \(C_{\text{good}}\coloneqq C\setminus(C_{\text{bad}}^{\Delta}\cup C_{\text{bad}}^{\circ})\).
Define the auxiliary multihypergraph~\(H\) with \(V(H)\coloneqq V\cup C_{\text{good}}\), putting an edge $\{v_1, v_2, v_3, c_1, c_2, c_3\}$ in~\(H\) for each set of distinct vertices $\{v_1, v_2, v_3\}$ and colours $\{c_1, c_2, c_3\}$ and permutation~\(\sigma\) of~\(\{1,2\}\) such that the directed triangle \(v_{\sigma(1)} v_{\sigma(2)} v_3 v_{\sigma(1)}\) in~\(G\) is rainbow with colours (in some order) \(c_1,c_2,c_3\).\COMMENT{There are 2 directed triangles on each triple of vertices; namely \(v_1v_2v_3v_1\) and \(v_2v_1v_3v_2\). For each that is rainbow, it contributes an edge to \(H\). It's possible that both are rainbow with the same set of colours, which we want to include, so there may be 2 copies of some edges. This is the price I've paid for avoiding an unnecessary partitioning argument (random or otherwise), and actually I think it's nice to illustrate a usage of the multi part of Theorem~\ref{theorem:maintheorem}}
\begin{claim}\label{claim:triangleauxstats}
\(H\) is \((n',D,\eps)\)-regular, where \(n'=(1\pm n^{-1/2})2n\), \(D=n^2\), and \(\eps=10n^{-1/2}\).
Further, \(C_2(H), C_3(H)\leq 3n\), \(C_4(H),C_5(H)\leq 6\), and \(C_6(H)\leq 2\).
\end{claim}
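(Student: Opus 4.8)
The plan is to translate every assertion about $H$ into a statement about directed triangles of $G$ and then bound everything with the one counting principle already used to bound $|C_{\text{bad}}^{\Delta}|$ and $|C_{\text{bad}}^{\circ}|$: since the colouring of $G$ is proper, a directed triangle is pinned down once one fixes one of its arcs together with one further vertex, or once one fixes a couple of its arc-colours at a prescribed vertex. The dictionary is as follows. Each rainbow directed triangle of $G$ all of whose colours lie in $C_{\text{good}}$ gives rise to exactly one copy of the $H$-edge consisting of its three vertices and three colours (and a given $H$-edge occurs with multiplicity $2$ exactly when both directed triangles on its vertex triple are rainbow with its colour set). Consequently $\deg_H(v)$ for $v\in V$ equals the number of rainbow $C_{\text{good}}$-coloured directed triangles through $v$, $\deg_H(c)$ for $c\in C_{\text{good}}$ equals the number of such triangles with one arc coloured $c$, and, more generally, $\text{codeg}_H(U)$ equals the number of such triangles ``containing'' $U$ (counting a triangle once for each element of $U$ it realises); phrasing the counts in terms of directed triangles rather than $H$-edges is what keeps the multiplicity bookkeeping honest.

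The bound $n'=(1\pm n^{-1/2})2n$ is immediate from $n'=n+|C_{\text{good}}|$ and $n-2\sqrt n\le|C_{\text{good}}|\le n$. For $v\in V$ there are exactly $(n-1)(n-2)$ directed triangles through $v$; those that are non-rainbow have two equally-coloured arcs, and specifying which pair of the three arc-slots collides together with their common colour pins the triangle down via properness, giving at most $3n$ of them; those using a fixed one of the $\le 2\sqrt n$ bad colours are determined by the arc carrying that colour plus the remaining vertex, giving at most $3n$ each and $6n^{3/2}$ in total. Hence $n^2\ge \deg_H(v)\ge (n-1)(n-2)-3n-6n^{3/2}\ge(1-10n^{-1/2})n^2$. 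For $c\in C_{\text{good}}$ there are $(n-\ell_c)(n-2)$ incidences (directed triangle, $c$-arc in it), where the number $\ell_c$ of $c$-coloured loops is $<\sqrt n$ since $c\notin C_{\text{bad}}^{\circ}$; of these at most $n$ come from triangles with at least two $c$-arcs, at most $N(c)<n^{3/2}$ (since $c\notin C_{\text{bad}}^{\Delta}$) are non-rainbow with exactly one $c$-arc, and those using a bad colour on another arc number at most $2n$ per bad colour and so at most $4n^{3/2}$ in total. Combining again gives $\deg_H(c)=(1\pm10n^{-1/2})n^2$, so $H$ is $(n',n^2,10n^{-1/2})$-regular.

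For the codegrees, observe that every $H$-edge has exactly three vertices and three colours, so a set $U\subseteq V(H)$ lying in an edge has $|U\cap V|\le3$ and $|U\cap C|\le3$, and I would case-split on the pair $(|U\cap V|,|U\cap C|)$. When $|U|=2$: two vertices lie in at most $2(n-2)$ directed triangles (choose the third vertex and the orientation); a vertex $v$ and a colour $c$ lie in at most $3n$ (the $c$-arc of the triangle is the out-arc at $v$, the in-arc at $v$, or disjoint from $v$ — at most $n$ triangles in each case); two colours lie in at most $2n$ (fix one of the $\le n$ arcs of the first colour, after which the second colour's arc is one of the two remaining arc-slots). Thus $C_2(H)\le 3n$, and $C_3(H)\le C_2(H)\le 3n$ by monotonicity. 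When $|U|=4$: in the $(3,1)$ split $U$ already determines the vertex triple, leaving only its $2$ orientations; in the $(1,3)$ split $U=\{v,c_1,c_2,c_3\}$ lies in a triangle through $v$ with colour set $\{c_1,c_2,c_3\}$, and each of the $3!=6$ assignments of these colours to the three arcs around $v$ determines the whole triangle, giving at most $6$; in the $(2,2)$ split $U=\{u,w,c,c'\}$, for each of the two orientations of the (still undetermined) triple through $u$ and $w$ exactly one arc has its colour already fixed by $G$, and then — using that the triangle must be rainbow, so a fixed arc coloured $c$ (resp. $c'$) forbids a second $c$-arc (resp. $c'$-arc) — there are at most $2$ completions, hence at most $4$ overall. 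So $C_4(H)\le 6$ and $C_5(H)\le C_4(H)\le 6$. Finally, a $6$-set contained in an edge equals that edge, whose multiplicity is the number of the triple's two orientations that are rainbow with the prescribed colour set, so $C_6(H)\le 2$.

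I expect the fiddliest step to be the $(2,2)$ case of the $C_4$ bound: the naive argument — deciding which arc-slot carries $c$ and which carries $c'$ — permits up to $8$ completions, and one genuinely needs the rainbow condition to discard the configurations that would force a second $c$- or $c'$-arc onto a triangle whose fixed arc is already coloured $c$ or $c'$, bringing the count down to $4\le 6$. The only other thing to watch, present throughout, is correctly crediting the multiplicity of repeated $H$-edges (triples both of whose orientations are rainbow with the same colour set), which is exactly why I would carry out every count at the level of directed triangles of $G$ rather than at the level of edges of $H$.
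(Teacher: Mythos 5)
Your proof is correct and takes essentially the same approach as the paper: both translate every count into a count of rainbow, good-coloured directed triangles of $G$, bound the degrees by subtracting non-rainbow and bad-coloured triangles from the total, and handle the codegrees by the same $(|U\cap V|,|U\cap C|)$ case split (the paper's lower bound for vertex degrees proceeds greedily by first picking a good outneighbour avoiding a small exceptional set $V_u^{\mathrm{bad}}$ rather than subtracting bad triangles from $(n-1)(n-2)$, but the two are equivalent in spirit and both yield $\eps=10n^{-1/2}$). One tiny imprecision: the incidences coming from triangles with at least two $c$-arcs should be bounded by $3n$ rather than $n$ (at most $n$ such triangles, each contributing up to $3$ incidences), but this does not affect the final $(1\pm10n^{-1/2})n^2$ bound.
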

\claimproof
The value of~\(n'\) follows from \(|C_{\text{bad}}^{\Delta}|, |C_{\text{bad}}^{\circ}|\leq\sqrt{n}\).\COMMENT{\(n'\leq n+n=2n\). \(n'\geq n + (n-2\sqrt{n})=2n-2\sqrt{n}=2n(1-n^{-1/2})\).}
Fix \(u\in V\).
Clearly there are at most~\(n\) choices for the outneighbour of~\(u\) in a triangle, and at most~\(n\) choices for the remaining vertex, so \(\text{deg}_{H}(u)\leq n^2\).\COMMENT{No funny business with the multi-edges; that's resolved by counting the choices in order beginning with the outneighbour of \(u\).}
On the other hand, notice the set~\(V_{u}^{\text{bad}}\subseteq V\setminus\{u\}\) of vertices~\(v\) for which there are at least~\(\sqrt{n}\) vertices \(w\in V\setminus\{u,v\}\) such that the directed path \(vwu\) is monochromatic must satisfy \(|V_{u}^{\text{bad}}|\leq\sqrt{n}\); otherwise there are more than~\(n\) monochromatic directed \(2\)-paths in~\(G\) with head~\(u\), a contradiction.  
Thus there are at least \(n-1-\sqrt{n}-2\sqrt{n}\) choices for an outneighbour \(v\in V\setminus{V_u^{\text{bad}}}\) of~\(u\) in~\(G\) via an arc with colour in~\(C_{\text{good}}\).
Then since~\(v\notin V_u^{\text{bad}}\), there are at least~\(n-2-\sqrt{n}\) choices for a vertex \(w\in V\setminus\{u,v\}\) such that~\(vwu\) is not monochromatic, and at most \(4\sqrt{n}+2\) of these must be rejected due to one (or both) of the arcs \(vw, wu\) having a bad colour or the colour of~\(uv\).\COMMENT{Rule out all out-neighbours of~\(v\) via arcs with any of the \(\leq 2\sqrt{n}\) bad colours or the arc with colour \(\phi(uv)\). Do the same for in-neighbors of \(u\).}
It follows that \(\text{deg}_{H}(u)=(1\pm 10n^{-1/2})n^2\).\COMMENT{\(\text{deg}_{H}(u)\geq(n-4\sqrt{n})(n-2-\sqrt{n}-4\sqrt{n}-2)\geq(n-4\sqrt{n})(n-6\sqrt{n})\geq n^2 -10n^{3/2}\)}
Now fix instead \(c\in C_{\text{good}}\).
Clearly \(\text{deg}_{H}(c)\leq n^2\).\COMMENT{There are at most~\(n\) choices for a non-loop arc~\(uv\) of colour~\(c\), and at most~\(n\) acceptable choices for the final vertex~\(w\) of the triangle, the directions of all arcs now decided.}
Since \(c\notin C_{\text{bad}}^{\circ}\), there are at least \(n-\sqrt{n}\) non-loop \(c\)-arcs~\(uv\).
Then there are at least \(n-2-4\sqrt{n}-2\geq n-5\sqrt{n}\) choices of a third vertex \(w\in V\setminus\{u,v\}\) such that \(c\notin\{\phi(vw), \phi(wu)\}\subseteq C_{\text{good}}\), so there are at least \(n^2 - 6n^{3/2}\) directed triangles in~\(G\) with exactly one \(c\)-arc and no bad colours.
Since \(c\notin C_{\text{bad}}^{\Delta}\), at most~\(n^{3/2}\) of these are not rainbow, so we conclude that \(\text{deg}_{H}(c)=(1\pm 10n^{-1/2})n^2\), as required.

It remains to find upper bounds for the codegrees.
For distinct vertices \(u,v\in V\), there are two choices for whether the arc~\(uv\) or the arc~\(vu\) will appear in the triangle, and at most~\(n\) choices for the final vertex~\(w\) of the triangle.\COMMENT{And clearly the directions of the remaining arcs are decided}
For distinct colours \(c,d\in C_{\text{good}}\), there are at most~\(n\) choices of a non-loop arc~\(uv\) of colour~\(c\), and two choices for whether~\(d\) will be the colour of the out-arc at~\(v\), or the in-arc at~\(u\), which decides the rest of the triangle.
For a vertex~\(u\) and a good colour~\(c\), either the out-arc at~\(u\) in a potential triangle has colour~\(c\), or the in arc at~\(u\), or the arc disjoint from~\(u\).
In the first two cases, there are at most~\(n\) choices for the third vertex, and in the final case, there are at most~\(n\) choices for the \(c\)-arc, and each of these choices decides the rest of the triangle.
It follows that \(C_2(H), C_3(H)\leq 3n\).
We now examine \(4\)-sets.
For any set \(S\subseteq V\cup C_{\text{good}}\) containing three \(V\)-vertices (and a colour), there are~\(2\) choices for a permutation of those vertices leading to distinct triangles (since the ``starting'' vertex does not matter).\COMMENT{Then either this colour is in those triangles or it isn't. Worst case it's in both}
For any set \(S\subseteq V\cup C_{\text{good}}\) containing three colours and a vertex~\(u\), there are~\(6\) choices for the order in which a triangle traverses the colours, beginning with the out-arc at~\(u\).
Finally, for a set~\(S\) of two vertices \(u,v\) and two colours \(c,d\), first we may choose whether the arc~\(uv\) or the arc~\(vu\) appears in the triangle.
The colour of that arc is then known, and is either \(c,d\), or some other colour.
In any of those cases, there are two options for completing the triangle (if that arc is not coloured \(c,d\), then choose the order for the colours \(c,d\) of the remaining arcs, and otherwise choose which of the two remaining arcs is the remaining known colour).\COMMENT{For a total of 4 actual choices, i.e. \(\text{codeg}(S)\leq 4\) in this case - some of what appear to be ``options'' are really just cases depending on the structure of~\(G\)}
It follows that \(C_4(H), C_5(H)\leq 6\), and \(C_6(H)\leq 2\) is clear from the construction of~\(H\).
\endclaimproof
Applying Theorem~\ref{theorem:maintheorem} to~\(H\) with \(D_2, D_3 = 3n\), \(D_4, D_5, D_6=6\) (say), and \(D=n^2, \eps = 10n^{-1/2}\), and~\(n'=(1\pm n^{-1/2})2n\) playing the role of~\(n\) (and \(\cT=\emptyset\)), notice that we may set \(B=(D/D_6)^{1/5}=(n^2 / 6)^{1/5}\), which yields the result.\COMMENT{\(\sqrt{D/D_2}=\sqrt{n/3}\), \((D/D_4)^{1/3}=(n^2 / 6)^{1/3} = \Theta(n^{2/3})\), \((D/D_5)^{1/4}=(n^2 /6)^{1/4}=\Theta(n^{1/2})\), \((D/D_6)^{1/5}=\Theta(n^{2/5})\), \(1/\eps = \Theta(n^{1/2})\). We may assume the \(\delta\) in the statement satisfies \(\delta\ll1\), since proving the statement for small \(\delta\) proves it for larger \(\delta\). Set \(\gamma=\delta\) and assume \(1/n\ll1/A\ll\gamma\) and we have \(\gamma\ll1/k\) by the assumption \(\delta\ll1\). Then there's a matching covering all but at most \(n'B^{-1+\delta}\log^A (n^2)\leq 2^{A+2}n\cdot 6n^{-2/5}B^{\delta}\log^A n\leq 2^{A+2}\cdot 6\log^A n \cdot n^{3/5}\cdot n^{2\delta/5}\leq n^{3/5+\delta}\) vertices of~\(H\), which by construction corresponds to a partial rainbow directed triangle factor of~\(G\) covering all but at most~\(n^{3/5+\delta}\) vertices of~\(G\).}
\endproof
\subsection{Large-diameter simplicial complexes}\label{section:SC}
In this subsection we prove Theorem~\ref{theorem:SimplicialComplex}.
To construct the desired complex~\(\cK\), we first need to find a subset \(X\subseteq\binom{[n]}{d}\) (and a good choice of orderings of the \(X\)-elements) with properties that will enable us to find many paths which do not ``cover'' any \(X\)-sets, except at their ends.
We sometimes abuse terminology and identify a permutation \(f\colon A\rightarrow A\) (for \(A=\{a_1,a_2,\dots,a_r\}\subseteq[n]\) with \(a_1<\dots<a_r\)) with the ordering \((f(a_1),f(a_2),\dots,f(a_r))\) of the elements of~\(A\), writing~\(f(A)\) for the latter.
\begin{lemma}\label{BNslicing}
Suppose \(1/n\ll1/\ell\ll\delta\ll1/d\leq1/2\).
Put \(p\coloneqq(1-n^{-1/d+\delta})/(d(\ell+d))\) and \(\eps\coloneqq n^{-1/2}\log^2 n\).
There exists a set \(X=\{A_1,A_2,\dots,A_m\}\subseteq\binom{[n]}{d}\) and a set of permutations \(\{f(A)\colon A\in\binom{[n]}{d}\}\) such that each of the following hold:
\begin{enumerate}[($X$1), topsep = 6pt]
\item \(m=(1\pm\eps)p\binom{n}{d}\);\label{Arightsize}
\item For any \(U\in\binom{[n]}{d}\), there are~\((1\pm\eps)n(1-p)^{d}\) vertices \(v\in[n]\setminus U\) such that \(\left(\binom{U\cup\{v\}}{d}\setminus \{U\}\right) \cap X=\emptyset\);\label{radiation}
\item For all vertex-disjoint ordered \(d\)-tuples \(\overrightarrow{U}=(u_1,u_2,\dots,u_d)\), \(\overrightarrow{U'}=(u'_1,u'_2,\dots,u'_d)\) with \(u_1,\dots,u_d,u'_1,\dots,u'_d\) distinct elements of~\([n]\), there are \((1\pm\eps)n^2 (1-p)^{d^2 + 2d-1}\) choices for an ordered pair of distinct vertices~\((v,w)\) with \(v,w\in[n]\setminus(U\cup U')\) such that all non-\((U,U')\) \(d\)-sets contained (not necessarily consecutively) among any sequence of \(d+1\) consecutive vertices from \(u_1,u_2,\dots,u_d,v,w,u'_1,u'_2,\dots,u'_d\) are not in~\(X\);\label{connection}
\item For each \(i\in [d]\), \(v\in[n]\), and each ordered sequence \((a_1,a_2,\dots,a_{i-1},a_{i+1},a_{i+2},\dots,a_d)\) of \(d-1\) vertices from \([n]\setminus\{v\}\), there are \((1\pm\eps)np(1-p)^{d-1}/d!\) choices of a vertex~\(a_i\), not among~\(v\) and the other~\(a_j\), such that \((a_1,a_2,\dots,a_d)\in\{f(A)\colon A\in X\}\) and \(\binom{\{a_1,\dots,a_d,v\}}{d}\setminus\{\{a_1,\dots,a_d\}, \{a_1,\dots,a_d,v\}\setminus\{a_i\}\}\cap X=\emptyset\).\label{GetintoAi}
\end{enumerate}
\end{lemma}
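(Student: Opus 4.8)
The plan is to choose $X$ and the orderings at random and check that each of \ref{Arightsize}--\ref{GetintoAi} holds with high probability; as there are only four properties, a union bound then yields an outcome satisfying all of them simultaneously. Concretely, form a random set $\mathbf{X}\subseteq\binom{[n]}{d}$ by placing each $A\in\binom{[n]}{d}$ in $\mathbf{X}$ independently with probability $p$, and independently for each $A\in\binom{[n]}{d}$ choose a uniformly random ordering $\mathbf{f}(A)$ of $A$, all these choices mutually independent. Note that $p=\Theta_{d,\ell}(1)$ (so $1-p$ is bounded away from $0$ and $1$), and that $\eps^2\cdot\Theta(n)=\Omega(\log^4 n)$, which is the quantity that will make every failure probability $n^{-\omega(1)}$, beating the relevant polynomial union bounds.

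For \ref{Arightsize}, $|\mathbf{X}|\sim\mathrm{Bin}\bigl(\binom{n}{d},p\bigr)$ has expectation $\Theta(n^d)$, so Lemma~\ref{chernoff}\ref{lemma:chernoffbigexp} gives concentration within a $(1\pm\eps)$ factor with super-polynomially small failure probability. For \ref{radiation}, fix $U=\{u_1,\dots,u_d\}$; a vertex $v\notin U$ is \emph{good} precisely when none of the $d$ sets $U-u_i+v$ ($i\in[d]$) lies in $\mathbf{X}$, which happens with probability $(1-p)^d$. The key point is that for distinct $v,v'$ the families $\{U-u_i+v:i\in[d]\}$ and $\{U-u_j+v':j\in[d]\}$ are disjoint, so the goodness events across $v$ are mutually independent; the number of good $v$ is then a sum of $n-d$ independent indicators of mean $\Theta(n)$, and Lemma~\ref{chernoff}\ref{lemma:chernoffbigexp} applies, with a union bound over the at most $n^d$ choices of $U$.

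For \ref{connection}, fix vertex-disjoint ordered tuples $\overrightarrow{U},\overrightarrow{U'}$, and for an ordered pair $(v,w)$ of distinct vertices outside $U\cup U'$ let $\mathcal{D}(v,w)$ be the collection of $d$-subsets of the $d+2$ length-$(d+1)$ windows of the sequence $u_1,\dots,u_d,v,w,u'_1,\dots,u'_d$, with $U$ and $U'$ removed. A routine count shows $|\mathcal{D}(v,w)|=d^2+2d-1$ for every valid $(v,w)$, and every member is distinct from $U$ and $U'$; hence $(v,w)$ is good with probability $(1-p)^{d^2+2d-1}$ and $\expn{\mathbf{Z}}=(1\pm o(1))n^2(1-p)^{d^2+2d-1}=\Theta(n^2)$, where $\mathbf{Z}$ counts good pairs. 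I concentrate $\mathbf{Z}$ with McDiarmid's inequality (Lemma~\ref{lemma:mcd}) using the indicators $\mathbf{1}[A\in\mathbf{X}]$ as the independent variables. Toggling $\mathbf{1}[A\in\mathbf{X}]$ changes $\mathbf{Z}$ by at most the number of pairs $(v,w)$ with $A\in\mathcal{D}(v,w)$; since every element of every $\mathcal{D}(v,w)$ contains at least one of $v,w$ (the only $d$-subsets of windows using neither are $U$ and $U'$), this is $O(1)$ unless $A$ is a $d$-subset of the first or last window, i.e.\ $A=U-u_i+v$ or $A=U'-u'_i+w$ for some free vertex, in which case it is $O(n)$ --- but there are only $O(dn)$ such ``heavy'' sets. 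Therefore $\sum_A c_A^2=O(dn^3)$, and taking $t=\tfrac{\eps}{2}\expn{\mathbf{Z}}=\Theta(n^{3/2}\log^2 n)$ makes the failure probability $\exp(-\Omega(\log^4 n/d))$, beating the union bound over the at most $n^{2d}$ choices of $(\overrightarrow{U},\overrightarrow{U'})$. This bookkeeping of Lipschitz constants --- in particular isolating the few heavy $d$-sets so that $\sum c_A^2$ stays polynomially small despite individual constants as large as $\Theta(n)$ --- is the main obstacle in the proof.

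Finally, \ref{GetintoAi} also involves the random orderings, so I use the permutation form of Talagrand's inequality (Lemma~\ref{lemma:permutations}). Fix $i\in[d]$, $v\in[n]$, and the ordered $(d-1)$-sequence; writing $A=\{a_1,\dots,a_d\}$, a candidate vertex $a_i$ is good iff $A\in\mathbf{X}$, $\mathbf{f}(A)=(a_1,\dots,a_d)$, and the $d-1$ sets $A-a_j+v$ ($j\neq i$) all avoid $\mathbf{X}$ --- independent events of combined probability $\tfrac{1}{d!}p(1-p)^{d-1}$, so the number $\mathbf{Z}$ of good $a_i$ has mean $(1\pm o(1))\tfrac{np(1-p)^{d-1}}{d!}=\Theta(n)$. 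Changing one indicator $\mathbf{1}[A'\in\mathbf{X}]$, or swapping two entries of one ordering $\mathbf{f}(A')$, affects $O(1)$ candidates, and each good $a_i$ is certified by its $d+1$ relevant indicators and its ordering of $A$, so Lemma~\ref{lemma:permutations} applies with $c=O(1)$ and $r=O(d)$; since $\expn{\mathbf{Z}}=\Theta(n)$ and $\eps\gg\sqrt{d/n}$, choosing $t=\tfrac{\eps}{2}\expn{\mathbf{Z}}$ gives failure probability $\exp(-\Omega(\log^4 n/d))$, which beats the union bound over the at most $dn^d$ choices of $(i,v,(a_1,\dots,\widehat{a_i},\dots,a_d))$. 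A final union bound over the four events \ref{Arightsize}--\ref{GetintoAi} completes the proof.
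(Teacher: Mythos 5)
Your overall strategy (random selection of $X$ and of the orderings, concentrate each of \ref{Arightsize}--\ref{GetintoAi}, then union bound) is exactly the paper's, and for \ref{Arightsize}, \ref{radiation}, and \ref{GetintoAi} your arguments agree with the paper's in all essentials: Chernoff for \ref{Arightsize} and \ref{radiation} (noting the mutual independence in \ref{radiation}), and Lemma~\ref{lemma:permutations} for \ref{GetintoAi} with the same Lipschitz constant $c=1$ and certificate size $r=d+1$.

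The one place you genuinely diverge is \ref{connection}: you use McDiarmid's inequality (Lemma~\ref{lemma:mcd}), whereas the paper invokes Theorem~\ref{theorem:lintal} (Linear Talagrand with $\Omega^*=\emptyset$). Both work. McDiarmid is conceptually simpler here since there is no need to construct certificates, at the cost of having to bound $\sum_A c_A^2$ globally rather than per-outcome; Talagrand would win if $\expn{\mathbf{Z}}$ were much smaller than the crude maximum, but since $\expn{\mathbf{Z}}=\Theta_d(n^2)$ that advantage is moot, and your $\exp(-\Omega_d(\log^4 n))$ actually slightly beats the paper's $\exp(-\Omega_d(\log^2 n))$ due to a more aggressive choice of~$t$. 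However, your \emph{characterization} of the heavy $d$-sets is not quite right: you claim the only $A$ with Lipschitz constant $\Theta(n)$ are the $d$-subsets of the first or last window, i.e.\ $A=U-u_i+z$ or $A=U'-u'_i+z$ for a free vertex $z$, but in fact every $A$ with $|A\setminus(U\cup U')|=1$ which fits inside some window is heavy. For instance when $d\geq 3$ the set $A=\{u_3,\dots,u_d,z,u'_1\}$ sits inside the third window for either assignment of $z$ to $v$ or $w$ and therefore has Lipschitz constant $\sim 2n$, yet it is not of the form $U-u_i+z$ or $U'-u'_i+z$. This does not break your argument --- the correct count of heavy sets is still $O(\binom{2d}{d-1}n)=O_d(n)$ since for each such $A$ the single free vertex is forced and the rest is a $(d-1)$-subset of $U\cup U'$, so $\sum_A c_A^2 = O_d(n^3)$ still holds and your bound goes through --- but the statement as written is wrong and would need the condition ``$|A\setminus(U\cup U')|=1$'' in place of the window-membership condition.
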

\begin{proof}
Construct a random set \(\mathbf{X}\subseteq\binom{[n]}{d}\) by selecting each \(A\in\binom{[n]}{d}\) independently for~\(\mathbf{X}\) with probability~\(p\).
Independently to this, for each \(A\in\binom{[n]}{d}\) independently, let~\(\mathbf{f}(A)\) be a uniformly random permutation of~\(A\).
We prove each of \ref{Arightsize}--\ref{GetintoAi} holds with high probability individually, from which the result will follow.
\newline\noindent\ref{Arightsize}: This follows from a simple application of Lemma~\ref{chernoff}\ref{lemma:chernoffbigexp}.
We omit the details.\COMMENT{\(|\mathbf{X}|\sim\text{Bin}(\binom{n}{d},p)\).
Put \(\eps\coloneqq\log n/\sqrt{\expn{|\mathbf{X}|}}\,\,(=\Theta(\log n \cdot n^{-d/2}))\).
Then by Lemma~\ref{chernoff}~\ref{lemma:chernoffbigexp} we have
\[
\prob{||\mathbf{X}|-\expn{|\mathbf{X}|}|\geq\sqrt{\expn{|\mathbf{X}|}}\log n}\leq2\exp(-\log^2 n/3).
\]
Then \(\binom{n}{d}p\pm\sqrt{\expn{|\mathbf{X}|}}\log n = \binom{n}{d}p(1\pm \log n (\expn{|\mathbf{X}|})^{-1/2})\). We have \((\expn{|\mathbf{X}|})^{-1/2}=\Theta(n^{-d/2})\leq \log n \cdot n^{-d/2}\leq \log n\cdot n^{-1/2}\) since \(d\geq 2\).}
\newline\noindent\ref{radiation}: Fix \(U\in\binom{[n]}{d}\).
Notice that the sought number of vertices \(v\in[n]\setminus U\) is the random variable \(\mathbf{Z}=\sum_{v\in[n]\setminus U}\mathbf{I}_v\), where~\(\mathbf{I}_v\) indicates that none of the~\(d\) different \(d\)-sets \((U\cup\{v\})\setminus\{u\}\) (ranging over \(u\in U\)) are in~\(\mathbf{X}\), so has \(\prob{\mathbf{I}_v=1}=(1-p)^{d}\).
Moreover, only those \(d\)-sets~\(Y\) intersecting~\(U\) on~\(d-1\) vertices affect any~\(\mathbf{I}_v\), and they only affect the~\(\mathbf{I}_v\) where~\(v\) is the unique vertex in~\(Y\setminus U\).
It follows that \(\mathbf{Z}\sim\text{Bin}(n-d,(1-p)^d)\), so the claim follows from Lemma~\ref{chernoff}\ref{lemma:chernoffbigexp}.
We omit the details.\COMMENT{Put \(\eps\coloneqq \log n/\sqrt{\expn{\mathbf{Z}}}\,\,(=\Theta(n^{-1/2}\log n))\). We have
\[
\prob{|\mathbf{Z}-\expn{\mathbf{Z}}|\geq \sqrt{\expn{\mathbf{Z}}}\log n}\leq2\exp(-\log^2 n/3).
\]
And \((n-d)(1-p)^d \pm\sqrt{\expn{\mathbf{Z}}}\log n=n(1-p)^d\pm d(1-p)^d\pm\sqrt{\expn{\mathbf{Z}}}\log n =n(1-p)^d\pm2\sqrt{\expn{\mathbf{Z}}}\log n=n(1-p)^d(1\pm2\sqrt{\expn{\mathbf{Z}}}\log n/(n(1-p)^d))\).
And \(2\sqrt{\expn{\mathbf{Z}}}\log n/(n(1-p)^d)\leq 2\log n/(\sqrt{n(1-p)^d})\leq n^{-1/2}\log^2 n\).}
\newline\noindent\ref{connection}: Fix such \(\overrightarrow{U},\overrightarrow{U'}\).
We use Theorem~\ref{theorem:lintal} (with \(\Omega^*=\emptyset\)).\COMMENT{Clearly we may identify the choice of~\(\mathbf{X}\) with the product space of all of the choices to put each \(d\)-set in~\(\mathbf{X}\). The permutations aren't relevant for this property.}
Let~\(\mathbf{Z}\) denote the number of acceptable choices of~\((v,w)\) and suppose that \(\mathbf{Z}(\omega)=s\) for some outcome~\(\omega\) of~\(\mathbf{X}\).
Then there are~\(s\) pairs~\(\{(v_i,w_i)\colon i\in[s]\}\) such that each of the \(d\)-sets (other than \(U,U'\)) obtained as a subset of any~\(d+1\) consecutive vertices from \(u_1,\dots,u_d,v_i,w_i,u'_1,\dots,u'_d\) are not in~\(\mathbf{X}\).
Proceeding to the right, first with the addition of~\(v_i\), notice that each added vertex introduces~\(d\) such new \(d\)-sets, except for~\(u'_d\) which adds only \(d-1\), since we exclude \(U'\).
Thus, each of these \(d(d+1)+d-1=d^2+2d-1\) sets was not chosen for~\(\mathbf{X}\) in~\(\omega\).
Let~\(I_{(v_i,w_i)}\) be the set of these~\(d^2+2d-1\) \(d\)-sets, put \(I\coloneqq\bigcup_{i\in[s]}I_{(v_i,w_i)}\), and for each \(d\)-set \(Y\in I\), put \(c_Y\coloneqq|\{i\in[s]\colon Y\in I_{(v_i,w_i)}\}|\).
One checks easily that \(\sum_{Y\in I}c_Y=(d^2+2d-1)s\).
Notice that each \(d\)-set \(Y\in I\) has at least one and at most two vertices not among~\(U\cup U'\).
If \(|Y\setminus(U \cup U')|=1\) then there are at most~\(2n\) of the~\(i\in[s]\) such that \(Y\in I_{(v_i,w_i)}\) (pick the remaining non-\((U,U')\) vertex then order the pair of such vertices), and if \(|Y\setminus(U\cup U')|=2\) then there are at most~\(2\) such~\(i\).
It is now easy to check\COMMENT{If you pick some \(I'\subseteq I\) and now all we know is that each \(d\)-set among \(I\setminus I'\) still is not put in~\(\mathbf{X}\), then the lost valid pairs \((v_i,w_i)\) clearly can be at most the sum over all \(Y\in I'\) of the involvement of that~\(Y\), as required. \(Y\) does not have any effect beyond this.} that~\(Z\) is \((2d^2,2n)\)-observable (say)\COMMENT{\(d^2+2d-1\leq d^2+2d\leq d^2 + d^2\) since \(2\leq d\)} with respect to \(\Omega^*=\emptyset\).

Observe that \(\expn{\mathbf{Z}}=(n-2d)(n-2d-1)(1-p)^{d^2+2d-1}\).
We seek to apply Theorem~\ref{theorem:lintal} with \(t\coloneqq (1/2)n^{3/2}\log^{3/2} n\),~\(r\coloneqq2d^2\), and~\(2n\) playing the role of ``\(d\)''.
To that end, notice that \(128\cdot2d^2\cdot2n<t/2\) and \(96\sqrt{2d^2 \cdot 2n \expn{\mathbf{Z}}}<t/2\).\COMMENT{Clearly \(\expn{\mathbf{Z}}\leq n^2\)}
We thus obtain that
\[
\prob{|\mathbf{Z}-\expn{\mathbf{Z}}|>t}\leq 4\exp\left(-\frac{n^3\log^3 n}{128d^2n(5n^2)}\right)\leq\exp\left(-\frac{\log^2 n}{2}\right),
\]
so the claim follows from the union bound.\COMMENT{UB over at most \(n^{2d}\) events, because you can pick each vertex of \(U,U'\) in turn to determine the orderings. Clearly \(\expn{\mathbf{Z}}=(1\pm 3d/n)^2n^2(1-p)^{d^2+2d-1}=(1\pm 9d/n)...=(1\pm (1/2)n^{-1/2}\log^2 n)...\). And then \(\pm (1/2)n^{3/2}\log^{3/2} n = n^2(1-p)^{d^2+2d-1} \cdot \pm(1/2)n^{-1/2}\log^{3/2} n/(1-p)^{d^2+2d-1}=n^2 (1-p)^{d^2+2d-1}\cdot\pm(1/2)n^{-1/2}\log^{2}n\)}
\newline\noindent\ref{GetintoAi}: Fix \(v\in[n]\) and \(i\in[d]\).
Without loss of generality, say \(i=1\).
Fix distinct \(a_2,a_3,\dots,a_d\in[n]\setminus\{v\}\).
Let~\(\mathbf{Z}\) denote the number of acceptable choices for~\(a_1\).
We apply Lemma~\ref{lemma:permutations}.
To that end, observe that~\(\mathbf{Z}\) is determined by the outcomes \(A\in\mathbf{X}\) for those \(d\)-sets~\(A\) having exactly~\(d-1\) vertices in common with \(\{a_2,a_3,\dots,a_d,v\}\), and the permutations~\(\mathbf{f}(A')\) of those~\(A'\) among these~\(A\) having \(v\notin A'\).
Fix some outcome~\(\omega\) of all of these trials and permutations, and let \(Z=\mathbf{Z}(\omega)\).
Changing the outcome (of putting \(A\in\mathbf{X}\)) for any of the above~\(A\) changes~\(Z\) by at most~\(1\), as the corresponding vertex~\(a_1\) is uniquely determined as the only vertex in \(A\setminus\{a_2,a_3,\dots,a_d,v\}\), and~\(a_1\) either counts (one) towards~\(\mathbf{Z}\) or it does not.
Swapping two co-ordinates of any one of the above permutations also changes~\(Z\) by at most~\(1\), as the corresponding~\(a_1\) is again uniquely identified as the only non-\(\{a_2,\dots,a_d,v\}\) vertex in the domain of~\(\mathbf{f}\).
If \(Z=s\), then there are~\(s\) vertices \(a_1^{(1)},a_1^{(2)},\dots,a_1^{(s)}\notin\{a_2,a_3,\dots,a_d,v\}\) such that, for each \(i\in[s]\), \(A^{(i)}\coloneqq\{a_1^{(1)},a_2,\dots,a_d\}\in\mathbf{X}\), \(\mathbf{f}(A^{(i)})=(a^{(i)}_1,a_2,\dots,a_d)\), and each \(d\)-set \(B^{(i)}_1,B^{(i)}_2,\dots,B^{(i)}_{d-1}\) of \(\{a^{(i)}_1,a_2,\dots,a_d,v\}\), excluding~\(\{a_2,a_3,\dots,a_d,v\}\) and~\(A^{(i)}\), is not in~\(\mathbf{X}\).
For each \(i\in[s]\), this is \(1+1+(d-1)=d+1\) ``witnesses'' for the fact that~\(a^{(i)}_1\) counts towards~\(Z=\mathbf{Z}(\omega)\), and in any outcome in which each \(A^{(i)}\in\mathbf{X}\), each~\(\mathbf{f}(A^{(i)})\) is ordered as above, and each~\(B^{(i)}_j\notin\mathbf{X}\), we have \(\mathbf{Z}\geq s\), as required.

We seek to apply Lemma~\ref{lemma:permutations} with \(c=1\), \(r=d+1\), and \(t=(1/2)n^{1/2}\log^{3/2}n\).
To that end, notice that \(\expn{\mathbf{Z}}=(n-d)p(1-p)^{d-1}/d!\leq n\), so \(128\cdot 1\sqrt{(d+1)\expn{\mathbf{Z}}}<t/2\), and clearly \(512(d+1)\cdot1^2<t/2\).
Thus,
\[
\prob{|\mathbf{Z}-\expn{\mathbf{Z}}|>\frac{n^{1/2}\log^{3/2}n}{2}}\leq4\exp\left(-\frac{n\log^3 n}{128(d+1)\cdot5n}\right)\leq4\exp\left(-\frac{\log^2 n}{2}\right),
\]
so the claim follows from the union bound.\COMMENT{Union bound: at most~\(dn^d\) events. Then we have \(\mathbf{Z}=\frac{(n-d)p(1-p)^{d-1}}{d!}\pm\frac{n^{1/2}\log^{3/2}n}{2} = \frac{np(1-p)^{d-1}}{d!}\left(1\pm\frac{d}{n}\pm\frac{n^{-1/2}\log^{3/2}n d!}{2p(1-p)^{d-1}}\right)=\frac{np(1-p)^{d-1}}{d!}\left(1\pm\frac{1}{2}n^{-1/2}\log^{2}n\pm\frac{1}{2}n^{-1/2}\log^2 n\right)\)}
\end{proof}
We now have all the tools we need to prove Theorem~\ref{theorem:SimplicialComplex}.
\lateproof{Theorem~\ref{theorem:SimplicialComplex}}
Insert a new integer~\(\ell\) such that \(1/n\ll1/\ell\ll\delta\ll1/d\leq1/2\).
Put \(p\coloneqq(1-n^{-1/d + \delta/2})/(d(\ell+d))\) and \(\eps\coloneqq n^{-1/2}\log^2 n\).
By Lemma~\ref{BNslicing} (with~\(\delta/2\) playing the role of~\(\delta\)), there exists a set \(X=\{A_1,A_2,\dots,A_m\}\subseteq\binom{[n]}{d}\) (with this ordering of~\(X\) being arbitrary) and a set of permutations \(\{f(A_i)\colon i\in[m]\}\) satisfying~\ref{Arightsize}--\ref{GetintoAi}.
For each \(i\in[m]\), denote \(\overrightarrow{A}_i=(a_i^{(1)},a_i^{(2)},\dots,a_i^{(d)})\coloneqq f(A_i)\).
Define \(\cX\coloneqq\{(\overrightarrow{A}_i,\overrightarrow{A}_{i+1})\colon i\in[m-1]\}\cup\{(\overrightarrow{A}_m,\overrightarrow{A}_{1})\}\).
For each \(\nu=(\overrightarrow{A}_i,\overrightarrow{A}_{i+1})\in\cX\) (indices modulo~\(m\), here and throughout) and each sequence~\(\overrightarrow{S}\) of~\(\ell\) distinct vertices \((v_1,v_2,\dots,v_{\ell})\in[n]\setminus(A_i\cup A_{i+1})\), write \(a_i^{(j)}=v_{-d+j}\) and \(a_{i+1}^{(j)}=v_{\ell+j}\) for each \(j\in[d]\).
We refer to $v_1, \dots, v_\ell$ as the ``internal vertices'' and $v_{-d+1}, \dots, v_0, v_{\ell + 1}, \dots, v_{\ell + d}$ as the ``external vertices''.
We often drop the arrow from~\(\overrightarrow{S}\) when clear from context, still meaning an ordered sequence.\COMMENT{It adds vertical space to the line every time I write it. Also we only ever use \(S\) in the context of an ordered sequence. We use \(A_i\) and \(\overrightarrow{A}_{i}\) somewhat frequently and need to be accurate about both contexts so we don't do any dropping arrows there.}
For \(2\leq j\leq\ell+d-1\), we define~\(\cU_j(\nu,S)\) to be the collection of \(d\)-sets of (not necessarily consecutive) vertices among the~\(d+1\) vertices \(v_{j-d},\dots,v_d\), so \(|\cU_j(\nu,S)|=d+1\) for these~\(j\).
We define~\(\cU_1(\nu,S)\) and~\(\cU_{\ell+d}(\nu,S)\) analogously, though we exclude the sets~\(A_i\) and~\(A_{i+1}\) respectively.
For \(j\in[\ell+d]\) we say that a \(d\)-set~\(U\) is \textit{generated in position}~\(j\) by~\((\nu,S)\) if~\(U\in\cU_j(\nu,S)\setminus(\bigcup_{t\in[j-1]}\cU_t(\nu,S))\), and denote the set of such~\(U\) by~\(\cU^{*}_j(\nu,S)\), so that \(\cU^*(\nu,S)\coloneqq\bigcup_{j\in[\ell+d]}\cU^{*}_j(\nu,S)=\cU(\nu,S)\).
We say that~\(S\) is \(\nu\)-\textit{good} if \(\cU^*(\nu,S)\cap X=\emptyset\).
We sometimes drop the~\((\nu,S)\)-notation if~\(\nu,S\) are fixed and clear from context.

Construct an auxiliary bipartite hypergraph~\(\cH\) with parts~\(\cX\) and \(Y\coloneqq\binom{[n]}{d}\setminus X\), and for each \(\nu=(\overrightarrow{A}_i,\overrightarrow{A}_{i+1})\in\cX\) and each \(\nu\)-good sequence \(S=(v_1,\dots,v_{\ell})\) of vertices from~\([n]\setminus(A_i\cup A_{i+1})\), put the edge \(\{\nu\}\cup\cU^{*}(\nu,S)\) in~\(\cH\).\COMMENT{It turns out this is not a multihypergraph. From a given edge you can identify \(v_1\) as the only vertex appearing with all \((d-1)\)-sets of~\(A_i\) (since~\(v_2\) fails to appear with \(\{a_i^{(1)},\dots,a_i^{(d-1)}\}\)), then~\(v_2\) as the only vertex appearing with all \((d-1)\)-sets of~\(\{a_i^{(2)},\dots,a_i^{(d)},v_1\}\), and so on. It doesn't really matter since the result still applies for multis and we are already able to uniquely equate each edge (or edge copy if you're imagining it as a multi) with the sequence \(S\).}
Note that each edge of~$\cH$ is determined by a sequence $(v_{-d+1}, \dots, v_0, v_1, \dots, v_\ell, v_{\ell+1}, \dots, v_{\ell + d})$.
Observe that~\(\cH\) is \(d(\ell+d)\)-uniform.\COMMENT{I suppose we can say in a slightly shorter way that it is \(d(\ell+d)\)-bounded, which suffices for applying our bipartite theorem. You'd still need to justify that though, and I think the following uniform argument helps to increase comfort with this construction. Let me know what you think}
Indeed, for any~\((\nu,S)\), there are~\(d\) sets in~\(\cU^*_1\) since we exclude~\(A_i\).
Then for each \(2\leq j\leq \ell\), there are~\(d\) sets in~\(\cU^*_j\), since we exclude~\(\{v_{j-d},\dots,v_{j-1}\}\) from~\(\cU^*_j\) because it is generated in position~\(j-1\), and each set including~\(v_j\) is new, as \(v_j\notin A_i\cup\{v_1,\dots,v_{j-1}\}\).
The sets~\(\cU^*_j\) for \(\ell+1\leq j\leq\ell+d\) require some extra care, because we may have \(A_i\cap A_{i+1}\neq\emptyset\).
However, we do still have \(|\cU^*_j|=d\) for \(\ell+1\leq j<\ell+d\) since each~\(j\) in turn introduces a new vertex that the vertices \(\{v_{\ell-d+1},\dots,v_{\ell}\}\) have not ``seen'' before in a \(d\)-set\COMMENT{Because, in particular, they are far enough away from~\(A_i\) that they never saw the \(A_i\)-vertices. All earlier spawned sets containing any \(A_i\) vertices do not contain any \(Z\) vertices, as these appear much later in the path.}, and \(|\cU^*_{\ell+d}|=d-1\), since we exclude both \(\{v_{\ell},\dots,v_{\ell+d-1}\}\) and~\(A_{i+1}\).

\begin{claim}\label{claimGettheSC}
If~\(\cH\) has an \(\cX\)-perfect matching, then there is a strongly connected simplicial complex~\(\cK\) on~\([n]\) whose dual graph~\(G(\cK)\) is a cycle of length at least \((1-n^{-1/d+\delta})n^d/(dd!)\).
\end{claim}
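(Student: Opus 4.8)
The plan is to unpack the $\cX$-perfect matching into a single cyclic sequence of vertices, read $\cK$ off from its consecutive $(d+1)$-windows, and then check that the dual graph is exactly an $N$-cycle of the claimed length. First I would let $M$ be an $\cX$-perfect matching of $\cH$; its edge covering $(\overrightarrow{A}_i,\overrightarrow{A}_{i+1})$ is $\{(\overrightarrow{A}_i,\overrightarrow{A}_{i+1})\}\cup\cU^{*}(\nu_i,S_i)$ for some $\nu_i$-good sequence $S_i=(v_1^{(i)},\dots,v_\ell^{(i)})$ of internal vertices, each avoiding $A_i\cup A_{i+1}$. Concatenating $\overrightarrow{A}_1,S_1,\overrightarrow{A}_2,S_2,\dots,\overrightarrow{A}_m,S_m$ gives a cyclic sequence of length $N\coloneqq m(\ell+d)$; write $v_0,\dots,v_{N-1}$ for its entries (indices mod $N$), so corridor $i$ occupies a block of $\ell+d$ consecutive positions whose first $d$ carry $\overrightarrow{A}_i$. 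Set $F_j\coloneqq\{v_j,\dots,v_{j+d}\}$ and let $\cK$ be the downward closure of $\{F_j:j\}$ — these $F_j$ are the sets ``appearing consecutively in the ordering''. Since $1/\ell\ll1/d$ we have $\ell\ge d$, so no window $F_j$ contains positions from two different $A$-blocks or internal vertices of two different corridors; with each $\overrightarrow{A}_i$ listing $d$ distinct vertices and each $S_i$ distinct and disjoint from $A_i\cup A_{i+1}$, it follows that any $d+1$ consecutive entries are distinct. Hence each $F_j$ is a genuine $(d+1)$-set, the facets of $\cK$ are exactly the distinct sets among $F_0,\dots,F_{N-1}$, and $\cK$ is a pure $d$-dimensional complex; moreover the windows inside corridor $i$ form one length-$(\ell+d)$ block of indices, these blocks partition $\{0,\dots,N-1\}$, so each $F_j$ is a window of a unique corridor, and consecutive windows $F_j,F_{j+1}$ share the $d$-set $\{v_{j+1},\dots,v_{j+d}\}$, giving the closed walk $F_0\sim F_1\sim\cdots\sim F_{N-1}\sim F_0$ inside $G(\cK)$.

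The main step is to show $F_0,\dots,F_{N-1}$ are pairwise distinct and $F_j\sim F_{j'}$ in $G(\cK)$ only for $j'\equiv j\pm1$; granting this, $G(\cK)$ is exactly the $N$-cycle, so $\cK$ is strongly connected with $G(\cK)$ a cycle of length $N$. I would suppose $F_j$ and $F_{j'}$ share a common $d$-subset $U$ with $j\ne j'$, $j'\not\equiv j\pm1$ (this simultaneously handles $F_j=F_{j'}$ and a spurious edge). If $U\in X$, say $U=A_i$, then $\nu$-goodness says every window of every corridor meets $X$ only in that corridor's two designated sets; since the $A_t$ are distinct, tracking which windows can contain $A_i$ shows $A_i$ is a $d$-subset of $F_k$ only for the two consecutive indices $k$ bordering the $\overrightarrow{A}_i$-block — contradiction. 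If $U\in Y$, then $U\in\cU^{*}(\nu_{i_1},S_{i_1})$ where $F_j$ is a window of corridor $i_1$, and $U\in\cU^{*}(\nu_{i_2},S_{i_2})$ for the corridor $i_2$ of $F_{j'}$; as the edges of $M$ are disjoint, $i_1=i_2$. So $F_j,F_{j'}$ are two windows of one corridor $i$, and a short position count — using $\ell\gg d$ and that within corridor $i$'s window span the only vertices that can repeat are those of $A_i\cap A_{i+1}$, a proper subset of each since $A_i\ne A_{i+1}$ — shows two such windows cannot share $d$ vertices unless consecutive, again a contradiction. (The same edge-disjointness of $M$ also forces $v_\ell^{(i)}\ne v_1^{(i+1)}$: otherwise corridor $i$'s window $\cU_{\ell+d}$ and corridor $i+1$'s window $\cU_1$ would be the identical collection of $d$-sets, so the two edges would intersect; thus the boundary windows across a corridor break are genuinely distinct.)

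Finally I would do the length count: $G(\cK)$ has length $N=m(\ell+d)$, and by \ref{Arightsize} (applied with $\delta/2$ in the role of $\delta$, as in the proof of Theorem~\ref{theorem:SimplicialComplex}), $m=(1\pm\eps)p\binom nd$ with $p=(1-n^{-1/d+\delta/2})/(d(\ell+d))$, so $N=(1\pm\eps)(1-n^{-1/d+\delta/2})\binom nd/d$. Using $\binom nd\ge(1-d^2/n)n^d/d!$ and that $\eps=n^{-1/2}\log^2 n$, $n^{-1/d+\delta/2}$, and $d^2/n$ are each $\ll n^{-1/d+\delta}$ (here $1/d\le1/2$ and $\delta\ll1/d$ are exactly what is needed), one gets $N\ge(1-n^{-1/d+\delta})n^d/(d\cdot d!)$, which completes the claim.

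I expect the main obstacle to be the case analysis of the middle paragraph — showing that no $X$-set and no $Y$-set can be a common $d$-face of two non-consecutive windows. The genuinely delicate part is the behaviour across a corridor boundary, where two adjacent $A$-tuples, or the internal vertices of two adjacent corridors, could a priori coincide; all of this is controlled precisely by the two structural features of a matching edge of $\cH$, namely that distinct corridors have disjoint $\cU^{*}$-collections and that each corridor's $S_i$ is $\nu_i$-good, so the write-up mostly amounts to turning these two facts into the position-counting contradictions above.
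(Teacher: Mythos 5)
Your proof is correct and follows essentially the same approach as the paper's: form the cyclic vertex sequence from the matching, read facets off as consecutive $(d+1)$-windows, and rule out spurious dual-graph adjacencies by splitting a shared $d$-face into the $X$-case (handled by $\nu$-goodness and distinctness of the $A_t$) and the $Y$-case (handled by matching-disjointness forcing the two windows into one corridor, then a position count using $\ell\geq d$ and $A_i\neq A_{i+1}$). The paper phrases the within-corridor contradiction slightly more abstractly via the fact that each $d$-set lies in $\cU_s$ for at most two consecutive $s$, but the substance is the same, and your parenthetical observation about $v_\ell^{(i)}\neq v_1^{(i+1)}$ is already subsumed into the general ``no facet arises from two distinct matching edges'' argument.
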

\claimproof
Let~\(\cM\) be such a matching, let~\(e_i\) be the \(\cM\)-edge containing~\(\nu_i=(\overrightarrow{A}_i,\overrightarrow{A}_{i+1})\), and let~\(\overrightarrow{S}_i\) be the \(\nu_i\)-good sequence of~\(\ell\) vertices  corresponding to~\(e_i\).
Then we have that \(\overrightarrow{A}_1\overrightarrow{S}_1\overrightarrow{A}_2\overrightarrow{S}_2\dots\overrightarrow{A}_m\overrightarrow{S}_m\overrightarrow{A}_1\) is a walk~\(W\coloneqq v_1v_2\dots v_{m(\ell+d)}v_1\dots v_d\) on~\([n]\).\COMMENT{(not necessarily visiting every vertex). Also for each \(i\in[m]\) we visit the \(d\) vertices of~\(A_i\) then the \(\ell\) external vertices from~\(S_i\), so that's \(m(\ell+d)\) vertices before we return to the first vertex of~\(A_1\)}
Construct a \(d\)-dimensional simplicial complex~\(\cK\) by putting \(F(\cK)\coloneqq\{F_i\colon i\in[m(\ell+d)]\}\), where \(F_i\coloneqq\{v_{i},v_{i+1},\dots,v_{i+d}\}\), indices modulo~\(m(\ell+d)\) (and including all non-empty subsets of these sets in~\(\cK\)).
Clearly \(|F_i\cap F_{i+1}|=d\) and \(|F_{m(\ell+d)}\cap F_1|=d\), so~\(G(\cK)\) contains a cyclical walk~\(W'\) on all of its facets, and thus~\(\cK\) is strongly connected.
Notice that no facet appears more than once on~\(W'\); indeed, if \(F=F_i\) for some~\(i\in[m(\ell+d)]\), then~\(F_i\) appears as~\(d+1\) consecutive vertices (in the obvious order) on the walk~\(\overrightarrow{A}_j\overrightarrow{S}_j\overrightarrow{A}_{j+1}\) where \(i\in\{(j-1)(\ell+d)+1,(j-1)(\ell+d)+2,\dots,j(\ell+d)\}\).
Thus, all non-\(X\) \(d\)-subsets of~\(F_i\) (of which there are at least \(d\geq 2\)) are elements of~\(Y\) covered by the \(\cM\)-edge~\(e_j\).
Since~\(\cM\) is a matching, it follows that no facet can arise from different \(e_j, e_{j'}\in\cM\).
Moreover, clearly no facet can arise twice from within the same~\(e_j\) (i.e.\ as~\(d+1\) consecutive vertices from~\(\overrightarrow{A}_{j}\overrightarrow{S}_j\overrightarrow{A}_{j+1}\)), since the~\(\ell\) vertices of~\(\overrightarrow{S}_j\) are distinct and all such facets contain at least one \(\overrightarrow{S}_j\)-vertex.
Therefore~\(W'\) is a Hamilton cycle in~\(G(\cK)\).

We also need that \(|i-j|\geq 2\) implies \(|F_i\cap F_j|\leq d-1\), as this will ensure that~\(G(\cK)=W'\).
Suppose, then, that \(|i-j|\geq 2\) and \(|F_i\cap F_j|=d\).
We must have \(F_i\cap F_j\notin X\), since by construction of~\(\cH\), each \(X\)-set only appears as a \(d\)-subset of precisely two, consecutive facets (since an edge for~\((\nu,S)\) is only in~\(\cH\) if~\(S\) is \(\nu\)-good).
But then~\(F_i\cap F_j\) is a \(Y\)-set covered by some unique edge~\(e_t\in M\), so \(F_i, F_j\) arise from the same~\(e_t\).\COMMENT{\(F_i\cap F_j\) is generated as a \(d\)-set of \(F_i\) in particular, so is covered.}
Recall that \(|\cU_s(\nu_t,S_t)|=d+1\) for all \(s\in[\ell+d]\) and \(|\cU^*_s(\nu_t,S_t)|=d\) for all \(s\in[\ell+d-1]\) and \(|\cU^*_{\ell+d}(\nu_t,S_t)|=d-1\) since~\(A_{t+1}\) is excluded from the latter set.
From the definitions of these sets, it is clear that any \(d\)-set (including~\(F_i\cap F_j\)) in~\(\cU(\nu_t,S_t)\) is in~\(\cU_s(\nu_t,S_t)\) for either one value~\(s\), or two, and in the latter case these \(s\)-values must be consecutive.
It now follows that \(|i-j|=1\), a contradiction.
Finally, notice that
\begin{eqnarray*}
m(\ell+d) & \geq & \frac{(1-\eps)(1-n^{-1/d+\delta/2})\binom{n}{d}(\ell+d)}{d(\ell+d)} \geq \frac{(1-2n^{-1/d+\delta/2})(1-d^2 n^{-1})n^d}{dd!}\\ & \geq & (1-n^{-1/d+\delta})\frac{n^d}{dd!},
\end{eqnarray*}
as required, where we used~\ref{Arightsize} and the fact \(d\geq 2\).\COMMENT{No idea why but it just straight up wouldn't let me use ``stackrel'' in the appropriate place. We have \(\eps=n^{-1/2}\log^2 n\leq n^{-1/d+\delta/2}\) and \(\binom{n}{d}\geq (n-d)^d/d! =n^d(1-d/n)^d/d!=n^d(1-d^2/(dn))^d/d!\geq n^d(1-d^2n^{-1})/d!\) using \((1+x/n)^n\geq 1+x\) for \(x\coloneqq -d^2/n\) and \(`n'\coloneqq d\)}
\endclaimproof

It remains to find an \(\cX\)-perfect matching of~\(\cH\), for which we seek to use Theorem~\ref{thm:mainpartitetheorem}.
To that end, set \(D\coloneqq(1+\eps)^{\ell-1}(d(\ell+d)-1)p(1-p)^{d(\ell+d)-2}n^{\ell}\).
\begin{claim}\label{claimSCY}
\(\text{deg}_{\cH}(U)\leq D\) for all \(U\in Y\).
\end{claim}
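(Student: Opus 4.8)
The plan is to fix $U\in Y$ and bound the number of pairs $(\nu,S)$, with $S$ a $\nu$-good sequence, such that $U$ lies in the edge $\{\nu\}\cup\cU^{*}(\nu,S)$. Write $\nu=(\overrightarrow{A}_i,\overrightarrow{A}_{i+1})$ and let $v_{-d+1},\dots,v_{\ell+d}$ be the associated sequence, so $v_{-d+1},\dots,v_0=f(A_i)$, the internal vertices are $v_1,\dots,v_\ell$, and $v_{\ell+1},\dots,v_{\ell+d}=f(A_{i+1})$. Since the $Y$-vertices of an edge of $\cH$ are exactly the $d$-subsets of the windows $\{v_{j-d},\dots,v_j\}$ (with $A_i,A_{i+1}$ removed), membership of $U$ forces the $d$ elements of $U$ to occupy $d$ of the $d+1$ coordinates of some window, and one checks easily that $U$ is then generated in a unique position $j\in[\ell+d]$ — namely the largest coordinate carrying an element of $U$, which must also be the coordinate of the newest vertex of that window. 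Hence $\text{deg}_{\cH}(U)=\sum_{j=1}^{\ell+d}G_j$, where $G_j$ counts the pairs with $U$ generated in position $j$; for each $j$ there are at most $d$ choices for the coordinate of window $j$ that $U$ omits, and at most $d!$ ways to place the elements of $U$ on the remaining $d$ coordinates.

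First I would treat the interior positions $d+1\le j\le\ell$, where all $d+1$ coordinates of window $j$ are internal. Here $A_i$ (hence $A_{i+1}$ and both external blocks) is unconstrained by $U$, contributing at most $m\le(1+\eps)p\binom{n}{d}\le(1+\eps)pn^{d}/d!$ choices by \ref{Arightsize}; the remaining $\ell-d$ internal coordinates not occupied by $U$ are then filled one vertex at a time, scanning left to right, where at each step \ref{radiation} applied to the $d$ immediately preceding vertices of the path bounds the number of admissible vertices (those keeping the newly generated $d$-sets out of $X$) by $(1+\eps)n(1-p)^{d}$. This yields, for each interior $j$,
\[
G_j\ \le\ d\cdot d!\cdot(1+\eps)pn^{d}/d!\cdot\bigl((1+\eps)n(1-p)^{d}\bigr)^{\ell-d}\ =\ d\,(1+\eps)^{\ell-d+1}p\,n^{\ell}(1-p)^{d(\ell-d)}.
\]

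Next come the $2d$ boundary positions ($j\le d$ or $j>\ell$), where window $j$ meets $f(A_i)$ or $f(A_{i+1})$. Now some $t\in\{1,\dots,d-1\}$ elements of $U$ are pinned to a prescribed run of coordinates of the ordering $f(A_i)$ (or $f(A_{i+1})$), so the admissible blocks are exactly the $X$-sets whose $f$-ordering realises that prescribed partial tuple; one bounds their number, and then fills the now $\ell-d+t$ free internal coordinates exactly as in the interior case. When the block has a single free coordinate this count is supplied directly by \ref{GetintoAi}; for the other boundary cases one must combine \ref{GetintoAi} with the path-building bound, or derive from \ref{Arightsize}--\ref{GetintoAi} a bound of the correct order, roughly $p\,n^{d-t}/d!$, for the number of $X$-sets extending a prescribed ordered $t$-tuple. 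Either way each boundary $G_j$ comes out of the same order $O\bigl(p\,n^{\ell}(1-p)^{d(\ell+d)}\bigr)$ as one interior term.

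Finally I would sum, rewrite $(1-p)^{d(\ell-d)}=(1-p)^{d(\ell+d)-2}(1-p)^{-(2d^{2}-2)}$, and use $p=(1-n^{-1/d+\delta/2})/(d(\ell+d))$, $\eps=n^{-1/2}\log^{2}n$, and the hierarchy $1/n\ll1/\ell\ll\delta\ll1/d$ (so $\ell\eps=o(1)$ while the $(1-p)$-correction factors are bounded constants) to check that
\[
\sum_{j}G_j\ \le\ (1+\eps)^{\ell-1}(d(\ell+d)-1)\,p\,(1-p)^{d(\ell+d)-2}n^{\ell}\ =\ D,
\]
the factor $(1+\eps)^{\ell-1}$ in $D$ absorbing the boundary contributions together with the slack lost in $\binom{n}{d}\le n^{d}/d!$ and in the $(1-p)$-rewriting. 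The main obstacle is precisely these boundary positions: \ref{GetintoAi} is stated only for a single free coordinate of the block, so for $2\le t\le d-2$ one needs a right-order estimate on the number of $X$-sets extending a prescribed ordered tuple — the trivial bound $|X|$ is far too weak, and a naive "codegree" bound $n^{d-t}$ is off by a factor $\approx1/p$ — and then fitting everything inside the $(1+\eps)^{\ell-1}$ slack requires keeping the interior and boundary estimates sharp; a secondary point to handle with the care already signalled in the construction of $\cH$ is the tail windows, where $A_i\cap A_{i+1}\ne\emptyset$ must be checked not to merge any of the $d$ newly generated $d$-sets, though this only lowers the count.
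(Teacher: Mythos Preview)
Your decomposition $\text{deg}_{\cH}(U)=\sum_{j}G_j$ by generation position matches the paper's. The gap is in the interior estimate: choosing $A_i$ in one shot via \ref{Arightsize} and then filling the remaining $\ell-d$ free internal vertices by \ref{radiation} gives only a factor $(1-p)^{d(\ell-d)}$, whereas the per-position target implicit in $D$ carries $(1-p)^{d(\ell+d)-2}$ --- indeed, $D$ is calibrated so that $\ell+d$ copies of the target $d(1+\eps)^{\ell-1}pn^\ell(1-p)^{d(\ell+d)-2}$ (with $d-1$ at $j=\ell+d$) sum to \emph{exactly} $D$, with no room to spare. Your interior $G_j$ therefore overshoots this target by a factor $(1-p)^{-(2d^2-2)}\approx 1+(2d^2-2)p\approx 1+2d/\ell$; summed over the $\ell-d$ interior positions this already accounts for about $d\ell+d^2-2$ of the $d\ell+d^2-1$ available target-units, leaving essentially one unit for \emph{all} $2d$ boundary positions combined. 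Since each boundary $G_j$ is genuinely of order $d$ units, the total exceeds $D$. The slack you invoke from $(1+\eps)^{\ell-1}$ versus $(1+\eps)^{\ell-d+1}$ and from $\binom{n}{d}\le n^d/d!$ is of relative size $O(\eps)$ and $O(1/n)$ respectively --- orders of magnitude too small to close a constant-order gap.

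The idea you are missing, which also dissolves your boundary obstacle, is not to select $A_i$ at once but to build the entire path from the window outward one vertex at a time, \emph{including the external coordinates of $\overrightarrow{A}_i$}. From the window walk left via \ref{radiation} through $v_{j-d-1},v_{j-d-2},\dots,v_0,\dots,v_{-d+2}$ (each step contributing $(1+\eps)n(1-p)^d$; \ref{radiation} is indifferent to whether a vertex is internal or external), and invoke \ref{GetintoAi} only for the final coordinate $v_{-d+1}$: this supplies the factor $(1+\eps)np(1-p)^{d-1}/d!$ and lands $(v_{-d+1},\dots,v_0)$ in $\{f(A):A\in X\}$, fixing $\overrightarrow{A}_i$ and hence $\overrightarrow{A}_{i+1}$. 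Then walk right via \ref{radiation} to $v_{\ell-2}$ and finish with \ref{connection} for $(v_{\ell-1},v_\ell)$, whose factor $(1-p)^{d^2+2d-1}$ accounts for all $d$-sets generated near $\overrightarrow{A}_{i+1}$. The $(1-p)$-exponents now total exactly $d(\ell+d)-2$, and one obtains $G_j\le(1+\eps)^{\ell-1}dpn^\ell(1-p)^{d(\ell+d)-2}$ \emph{uniformly for every} $2\le j\le\ell+d-1$; only $j=1$ and $j=\ell+d$ are special, handled by a direct use of \ref{GetintoAi}. Since \ref{GetintoAi} is always applied with exactly one free block-coordinate, no auxiliary estimate on $X$-sets extending a prescribed partial ordered tuple is ever needed.
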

\claimproof
Fix \(U\in Y\).
Note that each edge copy of~\(\cH\) is uniquely associated with some~\((\nu,S)\) (in fact, \(C_{d(\ell+d)}=1\), but this does not affect the argument and we leave this to the reader).
We have \(\text{deg}_{\cH}(U)=\sum_{j\in[\ell+d]}\text{deg}^{(j)}_{\cH}(U)\), where \(\text{deg}^{(j)}_{\cH}(U)\) denotes the number of~\((\nu,S)\) (for \(\nu\in\cX\) and a \(\nu\)-good~\(S\)) for which~\(U\) is generated in position~\(j\).

If~\(U\) is generated in position~\(1\), then \(v_1\in U\), and exactly one vertex among \(v_{-d+1},\dots,v_0\) is not in~\(U\).
There are~\(d!\) permutations~\(\overrightarrow{U}\) of~\(U\) and~\(d\) choices for which ``role''~\(v_j\) among \(v_{-d+1},\dots,v_0\) will be a non-\(U\) vertex.
By~\ref{GetintoAi}, there are at most~\((1+\eps)np(1-p)^{d-1}/d!\) choices for the vertex~\(v_j\) such that \((v_{-d+1},\dots,v_0)\) (obeying the obvious ordering from~\(\overrightarrow{U}\), skipping~\(v_j\) and ignoring~\(v_1\)) is some~\(\overrightarrow{A}_i\) (which also fixes~\(\overrightarrow{A}_{i+1}\)), and all \(d\)-sets of \(\{v_{-d+1},\dots,v_1\}\) other than~\(A_i\) and~\(U\) (which is already known to not be in~\(X\) since \(U\in Y\)) are not in~\(X\).
Now, by~\ref{radiation}, there are at most~\((1+\eps)n(1-p)^d\) choices for the vertex~\(v_2\) such that there are no \(X\)-sets containing~\(v_2\) among \(\{v_{-d+2},\dots,v_1,v_2\}\); that is~\(v_2\) ``generates'' no~\(X\)-sets in position~\(2\).\COMMENT{No need to take away say \(2d+\ell\) vertices here to ensure \(v_2\) isn't some previously seen vertex, as we're after an upper bound here}
Applying~\ref{radiation} repeatedly in this way, we find there are then at most~\(((1+\eps)n(1-p)^d)^{\ell-4}\) appropriate choices for \(v_3,v_4,\dots,v_{\ell-2}\).
Then we apply~\ref{connection} with~\((v_{\ell-d-1},\dots,v_{\ell-2})\) and~\(\overrightarrow{A}_{i+1}\) playing the roles of~\(\overrightarrow{U},\overrightarrow{U'}\) respectively, to deduce there are at most \((1+\eps)n^2(1-p)^{d^2+2d-1}\) choices for the pair~\((v_{\ell-1},v_{\ell})\) such that none of the generated \(d\)-sets are in~\(X\) (excluding~\(A_{i+1}\)).
Altogether, \(\text{deg}_{\cH}^{(1)}(U)\leq(1+\eps)^{\ell-1}dpn^{\ell}(1-p)^{d(\ell+d)-2}\).
One computes similarly that \(\text{deg}_{\cH}^{(\ell+d)}(U)\leq(1+\eps)^{\ell-1}(d-1)pn^{\ell}(1-p)^{d(\ell+d)-2}\) (there are~\(d-1\) rather than~\(d\) choices for which vertex among \(v_{\ell},v_{\ell+1},\dots,v_{\ell+d}\) does not appear in~\(U\), since we exclude~\(v_{\ell}\) to ensure \(U\neq A_{i+1}\), and exclude~\(v_{\ell+d}\) as this would mean~\(U\) would be generated in position~\(\ell+d-1\); \ref{radiation}--\ref{GetintoAi} can be used in either direction).\COMMENT{Finally, for \(\text{deg}_{\cH}^{(\ell+d)}(U)\), there are~\(d!\) permutations~\(\overrightarrow{U}\) of~\(U\), but only~\(d-1\) choices for the role among \((v_{\ell},\dots,v_{\ell+d})\) of a non-\(U\) vertex; the unfixed vertex cannot be~\(v_{\ell}\), as this would lead to \(\overrightarrow{U}=\overrightarrow{A}_{i+1}\), and cannot be~\(v_{\ell+d}\), as then~\(U\) would have been generated in position~\(\ell+d-1\).
Using~\ref{GetintoAi} to find this unfixed vertex, which fixes \(\overrightarrow{A}_{i+1}, \overrightarrow{A}_i\), then applying~\ref{radiation}~\(\ell-3\) times successively, moving ``backwards'', and finally~\ref{GetintoAi} to complete the ``path'', we have \(\text{deg}_{\cH}^{(\ell+d)}(U)\leq(1+\eps)^{\ell-1}(d-1)pn^{\ell}(1-p)^{d(\ell+d)-2}\).}

We now handle~\(\text{deg}_{\cH}^{(j)}(U)\) for \(2\leq j\leq \ell+d-1\).
Suppose without loss of generality that \(2\leq j\leq (\ell+d)/2\).
By~\ref{radiation}, there are at most \((1+\eps)n(1-p)^d\) choices for a vertex \(v\notin U\) such that \(U\cup\{v\}\) contains no \(X\)-sets, then \((d+1)!-d!=dd!\) permutations of \(U\cup\{v\}\) in which~\(v\) is not the final vertex (as then~\(U\) would have been generated in position~\(j-1\)).
We have thus decided the~\(d+1\) vertices \(v_{j-d},\dots,v_j\) and their order.
Then by~\ref{radiation} applied repeatedly in both directions, there are at most \(((1+\eps)n(1-p)^d)^{(j-2)+(\ell-2-j)}=((1+\eps)n(1-p)^d)^{\ell-4}\) choices for the vertices \(v_{-d+2},\dots,v_{j-d-1},v_{j+1},\dots,v_{\ell-2}\) and their order.
Then by~\ref{GetintoAi}, there are at most~\((1+\eps)np(1-p)^{d-1}/d!\) permissible choices for~\(v_{-d+1}\), which determines~\(\overrightarrow{A}_i, \overrightarrow{A}_{i+1}\), then by~\ref{connection}, there are at most~\((1+\eps)n^2(1-p)^{d^2+2d-1}\) acceptable choices for~\(v_{\ell-1},v_{\ell}\).
Thus, \(\text{deg}_{\cH}^{(j)}(U)\leq (1+\eps)^{\ell-1}dpn^{\ell}(1-p)^{d(\ell+d)-2}\) for all \(2\leq j\leq\ell+d-1\).
The claim follows.\COMMENT{\(\sum_{j=1}^{\ell+d}\text{deg}_{\cH}^{(j)}(U)\leq (1+\eps)^{\ell-1}pn^{\ell}(1-p)^{d(\ell+d)-2}(d(\ell+d)-1)\) as claimed}
\endclaimproof
Set \(D'\coloneqq(1-\eps)^{\ell}(1-p)^{d(\ell+d)-1}n^{\ell}\).
\begin{claim}\label{claimSCX}
\(\text{deg}_{\cH}(\nu)\geq D'\) for all \(\nu\in\cX\).
\end{claim}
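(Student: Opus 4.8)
The plan is to bound $\text{deg}_{\cH}(\nu)$ below by the number of $\nu$-good sequences $S=(v_1,\dots,v_\ell)$ of internal vertices, and then to count such sequences greedily from left to right, mirroring the argument for Claim~\ref{degXcheck}. For the reduction, recall that every edge of $\cH$ incident to $\nu=(\overrightarrow{A}_i,\overrightarrow{A}_{i+1})$ has the form $\{\nu\}\cup\cU^{*}(\nu,S)$ for a $\nu$-good $S$, that $C_{d(\ell+d)}(\cH)=1$, and that such an edge is determined by the sequence $(v_{-d+1},\dots,v_0,v_1,\dots,v_\ell,v_{\ell+1},\dots,v_{\ell+d})$ whose first and last $d$ entries are fixed by $\nu$; hence distinct $\nu$-good $S$ yield distinct edges, and $\text{deg}_{\cH}(\nu)$ is at least the number of $\nu$-good $S$.

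To count $\nu$-good sequences I would choose $v_1,v_2,\dots,v_{\ell-2}$ one at a time and then choose the pair $(v_{\ell-1},v_\ell)$ last. At step $j$ (for $1\le j\le\ell-2$), with $v_1,\dots,v_{j-1}$ already fixed, put $U_j\coloneqq\{v_{j-d},\dots,v_{j-1}\}$ (using $v_{-d+r}=a_i^{(r)}$ for $r\in[d]$) and apply~\ref{radiation} with $U=U_j$ to obtain at least $(1-\eps)n(1-p)^d$ vertices $v$ with $\big(\binom{U_j\cup\{v\}}{d}\setminus\{U_j\}\big)\cap X=\emptyset$; discarding the at most $\ell+2d$ such $v$ equal to a previously chosen vertex or to a vertex of $A_i\cup A_{i+1}$ leaves at least $(1-\eps)n(1-p)^d-(\ell+2d)$ admissible choices of $v_j$. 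Since $U_j\in\cU_{j-1}(\nu,S)$ (and $U_1=A_i$ is exactly the set excluded from $\cU_1$), we have $U_j\notin\cU^{*}_j(\nu,S)$, and as $\cU^{*}_j(\nu,S)$ consists of $d$-subsets of the window $U_j\cup\{v_j\}$, this choice guarantees $\cU^{*}_j(\nu,S)\cap X=\emptyset$. For the last two vertices I would apply~\ref{connection} with $\overrightarrow{U}=(v_{\ell-1-d},\dots,v_{\ell-2})$ and $\overrightarrow{U'}=\overrightarrow{A}_{i+1}$ (disjoint, as the internal vertices avoid $A_{i+1}$), getting at least $(1-\eps)n^2(1-p)^{d^2+2d-1}$ ordered pairs, at most $O(\ell n)$ of which are discarded for distinctness; here one checks that the windows of $d+1$ consecutive vertices covered by~\ref{connection} are precisely those producing $\cU^{*}_j$ for $j=\ell-1,\dots,\ell+d$ (with $\overrightarrow{A}_{i+1}=\overrightarrow{U'}$ excluded, matching the exclusion of $A_{i+1}$ from $\cU^{*}_{\ell+d}$), so any such pair completes $S$ into a $\nu$-good sequence. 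Multiplying, $\text{deg}_{\cH}(\nu)\ge\big((1-\eps)n(1-p)^d-(\ell+2d)\big)^{\ell-2}\big((1-\eps)n^2(1-p)^{d^2+2d-1}-O(\ell n)\big)$.

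Finally I would check the exponents: the power of $n$ is $(\ell-2)+2=\ell$ and the power of $(1-p)$ is $d(\ell-2)+(d^2+2d-1)=d(\ell+d)-1$, matching $D'$. The only point needing care is that this product carries only $\ell-1$ explicit factors of $(1-\eps)$ rather than $\ell$; this is handled because $\ell,d$ are constants relative to $n$, $(1-p)^{d(\ell+d)}=\Theta(1)$, and $\eps n=n^{1/2}\log^2 n\to\infty$, so the additive losses $\ell+2d$ and $O(\ell n)$ cost only a factor $1-O(\ell/n)$ overall, whence the product is at least $(1-\eps)^{\ell-1}n^\ell(1-p)^{d(\ell+d)-1}\big(1-O(\ell/n)\big)\ge(1-\eps)^\ell n^\ell(1-p)^{d(\ell+d)-1}=D'$, using $1-O(\ell/n)\ge1-\eps$ for $n$ large. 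I expect the main obstacle to be the boundary bookkeeping in the middle paragraph — confirming that ``every $d$-subset of every window avoids $X$'' is exactly the $\nu$-goodness condition, and that the sets $A_i,A_{i+1}\in X$ are consistently excluded at positions $1$ and $\ell+d$ — but this is entirely parallel to the position-by-position analysis already carried out for $\text{deg}^{(j)}_{\cH}$ in the proof of Claim~\ref{claimSCY}.
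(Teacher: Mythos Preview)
Your proposal is correct and follows essentially the same approach as the paper: choose $v_1,\dots,v_{\ell-2}$ greedily via repeated applications of~\ref{radiation}, then pick the pair $(v_{\ell-1},v_\ell)$ via~\ref{connection} with $\overrightarrow{U}=(v_{\ell-d-1},\dots,v_{\ell-2})$ and $\overrightarrow{U'}=\overrightarrow{A}_{i+1}$, subtracting the same $(\ell+2d)$ and $O(\ell n)$ losses for distinctness. Your final accounting of the $(1-\eps)$ factors (absorbing the $1-O(\ell/n)$ correction into the extra factor of $(1-\eps)$) is also exactly how the paper handles it.
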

\claimproof
Fix \(\nu=(\overrightarrow{A}_i,\overrightarrow{A}_{i+1})\in\cX\).
Applying~\ref{radiation}~\(\ell-2\) times successively from~\(A_i\) proceeding to the ``right'', there are at least \(((1-\eps)n(1-p)^d - (\ell+2d))^{\ell-2}\) choices for distinct vertices \(v_1,v_2,\dots,v_{\ell-2}\in[n]\setminus(A_i\cup A_{i+1})\) which do not lead to the generation of any \(X\)-sets.
Then by~\ref{connection}, there are at least \((1-\eps)n^2(1-p)^{d^2+2d-1}\) choices for \(v_{\ell-1}, v_{\ell}\) not generating any undesired \(X\)-sets, and we must remove at most~\(2(\ell+2d)n\) of these to ensure these vertices are ``new''.\COMMENT{Remove every pair \((v_{\ell-1},v_{\ell})\) for which~\(v_{\ell-1}\) has been seen before. That's at most~\((\ell+2d)n\). Also remove every pair such that \(v_{\ell}\) has been seen before. The pair themselves are distinct by design, so, golden.}
The claim follows.\COMMENT{\(((1-\eps)n(1-p)^d - (\ell+2d))^{\ell-2}((1-\eps)n^2(1-p)^{d^2+2d-1}-2n(\ell+2d))=((1-\eps)n(1-p)^d)^{\ell-2}(1-(\ell+2d)/((1-\eps)n(1-p)^d))^{\ell-2}(1-\eps)n^2(1-p)^{d^2+2d-1}(1-(2n(\ell+2d))/((1-\eps)n^2(1-p)^{d^2+2d-1}))\geq (1-\eps)^{\ell-1}n^{\ell}(1-p)^{d(\ell+d)-1}(1-\log n/n)^{\ell-2}(1-\log n/n)\geq (1-\eps)^{\ell}n^{\ell}(1-p)^{d(\ell+d)-1}\).}
\endclaimproof
Finally, we must study the codegree sequence of~\(\cH\) to determine the largest value~\(B\) with which we may apply Theorem~\ref{thm:mainpartitetheorem}.
This requires some delicate, but ultimately elementary\COMMENT{I'm trying to soften the fact this takes a page and a half. On one hand I don't want to underplay our achievement here, but on the other we are trying to present our main theorems as easy to apply, and the case analysis is ``elementary'' in the sense that it doesn't require any powerful probabilistic blackboxes or anything like that.} case analysis.
\begin{claim}\label{claimSCcodegs}
For each \(t\in[\ell-5d]_0\), we have \(C_{td+2}(\cH)\leq n^{\ell-t-1}\log n\).
\end{claim}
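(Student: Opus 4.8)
\textit{Proof proposal.} Recall that every edge of~\(\cH\) is described by a sequence \((v_{1-d},\dots,v_0,v_1,\dots,v_\ell,v_{\ell+1},\dots,v_{\ell+d})\) of distinct vertices of~\([n]\), where \((v_{1-d},\dots,v_0)=\overrightarrow{A}_i\) and \((v_{\ell+1},\dots,v_{\ell+d})=\overrightarrow{A}_{i+1}\) for a consecutive pair (the ``external'' vertices) and \(v_1,\dots,v_\ell\) are the ``internal'' ones; its \(Y\)-elements are the generated sets \(\cU^*_j\), \(j\in[\ell+d]\), each a \(d\)-subset of the \(d+1\) consecutive vertices \(\{v_{j-d},\dots,v_j\}\). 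One checks (using that \(\cU^*_1\) omits~\(A_i\) and \(\cU^*_{\ell+d}\) omits~\(A_{i+1}\)) that each generated \(d\)-set contains at least one internal vertex, and that for a fixed edge each position \(j\) generates at most \(d\) of its \(d\)-sets (exactly \(d\) for \(j\le\ell+d-1\), and \(d-1\) for \(j=\ell+d\)). The plan is to bound \(\mathrm{codeg}_\cH(W)\) for an arbitrary \((td+2)\)-set \(W\subseteq V(\cH)=\cX\cup Y\); if \(W\) contains two distinct \(\cX\)-elements the codegree is~\(0\), so assume not.

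First I would carry out the routine reduction. Paying a multiplicative factor of at most \(\big((\ell+d)\,d!\,d\big)^{td+2}\), which is a function of \(\ell\) and~\(d\) only and hence at most~\(\log n\) since \(1/n\ll 1/\ell\ll1/d\), we may commit, for each \(d\)-set \(U\in W\cap Y\), to the position \(j_U\in[\ell+d]\) it occupies, to the index it omits, and to an identification of the elements of~\(U\) with the vertices \(\{v_{j_U-d},\dots,v_{j_U}\}\setminus\{v_{a_U}\}\) in order; it then suffices to count the edges consistent with such a commitment. The commitment forces the value \(v_i\) for~\(i\) in some set~\(L\) of indices, and, using that consecutive runs of occupied positions propagate (knowing \(\{v_{j-d},\dots,v_{j-1}\}\) in order and an occupied position~\(j\) determines~\(v_j\)), \(L\) is a union of intervals readable from the ``shape'' of~\(W\). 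The number of consistent edges is then at most \(n^{\#\{\text{free internal indices}\}}\) if \(W\) contains an \(\cX\)-element (the externals being then known), and at most \(|\cX|\cdot n^{\#\{\text{free internal indices}\}}\le n^{d}\cdot n^{\#\{\text{free internal indices}\}}\) otherwise, choosing \(\nu\in\cX\) freely (here \(|\cX|\le\binom nd\le n^d\)).

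The combinatorial heart is to show \(W\) always forces at least \(t+1\) internal values when \(W\cap\cX\ne\emptyset\), and at least \(t+d+1\) when \(W\cap\cX=\emptyset\); in both cases this gives the bound \(n^{\ell-t-1}\log n\). The packing fact above shows \(W\) occupies at least \(\lceil(td+O(1))/d\rceil\ge t+1\) positions; an occupied position in the interior range \(\{d+1,\dots,\ell\}\) forces an internal value directly, a fully occupied interior position together with the ordering commitment forces \(d+1\) of them, and a run of occupied positions anchored at the fixed end \(\overrightarrow{A}_i\) or \(\overrightarrow{A}_{i+1}\) forces one new internal value per step. Combining these, and using the slack \(t\le\ell-5d\) (which guarantees at least \(5d\) unused positions, so that clusters of \(W\)'s \(d\)-sets cannot wrap around or straddle both ends without creating extra components), one shows the extremal configuration is essentially a single consecutive run of occupied positions, which yields precisely the required count. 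In the \(\cX\)-free case one additionally observes that a position meeting an end, while forcing fewer internal values, simultaneously pins down part of \(A_i\) or \(A_{i+1}\) — indeed two \(d\)-sets in position~\(1\) determine~\(A_i\) as a set, hence~\(i\), hence~\(\nu\), returning us to the first case — so the \(n^d\) cost of a free~\(\nu\) is always recouped.

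I expect the main obstacle to be exactly this bookkeeping. A naive ``component'' count — each connected component of the sub-hypergraph of \(\cH\) induced by \(W\)'s \(d\)-sets spans at least \(d\) vertices and uses at most \(d(|V|-d)+1\) of them, forcing at least \(t+d\) covered vertices in total — falls short by a factor \(n^{\Theta(d)}\) once one discounts the up to \(2d\) external vertices and the fact that ``covered'' is weaker than ``forced''. Recovering the lost factor forces the case split described above (cluster at the start / at the end / in the interior / straddling, and whether or not \(W\) meets~\(\cX\)), together with a careful treatment of the \(O(d)\) positions adjacent to the fixed ends \(\overrightarrow{A}_i,\overrightarrow{A}_{i+1}\), where a single occupied position contributes fewer forced vertices but constrains the external data; this is elementary but delicate, and is the step where the hypothesis \(t\le\ell-5d\) is genuinely used.
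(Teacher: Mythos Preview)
Your proposal is correct and takes essentially the same approach as the paper: fix the position and ordering data for each $d$-set at a cost of $\log n$, then bound the number of undetermined internal vertices via a case analysis on whether $W$ meets $\cX$ and on the gap structure of the occupied positions, trading forced external vertices against the $n^d$ cost of choosing $\nu$ in the $\cX$-free case and using the slack $t\le\ell-5d$ to absorb $O(d)$ losses. The paper's organizing device for the bookkeeping you anticipate is the dichotomy of whether some window of $d$ consecutive positions is entirely unoccupied (if so, work separately from both ends using the reverse generation sets $\cU^\dagger_j$; if not, all but $O(d)$ occupied positions are ``super-occupied'' and hence determine their full $(d+1)$-window, so all but $O(d)$ internal vertices are known).
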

\claimproof
Fix \(t\in[\ell-5d]_0\) and $\cZ \in \binom{V(\cH)}{td+2}$, and let $\cY \coloneqq \cZ \cap Y$. We will show $\text{codeg}_{\cH}(\cZ) \leq n^{\ell - t - 1}\log n$ in two cases, depending on whether $\cZ \cap \cX = \emptyset$.

First, observe that, for any \(\nu \in \cX\) and \(\nu\)-good~\(S\), we may order all the generated \(d\)-sets by the index of their ``generation position'' \(1\leq j\leq\ell+d\), and within these subsets by, say, increasing index of the ``missing vertex''.
Thus~\((\nu,S)\) corresponds to an ordering~\(\pi\) of \(d(\ell+d)-1\) \(d\)-sets.
Clearly there are at most~\(\log n\) orderings of~\(\cY\cup\Lambda\), where~\(\Lambda\) is simply \(d(\ell+d)-1-|\cY|\) copies of the entry ``null'', say, and we can imagine such an ordering~\(\pi\) as an assignment to the sets \(\cU^*_1,\dots,\cU^{*}_{\ell+d}\), which we may assume is ``consistent'' in that, for example, any non-null sets within the same~\(\cU^{*}_{j}\) intersect on~\(d-1\) vertices. 
Thus, it suffices to show that for each such ordering $\pi$, there are at most $n^{\ell - t - 1}$ choices for $\nu \in \cX$ and $\nu$-good $S = (v_1, \dots, v_\ell)$ such that $\cZ \subseteq \{\nu\}\cup\cU^*(\nu, S)$.

Notice that we may also collect the non-\(X\) \(d\)-sets corresponding to some \(\nu\)-good~\(S\) as they ``generate'' moving along~\(S\) from~\(A_{i+1}\) to~\(A_i\); that is, into the sets \(\cU^{\dagger}_{\ell}, \cU^{\dagger}_{\ell-1},\dots,\cU^{\dagger}_{-d+1}\) where \(\cU^{\dagger}_{j}\) is the set of \(U\in Y\) for which~\(U\) appears among the \(d+1\) vertices \(\{v_j,v_{j+1},\dots,v_{j+d}\}\) and not among \(\{v_s,v_{s+1},\dots,v_{s+d}\}\) for any \(j<s\leq\ell\).
We may simultaneously imagine an ordering of~\(\cY\cup\Lambda\) as an assignment to the sets~\(\cU^{\dagger}_j\).

\textbf{Case 1:} $\cZ \cap \cX \neq \emptyset$.

In this case, since $\cH$ has bipartition $(\cX, Y)$, we may assume that \(\cZ\) contains a single element \(\nu=(\overrightarrow{A}_i,\overrightarrow{A}_{i+1})\in\cX\), with the remaining \(\cZ\)-elements \(\cY\subseteq Y\) satisfying \(|\cY|=td+1\).
Fix one of the at most~\(\log n\) orderings~\(\pi\) of~\(\cY\cup\Lambda\), with~\(\Lambda\) being~\(d(\ell+d)-1-|\cY|\) ``null'' entries, which we may associate with the assignment of all \(d\)-sets in~\(\cY\) to roles among the sets \(\cU^{*}_1,\dots,\cU^{*}_{\ell+d}\) (equivalently to roles among the sets \(\cU^{\dagger}_{\ell},\cU^{\dagger}_{\ell-1},\dots,\cU^{\dagger}_{-d+1}\))  that is ``consistent''.

Since \(|\cY|=dt+1\) and each~\(\cU^*_j\) has size at most~\(d\), by the Pigeonhole Principle, at least~\(t+1\) of the sets~\(\cU^*_j\) must be assigned some \(U\in\cY\).
Say such~\(\cU^*_j\) are \textit{occupied}.
Suppose first that there is some \(z\in[\ell+1]\) such that the sets \(\cU^*_z,\cU^*_{z+1},\dots,\cU^{*}_{z+d-1}\) are all not occupied.
If at least~\(t+1\) occupied~\(\cU^{*}_j\) have \(j\leq z-1\), then, for each such~\(j\), we learn the element~\(v_j\) from any element of~\(\cU^*_j\) (the role is decided by~\(\pi\)), and each such~\(v_j\) is ``internal'' (i.e.,\ among \(v_1,\dots,v_{\ell}\)) by assumption.
We then know at least~\(t+1\) of the internal vertices, and there are at most~\(n^{\ell-(t+1)}\) choices for the others, as desired.\COMMENT{We collect only~\(v_j\) from each~\(\cU^{*}_j\) to be sure our collected vertices are distinct}
The case that \emph{all}\COMMENT{It's slightly awkward because you can't use the same logic with ``at least~\(t+1\) of the occupied \(\cU^*_j\) have large \(j\)''; because of the way \(\cU^*\) and \(\cU^{\dagger}\) are defined, at least \(t+1\) occupied \(\cU^*_j\) can lead to only \(t\) occupied \(\cU^{\dagger}_j\); for instance some \(\cU^*_s\) may contain the element \(\{v_{s-d+1},\dots,v_{s}\}\) and \(\cU^*_{s+1}\) the element \(\{v_{s-d+1},v_{s-d+3},v_{s-d+4},\dots,v_{s+1}\}\), but these are both elements of~\(\cU^{\dagger}_{s-d+1}\). We save the day by instead assuming \emph{all} occupied buckets have large~\(j\) then applying PP to the sets~\(\cU^{\dagger}_s\).} occupied~\(\cU^*_j\) have \(z+d\leq j\leq\ell+d\) is handled analogously by considering instead the sets~\(\cU^{\dagger}_s\) (any \(d\)-set in~\(\cU^{\dagger}_s\) contains~\(v_s\), and any element of~\(\cU^*_j\) for \(j\geq z+d\) is in~\(\cU^{\dagger}_{s}\) for \(s\geq z \geq 1\); if no~\(\cU^*_j\) for \(j\leq z+d-1\) are occupied then all~\(dt+1\) elements of~\(\cY\) are distributed among \(\cU^{\dagger}_z,\dots,\cU^{\dagger}_{\ell}\) so apply the Pigeonhole Principle here).
Thus we may assume that \(1\leq q\leq t\) of the occupied~\(\cU^{*}_j\) have \(j\leq z-1\), and at least $t + 1 - q$ of the occupied~\(\cU^{*}_j\) have \(j \geq z + d\).
Then one proceeds similarly to deduce~\(q\) internal vertices among \(v_1, \dots, v_{z-1}\) from analysis of these occupied sets.
Further, at least \(dt+1-qd=d(t-q)+1\) elements of~\(\cY\) are distributed among \(\cU^{\dagger}_z,\dots,\cU^{\dagger}_{\ell}\) (here using that \(\cU^*_z,\dots,\cU^*_{z+d-1}\) are not occupied), so by the Pigeonhole Principle, at least \(t+1-q\) of these sets are occupied, from which we can learn~\(t+1-q\) additional internal vertices among \(v_{z}, \dots, v_\ell\) (in particular, sets in~\(\cU^*_{z-1}\) and~\(\cU^{\dagger}_z\) cannot intersect), and there are at most~\(n^{\ell-(t+1)}\) choices for the rest, as desired.

Now suppose instead there is no \(z\in[\ell+1]\) such that all~\(d\) of the sets \(\cU^*_z,\dots,\cU^*_{z+d-1}\) are not occupied.
Notice that if at least~\(t+1+d\) of the sets~\(\cU^{*}_1,\dots,\cU^{*}_{\ell+d}\) are occupied, then at least~\(t+1\) of the sets \(\cU^{*}_1,\dots,\cU^{*}_{\ell}\) are occupied, and one proceeds as before.
So we may assume that between~\(t+1\) and~\(t+d\) of the sets~\(\cU^{*}_j\) are occupied\COMMENT{So \(t\) is large, at least \(\lflr \ell/d\rflr -d\)}, in which case it follows that at most~\(d+1\) of these sets have only one \(\cY\)-element; indeed, if~\(d+2\) have only one \(\cY\)-element, then there are \(dt+1-(d+2)=dt-d-1\) remaining \(\cY\)-elements to distribute among at most~\(t-2\) sets, each with size at most $d$, which is impossible since \(d(t-2)=dt-2d<dt-d-1\).
Say a~\(\cU^*_j\) with at least two \(\cY\)-elements is \emph{super-occupied}, and notice that if~\(\cU^*_j\) is super-occupied, then the union of the non-null elements of~\(\cU^{*}_j\) yields all~\(d+1\) vertices~\(v_{j-d},\dots,v_j\) (and their roles are determined by~\(\pi\)).
If~\(\cU^{*}_j\) is occupied but not super-occupied, then we learn only~\(d\) of the~\(d+1\) vertices \(v_{j-d},\dots,v_j\).
Since there is no \(z\in[\ell+1]\) such that all~\(d\) of the sets \(\cU^*_z,\dots,\cU^*_{z+d-1}\) are not occupied, the union over all ``occupied indices'' \(j\in[\ell+d]\) of the set~\(\{j-d,j-d+1,\dots,j\}\) must include all of~\([\ell]\), and since, for all but at most~\(d+1\) of these~\(j\), we learn all the vertices~\(v_{j-d},\dots,v_j\) from~\(\cY\) (and all but one such vertex is learned for the non-super-occupied~\(j\)), we deduce all but at most~\(d+1\) of the internal vertices. Hence, there are at most \(n^{d+1}\) choices for the remaining internal vertices, and since $t \leq \ell - 5d$, we have at most \(n^{d+1} \leq n^{\ell-t-1}\) choices for the rest, as desired.

\textbf{Case 2: $\cZ \cap \cX = \emptyset$}

In this case, since $\cH$ has bipartition $(\cX, Y)$, we may assume \(\cZ\subseteq Y\) with \(|\cZ|=td+2\).
Again, there are at most~\(\log n\) orderings~\(\pi\) of~\(\cZ\cup\Lambda\), with~\(\Lambda\) being~\(d(\ell+d)-1-|\cZ|\) ``null'' entries, which we may associate with the assignment of all \(d\)-sets in~\(\cZ\) to roles among the sets \(\cU^{*}_1,\dots,\cU^{*}_{\ell+d}\) (equivalently to roles among the sets \(\cU^{\dagger}_{\ell},\cU^{\dagger}_{\ell-1},\dots,\cU^{\dagger}_{-d+1}\)) and we consider some ``consistent'' such~\(\pi\).
Let~\(s^*\) be the smallest~\(j\) such that~\(\cU^*_j\) is occupied.

First, assume that~\(\pi\) does not yield any external vertices (i.e., among either \(v_{-d+1},\dots,v_0\) or \(v_{\ell+1},\dots,v_{\ell+d}\)).
By the Pigeonhole Principle, there are at least~\(t+1\) occupied~\(\cU^*_j\).
If there are exactly~\(t+1\), then each must be super-occupied\COMMENT{If even 1 has only one element, then there's \(dt+1\) elements left to distribute among \(t\) remaining sets, impossible}, so we learn~\(d+1\) internal vertices from~\(\cU^*_{s^*}\), and at least the~\(t\) internal vertices~\(v_{j'}\) where~\(j'\) ranges over all other indices~\(s\) such that~\(\cU^*_s\) is occupied.
Then there are at most~\(n^d\) choices for the pair~\((A_i,A_{i+1})\), which determines the ``external'' vertices, and at most~\(n^{\ell-(t+d+1)}\) choices for the remaining internal vertices, which is altogether at most $n^{\ell - t - 1}$, as desired.
If instead there are at least~\(t+2\) occupied~\(\cU^*_j\), then we may only learn~\(d\) internal vertices from~\(\cU^{*}_{s^*}\), but we learn an additional~\(t+1\) vertices~\(v_{j'}\) from the other occupied~\(\cU^*_{j'}\), which is again sufficient.\COMMENT{So again \(t+1+d\) internal vertices known, all definitely internal due to the case we're in. Then \(n^d\) choices for the external vertices, so boom.}

Now suppose~\(\pi(\cZ\cup\Lambda)\) does determine some external vertices.
By symmetry\COMMENT{i.e. consider the sets~\(\cU^{\dagger}\) instead if necessary}, we may assume that~\(\pi(\cZ\cup\Lambda)\) determines \(1\leq j^*\leq d\) of the vertices \(v_{-d+1},\dots,v_0\). 
Again, it is useful to distinguish two cases:
If there is some \(z\in[\ell+1]\) such that the sets \(\cU^*_z,\dots, \cU^*_{z+d-1}\) are all not occupied, and~\(|\cU^*_{s^*}\cap\cZ|=1\), then we learn at least\COMMENT{If this set has only one element, then we learn \(d\) vertices from it. We must have either~\(j^{*}\) of them being external or\(j^{*}-1\) being external since any \(d\)-set only skips one vertex. The latter is possible if, say, \(j^*=2\), \(d=2\), the lone \(d\)-set in~\(\cU^*_{1}\) yields \(v_1\) and the penultimate vertex of~\(A_i\), and \(\cU^*_2\) is super-occupied so we learn the final vertex of \(A_i\) from \(\cU^*_2\), so we learn \(j^*=2\) of these external vertices, only one of which from \(\cU^*_1\). But the point is this would only help us, and clearly since at most~\(j^*\) vertices of a \(d\)-set in \(\cU^*_1\) can be external (as \(j^*\) is the total determined external vertices on the left of the path by hypothesis), we must learn at least \(d-j^*\) internal vertices from \(\cU^*_1\).} \(d-j^*\) internal vertices from~\(\cU^*_{s^*}\), and there must be at least~\(t+1\) other occupied sets. Then we proceed separately from ``both sides'' as in the \(\cZ\cap\cX\neq\emptyset\) case to learn~\(t+1\) additional internal vertices from these sets, and there are~\(n^{d-j^*}\) choices for the remaining vertices among \(v_{-d+1},\dots,v_0\), which fixes~\((A_i,A_{i+1})\), and~\(n^{\ell-(d-j^*+t+1)}\) choices for the remaining internal vertices.
If~\(\cU^{*}_{s^*}\) is super-occupied then there may be only~\(t+1\) occupied~\(\cU^*_{j'}\) in total, but we learn~\(d+1-j^*\) internal vertices from~\(\cU^*_{s^*}\).
If instead there is no such~\(z\) and at least say~\(t+2+d\) sets are occupied, then at least~\(t+2\) of these are among \(\cU^*_1,\dots,\cU^*_{\ell}\), from which we learn at least \((d-j^*)+(t+1)\) internal vertices\COMMENT{At least \(d-j^*\) from the first, then one from each of the others since they're all internal so we can uniquely pick their ``last'' vertex and be sure these are all different}, then there are \(n^{d-j^*}\) choices for the remaining vertices of~\(\overrightarrow{A}_i\), fixing~\(\overrightarrow{A}_{i+1}\), and \(n^{\ell-(t+1+d-j^*)}\) choices for the remaining internal vertices.
Finally if there is no such~\(z\) and exactly~\(t+j\) of the sets~\(\cU^*_j\) are occupied for \(1\leq j\leq d+1\), then at most~\(j+1\leq d+2\) are not super-occupied\COMMENT{Suppose exactly \(t+j\) sets are occupied and suppose for a contradiction that \(j+2\) of these have only one element. Then there are \(dt+2-(j+2)=dt-j\) elements remaining to be distributed among \((t+j)-(j+2)=t-2\) sets, but they have total capacity \(d(t-2)=dt-2d<dt-j\) since \(j\leq d+1<2d\), so, contradiction}, so we learn all but at most~\(d+2\) of the internal vertices from~\(\cZ\), so there are at most \(n^{d-j^*+d+2}\leq n^{5d-1}\leq n^{\ell-(t+1)}\) (say) choices for the other vertices.
We omit the full details as they are similar to the previous cases.
\endclaimproof
By Claim~\ref{claimSCcodegs}, if we set \(D_j\coloneqq n^{\ell-(t+1)}\log n\) for all \(t\in[\ell-5d-1]_0\) and \(td+2\leq j<(t+1)d+2\), and \(D_j\coloneqq n^{5d-1}\log n\) for \((\ell-5d)d+2\leq j\leq d(\ell+d)\), we obtain \(C_j(\cH)\leq D_j\) for all \(2\leq j\leq d(\ell+d)\).
Notice that \(\sqrt{D/D_2}\geq \sqrt{n}/\log n\), and that the value \(j\in\{4,5,\dots,k+1\}\) minimizing~\((D/D_j)^{1/(j-1)}\) is clearly either~\(j=d(\ell+d)\), or among the values~\(j=td+1\) for \(t\in[\ell-5d]\).\COMMENT{The ends of the clusters}
For such~\(t\) and \(j=td+1\) we have \(D_j=n^{\ell-t}\) and \((D/D_j)^{1/(j-1)}\geq (n^{t}/\log^2 n)^{1/(td)}\geq n^{1/d}/\log n\).
Moreover, we have \((D/D_{d(\ell+d)})^{1/(d(\ell+d)-1)}\geq n^{(\ell-5d+1)/(d(\ell+d)-1)}/\log n\geq n^{\ell/(d\ell+d^2) - 5d/(d\ell+d^2)}/\log n\geq n^{1/d - 6/\ell}/\log n\geq n^{1/d - \delta/4}\).\COMMENT{\(n^{-5d/(d\ell+d^2)}\geq n^{-5d/(d\ell)}=n^{-5\ell}\) and \(\ell/(d\ell+d^2)=1/d-1/(\ell+d)\geq 1/d-1/\ell\)}
Using \(d\geq 2\), we see that the upper bound for~\(B\) in Theorem~\ref{thm:mainpartitetheorem} is thus at least \(n^{1/d-\delta/4}\), so we may set~\(B\coloneqq n^{1/d-\delta/4}\,\,(\geq1)\).
Observe that
\begin{eqnarray*}
\frac{D'}{D} & = & \frac{(1-\eps)^{\ell}(1-p)}{(1+\eps)^{\ell-1}(d(\ell+d)-1)p}\geq\frac{(1-\eps)^{2\ell}\left(1-\frac{1}{d(\ell+d)}\right)}{(d(\ell+d)-1)\frac{1-n^{-1/d+\delta/2}}{d(\ell+d)}}\\ & \geq & (1-2\ell\eps)(1+n^{-1/d+\delta/2})\geq1+n^{-1/d+\delta/3}\geq1+B^{-1+\gamma}\log^K D,
\end{eqnarray*}
for appropriate \(1/n\ll1/K\ll\gamma\ll1/\ell\).
Thus, Theorem~\ref{thm:mainpartitetheorem} yields an \(\cX\)-perfect matching~\(\cM\) of~\(\cH\), which, by Claim~\ref{claimGettheSC}, completes the proof.
\endproof
\subsection{Matchings and chromatic index of Steiner systems}\label{Section:Steinermatchcolours}
In this subsection we prove Theorems~\ref{theorem:Steinercolorsmatchings} and~\ref{theorem:PartiteSteinercolourmatchings}, which both follow from simple applications of Theorems~\ref{theorem:maintheorem} and~\ref{theorem:maincolourtheorem}.
\lateproof{Theorem~\ref{theorem:Steinercolorsmatchings}}
Clearly we may assume \(1/n\ll1/A\ll\delta\ll1/r<1/t\leq 1/2\) for some new constant~\(A\).\COMMENT{Proving the theorem for small~\(\delta\) proves it for large \(\delta\). We need \(\delta\ll1/r\) since \(r\) parameterizes the uniformity and we need \(\gamma\ll1/k\) in the main theorems, and will use \(\gamma\coloneqq\delta/2\).}
Fix a \((t,r,n)\)-Steiner system~\(S\).
It is easy to check that~\(S\) is \(D\coloneqq\left(\binom{n-1}{t-1}/\binom{r-1}{t-1}\right)\)-regular, \(r\)-uniform, and satisfies \(C_j(S)\leq\binom{n-j}{t-j}/\binom{r-j}{t-j}\eqqcolon D_j\) for all \(2\leq j\leq t\), and \(C_j(S)=1\eqqcolon D_j\) for all \(t+1\leq j\leq r\).
Set \(\eps\coloneqq 1/\sqrt{n}\) (say).
We have \(\sqrt{D/D_2}=\sqrt{(n-1)/(r-1)}\geq\sqrt{n/2r}\) and \(1/\eps\geq\sqrt{n/2r}\).
We must also lower bound the values~\((D/D_j)^{1/(j-1)}\) for \(4\leq j\leq r\) (if there are such values, i.e.\ if \(r\geq 4\)).
For those~\(j\) among these having \(4\leq j\leq t\) (if there are any such), we have \((D/D_j)^{1/(j-1)} = ((n-1)!(r-j)!/(n-j)!(r-1)!)^{1/(j-1)}\geq ((n/2r)^{j-1})^{1/(j-1)}=n/2r > \sqrt{n/2r}\).
Since \(D_j=1\) for all \(t+1\leq j\leq r\), clearly the minimum \((D/D_j)^{1/(j-1)}\) among these~\(j\) is attained by \(j=r\), and we have \((D/D_{r})^{1/(r-1)} = ((n-1)!(r-t)!/(n-t)!(r-1)!)^{1/(r-1)}\geq(n/2r)^{(t-1)/(r-1)}\).
If \((t-1)/(r-1)<1/2\) (i.e.\ \(t<(r+1)/2\), which, in particular, can only occur if \(r\geq 4\))\COMMENT{Point being, you only have to take this term into consideration if \(r\geq 4\) so there are relevant values~\(j\) to bound \((D/D_j)^{1/(j-1)}\). But, in the case \(r=3\) (and so \(t=2\)), we're using the other bound, so we're good.} then we put \(B\coloneqq (n/2r)^{(t-1)/(r-1)}\) and \(\gamma\coloneqq\delta/2\) in Theorem~\ref{theorem:maintheorem} to obtain that \(U(S)\leq n^{1-(t-1)/(r-1)+\delta}=n^{(r-t)/(r-1)+\delta}\), as claimed, and if instead \((t-1)/(r-1)\geq 1/2\) (i.e.\ \(t\geq(r+1)/2\)), then we may put \(B\coloneqq\sqrt{n/2r}\) and \(\gamma\coloneqq\delta/2\) in Theorem~\ref{theorem:maintheorem} to obtain that \(U(S)\leq n^{1/2+\delta}\), as claimed.

To apply Theorem~\ref{theorem:maincolourtheorem}, we must also examine~\(D^{1/r}\).\COMMENT{No need for case analysis as to whether there are terms \((D/D_j)^{1/(j-1)}\) to bound this time. We always have to look at \(D^{1/r}\), and \(D^{1/r}\) is always worse than all the other \((D/D_j)^{1/(j-1)}\) in this setting.}
By the above, clearly \(D^{1/r}\geq (n/2r)^{(t-1)/r}\).\COMMENT{We had already showed \(D^{1/(r-1)}\geq(n/2r)^{(t-1)/(r-1)}\) so just raise this to the \((r-1)/r\).}
Clearly the bottleneck for~\(B\) for applying Theorem~\ref{theorem:maincolourtheorem} is\COMMENT{Because it was previously either \(\sqrt{n/2r}\) or \(D^{1/(r-1)}\), but now \(D^{1/r}<D^{1/(r-1)}\) is always worse than \(D^{1/(r-1)}\)} either~\(\sqrt{n/2r}\) or~\((n/2r)^{(t-1)/r}\) depending on whether \((t-1)/r<1/2\), so now Theorem~\ref{theorem:maincolourtheorem} applied in each of these regimes of \(t,r\) finishes the proof, as above.
%
%
\endproof
The proof of Theorem~\ref{theorem:PartiteSteinercolourmatchings} involves very similar computations, so we are more brief.
\lateproof{Theorem~\ref{theorem:PartiteSteinercolourmatchings}}
Clearly we may assume \(1/n\ll1/A\ll\delta\ll1/r<1/t\leq 1/2\) for some new constant~\(A\).
Fix a partite \((t,r,n)\)-Steiner system~\(S\).
It is easy to check that~\(S\) is \(D\coloneqq (n^{t-1})\)-regular, \(r\)-uniform, and satisfies \(C_j(S)\leq n^{t-j}\eqqcolon D_j\) for \(2\leq j\leq t\) and \(C_{j}(S)=1\eqqcolon D_j\) for \(t+1\leq j\leq r\). Set \(\eps\coloneqq1/\sqrt{n}\) (say).
We have \(\sqrt{D/D_2}=\sqrt{n}=1/\eps\).
For \(4\leq j\leq t\) (if there are such), we have \((D/D_j)^{1/(j-1)}=n>\sqrt{n}\) and for \(r\geq 4\) we must consider \((D/D_{r})^{1/(r-1)}=n^{(t-1)/(r-1)}\).
The value of~\(B\) with which to apply Theorem~\ref{theorem:maintheorem} is thus either~\(n^{(t-1)/(r-1)}\) if \(t<(r+1)/2\) (which occurs only if \(r\geq 4\)), or~\(\sqrt{n}\) otherwise.
For applying Theorem~\ref{theorem:maincolourtheorem}, the limiting expression for~\(B\) will be either~\(D^{1/r}=n^{(t-1)/r}\) if \(t<r/2 +1\), or~\(\sqrt{n}\) otherwise.
\endproof
\subsection{Partial Steiner Systems}\label{Section:Steiner}
In this subsection we use Theorem~\ref{theorem:maintheorem} to present a short proof of Theorem~\ref{theorem:Steiner}.
\lateproof{Theorem~\ref{theorem:Steiner}}
Clearly we may assume \(\delta\ll1/r\)\COMMENT{since proving the theorem for small \(\delta\) proves it for large \(\delta\)}.
Insert a new constant~\(A\) into the hierarchy such that \(1/n\ll1/A\ll\delta\ll1/r<1/t\leq 1/2\).
Consider the \(n\)-element ``ground set''~\([n]\).
Construct the hypergraph~\(H_n\) as follows:
Put \(V(H_n)\coloneqq\binom{[n]}{t}\), and for each \(r\)-set \(L\in\binom{[n]}{r}\), put an edge~\(e_L=\binom{L}{t}\) in~\(H_n\).
Clearly~\(H_n\) is \(\binom{r}{t}\)-uniform and \((n',D,\eps)\)-regular, where \(n'=\binom{n}{t}\), \(D=\binom{n-t}{r-t}\), and \(\eps=1/\sqrt{n}\) (say).\COMMENT{In fact \(H_n\) is perfectly \(\binom{n-t}{r-t}\)-regular}
Notice that, for each \(i=0,1,\dots,r-t-1\), any set~\(U\) of \(\binom{t+i}{t}+1\) vertices of~\(H_n\) must collectively cover at least \(t+i+1\) elements of the ground set, and thus have \(\text{codeg}_{H_n}(U)\leq \binom{n-(t+i+1)}{r-(t+i+1)}\).\COMMENT{Here using \(1/n\ll1/r,1/t\) so that the binomial coefficient \(\binom{n-(t+i+1)}{r-(t+i+1)}\) is strictly decreasing as \(i\) increases over \([r-t-1]_0\)}
For each \(i\in[r-t-1]_0\) and \(j\in\{\binom{t+i}{t}+1, \binom{t+i}{t}+2\dots, \binom{t+i+1}{t}\}\), set \(D_j\coloneqq\binom{n-t-i-1}{r-t-i-1}\).
This defines~\(D_j\) uniquely for all \(j\in[\binom{r}{t}]\setminus\{1\}\), and for all such~\(j\), by the above discussion (and using the fact \(C_{j+1}(H)\leq C_j(H)\) for any hypergraph~\(H\)), we have \(C_j(H)\leq D_j\).

We seek to apply Theorem~\ref{theorem:maintheorem}, for which we need to compute a lower bound for the minimum of \(\sqrt{D/D_2}\), \(\min_{j\in[\binom{r}{t}]\setminus[3]}(D/D_j)^{1/(j-1)}\), \(1/\eps\).
To that end, observe \(\sqrt{D/D_2}=\sqrt{(n-t)/(r-t)}\geq\sqrt{n/2r}\) and clearly \(1/\eps=\sqrt{n}\geq \sqrt{n/2r}\).
The only values \(r>t\geq 2\) yielding \(\binom{r}{t}\leq 3\) (so that there are no other codegree ratios we need to find a minimum of) are \(r=3\), \(t=2\), so in this case we may take \(B=\sqrt{n/2r}\) and \(\gamma\coloneqq\delta/2\) to see that there exists a matching of~\(H_n\) covering all but at most \(\binom{n}{t}(\sqrt{n/2r})^{-1+\delta/2}\log^A (n^{r})\leq \binom{n}{t}n^{-1/2+\delta/2}\) vertices of~\(H_n\), which corresponds to a partial Steiner system~\(S_{p}(t,r,n)\) of size at least \(\lflr\binom{n}{t}/\binom{r}{t}\rflr-\lcl \binom{n}{t}n^{-1/2+\delta/2}/\binom{r}{t}\rcl \geq(1-n^{-1/2 +\delta})\binom{n}{t}/\binom{r}{t} = (1-n^{-(r-t)/(\binom{r}{t}-1) +\delta})\binom{n}{t}/\binom{r}{t}\), as required.\COMMENT{As in the Steiner matching/colouring proofs, you could just compute all bounds and then later argue that the one arising from there being more terms \((D/D_j)^{1/(j-1)}\) to consider doesn't end up being the bottleneck numerically for the case where there are no such terms to consider, but it's a little less immediate to justify that this time around, and just doing them separately as I have done here doesn't really add much space.}

We may now assume that \(r\geq 4\), which implies \(\binom{r}{t}\geq 4\), so that there are terms \((D/D_j)^{1/(j-1)}\) to find a lower bound for.
In particular, since for each \(i\in[r-t-1]_0\), all~\(j\) such that \(\binom{t+i}{t}+1\leq j\leq \binom{t+i+1}{t}\) receive the same value of~\(D_j\), it suffices to find a lower bound for all the ratios corresponding to the largest~\(j\) in each ``cluster'' of equal codegrees, so it suffices to lower bound \(\min_{i\in[r-t-1]_0}(D/\binom{n-t-i-1}{r-t-i-1})^{1/(\binom{t+i+1}{t}-1)}\).
Substituting \(D=\binom{n-t}{r-t}\), each of these terms can be bounded below by \(((n-t-i)^{i+1}/(r-t)^{i+1})^{1/(\binom{t+i+1}{t}-1)}\geq (n/2r)^{(i+1)/(\binom{t+i+1}{t}-1)}\).
Setting \(f(i)\coloneqq (i+1)/(\binom{t+i+1}{t}-1)\), we claim that \((f(i))_{i\in[r-t-1]_0}\) is monotone decreasing.
Indeed,
\begin{eqnarray*}
f(i+1) & = & f(i)\cdot\frac{i+2}{i+1}\cdot\frac{\binom{t+i+1}{t}-1}{\binom{t+i+2}{t}-1}\\ & = & f(i)\left(1+\frac{1}{i+1}\right)\left(\frac{\left(\binom{t+i+2}{t}-1\right)\frac{i+2}{t+i+2}+\frac{i+2}{t+i+2}-1}{\binom{t+i+2}{t}-1}\right)\\ & \leq & f(i)\left(1+\frac{1}{i+1}\right)\frac{i+2}{t+i+2} = f(i)\left(\frac{i+2+\frac{i+2}{i+1}}{t+i+2}\right),
\end{eqnarray*}
which, together with the facts \((i+2)/(i+1)\leq 2\) (attained uniquely by \(i=0\)) and \(t\geq 2\), yields \(f(i+1)\leq f(i)\) as claimed.
Therefore, each of the terms \((D/D_j)^{1/(j-1)}\) can be bounded below by \((n/2r)^{f(r-t-1)}=(n/2r)^{(r-t)/(\binom{r}{t}-1)}\).
We claim that \((r-t)/(\binom{r}{t}-1)\leq 1/2\).
Indeed, if \(r=t+1\), then \(\binom{r}{t}-1=t\geq 2= 2(r-t)\), as claimed, and otherwise \(\binom{r}{t}-1\geq\binom{r}{2}-1\), which is at least \(2r-4\) since \(r\geq 3\),\COMMENT{I just plotted \(y=r^2 /2 -r/2 -2r+3\) on Desmos} and \(2r-4\geq 2(r-t)\).
We conclude that we may apply Theorem~\ref{theorem:maintheorem} with \(B\coloneqq (n/2r)^{(r-t)/(\binom{r}{t}-1)}\) and \(\gamma\coloneqq\delta/2\), which again yields the desired result.
\endproof
\section{The Nibble}\label{Section:Nibble}
In this section, we begin the proof of Theorem~\ref{theorem:maintheorem} by performing the probabilistic analysis that shows that an almost-regular hypergraph admits a small matching (and a very small set of ``waste'' vertices) such that the leftover hypergraph is also almost-regular and well-distributed with respect to some well-behaved set of weight functions.
Moreover, if the input hypergraph admits some set \(J^{*}\subseteq [k]\setminus\{1\}\) such that the codegree ratios~\(D_j/D_{j+1}\) are large for \(j\in J^{*}\), then we show there is a matching as above such that each~\(D_j\)  for \(j\in J^{*}\) has degraded in the leftover hypergraph, in line with expectations.
Specifically, the aim is to prove the following lemma, which we refer to throughout the paper as the `Nibble Lemma'.\COMMENT{In the case \(k=1\) (graphs), the statement forces you to have \(J^{*}\) empty, and all you have is \(D\) and \(D_2\), and the latter has to satisfy \(C_2(H)\leq D_2\). \ref{hypothesis:codegratio} holds vacuously since there are no such \(j\). The conclusion at the end holds vacuously since there are no such \(j\).}\COMMENT{No need to worry in the Nibble Lemma about having enough space between one \(D_j\) we mean to degrade and a non-moving \(D_{j+1}\). There is enough space because we're only decreasing \(D_j\) by a \(1-o(1)\) factor and there's polylog between them by hypothesis, and the Nibble Lemma on its own doesn't care about ensuring you can apply the Nibble Lemma afterwards anyway. Such concerns will arise in the future sections.} 
\begin{lemma}[Nibble Lemma]\label{lemma:nibble}
Suppose $1/D, \theta\ll 1/k\leq 1$, and let~$H$ be an $(n,D,\eps)$-regular, $(k+1)$-uniform hypergraph.
Suppose that \(D_2, D_3, \dots, D_{k+1}\) are numbers satisfying \(C_{j}(H)\leq D_j\), and consider a (permissibly empty) set \(J^{*}\subseteq [k]\setminus\{1\}\).
Suppose further that:
\begin{enumerate}[(N1), topsep = 6pt]
\item $\eps^{2}\theta\frac{D}{D_{2}}\geq\log^{5}D$;\COMMENT{I find it a useful sanity check to remember this is a lower bound on \(\eps\), so for example Lemma~\ref{lemma:concanalysis}\ref{concvertexdegree} is not saying with high probability a vertex degree is exactly \(\text{deg}(v)(1-p^{*})^k\) in the case of exactly vertex-regular hypergraphs. You're still forced to pick some value of \(\eps\geq\sqrt{D_2/D}\geq n^{-1/2}\) in this case.} \label{hypothesis:degratio}
\item $\theta^{2}\frac{D_{j}}{D_{j+1}}\geq\log^{5}D$ for all $j\in J^{*}$; \label{hypothesis:codegratio}
\item $\eps\leq\theta$. \label{hypothesis:epsleqtheta}
\end{enumerate}
Then there is a matching~$M$ of~$H$ and a set $W\subseteq V(H)$ of size $|W|\leq 10\eps\theta n$, together with numbers~$n'$ and~$D'$ satisfying $n(1-\theta-2\theta^{3/2})\leq n'\leq n(1-\theta+2\theta^{3/2})$ and $D(1-k\theta-\theta^{3/2})\leq D' \leq D(1-k\theta +\theta^{3/2})$, such that the hypergraph~$H'$ induced by~$V(H)\setminus (V(M)\cup W)$ is $(n',D',\eps')$-regular, where $\eps'\coloneqq\eps(1+\theta)$.
Further, $C_{j}(H')\leq D_{j}(1-(k-j+1)\theta+\theta^{3/2})$ for all $j\in J^{*}$.

If, in addition,~\(\cT\) is a family of functions \(\tau\colon V(H)\rightarrow \mathbb{R}_{\geq0}\) such that
\begin{enumerate}[(NP1), topsep = 6pt]
\item \(\eps\theta\tau(V(H))\geq\tau_{\text{max}}\cdot\log^5 D\) for all \(\tau\in\cT\), where \(\tau_{\text{max}}\coloneqq\max_{v\in V(H)}\tau(v)\);\label{pseudhyp:lowerbound}
\item \(\text{Supp}(\tau)\coloneqq\{v\in V(H)\colon\tau(v)>0\}\) satisfies \(|\text{Supp}(\tau)|\leq D^{\log^{5/4}D}\) for all \(\tau\in\cT\);\label{pseudhyp:supportsize}
\item Each vertex of~\(H\) is in~\(\text{Supp}(\tau)\) for at most~\(D^{\log^{5/4}D}\) of the functions \(\tau\in\cT\),\label{pseudhyp:maxinvolvement} 
\end{enumerate}
then there exist~\(M, W\) as above additionally satisfying \(\tau(V(H))(1-\theta-\theta^{3/2})\leq \tau(V(H)\setminus (V(M)\cup W))\leq\tau(V(H))(1-\theta+\theta^{3/2})\) and \(\tau(W)\leq 10\eps\theta\tau(V(H))\) for all \(\tau\in\cT\).
\end{lemma}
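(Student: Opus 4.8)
The plan is to run a single non-wasteful nibble with a carefully designed counteracting ``waste'' set, analyse the key random variables via the concentration inequalities stated earlier (primarily Theorem~\ref{theorem:lintal}, with Lemmas~\ref{chernoff} and~\ref{lemma:martingale} for the simpler quantities), and then collect all failure probabilities with the Lov\'asz Local Lemma (Lemma~\ref{lemma:LLL}). Concretely, I would first set up the probability space: include each edge of~$H$ independently in a random set~$\mathbf{E}$ with probability~$\theta/D$, let~$\mathbf{M}$ be the sub-matching of~$\mathbf{E}$ consisting of edges that meet no other edge of~$\mathbf{E}$, and then, to compensate for the fact that different vertices have slightly different degrees, independently put each vertex~$v$ into a ``waste'' set~$\mathbf{W}$ with a probability~$q_v$ on the order of~$\eps\theta$, chosen so that for every~$v$ the probability of \emph{surviving} (i.e.\ being neither covered by~$\mathbf{M}$ nor placed in~$\mathbf{W}$) is exactly some common value~$p^{*}=1-\theta+O(\theta^2)$, say. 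This is the standard equalisation trick; one checks the required~$q_v$ lie in~$[0,1]$ using hypothesis~\ref{hypothesis:epsleqtheta} and the degree bound~$\deg(v)=(1\pm\eps)D$. The target leftover hypergraph is~$\mathbf{H}'\coloneqq H[V(H)\setminus(V(\mathbf{M})\cup\mathbf{W})]$, and $n'=|V(\mathbf{H}')|$, $D'$ the new common (approximate) degree.

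Next I would compute expectations and concentrate, in order: (i)~$|\mathbf{W}|$ has expectation~$\sum_v q_v = \Theta(\eps\theta n)$ and is a sum of independent indicators, so Chernoff (Lemma~\ref{chernoff}\ref{lemma:chernoffbigexp}) gives $|\mathbf{W}|\le 10\eps\theta n$ whp, using hypothesis~\ref{hypothesis:degratio} to get enough room for concentration; similarly $\tau(\mathbf{W})\le 10\eps\theta\tau(V(H))$ from~\ref{pseudhyp:lowerbound}. (ii)~$n'$: each vertex survives independently with probability~$p^{*}$, so $\mathbf{n}'$ concentrates around~$p^{*}n=(1\pm\theta^{3/2})(1-\theta)n$, again by Chernoff or Lemma~\ref{lemma:martingale}. (iii)~the surviving degree $\mathbf{deg}'(v)$ of a fixed vertex~$v$ in~$\mathbf{H}'$: here I would split $\mathbf{deg}'(v)$ as a combination of random variables amenable to the Linear Talagrand inequality with exceptional outcomes, as the sketch indicates. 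The expectation is $\deg(v)(p^{*})^k \cdot(1+O(\theta^2))=(1\pm\theta^{3/2})(1-k\theta)D$; the exceptional set~$\Omega^{*}$ consists of outcomes where some vertex at distance~$\le 2$ from~$v$ lies in unusually many edges of~$\mathbf{E}$, which by Chernoff (Lemma~\ref{chernoff}\ref{lemma:chernoffsmallexp}) and a union bound over the~$O(D\cdot D_2)$ relevant vertices has probability~$\le D^{-\omega(1)}$, small enough to be absorbed into the Talagrand error term. (iv)~For $j\in J^{*}$, the surviving codegree $\mathbf{codeg}'(Z)$ of a fixed $j$-set~$Z$ with $\mathrm{codeg}_H(Z)>0$: this is a sum over the (at most~$D_j$) edges through~$Z$, each surviving with probability~$(p^{*})^{k+1-j}$, so $\mathbb{E}[\mathbf{codeg}'(Z)]=\mathrm{codeg}_H(Z)(p^{*})^{k+1-j}(1+O(\theta^2))$, and hypothesis~\ref{hypothesis:codegratio} (bounding $D_j/D_{j+1}$) provides exactly the space needed to concentrate this within relative error~$\theta^{3/2}$, giving $C_j(\mathbf{H}')\le D_j(1-(k-j+1)\theta+\theta^{3/2})$. (v)~For $\tau\in\cT$, the quantity $\tau(V(\mathbf{H}'))=\sum_{v\text{ survives}}\tau(v)$: the relevant certificate size / Lipschitz bounds are controlled by the edges of~$H$ and the indicators for~$\mathbf{W}$; using~\ref{pseudhyp:lowerbound}--\ref{pseudhyp:maxinvolvement} to bound support sizes and vertex involvement, Theorem~\ref{theorem:lintal} gives $\tau(V(\mathbf{H}'))=(1\pm\theta^{3/2})(1-\theta)\tau(V(H))$ whp (after noting $1-\theta+O(\theta^2)$ can be re-written with the stated error since $\theta$ is small).

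Finally I would assemble everything with the Lov\'asz Local Lemma (Lemma~\ref{lemma:LLL}). The bad events are: $|\mathbf{W}|$ too big; $\mathbf{n}'$ outside its window; for each vertex~$v$, $\mathbf{deg}'(v)$ outside its window; for each~$j\in J^{*}$ and each $j$-set~$Z$, $\mathbf{codeg}'(Z)$ too big; and for each $\tau\in\cT$, $\tau(V(\mathbf{H}'))$ or $\tau(\mathbf{W})$ outside its window. Each such event depends only on the edge-indicators of edges within bounded distance of the relevant vertex/set, and the~$\mathbf{W}$-indicators of nearby vertices, so each is mutually independent of all but at most roughly $D^{O(1)}\cdot D^{\log^{5/4}D}$ others (the extra factor from~\ref{pseudhyp:supportsize}--\ref{pseudhyp:maxinvolvement}); since each bad event has probability at most $D^{-\log^{3}D}$ or so (much smaller than any polynomial or even quasipolynomial in~$D$), the LLL condition $\prob{A_i}\le x_i\prod(1-x_j)$ holds with uniform $x_i\coloneqq D^{-\log^{2}D}$. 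This yields an outcome avoiding all bad events, which gives the desired~$M$ and~$W$; relabelling $p^{*}$-error terms into the stated $(1\pm\theta^{3/2})$ and $(1\pm\theta^{3/2})$-style bounds finishes the proof. The main obstacle, as the excerpt itself flags, is step~(iii)/(iv): showing that the surviving degree (and codegrees) are $(r,d)$-observable with respect to a suitably tiny exceptional set~$\Omega^{*}$ --- i.e.\ exhibiting the right decomposition of $\mathbf{deg}'(v)$ into Talagrand-amenable pieces and verifying that the ``many random choices with large effect'' outcomes are genuinely negligible --- since this is where the improvement over Vu's polynomial-method truncation (and hence the ability to tolerate $\eps\ll\theta$) really lives; the pseudorandomness bookkeeping in~(v) is then a comparatively routine variation on the same Talagrand application.
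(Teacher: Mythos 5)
Your plan reproduces the paper's architecture accurately — the non-wasteful nibble with an equalizing waste set, the decomposition of the surviving degree into Talagrand-amenable counters with exceptional outcomes, and the final assembly via the Local Lemma — and you correctly identify the decomposition/exceptional-set step as the genuine obstacle. However, a few of your claimed intermediate estimates would actually break the argument as written, and one substantial piece of the paper's proof is missing.

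First, the claimed error $\expn{\textbf{deg}\mathbf{'}(v)}=\deg(v)(1-p^{*})^{k}(1+O(\theta^{2}))$ is too coarse. Since $H'$ must be $(n',D',\eps')$-regular with $\eps'=\eps(1+\theta)$, the degree of a surviving vertex must be pinned down to within $\Theta(\eps\theta D)$; an $O(\theta^{2}D)$ error term forces $\eps\gtrsim\theta$, which is precisely Vu's restriction that this lemma is designed to avoid. To get the required error $O(\eps^{2}\theta^{2}D)$ on the expectation, the paper does a non-trivial inclusion--exclusion over ``$A$-covers'' (Lemma~\ref{lemma:nomatchedvertices} and its three claims), which shows the events $\{u\in V(\mathbf{M})\}$ for the $k$ vertices of an edge are approximately independent up to error $O(\eps^{2}\theta^{2})$, using \ref{hypothesis:degratio} heavily. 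Your sketch has no analogue of this step, and the $O(\theta^{2})$ you write is not consistent with the conclusion you want.

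Second, the sentence ``each vertex survives independently with probability $p^{*}$'' is false: the equalization makes the survival \emph{probabilities} equal, but the survival events are strongly correlated through shared edges. So $\mathbf{n}'$ is not a sum of independent indicators and plain Chernoff is unavailable; the paper uses the martingale inequality (Lemma~\ref{lemma:martingale}) for exactly this reason.

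Third, your Local Lemma weights are mis-calibrated in both directions. The per-event failure probabilities coming out of Theorem~\ref{theorem:lintal} under \ref{hypothesis:degratio}--\ref{hypothesis:codegratio} are only $D^{-\Theta(\log^{5/4}D)}$, not the $D^{-\log^{3}D}$ you assert; taking $x_i\coloneqq D^{-\log^{2}D}$ is therefore \emph{smaller} than the failure probability, so the LLL condition fails for every local event. Moreover, the global events (sizes of $\mathbf{W}$ and $\mathbf{n}'$) touch every trial, so they are adjacent in the dependency digraph to every other event; they cannot be handled with the same tiny weight as the local events. The paper gives them $x_i=1/2$ and gives the local events $x_S\approx D^{-3\log^{5/4}D}$, and crucially groups vertices/$j$-sets into $\leq O(nD)$ sets $\mathbb{S}$ so that the product $\prod_S(1-x_S)$ has exponent $\propto n$, which then cancels the $n$ in $\exp(-\theta^{2}n/\cdot)$ without any extra assumption relating $n$ and $D$. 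A uniform $x_i$ and an unstructured union over $j$-sets does not obviously survive this calculation.

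Finally, a small point: your exceptional set looks at vertices at distance $\leq 2$ from $v$, but the edges that witness the ``resurrection'' of a vertex of $e\setminus\{v\}$ can have vertices at distance $3$ from $v$, so the paper's $\Omega^{*}$ uses $N^{(3)}(v)$; with distance $2$ you cannot bound the Lipschitz constant for those witnesses.

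None of this undermines the strategy, which is the right one, but the error tracking and LLL bookkeeping need to be done essentially as in the paper; in particular, the inclusion--exclusion computation of $\expn{\textbf{deg}\mathbf{'}(v)}$ with error $O(\eps^{2}\theta^{2}D)$ is a real missing ingredient, not just a detail.
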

It suffices to prove that Lemma~\ref{lemma:nibble} holds with the inclusion of some arbitrary~\(\cT\) satisfying~\ref{pseudhyp:lowerbound}--\ref{pseudhyp:maxinvolvement}, as one then obtains the lemma without~\(\cT\) by setting \(\cT=\emptyset\).
Suppose that~$H$ satisfies the hypotheses of Lemma~\ref{lemma:nibble} (including~\ref{pseudhyp:lowerbound}--\ref{pseudhyp:maxinvolvement}).
Firstly, we remark that we may suppose that $1/n,\eps\ll 1/k$.
Indeed,~\ref{hypothesis:epsleqtheta} and the assumption $\theta\ll1/k$ show that~$\eps$ can be as small as we desire with respect to~$k$.
Moreover, for any $v\in V(H)$, we have 
\begin{equation}\label{eq:ngeqdd2}
D/2\leq D(1-\eps)\leq \text{deg}(v)\leq\sum_{w\neq v}\text{codeg}(\{v,w\})\leq nD_{2},
\end{equation}
whence (using~\ref{hypothesis:degratio}) we have $n\geq D/(2D_{2})\geq\eps^{2}\theta D/(2D_{2})\geq(\log^{5}D)/2$, so that $1/n\ll1/k$ as claimed, using the assumption $1/D\ll1/k$.
We also remark that we may assume \(D_j\geq 1\) for all \(j\in[k+1]\setminus\{1\}\), since the hypergraph must have at least some edge\COMMENT{\(1/D\ll1\) and \(\eps<1/2\)}, so any \(j\)-subset of this edge witnesses that \(D_j\geq C_j(H)\geq 1\).

We first prove Lemma~\ref{lemma:nibble} for hypergraphs satisfying \(C_{k+1}(H)=1\) (see Sections~\ref{section:description}--\ref{section:concanalysis}).
We then discuss the minor changes needed to prove Lemma~\ref{lemma:nibble} in the case of multihypergraphs (having \(C_{k+1}(H)>1\)) in Section~\ref{section:multi}. We now discuss the random procedure (the `random nibble') that we will apply to~$H$.
The aim of this section will then be to show that the random nibble applied to~$H$ produces a matching $M\subseteq E(H)$ and a `waste set' $W\subseteq V(H)$ satisfying the conclusions of the Nibble Lemma, with positive probability.
\subsection{Description of the Nibble}\label{section:description}
Set $p\coloneqq \theta/\delta(H)$.
Construct a random set of edges $\mathbf{X}\subseteq E(H)$ by selecting each edge to be in~$\mathbf{X}$ independently with probability~$p$.
We say that an edge~$e\in E(H)$ is \emph{isolated} if $e\in\mathbf{X}$ and~$e$ does not intersect any other $e'\in\mathbf{X}\setminus\{e\}$.
We define~$\mathbf{M}$ to be the set of isolated edges.
It is clear that~$\mathbf{M}$ is a matching.

For each $v\in V(H)$, notice that\COMMENT{Since~$\mathbf{M}$ is a matching, the events $\{e\in\mathbf{M}\}_{e\ni v}$ are pairwise disjoint.}
\[
p(v)\coloneqq\prob{v\in V(\mathbf{M})} = \sum_{e\ni v}\prob{e\in\mathbf{M}}=\sum_{e\ni v}p(1-p)^{f(e)},
\]
where~$f(e)$ is the number of non-$e$ edges intersecting~$e$.
Set~$p^{*}$ to be the maximum~$p(v)$ over all $v\in V(H)$, and for each $v\in V(H)$ define~$w(v)\coloneqq(p^{*}-p(v))/(1-p(v))$. It is clear that $0\leq w(v)<1$ for each $v\in V(H)$.\COMMENT{Observe that $p(v)\leq \Delta(H)p\leq(1+\eps)\theta/(1-\eps)<1$ so that~$w(v)$ is defined \textcolor{blue}{(Here used that, say $\eps\leq 1/3$, $\theta <1/2$)}, and we have $1-p(v)>0$ since $p(v)<1$, and we have $p^{*}-p(v)\geq 0$ by definition of~$p^{*}$, so $w(v)\geq 0$ for all $v\in V(H)$. Further, $p^{*}<1$ (since $p(v)<1$ for all $v$), whence $p^{*}-p(v)<1-p(v)$, so $w(v)<1$.}
Thus we may define a random set $\mathbf{W}\subseteq V(H)$ by selecting each $v\in V(H)$ to be in~$\mathbf{W}$ independently with probability~$w(v)$.
Define~$\mathbf{H'}$ to be the hypergraph induced by $V(H)\setminus(V(\mathbf{M})\cup\mathbf{W})$, define $\mathbf{n'}\coloneqq |V(\mathbf{H'})|$, and notice that for each $v\in V(H)$ we have
\begin{equation}\label{eq:pstar}
\prob{v\in V(\mathbf{H'})}=(1-p(v))(1-w(v))=1-p^{*}.
\end{equation}

We say a vertex \(v\in V(H)\) \textit{survives the nibble} if \(v\in V(\mathbf{H'})\), or equivalently \(v\notin V(\mathbf{M})\cup\mathbf{W}\).
Ensuring that each vertex is equally likely to survive the nibble is the main purpose of throwing some vertices into the `waste set'~$\mathbf{W}$.
We write~$(\Omega_{v},\Sigma_{v},\mathbb{P}_{v})$ and $(\Omega_{e},\Sigma_{e},\mathbb{P}_{e})$ for the probability space corresponding to the random decision to put \(v\in\mathbf{W}\) and \(e\in\mathbf{X}\), respectively; that is, with outcomes~\(\omega^{1}_{v}, \omega^{0}_{v}\) and \(\omega^{1}_{e}, \omega^{0}_{e}\) satisfying \(\mathbb{P}_{v}\left[\omega^{1}_{v}\right]=w(v)\), \(\mathbb{P}_{v}\left[\omega^{0}_{v}\right]=1-w(v)\) and \(\mathbb{P}_{e}\left[\omega^{1}_{e}\right]=p\), \(\mathbb{P}_{e}\left[\omega^{0}_{e}\right]=1-p\).

For any vertex $v\in V(H)$, we define \(\textbf{deg}\mathbf{'}(v)\coloneqq \sum_{e\in\partial_{H}(v)}\mathbf{I}_{e\setminus\{v\}}\), where~\(\mathbf{I}_{e\setminus\{v\}}\) is the indicator random variable for the event that all vertices \(u\in e\setminus\{v\}\) survive the nibble.
Note that if~$v$ survives the nibble, then~$\textbf{deg}\mathbf{'}(v)$ is simply the degree of~$v$ in~$\mathbf{H'}$.
Similarly, for $j\in J^{*}$ and $Z\in\binom{V(H)}{j}$, define $\textbf{codeg}\mathbf{'}(Z)\coloneqq\sum_{e\in\partial_{H}(Z)}\mathbf{I}_{e\setminus Z}$, with the obvious definition of~\(\mathbf{I}_{e\setminus Z}\).\COMMENT{There are no such \(j\) and therefore no such \(Z\) when \(k=1\) (graphs), since the statement of the lemma forced \(J^{*}\) to be empty.}
We identify the `random nibble' with the pair~$(\mathbf{X},\mathbf{W})$ (using all parameters as defined above).
It is clear that the choices of~$\mathbf{X}$ and~$\mathbf{W}$ are determined by the outcomes~$\{\omega_{v}\}_{v\in V(H)}$ and~$\{\omega_{e}\}_{e\in E(H)}$, which in turn determines all of $\mathbf{M}$, $\mathbf{H'}$, $\mathbf{n'}$, $\{\textbf{deg}\mathbf{'}(v)\}_{v\in V(H)}$, $\{\textbf{codeg}\mathbf{'}(Z)\}_{Z\subseteq V(H), |Z|=j}$ for all $j\in J^{*}$, and~\(\{\tau(V(\mathbf{H'})), \tau(\mathbf{W})\}_{\tau\in\cT}\).

It is useful at this stage to collect some inequalities about our parameters that  we will use throughout the proof.
Clearly we have\COMMENT{$(1+\eps)/(1-\eps)=1 +2\eps/(1-\eps) \leq 1+3\eps$ \textcolor{blue}{using say $\eps\leq 1/3$.}} \(\Delta(H)/\delta(H)\leq 1+3\eps.\)
With~$f(e)$ defined as above, we have for any $e\in E(H)$ that $f(e)\leq (k+1)\Delta(H)$, and $f(e)\geq (k+1)(\delta(H)-1)-\binom{k+1}{2}D_{2}\geq (k+1)D(1-3\eps/2)$\COMMENT{$(k+1)(\delta(H)-1)-\binom{k+1}{2}D_{2}=(k+1)(\delta(H)-1-kD_{2}/2)\geq (k+1)((1-\eps)D-1-kD_{2}/2)=(k+1)D(1-\eps-1/D-kD_{2}/2D)$. It now suffices to notice that $kD_{2}/2D\leq \eps/8$ and $1/D\leq\eps/4$; Indeed, $kD_{2}/2D\leq k\eps^{2}\theta/2\log^{5}D\leq k\eps^{2}\theta/2=\eps(k\eps\theta/2)\leq\eps/8$ \textcolor{blue}{using say $\theta\leq1/2$ and $\eps\leq 1/2k$ and $\log^{5}D\geq 1$} and hypothesis~\ref{hypothesis:degratio}. Then $1/D = (2/kD_{2})\cdot(kD_{2}/2D)\leq 2\cdot kD_{2}/2D\leq\eps/4$, using $k,D_{2}\geq 1$.}, where we have used hypothesis~\ref{hypothesis:degratio}.
Thus\COMMENT{\((1+3\theta)\theta\exp(-(1-3\eps/2)(k+1)\theta)\leq (1+3\theta)\theta(1-(1-3\eps/2)(k+1)\theta+(1-3\eps/2)^2 (k+1)^2\theta^2)\leq (1+3\theta)\theta(1+(k+1)^2\theta^2)=\theta+(k+1)^2\theta^3 + 3\theta^2 +3(k+1)^2\theta^4\leq \theta+\theta^{7/4}\). Interestingly we can do better here if you don't use \(\eps\leq\theta\) too early and don't just discard the negative \(O(\theta)\) term, but even then the expression ends up approx \(\theta+3\eps\theta - (k+1)\theta^2\) which could be \(<\theta\) if \(\eps\ll\theta\), but could, if \(k=1\) and \(\eps=\theta\), still be \(\theta +\theta^2\leq\theta+\theta^{7/4}\), so the given bound isn't necessarily always that wasteful},
\begin{eqnarray}\label{eq:pstarabove}
p^{*} & \leq & \Delta(H)p(1-p)^{(1-3\eps/2)(k+1)D}\leq (1+3\eps)\theta\left(1-p\right)^{(1-3\eps/2)(k+1)D}\nonumber\\ & \stackrel{\ref{hypothesis:epsleqtheta}}{\leq} & (1+3\theta)\theta\exp(-(1-3\eps/2)(k+1)pD) \leq (1+3\theta)\theta(1+(k+1)^2 \theta^2)\leq\theta+\theta^{7/4},
\end{eqnarray}
where we have used \(pD\geq\theta\).
Similarly\COMMENT{$\theta(1-p)^{(1+\eps)(k+1)D}=\theta\left(1-\frac{\theta(1+\eps)(k+1)D}{\delta(H)(1+\eps)(k+1)D}\right)^{(1+\eps)(k+1)D}\geq\theta(1-\theta(1+\eps)(k+1)D/\delta(H))\geq\theta(1-\theta(1+\eps)(k+1)/(1-\eps))=\theta-\theta^{2}(1+\eps)(k+1)/(1-\eps)$. Here we used $(1+x/n)^{n}\geq (1+x)$ for $n\geq 1, |x|\leq n$ with $n\coloneqq (1+\eps)(k+1)D$, $x\coloneqq -\theta (1+\eps)(k+1)D/\delta(H)$ and note $\theta/\delta(H)\leq\theta/(1-\eps)D \leq 1/10$ say, \textcolor{blue}{using $\theta\leq 1/2$, $\eps\leq 1/2$, $D\geq 10$}. Then note that $\theta^{2}(1+\eps)(k+1)/(1-\eps)\leq 3(k+1)\theta^{2}$ \textcolor{blue}{(using $\eps\leq 1/2$)} which is at most $\theta^{7/4}$ \textcolor{blue}{provided $\theta\leq 1/3^{4}(k+1)^{4}$}.},
\begin{equation}\label{eq:pstarbelow}
p^{*}\geq\min_{u\in V(H)}p(u)\geq \theta\left(1-p\right)^{(1+\eps)(k+1)D}\geq\theta(1-(1+\eps)(k+1)pD)\geq \theta-\theta^{7/4}.
\end{equation}
Finally, we claim
\begin{equation}\label{eq:wvabove}
w(v)\leq 8\eps\theta \hspace{5mm}\text{for}\,\text{all}\,v\in V(H).
\end{equation}
Indeed, for any $v\in V(H)$ we have\COMMENT{\textcolor{blue}{Using $p(v)\leq p^{*}\leq \theta+\theta^{7/4}\leq 1/2$ in the first inequality.}} $w(v)\coloneqq (p^{*}-p(v))/(1-p(v))\leq 2(p^{*}-\min_{u\in V(H)}p(u))\leq 2\theta(1-p)^{(1-3\eps/2)(k+1)D}(1+3\eps-(1-p)^{5\eps(k+1)D/2})$ by~(\ref{eq:pstarabove}) and~(\ref{eq:pstarbelow}).
Since\COMMENT{$(1-\theta/\delta(H))^{5\eps(k+1)D/2}\geq (1-3\theta/2\delta(H) +9\theta^{2}/4\delta(H)^{2})^{5\eps(k+1)D/2}\geq\exp(-(3\theta/2\delta(H))\cdot(5\eps(k+1)D/2))\geq 1- 5\eps\theta(k+1)D/\delta(H)\geq 1-5\eps\theta(k+1)/(1-\eps)\geq 1-10\eps\theta(k+1)\geq 1-\eps$. First inequality required $\theta/2\delta(H)\geq 9\theta^{2}/4\delta(H)^{2}$ which follows by \textcolor{blue}{$\theta<1/2$ and $\delta(H)\geq (1-\eps)D\geq (1/2)\cdot 10=5$}. Also used $e^{x}\leq 1+x +x^{2}$ for $x\coloneqq -3\theta/2\delta(H)<0<1.79$. Final inequality used \textcolor{blue}{$\theta\leq 1/10(k+1)$.}} $(1-p)^{5\eps (k+1)D/2}\geq 1-10(k+1)\eps\theta\geq 1-\eps$ and $(1-p)^{(1-3\eps/2)(k+1)D}\leq 1$, it follows that $w(v)\leq 2\theta(1+3\eps -(1-\eps))=8\eps\theta$, as claimed.
\subsection{Proof modulo concentration}\label{section:modulo}
We now state a lemma which encapsulates the concentration analysis for the random nibble, and shows that each of the associated ``bad events'' occur with very low probability individually.
We defer the proof of this lemma to Section~\ref{section:concanalysis}.
We first define each of the ``bad events''.
Let~\(A_1\) be the complement of the event \(\{n(1-\theta-2\theta^{3/2})\leq\mathbf{n'}\leq n(1-\theta+2\theta^{3/2})\}\), and let~\(A_2\) be the event~\(\{|\mathbf{W}|>10\eps\theta n\}\).
For each \(v\in V(H)\), let~\(A_v\) be the complement of the event \(\{\text{deg}(v)((1-p^{*})^k -\eps\theta/2)\leq\textbf{deg}\mathbf{'}(v)\leq \text{deg}(v)((1-p^{*})^k +\eps\theta/2)\}\), and for each \(j\in J^{*}\), \(Z\in\binom{V(H)}{j}\), let~\(A_Z\) denote the event \(\{\textbf{codeg}\mathbf{'}(Z)>D_j(1-(k-j+1)\theta + \theta^{3/2})\}\).
Finally, for each \(\tau\in\cT\), let~\(A^{(1)}_{\tau}\) denote the complement of the event \(\{\tau(V(H))(1-\theta-\theta^{3/2})\leq \tau(V(\mathbf{H'}))\leq \tau(V(H))(1-\theta+\theta^{3/2})\}\), and let~\(A^{(2)}_{\tau}\) denote the event \(\{\tau(\mathbf{W})>10\eps\theta\tau(V(H))\}\).
\begin{lemma}\label{lemma:concanalysis}
Let~\(H\) satisfy the hypotheses of Lemma~\ref{lemma:nibble} and apply a random nibble~\((\mathbf{X},\mathbf{W})\) to~\(H\).
Then:
\begin{enumerate}[label=\upshape(\roman*)]
\item \(\prob{A_1}\leq 2\exp(-\theta^{2}n/8(k+1)^3)\);\label{concleftover}
\item \(\prob{A_2}\leq 2\exp(-\eps\theta n/6)\);\label{concwaste}
\item \(\prob{A_v} \leq D^{-4\log^{5/4}D}\) for all \(v\in V(H)\);\label{concvertexdegree}
\item \(\prob{A_Z}\leq D^{-4\log^{5/4}D}\) for all \(j\in J^{*}\), \(Z\in\binom{V(H)}{j}\);\label{conccodegree}
\item \(\prob{A^{(1)}_{\tau}}\leq D^{-4\log^{5/4}D}\) for all \(\tau\in\cT\);\label{conctestpseudo}
\item \(\prob{A^{(2)}_{\tau}}\leq D^{-4\log^{5/4}D}\) for all \(\tau\in\cT\).\label{conctestwaste}
\end{enumerate}
\end{lemma}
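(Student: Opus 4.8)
The plan is to handle the six items in three groups: the two ``global'' quantities $\mathbf{n'}$ and $|\mathbf{W}|$ (items~\ref{concleftover},~\ref{concwaste}), the three matching-sensitive quantities $\textbf{deg}\mathbf{'}(v)$, $\textbf{codeg}\mathbf{'}(Z)$, $\tau(V(\mathbf{H'}))$ (items~\ref{concvertexdegree}--\ref{conctestpseudo}), and the purely $\mathbf{W}$-driven quantity $\tau(\mathbf{W})$ (item~\ref{conctestwaste}). Item~\ref{concwaste} is immediate: $|\mathbf{W}|=\sum_{v}\mathbf{1}[v\in\mathbf{W}]$ is a sum of independent indicators with $\prob{v\in\mathbf{W}}=w(v)\leq 8\eps\theta$ by~(\ref{eq:wvabove}), so $|\mathbf{W}|$ is stochastically dominated by $\mathrm{Bin}(n,8\eps\theta)$, and Lemma~\ref{chernoff}\ref{lemma:chernoffbigexp} applied to the dominating variable with ratio $5/4$ gives the bound $2\exp(-\eps\theta n/6)$. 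For item~\ref{concleftover} I would apply the bounded-differences inequality Lemma~\ref{lemma:martingale} to $\mathbf{n'}=n-|V(\mathbf{M})|-|\mathbf{W}\setminus V(\mathbf{M})|$, with independent trials the edge-choices $\{\omega_e\}$ (probability $p\leq 2\theta/D$) and vertex-choices $\{\omega_v\}$ (probability $w(v)\leq 8\eps\theta$). A vertex-trial affects $\mathbf{n'}$ by at most $1$; the point is that an edge-trial affects $\mathbf{n'}$ by only $(k+1)(k+2)$, since toggling $\omega_e$ adds or removes $e$ from $\mathbf{M}$ and can break the isolation of at most $k+1$ further $\mathbf{M}$-edges (as $\mathbf{M}$ is a matching, at most $k+1$ of its edges meet $e$). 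Hence $\sum p_i(1-p_i)c_i^2$ is at most a constant times $(k+1)^3\theta n$, and since $\mathbb{E}[\mathbf{n'}]=(1-p^*)n\in[n(1-\theta-\theta^{7/4}),n(1-\theta+\theta^{7/4})]$ by~(\ref{eq:pstarabove})--(\ref{eq:pstarbelow}), taking the deviation to be $\theta^{3/2}n$ and checking that the range condition $\alpha<2\sigma/C$ of Lemma~\ref{lemma:martingale} reduces to $\theta^{1/2}\ll k+1$ yields the claimed bound.

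For items~\ref{concvertexdegree}--\ref{conctestpseudo} the strategy is uniform: first pin down the expectation, then prove concentration via Theorem~\ref{theorem:lintal}. To compute $\expn{\textbf{deg}\mathbf{'}(v)}=\sum_{e\ni v}\expn{\mathbf{I}_{e\setminus\{v\}}}$, use the independence of $\mathbf{W}$ and $\mathbf{X}$ to write $\expn{\mathbf{I}_{e\setminus\{v\}}}=\big(\prod_{u\in e\setminus\{v\}}(1-w(u))\big)\cdot\prob{(e\setminus\{v\})\cap V(\mathbf{M})=\emptyset}$, and expand the last probability by inclusion–exclusion over which vertices of $e\setminus\{v\}$ are covered. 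The key structural observation is that ``being covered by $\mathbf{M}$'' for two vertices of a common edge is governed by sets of edge-trials overlapping in only $\mathrm{codeg}$-many ($\leq D_2$) edges, so each cross term differs from its ``independent'' value only by a relative error controlled by $D_2/D$; combining this with the identity $\prod_u(1-p(u))(1-w(u))=(1-p^*)^k$ gives $\expn{\textbf{deg}\mathbf{'}(v)}=\deg(v)\big((1-p^*)^k \pm (\text{poly in }k)\cdot D_2/D\big)$, and~\ref{hypothesis:degratio} makes the error much smaller than $\deg(v)\eps\theta/4$. The analogues for $\textbf{codeg}\mathbf{'}(Z)$ (one codegree level higher, using~\ref{hypothesis:codegratio} and $(1-p^*)^{k-j+1}\leq 1-(k-j+1)\theta+\theta^{3/2}/2$) and for $\tau(V(\mathbf{H'}))=\sum_v\tau(v)\mathbf{1}[v\text{ survives}]$ (whose expectation is exactly $(1-p^*)\tau(V(H))$, lying in $[(1-\theta-\theta^{7/4})\tau(V(H)),(1-\theta+\theta^{7/4})\tau(V(H))]$) land the required target windows with room to spare.

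For the concentration step I would split each of these quantities into a part depending only on $\mathbf{W}$ and a part depending on $\mathbf{M}$ (equivalently on $\mathbf{X}$). The $\mathbf{W}$-part — e.g.\ $\sum_{e\ni v}\mathbf{1}[(e\setminus\{v\})\cap\mathbf{W}=\emptyset]$ — is a function of the independent vertex-trials and is $(k,D_2)$-observable with respect to the empty exceptional set (for each counting edge take its $\leq k$ witnesses ``$u\notin\mathbf{W}$'', each such vertex witnessing at most $\mathrm{codeg}(\{u,v\})\leq D_2$ edges), so Theorem~\ref{theorem:lintal} with $r=k$, $d=D_2$ and~\ref{hypothesis:degratio} concentrates it within $\deg(v)\eps\theta$ of its mean with failure probability at most $4\exp(-c\log^5 D/k)\leq D^{-4\log^{5/4}D}$; the codegree and weight-function versions are identical with $D_{j+1}$ (resp.\ $\tau_{\text{max}}$) in place of $D_2$, using~\ref{hypothesis:codegratio} (resp.~\ref{pseudhyp:lowerbound}). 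The $\mathbf{M}$-part is the difficulty: ``no vertex of $e\setminus\{v\}$ lies in an isolated edge'' is not a cheaply certifiable event, so I would introduce a super-polynomially rare exceptional set $\Omega^{*}$ of outcomes in which the local structure of $\mathbf{X}$ near $v$ is atypically dense (for instance some vertex at distance $\leq 1$ from $v$ lies in more than $\log^{3}D$ edges of $\mathbf{X}$, or more than $\log^{3}D$ isolated edges meet $\bigcup_{e\ni v}e$), which by Lemma~\ref{chernoff}\ref{lemma:chernoffsmallexp} and a union bound over the polynomially many relevant local events satisfies $\prob{\Omega^{*}}\leq\exp(-\tfrac12\log^{3}D)\leq D^{-4\log^{5/4}D}$. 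Off $\Omega^{*}$ the matching-part is $(r,d)$-observable with $r,d$ small enough (a polynomial in $k$ times $D_2$, resp.\ $D_{j+1}$ or $\tau_{\text{max}}$) that Theorem~\ref{theorem:lintal} applies, the term $8\prob{\Omega^{*}}(\sup\mathbf{X})$ being negligible against the allotted deviation; assembling the two parts gives items~\ref{concvertexdegree}--\ref{conctestpseudo}. Item~\ref{conctestwaste} is then like item~\ref{concwaste}: $\tau(\mathbf{W})=\sum_v\tau(v)\mathbf{1}[v\in\mathbf{W}]$ is a non-negative weighted sum of independent vertex-indicators, hence $(1,\tau_{\text{max}})$-observable with respect to $\emptyset$, and since $\expn{\tau(\mathbf{W})}=\sum_v\tau(v)w(v)\leq 8\eps\theta\tau(V(H))$, Theorem~\ref{theorem:lintal} with $r=1$, $d=\tau_{\text{max}}$, deviation $2\eps\theta\tau(V(H))$, and the bound $\tau_{\text{max}}\leq\eps\theta\tau(V(H))/\log^5 D$ from~\ref{pseudhyp:lowerbound}, yields $\prob{\tau(\mathbf{W})>10\eps\theta\tau(V(H))}\leq D^{-4\log^{5/4}D}$.

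The main obstacle is exactly the concentration of the matching-sensitive contribution in items~\ref{concvertexdegree}--\ref{conctestpseudo}: one must verify that, off the engineered exceptional set $\Omega^{*}$, the relevant random variable admits an $(r,d)$-certificate with $rd$ small enough to beat the $D^{-4\log^{5/4}D}$ target in Theorem~\ref{theorem:lintal}. This is the ``exceptional outcomes'' device of Delcourt and Postle, and the delicate bookkeeping — bounding how many counting edges a single edge-trial can witness for, using that $\mathbf{M}$ is a matching and that $\mathbf{X}$ is locally sparse outside $\Omega^{*}$, and doing the same one codegree level up and for the weight functions while only spending a polynomial-in-$k$ amount in $r$ and $d$ — is the technical heart of the section. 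The expectation computations, by contrast, are routine once one notes that the inclusion–exclusion corrections are driven by codegrees rather than by $\theta$.
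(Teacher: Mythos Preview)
Your handling of items~\ref{concleftover}, \ref{concwaste}, and~\ref{conctestwaste}, and your expectation analysis for~\ref{concvertexdegree}--\ref{conctestpseudo}, match the paper's approach and are correct.

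The gap is in the concentration step for~\ref{concvertexdegree}--\ref{conctestpseudo}. Your proposed ``split each of these quantities into a part depending only on $\mathbf{W}$ and a part depending on $\mathbf{M}$'' does not actually yield a workable decomposition: $\textbf{deg}\mathbf{'}(v)=\sum_{e\ni v}\mathbf{1}[(e\setminus\{v\})\cap\mathbf{W}=\emptyset]\cdot\mathbf{1}[(e\setminus\{v\})\cap V(\mathbf{M})=\emptyset]$ is a sum of \emph{products}, and if you take your $\mathbf{W}$-part to be $\sum_{e\ni v}\mathbf{1}[(e\setminus\{v\})\cap\mathbf{W}=\emptyset]$ then the residual depends on both $\mathbf{W}$ and $\mathbf{M}$, not on $\mathbf{M}$ alone. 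More fundamentally, your assertion that ``off $\Omega^{*}$ the matching-part is $(r,d)$-observable with $r,d$ small enough'' is exactly what fails for any naive choice of matching-part. Certifying that some $u\in e\setminus\{v\}$ lies in an \emph{isolated} $\mathbf{X}$-edge $f$ requires, in the Delcourt--Postle sense, witnesses for isolation: an outcome $\omega'$ agreeing with $\omega$ on $I\setminus I'$ is free to place any $g\notin I$ into $\mathbf{X}$, breaking the isolation of $f$ and destroying the count. Avoiding this forces you to include all $\Theta(D)$ neighbours of $f$ in $I$, which is far too many. Your exceptional set (local sparsity of $\mathbf{X}$) does not help here, since the problematic $g$ need not be in $\mathbf{X}$ in $\omega$.

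The paper's solution, which your proposal is missing, is to decompose not along the $\mathbf{W}$/$\mathbf{M}$ axis but into counters $\mathbf{f}(a,b,c)$, where $\mathbf{f}(a,b,c)$ is the number of edges $e\ni v$ with at least $a$ vertices of $e\setminus\{v\}$ $\mathbf{X}$-covered, at least $b$ \emph{resurrected} (in a non-isolated $\mathbf{X}$-edge), and at least $c$ in $\mathbf{W}$. Each such counter is \emph{monotone increasing} in all trials, so it is cheaply certifiable: for each contributing edge one records $a+b+c$ affirmative witnesses (an $\mathbf{X}$-edge for each covered vertex, a pair of intersecting $\mathbf{X}$-edges for each resurrected vertex, a $\mathbf{W}$-choice for each wasted vertex), and the exceptional set $\Omega^{*}=\{$some $u\in N^{(3)}(v)$ has $|\partial(u)\cap\mathbf{X}|>\log^{5/2}D\}$ is precisely what caps at $O(D_2\log^{5/2}D)$ the number of contributing edges any single resurrection-witness $g$ can serve. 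One then writes $\textbf{deg}\mathbf{'}(v)=\text{deg}(v)-\mathbf{f}(0,0,1)-\sum_{a=1}^{k}\mathbf{N}(a)$, where each $\mathbf{N}(a)$ is a signed combination of eight such counters, and applies Theorem~\ref{theorem:lintal} to each of the $O(k)$ counters separately. The point is that ``$u$ is resurrected'' is upward-certifiable (two $\mathbf{X}$-edges) whereas ``$u\in V(\mathbf{M})$'' is not, and the $\mathbf{f}(a,b,c)$ algebra is what lets you trade the latter for the former.
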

Assuming Lemma~\ref{lemma:concanalysis}, we are now ready to prove the Nibble Lemma.
We need only show that an outcome of the random nibble in which none of the above bad events occur satisfies the conclusions of Lemma~\ref{lemma:nibble} (in particular that~\(\mathbf{H'}\) is \((\mathbf{n'},D',\eps')\)-regular for well-chosen~\(D'\))\COMMENT{As the rest is obvious from the definitions of the bad events, also \(\eps'\) is defined in the statement of Lemma~\ref{lemma:nibble}}, and that there exists such an outcome.
The latter task requires the most attention, and we will use the Lov\'{a}sz Local Lemma (Lemma~\ref{lemma:LLL}) for this purpose.
Instead of using a dependency digraph with a vertex for each of the events \(A_1, A_2, A_v, A_Z, A^{(1)}_{\tau}, A^{(2)}_{\tau}\), it will be useful for us (in the sense that it enables us to not make any further assumptions about the relationship between~\(n\) and \(D\)) to collect vertices~\(v\) and tuples~\(Z\) into a smaller number of sets~\(S\), and construct dependency-digraph vertices corresponding to the event that a fixed~\(S\) contains any~\(v\) or~\(Z\) for which~\(A_v\) or~\(A_Z\) occur.
This approach is similar to the way in which Vu~\cite{V00} collects his analogous failure probabilities using `important' and `perfect' sets (see the proof of~\cite[Lemma 5.2]{V00}).
\lateproof{Lemma~\ref{lemma:nibble}}
Apply a random nibble~$(\mathbf{X},\mathbf{W})$ to~$H$.
If \(J^{*}=\emptyset\), then set \(s=1\) and \(\mathbb{S}\coloneqq \{\{v\}\colon v\in V(H)\}\) (\(=\binom{V(H)}{s}\)).
If, instead, \(J^{*}\neq\emptyset\), then set \(s\coloneqq \max_{j\in J^{*}}j\) (\(\geq 2\) by construction of~\(J^{*}\)) and set \(\mathbb{S}\coloneqq\{S\in\binom{V(H)}{s}\colon \text{codeg}_{H}(S)>0\}\). 
For $S\in\mathbb{S}$ (whether~\(J^{*}\) is empty or not), we define the event $A^{*}_S\coloneqq\left(\bigcup_{v\in S}A_{v}\right)\cup\left(\bigcup_{j\in J^{*}}\bigcup_{Z\in\binom{S}{j}}A_{Z}\right)$\COMMENT{If \(J^{*}=\emptyset\) then the latter intersection involves no events, which I clarified in the notation section means taking an intersection with the sample space. Basically if we're only tracking vertex degrees then \(A^{*}_S\) occurring just means \(A_v\) occurs, where \(S=\{v\}\).}.
Since every vertex of~$H$ and all $Z\in\binom{V(H)}{j}$ such that $j\in J^{*}$ and $\text{codeg}(Z) >0$ are contained in an \(\mathbb{S}\)-set\COMMENT{Fix $v\in V(H)$. If \(J^{*}=\emptyset\), then clearly \(S=\{v\}\in\mathbb{S}\) contains \(v\). If \(J^{*}\neq\emptyset\), then we know $\text{deg}(v)\geq\delta(H)\geq (1-\eps)D>0$ \textcolor{blue}{(say $\eps<1/2$, $D\geq 10$)} so that there is an edge $e\ni v$ in~$H$. Arbitrarily choose $v\in S\subseteq e$ with $|S|=s < k+1$. Clearly $S$ is an \(\mathbb{S}\)-set containing~$v$. Similarly if $j\in J^{*}$ and \(Z\in\binom{V(H)}{j}\) satisfies $\text{codeg}(Z)>0$ then there is $e\supseteq Z$ in~$H$ so any $Z\subseteq S\subseteq e$ with $|S|=s$ is an \(\mathbb{S}\)-set containing~$Z$ (recall \(j\leq s\leq k\) by definition of \(s\)).}, it follows that\COMMENT{If \(J^{*}=\emptyset\) then we're only checking \(\bigcup_{v\in V(H)}A_v= \bigcup_{S\in\mathbb{S}}A^{*}_S\), but this is obvious by the definition of \(\mathbb{S}\) and \(A^{*}_S\) in the case \(J^{*}=\emptyset\).
So suppose that \(J^{*}\neq\emptyset\).
Clearly all \(Z\) for which \(\text{codeg}_H(Z)=0\) still have \(\textbf{codeg}\mathbf{'}(Z)=0\leq D_j(1-(k-j+1)\theta+\theta^{3/2})\) so such \(Z\) cannot be ``bad'' (i.e. the event \(A_Z\) does not occur).
Now, if some vertex is bad, then the \(S\) containing that vertex is bad.
If a tuple \(Z\) is bad, then it must have \(\text{codeg}_H(Z)>0\) by the above, and therefore must be in some important \(S\), which is thus bad.
Conversely, if some important tuple \(S\) is bad, then there is some bad vertex or bad tuple in it.} $\left(\bigcup_{v\in V(H)}A_{v}\right)\cup\left(\bigcup_{j\in J^{*}}\bigcup_{Z\in\binom{V(H)}{j}}A_{Z}\right)=\bigcup_{S\in\mathbb{S}}A^{*}_{S}$ (whether~\(J^{*}\) is empty or not).

Define $D'\coloneqq D(1-p^{*})^{k}$.
Using~(\ref{eq:pstarabove}) and~(\ref{eq:pstarbelow}), we observe that\COMMENT{$D(1-p^{*})^{k}\geq D(1-(\theta+\theta^{7/4}))^{k}\geq D(1-(\theta+\theta^{17/10})+(\theta+\theta^{17/10})^{2})^{k}\geq D\exp(-k(\theta+\theta^{17/10}))\geq D(1-k\theta-k\theta^{17/10})\geq D(1-k\theta-\theta^{3/2})$. Second inequality uses $-(\theta+\theta^{7/4})\geq -(\theta+\theta^{17/10})+(\theta+\theta^{17/10})^{2}$, which is equivalent to $\theta^{17/10}\geq\theta^{7/4}+\theta^{2}+2\theta^{27/10}+\theta^{34/10}$, so it suffices to have $\theta^{1/20}+\theta^{3/10}+2\theta+\theta^{17/10}\leq 1$, \textcolor{blue}{which in particular holds if $\theta\leq 1/1000$.} Third inequality uses $-(\theta+\theta^{17/10})<0<1.79$. Final inequality uses \textcolor{blue}{$\theta\leq 1/k^{5}$}. Similarly $D(1-p^{*})^{k}\leq D\exp(-p^{*}k)\leq D(1-p^{*}k+(p^{*})^{2}k^{2})\leq D(1-(\theta-\theta^{7/4})k+(\theta+\theta^{7/4})^{2}k^{2})\leq D(1-k\theta+\theta^{3/2})$, where the final inequality uses $k\theta^{7/4}+k^{2}\theta^{2}+2\theta^{11/4}+\theta^{14/4}\leq\theta^{3/2}$, which holds if in particular \textcolor{blue}{$\theta\leq 1/5^{4}k^{8}$. Also used $k\geq 1$.}}
\begin{equation}\label{eq:boundd'}
D(1-k\theta-\theta^{3/2})\leq D'\leq D(1-k\theta+\theta^{3/2}).
\end{equation}
With this definition of~$D'$, and with $\eps'\coloneqq\eps(1+\theta)$, we claim that if the event $\bigcap_{S\in \mathbb{S}}\overline{A^{*}_{S}}$ occurs, then~$\mathbf{H'}$ is~$(\mathbf{n'},D',\eps')$-regular.
Indeed, suppose $\bigcap_{S\in \mathbb{S}}\overline{A^{*}_{S}}$ occurs.
In particular, this means \(\bigcap_{v\in V(H)}\overline{A_v}\) occurs.
Recall that if $v\in V(H)$ survives the nibble then~$\textbf{deg}\mathbf{'}(v)$ is precisely the degree of~$v$ in~$\mathbf{H'}$.
It suffices to fix $v\in V(H)$ and check that $(1-\eps')D'\leq\textbf{deg}\mathbf{'}(v)\leq (1+\eps')D'$.
To that end, notice first that \((1-p^*)^k\geq (1-\theta-\theta^{7/4})^k \geq (1-3\theta+9\theta^2)^k\geq \exp(-3\theta k)\geq 1-3\theta k\geq 2/3\) by~(\ref{eq:pstarabove}).
Now fix \(v\in V(H)\).
Since~\(\overline{A_v}\) occurs, we have:
\begin{eqnarray*}
\textbf{deg}\mathbf{'}(v) & \leq & \text{deg}(v)((1-p^{*})^k + \eps\theta/2) \leq \frac{D'}{(1-p^{*})^k}(1+\eps)((1-p^{*})^k + \eps\theta/2) \\ & = & D'(1+\eps)\left(1+\frac{\eps\theta}{2(1-p^{*})^k}\right) \leq D'(1+\eps)\left(1+\frac{3\eps\theta}{4}\right)\leq D'(1+\eps').
\end{eqnarray*}
The lower bound is similar\COMMENT{$\textbf{deg}\mathbf{'}(v)\geq D(1-\eps)((1-p^{*})^{k}-\eps\theta/2)=D'(1-\eps)(1-\eps\theta/2(1-p^{*})^{k})\geq D'(1-\eps)(1-3\eps\theta/4)=D'(1-\eps-3\eps\theta/4+3\eps^{2}\theta/4)\geq D'(1-\eps-3\eps\theta/4)\geq D'(1-\eps-\eps\theta)$.}, so we omit it.
From the above discussion and~(\ref{eq:boundd'}), notice that any outcome of the nibble in the event \(\overline{A_1}\cap\overline{A_2}\cap\bigcap_{S\in\mathbb{S}}\overline{A^{*}_S}\cap\bigcap_{\tau\in\cT}\overline{A^{(1)}_{\tau}}\cap\bigcap_{\tau\in\cT}\overline{A^{(2)}_{\tau}}\) satisfies the conclusions of Lemma~\ref{lemma:nibble}.
It remains only to show that with positive probability, none of the above bad events \(A_1, A_2, A^{*}_S, A^{(1)}_{\tau}, A^{(2)}_{\tau}\) occur, for which we use Lemma~\ref{lemma:LLL}.
It will be useful to have the following bound on~\(|\mathbb{S}|\).\COMMENT{Indeed, the fact this turns out to be \(\Theta(nD)\) is the whole point of collecting the \(v\) and \(Z\) into these \(S\)}
\begin{claim}\label{claim:sizeofs}
$|\mathbb{S}|\leq 2nD(k+1)^{k}$.
\end{claim}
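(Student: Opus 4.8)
\textbf{Proof plan for Claim~\ref{claim:sizeofs}.}

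The plan is to bound $|\mathbb{S}|$ directly from its definition, splitting into the two cases governing the definition of $s$ and $\mathbb{S}$. When $J^{*}=\emptyset$ we have $s=1$ and $\mathbb{S}=\{\{v\}\colon v\in V(H)\}$, so $|\mathbb{S}|=n$, and since $D\geq 1$ and $(k+1)^{k}\geq 1$ this is trivially at most $2nD(k+1)^{k}$. So the interesting case is $J^{*}\neq\emptyset$, where $s=\max_{j\in J^{*}}j\in\{2,\dots,k\}$ and $\mathbb{S}=\{S\in\binom{V(H)}{s}\colon\text{codeg}_{H}(S)>0\}$.

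In that case, I would count the pairs $(v,S)$ with $v\in S\in\mathbb{S}$ in two ways, or equivalently just count the $S\in\mathbb{S}$ by first choosing a vertex $v$ and then extending. Fix a vertex $v\in V(H)$. Every $S\in\mathbb{S}$ with $v\in S$ has $\text{codeg}_{H}(S)>0$, so in particular there is an edge $e\supseteq S$, and hence $S\setminus\{v\}$ is an $(s-1)$-subset of $e\setminus\{v\}$ for some edge $e\ni v$. Since $\text{deg}(v)\leq\Delta(H)\leq(1+\eps)D\leq 2D$ (using $\eps<1/2$), there are at most $2D$ choices for $e$, and then at most $\binom{k}{s-1}\leq 2^{k}\leq(k+1)^{k}$ choices for the $(s-1)$-subset of the $k$-set $e\setminus\{v\}$. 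Hence the number of $S\in\mathbb{S}$ containing $v$ is at most $2D(k+1)^{k}$. Summing over the $n$ choices of $v$ overcounts each $S$ exactly $s$ times (which is at least once), so
\[
|\mathbb{S}|\leq\sum_{v\in V(H)}\#\{S\in\mathbb{S}\colon v\in S\}\leq n\cdot 2D(k+1)^{k}=2nD(k+1)^{k},
\]
as required.

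There is no real obstacle here; the only mild point of care is to make sure the bound is uniform in $s$ (it is, since $\binom{k}{s-1}\leq 2^{k}\leq(k+1)^{k}$ for every $1\leq s\leq k$) and that the degenerate case $J^{*}=\emptyset$ is handled separately so that the $s=1$ count $|\mathbb{S}|=n$ is recognized as trivially within the claimed bound. I would present the argument in the $J^{*}\neq\emptyset$ case with the per-vertex counting as above, and note the $J^{*}=\emptyset$ case in a sentence.
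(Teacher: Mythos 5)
Your proof is correct and essentially the same as the paper's: both handle the $J^{*}=\emptyset$ case trivially and then, for $J^{*}\neq\emptyset$, bound $|\mathbb{S}|$ by a double-counting argument resting on the observation that every $S\in\mathbb{S}$ lies in some edge and an edge admits only boundedly many $s$-subsets. The paper organizes the count through edges (mapping $S\mapsto e_S$ and noting each edge is used at most $\binom{k+1}{s}\leq(k+1)^k$ times, then bounding $e(H)\leq 2nD/(k+1)$ via the degree sum), whereas you organize it through vertices (at most $2D$ edges at each of $n$ vertices, times $\binom{k}{s-1}\leq(k+1)^k$ completions); these are the same computation reparenthesized, and both land comfortably within the claimed bound.
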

\claimproof
If \(J^{*}=\emptyset\) then \(|\mathbb{S}|=n\).
If \(J^{*}\neq\emptyset\), note that for each \(S\in\mathbb{S}\) there is an edge \(e_S\supseteq S\).
Since\COMMENT{This whole argument is unaffected in the multihypergraph case. For each \(S\in\mathbb{S}\) arbitrarily pick one (specific edge copy) \(e_S\supseteq S\).} counting these~\(e_S\) over all \(S\in\mathbb{S}\) counts each \(e\in E(H)\) at most~\(\binom{k+1}{s}\) times, we have \(|\mathbb{S}|\leq e(H)\binom{k+1}{s} \leq e(H)(k+1)^k\), from which the claim follows\COMMENT{\(e(H)=\sum_{v\in V(H)}\text{deg}(v)/(k+1)\leq (1+\eps)nD/(k+1)\leq 2nD\). Of course, you could replace \((k+1)^k\) in the claim with \((k+1)^{k-1}\), but I felt this may look awkward for the \(k=1\) case (in truth, if \(k=1\) then the statement of the Nibble Lemma forces \(J^{*}=\emptyset\)), and this lazy upper bound looks clean and does the job}.
\endclaimproof
Construct an auxiliary digraph~\(G\) with \(V(G)\coloneqq\{a_1,a_2\}\cup\{a_S\colon S\in\mathbb{S}\}\cup\{a^{(1)}_{\tau}, a^{(2)}_{\tau}\colon \tau\in\cT\}\) (these vertices corresponding in the obvious way to the events \(A_1, A_2, A^{*}_S, A^{(1)}_{\tau}, A^{(2)}_{\tau}\)), and put an arc in both directions between a pair of \(G\)-vertices if the corresponding events are such that there is some trial \(e\in\mathbf{X}\) or \(v\in\mathbf{W}\) affecting both.
Clearly~\(G\) is a dependency digraph.\COMMENT{If a \(G\)-vertex \(a\) sends no arcs to some set~\(U\) of \(G\)-vertices, then for each \(u\in U\), the event corresponding to \(a\) and the event corresponding to \(u\) are determined by a disjoint set of trials. Thus the set of trials determining \(a\)'s event is disjoint from the entire set of trials determining any of the events for \(u\in U\), whence clearly \(a\)'s event is mutually independent of all the events for \(u\in U\).}
Notice that~\(a_1\) and~\(a_2\) send and receive arcs from all other \(G\)-vertices.
For \(S\in\mathbb{S}\), define \(d_G(S,\mathbb{S})\) (respectively \(d_G(S,\cT)\)) to be the number of arcs in~\(G\) with tail~\(a_S\) and head~\(a_{S'}\) for some \(S'\in\mathbb{S}\) (respectively head~\(a^{(j)}_{\tau}\) for some \(\tau\in\cT, j\in\{1,2\}\)), and define \(\Delta_{G}(\mathbb{S},\mathbb{S})\coloneqq\max_{S\in\mathbb{S}}d_G(S,\mathbb{S})\) and \(\Delta_G(\mathbb{S},\cT)\coloneqq\max_{S\in\mathbb{S}}d_G(S,\cT)\).
Similarly, for \(\tau\in\cT\) and \(j\in\{1,2\}\), define \(d_G((\tau,j),\mathbb{S})\) (respectively~\(d_G((\tau,j),\cT)\)) to be the number of arcs in~\(G\) with tail~\(a^{(j)}_{\tau}\) and head~\(a_S\) for some \(S\in\mathbb{S}\) (respectively head~\(a^{(i)}_{\tau'}\) for some \(i\in\{1,2\}\), \(\tau'\in\cT\)), and define \(\Delta_G(\cT,\mathbb{S})\coloneqq\max_{\tau\in\cT, j\in\{1,2\}}d_G((\tau,j),\mathbb{S})\) and \(\Delta_G(\cT,\cT)\coloneqq\max_{\tau\in\cT, j\in\{1,2\}}d_G((\tau,j),\cT)\).
\begin{claim}\label{claim:dependencygraph}
We have \(\Delta_{G}(\mathbb{S},\mathbb{S})\leq D^7\), \(\Delta_{G}(\mathbb{S},\cT)\leq D^{2\log^{5/4}D}\), \(\Delta_{G}(\cT,\mathbb{S})\leq D^{2\log^{5/4}D}\), and \(\Delta_{G}(\cT,\cT)\leq D^{3\log^{5/4}D}\).
\end{claim}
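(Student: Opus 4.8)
The plan is to fix, for each bad event, the precise list of trials $\{e \in \mathbf{X}\}_{e \in E(H)}$ and $\{v \in \mathbf{W}\}_{v \in V(H)}$ on which it depends, and then to bound each of the four out-degrees by the observation that two events joined by an arc must share such a trial, which forces their ``witnesses'' to lie at bounded hypergraph-distance; throughout, every $k$-dependent factor and every additive constant power of $D$ is absorbed into an exponent of $D$ using $1/D \ll 1/k$ and the fact that $D$ is large (so $\log^{5/4}D$ exceeds any fixed constant).

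First I would unwind the dependency structure. Whether a vertex $u$ survives the nibble is a function of the single trial $\{u \in \mathbf{W}\}$ together with the trials $\{e' \in \mathbf{X}\}$ over all $e'$ sharing a vertex with an edge through $u$ (these determine whether $u \in V(\mathbf{M})$). Since $\textbf{deg}\mathbf{'}(v) = \sum_{e \in \partial_H(v)} \mathbf{I}_{e \setminus \{v\}}$, the event $A_v$ is therefore a function of $\{u \in \mathbf{W}\}$ over all $u$ lying in an edge through $v$ and of $\{e \in \mathbf{X}\}$ over all $e$ at hypergraph edge-distance at most $3$ from $v$ (that is, admitting a chain $v \in e_0$, $u \in e_0$, $e_1 \ni u$, $w \in e_1$, $e \ni w$); the same holds for each $A_Z$ with the set $Z$ (which lies in a common edge, since $\text{codeg}(Z) > 0$) in place of $\{v\}$. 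Consequently $A_S^*$ depends only on trials in a bounded neighbourhood of a fixed edge $e_S \supseteq S$ (one exists in all cases): the $\mathbf{W}$-trials indexed by vertices of edges meeting $S$, and the $\mathbf{X}$-trials at edge-distance at most $3$ from $e_S$. For the weight functions, $\tau(V(\mathbf{H'}))$ is the sum of $\tau(v)$ over the $v \in \text{Supp}(\tau)$ that survive and $\tau(\mathbf{W})$ the sum over the $v \in \text{Supp}(\tau)$ that land in $\mathbf{W}$, so $A^{(1)}_\tau$ is a function of $\{v \in \mathbf{W}\}$ for $v \in \text{Supp}(\tau)$ and of $\{e \in \mathbf{X}\}$ for $e$ at edge-distance at most $1$ from a vertex of $\text{Supp}(\tau)$, while $A^{(2)}_\tau$ depends only on the former $\mathbf{W}$-trials.

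Then I would carry out the four counts. For $\Delta_G(\mathbb{S},\mathbb{S})$: a trial shared by $A_S^*$ and $A_{S'}^*$ lies in the bounded neighbourhoods of both $e_S$ and $e_{S'}$, forcing $e_S$ and $e_{S'}$ to within edge-distance $6$; there are at most $((k+1)\Delta(H))^6$ edges within edge-distance $6$ of $e_S$, each containing at most $\binom{k+1}{s} \le 2^{k+1}$ members of $\mathbb{S}$, so (using $\Delta(H) \le 2D$) $\Delta_G(\mathbb{S},\mathbb{S}) \le D^7$. For $\Delta_G(\mathbb{S},\cT)$: a trial shared by $A_S^*$ and $A^{(j)}_\tau$ forces a vertex of $\text{Supp}(\tau)$ into a bounded neighbourhood of $e_S$, there are at most $D^6$ such vertices, and by~\ref{pseudhyp:maxinvolvement} each lies in $\text{Supp}(\tau)$ for at most $D^{\log^{5/4}D}$ functions, giving $\Delta_G(\mathbb{S},\cT) \le D^{6} \cdot D^{\log^{5/4}D} \le D^{2\log^{5/4}D}$. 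Symmetrically, for $\Delta_G(\cT,\mathbb{S})$ a trial shared by $A^{(j)}_\tau$ and $A_S^*$ forces $e_S$ into a bounded neighbourhood of some vertex of $\text{Supp}(\tau)$; there are at most $|\text{Supp}(\tau)| \le D^{\log^{5/4}D}$ such vertices by~\ref{pseudhyp:supportsize}, at most $D^7$ edges near each, and at most $2^{k+1}$ members of $\mathbb{S}$ per edge, so $\Delta_G(\cT,\mathbb{S}) \le D^{\log^{5/4}D} \cdot D^{7} \le D^{2\log^{5/4}D}$. Finally, for $\Delta_G(\cT,\cT)$ a trial shared by $A^{(j)}_\tau$ and $A^{(i)}_{\tau'}$ is either a $\mathbf{W}$-trial indexed by a vertex of $\text{Supp}(\tau) \cap \text{Supp}(\tau')$, admitting at most $D^{\log^{5/4}D} \cdot D^{\log^{5/4}D}$ choices of $\tau'$ in total by~\ref{pseudhyp:supportsize} and~\ref{pseudhyp:maxinvolvement}, or an $\mathbf{X}$-trial $e$ near a vertex of $\text{Supp}(\tau)$ and near a vertex of $\text{Supp}(\tau')$, of which there are at most $D^{\log^{5/4}D} \cdot D^3$ (at most $D^3$ edges near each of at most $D^{\log^{5/4}D}$ vertices of $\text{Supp}(\tau)$), each near at most $D^2$ vertices, each in $\text{Supp}(\tau')$ for at most $D^{\log^{5/4}D}$ functions; in both cases this is at most $D^{3\log^{5/4}D}$.

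The main obstacle is the first step rather than the counting: one must carefully unwind ``$u$ survives'' through the random matching $\mathbf{M}$ — whose own structure is determined by $\mathbf{X}$ only through a two-edge-deep neighbourhood of $u$ — to pin down a list of trials that is both correct and tight enough, since taking these dependency radii larger than necessary would push the exponent of $D$ in $\Delta_G(\mathbb{S},\mathbb{S})$ above $7$. Once the correct radii are fixed, the remainder is the routine hypergraph-neighbourhood counting sketched above.
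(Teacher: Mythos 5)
Your proposal is correct and follows essentially the same strategy as the paper: identify which $\mathbf{X}$- and $\mathbf{W}$-trials can affect each event, conclude that two dependency-adjacent events must have "witnesses" ($S$-vertices or support vertices) at bounded hypergraph distance, and count using the maximum degree, \ref{pseudhyp:supportsize}, and \ref{pseudhyp:maxinvolvement}. The only cosmetic difference is that you anchor the counting at the representative edge $e_S$ and use edge-neighbourhoods, while the paper anchors at a vertex $v\in S$ and uses $\text{dist}_H$; the resulting radii (your edge-distance $6$ versus the paper's vertex-distance $5$) produce the same $D^{6}$-order factors, which absorb into $D^7$ and the $\log^{5/4}D$ exponents exactly as you indicate.
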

\claimproof
Suppose first that \(S\in\mathbb{S}\) and some vertex \(v\in V(H)\) is such that the outcome~\(\omega_v\) of the choice to put \(v\in\mathbf{W}\) or not affects (is not independent of)~\(A^{*}_S\).
Then either \(v\in S\) or \(v\in e\) for some edge \(e\in E(H)\) intersecting~\(S\).
If the outcome~\(\omega_e\) of the choice to put \(e\in\mathbf{X}\) or not affects~\(A^{*}_S\), then~\(\omega_e\) affects \(\textbf{deg}\mathbf{'}(v)\) for some \(v\in S\),\COMMENT{Or, it affects \(\textbf{codeg}\mathbf{'}(Z)\) for some \(Z\subseteq S\), but in this case it clearly affects \(\textbf{deg}\mathbf{'}(v)\) for every \(v\in Z\subseteq S\).} which implies that either \(e\ni v\), or there is some edge \(f\in E(H)\) for which \(v\in f\) and \(f\cap e\neq\emptyset\), or there are edges \(f_1, f_2\in E(H)\) such that \(v\in f_1, f_1\cap f_2\neq \emptyset, f_2\cap e\neq \emptyset\).
If \(\tau\in \cT\) and~\(\omega_v\) affects~\(A^{(1)}_{\tau}\) or~\(A^{(2)}_{\tau}\), then \(v\in \text{Supp}(\tau)\).
Finally, if~\(\omega_e\) affects~\(A^{(1)}_{\tau}\), then either~\(e\) intersects~\(\text{Supp}(\tau)\), or there is an edge \(f\in E(H)\) such that \(f\cap \text{Supp}(\tau)\neq\emptyset\) and \(f\cap e\neq\emptyset\).\COMMENT{(And edge outcomes have no impact on \(\tau(\mathbf{W})\), i.e. the event~\(A^{(2)}_{\tau}\))}

Fix distinct \(S,S'\in\mathbb{S}\).
If there is some~\(v\in V(H)\) such that the outcome~\(\omega_v\) of the choice to put \(v\in\mathbf{W}\) or not affects both~\(A^{*}_S\) and~\(A^{*}_{S'}\), then by the above there is some \(v\in S\), \(v'\in S'\) such that \(\text{dist}_{H}(v,v')\leq 2\leq 5\).
If instead the trial \(e\in\mathbf{X}\) affects both, then there is some \(v\in S\), \(v'\in S'\) such that \(\text{dist}_{H}(v,v')\leq 5\).
Note there are at most~\(k\) choices for a vertex \(v\in S\), at most\COMMENT{1 at distance 0. At most \(\Delta(H)k\leq 2DK\) at distance 1. At most \((2Dk)^2\) at distance 2. ... At most \(1+2DK + \dots + (2Dk)^5 = (2Dk)^5(1+1/(2Dk) + 1/(2Dk)^2 + \dots + 1/(2Dk)^5)\leq (2Dk)^5(1+1/2+1/4+\dots+ 1/32)\leq 2\cdot(2Dk)^5\) at distance at most 5.} \(2\cdot(2kD)^5\) vertices~\(v'\in V(H)\) at distance at most~\(5\) from~\(v\) in~\(H\), then at most~\(2D\) choices for an edge of~\(H\) containing~\(v'\), and at most~\(2^{k+1}\) choices of an \(s\)-subset~\(S'\) of this edge.
We deduce that \(\Delta_{G}(\mathbb{S},\mathbb{S})\leq D^7\), as claimed. 
One checks similarly that if~\(A^{*}_S, A^{(j)}_{\tau}\) are not independent for \(S\in\mathbb{S}\), \(\tau\in\cT\), \(j\in\{1,2\}\), then there are vertices \(v\in S\), \(u\in \text{Supp}(\tau)\) such that \(\text{dist}_{H}(v,u)\leq 4\)\COMMENT{Indeed if \(j=2\) then it must be a~\(\omega_v\) affecting both, which implies \(v\in \text{Supp}(\tau)\) and (\(v\in S\) or \(v\in e\) for some \(e\) intersecting \(S\)), so distance 4 is overkill}, and that if \(A^{(i)}_{\tau}, A^{(j)}_{\tau'}\) are not independent for \(\tau,\tau'\in\cT\), \(i,j\in\{1,2\}\), then there are vertices \(u\in \text{Supp}(\tau)\), \(u'\in \text{Supp}(\tau')\) such that \(\text{dist}_{H}(u,u')\leq 3\).\COMMENT{Again if either \(i\) or \(j=2\) then it must be a \(\omega_v\) affecting both, which then actually has to be in both, so there exist \(u\in \text{Supp}(\tau)\), \(u'\in \text{Supp}(\tau')\) such that \(\text{dist}_{H}(u,u')=0\leq 3\).}
To analyze \(\Delta_{G}(\mathbb{S},\cT)\), note there are at most~\(k\) choices for a vertex \(v\in S\), at most \(2\cdot(2kD)^4\) vertices~\(u\in V(H)\) at distance at most~\(4\) from~\(v\) in~\(H\), and by~\ref{pseudhyp:maxinvolvement}, there are at most~\(D^{\log^{5/4}D}\) functions \(\tau\in\cT\) with~\(u\in\text{Supp}(\tau)\).
Counting \(G\)-arcs from~\(a_S\) to both~\(a_{\tau}^{(1)}\) and~\(a_{\tau}^{(2)}\), we deduce that\COMMENT{Just count the number of \(\tau\) for which there is a pair of vertices \(v,u\) satisfying the distance requirement, then for all such \(\tau\) we put arcs to both \(a_{\tau}^{(1)}\) and \(a_{\tau}^{(2)}\) so just double it. Clearly the upper bound provided still works.} \(\Delta_{G}(\mathbb{S},\cT)\leq D^{2\log^{5/4}D}\).
Now fix instead \(\tau\in\cT\).
By~\ref{pseudhyp:supportsize}, there are at most~\(D^{\log^{5/4}D}\) vertices \(u\in \text{Supp}(\tau)\).
There are at most \(2\cdot (2kD)^4\) choices for a vertex \(v\in V(H)\) at distance at most~\(4\) from~\(u\) in~\(H\), then at most~\(2D\) choices for an edge of~\(H\) containing~\(v\), and finally at most~\(2^{k+1}\) choices of an \(s\)-subset~\(S\) of this edge.
We deduce that \(\Delta_{G}(\cT,\mathbb{S})\leq D^{2\log^{5/4}D}\).
One proves similarly, using~\ref{pseudhyp:supportsize} and~\ref{pseudhyp:maxinvolvement}, that \(\Delta_{G}(\cT,\cT)\leq D^{3\log^{5/4}D}\).
We omit the details.\COMMENT{By~\ref{pseudhyp:supportsize}, there are at most~\(D^{\log^{5/4}D}\) choices for a vertex \(u\in \text{Supp}(\tau)\), at most \(2\cdot (2kD)^3\) choices for a vertex \(u'\in V(H)\) at distance at most \(3\) from \(u\). Then by~\ref{pseudhyp:maxinvolvement}, there are at most~\(D^{\log^{5/4}D}\) functions \(\tau'\in\cT\) with \(u'\in\text{Supp}(\tau')\). Now double it to account for edges to both \(a^{(1)}_{\tau'}\) and \(a^{(2)}_{\tau'}\)}
\endclaimproof
Set \(x_1, x_2 \coloneqq 1/2\), for each \(S\in\mathbb{S}\) set \(x_S\coloneqq(1/2) D^{-3\log^{5/4}D}\), and for each \(\tau\in\cT\) and \(j\in\{1,2\}\) set \(x^{(j)}_{\tau}\coloneqq (1/2)D^{-3\log^{5/4}D}\).
The following will be useful:
\begin{equation}\label{eq:implyoldN3}
\theta^2 n \stackrel{\ref{hypothesis:epsleqtheta}}{\geq}\eps\theta n\stackrel{(\ref{eq:ngeqdd2})}{\geq} \eps\theta \frac{D}{2D_{2}}\geq\eps^{2}\theta\frac{D}{2D_{2}}\stackrel{\ref{hypothesis:degratio}}{\geq}\frac{\log^{5}D}{2}\geq 32(k+1)^{3}.
\end{equation}
For each \(i\in[2]\) set \(f(i)\coloneqq x_i\left(\prod_{r\in([2]\setminus\{i\})\cup\mathbb{S}}(1-x_r)\right)\left(\prod_{j\in[2], \tau\in\cT}(1-x^{(j)}_{\tau})\right)\).
Then for each such~\(i\) we have
\begin{eqnarray*}
f(i) & = & \frac{1}{4}(1-(1/2)D^{-3\log^{5/4}D})^{|\mathbb{S}|+2|\cT|}\geq\frac{1}{4}\exp(-D^{-3\log^{5/4}D}(2nD(k+1)^k + 2nD^{\log^{5/4}D}))\\ & \geq & \frac{1}{4}\exp(-3nD^{-2\log^{5/4}D}),
\end{eqnarray*}
where we used Claim~\ref{claim:sizeofs} and~\ref{pseudhyp:maxinvolvement}.
Notice that the inequality \((1/4)\exp(-3nD^{-2\log^{5/4}D})\geq 2\exp(-\theta^2 n/8(k+1)^3)\) is equivalent to \(3nD^{-2\log^{5/4}D}\leq\theta^2 n/(8(k+1)^3)-\log 8\).
The right side of the latter inequality is at least \(\theta^2 n/(100(k+1)^3)\) by~(\ref{eq:implyoldN3}),\COMMENT{\(\log 8\leq 3\leq \theta^2 n/(10(k+1)^3)\) then \((\theta^2 n/(k+1)^3) \cdot (1/8 - 1/10)\geq \theta^2 n/(100(k+1)^3)\).} so the inequality holds since \(\theta\geq1/D\) by~\ref{hypothesis:degratio}.
Thus \(f(1)\geq2\exp(-\theta^2 n/8(k+1)^3)\geq\prob{A_1}\) by Lemma~\ref{lemma:concanalysis}\ref{concleftover}.
One similarly\COMMENT{\(\frac{1}{4}\exp(-3nD^{-2\log^{5/4}D})\geq 2\exp(-\eps\theta n/6) \Leftrightarrow 3nD^{-2\log^{5/4}D}\leq \eps\theta n/6 - \log 8\), but by~(\ref{eq:implyoldN3}) we have \(\eps\theta n/10 \geq 3(k+1)^3 \geq 3\geq \log 8\) so \(\eps\theta n/6 - \log 8 \geq \eps\theta n/20\), so it suffices if \(3nD^{-2\log^{5/4}D}\leq \eps\theta n/20\), but this is now clear since the \(n\)'s cancel and \(\eps,\theta\geq 1/D\). Interestingly, this is where it became important that we collected the \(v\) and \(Z\) into the sets \(S\in\mathbb{S}\), since the means the power of \(n\) on the LHS stays at 1, so the \(n\)'s cancel. Without doing this, the expression on the left could look more like \(n^k D^{-2\log^{5/4}D}\), and then we'd need some density assumption as the \(n\)'s don't cancel and this would rearrange to some \(D\geq g(n)\).} uses \(\eps,\theta\geq 1/D\),~(\ref{eq:implyoldN3}), and Lemma~\ref{lemma:concanalysis}\ref{concwaste} to show that \(f(2)\geq2\exp(-\eps\theta n/6)\geq\prob{A_2}\).

For \(S\in\mathbb{S}\), Lemma~\ref{lemma:concanalysis}\ref{concvertexdegree}--\ref{conccodegree} and the union bound yield that \(\prob{A^{*}_S}\leq 2^{k+1}D^{-4\log^{5/4}D}\).
Define \(N_{1}(S)\coloneqq \{i\in\{1,2\}\cup\mathbb{S}\colon a_S a_i\in E(G)\}\) and \(N_{2}(S)\coloneqq \{(\tau,j)\in\cT\times\{1,2\}\colon a_S a^{(j)}_{\tau}\in E(G)\}\).
Define \(f(S)\coloneqq x_S \left(\prod_{i\in N_{1}(S)}(1-x_i)\right)\left(\prod_{(\tau,j)\in N_{2}(S)}(1-x^{(j)}_{\tau})\right)\).
Then Claim~\ref{claim:dependencygraph} implies that
\begin{eqnarray*}
f(S) & \geq & \frac{1}{8}D^{-3\log^{5/4}D}(1-(1/2)D^{-3\log^{5/4}D})^{2D^{2\log^{5/4}D}}\geq\frac{1}{8}D^{-3\log^{5/4}D}\exp(-2D^{-\log^{5/4}D})\\ & \geq & \frac{1}{16}D^{-3\log^{5/4}D}\geq\prob{A^{*}_S}.
\end{eqnarray*}
Finally, fix \(\tau\in\cT\) and \(j\in\{1,2\}\).
Define \(N_{1}(\tau,j)\coloneqq \{i\in\{1,2\}\cup\mathbb{S}\colon a^{(j)}_{\tau} a_i\in E(G)\}\) and \(N_{2}(\tau,j)\coloneqq\{(\tau',j')\in\cT\times\{1,2\}\colon a^{(j)}_{\tau} a^{(j')}_{\tau'}\in E(G)\}\).
It remains to analyze \(f(\tau,j)\coloneqq x^{(j)}_{\tau} \left(\prod_{i\in N_{1}(\tau,j)}(1-x_i)\right)\left(\prod_{(\tau',j')\in N_{2}(\tau,j)}(1-x^{(j')}_{\tau'})\right)\).
By Claim~\ref{claim:dependencygraph}, we have
\[
f(\tau,j)\geq\frac{1}{8}D^{-3\log^{5/4}D}(1-(1/2)D^{-3\log^{5/4}D})^{2D^{3\log^{5/4}D}}\geq\frac{1}{8}D^{-3\log^{5/4}D}\exp(-2),
\]
which is evidently at least \(D^{-4\log^{5/4}D}\geq\prob{A_{\tau}^{(1)}}\) by Lemma~\ref{lemma:concanalysis}\ref{conctestpseudo}.
By Lemma~\ref{lemma:concanalysis}\ref{conctestwaste}, we also have \(f(\tau,2)\geq(1/8e^2)D^{-3\log^{5/4}D}\geq D^{-4\log^{5/4}D}\geq \prob{A_{\tau}^{(2)}}\).
Lemma~\ref{lemma:LLL} now completes the proof.
\endproof
It remains to prove Lemma~\ref{lemma:concanalysis}.
In Section~\ref{section:expanalysis}, we compute the expectations of the random variables~\(\textbf{deg}\mathbf{'}(v)\) and~\(\textbf{codeg}\mathbf{'}(Z)\) (see Lemma~\ref{lemma:expectations}), before using Lemmas~\ref{chernoff},~\ref{lemma:martingale},~\ref{lemma:expectations} and Theorem~\ref{theorem:lintal} to prove Lemma~\ref{lemma:concanalysis} in Section~\ref{section:concanalysis}.
\subsection{Expectation analysis}\label{section:expanalysis}
The aim of this subsection is to prove the following lemma, which will be used in the proofs of Lemma~\ref{lemma:concanalysis}\ref{concvertexdegree} and~\ref{conccodegree} in Section~\ref{section:concanalysis}.\COMMENT{Part (ii) of Lemma~\ref{lemma:expectations} is vacuous when \(J^{*}=\emptyset\) (which includes the case of graphs), there are no such \(j,Z\). Part (i) is also very easy in the case of \(k=1\) (graphs), the expectation is just equal to \(\text{deg}(v)(1-p^{*})\).}
\begin{lemma}\label{lemma:expectations}
Let~$H$ satisfy the hypotheses of Lemma~\ref{lemma:nibble}, and apply a random nibble~$(\mathbf{X},\mathbf{W})$ to~$H$.
Then:
\begin{enumerate}[label=\upshape(\roman*)]
\item\label{lemma:degreeexpectation} For all $v\in V(H)$, we have
\[
\expn{\textbf{deg}\mathbf{'}(v)}=\text{deg}(v)((1-p^{*})^{k}\pm 2^{k+3}\eps^{2}\theta^{2}).
\]
\item\label{lemma:codegreeexpectation} For all \(j\in J^{*}\) and \(Z\in\binom{V(H)}{j}\), we have\COMMENT{In fact this holds for any \(j\). It's just we can't concentrate for those \(j\notin J^{*}\).}
\[
\expn{\textbf{codeg}\mathbf{'}(Z)}\leq D_{j}(1-\theta(k+1-j)+\theta^{5/3}).
\]
\end{enumerate}
\end{lemma}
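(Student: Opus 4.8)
**The plan is to compute each expectation by linearity, reducing to the probability that a fixed set of vertices all survive the nibble, and then carefully Taylor-expand around the main term.** For part~\ref{lemma:degreeexpectation}, fix $v\in V(H)$. By linearity, $\expn{\textbf{deg}\mathbf{'}(v)}=\sum_{e\in\partial_H(v)}\prob{\text{all }u\in e\setminus\{v\}\text{ survive}}$. The survival events of distinct vertices are \emph{not} independent --- two vertices $u,u'$ both fail to survive with correlated probability because a single edge $e'\ni u,u'$ in $\mathbf{X}$ can cover both --- so the first task is to write $\prob{\text{all }u\in e\setminus\{v\}\text{ survive}}$ as $(1-p^*)^k$ times a correction factor that is $1\pm O(\eps^2\theta^2)$ per edge. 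The key point: the $\mathbf{W}$-membership decisions are independent across vertices, contributing exactly $\prod_{u\in e\setminus\{v\}}(1-w(u))$, and the $\mathbf{M}$-membership events $\{u\in V(\mathbf{M})\}$ for $u\in e\setminus\{v\}$ are ``almost independent'' --- they become dependent only through edges $e'\in\mathbf{X}$ meeting two or more of the $u$'s. An inclusion--exclusion / Bonferroni argument over which of the $\binom{k}{2}$ pairs share such an edge gives a multiplicative error governed by $\binom{k}{2}\cdot (\text{number of common edges})\cdot p^2 \lesssim k^2 D_2 p^2 \lesssim k^2 D_2 \theta^2/D^2$, and by hypothesis~\ref{hypothesis:degratio} this is at most $k^2\theta^2\eps^{-2}\log^{-5}D\cdot(\text{something})$; more simply, $D_2 p^2\le (D_2/D)\theta^2/D \le \eps^2\theta^2$ crudely via~\ref{hypothesis:degratio}. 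Multiplying through the $k$ vertices and using $\text{deg}(v)=\sum_{e\ni v}1$ yields the claimed $\text{deg}(v)((1-p^*)^k\pm 2^{k+3}\eps^2\theta^2)$; the constant $2^{k+3}$ absorbs the $\binom{k}{2}$, the factors $(1-p^*)^{-k}\le 2$, and the combinatorial terms from the Bonferroni expansion.

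For part~\ref{lemma:codegreeexpectation}, fix $j\in J^*$ and $Z\in\binom{V(H)}{j}$. Again by linearity, $\expn{\textbf{codeg}\mathbf{'}(Z)}=\sum_{e\in\partial_H(Z)}\prob{\text{all }u\in e\setminus Z\text{ survive}}$, where now $|e\setminus Z|=k+1-j$. We only need an \emph{upper} bound, which makes life easier: each survival event is bounded above by the probability that the single vertex $u$ survives \emph{if we ignore correlations that only decrease probabilities}, but to be safe I would bound $\prob{\text{all }u\in e\setminus Z\text{ survive}}\le \prod_{u\in e\setminus Z}\prob{u\text{ survives}}\cdot(1+O(\text{error}))$ --- actually, since we want an upper bound and positive correlations among survival events would hurt us, the cleanest route is: $\prob{\text{all survive}}\le \prod_{u\in e\setminus Z}(1-w(u)) \cdot \prob{\text{none of }e\setminus Z\text{ in }V(\mathbf{M})}$, and then bound $\prob{\text{none in }V(\mathbf{M})}$ from above. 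Since $|\partial_H(Z)|=\text{codeg}(Z)\le D_j$ and $\prob{u\text{ survives}}=1-p^*\le 1-(\theta-\theta^{7/4})$ by~\eqref{eq:pstarbelow}, a first pass gives $\expn{\textbf{codeg}\mathbf{'}(Z)}\le D_j(1-p^*)^{k+1-j}\cdot(1+\text{err})\le D_j(1-(\theta-\theta^{7/4}))^{k+1-j}(1+\text{err})$, and expanding $(1-(\theta-\theta^{7/4}))^{k+1-j}\le 1-(k+1-j)\theta + (k+1-j)\theta^{7/4}+\binom{k+1-j}{2}\theta^2 + \cdots \le 1-(k+1-j)\theta+\tfrac12\theta^{5/3}$ for $\theta$ small in $k$, with the remaining slack $\tfrac12\theta^{5/3}$ soaking up the multiplicative error term (the error here is again $O(k^2 D_2 p^2)=O(\eps^2\theta^2)\le\theta^{2}\le\tfrac12\theta^{5/3}$ using $\eps\le\theta$ and $\theta$ small). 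No use of hypothesis~\ref{hypothesis:codegratio} is needed for the expectation --- that hypothesis is for the \emph{concentration} step (Lemma~\ref{lemma:concanalysis}), not here.

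\textbf{The main obstacle is the correlation bookkeeping in part~\ref{lemma:degreeexpectation}}: getting the error down to $O(\eps^2\theta^2)$ per edge (rather than, say, $O(k\theta^2)$ or $O(\eps\theta^2)$) requires being honest that the only source of dependence among the events $\{u\in V(\mathbf{M})\}_{u\in e\setminus\{v\}}$ is a \emph{shared} edge $e'\in\mathbf{X}$ meeting two of them, of which there are at most $\binom{k}{2}D_2$, each contributing $O(p^2)=O(\theta^2/D^2)$, and then $D_2\theta^2/D^2\le\eps^2\theta^2$ exactly by~\ref{hypothesis:degratio} (which gives $D_2/D\le\eps^2\theta/\log^5 D\le\eps^2\theta$). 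I would structure this via a union bound over the ``bad'' event that some pair $u,u'$ in $e\setminus\{v\}$ has a common edge in $\mathbf{X}$: off this event, the individual $\{u\in V(\mathbf{M})\}$ events behave as if generated by disjoint edge-sets, hence are genuinely independent, and conditioning on the bad event not occurring changes each probability by at most the bad-event probability. I should also double-check that $p^*$, defined as $\max_v p(v)$, interacts correctly with the per-vertex factors $(1-w(u))(1-p(u))=1-p^*$ from~\eqref{eq:pstar} --- indeed that identity is exactly what collapses the product $\prod_u \prob{u\text{ survives}}$ to $(1-p^*)^k$, which is the reason the $w(u)$'s were introduced in the first place. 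The lower-order terms ($1/D$, $D_2/D$, $\eps$ vs.\ $\theta$) all need to be tracked against the advertised constants, but these are exactly the kind of routine estimates already appearing in~\eqref{eq:pstarabove}--\eqref{eq:wvabove}, so I would not expect surprises there.
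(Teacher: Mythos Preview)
Your overall strategy matches the paper's: split off the independent $\mathbf{W}$-factors, establish that $\prob{(e\setminus\{v\})\cap V(\mathbf{M})=\emptyset}=\prod_{u\in e\setminus\{v\}}(1-p(u))\pm O(\eps^2\theta^2)$, then use the identity $(1-p(u))(1-w(u))=1-p^*$ to collapse the product to $(1-p^*)^k$. You also correctly locate the scale of the error in $D_2/D$ via~\ref{hypothesis:degratio}, and your treatment of part~\ref{lemma:codegreeexpectation} (only an upper bound is needed; use $\text{codeg}(Z)\le D_j$ and expand $(1-p^*)^{k+1-j}$) is exactly what the paper does.

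The gap is in your mechanism for the near-independence step. You assert that, conditionally on the ``bad event'' that no $\mathbf{X}$-edge meets two vertices of $Z=e\setminus\{v\}$, the events $\{u\in V(\mathbf{M})\}_{u\in Z}$ are ``generated by disjoint edge-sets, hence genuinely independent''. This is false. The event $\{u\in V(\mathbf{M})\}$ depends on whether some $e'\in\partial(u)\cap\mathbf{X}$ is \emph{isolated}, which in turn depends on the $\mathbf{X}$-status of every edge meeting $e'$. So two vertices $u,u'\in Z$ with no common edge in $\mathbf{X}$ are still coupled whenever some $e'\ni u$ and $e''\ni u'$ intersect (at a vertex outside $Z$): if $e'\in\mathbf{X}$ then $e''$ is not isolated, and conversely. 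Your conditioning does not sever this distance-$2$ dependence, and conditioning on an event of probability $1-O(\eps^2\theta^2)$ also perturbs the marginals. (The distance-$2$ coupling turns out to be quantitatively harmless, contributing only a multiplicative $1+O(\eps^2\theta^2)$ to a term of size $O(\theta^{|A|})$, but that requires proof, not an incorrect independence claim.)

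The paper handles this via a separate Lemma~\ref{lemma:nomatchedvertices}, proved by a \emph{different} inclusion--exclusion than the one you sketched: first show $\prob{A\subseteq V(\mathbf{M})}=\prod_{u\in A}p(u)\pm 5\eps^2\theta^2$ for every $A\subseteq Z$, then apply standard inclusion--exclusion over subsets of $Z$ to pass to $\prob{Z\cap V(\mathbf{M})=\emptyset}$. The estimate on $\prob{A\subseteq V(\mathbf{M})}$ proceeds by decomposing over ``$A$-covers'' (sets of pairwise-disjoint $\mathbf{M}$-edges whose union contains $A$): covers of size $|A|$ give the main term $\prod_{u\in A}p(u)$; covers of smaller size necessarily use an edge containing two points of $A$ and contribute the $O(D_2 p)\le O(\eps^2\theta^2)$ error you anticipated; and an exact identity $\prob{Y\subseteq\mathbf{M}}=\bigl(\prod_{e\in Y}\prob{e\in\mathbf{M}}\bigr)(1-p)^{-\sum_f(\mathrm{mult}(f)-1)}$ for pairwise-disjoint $Y$ cleanly dispatches the distance-$2$ coupling that your argument missed.
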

To prove Lemma~\ref{lemma:expectations}, we will use the following lemma, which essentially says that the probability a small set of vertices are all not covered by the matching~\(\mathbf{M}\) is close to what it would be if the events~\(\{u\in V(\mathbf{M})\}\) were independent.
\begin{lemma}\label{lemma:nomatchedvertices}
Let~\(H\) satisfy the hypotheses of Lemma~\ref{lemma:nibble}, and apply a random nibble~\((\mathbf{X},\mathbf{W})\) to~\(H\).
For any $1\leq j \leq k$ and \(Z\in\binom{V(H)}{j}\), we have
\[
\prob{ Z\cap V(\mathbf{M})=\emptyset}=\left(\prod_{u\in Z}(1-p(u))\right)\pm 2^{k+3}\eps^{2}\theta^{2}.
\]
\end{lemma}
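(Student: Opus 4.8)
The plan is to estimate $\prob{Z\cap V(\mathbf{M})=\emptyset}$ by relating it to a product of independent-looking factors, with an error term controlled by the overlap structure of edges near $Z$. Write $Z=\{u_1,\dots,u_j\}$ with $j\le k$. The event $Z\cap V(\mathbf{M})=\emptyset$ is the event that none of the $u_i$ is covered by an isolated edge of $\mathbf{X}$. First I would expose the status of all edges $e$ that meet $Z$ (i.e.\ decide which are in $\mathbf{X}$), and observe that $Z\cap V(\mathbf{M})=\emptyset$ holds iff for each $u_i$, \emph{either} no edge of $\mathbf{X}$ through $u_i$ is isolated. The clean way is to condition: let $\mathcal{E}_Z$ be the set of edges meeting $Z$. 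If two or more distinct edges of $\mathbf{X}$ meet $Z$ and they pairwise intersect within small distance, that already helps; the main contribution comes from the event that exactly one edge through each $u_i$ lies in $\mathbf{X}$, these edges are distinct and vertex-disjoint, and each is isolated.

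The key computation is an inclusion–exclusion / independence comparison. For a \emph{single} vertex $u$, $\prob{u\notin V(\mathbf{M})}=1-p(u)$ exactly, by the definition of $p(u)$ in Section~\ref{section:description}. For the joint event over $Z$, the obstruction to independence is that (a) a single edge $e$ may contain two vertices of $Z$, and (b) the ``isolation'' events for edges through different $u_i$ are correlated through shared neighbouring edges. I would bound the probability that \emph{any} edge of $\mathbf{X}$ contains two vertices of $Z$, or that two edges of $\mathbf{X}$ through (possibly different) vertices of $Z$ intersect each other: each such ``bad local configuration'' involves a pair of vertices of $Z$ joined by a short path of $\le 2$ edges, and the relevant edges being selected. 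Since $p=\theta/\delta(H)=\Theta(\theta/D)$, the probability that two prescribed edges are both in $\mathbf{X}$ is $p^2=\Theta(\theta^2/D^2)$, and the number of relevant pairs of edges realising a fixed pair $\{u_i,u_{i'}\}$ is $O(D^2)$ (using $C_2(H)\le D_2$ and $\Delta(H)\le(1+\eps)D$; the factor $\eps^2$ enters because $D_2\le\eps^2\theta D/\log^5 D$ by~\ref{hypothesis:degratio}, so codegree-type configurations are a $\Theta(\eps^2)$-fraction). Summing over the $\binom{j}{2}=O(k^2)$ pairs gives total bad-configuration probability $O(2^{k}\eps^2\theta^2)$, which will be the source of the $2^{k+3}\eps^2\theta^2$ error.

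On the complement of all bad configurations, the edges of $\mathbf{X}$ meeting $Z$ that are relevant to matching-coverage form, for each $u_i$, an independent family, and one can write the conditional probability as $\prod_{i}(1-p(u_i))$ adjusted by lower-order terms already absorbed into the error. More precisely, I would argue: $\prob{Z\cap V(\mathbf{M})=\emptyset}$ differs from $\prod_{u\in Z}\prob{u\notin V(\mathbf{M})}=\prod_{u\in Z}(1-p(u))$ by at most the probability of a bad local configuration, because conditioned on no bad configuration the coverage events $\{u_i\in V(\mathbf{M})\}$ depend on disjoint sets of edge-trials (the edges through $u_i$ and their neighbours, which are disjoint from those for $u_{i'}$ precisely because we have excluded short paths between $u_i$ and $u_{i'}$ within two edges). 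This disjointness-under-conditioning step is the heart of the argument and needs care: I would make it rigorous by a union-bound splitting $\prob{Z\cap V(\mathbf{M})=\emptyset}=\prob{Z\cap V(\mathbf{M})=\emptyset, \text{no bad config}}\pm\prob{\text{bad config}}$ and on the good event factorising exactly.

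The main obstacle I anticipate is handling correlation (b) cleanly — the isolation events for edges through different vertices of $Z$ are \emph{not} determined by disjoint trial-sets in general, because an edge $f$ disjoint from $Z$ could intersect both an $\mathbf{X}$-edge through $u_1$ and an $\mathbf{X}$-edge through $u_2$. Such an $f$ realises a path of length $\le 3$ between $u_1$ and $u_2$; the probability that the two edges through $u_1,u_2$ and the connecting edge $f$ are all in $\mathbf{X}$ is $O(p^3\cdot D^3)=O(\theta^3)$, which is even smaller and is comfortably absorbed, but I must be careful to enumerate \emph{all} such short-range interaction patterns (lengths $0,1,2,3$ between vertices of $Z$) and verify each contributes $O(2^k\eps^2\theta^2)$ or smaller. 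Keeping the bookkeeping of distances, the codegree bounds $C_i(H)\le D_i$, and the $\eps^2$ savings from~\ref{hypothesis:degratio} straight will be the delicate part; everything else is routine. Given $j\le k$, $\theta\ll 1/k$, and $\eps\le\theta$, all the error terms collapse into the stated $2^{k+3}\eps^2\theta^2$ bound.
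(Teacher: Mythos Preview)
There is a genuine gap at the factorisation step. You correctly identify that the isolation events for $\mathbf{X}$-edges through different $u_i$ are \emph{not} determined by disjoint trial-sets: an edge $f\notin\mathcal{E}_Z$ meeting both $e_1\ni u_1$ and $e_2\ni u_2$ (with $e_1,e_2\in\mathbf{X}$) is a shared trial. But your fix---bounding the probability that $e_1,e_2,f$ are \emph{all} in $\mathbf{X}$ by $O(\theta^3)$---misidentifies the source of the correlation. The trial $f$ lies in the dependency set for both isolation events \emph{deterministically}; whether $f$ actually lands in $\mathbf{X}$ is irrelevant to whether the trial-sets overlap. Consequently your assertion that ``on the good event the coverage events depend on disjoint sets of edge-trials'' is false for the bad event $B$ you describe. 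One can try to rescue this by enlarging $B$ to forbid $f\in\mathbf{X}$ for every such connecting $f$, so that conditioning on $\overline{B}$ pins the shared trials to~$0$; but then you must argue carefully that (i) this really yields conditional independence of the events $\{u_i\notin V(\mathbf{M})\}$---which requires first conditioning on $\mathbf{X}\cap\mathcal{E}_Z$ and then integrating out---and (ii) the conditional single-vertex probabilities $\prob{u_i\notin V(\mathbf{M})\mid\overline{B}}$ are within $O(\prob{B})$ of $1-p(u_i)$, so that the conditional product matches $\prod(1-p(u_i))$ up to the stated error. Neither step is addressed, and neither is routine.

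The paper takes a different route that avoids conditioning altogether. It first proves the ``positive'' estimate $\prob{A\subseteq V(\mathbf{M})}=\prod_{u\in A}p(u)\pm 5\eps^2\theta^2$ for every subset $A\subseteq Z$ (Claim~\ref{claim:nearlyindep}), by decomposing over $A$-\emph{covers} $Y$ (sets of pairwise-disjoint edges whose union contains $A$) and computing the exact ratio $\prob{Y\subseteq\mathbf{M}}\big/\prod_{e\in Y}\prob{e\in\mathbf{M}}=(1-p)^{-\sum_f(\text{mult}(f)-1)}\le 1+\eps^2\theta^2$ (Claim~\ref{claim:intproduct}); covers of size $<|A|$ are controlled by a counting bound using $D_2/D\le\eps^2\theta/\log^5 D$ (Claim~\ref{claim:acovers}). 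Then inclusion--exclusion is applied twice---once in the true measure and once in an artificial independent measure $\mathbb{P}^*$ with $\probstar{u\in\mathbf{Y}}=p(u)$---to convert these estimates for $\prob{A\subseteq V(\mathbf{M})}$ into the desired estimate for $\prob{Z\cap V(\mathbf{M})=\emptyset}$. The $(1-p)^{-\text{overlap}}$ computation in Claim~\ref{claim:intproduct} is precisely the correct quantification of the correlation you were trying to handle.
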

Lemma~\ref{lemma:nomatchedvertices} quickly yields Lemma~\ref{lemma:expectations}:
\lateproof{Lemma~\ref{lemma:expectations}}
\newline\ref{lemma:degreeexpectation}: Fix \(v\in V(H)\).
We have
\begin{eqnarray*}
\expn{\textbf{deg}\mathbf{'}(v)} & = & \sum_{e\ni v}\prob{(e\setminus\{v\})\cap V(\mathbf{M})=\emptyset}\cdot\prob{(e\setminus\{v\})\cap\mathbf{W}=\emptyset} \\ & \stackrel{\text{(Lemma~\ref{lemma:nomatchedvertices})}}{=} & \sum_{e\ni v}\left(\left(\left(\prod_{u\in e\setminus\{v\}}(1-p(u))\right)\pm 2^{k+3}\eps^{2}\theta^{2}\right)\cdot\prod_{u\in e\setminus\{v\}}(1-w(u))\right) \\ & = & \sum_{e\ni v}\left(\left(\prod_{u\in e\setminus\{v\}}(1-p(u))(1-w(u))\right)\pm 2^{k+3}\eps^{2}\theta^{2}\right) \\ & \stackrel{\text{(\ref{eq:pstar})}}{=} & \text{deg}(v)\left((1-p^{*})^{k}\pm 2^{k+3}\eps^{2}\theta^{2}\right).
\end{eqnarray*}
\ref{lemma:codegreeexpectation}: Fix \(j\in J^{*}\) and \(Z\in \binom{V(H)}{j}\).
As in the proof of~\ref{lemma:degreeexpectation}, we can apply Lemma~\ref{lemma:nomatchedvertices} (with \(e\setminus Z\) playing the role of~\(Z\))\COMMENT{\(j\in J^{*} \implies 2\leq j \leq k \implies 1 \leq |e\setminus Z| \leq k-1\)} to obtain\COMMENT{Here using \(x\coloneqq -p^{*}\), \(n\coloneqq k+1-j\) in the inequality \((1+x)^n\leq e^{xn}\leq 1 + xn + x^2n^2\), which requires \(xn<1.79\). We can use \(\eps\leq 1\) and see the smallest non-leading power of \(\theta\) is \(\theta^{7/4}\). Altogether using \(\theta\ll1/k\) the error inside the bracket is at most \(\theta^{5/3}\).} 
\begin{eqnarray*}
\expn{\textbf{codeg}\mathbf{'}(Z)} & = & \text{codeg}(Z)\left((1-p^{*})^{k+1-j} \pm 2^{k+3}\eps^{2}\theta^{2}\right) \\ & \leq & D_{j}(1-(k+1-j)(\theta-\theta^{7/4}) + (k+1-j)^{2}(\theta+\theta^{7/4})^{2} + 2^{k+3}\eps^{2}\theta^{2}) \\ & \leq & D_{j}(1-(k+1-j)\theta + \theta^{5/3}),
\end{eqnarray*}
where we used~(\ref{eq:pstarabove}) and~(\ref{eq:pstarbelow}).
\endproof
To prove Lemma~\ref{lemma:nomatchedvertices}, we adapt an argument of Alon, Kim, and Spencer~\cite[Theorem 2.1, Claim 1]{AKS97}, who also analyzed a `non-wasteful' nibble (see Section~\ref{section:sketch} of the current paper), but in the setting of linear hypergraphs (having \(C_2(H)=1\)).
The lack of restrictions on the codegrees in our setting presents an added obstacle.
\lateproof{Lemma~\ref{lemma:nomatchedvertices}}
We begin by finding bounds for the probability that all of a small set of pairwise-disjoint edges are selected for~$\mathbf{M}$.
\begin{claim}\label{claim:intproduct}
Let $y\in[k]$ and suppose that the edges $Y\coloneqq\{e_{1},e_{2},\dots,e_{y}\}\subseteq E(H)$ are pairwise-disjoint. Then
\[
\prod_{e\in Y}\prob{e\in\mathbf{M}}\leq\prob{Y\subseteq\mathbf{M}}\leq(1+\eps^{2}\theta^{2})\prod_{e\in Y}\prob{e\in\mathbf{M}}.
\]
\end{claim}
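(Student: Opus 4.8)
The plan is to prove Claim~\ref{claim:intproduct} by expressing each probability explicitly in terms of the independent edge-selection events $\{f \in \mathbf{X}\}$ and then carefully comparing the joint probability $\prob{Y \subseteq \mathbf{M}}$ with the product $\prod_{e \in Y}\prob{e \in \mathbf{M}}$. Recall that for a single edge $e$, we have $\prob{e \in \mathbf{M}} = p(1-p)^{f(e)}$, where $f(e)$ is the number of edges other than $e$ meeting $e$. For the joint event $Y \subseteq \mathbf{M}$, since the $e_i$ are pairwise disjoint, this requires that each $e_i \in \mathbf{X}$ (probability $p^y$, independent) and that no edge outside $Y$ meeting $\bigcup Y$ is chosen. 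Writing $F(Y)$ for the number of edges $f \notin Y$ with $f \cap (\bigcup_{i} e_i) \neq \emptyset$, we get $\prob{Y \subseteq \mathbf{M}} = p^y (1-p)^{F(Y)}$ (using that the ``forbidden'' edges are distinct from the chosen ones and their selections are independent).

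The lower bound is the easy direction: by inclusion-exclusion style counting, $F(Y) \leq \sum_{i \in [y]} f(e_i)$, because every edge meeting $\bigcup Y$ meets at least one $e_i$ and is counted at least once on the right. Hence $(1-p)^{F(Y)} \geq (1-p)^{\sum_i f(e_i)}$, which immediately gives $\prob{Y \subseteq \mathbf{M}} \geq p^y \prod_i (1-p)^{f(e_i)} = \prod_{e \in Y}\prob{e \in \mathbf{M}}$. For the upper bound, I would bound the overcounting: the difference $\sum_i f(e_i) - F(Y)$ counts (with multiplicity) edges meeting two or more of the $e_i$; there are at most $\binom{y}{2}$ pairs $\{e_i, e_j\}$, and each pair shares no vertex, so the number of edges meeting both $e_i$ and $e_j$ is at most $(k+1)^2 D_2$ (choose a vertex in $e_i$, a vertex in $e_j$, then an edge through that pair; here using $C_2(H) \leq D_2$). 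So $\sum_i f(e_i) - F(Y) \leq \binom{k}{2}(k+1)^2 D_2 \leq k^4 D_2$, say. Therefore
\[
\prod_{e \in Y}\prob{e \in \mathbf{M}} = p^y(1-p)^{\sum_i f(e_i)} \geq p^y (1-p)^{F(Y)} (1-p)^{k^4 D_2} = \prob{Y \subseteq \mathbf{M}}\,(1-p)^{k^4 D_2},
\]
and it remains to check $(1-p)^{-k^4 D_2} \leq 1 + \eps^2\theta^2$, i.e.\ that $k^4 D_2 p$ is small enough. Since $p = \theta/\delta(H) \leq 2\theta/D$, we have $k^4 D_2 p \leq 2k^4 \theta D_2 / D$, and hypothesis~\ref{hypothesis:degratio} gives $D_2/D \leq \eps^2 \theta / \log^5 D$, so $k^4 D_2 p \leq 2k^4 \eps^2\theta^2 / \log^5 D \leq \eps^2\theta^2/2$ for $D$ large; then $(1-p)^{-k^4 D_2} \leq \exp(2 k^4 D_2 p) \leq 1 + \eps^2\theta^2$ using $e^x \leq 1+x+x^2$ and the smallness of the exponent. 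This establishes the claim.

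I do not anticipate a serious obstacle here; the main care point is the bookkeeping for the upper bound, specifically getting the overcount $\sum_i f(e_i) - F(Y)$ controlled by the codegree bound $D_2$ rather than something larger, and then verifying the resulting multiplicative slack sits inside $1 + \eps^2\theta^2$. The pairwise-disjointness of the $e_i$ is what makes the two-vertex codegree bound applicable (an edge meeting both $e_i$ and $e_j$ must contain one vertex from each, and those are distinct vertices). Once Claim~\ref{claim:intproduct} is in hand, the remainder of the proof of Lemma~\ref{lemma:nomatchedvertices} will proceed by summing over the (constantly many, since $j \leq k$) ways the matching can cover a subset of $Z$: writing $\prob{Z \cap V(\mathbf{M}) = \emptyset}$ via inclusion–exclusion over which disjoint edge-families through $Z$ are in $\mathbf{M}$, applying Claim~\ref{claim:intproduct} to each term to replace joint probabilities by products up to a $(1 \pm \eps^2\theta^2)$ factor, and then recognizing the resulting product expression as $\prod_{u \in Z}(1 - p(u))$ up to an additive error of order $2^{k+3}\eps^2\theta^2$ (the $2^{k+3}$ absorbing the number of subsets of $Z$ and the combinatorial constants). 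That final step is essentially the argument of Alon–Kim–Spencer adapted to allow nontrivial codegrees, and the codegree flexibility is exactly what Claim~\ref{claim:intproduct} was designed to supply.
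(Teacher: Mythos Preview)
Your proof is correct and follows essentially the same approach as the paper: both arguments express the ratio $\prob{Y\subseteq\mathbf{M}}/\prod_{e\in Y}\prob{e\in\mathbf{M}}$ as $(1-p)^{-(\sum_i f(e_i)-F(Y))}$, bound the overcount $\sum_i f(e_i)-F(Y)$ by $\binom{y}{2}(k+1)^2 D_2$ via the pair-codegree bound, and then use~\ref{hypothesis:degratio} to show this exponent contributes at most a $(1+\eps^2\theta^2)$ factor. The only cosmetic differences are that the paper phrases the overcount as $\sum_{f\in F(Y)}(\text{mult}(f)-1)$ and keeps the constant $\binom{y}{2}(k+1)^2$ rather than your $k^4$, but these are equivalent bookkeeping choices.
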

\claimproof
For a set of edges~$R\subseteq E(H)$, define \(F(R)\coloneqq\{f\in E(H) \colon f\cap(\bigcup_{r\in R}r)\neq\emptyset, f\notin R\}\).
Note that
\[
\prob{Y\subseteq\mathbf{M}}=\left(\prod_{e\in Y}\prob{e\in\mathbf{X}}\right)\cdot\left(\prod_{f\in F(Y)}\prob{f\notin \mathbf{X}}\right),
\]
whereas
\[
\prod_{e\in Y}\prob{e\in\mathbf{M}}=\left(\prod_{e\in Y}\prob{e\in\mathbf{X}}\right)\cdot\left(\prod_{e\in Y}\prod_{f\in F(\{e\})}\prob{f\notin\mathbf{X}}\right).
\]
It follows that $\prob{Y\subseteq\mathbf{M}}=\left(\prod_{e\in Y}\prob{e\in\mathbf{M}}\right)\cdot (1-p)^{-\sum_{f\in F(Y)}(\text{mult}(f)-1)}$, where~$\text{mult}(f)$ is the number of $e\in Y$ for which $f\in F(\{e\})$.
The lower bound of the claim now follows from the fact that $\text{mult}(f)\geq 1$ for all $f\in F(Y)$.
To obtain the upper bound, observe that $\sum_{f\in F(Y)}(\text{mult}(f)-1)\leq\binom{y}{2}(k+1)^{2}D_{2}$.
Indeed, the right side is an upper bound for \(A\coloneqq\sum \text{codeg}(P)\), where the sum is over unordered pairs~\(P\) of vertices from different \(Y\)-edges.
An edge \(f\in F(Y)\) with \(\text{mult}(f)=1\) doesn't contribute to~\(A\) nor to \(B\coloneqq\sum_{f\in F(Y)}(\text{mult}(f)-1)\).
If \(\text{mult}(f)=\ell\geq 2\) then~\(f\) contributes~\(\ell-1\) to~\(B\) and at least\COMMENT{Exactly \(\binom{\ell}{2}\) if \(f\) intersects each its \(\ell\) (pairwise-disjoint) \(Y\)-edges in 1 vertex. More if it intersects any of these \(\ell\) edges in more than 1 vertex.} \(\binom{\ell}{2}\geq\ell-1\) to~\(A\).
By hypothesis~\ref{hypothesis:degratio} we have\COMMENT{The first inequality uses \(1+x+x^2\geq e^x\) for \(x\coloneqq -3p/2<1.79\) (say), \textcolor{blue}{and uses $\delta(H)\geq (1-\eps)D\geq D/2$ by $\eps\leq 1/2$}. The second inequality uses $e^{x}\leq 1+x+x^{2}$ for $x<1.79$, which holds for $x\coloneqq 2\binom{y}{2}(k+1)^{2}\theta\cdot\eps^{2}\theta/\log^{5}D$ \textcolor{blue}{by, say, $\eps,\theta\leq 1$ and $\log^{5}D\geq 2\binom{k}{2}(k+1)^{2}$}. Then $1+2\binom{y}{2}(k+1)^{2}\eps^{2}\theta^{2}/\log^{5}D + 4\binom{y}{2}^{2}(k+1)^{4}\eps^{4}\theta^{4}/\log^{10}D \leq 1+\eps^{2}\theta^{2}$, provided \textcolor{blue}{say, $\log^{5}D\geq 4\binom{k}{2}(k+1)^{2}$ and $\eps,\theta\leq 1$, $\log^{10}D\geq 8\binom{k}{2}^{2}(k+1)^{4}$}.} $(1-p)^{-\binom{y}{2}(k+1)^{2}D_{2}}\leq\exp(2\binom{y}{2}(k+1)^{2}\theta D_{2}/D)\leq1+\eps^{2}\theta^{2}$, completing the proof of the claim.
\endclaimproof
For a set $A\subseteq V(H)$, we define an $A$-\textit{cover of size}~$j$ to be a set of~$j$ pairwise-disjoint edges~$\{e_{1},e_{2},\dots,e_{j}\}\subseteq E(H)$ such that $e_{i}\cap A\neq\emptyset$ for all $i\in[j]$ and $A\setminus\bigcup_{i\in[j]}e_{i}=\emptyset$.
Write~$\cC(A,j)$ for the set of $A$-covers of size~$j$.
\begin{claim}\label{claim:acovers}
Let $A\subseteq V(H)$ have size $|A|=r$ for $r\in[k]$.
Then $|\cC(A,j)|\leq\eps^{2}\theta D^{j}$ for each $j\in [r-1]$.
\end{claim}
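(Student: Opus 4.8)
The plan is to bound $|\cC(A,j)|$ by counting $A$-covers of size $j$ directly, exploiting the fact that an $A$-cover of size $j < r$ must contain an edge covering at least two vertices of $A$. First I would fix $A$ with $|A| = r \in [k]$ and $j \in [r-1]$, and consider any $A$-cover $\{e_1, e_2, \dots, e_j\}$. Since the $e_i$ are pairwise-disjoint, each meets $A$, and together they cover all of $A$, a counting argument (pigeonhole) shows that at least one $e_i$ satisfies $|e_i \cap A| \geq 2$: indeed if every $e_i$ met $A$ in exactly one vertex then $\bigcup e_i$ would cover at most $j < r$ vertices of $A$, a contradiction. So to build an $A$-cover we may first choose which edge plays the role of this ``doubly-covering'' edge, then choose that edge, then choose the remaining $j-1$ edges.

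The key estimate is as follows. There are at most $\binom{r}{2} \leq \binom{k}{2}$ choices for a pair of vertices in $A$ that will lie in a common cover-edge, and for each such pair $P$ there are at most $\text{codeg}_H(P) \leq C_2(H) \leq D_2$ choices for an edge $e$ containing $P$. Having fixed $e$ (which covers at least two, hence at most $k+1$, vertices of $A$), there remain at most $r - 2 \leq k - 2$ vertices of $A$ to be covered by the other $j - 1 \leq r - 2$ edges; each such edge must contain at least one of the remaining uncovered vertices of $A$, so there are at most $|A| \cdot \Delta(H) \leq (k+1) \cdot (1+\eps)D \leq 2(k+1)D$ choices for each of them (choose an uncovered vertex of $A$ it must contain, then an edge through that vertex). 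Hence
\[
|\cC(A,j)| \leq \binom{k}{2} D_2 \cdot \left(2(k+1)D\right)^{j-1} \leq \binom{k}{2}(2(k+1))^{k-1} D_2 D^{j-1}.
\]
Now I would invoke hypothesis~\ref{hypothesis:degratio}, which gives $D_2 \leq \eps^2 \theta D / \log^5 D$, so that
\[
|\cC(A,j)| \leq \binom{k}{2}(2(k+1))^{k-1} \frac{\eps^2 \theta D}{\log^5 D} \cdot D^{j-1} = \frac{\binom{k}{2}(2(k+1))^{k-1}}{\log^5 D} \cdot \eps^2 \theta D^{j}.
\]
Since $1/D \ll 1/k$, we have $\log^5 D \geq \binom{k}{2}(2(k+1))^{k-1}$ (using also that $D$ is large enough relative to $k$, as established in the discussion after the statement of Lemma~\ref{lemma:nibble}), whence $|\cC(A,j)| \leq \eps^2 \theta D^j$, as desired.

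The main obstacle here is nailing down the exact combinatorial counting for the ``remaining $j-1$ edges'': one must be careful that the bound $(2(k+1)D)^{j-1}$ genuinely accounts for all ways the cover can be completed, including the possibility that some later edge also covers multiple vertices of $A$. The clean way to handle this is to simply note that each of the $j-1$ remaining edges contains a vertex of $A$ not covered by the previously selected edges (this follows from pairwise-disjointness together with the covering condition, processing edges in any fixed order), so we can always charge each such edge to a fresh $A$-vertex and then pick an edge through it, which is at most $\Delta(H) \leq 2D$ choices — the factor $(k+1)$ absorbing the choice of which $A$-vertex. I do not anticipate any genuine difficulty beyond bookkeeping; the essential input is the $2$-codegree bound~\ref{hypothesis:degratio}, which is precisely what makes the $\eps^2\theta$ savings appear and is the reason an $A$-cover strictly smaller than $|A|$ is rare.
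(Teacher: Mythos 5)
Your proposal is correct and uses essentially the same approach as the paper: isolate an edge of the cover that contains at least two $A$-vertices (bounding its selection by $D_2$ via the chosen pair), bound the remaining $j-1$ edges by roughly $\Delta(H)^{j-1} = O(D^{j-1})$, and then convert the $D_2/D$ saving into $\eps^2\theta$ via hypothesis~\ref{hypothesis:degratio}. The only difference is bookkeeping: the paper first partitions $A \setminus B_0$ into blocks $B_1, \dots, B_j$ (at most $(k+1)^{2k}$ ways) assigning each $A$-vertex to an edge slot before choosing edges, while you instead choose for each subsequent edge which (fresh) $A$-vertex it covers; both yield an over-count of the form $\text{const}(k) \cdot D_2 D^{j-1}$, which hypothesis~\ref{hypothesis:degratio} and $1/D \ll 1/k$ then turn into the stated bound $\eps^2\theta D^j$.
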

\claimproof
The claim holds vacuously if $r=1$, so suppose $2\leq r\leq k$ and fix $j\in[r-1]$.
We describe a process that can output any $A$-cover of size~$j$, then count the number of ways to run the process.
Firstly select a pair~$B_{0}$ of vertices of~$A$, then partition the remainder of~$A$ into sets $B_{1},B_{2},\dots,B_{j}$, where only~$B_{1}$ is permitted to be empty.
Finally, select~$j$ pairwise-disjoint edges $e_{1},e_{2},\dots,e_{j}$ of~$H$ such that $B_{0}\cup B_{1}\subseteq e_{1}$ and $B_{i}\subseteq e_{i}$ for $2\leq i\leq j$ (if such edges exist, otherwise say the process fails).
Since $j<r$, for any $A$-cover of size~$j$ there must be at least some pair of vertices in~$A$ contained within one edge of the $A$-cover, with~$B_{0}$ playing the role of this pair in the above process.
It follows that for any $A$-cover of size~$j$, there is at least one way to run the process to obtain that $A$-cover.
Further, any successful outcome $\{e_{1},\dots,e_{j}\}$ of the process is an $A$-cover of size~$j$.
When running the process, there are~$\binom{r}{2}\leq\binom{k}{2}$ ways to select~$B_{0}$, at most~$(k+1)^{2k}$ ways\COMMENT{Any valid partition $(B_{1},\dots,B_{j})$ of $A\setminus B_{0}$ can be obtained (possibly more than once) by the following procedure: Apply a permutation to $A\setminus B_{0}$. Then choose $B_{1}$ to be the first $t_{1}$ vertices of $A\setminus B_{0}$ under this permutation, where $0\leq t_{1}\leq r-2$. Then choose $B_{2}$ to be the first $t_{2}$ vertices of $A\setminus (B_{0}\cup B_{1})$, where $1\leq t_{2}\leq r-2-t_{1}$. Etc. The procedure can be performed in at most $(r-2)!(r+1)^{j}\leq k!(k+1)^{k}\leq k^{k}(k+1)^{k}\leq (k+1)^{2k}$ ways.} to perform the partitioning of~$A\setminus B_{0}$, at most~$D_{2}$ ways to select~$e_{1}$, and\COMMENT{\textcolor{blue}{Obviously $\eps\leq 1$.}} at most~$\Delta(H)\leq 2D$ ways to choose~$e_{i}$, for each $2\leq i\leq j$.
By~\ref{hypothesis:degratio}, we deduce\COMMENT{Consider a bipartite graph with parts $\cC(A,j)$ and $\cP$, where $\cP$ is the set of all ways to successfully run the process (that is, to make all the choices $B_{0},\dots,B_{j},e_{1}\dots,e_{j}$), with $CP$ an edge (for $C\in\cC(A,j)$, $P\in\cP$) iff $C=\{e_{1},\dots,e_{j}\}$. By double-counting the edges we have $|\cC(A,j)|\leq |\cP|(\max_{P\in\cP}d(P))/(\min_{C\in\cC(A,j)}d(C))$. In the main text we argued that $d(P)=1$ for all $P\in\cP$ and $d(C)\geq 1$ for all $C\in\cC(A,j)$, whence $|\cC(A,j)|\leq|\cP|$.} that\COMMENT{\textcolor{blue}{Used $\log^{5}D\geq 2^{k}(k+1)^{2k+2}$.}} $|\cC(A,j)|\leq\binom{k}{2}(k+1)^{2k}D_{2}\cdot 2^{j-1}D^{j-1}\leq 2^{k}(k+1)^{2k+2}(D_{2}/D)\cdot D^{j}\leq \eps^{2}\theta D^{j}$, completing the proof of the claim.
\endclaimproof
Next we use Claims~\ref{claim:intproduct} and~\ref{claim:acovers} to show that the probability that all of a small set of vertices are in $V(\mathbf{M})$ is close to what it would be if the events~$\{u\in V(\mathbf{M})\}$ were independent for $u\in V(H)$.
Recall that~$p(u)\coloneqq\prob{u\in V(\mathbf{M})}$.
\begin{claim}\label{claim:nearlyindep}
Let $A\subseteq V(H)$ have size $|A|=r$, where $r\in[k]$.
Then
\[
\prob{A\subseteq V(\mathbf{M})}=\left(\prod_{u\in A}p(u)\right)\pm 5\eps^{2}\theta^{2}.
\]
\end{claim}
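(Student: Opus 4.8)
The plan is to prove Claim~\ref{claim:nearlyindep} by inclusion–exclusion over which edges of the matching~\(\mathbf{M}\) cover the vertices of~\(A\). Writing \(A = \{u_1, u_2, \dots, u_r\}\), the event \(\{A \subseteq V(\mathbf{M})\}\) decomposes according to the (unordered) set of distinct \(\mathbf{M}\)-edges that collectively cover~\(A\); since~\(\mathbf{M}\) is a matching and \(|A| = r \leq k\), each such set is an \(A\)-cover of size~\(j\) for some \(j \in [r]\), and these sets are pairwise-disjoint as families of edges. Thus
\[
\prob{A \subseteq V(\mathbf{M})} = \sum_{j=1}^{r} \sum_{Y \in \cC(A,j)} \prob{Y \subseteq \mathbf{M}}.
\]
The \(j = r\) term corresponds to each vertex of~\(A\) being covered by its own private \(\mathbf{M}\)-edge, and summing over \(Y \in \cC(A,r)\) should reproduce \(\prod_{u \in A} p(u)\) up to a small error. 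Indeed, \(\prod_{u \in A} p(u) = \prod_{u \in A} \sum_{e \ni u} \prob{e \in \mathbf{M}} = \sum \prod_{i \in [r]} \prob{e_i \in \mathbf{M}}\), the latter sum over all ordered tuples \((e_1, \dots, e_r)\) with \(e_i \ni u_i\); the terms where the \(e_i\) are pairwise-disjoint are (after accounting for the unordered/ordered bookkeeping — but here each vertex gets its own edge so there is a natural correspondence) exactly indexed by \(\cC(A,r)\), and by Claim~\ref{claim:intproduct} each such \(\prob{Y \subseteq \mathbf{M}}\) agrees with \(\prod_{e \in Y}\prob{e \in \mathbf{M}}\) up to a factor \((1 + \eps^2\theta^2)\). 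The tuples where some \(e_i = e_{i'}\) or \(e_i \cap e_{i'} \neq \emptyset\) must be discarded, and these are controlled: fixing a colliding pair costs a factor \(D_2\) (via hypothesis~\ref{hypothesis:degratio}) rather than \(D\), so their total contribution is \(O(\eps^2\theta^2)\) after dividing by the relevant power of \(D\).

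Concretely, I would first bound the "low-\(j\)" contribution \(\sum_{j=1}^{r-1}\sum_{Y \in \cC(A,j)}\prob{Y \subseteq \mathbf{M}}\). By Claim~\ref{claim:acovers}, \(|\cC(A,j)| \leq \eps^2\theta D^j\), and each \(\prob{Y \subseteq \mathbf{M}} \leq \prod_{e \in Y} p \leq (\theta/\delta(H))^j \leq (2\theta/D)^j\) (using \(\delta(H) \geq D/2\)), so \(\sum_{Y \in \cC(A,j)}\prob{Y \subseteq \mathbf{M}} \leq \eps^2\theta D^j (2\theta/D)^j = \eps^2\theta (2\theta)^j \leq \eps^2 \theta^2 \cdot 2^k k\) when summed over \(j \in [r-1]\); absorbing constants (using \(\theta \ll 1/k\)) this is at most, say, \(\eps^2\theta^2\). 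Then for the \(j = r\) term, I would expand \(\prod_{u \in A} p(u)\) as the sum over ordered tuples, split it into the "disjoint" part (indexed bijectively by \(\cC(A,r)\), since each vertex is in a distinct edge) and the "non-disjoint" part. For the disjoint part, Claim~\ref{claim:intproduct} gives \(\sum_{Y \in \cC(A,r)} \prob{Y \subseteq \mathbf{M}} = (1 \pm \eps^2\theta^2)\sum_{Y \in \cC(A,r)}\prod_{e \in Y}\prob{e \in \mathbf{M}}\), and since \(\sum_{Y \in \cC(A,r)}\prod_{e \in Y}\prob{e \in \mathbf{M}} \leq \prod_{u\in A}p(u) \leq (p^*)^r \leq (\theta + \theta^{7/4})^r\) by~(\ref{eq:pstarabove}), the error introduced by the \((1 \pm \eps^2\theta^2)\) factor is \(O(\eps^2\theta^2)\). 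For the non-disjoint part of the expansion of \(\prod_{u\in A}p(u)\): a tuple with a collision has two vertices \(u_i, u_{i'}\) lying in intersecting (or equal) edges \(e_i, e_{i'}\); the number of choices of such a configuration costs \(\text{codeg}\) of a pair \(\leq D_2\) in place of one \(D\)-factor, and \(\prob{e \in \mathbf{M}} \leq p\), so the total contribution is bounded by \(\binom{r}{2}(k+1)^2 D_2 D^{r-2}(2\theta/D)^r \cdot O(1) \leq \eps^2\theta^2\) using hypothesis~\ref{hypothesis:degratio} again. Combining, \(\prob{A \subseteq V(\mathbf{M})} = \prod_{u \in A}p(u) \pm 5\eps^2\theta^2\), where the constant \(5\) comfortably absorbs the handful of \(O(\eps^2\theta^2)\) error terms above (each individually at most \(\eps^2\theta^2\) after the constant-absorption permitted by \(1/D, \theta \ll 1/k\)).

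The main obstacle I anticipate is getting the bijective bookkeeping between \(\cC(A,r)\) and the disjoint ordered tuples exactly right, and being careful that the "non-disjoint part" of \(\prod_{u\in A} p(u)\) is genuinely what needs to be subtracted — one has to check that the inclusion–exclusion / event-decomposition of \(\{A \subseteq V(\mathbf{M})\}\) into \(A\)-covers is a genuine partition (it is, because in a matching the set of edges covering a fixed vertex set is uniquely determined by the outcome) and that no cross-terms are double-counted. Once that combinatorial identity is pinned down, every remaining estimate is a routine application of Claims~\ref{claim:intproduct} and~\ref{claim:acovers} together with hypothesis~\ref{hypothesis:degratio} and the bounds~(\ref{eq:pstarabove})--(\ref{eq:pstarbelow}) already established, so the bulk of the work is organisational rather than technical. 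Finally, Lemma~\ref{lemma:nomatchedvertices} then follows from Claim~\ref{claim:nearlyindep} by the standard inclusion–exclusion identity \(\prob{Z \cap V(\mathbf{M}) = \emptyset} = \sum_{A \subseteq Z}(-1)^{|A|}\prob{A \subseteq V(\mathbf{M})}\), substituting the estimate from Claim~\ref{claim:nearlyindep} for each of the \(2^j \leq 2^k\) subsets \(A\) and noting \(\sum_{A\subseteq Z}(-1)^{|A|}\prod_{u\in A}p(u) = \prod_{u \in Z}(1-p(u))\), so the accumulated error is at most \(2^k \cdot 5\eps^2\theta^2 \leq 2^{k+3}\eps^2\theta^2\), which is the stated bound.
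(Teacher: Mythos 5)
Your proposal takes essentially the same approach as the paper: decompose $\{A\subseteq V(\mathbf{M})\}$ over $A$-covers, dispose of the size-$j<r$ covers via Claim~\ref{claim:acovers}, and for size-$r$ covers use Claim~\ref{claim:intproduct} together with the bijection between $\cC(A,r)$ and the pairwise-disjoint ordered tuples $(e_1,\dots,e_r)$ with $e_i\ni u_i$, so that the defect $\prod_{u\in A}p(u)-\sum_{Y\in\cC(A,r)}\prod_{e\in Y}\prob{e\in\mathbf{M}}$ is bounded by a codegree count. One small arithmetic slip to watch: the number of colliding tuples is of order $D_2 D^{r-1}$, not $D_2 D^{r-2}$ — after choosing the first colliding edge freely ($\approx D$ choices) and the shared vertex ($\leq k+1$ choices), the second colliding edge costs $\leq D_2$ and the remaining $r-2$ edges cost $\approx D^{r-2}$ — but the corrected bound $\binom{r}{2}(k+1)D_2 D^{r-1}p^r\leq\mathrm{const}\cdot\theta^r (D_2/D)\leq\eps^2\theta^2$ still follows from~\ref{hypothesis:degratio}, so the conclusion is unaffected.
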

\claimproof
The statement is clear if $r=1$, so suppose $r,k\geq 2$.
We show the upper bound first.
By Claim~\ref{claim:intproduct} we have
\begin{equation}\label{eq:mostupperbound}
\prob{A\subseteq V(\mathbf{M})} = \sum_{j=1}^{r}\sum_{B\in\cC(A,j)}\prob{B\subseteq \mathbf{M}}\leq (1+\eps^{2}\theta^{2})\sum_{j=1}^{r}\sum_{B\in\cC(A,j)}\prod_{e\in B}\prob{e\in\mathbf{M}}.
\end{equation}
Let $A=\{u_{1},u_{2},\dots,u_{r}\}$, define $\partial A\coloneqq \partial(u_{1})\times\partial(u_{2})\times\dots\times\partial(u_{r})$ and let~$\partial^{*} A$ be the set of those tuples $(e_{1},\dots,e_{r})\in\partial A$ for which $e_{1},\dots,e_{r}$ are pairwise-disjoint.
We can split the double sum in~(\ref{eq:mostupperbound}) into the sum over \(A\)-covers of size~\(r\), and the sum over all smaller \(A\)-covers.
By Claim~\ref{claim:acovers}, the latter sum is at most\COMMENT{For the final inequality, the \(j\) term is at most \(\eps^2\theta \cdot2\theta^j\), so the largest term is the \(j=1\) term \(2\eps^2\theta^2\) and all the others are asymptotically smaller, using \(r\leq k\) and \textcolor{blue}{\(\theta\ll 1/k\)}.} \(\sum_{j=1}^{r-1}|\cC(A,j)|p^j \leq \sum_{j=1}^{r-1}\eps^{2}\theta D^{j}p^{j} \leq 3\eps^2\theta^2\).
Further,
\begin{eqnarray}\label{eq:sumprodprodsum}
\sum_{B\in\cC(A,r)}\prod_{e\in B}\prob{e\in\mathbf{M}} & = & \sum_{(e_1,e_2,\dots,e_r)\in\partial^{*}A}\prod_{i\in[r]}\prob{e_i \in\mathbf{M}} \leq \sum_{(e_1,e_2,\dots,e_r)\in\partial A}\prod_{i\in[r]}\prob{e_i \in\mathbf{M}} \nonumber \\ & = & \prod_{u\in A}\sum_{e\ni u}\prob{e\in\mathbf{M}}=\prod_{u\in A}\prob{u\in V(\mathbf{M})},
\end{eqnarray}
where the penultimate equality can be most easily seen by `expanding the brackets' in the expression \(\prod_{u\in A}\sum_{e\ni u}\prob{e\in\mathbf{M}}\).
Altogether, we have
\[
\prob{A\subseteq V(\mathbf{M})} \leq (1+\eps^{2}\theta^{2})\left(\left(\prod_{u\in A}\prob{u\in V(\mathbf{M})}\right) + 3\eps^{2}\theta^{2}\right),
\]
and the upper bound of the claim follows.

To see the lower bound holds, one can apply Claim~\ref{claim:intproduct} as in~(\ref{eq:mostupperbound}) (and simply ignore the terms \(j<r\)) to obtain
\begin{equation}\label{eq:mainproblow}
\prob{A\subseteq V(\mathbf{M})}\geq \sum_{(e_{1},\dots,e_{r})\in\partial^{*} A}\prod_{i=1}^{r}\prob{e_{i}\in\mathbf{M}}.
\end{equation}
By~(\ref{eq:sumprodprodsum}) and~(\ref{eq:mainproblow}), all that remains\COMMENT{The RHS of~(\ref{eq:mainproblow}) equals $\sum_{(e_{1},\dots,e_{r})\in\partial A}\prod_{i=1}^{r}\prob{e_{i}\in\mathbf{M}}-\sum_{(e_{1},\dots,e_{r})\in\partial A\setminus \partial^{*}A}\prod_{i=1}^{r}\prob{e_{i}\in\mathbf{M}}$, which by~(\ref{eq:sumprodprodsum}) is equal to $\left(\prod_{u\in A}p(u)\right)- \sum_{(e_{1},\dots,e_{r})\in\partial A\setminus \partial^{*}A}\prod_{i=1}^{r}\prob{e_{i}\in\mathbf{M}}$. Compare this with the claim.} is to check $\sum_{(e_{1},\dots,e_{r})\in\partial A\setminus\partial^{*}A}\prod_{i=1}^{r}\prob{e_{i}\in\mathbf{M}}\leq5\eps^{2}\theta^{2}$.
To that end, note that since any $r$-tuple in~$\partial A\setminus\partial^{*}A$ must contain a pair of intersecting edges, it follows\COMMENT{There are~$\binom{r}{2}\leq k^{2}/2$ ways to choose a distinct pair $i,j\in[r]$ such that $e_{i}$ and $e_{j}$ will intersect. Choose one of the two, say $i$. Perhaps $e_{i}$ contains $u_{j}$, in which case there are at most $D_{2}$ choices for $e_{i}$ and $\Delta(H)^{r-1}$ choices for the other edges. Otherwise there are at most $\Delta(H)$ choices for an $e_{i}$ that does not contain $u_{j}$. Then there are $k+1$ choices of a vertex $z\in e_{i}$ such that $e_{j}$ will contain both $u_{j}$ and $z$ (and we have ensured that $z$ is not $u_{j}$). So there are now at most $D_{2}$ choices for $e_{j}$ and $\Delta(H)^{r-2}$ choices for the remaining edges. Crunching the numbers, $(k^{2}/2)\cdot 2(D_{2}\Delta(H)^{r-1}+\Delta(H)(k+1)D_{2}\Delta(H)^{r-2})=k^{2}(k+2)D_{2}\Delta(H)^{r-1}\leq 2k^{2}(k+2)D_{2}D^{r-1}$ by \textcolor{blue}{$(1+\eps)^{r-1}\leq (1+\eps)^{k}\leq2$}.} that $|\partial A\setminus\partial^{*}A|\leq2k^{2}(k+2)D_{2}D^{r-1}$, which, together with~\ref{hypothesis:degratio} and the fact that $\prob{e\in\mathbf{M}}\leq p$ for all $e\in E(H)$, finishes\COMMENT{$\sum_{(e_{1},\dots,e_{r})\in\partial A\setminus\partial^{*}A}\prod_{i=1}^{r}\prob{e_{i}\in\mathbf{M}}\leq |\partial A\setminus\partial^{*}A|p^{r}\leq 2k^{2}(k+2)D_{2}D^{r-1}p^{r}=2k^{2}(k+2)(D_{2}/D)D^{r}p^{r}\leq 4k^{2}(k+2)\eps^{2}\theta^{1+r}/\log^{5}D$, where we \textcolor{blue}{used $(1-\eps)^{r}\geq 1/2$}. Then \textcolor{blue}{use $\log^{5}D\geq 4k^{2}(k+2)$, $\theta\leq 1$}, $r\geq 1$.} the proof of the claim.
\endclaimproof
We can now use Inclusion-Exclusion to finish the proof of the lemma.
To that end, fix \(1\leq j\leq k\) and fix \(Z\in\binom{V(H)}{j}\).
Let~$\pr^{*}$ denote the measure for the probability space in which the vertices~$u$ of~$H$ are chosen independently for a set~$\mathbf{Y}$ with probability $\probstar{u\in\mathbf{Y}}\coloneqq\prob{u\in V(\mathbf{M})}$.
For $r\in[j]$ define $\cA_{r}\coloneqq\binom{Z}{r}$.
Note that\COMMENT{From line 2 to 3, the number of times we must count the error $\pm5\eps^{2}\theta^{2}$ is equal to the number of subsets of a \(j\)-set, which is \(2^j \leq 2^k\), and clearly $2^{k}\cdot 5\eps^{2}\theta^{2}\leq 2^{k+3}\eps^{2}\theta^{2}$.}
\begin{eqnarray*}
\prob{Z\cap V(\mathbf{M})=\emptyset} & \stackrel{\text{(I.-E.)}}{=} & 1-\sum_{r=1}^{j}(-1)^{r+1}\sum_{A\in\cA_{r}}\prob{A\subseteq V(\mathbf{M})} \\ & \stackrel{\text{(Claim~\ref{claim:nearlyindep})}}{=} & 1-\sum_{r=1}^{j}(-1)^{r+1}\sum_{A\in\cA_{r}}\left(\left(\prod_{u\in A}p(u)\right)\pm 5\eps^{2}\theta^{2}\right) \\ & = & \left(1-\sum_{r=1}^{j}(-1)^{r+1}\sum_{A\in\cA_{r}}\probstar{A\subseteq\mathbf{Y}}\right)\pm 2^{k+3}\eps^{2}\theta^{2} \\ & \stackrel{\text{(I.-E.)}}{=} & \probstar{Z\cap\mathbf{Y}=\emptyset}\pm 2^{k+3}\eps^{2}\theta^{2},
\end{eqnarray*}
which is $\left(\prod_{u\in Z}(1-p(u))\right)\pm 2^{k+3}\eps^{2}\theta^{2}$, as required.
\endproof
\subsection{Concentration analysis}\label{section:concanalysis}
In this subsection, we use the concentration inequalities from Section~\ref{section:concineqs} to prove each part of Lemma~\ref{lemma:concanalysis} in turn.
We begin by using Lemma~\ref{lemma:martingale} to analyze the concentration of \(\mathbf{n'}=|V(\mathbf{H'})|\), the size of the leftover vertex set.
\lateproof{Lemma~\ref{lemma:concanalysis}\ref{concleftover}}
Note that by~(\ref{eq:pstar}) we have $\expn{\mathbf{n'}}=n(1-p^{*})$, whence by~(\ref{eq:pstarabove}) and~(\ref{eq:pstarbelow}) we have
\begin{equation}\label{eq:expectm'}
n(1-\theta-\theta^{7/4})\leq\expn{\mathbf{n'}}\leq n(1-\theta+\theta^{7/4}).
\end{equation}
We now seek to apply Lemma~\ref{lemma:martingale} to show that~$\mathbf{n'}$ is tightly concentrated around its expectation.
To that end, note that~$\mathbf{n'}$ is a function of the mutually independent indicator random variables $\{\mathbf{t}_{u}\colon u\in V(H)\}\cup\{\mathbf{t}_{e}\colon e\in E(H)\}$, where~\(\mathbf{t}_u\) (respectively~\(\mathbf{t}_e\)) is the indicator for the event~\(\{u\in\mathbf{W}\}\) (\(\{e\in\mathbf{X}\}\)).
It is clear that each~$\mathbf{t}_{u}$ affects~$\mathbf{n'}$ by at most~$1$.
We claim that each~$\mathbf{t}_{e}$ affects~$\mathbf{n'}$ by at most~$(k+1)^{2}$.
Indeed, fix $e\in E(H)$ and consider two nibbles (assignments of values to all~$\mathbf{t}_{u}$ and all~$\mathbf{t}_{f}$ for $u\in V(H)$, $f\in E(H)$), differing only in the value of~$\mathbf{t}_{e}$.
Let~$\text{Nibble}_{0}$ and~$\text{Nibble}_{1}$ be the nibbles in which $\mathbf{t}_{e}=0$ and $\mathbf{t}_{e}=1$ respectively, set~$n_{0}$ to be the value of $|V(H)\setminus(V(\mathbf{M})\cup\mathbf{W})|$ in~$\text{Nibble}_{0}$ and set~$n_{1}$ to be $|V(H)\setminus(V(\mathbf{M})\cup\mathbf{W})|$ in~$\text{Nibble}_{1}$.
If~$e$ is isolated in~$\text{Nibble}_{1}$ then $n_{1}=n_{0}-b$, where $0\leq b\leq k+1$ is the number of vertices~$u$ in~$e$ for which $\mathbf{t}_{u}=0$.
If~$e$ is not isolated in~$\text{Nibble}_{1}$ then $n_{1}=n_{0}+b'$, where~$0\leq b' \leq (k+1)^{2}$ is the number of vertices~$u$ in edges~$e'$ intersecting~$e$ for which~$e'$ is isolated in~$\text{Nibble}_{0}$ and $\mathbf{t}_{u}=0$ (each vertex of~$e$ can only contribute at most one such isolated~$e'$).
It follows that~$\mathbf{t}_{e}$ affects~$\mathbf{n'}$ by at most~$(k+1)^{2}$, as claimed.
Note that $\sum_{u\in V(H)}w(u)(1-w(u))\cdot1^{2} +\sum_{e\in E(H)}p(1-p)(k+1)^{4} \leq 8\eps\theta n+e(H)p(k+1)^{4}\leq 8\eps\theta n+\frac{n\Delta(H)}{k+1}\cdot\frac{\theta}{\delta(H)}(k+1)^{4}\leq 8\eps\theta n +(1+3\eps)(k+1)^{3}n\theta \leq 2(k+1)^{3}n\theta$, where\COMMENT{\textcolor{blue}{Used say $\eps\leq 1/6$} so that $(1+3\eps)(k+1)^{3}n\theta\leq \frac{3}{2}(k+1)^{3}n\theta$. Also we have $8\eps\theta n\leq \frac{1}{2}(k+1)^{3}n\theta$ \textcolor{blue}{provided, say $\eps\leq (k+1)^{3}/16$}.} we have used~(\ref{eq:wvabove}).
We apply\COMMENT{We need $\alpha<2\sigma/C$. With our definitions of these parameters, this requires $\theta\sqrt{n/2(k+1)^{3}}<2\sqrt{2n\theta/(k+1)}$, which is equivalent to $\theta<16(k+1)^{2}$, so it's fine.} Lemma~\ref{lemma:martingale} with $C\coloneqq(k+1)^{2}$, $\sigma\coloneqq\sqrt{2(k+1)^{3}n\theta}$, and $\alpha\coloneqq\theta\sqrt{\frac{n}{2(k+1)^{3}}}$, obtaining
\begin{equation}\label{eq:concentratem'}
\prob{|\mathbf{n'}-\expn{\mathbf{n'}}|>n\theta^{3/2}}\leq2\exp\left(-\frac{\theta^{2}n}{8(k+1)^{3}}\right).
\end{equation}
Combining~(\ref{eq:expectm'}) and~(\ref{eq:concentratem'}), we obtain\COMMENT{Here simply using $n\theta^{7/4}\leq n\theta^{3/2}$ which holds for any \textcolor{blue}{$0<\theta<1$}.} $\prob{A_1}\leq2\exp(-\theta^{2}n/8(k+1)^{3})$, as required.
\endproof
Next, we prove Lemma~\ref{lemma:concanalysis}\ref{concwaste}, which simply says it is unlikely that~\(\mathbf{W}\) is too large.
\lateproof{Lemma~\ref{lemma:concanalysis}\ref{concwaste}}
Recall that vertices $v\in V(H)$ are chosen independently for~$\mathbf{W}$ with probability~$w(v)$.
Using~(\ref{eq:wvabove}), we obtain $\prob{A_2}\leq \prob{\text{Bin}(n,8\eps\theta)\geq10\eps\theta n}\) from a simple coupling argument\COMMENT{Suppose that $W'$ is constructed by selecting every vertex $v\in V(H)$ independently for $W'$ with probability $8\eps\theta$. We couple $W$ and $W'$ as follows: For every vertex $v\in V(H)$, couple the random variables $\mathds{1}_{v\in W}$ and $\mathds{1}_{v\in W'}$ by ensuring that if a vertex is in~$W$, then it is in~$W'$ (possible by~(\ref{eq:wvabove}), and we may couple independently like this because of the mutual independence of~$\{\mathds{1}_{v\in W}\}_{v\in V(H)}$ and~$\{\mathds{1}_{v\in W'}\}_{v\in V(H)}$). Then clearly $|W|\leq |W'|$ in every outcome (of all assignments of $\mathds{1}_{v\in W}$ and $\mathds{1}_{v\in W'}$), and $|W'|\sim\text{Bin}(n,8\eps\theta)$.}.
It follows by Lemma~\ref{chernoff}\ref{lemma:chernoffbigexp}\COMMENT{Expectation is $8\eps\theta n$, set the $\eps$ in statement of Lemma~\ref{chernoff}\ref{lemma:chernoffbigexp} to be $1/4$.} that $\prob{A_2}\leq 2\exp(-\eps\theta n/6)$, as required.
\endproof
The proofs of Lemma~\ref{lemma:concanalysis}\ref{concvertexdegree}--\ref{conctestwaste} all use Theorem~\ref{theorem:lintal} as the central tool.
We describe our approach now.
The four proofs are somewhat similar, so we focus our explanation on the concentration of~\(\textbf{deg}\mathbf{'}(v)\).
For the remainder of the section, we say that a vertex \(u\in V(H)\) is \(\mathbf{X}\)-\textit{covered} if \(u\in e\) for some \(\mathbf{X}\)-edge~\(e\), or equivalently \(u\in V(\mathbf{X})\).
We say \(u\in V(H)\) is \textit{resurrected} if \(u\in e\) for some non-isolated \(\mathbf{X}\)-edge~\(e\) (recall an \(\mathbf{X}\)-edge~\(e\) is isolated if it does not intersect any other \(\mathbf{X}\)-edge~\(e'\)).

Unfortunately, the random variable~\(\textbf{deg}\mathbf{'}(v)\) is not directly suitable for an application of a Talagrand-type concentration inequality; indeed, it will take too many `witnesses' (too large a value of~\(r\) in Theorem~\ref{theorem:lintal}) to certify that an edge~\(e\) containing~\(v\) survives the nibble (in particular, one might need to see that every edge~\(f\) intersecting~\(e\setminus\{v\}\) is not an \(\mathbf{X}\)-edge).
The random variable \((\text{deg}(v) - \textbf{deg}\mathbf{'}(v))\) is also not suitable; to certify an edge \(e\ni v\) does not survive the nibble, it does not suffice to show that some edge~\(f\) intersecting~\(e\setminus\{v\}\) was put in~\(\mathbf{X}\), since we would still need to know that none of the other edges intersecting~\(f\) were put in~\(\mathbf{X}\) (otherwise any vertex in \(f\cap e\) is resurrected and may survive the nibble).
However, we will show that~\(\textbf{deg}\mathbf{'}(v)\) can be written as the combination of (constantly many) random variables we call~\(\mathbf{f}(a,b,c)\), each of which is \((r,d)\)-observable for suitably small~\(r\).
More specifically, we set~\(\mathbf{f}(a,b,c)\) to be the number of edges \(e\ni v\) for which at least~\(a\) elements of~\(e\setminus\{v\}\) are \(\mathbf{X}\)-covered, at least~\(b\) elements of~\(e\setminus\{v\}\) are resurrected, and at least~\(c\) elements of~\(e\setminus\{v\}\) are put in~\(\mathbf{W}\).
Further, we will define the set~\(\Omega^{*}\) of exceptional outcomes in such a way that ensures no edge \(g\in E(H)\) can interfere with~\(\mathbf{f}(a,b,c)\) too much in any outcome outside of~\(\Omega^{*}\), which is needed in order to apply Theorem~\ref{theorem:lintal} with an acceptably small value of~\(d\) (we discuss this more concretely after the proof of Lemma~\ref{lemma:concanalysis}\ref{concvertexdegree}).
Once \((r,d)\)-observability of each~\(\mathbf{f}(a,b,c)\) has been established for appropriate~\(r,d\), we show that~\ref{hypothesis:degratio} ensures that Theorem~\ref{theorem:lintal} can be applied to obtain error on the order~\(\eps\theta D\).~\ref{hypothesis:codegratio} (respectively~\ref{pseudhyp:lowerbound}) performs the corresponding role in the proof of Lemma~\ref{lemma:concanalysis}\ref{conccodegree} (respectively Lemma~\ref{lemma:concanalysis}\ref{conctestpseudo}--\ref{conctestwaste}).

Delcourt and Postle~\cite{DP24} acknowledge this strategy of splitting a random variable which is not suitable for a Talagrand-type concentration inequality into a combination of random variables which are, and even have a version of Theorem~\ref{theorem:lintal} which `builds in' this combination (specifically,~\cite[Theorem 4.5]{DP24}).
For an earlier example of this kind of approach, see~\cite[Lemma 5(b)]{MR00}.
\lateproof{Lemma~\ref{lemma:concanalysis}\ref{concvertexdegree}}
Clearly we can identify the random nibble~\((\mathbf{X},\mathbf{W})\) with the product space~\((\Omega,\Sigma,\mathbb{P})\) of the spaces \(((\Omega_{x},\Sigma_{x},\mathbb{P}_{x}))_{x\in V(H)\cup E(H)}\).
Fix \(v\in V(H)\).
Define \(N^{(3)}(v)\coloneqq\{u\in V(H)\colon\text{dist}_{H}(v,u)\leq 3\}\).
We define the set~\(\Omega^{*}\subseteq\Omega\) of exceptional outcomes as follows:
\[
\Omega^{*}\coloneqq\{\omega\in\Omega\colon \exists u\in N^{(3)}(v)\,\, \text{such that}\, \,|\partial(u)\cap\mathbf{X}|>\log^{5/2}D\}.
\]
In particular, a simple application\COMMENT{Fix \(u\in V(H)\). Note \(|\partial(u)\cap\mathbf{X}|\sim\text{Bin}(\text{deg}(u),p)\). We have \(\mu\coloneqq \expn{|\partial(u)\cap\mathbf{X}|} = \frac{\theta\text{deg}(u)}{\delta(H)}(\leq 2\theta\leq1)\). Apply Lemma~\ref{chernoff}\ref{lemma:chernoffsmallexp} with \(\beta\coloneqq\frac{\log^{5/2}D}{\mu}(\geq\log^{5/2}D>1)\) to obtain
\begin{eqnarray*}
\prob{|\partial(u)\cap\mathbf{X}|>\log^{5/2}D} & \leq & \left(\frac{\delta(H)\log^{5/2}D}{e\theta\text{deg}(u)}\right)^{-\log^{5/2}D}=\left(\frac{e\theta\text{deg}(u)}{\delta(H)\log^{5/2}D}\right)^{\log^{5/2}D}\leq e^{-\log^{5/2}D} \\ & = & D^{-\log^{3/2} D},
\end{eqnarray*}
where we used \(e\theta\text{deg}(u)/(\delta(H)\log^{5/2}D) \leq 1/e\), which follows from (for example) \(\theta\leq 1/(2e)\) and \(\log^{5/2}D\geq e\).} of Lemma~\ref{chernoff}\ref{lemma:chernoffsmallexp} shows that\COMMENT{1 vertex at distance 0 from \(v\), at most \(\Delta(H)k\leq 2kD\) at distance 1, \((2kD)^2\) at distance 2, \((2kD)^3\) at distance 3. \(1+2kD+(2kD)^2+(2kD)^3=(2kD)^3(1+1/(2kD)+1/(2kD)^2+1/(2kD)^3)\leq 8k^3D^3(1+1/2+1/4+1/8)\leq16k^3D^3\)}
\begin{equation}\label{eq:theexceptionaloutcomes}
\prob{\Omega^{*}}\leq |N^{(3)}(v)|D^{-\log^{3/2} D} \leq 16k^3D^{3-\log^{3/2} D}\leq D^{-\frac{1}{2}\log^{3/2} D}.
\end{equation}
For integers \(0\leq b\leq a\leq k\) and \(0\leq c\leq k\), we define~\(\mathbf{f}(a,b,c)\) to be the number of edges \(e\ni v\) such that at least~\(a\) of the vertices~\(e\setminus\{v\}\) are \(\mathbf{X}\)-covered, at least~\(b\) of the vertices~\(e\setminus\{v\}\) are resurrected, and at least~\(c\) of the vertices~\(e\setminus\{v\}\) are put in~\(\mathbf{W}\).
Additionally, set \(\mathbf{f}(k+1,b,c)=0\).
Notice that, for each \(a\in[k]\), the number of edges \(e\ni v\) such that exactly~\(a\) of the vertices~\(e\setminus\{v\}\) are \(\mathbf{X}\)-covered is \(\mathbf{f}(a,0,0)-\mathbf{f}(a+1,0,0)\).
Moreover, the number of these edges that satisfy the additional property that none of the vertices~\(e\setminus\{v\}\) are put in~\(\mathbf{W}\) is \((\mathbf{f}(a,0,0)-\mathbf{f}(a+1,0,0))-(\mathbf{f}(a,0,1)-\mathbf{f}(a+1,0,1))\).
Similarly, the number of edges \(e\ni v\) such that exactly~\(a\) of the vertices~\(e\setminus\{v\}\) are \(\mathbf{X}\)-covered and all of them are resurrected is \((\mathbf{f}(a,a,0)-\mathbf{f}(a+1,a,0))\), and the number of such edges which further satisfy \((e\setminus\{v\})\cap\mathbf{W}=\emptyset\) is \((\mathbf{f}(a,a,0)-\mathbf{f}(a+1,a,0))-(\mathbf{f}(a,a,1)-\mathbf{f}(a+1,a,1))\).
It follows that \(\textbf{deg}\mathbf{'}(v)=\text{deg}(v)-\mathbf{f}(0,0,1)-\sum_{a=1}^{k}\mathbf{N}(a)\), where\COMMENT{The top line terms do a bunch of cancelling out when we sum over~\(a\). The bottom line terms don't. Seems simpler to not bother commenting on this? (Just concentrate each \(\mathbf{f}(a,b,c)\) and we'll get at most the desired total error on \(\textbf{deg}\mathbf{'}(v)\).) Even if one \(\mathbf{f}(a,b,c)\) appears with a coefficient \(\geq 2\) (this doesn't actually happen), you could just replace each instance of it with the (expectation \(\pm\) error) we get from Theorem~\ref{theorem:lintal}, and there are at most \(8k+1\) total instances.}
\begin{eqnarray}\label{eq:Na}
\mathbf{N}(a) = &  & ((\mathbf{f}(a,0,0)-\mathbf{f}(a+1,0,0))-(\mathbf{f}(a,0,1)-\mathbf{f}(a+1,0,1)))\nonumber \\ & - & ((\mathbf{f}(a,a,0)-\mathbf{f}(a+1,a,0))-(\mathbf{f}(a,a,1)-\mathbf{f}(a+1,a,1))),
\end{eqnarray}
since each~\(\mathbf{N}(a)\) counts the number of edges \(e\ni v\) in which none of the vertices~\(e\setminus\{v\}\) are put in~\(\mathbf{W}\), exactly~\(a\) of the vertices~\(e\setminus\{v\}\) are \(\mathbf{X}\)-covered, and not all of those~\(a\) vertices are resurrected, whence at least some vertex of~\(e\setminus\{v\}\) is in an isolated \(\mathbf{X}\)-edge, and so does not survive the nibble.
Let the random variables~\(\mathbf{f}(a,b,c)\) which appear in the above expansion of~\(\textbf{deg}\mathbf{'}(v)\) (excluding the constant random variables \(\mathbf{f}(k+1,b,c)=0\)) be called the \textit{important counters}.

Now fix an important counter\COMMENT{It doesn't matter until we bound the expectation later that \(a,b,c\) is such that~\(\mathbf{f}(a,b,c)\) is an important counter, but it's clean to do it this way and not have to separately appeal later to which choices of \(a,b,c\) we care about.} \(\mathbf{f}(a,b,c)\eqqcolon\mathbf{f}\), and notice that \(a\geq b\).
We claim that~\(\mathbf{f}\) is \((a+b+c,2(k+1)^2D_{2}\log^{5/2}D)\)-observable with respect to~\(\Omega^{*}\).
Indeed, fix an outcome \(\omega\in\Omega\setminus\Omega^{*}\) and let~\(s\) be the value of~\(\mathbf{f}\) in~\(\omega\).
By the definition of~\(\mathbf{f}\), there are distinct edges \(\{e_1,e_2,\dots,e_s\}\subseteq\partial_{H}(v)\) such that each~\(e_i\) contains a set \(U_i=\{u_{i,1},u_{i,2},\dots,u_{i,b}\}\) of resurrected non-\(v\) vertices (there are at least this many, but this subset is enough for finding `witnesses'), a set \(U^{'}_i=\{u'_{i,1},u'_{i,2},\dots,u'_{i,a-b}\}\subseteq e_i \setminus (U_i \cup\{v\})\) of \(\mathbf{X}\)-covered (and not necessarily resurrected) vertices, and a set \(W_i = \{w_{i,1},w_{i,2},\dots,w_{i,c}\}\subseteq e_i\setminus\{v\}\) of vertices that are each put in~\(\mathbf{W}\) in~\(\omega\).
For each \(u_{i,j}\in U_i\), there is an \(\mathbf{X}\)-edge~\(f_{i,j}\ni u_{i,j}\) and a non-\(f_{i,j}\) \(\mathbf{X}\)-edge~\(g_{i,j}\) intersecting~\(f_{i,j}\), which witnesses the resurrection of~\(u_{i,j}\).
Let~\(F^1_{i}=\{f_{i,j}\colon j\in[b]\}\), allowing~\(F^1_i\) to be a multiset\COMMENT{Allowing it to be a multiset only really serves two purposes in the end, given my later choice of how to define \(c_e\): Firstly, mentioning it could help the reader to understand the complexities of what's going on - a single \(\mathbf{X}\)-edge can bear witness to the fact several vertices of \(e\setminus\{v\}\) are \(\mathbf{X}\)-covered, if it happens to contain all those vertices, and secondly, it helps make it clear and tidy that \(\sum_{j\in[s]}(|W_j|+|F_j^1|+|F_j^2|+|G_j|)=s(c+b+(a-b)+b)=(a+b+c)s\) when we come to count witnesses a bit later.}, and similarly construct the multiset \(G_i=\{g_{i,j}\colon j\in[b]\}\) and the multiset \(F^2_i=\{f'_{i,j}\colon j\in[a-b]\}\), where~\(f'_{i,j}\ni u'_{i,j}\) is an \(\mathbf{X}\)-edge which witnesses the fact that~\(u'_{i,j}\) is \(\mathbf{X}\)-covered.
Set \(Y_i\coloneqq F_i^1\cup F_i^2 \cup G_i\), ignoring multiple appearances beyond the first, so that~\(Y_i\) is a set.
Put \(I\coloneqq\{w\in V(H)\colon \exists i\in[s]\,\,\text{such that}\,\,w\in W_i\} \cup\{e\in E(H)\colon \exists i\in[s]\,\,\text{such that}\,\,e\in Y_i\}\).
For each vertex \(w\in I\), set~\(c_w\) to be the number\COMMENT{Mildly unfortunate notation to have the \(c\) in \(f(a,b,c)\) and also the \(c_i\) for the Talagrand vector. Do we mind this?} of \(i\in[s]\) for which \(w\in W_i\), and similarly, for an edge \(e\in I\) set~\(c_e\) to be the number of \(i\in[s]\) such that \(e\in Y_i\).\COMMENT{I went back and forth on whether to define the \(c_e\) this way or counting multiplicity. I began typing the latter and decided the former was cleaner. Here's the start of the multiplicity version: for an edge \(e\in I\) set~\(\lambda_e^{i,j}\) to be number of times~\(e\) appears in the multiset~\(F_i^j\) for \(i\in[s], j\in[2]\), and set \(\lambda_e^{(j)}=\sum_{i\in[s]}\lambda_e^{i,j}\).
Similarly, set~\(\lambda_{e}^{i,3}\) to be the number of times~\(e\) appears in~\(G_i\), and set \(\lambda_e^{(3)}=\sum_{i\in{s}}\lambda_e^{i,3}\).
Set \(c_e\coloneqq\sum_{j\in[3]}\lambda_e^{(j)}\).
We will show that \((I,(c_i\colon i\in I))\) is an \((a+b+c,2(k+1)^2 D_2\log^{5/2}D)\)-certificate for \(\mathbf{f}(a,b,c), \omega, s\), and~\(\Omega^{*}\).
To that end, notice that \(\sum_{i\in I}c_i=\sum_{j\in[s]}(|W_j|+|F_j^1|+|F_j^2|+|G_j|)=s(c+b+(a-b)+b)=(a+b+c)s\).
...Now when bounding the \(c_e\) you'll have to count the \(i\in[s]\) for which \(e\in F_i^1\), then multiply by \(k\) (it could be the chosen edge for all \(u\in e_i\setminus\{v\}\)), do the same for~\(F_i^2\), and so forth. When doing the \(I'\subseteq I\) business, it's clear this probably overcounts the number of edges counting towards \(\mathbf{f}(a,b,c)\) we lose, and I think the non-multiplicity version was cleaner even there.}

We will show that \((I,(c_i\colon i\in I))\) is an \((a+b+c,2(k+1)^2 D_2\log^{5/2}D)\)-certificate for \(\mathbf{f}, \omega, s\), and~\(\Omega^{*}\).
To that end, notice that \(\sum_{i\in I}c_i=\sum_{j\in[s]}(|W_j|+|Y_j|)\leq\sum_{j\in[s]}(|W_j|+|F_j^1|+|F_j^2|+|G_j|)=s(c+b+(a-b)+b)=(a+b+c)s\).
For a vertex \(w\in I\), clearly\COMMENT{If a vertex appears in \(W_i\) then it only appears once in that \(W_i\) since \(W_i\) is not a multiset. Clearly \(w\) can only be in \(W_i\) for those edges containing both \(v\) and \(w\) (and \(w\) cannot be \(v\) by construction of \(I\)). We may have that some edges containing \(v\) and \(w\) aren't counting towards \(\mathbf{f}(a,b,c)\) though, since they may not have at least a \(\mathbf{X}\)-covered vertices etc etc, so we can't say \(c_w=\text{codeg}(\{v,w\}\), but we can say \(c_w\leq \text{codeg}(\{v,w\})\), which suffices.} \(c_w\leq\text{codeg}(\{v,w\})\leq D_2\).
For an edge \(e\in I\), we can only have \(e\in F_{i}^{1}\cup F_{i}^{2}\) if~\(e\) intersects~\(e_i\setminus\{v\}\).
We deduce\COMMENT{Pick a non-\(v\) vertex (since \(e\) must intersect \(e_i\setminus\{v\}\)) of~\(e\) and generate all edges containing \(v\) and this vertex.} there are at most~\((k+1)D_2\) such~\(i\).
For an edge \(e\in I\) to be in~\(G_i\), we must have that~\(e\) intersects some \(\mathbf{X}\)-edge \(f\in F_i^1\), which in turn must intersect \(e_i\setminus\{v\}\).
In particular, only edges~\(e\) entirely comprised of vertices~\(u\in N^{(3)}(v)\) can possibly be in any~\(G_i\), since the sequence \(e_i, f, e\) of edges each intersect the next, with \(v\in e_i\).
So, suppose \(e\in \cup_{i\in[s]}G_i\).
Then since \(e\subseteq N^{(3)}(v)\) and \(\omega\in\Omega\setminus\Omega^{*}\), there are at most \((k+1)\log^{5/2}D\) \(\mathbf{X}\)-edges~\(f\) intersecting~\(e\), and each of these can only be in at most~\((k+1)D_2\) of the sets~\(F_i^1\), as above.
We deduce that~\(e\) is in at most~\((k+1)^2 D_2\log^{5/2}D\) of the sets~\(G_i\), and we deduce that \(\max_{i\in I}c_i\leq (k+1)D_2 + (k+1)^2D_2\log^{5/2} D\leq 2(k+1)^{2}D_2\log^{5/2}D\).

Finally, suppose \(I'\subseteq I\).
We are required to show that \(\mathbf{f}\geq s-\sum_{i\in I'}c_i\) holds for all outcomes \(\omega'\in\Omega\setminus\Omega^{*}\) aligning with~\(\omega\) on the (affirmative) choices to put \(e\in\mathbf{X}\), \(u\in\mathbf{W}\) for all \(e,u\in I\setminus I'\).
By the definition of~\(\mathbf{f}\), it clearly suffices to show this for the outcome~\(\omega''\) in which edges~\(e\) (respectively vertices~\(u\)) are chosen for~\(\mathbf{X}\)~(\(\mathbf{W}\)) if and only if \(e\in I\setminus I'\) (\(u\in I\setminus I'\)).\COMMENT{Since~\(\omega\in\Omega\setminus\Omega^{*}\), so is this~\(\omega''\), because the \(\mathbf{X}\)-edges in~\(\omega''\) are a subset of those in~\(\omega\). Also \(\mathbf{f}(a,b,c)\) is a "monotone increasing" random variable in the sense that \(\mathbf{f}(\omega_1)\leq\mathbf{f}(\omega_2)\) if the set of edges and vertices having \(\mathds{1}(e\in\mathbf{X})=1\) and \(\mathds{1}(u\in\mathbf{W})=1\) in~\(\omega_1\) is a subset of such edges and vertices in~\(\omega_2\), so this \(\omega''\) is a worst case, from which all others we need to check follow.}
In particular, from the definition of~\(\mathbf{f}\) and our construction of~\(I\), it is clear that \(\mathbf{f}=s\) still holds in the outcome~\(\omega^{*}\) in which edges~\(e\) and vertices~\(u\) are put in~\(\mathbf{X}\) (respectively~\(\mathbf{W}\)) if and only if they are in~\(I\).
Further (starting from~\(\omega^{*}\)), when a vertex \(u\in I'\) is taken out of~\(\mathbf{W}\) (respectively an edge \(e\in I'\) taken out of~\(\mathbf{X}\)), \(\mathbf{f}\) can only decrease by at most the number of \(i\in[s]\) for which \(u\in W_i\) (\(e\in Y_i\)).
We deduce that \(\mathbf{f}\geq s-\sum_{i\in I'}c_i\) in~\(\omega''\), which suffices.
We conclude that~\(\mathbf{f}\) is \((a+b+c, 2(k+1)^2 D_2\log^{5/2}D)\)-observable with respect to~\(\Omega^{*}\), as desired.

We now seek to apply Theorem~\ref{theorem:lintal} to~\(\mathbf{f}\) with \(t\coloneqq \eps\theta\text{deg}(v)/40k\), \(r\coloneqq a+b+c\), \(d\coloneqq 2(k+1)^2 D_2\log^{5/2}D\).
To that end, observe that
\begin{equation}\label{eq:tcheck1}
128rd \leq  128\cdot 6k(k+1)^2D_2\log^{5/2}D\leq \frac{D_2}{D}D\log^3 D \stackrel{\ref{hypothesis:degratio}}{\leq}\frac{\eps^2 \theta D}{\log^2 D} < \frac{\eps\theta \text{deg}(v)}{120k},
\end{equation}
and\COMMENT{This uses \(D_2 \geq 1\), which holds as discussed immediately after the statement of Lemma~\ref{lemma:nibble}.}
\begin{equation}\label{eq:tcheck2}
8\prob{\Omega^{*}}(\text{sup}\mathbf{f}) \stackrel{(\ref{eq:theexceptionaloutcomes})}{\leq} 8\Delta(H)D^{-\frac{1}{2}\log^{3/2} D} < 1 \leq \frac{D_2}{D}\cdot D \stackrel{\ref{hypothesis:degratio}}{\leq}\frac{\eps^{2}\theta D}{\log^5 D} < \frac{\eps\theta \text{deg}(v)}{120k}.
\end{equation}
We still need an upper bound for~\(\expn{\mathbf{f}}\).
Notice that, since \(\mathbf{f}=\mathbf{f}(a,b,c)\) is an important counter\COMMENT{From the expansion of \(\textbf{deg}\mathbf{'}(v)\) in~(\ref{eq:Na}) and just before, we can see that none of the important counters have \(a=c=0\).}, at least one of~\(a\) and~\(c\) is at least~\(1\).
If \(a\geq 1\), then
\[
\expn{\mathbf{f}(a,b,c)}\leq\sum_{e\ni v}\prob{|(e\setminus\{v\})\cap V(\mathbf{X})|\geq 1}\leq\sum_{e\ni v}k\Delta(H)p\leq 2k\theta D,
\]
and similarly, if \(c\geq 1\), then
\[
\expn{\mathbf{f}(a,b,c)}\leq\sum_{e\ni v}\prob{|(e\setminus\{v\})\cap \mathbf{W}|\geq 1}\stackrel{(\ref{eq:wvabove})}{\leq} 8k\eps\theta \Delta(H)\leq 2k\theta D.
\]
Therefore:\COMMENT{\(96\sqrt{rd\expn{\mathbf{f}}}\leq96\sqrt{(a+b+c)2(k+1)^2D_2(\log^{5/2}D)2k\theta D}\). All the constants can be upper bounded by \(\log^{1/4}D\), i.e. \(\sqrt{\log^{1/2}D}\). Then multiply and divide by~\(D\) inside the square root.}
\begin{equation}\label{eq:tcheck3}
96\sqrt{rd\expn{\mathbf{f}}} \leq \sqrt{\frac{D_2}{D}\theta D^2 \log^{3}D} \stackrel{\ref{hypothesis:degratio}}{\leq} D\sqrt{\frac{\eps^2 \theta^2}{\log^2 D}} < \frac{\eps\theta \text{deg}(v)}{120k}.
\end{equation}
By~(\ref{eq:tcheck1}),~(\ref{eq:tcheck2}), and~(\ref{eq:tcheck3}), we can apply Theorem~\ref{theorem:lintal} to~\(\mathbf{f}\) with the above values of \(t, r, d\), obtaining:\COMMENT{\(t=\eps\theta D/(40k) \leq 2k\theta D\) is clear, and we also have \(\expn{\mathbf{f}}\leq 2k\theta D\) from above, also of course \(\text{deg}(v)\geq D/2\), so the denominator is \(\Theta(D_2\log^{5/2}D \cdot \theta D)\), and upping the polylog to \(\log^{8/3}D\) sorts out all the constants. A \(\theta D\) cancel from top and bottom. Then by \ref{hypothesis:degratio}, the first term is \(\leq 4\exp(-\log^{7/3}D)=4D^{-\log^{4/3}D}\), and now \(4D^{-\log^{4/3}D}+4D^{-\frac{1}{2}\log^{3/2} D}\leq D^{-5\log^{5/4}D}\) is clear.}
\begin{eqnarray*}
\prob{|\mathbf{f}-\expn{\mathbf{f}}|>\frac{\eps\theta \text{deg}(v)}{40k}} & \leq & 4\exp\left(-\frac{\eps^2\theta^2 (\text{deg}(v))^2}{(40k)^{2}8rd(4\expn{\mathbf{f}}+t)}\right)+4\prob{\Omega^{*}} \\ & \leq & 4\exp\left(-\frac{\eps^2\theta D}{D_2 \log^{8/3}D}\right) +4\prob{\Omega^{*}} \leq D^{-5\log^{5/4}D}.
\end{eqnarray*}
Here, we used \(\expn{\mathbf{f}},t\leq 2k\theta D\), \ref{hypothesis:degratio}, and~(\ref{eq:theexceptionaloutcomes}).
Taking a union bound over the (at most \(8k+1\leq 9k\)) important counters yields that\COMMENT{All the errors could add up, but of course \(9k\cdot\frac{\eps\theta\text{deg}(v)}{40k}\leq \frac{\eps\theta\text{deg}(v)}{4}\), and also clearly \(9kD^{-5\log^{5/4}D}\leq D^{-4\log^{1/4}D}\) since \(9k\leq D^{\log^{5/4}D}\).}
\[
\prob{|\textbf{deg}\mathbf{'}(v)-\expn{\textbf{deg}\mathbf{'}(v)}| > \frac{\eps\theta\text{deg}(v)}{4}}\leq D^{-4\log^{5/4}D},
\]
which, together\COMMENT{\(\textbf{deg}\mathbf{'}(v)\) is at most \(\eps\theta\text{deg}(v)/4\) away from \(\expn{\textbf{deg}\mathbf{'}(v)}\), which is in turn at most \(\text{deg}(v)2^{k+3}\eps^2 \theta^2\) away from \(\text{deg}(v)(1-p^{*})^k\). So \(\textbf{deg}\mathbf{'}(v)\) is at most \(\text{deg}(v)(\eps\theta/4 + 2^{k+3}\eps^2\theta^2)\) away from \(\text{deg}(v)(1-p^{*})^k\), and \(\eps\theta/4 + 2^{k+3}\eps^2\theta^2 \leq \eps\theta/2\) clearly.} with Lemma~\ref{lemma:expectations}\ref{lemma:degreeexpectation}, finishes the proof.
\endproof
Notice the choice~\(\Omega^{*}\) of exceptional outcomes ensures that in any \(\omega\notin\Omega^*\), any \(g\in E(H)\) can only witness the resurrection of an \((e\setminus\{v\})\)-vertex for at most a little more than~\(D_2\) of the~\(D\) edges containing~\(v\).
With \(\Omega^{*}=\emptyset\), we have that for any \(e\ni v\), the edge~\(g\) potentially intersects as many as~\((k+1)D_2\) edges containing any \(u\in e\setminus\{v\}\), and so can be a resurrection witness for potentially any of the~\(D\) edges containing~\(v\), which clearly yields a problematic value of~\(d\).
Thus, the ``exceptional outcomes'' aspect of Theorem~\ref{theorem:lintal} seems to be essential here.

The proof of Lemma~\ref{lemma:concanalysis}\ref{conccodegree} is very similar to the proof of Lemma~\ref{lemma:concanalysis}\ref{concvertexdegree}, but for completeness, we still include the necessary changes and calculations:
\lateproof{Lemma~\ref{lemma:concanalysis}\ref{conccodegree}}
Fix \(j\in J^{*}\) and \(Z\in \binom{V(H)}{j}\).
We use the same~\(\Omega^{*}\) as in the proof of Lemma~\ref{lemma:concanalysis}\ref{concvertexdegree}, except for replacing~\(N^{(3)}(v)\) with~\(N^{(3)}(Z)\) defined in the obvious way, so we still have \(\prob{\Omega^{*}}\leq D^{-(1/2)\log^{3/2}D}\).
\(\mathbf{f}(a,b,c)\) is defined analogously (just replace the instances of~\(e\setminus\{v\}\) with~\(e\setminus Z\)).
Then we have \(\textbf{codeg}\mathbf{'}(Z)=\text{codeg}(Z)-\mathbf{f}(0,0,1) - \sum_{a=1}^{k+1-j}\mathbf{N}(a)\), with~\(\mathbf{N}(a)\) as defined in~(\ref{eq:Na}).
We can use the same witness set~\(I\) (just replace all instances of~\(v\) with~\(Z\))\COMMENT{Here implicitly defining \(W_i\) and \(Y_i\) and \(F_i^1\) and so on} and the same vector \((c_i\colon i\in I)\) (specifically~\(c_w\) (respectively~\(c_e\)) is the number of \(i\in[s]\) for which \(w\in W_i\) (\(e\in Y_i\))).
Clearly we still have at most~\((a+b+c)s\) witnesses as before, so can use \(r=a+b+c\).

For a vertex~\(w\), we now have\COMMENT{No vertex of \(Z\) can ever be a witness, since all vertex witness arise from some \(e_i\setminus Z\), so \(Z\cup\{w\}\) is genuinely a set of size \(j+1\).} \(c_w\leq \text{codeg}(Z\cup\{w\})\leq D_{j+1}\).
For an edge \(e\in I\), we can only have \(e\in F_{i}^{1}\cup F_{i}^{2}\) if~\(e\) intersects~\(e_i\setminus Z\), so there are at most~\((k+1)D_{j+1}\) such~\(i\).
Again, if \(e\in\cup_{i\in[s]}G_i\), then \(e\subseteq N^{(3)}(Z)\), so \(\omega\in\Omega\setminus\Omega^{*}\) implies there are at most \((k+1)\log^{5/2}D\) \(\mathbf{X}\)-edges intersecting~\(e\), each of which can only be in at most \((k+1)D_{j+1}\) of the sets~\(F_i^1\), so \(\max_{i\in I}c_i\leq (k+1)D_{j+1}+(k+1)^2 D_{j+1}\log^{5/2}D\leq 2(k+1)^2 D_{j+1}\log^{5/2}D\), so we can set~\(d\) to be this number.
The proof that~\(\mathbf{f}\geq s - \sum_{i\in I'}c_i\) in any non-exceptional outcome aligning with~\(\omega\) on the witnesses in~\(I\setminus I'\) proceeds unchanged.

We seek to apply Theorem~\ref{theorem:lintal} to~\(\mathbf{f}\) with \(t\coloneqq D_j\theta^{3/2}/(40k)\), \(r\coloneqq a+b+c\), \(d\coloneqq 2(k+1)^2 D_{j+1}\log^{5/2}D\).
To that end, observe that
\begin{equation}\label{eq:codegtcheck1}
128rd \leq D_{j+1}\log^{3}D = \frac{D_{j+1}}{D_j}\cdot D_j\log^{3}D \stackrel{\ref{hypothesis:codegratio}}{\leq} \frac{D_j \theta^2}{\log^2 D} < \frac{D_j \theta^{3/2}}{120k},
\end{equation}
and:\COMMENT{This uses \(D_{j+1}\geq 1\), which holds as discussed immediately after the statement of Lemma~\ref{lemma:nibble}.}
\begin{equation}\label{eq:codegtcheck2}
8\prob{\Omega^{*}}(\text{sup}\mathbf{f}) \leq 8\Delta(H)D^{-\frac{1}{2}\log^{3/2} D} < 1 \leq \frac{D_{j+1}}{D_j}\cdot D_j \stackrel{\ref{hypothesis:codegratio}}{\leq}\frac{D_j \theta^2}{\log^5 D} < \frac{D_j \theta^{3/2}}{120k}.
\end{equation}
Again, since~\(\mathbf{f}\) is an important counter, we have \(a+c\geq 1\), and so, similarly to the proof of Lemma~\ref{lemma:concanalysis}\ref{concvertexdegree}, we compute that \(\expn{\mathbf{f}}\leq\max\{\sum_{e\supseteq Z}(k+1-j)\Delta(H)p, \sum_{e\supseteq Z}8(k+1-j)\eps\theta\}\leq 2k\theta D_j\), so:\COMMENT{\(96\sqrt{rd\expn{\mathbf{f}}} \leq 96\sqrt{(a+b+c)2(k+1)^2 D_{j+1}\log^{5/2}D \cdot 2k\theta D_j}\). All the constants can be bounded above by \(\sqrt{\log^{1/2}D}\). Multiply and divide by \(D_j\) inside the square root.}
\begin{equation}\label{eq:codegtcheck3}
96\sqrt{rd\expn{\mathbf{f}}} \leq \sqrt{\frac{D_{j+1}}{D_j}\theta D_j^2 \log^{3}D} \stackrel{\ref{hypothesis:codegratio}}{\leq} D_j \sqrt{\frac{\theta^{3}}{\log^2 D}} < \frac{D_j \theta^{3/2}}{120k}.
\end{equation}
By~(\ref{eq:codegtcheck1}),~(\ref{eq:codegtcheck2}), and~(\ref{eq:codegtcheck3}), we can apply Theorem~\ref{theorem:lintal} to~\(\mathbf{f}\) with the above values of \(t, r, d\), obtaining:\COMMENT{\(t=\theta^{3/2}D_j/(40k)\leq 2k\theta D_j\) is clear, and \(\expn{\mathbf{f}}\leq 2k\theta D_j\) from above, so the denominator is \(O(D_{j+1}\log^{5/2}D \cdot \theta D_j)\), and upping the polylog to \(\log^{8/3}D\) sorts out all the constants. A \(\theta D_j\) cancel from top and bottom. Then by \ref{hypothesis:codegratio}, the first term is \(\leq 4\exp(-\log^{7/3}D)\), and the rest continues as before.}
\begin{eqnarray*}
\prob{|\mathbf{f}-\expn{\mathbf{f}}| > \frac{D_j \theta^{3/2}}{40k}} & \leq & 4\exp\left(-\frac{\theta^3 D_j^2}{(40k)^2 8rd(4\expn{\mathbf{f}}+t)}\right)+4\prob{\Omega^{*}} \\ & \leq & 4\exp\left(-\frac{\theta^{2}D_j}{D_{j+1}\log^{8/3}D}\right)+4\prob{\Omega^{*}} \leq D^{-5\log^{5/4}D}.
\end{eqnarray*}
Here, we used \(\expn{\mathbf{f}}, t \leq 2k\theta D_j\), \ref{hypothesis:codegratio}, and \(\prob{\Omega^{*}}\leq D^{-(1/2)\log^{3/2}D}\).
Taking a union bound over the (at most \(8(k+1-j)+1\leq 9k\)) important counters yields that\COMMENT{All the errors could add up, but of course \(9k\cdot \frac{D_j \theta^{3/2}}{40k}\leq \frac{D_j \theta^{3/2}}{4}\), and also clearly \(9kD^{-5\log^{5/4}D}\leq D^{-4\log^{5/4}D}\).}
\[
\prob{|\textbf{codeg}\mathbf{'}(Z)-\expn{\textbf{codeg}\mathbf{'}(Z)}|>\frac{D_j \theta^{3/2}}{4}} \leq D^{-4\log^{5/4}D},
\]
which, together\COMMENT{\(\textbf{codeg}\mathbf{'}(Z)\) is at most \(D_j \theta^{3/2}/4\) above \(\expn{\textbf{codeg}\mathbf{'}(Z)}\), which is in turn at most \(D_j\theta^{5/3}\) above \(D_j(1-\theta(k+1-j))\). So \(\textbf{codeg}\mathbf{'}(Z)\) is at most \(D_j(\theta^{3/2}/4 + \theta^{5/3})\) above \(D_j(1-\theta(k+1-j))\), and \(\theta^{3/2}/4 + \theta^{5/3} \leq \theta^{3/2}\) clearly.} with Lemma~\ref{lemma:expectations}\ref{lemma:codegreeexpectation}, finishes the proof.
\endproof
Finally, we prove Lemma~\ref{lemma:concanalysis}\ref{conctestpseudo}--\ref{conctestwaste} using a similar strategy.
\lateproof{Lemma~\ref{lemma:concanalysis}\ref{conctestpseudo}--\ref{conctestwaste}}
Fix \(\tau\in\cT\).
If \(\text{Supp}(\tau)=\emptyset\), i.e.\ \(\tau(v)=0\) for all \(v\in V(H)\), then \(\tau(V(\mathbf{H'}))=\tau(V(H))=0\) and \(\tau(\mathbf{W})=0\) deterministically\COMMENT{so the bad event doesn't happen}, so assume that \(\text{Supp}(\tau)\neq\emptyset\), whence \(\tau_{\text{max}}>0\).\COMMENT{We will later divide by it. Also once you know the support isn't empty,~\ref{pseudhyp:lowerbound} ensures \(\tau\) is a thoroughly non-trivial function, it must have many vertices in the support} 
By~(\ref{eq:pstar}),~(\ref{eq:pstarabove}), and~(\ref{eq:pstarbelow}), we have\COMMENT{\(\expn{\tau(V(\mathbf{H'}))}=\expn{\sum_{v\in V(H)}\tau(v)\mathds{1}_{v\in V(\mathbf{H'})}}=\sum_{v\in V(H)}\tau(v)\expn{\mathds{1}_{v\in V(\mathbf{H'})}}=\sum_{v\in V(H)}\tau(v)\prob{v\in V(\mathbf{H'})}=\sum_{v\in V(H)}\tau(v)(1-p^{*})=\tau(V(H))(1-p^{*})\)}
\begin{equation}\label{eq:expntau'}
\tau(V(H))(1-\theta-\theta^{7/4})\leq \tau(V(H))(1-p^{*})=\expn{\tau(V(\mathbf{H'}))}\leq \tau(V(H))(1-\theta+\theta^{7/4}).
\end{equation}
For \(u\in V(H)\), set \(N^{(2)}(u)\coloneqq\{v\in V(H)\colon \text{dist}_{H}(u,v)\leq 2\}\), and set \(\Omega^{*}\coloneqq\{\omega\in\Omega\colon\exists u\in \text{Supp}(\tau), v\in N^{(2)}(u)\,\,\text{such that}\,\,|\partial(v)\cap\mathbf{X}|>\log^{5/2}D\}\).
Then by~\ref{pseudhyp:supportsize} and a simple application of Lemma~\ref{chernoff}\ref{lemma:chernoffsmallexp}, we have \(\prob{\Omega^{*}}\leq|\text{Supp}(\tau)|\cdot 2(2kD)^2 D^{-\log^{3/2}D}\leq D^{-(1/2)\log^{3/2}D}\).
For integers \(0\leq b\leq a \leq 1\), \(0\leq c\leq 1\), define~\(\mathbf{f}(a,b,c)\coloneqq\tau(\mathbf{U}_{(a,b,c)})\), where~\(\mathbf{U}_{(a,b,c)}\) is the subset of~\(\text{Supp}(\tau)\) formed of vertices \(u\in\text{Supp}(\tau)\) for which at least~\(a\) elements of~\(\{u\}\) are \(\mathbf{X}\)-covered, at least~\(b\) elements of~\(\{u\}\) are resurrected, and at least~\(c\) elements of~\(\{u\}\) are put in~\(\mathbf{W}\).
Notice that the total ``weight'' (\(\tau\)-value) of all \(\mathbf{X}\)-covered but not resurrected vertices \(u\in\text{Supp}(\tau)\) is~\(\mathbf{f}(1,0,0)-\mathbf{f}(1,1,0)\), and that the total weight of all such vertices which are also put in~\(\mathbf{W}\) is \(\mathbf{f}(1,0,1)-\mathbf{f}(1,1,1)\).
It follows that \(\tau(V(\mathbf{H'}))=\tau(V(H))-\mathbf{f}(0,0,1)-((\mathbf{f}(1,0,0)-\mathbf{f}(1,1,0))-(\mathbf{f}(1,0,1)-\mathbf{f}(1,1,1)))\).\COMMENT{Begin with the total weight. Remove the weight of all wasted vertices. Then remove the weight of all vertices which are (not wasted, but are \(\mathbf{X}\)-covered, and not resurrected). The latter bracket removes the weight of precisely those vertices in the support that are killed solely by the edge choices.}
Let these \(\mathbf{f}(a,b,c)\) be called the \textit{important counters}.

Fix an important counter~\(\mathbf{f}(a,b,c)\eqqcolon\mathbf{f}\) and notice that \(a\geq b\).
Fix an outcome \(\omega\in\Omega\setminus\Omega^{*}\) and let \(s=\mathbf{f}(\omega)\).
By definition of~\(\mathbf{f}\) and~\(\text{Supp}(\tau)\), there are~\(z\) (say) distinct vertices \(u_1, u_2, \dots, u_z\in \text{Supp}(\tau)\) such that \(\sum_{i\in[z]}\tau(u_i)=s\) and, for all \(i\in[z]\), at least~\(b\) elements of~\(\{u_i\}\) are resurrected, at least~\(a-b\) elements of~\(\{u_i\}\) are \(\mathbf{X}\)-covered (and not necessarily resurrected), and at least~\(c\) elements of~\(\{u_i\}\) are put in~\(\mathbf{W}\).
If \(b=1\), there is an \(\mathbf{X}\)-edge \(f_i\ni u_i\) and a non-\(f_i\) \(\mathbf{X}\)-edge~\(g_i\) intersecting~\(f_i\), which witnesses the resurrection of~\(u_i\).
If \(a-b=1\), there is an \(\mathbf{X}\)-edge \(h_i\ni u_i\).
If \(c=1\), then each \(u_i\in\mathbf{W}\).
If \(c=0\), then put \(I\coloneqq\cF\coloneqq\{e\in E(H)\colon \exists i\in[z]\,\text{such that}\,e\in\{f_i,g_i,h_i\}\}\), and if \(c=1\), then put \(I\coloneqq \cF\cup\{u_i\colon i\in[z]\}\).
For a vertex \(w\in I\), put \(c_w\coloneqq \tau(w)\).
For an edge \(e\in I\), put \(\cI_e^{(1)}\coloneqq\{i\in[z]\colon e=f_i\}\), put \(\cI_e^{(2)}\coloneqq\{i\in[z]\colon e=g_i\}\), and put \(\cI_e^{(3)}\coloneqq\{i\in[z]\colon e=h_i\}\).
Then set \(c_e\coloneqq\sum_{j\in[3]}\sum_{i\in\cI_e^{(j)}}\tau(u_i)\).
One checks that \(\sum_{i\in I}c_i=(a+b+c)s\).\COMMENT{Let's just run through the five important counters: If \(a=0, b=0, c=1\), then \(I\) is just the \(z\) vertices, and \(\sum_{i\in I}c_i=\sum_{i\in[z]}\tau(u_i)=s=(a+b+c)s\).
If \(a=1, b=0, c=0\), then \(I\) is just some edges each witnessing some \(u_i\) are \(\mathbf{X}\)-covered, and \(\sum_{i\in I}c_i\) is the sum over these edges, of the sum over the \(u_i\) that edge is witnessing, of \(\tau(u_i)\). Since each \(u_i\) has one such witness, that sum is \(\sum_{i\in[z]}\tau(u_i)=s=(a+b+c)s\).
If \(a=1, b=1, c=0\), then \(I\) is a collection of edges, some of which witnessing some \(u_i\) are \(\mathbf{X}\)-covered (i.e. \(e=f_i\)), some witnessing some \(u_i\) are resurrected (i.e. \(e=g_i\)), some witnessing both of these for some sets of \(i\). Since each \(u_i\) gives two witnesses (an \(f_i\) and a \(g_i\)), we have \(\sum_{e\in I}c_e=\sum_{e\in I}\sum_{i\in\cI_e^{(1)}}\tau(u_i)+\sum_{e\in I}\sum_{i\in\cI_e^{(2)}}\tau(u_i)=\sum_{i\in[z]}2\tau(u_i)=2s=(a+b+c)s\).
Cases \(a=1,b=0,c=1\) and \(a=b=c=1\) similar. In the latter case, \(b=1\) but \(a-b=0\), so we collect vertex witnesses and edge witnesses \(f_i,g_i\), but not \(h_i\).}
Further, for an edge~\(e\) to be~\(f_i\) or~\(h_i\), we must have \(u_i\in e\), so we deduce there are at most \(2(k+1)\) values \(i\in[s]\) such that \(e\in\{f_i,h_i\}\).
For~\(e\) to be~\(g_i\) for some~\(i\), all vertices of~\(e\) must be in~\(N^{(2)}(u_i)\), and therefore~\(e\) must intersect at most~\((k+1)\log^{5/2}D\) \(\mathbf{X}\)-edges (since \(\omega\in\Omega\setminus\Omega^{*}\)), each of which can be~\(f_j\) for at most~\(k+1\) values \(j\in[s]\).\COMMENT{Since we're looking at distance-2 edges here rather than distance-3 edges as in the case of concentrating the vertex degrees, you could get away with \(\Omega^*=\emptyset\) and you'd get that like \((k+1)D_2\) out of the \(D\) edges containing \(u_i\) could possibly have \(e\) playing the role of \(g_i\). But this isn't good enough. Through the action of~\(\Omega^*\) we've reduced that to more like \(\log^{5/2}D\), which is good enough.}
We deduce that \(c_i\leq 2(k+1)^2 \tau_{\text{max}}\log^{5/2}D\) for all \(i\in I\).
It is easy to show that \(\mathbf{f}\geq s-\sum_{i\in I'}c_i\) holds for all \(I'\subseteq I\) and outcomes \(\omega'\in\Omega\setminus\Omega^{*}\) aligning with~\(\omega\) on the (affirmative) choices to put \(e\in\mathbf{X}\), \(u\in\mathbf{W}\) for all \(e,u\in I\setminus I'\).\COMMENT{If I drop a vertex witness, I lose at most \(\tau_{\text{max}}\) weight. If I drop an edge witness, I lose at most \(\tau_{\text{max}}\) times the number of \(u_i\) that edge is any kind of witness for, so total \(\leq 2(k+1)^2\tau_{\text{max}}\log^{5/2}D\).}
We conclude that~\(\mathbf{f}\) is \((a+b+c,2(k+1)^2 \tau_{\text{max}} \log^{5/2}D)\)-observable with respect to~\(\Omega^{*}\).

We seek to apply Theorem~\ref{theorem:lintal} with \(t\coloneqq \tau(V(H))\theta^{3/2}/10\), \(r\coloneqq a+b+c\), and \(d\coloneqq 2(k+1)^2 \tau_{\text{max}}\log^{5/2}D\).
To that end, observe that
\begin{equation}\label{eq:Utcheck1}
128rd\leq \tau_{\text{max}}\log^3 D \stackrel{\ref{pseudhyp:lowerbound}, \ref{hypothesis:epsleqtheta}}{\leq}\frac{\tau(V(H))\theta^2}{\log^2 D} < \frac{\tau(V(H))\theta^{3/2}}{30}.
\end{equation}
Further,
\begin{eqnarray}\label{eq:Utcheck2}
8\prob{\Omega^*}(\text{sup}\mathbf{f}) & \stackrel{\ref{pseudhyp:supportsize}}{\leq} & 8D^{-(1/2)\log^{3/2}D}D^{\log^{5/4} D}\tau_{\text{max}}\leq \tau_{\text{max}}\log^5 D \stackrel{\ref{pseudhyp:lowerbound},\ref{hypothesis:epsleqtheta}}{\leq}\theta^2 \tau(V(H))\nonumber\\ & < & \frac{\tau(V(H))\theta^{3/2}}{30}.
\end{eqnarray}
Since~\(\mathbf{f}=\mathbf{f}(a,b,c)\) is an important counter, we have \(a+c\geq 1\).
If \(a\geq 1\), then \(\expn{\mathbf{f}}\leq\sum_{u\in \text{Supp}(\tau)}\tau(u)\prob{u\in V(\mathbf{X})}\leq \tau(V(H))\Delta(H)p\leq 2\theta \tau(V(H))\).
If \(c\geq 1\), then we have \(\expn{\mathbf{f}}\leq\sum_{u\in \text{Supp}(\tau)}\tau(u)\prob{u\in\mathbf{W}}\leq 8\eps\theta \tau(V(H))\leq 2\theta\tau(V(H))\) by~(\ref{eq:wvabove}).
Therefore:
\begin{equation}\label{eq:Utcheck3}
96\sqrt{rd\expn{\mathbf{f}}}\leq \sqrt{\theta\tau_{\text{max}}\tau(V(H))\log^3 D}<\frac{\tau(V(H))\theta^{3/2}}{30},
\end{equation}
since the last inequality is equivalent to \(\sqrt{\tau(V(H))}\theta > 30\sqrt{\tau_{\text{max}}}\log^{3/2} D\), which holds\COMMENT{Since both sides of \(\sqrt{\tau(V(H))}\theta > 30\sqrt{\tau_{\text{max}}}\log^{3/2} D\) are positive, it is equivalent to the inequality obtained by squaring both sides \(\tau(V(H))\theta^2>900\tau_{\text{max}}\log^{3}D\), but the latter is clear since \(\tau(V(H))\theta^2\stackrel{\ref{hypothesis:epsleqtheta}}{\geq}\tau(V(H))\eps\theta\stackrel{\ref{pseudhyp:lowerbound}}{\geq}\tau_{\text{max}}\log^5 D \geq 900\tau_{\text{max}}\log^3 D\).} by~\ref{hypothesis:epsleqtheta} and~\ref{pseudhyp:lowerbound}.
By~(\ref{eq:Utcheck1}),~(\ref{eq:Utcheck2}), and~(\ref{eq:Utcheck3}), we can apply Theorem~\ref{theorem:lintal} to~\(\mathbf{f}\) with the above values of \(t,r,d\), obtaining:\COMMENT{Use \(\expn{\mathbf{f}}, t\leq 2\theta\tau(V(H))\) to bound the denominator above by an order \(d\theta(\tau(V(H)))=O(\tau_{\text{max}}\tau(V(H))\theta\log^{5/2}D)\) term, so collecting the constants into a \(\log^{8/3-5/2}D\) term, so the denominator is bounded above by \(\tau_{\text{max}}\tau(V(H))\theta\log^{8/3}D\), then \(\tau(V(H))\theta\) cancels yielding the second line. Then \(\tau(V(H))\theta^2 \geq\tau(V(H))\eps\theta\geq\tau_{\text{max}}\log^5 D\) so we're now looking at \(4\exp(-\log^{7/3}D)+4D^{-(1/2)\log^{3/2}D}\) and now it's easy}
\begin{eqnarray*}
\prob{|\mathbf{f}-\expn{\mathbf{f}}| > \frac{\tau(V(H))\theta^{3/2}}{10}} & \leq & 4\exp\left(-\frac{(\tau(V(H)))^2\theta^3}{800rd(4\expn{\mathbf{f}}+t)}\right) + 4\prob{\Omega^{*}} \\ & \leq & 4\exp\left(-\frac{\tau(V(H))\theta^2}{\tau_{\text{max}}\log^{8/3}D}\right)+4\prob{\Omega^{*}} \leq D^{-5\log^{5/4}D}.
\end{eqnarray*}
Here, we used \(\expn{\mathbf{f}}, t \leq 2\theta\tau(V(H))\),~\ref{hypothesis:epsleqtheta},~\ref{pseudhyp:lowerbound}, \(\tau_{\text{max}}>0\) and \(\prob{\Omega^{*}}\leq D^{-(1/2)\log^{3/2}D}\).
Taking a union bound over the five important counters yields that
\[
\prob{|\tau(V(\mathbf{H'}))-\expn{\tau(V(\mathbf{H'}))}|>\frac{\tau(V(H))\theta^{3/2}}{2}}\leq D^{-4\log^{5/4}D},
\]
which, together with~(\ref{eq:expntau'}), finishes the proof of Lemma~\ref{lemma:concanalysis}\ref{conctestpseudo}.

For Lemma~\ref{lemma:concanalysis}\ref{conctestwaste}, notice that \(\tau(\mathbf{W})=\mathbf{f}(0,0,1)\) and recall that \(\expn{\mathbf{f}}\leq8\eps\theta\tau(V(H))\).
Since it suffices to take \(I=\{u_i\colon i\in[z]\}\) with \(c_w\coloneqq\tau(w)\leq\tau_{\text{max}}\) for \(w\in I\), it is clear that~\(\mathbf{f}(0,0,1)\) is \((1,\tau_{\text{max}})\)-observable with respect to~\(\Omega^{*}\coloneqq\emptyset\).
Set \(t\coloneqq2\eps\theta\tau(V(H))\), \(r\coloneqq 1\), and \(d\coloneqq\tau_{\text{max}}\), whence \(128rd=128\tau_{\text{max}}\leq2\eps\theta\tau(V(H))/3\) by~\ref{pseudhyp:lowerbound}, we have \(8\prob{\Omega^{*}}\text{sup}\mathbf{f}=0\), and \(96\sqrt{rd\expn{\mathbf{f}}}\leq96\sqrt{8\eps\theta\tau_{\text{max}}\tau(V(H))}\leq2\eps\theta\tau(V(H))/3\) since the latter inequality is equivalent to \(\sqrt{\eps\theta\tau(V(H))}\geq144\sqrt{8\tau_{\text{max}}}\), which clearly holds by~\ref{pseudhyp:lowerbound}.
Applying Theorem~\ref{theorem:lintal} with these \(t,r,d\), we obtain
\begin{eqnarray*}
\prob{|\tau(\mathbf{W})-\expn{\tau(\mathbf{W})}| > 2\eps\theta\tau(V(H))} & \leq & 4\exp\left(-\frac{4\eps^2 \theta^2 (\tau(V(H)))^2}{8\tau_{\text{max}}\cdot40\eps\theta\tau(V(H))}\right)\\ & = & 4\exp\left(-\frac{\eps\theta\tau(V(H))}{80\tau_{\text{max}}}\right)\leq D^{-4\log^{5/4}D},
\end{eqnarray*}
where we used \(\expn{\tau(\mathbf{W})}, t\leq 8\eps\theta\tau(V(H))\),~\ref{pseudhyp:lowerbound}, and \(\tau_{\text{max}}>0\).
Then \(\expn{\tau(\mathbf{W})}\leq8\eps\theta \tau(V(H))\) finishes the proof.
\endproof
\subsection{Multihypergraph Nibble}\label{section:multi}
In this short subsection, we discuss the minor definitional tweaks needed to prove Lemma~\ref{lemma:nibble} in the multihypergraph setting.

Recall from Section~\ref{section:notation} that copies of an edge~\(e\) are seen as distinct elements of~\(E(H)\).\COMMENT{And the reader hopefully then also recalls that \(\partial(v)\) and \(\partial(Z)\) can be multisets, and that \(\text{deg}(v)\) and \(\text{codeg}(Z)\) count copies, and \(D_2, D_3\) etc are therefore each upper bounds for the codegree counts including copies.}
As such, we consider them separately (and independently) for inclusion in~\(\mathbf{X}\).
In particular, if~\(e\) and~\(e'\) are copies of the same edge that are both (separately) put in~\(\mathbf{X}\), then they are not isolated.
The rest of Sections~\ref{section:description} and~\ref{section:modulo} proceed as written.

In the proof of Lemma~\ref{lemma:nomatchedvertices}, Claim~\ref{claim:intproduct}, we use the same definition of~\(F(R)\), noting that~\(F(R)\) may now contain copies.
Then the rest of the proof of Claim~\ref{claim:intproduct} proceeds as written, applying the same logic to every copy of~\(f\) in~\(F(Y)\).
Claims~\ref{claim:acovers} and~\ref{claim:nearlyindep} also do not need to be changed, with the understanding that \(A\)-covers using different copies of the same edge are seen as distinct \(A\)-covers, and similarly~\(\partial A\) and~\(\partial^{*}A\) distinguish copies as distinct.

In the proof of Lemma~\ref{lemma:concanalysis}\ref{concleftover}, the same logic still shows that~\(\mathbf{t}_e\) affects~\(\mathbf{n'}\) by at most~\((k+1)^2\), and the rest of that proof and the proof of Lemma~\ref{lemma:concanalysis}\ref{concwaste} clearly proceed without change.
In the proof of Lemma~\ref{lemma:concanalysis}\ref{concvertexdegree} (respectively Lemma~\ref{lemma:concanalysis}\ref{conccodegree}), we need to clarify that~\(\mathbf{f}(a,b,c)\) counts all copies of edges~\(e\ni v\) (\(e\supseteq Z\)) such that at least~\(a\) of the vertices~\(e\setminus\{v\}\) (\(e\setminus Z\)) are \(\mathbf{X}\)-covered, and so on.
In particular, if an edge \(e\ni v\) (\(e\supseteq Z\)) counts towards~\(\mathbf{f}(a,b,c)\), then so do all copies of~\(e\).
The edges \(e_1, e_2, \dots, e_s\) witnessing that \(\mathbf{f}(a,b,c)=s\) may contain a number of copies, but this does not impact our collection of witnesses, nor the Lipschitz constants~\(c_i\); for example a vertex \(u\in V(H)\) still has \(c_u\leq\text{codeg}(\{u,v\})\leq D_2\) (respectively \(c_u \leq\text{codeg}(\{u\}\cup Z)\leq D_{j+1}\)), since the formulation of~\(D_2\) and~\(D_{j+1}\) include the counting of all copies.
It is clear that the remaining checks for~\(\mathbf{f}(a,b,c)\) being observable proceed without change, and the final parts of both proofs are `number-crunching', unaffected by the multihypergraph setting.
Similarly, the proof of Lemma~\ref{lemma:concanalysis}\ref{conctestpseudo}--\ref{conctestwaste} does not need any changes.

\section{The Chomp}\label{section:chomp}
This section is devoted to the proof of the following lemma, which we refer to frequently as the ``Chomp Lemma''.
\begin{lemma}[Chomp Lemma]\label{lemma:chomp}
Suppose \(1/D, 1/x \ll 1/k \leq 1\), and let~\(H\) be a \((k+1)\)-uniform, \((n,D,\eps)\)-regular hypergraph.
Suppose that \(D_2, D_3, \dots, D_{k+1}\) are numbers satisfying \(C_j(H)\leq D_j\), and consider a (permissibly empty) set \(J^{*}\subseteq [k]\setminus\{1\}\).
Suppose further that:
\begin{enumerate}[(C1), topsep = 6pt]
\item \(\frac{\eps^2 D}{D_2}\geq\log^{8}D\) if \(2\in J^{*}\) or \(k=1\);\label{chomphyp:trackingdegratio}
\item \(\frac{\eps^{2}D}{x^{k-2}D_2}\geq\log^8 D\) if \(2\notin J^{*}\) and \(k\geq 2\);\label{chomphyp:nontrackingdegratio}
\item \(\frac{D_j}{xD_{j+1}}\geq\log^{10} D\) if \(j, j+1\in J^{*}\);\label{chomphyp:trackingcodegratio}
\item \(\frac{D_j}{x^{k-j+1}D_{j+1}} \geq \log^9 D\) if \(j\in J^{*}\), \(j+1\notin J^{*}\);\label{chomphyp:nontrackingcodegratio}
\item \(\eps\leq\frac{1}{x\log^2 D}\).\label{chomphyp:eps}
\end{enumerate}
Then there is a matching~\(M\) of~\(H\) and a set \(W\subseteq V(H)\) of size \(|W|\leq 10e^{2}\eps n\log x\), together with numbers~\(n'\) and~\(D'\) satisfying \(\frac{n}{e^{3}x}\leq n'\leq\frac{e^{3}n}{x}\) and \(\frac{D}{e^{2}x^k} \leq D'\leq \frac{e^{2}D}{x^k}\), such that the hypergraph \(H'=H[V(H)\setminus(V(M)\cup W)]\) is \((n', D', \eps')\)-regular, where \(\eps'\coloneqq \eps x\).
Further, \(C_{j}(H')\leq \frac{e^{2}D_j}{x^{k-j+1}}\) for all \(j\in J^{*}\) (and \(C_j(H')\leq D_j\) for all \(j\in[k+1]\setminus(J^{*}\cup\{1\})\)).

If, in addition,~\(\cT\) is a family of functions \(\tau\colon V(H)\rightarrow\mathbb{R}_{\geq0}\) and there is some \(\delta>0\) satisfying \(1/D,1/x\ll\delta<1\) such that the following conditions hold:
\begin{enumerate}[(CP1), topsep = 6pt]
\item \(\eps\tau(V(H))\geq \tau_{\text{max}}\cdot e^3 \log^7 D\) for all \(\tau\in\cT\), where \(\tau_{\text{max}}\coloneqq\max_{v\in V(H)}\tau(v)\);\label{chomppseudhyplowerbound}
\item \(\text{Supp}(\tau)\coloneqq\{v\in V(H)\colon\tau(v)>0\}\) satisfies \(|\text{Supp}(\tau)|\leq D^{\log^{6/5}D}\) for all \(\tau\in\cT\);\label{chomppseudhypsupportsize}
\item Each vertex of~\(H\) is in~\(\text{Supp}(\tau)\) for at most~\(D^{\log^{6/5}D}\) of the functions \(\tau\in\cT\);\label{chomppseudhypmaxinvolvement}
\item \(x^k \leq D^{1-\delta}\),\label{chomppseudhyppolyremainder}
\end{enumerate}
then there exist~\(M, W\) as above additionally satisfying \(\frac{\tau(V(H))}{e^2 x}\leq \tau(V(H'))\leq \frac{e^2 \tau(V(H))}{x}\) and \(\tau(W)\leq 10e^2\eps\tau(V(H))\log x\) for all \(\tau\in\cT\).
\end{lemma}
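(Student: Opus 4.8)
The plan is to obtain the Chomp Lemma by iterating the Nibble Lemma (Lemma~\ref{lemma:nibble}) roughly $T \coloneqq \lfloor \log^2 D \cdot \log x \rfloor$ times, always with the fixed parameter $\theta \coloneqq 1/\log^2 D$. After $t$ nibbles we expect the relevant parameters to have evolved, up to $1 \pm O(\theta^{3/2})$ multiplicative errors per step, as $n_t \approx n(1-\theta)^t$, $D_t \approx D(1-\theta)^{kt}$, $\eps_t = \eps(1+\theta)^t$, and $D_j^{(t)} \approx D_j (1-\theta)^{(k-j+1)t}$ for $j \in J^*$, while for $j \notin J^*$ we only keep $C_j \le D_j$. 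Since $(1-\theta)^T \approx e^{-\theta T} \approx 1/x$ and $(1+\theta)^T \approx x$, after $T$ nibbles this yields $n' = n/x \cdot (1 \pm o(1))$, $D' = D/x^k \cdot (1\pm o(1))$, $\eps' = \eps x \cdot (1 \pm o(1))$, and $C_j(H') \le D_j/x^{k-j+1} \cdot (1\pm o(1))$, with the $e^{\pm 2}, e^{\pm 3}$ slack in the statement comfortably absorbing the accumulated $(1\pm O(\theta^{3/2}))^T = 1 \pm O(\theta^{1/2})$ errors (here using $\theta^{3/2} T = \log x / \log D \to 0$). The matching $M$ is the union of the $T$ nibble matchings (disjoint since each lives in the leftover of the previous), and $W$ is the union of the $T$ waste sets, each of size $\le 10\eps_t \theta n_t \le 10\eps\theta n \cdot (1\pm o(1))$ (the key cancellation: $\eps_t n_t \approx \eps n$), summing to $\le 10e^2 \eps n \log x \cdot \theta T / (\log^2 D \log x) \approx 10 e^2 \eps n \log x$ after bounding $T\theta \le \log x$.

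The substance of the proof is verifying that the Nibble Lemma hypotheses~\ref{hypothesis:degratio}--\ref{hypothesis:epsleqtheta} (and~\ref{pseudhyp:lowerbound}--\ref{pseudhyp:maxinvolvement} when $\cT \ne \emptyset$) hold at \emph{every} timestep $t \in [T]_0$, using only the Chomp hypotheses~\ref{chomphyp:trackingdegratio}--\ref{chomphyp:eps} at $t=0$ and the worst-case parameter bounds just described. For~\ref{hypothesis:epsleqtheta}: $\eps_t \le \eps x \cdot e \le \theta$ follows from~\ref{chomphyp:eps} since $\eps x \le 1/\log^2 D = \theta$ with room for the $e$ factor. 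For~\ref{hypothesis:degratio}, i.e.\ $\eps_t^2 \theta D_t / D_2^{(t)} \ge \log^5 D$: when $2 \in J^*$, $\eps_t^2 D_t/D_2^{(t)} \approx \eps^2 (1+\theta)^{2t} D(1-\theta)^{kt} / (D_2(1-\theta)^{(k-1)t}) = \eps^2 (D/D_2)(1+\theta)^{2t}(1-\theta)^t$, and since $(1+\theta)^{2t}(1-\theta)^t \ge (1-\theta^2)^t(1+\theta)^t \ge 1$ roughly, this is $\ge \eps^2 D/D_2 \cdot (1\pm o(1)) \ge \log^8 D \cdot (1\pm o(1))$ by~\ref{chomphyp:trackingdegratio}, leaving a $\log^3 D$ factor to absorb the $1/\theta = \log^2 D$ loss and the $(1\pm o(1))$; when $2 \notin J^*$, $D_2^{(t)} \le D_2$ is fixed while $D_t \ge D/(e^2 x^k) \ge D x^{-(k-2)}/(e^2 x^2)$, and $\eps_t^2 \ge \eps^2$, so $\eps_t^2 \theta D_t/D_2^{(t)} \ge \eps^2 \theta D /(e^2 x^2 x^{k-2} D_2)$... one checks this is $\ge \log^5 D$ from~\ref{chomphyp:nontrackingdegratio} (the $x^2$ and $\theta = 1/\log^2D$ and $e^2$ losses are swallowed by the gap between $\log^8 D$ and $\log^5 D$ — this is precisely why~\ref{chomphyp:nontrackingdegratio} carries an extra $x^{k-2}$ and a larger polylog). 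For~\ref{hypothesis:codegratio}, i.e.\ $\theta^2 D_j^{(t)}/D_{j+1}^{(t)} \ge \log^5 D$ for $j \in J^*$: if $j+1 \in J^*$ then $D_j^{(t)}/D_{j+1}^{(t)} \approx (D_j/D_{j+1})(1-\theta)^{-t} \ge (D_j/D_{j+1})$... wait, $(1-\theta)^{(k-j+1)t}/(1-\theta)^{(k-j)t} = (1-\theta)^t \approx 1/x$, so $D_j^{(t)}/D_{j+1}^{(t)} \approx (D_j/D_{j+1})/x \ge \log^{10}D$ by~\ref{chomphyp:trackingcodegratio}, and $\theta^2 = 1/\log^4 D$ loss leaves $\log^6 D \ge \log^5 D$; if $j+1 \notin J^*$ then $D_{j+1}^{(t)} \le D_{j+1}$ fixed and $D_j^{(t)} \gtrsim D_j/x^{k-j+1}$, giving $\theta^2 D_j^{(t)}/D_{j+1}^{(t)} \gtrsim D_j/(x^{k-j+1} D_{j+1} \log^4 D) \ge \log^5 D$ by~\ref{chomphyp:nontrackingcodegratio}. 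The conclusion $C_j(H') \le D_j(1-(k-j+1)\theta+\theta^{3/2})$ from each nibble multiplies correctly; and for $j \in J^*$, $j+1 \notin J^*$ one must separately check that $D_j^{(t)}$ staying above the nontracked-but-present $D_{j+1}$ by a polylog factor is needed for the \emph{next} nibble, which is exactly~\ref{chomphyp:nontrackingcodegratio} propagated.

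For the pseudorandom part, at each step one feeds the surviving weight functions $\tau \restrict{V(H_t)}$ into the nibble; hypothesis~\ref{pseudhyp:lowerbound} requires $\eps_t \theta \tau(V(H_t)) \ge \tau_{\max}^{(t)} \log^5 D$, and since $\tau(V(H_t)) \gtrsim \tau(V(H))/x$, $\eps_t \ge \eps$, $\tau_{\max}^{(t)} \le \tau_{\max}$, and $\theta = 1/\log^2 D$, this reduces to $\eps \tau(V(H))/(x \log^2 D) \gtrsim \tau_{\max} \log^5 D$, i.e.\ $\eps\tau(V(H)) \gtrsim \tau_{\max} x \log^7 D$; but $\eps x \le \theta \le 1$ so actually we need the stronger $\eps\tau(V(H)) \ge \tau_{\max} e^3\log^7 D$ of~\ref{chomppseudhyplowerbound} (the $e^3$ absorbs the $e^2$ from $\tau(V(H_t)) \ge \tau(V(H))/(e^2 x)$ and cumulative error). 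Conditions~\ref{pseudhyp:supportsize}--\ref{pseudhyp:maxinvolvement} follow from~\ref{chomppseudhypsupportsize}--\ref{chomppseudhypmaxinvolvement} since supports only shrink and $D^{\log^{6/5}D} \le D_t^{\log^{5/4}D_t}$ by~\ref{chomppseudhyppolyremainder} (which keeps $D_t = D^{1-o(1)}$, so $\log^{5/4}D_t \gtrsim \log^{5/4}D \ge \log^{6/5}D$ — this is exactly why~\ref{chomppseudhyppolyremainder} is imposed). The waste bound $\tau(W) \le \sum_t 10\eps_t\theta\tau(V(H_t)) \le 10 \eps\theta\tau(V(H)) \cdot T \cdot (1\pm o(1)) \le 10e^2\eps\tau(V(H))\log x$ uses $\theta T \le \log x$; and $\tau(V(H'))$ lies in $\tau(V(H))(1-\theta\pm\theta^{3/2})^T$, i.e.\ within $[\tau(V(H))/(e^2 x), e^2\tau(V(H))/x]$. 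The main obstacle I anticipate is purely bookkeeping: one must track all the multiplicative $(1\pm O(\theta^{3/2}))$ slippages through $T$ steps simultaneously for $n, D, \eps$, every $D_j$ ($j \in J^*$), and every $\tau \in \cT$, verify the worst-case bounds are maintained inductively (so the nibble hypotheses hold at step $t$ \emph{given} the induction hypothesis at step $t$), and confirm the polylog margins ($\log^8$ vs $\log^5$, $\log^{10}$ vs $\log^5$, $\log^7$ vs $\log^5$) are wide enough to swallow the $1/\theta = \log^2 D$ and $1/\theta^2 = \log^4 D$ and $e^{O(1)}$ losses — this is routine but must be done carefully, and the cleanest organization is a single induction over $t$ with a combined invariant, exactly mirroring the structure Vu uses in~\cite{V00} for his corresponding iteration.
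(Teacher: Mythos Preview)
Your approach is exactly the paper's: iterate the Nibble Lemma $T=\lfloor\log^2 D\log x\rfloor$ times with $\theta=1/\log^2 D$, tracking the same evolving parameters and verifying~\ref{hypothesis:degratio}--\ref{hypothesis:epsleqtheta} (and~\ref{pseudhyp:lowerbound}--\ref{pseudhyp:maxinvolvement}) inductively. The structure and conclusions are right, but two of your specific verifications fail as written, and one preliminary step is missing.

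In the $2\notin J^{*}$, $k\ge2$ case for~\ref{hypothesis:degratio}, you bound $\eps_t^2\ge\eps^2$ and $D_t\ge D/(e^2x^k)$ \emph{separately}, and then assert that the resulting $x^2$ loss is ``swallowed by the gap between $\log^8 D$ and $\log^5 D$''. It is not: $x$ may be polynomial in $D$, so no polylog margin absorbs $x^2$. The paper instead tracks $\eps_{(i)}^2 D^{(i)}\approx\eps^2 D\, e^{i\theta(2-k)}$, which at $i=T-1$ gives precisely the $x^{2-k}$ factor that~\ref{chomphyp:nontrackingdegratio} supplies; the cancellation between the growth of $\eps_t$ and the decay of $D_t$ is essential here, exactly as you correctly noted it is for the waste bound $|W|$.

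The same issue recurs in your check of~\ref{pseudhyp:lowerbound}: bounding $\eps_t\ge\eps$ and $\tau(V(H_t))\gtrsim\tau(V(H))/x$ separately yields the requirement $\eps\tau(V(H))\gtrsim\tau_{\max}\, x\log^7 D$, which~\ref{chomppseudhyplowerbound} does \emph{not} provide (it has $e^3$, not $x$). Your sentence ``but $\eps x\le\theta$ so actually we need the stronger\ldots'' does not repair this. The fix is again the cancellation $\eps_{(i)}\tau(V(H_i))\ge\eps\tau(V(H))/e^3$, the same one you use for $\tau(W)$.

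Finally, before iterating you need $1/D^{(i)}\ll1/k$ at every step, i.e.\ $D/x^k$ must remain large. When $\cT\ne\emptyset$ this is~\ref{chomppseudhyppolyremainder}, but for the first part of the lemma you must \emph{derive} $x^k\le D/\log^{12}D$ from~\ref{chomphyp:trackingdegratio}--\ref{chomphyp:eps} alone; the paper does this by a short case analysis on whether $2\in J^{*}$, telescoping the ratios $D_j/D_{j+1}$ through $J^{*}$.
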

We use the name ``Chomp'' because Lemma~\ref{lemma:chomp} finds a much larger matching than the Nibble (reducing the leftover to size~\(o(n)\) rather than \(n-o(n)\)), but will itself be iterated many times (with ``small''~\(x\)) in the proof of Theorem~\ref{theorem:maintheorem} in Section~\ref{section:exhausting}.
As discussed in Section~\ref{section:sketch}, the strategy for proving the Chomp Lemma is simply to iterate the Nibble Lemma with \(\theta\coloneqq1/\log^2 D\), \(\lflr \log^2 D \log x\rflr\) times, where~\(x\) is the amount of ``progress'' we make in the current application of the Chomp (in the sense that the leftover size is reduced by a factor~\(x\)).
We use the same set~\(J^{*}\) for all Nibbles within a Chomp (see Section~\ref{section:sketch} for an explanation of the role of~\(J^{*}\)).\COMMENT{It occurs to me that if we found some way to omit the Chomp entirely and instead MCWA with nibbles, we could maybe keep the final error always logarithmic. A bit late in the process to check this fully, though, and I think the current presentation is probably simpler}
The Chomp Lemma implies\COMMENT{Don't even need to say ``up to the error term'', as we get a specific polylog error, whereas Vu claims only error \(\log^C\)} Theorem~\ref{theorem:V00} (we leave the full details up to the reader, but take \(D_j\coloneqq D_s\) for \(j\geq s\), take \(J^{*}\coloneqq[s-1]\setminus\{1\}\), and set \(\eps\approx1/x\)),\COMMENT{Let \(H, x\) satisfy the hypotheses of Theorem~\ref{theorem:V00}. Put \(J^{*}=[s-1]\setminus\{1\}\) and use the same values of \(D_j\) for \(2\leq j\leq s\) and \(D_j\coloneqq D_s\) for \(j>s\). Since \(H\) is \((n,D,1/x)\)-regular, it is \((n,D,\log^4 D/x)\)-regular.
Let \(x'\coloneqq x/\log^{10}D\) and \(\eps'=\log^4D /x\).
We seek to apply Chomp with these values \(\eps', x'\).
The assumption \(\eps\leq 1/x\) now implies \(\eps'=\log^4 D/x = \log^4 D/(x'\log^{10}D)=1/(x'\log^6 D)\leq 1/(x'\log^2 D)\) so~\ref{chomphyp:eps} is satisfied.
The assumption \(x^2\leq D/D_2\) of Vu implies that \((\eps')^2 D/D_2=D\log^8 D/(x^2 D_2)\geq \log^8 D\) so~\ref{chomphyp:trackingdegratio} holds whether \(2\in J^{*}\) or \(k=1\) or otherwise.
We need to check if~\ref{chomphyp:nontrackingdegratio} holds if \(2\notin J^{*}\) and \(k\geq 2\).
But if \(2\notin J^{*}\) then \(s=2\), whence Vu condition~\ref{condition:finalratio} says \(x^{k}\leq D/D_2\).
Then \((\eps')^2D/((x')^{k-2}D_2)=D\log^8 D/(x^2 (x')^{k-2} D_2)\geq D\log^8 D/(x^{k}D_2)\geq \log^8 D\) so~\ref{chomphyp:nontrackingdegratio} holds even without needing \(k\geq 2\).
For all those \(j\in J^{*}\) for which \(j+1\in J^{*}\) we have \(x^2\leq D_j/D_{j+1}\) from Vu~\ref{condition:otherratios}.
Then \(D_j/(x'D_{j+1}) > D_j/((x')^2 D_{j+1})\) (this assumes \(x'\coloneqq x/\log^{10}D >1\), but if \(x\leq\log^{10}D\) then Vu concludes nothing due to the arbitrary polylog, so we may assume this).
Then \(D_j/((x')^2 D_{j+1})=D_j\log^{20}D/(x^2 D_{j+1})\geq\log^{20}D\) so all good.
Finally the only \(j\in J^{*}\) having \(j+1\notin J^{*}\) is \(j=s-1\) for which Vu hypothesizes \(x^{k-s+2}\leq D_{s-1}/D_s\), so that in particular \(D_{s-1}/((x')^{k-s+2}D_{s}) = D_{s-1}\log^{10(k-s+2)}D/(x^{k-s+2}D_s)\geq \log^{10}D\), where we used \(s\leq k+1\). Then Chomp says that in particular, there's a matching covering all but at most \(e^3 n/x' + 10e^2\eps' n\log x=e^3 n\log^{10}D/x + 10e^2n\log x\log^4 D/x\) vertices, so clearly there exists \(A\) as in Vu, using \(x\leq D\). Indeed \(A=11\) suffices and we could do better} and is stronger in the following four ways:
\begin{enumerate}[label=\upshape(\roman*)]
\item Any input~\(\eps\) at least as large as roughly~\(\sqrt{D_2/D}\) (assuming~\(x\) is small or \(2\in J^{*}\), say)\COMMENT{Because of~\ref{chomphyp:nontrackingdegratio}. But I've just mentioned beforehand how we will always go on to apply Chomp with small \(x\)} degrades by the expected amount \(\eps\rightarrow\eps x\) (and the other parameters e.g.\ \(D, D_j\) degrade appropriately also), potentially allowing for many future Chomps.
Theorem~\ref{theorem:V00} makes no conclusion on the degradation of any parameters other than~\(n\), though the expected behaviour does follow from Vu's proof for all parameters other than, crucially,~\(\eps\), as one must set \(\eps\approx1/x\) at the start of Vu's proof, losing any tighter control on the degrees;
\item One is free to choose \(J^{*}\subseteq[k]\setminus\{1\}\) to tolerate/ignore any ``clustering'' of codegrees \(D_j\approx D_{j+1}\) and still proceed, showing those~\(D_j\) having \(D_j\gg D_{j+1}\) still degrade as expected.
This is essential in our handling of clustered codegrees in Section~\ref{section:exhausting};
\item \ref{chomphyp:trackingcodegratio} is a much weaker constraint on~\(x\) than Theorem~\ref{theorem:V00}\ref{condition:otherratios} if the codegree ratios are at least polynomial in~\(D\).
Therefore, if some intermediate ratio \(D_j/D_{j+1}\) for \(2\leq j<s-1\) turns out to be the (unique, say)\COMMENT{The application in the intro where we show Vu can get leftover \(n^{3/4}\) for the triangle factors has these ratios as a bottleneck, but so is \(D/D_2\). The Chomp wouldn't do any better than Vu because of \(D/D_2\) here (though it \textit{can} do better if you adjusted \(D_2=n^{3/2}\rightarrow n^{4/3}\), which is only a better setup because of the improved constraint)} bottleneck for~\(x\) in an application of Theorem~\ref{theorem:V00}, the Chomp Lemma gives\COMMENT{I went for the committal ``gives'', rather than ``should give''. In light of the fact Chomp implies Vu and the assumption that an intermediate ratio is uniquely the bottleneck, this is formally true and not too difficult to show, up to the polylogarithms - but even there, Vu states his result with an arbitrarily large polylogarithm error; for us, the logs appear in the hypotheses rather than the conclusion, but they yield clearly defined values on the log powers, so I think committing to ``larger'' is reasonable} a larger matching;\label{logerror?} 
\item The Chomp Lemma ensures the leftover is well-distributed with respect to a pre-chosen family~\(\cT\) of weight functions.
\end{enumerate}
We remark that if \(\cT=\emptyset\), then the Chomp Lemma yields logarithmic error.\COMMENT{\ref{chomppseudhyppolyremainder} gives polynomial error in some (but not all, to be fair) circumstances}
Our main theorem (Theorem~\ref{theorem:maintheorem}) is later proven in Section~\ref{section:exhausting} by iterating the Chomp Lemma an arbitrarily large number of times, which leads to subpolynomial error in the main theorem.
This arises due to the generality of the main theorem; some codegree sequences can be so poorly behaved that it is difficult to circumnavigate the need to apply the Chomp Lemma so many times, making such small progress each time (see Section~\ref{section:sketch} for a more in-depth sketch).
However, we expect that in some applications with simple codegree sequences\COMMENT{Likely including at least the triangle factors one} and no weight functions to track, one could obtain logarithmic error by applying the Chomp Lemma, say at most~\(k\) times, fully collapsing some ratio~\(D_j/D_{j+1}\) each time.
For this purpose, the optimization detailed in~\ref{logerror?} above would be essential.\COMMENT{Otherwise you're forever halving (the power of) the space available in the intermediate ratios, so you'd need to go logarithmically many times to get to the right leading term, by which time the errors grow out of control}
\lateproof{Lemma~\ref{lemma:chomp}}
Since~\(H\) has at least some edge\COMMENT{We're assuming \(1/D\ll 1\)}, we have \(D_j\geq C_j(H)\geq 1\) for all \(j\in[k+1]\setminus\{1\}\).
Set \(\theta\coloneqq 1/\log^2 D\) and \(T\coloneqq \lfloor \frac{1}{\theta}\log x\rfloor ( = \lfloor \log^2 D \log x\rfloor)\).
Firstly we claim that
\begin{equation}\label{eq:chomp1}
x^k \leq \frac{D}{\log^{12}D}
\end{equation}
holds.
Clearly~(\ref{eq:chomp1}) holds if~\ref{chomppseudhyppolyremainder} holds, but we show that \ref{chomphyp:trackingdegratio}--\ref{chomphyp:eps} suffice.\COMMENT{It is desirable to obtain the first part of the lemma without the inclusion of~\(\cT\) in its own right, as the first part yields logarithmic error in all cases, whereas~\ref{chomppseudhyppolyremainder} means one can do no better than polynomial error in some cases. The case in which \(x\) is so large that \ref{chomppseudhyppolyremainder} fails but \ref{chomphyp:trackingdegratio}--\ref{chomphyp:eps} hold is the case in which, annoyingly, we can't ensure that each vertex of \(H_i\) in at most \((D^{(i)})^{\log^{5/4}(D^{(i)})}\) of the supports. Indeed, every surviving vertex is always in as many of the supports as it always was, i.e. possibly up to \(D^{\log^{6/5}D}\) as per the hypothesis of Chomp, so one now needs \(D^{\log^{6/5}D}\leq (D^{(i)})^{\log^{5/4}(D^{(i)})}\). But if \(D^{(i)}\) is really small compared to \(D\), say \(D^{(i)}=D^{(T)}\approx D/x^k=\) polylog D, then one doesn't have this inequality at all. We do have it if \(D/x^k\) is still polynomial in \(D\), which is precisely the inspiration for~\ref{chomppseudhyppolyremainder}. This is no concern to us in the main theorem since we only aim for polynomial error there anyway, but it did make the Chomp slightly awkward since the Chomp can otherwise get logarithmic error.} 
Indeed, if \(k=1\), then\COMMENT{Used \(D_2 \geq 1\).}
\[
D \geq\frac{D}{D_2} \stackrel{\ref{chomphyp:trackingdegratio}}{\geq} \frac{\log^8 D}{\eps^2} \stackrel{\ref{chomphyp:eps}}{\geq} x^2 \log^{12}D > x^k\log^{12}D.
\]
If \(2\notin J^{*}\) and \(k\geq 2\), then
\[
D\geq\frac{D}{D_2} \stackrel{\ref{chomphyp:nontrackingdegratio}}{\geq} \frac{x^{k-2}\log^8 D}{\eps^2}\stackrel{\ref{chomphyp:eps}}{\geq} x^k \log^{12}D.
\]
Finally, if \(2\in J^{*}\),\COMMENT{which, in particular, means \(k\geq 2\) by definition of \(J^{*}\)} then let the elements of~\(J^{*}\) be \(j_1=2, j_2, j_3, \dots, j_{\ell}\) in increasing order, and let~\(r\) be the smallest \(i\in[\ell]\) such that \(j_i + 1\notin J^{*}\) (such~\(r\) must exist since~\(J^{*}\) is non-empty\COMMENT{It contains \(2\)} and \(k+1\notin J^{*}\) by construction).
Then\COMMENT{Used \(D_{j_{r}+1}\geq 1\). If \(r=1\) (i.e. \(2\in J^{*}, 3\notin J^{*}\)) then there's no intermediate ratios where we're tracking both codegrees, so we never apply \ref{chomphyp:trackingcodegratio}. Mathematically, \(j_r - 1=j_1 - 1 = 2-1=1\), so \(\prod_{i=2}^{j_r - 1}\frac{D_{i}}{D_{i+1}}\) is an empty product, which we identify with 1, and \(x^{j_r -2} = x^{j_1 -2} = x^{2-2}=1\), so that's consistent. All the polylog factors from applying \ref{chomphyp:trackingcodegratio} and \ref{chomphyp:nontrackingcodegratio} have been bounded below by 1; they're not needed and since as just discussed there may be no actual application of \ref{chomphyp:trackingcodegratio}, there may be no such polylogs, so not using them was necessary as well as clean. (Though the single application of \ref{chomphyp:nontrackingcodegratio} does occur, and yields polylogs that I bounded below by 1.)} 
\begin{eqnarray*}
D & \geq & \frac{D}{D_{j_{r}+1}}=\frac{D}{D_2}\prod_{i=2}^{j_r}\frac{D_i}{D_{i+1}} \stackrel{\ref{chomphyp:trackingdegratio}}{\geq} \frac{\log^8 D}{\eps^2}\cdot\frac{D_{j_{r}}}{D_{j_{r}+1}}\prod_{i=2}^{j_{r}-1}\frac{D_i}{D_{i+1}} \\ & \stackrel{\ref{chomphyp:trackingcodegratio}, \ref{chomphyp:nontrackingcodegratio}}{\geq} & \frac{\log^8 D}{\eps^2}\cdot x^{k-j_{r}+1}x^{j_{r}-2} \stackrel{\ref{chomphyp:eps}}{\geq} x^2 \log^{12}D \cdot x^{k-1} > x^{k}\log^{12}D.
\end{eqnarray*}
We deduce that~(\ref{eq:chomp1}) holds, as claimed.
Notice\COMMENT{\(D^{1/\log D} = \exp(\log (D^{1/\log D}))=\exp((1/\log D)\cdot\log D)=\exp(1)=e\)} also that\COMMENT{This is what lets us bound our errors by multiplicative constants (powers of \(e\)). I originally tried the whole proof with \(\theta=1/\log D\) and found I couldn't get small enough error, but forcing \(\theta\) to be just a bit smaller at \(\theta=1/\log^2 D\) means our error stays under control, basically because of (\ref{eq:chomp2}). It makes some sense to me that we can't just take \(\theta\) to be as large as we like without the errors becoming uncontrolled. Taking this in the other direction, we can get multiplicative error \(1+o(1)\) in the expressions for \(n', D'\) in the output of Chomp, if we put say \(\theta=1/\log^4 D\).}
\begin{equation}\label{eq:chomp2}
x^{\sqrt{\theta}} \stackrel{(\ref{eq:chomp1})}{\leq} D^{\sqrt{\theta}} = D^{1/\log D} = e.
\end{equation}
For \(i\in[T]_0\) and \(j\in J^{*}\), we set:

\begin{minipage}{.5\linewidth}
 \begin{eqnarray*}
    n_i^{(-)} &\coloneqq& n(1-\theta -2\theta^{3/2})^i,\\
    D^{(i,-)} &\coloneqq& D(1-k\theta - \theta^{3/2})^i,\\
    \eps_{(i)} &\coloneqq& \eps(1+\theta)^i,
  \end{eqnarray*}
\end{minipage}%
\begin{minipage}{.5\linewidth}
  \begin{eqnarray*}
    n_i^{(+)} &\coloneqq& n(1-\theta + 2\theta^{3/2})^i,\\
    D^{(i,+)} &\coloneqq& D(1-k\theta + \theta^{3/2})^i,\\
    D_j^{(i)} &\coloneqq& D_j(1-(k-j+1)\theta + \theta^{3/2})^i,
  \end{eqnarray*}
\end{minipage}
Observe that \((n_i^{(-)}), (n_i^{(+)}), (D^{(i,-)}), (D^{(i,+)}), (D_j^{(i)})\) are strictly decreasing sequences\COMMENT{Since \(\theta^{3/2}\ll\theta\) (because \(1/D\ll 1\) and \(\theta=1/\log^2 D\)), we subtract more than we add in the parameters on the right of the display. \(j\leq k\) for \(j\in J^{*}\) so we do subtract at least 1 \(\theta\) each time in the definition of \(D_j^{(i)}\).} and \((\eps_{(i)})\) is strictly increasing.
We now collect some bounds on our compounding error terms that will be useful.
\begin{equation}
\begin{split}
1-k\theta-\theta^{3/2} &\geq 1-(k\theta+2\theta^{3/2})+(k\theta+2\theta^{3/2})^2 \geq\exp(-(k\theta+2\theta^{3/2})),\label{eq:chomperror}\\
1-\theta-2\theta^{3/2} &\geq 1-(\theta+3\theta^{3/2})+(\theta+3\theta^{3/2})^2 \geq\exp(-(\theta+3\theta^{3/2})),\\
1+\theta &\geq 1 + \theta(1-\theta) + \theta^{2}(1-\theta)^2\geq\exp(\theta-\theta^2),\\
1 - (k-j+1)\theta + \theta^{3/2} & \geq 1- (k-j+1)\theta + (k-j+1)^2 \theta^2 \geq\exp(-\theta(k-j+1)), 
\end{split}
\end{equation}
where \(j\in J^{*}\) (so \(j\leq k\)).
\begin{claim}\label{claim:nibblechomp}
Suppose \(i\in[T-1]_0\) and~\(H_i\) is an \((n_i, D^{(i)}, \eps_{(i)})\)-regular, \((k+1)\)-uniform hypergraph for some numbers \(n_i^{(-)}\leq n_i \leq n_i^{(+)}\) and \(D^{(i,-)} \leq D^{(i)} \leq D^{(i,+)}\), further satisfying \(C_j(H_i)\leq D_j^{(i)}\) for all \(j\in J^{*}\) and \(C_j(H_i)\leq D_j\) for all \(j\in[k+1]\setminus(J^{*}\cup\{1\})\).
Then there is a matching~\(M_i\) of~\(H_i\) and a set \(W_i\subseteq V(H_i)\) of size \(|W_i|\leq 10\eps_{(i)}\theta n_i\), together with numbers \(n_{i+1}^{(-)}\leq n_{i+1} \leq n_{i+1}^{(+)}\) and \(D^{(i+1,-)}\leq D^{(i+1)} \leq D^{(i+1,+)}\) such that \(H_{i+1}\coloneqq H_i[V(H_i)\setminus(V(M_i)\cup W_i)]\) is \((n_{i+1}, D^{(i+1)}, \eps_{(i+1)})\)-regular, and further satisfies \(C_j(H_{i+1})\leq D_j^{(i+1)}\) for all \(j\in J^{*}\) (and \(C_j(H_{i+1})\leq D_j\) for all \(j\in [k+1]\setminus(J^{*}\cup\{1\})\)).

If, in addition,~\(\cT_i\) is a family of functions \(\tau_i\colon V(H_i)\rightarrow\mathbb{R}_{\geq0}\) such that \(\eps_{(i)}\theta\tau_i(V(H_i))\geq(\max_{v\in V(H_i)}\tau_i(v))\cdot \log^5(D^{(i)})\) for all \(\tau_i\in\cT_i\), and \(|\{v\in V(H_i)\colon\tau_i(v)>0\}|\leq (D^{(i)})^{\log^{5/4}(D^{(i)})}\) for all \(\tau_i\in\cT_i\), and each vertex of~\(H_i\) is in \(\{v\in V(H_i)\colon\tau_i(v)>0\}\) for at most~\((D^{(i)})^{\log^{5/4}(D^{(i)})}\) of the functions \(\tau_i\in\cT_i\), then there exist~\(M_i, W_i\) as above additionally satisfying \(\tau_i(V(H_i))(1-\theta-\theta^{3/2})\leq \tau_i(V(H_{i+1}))\leq \tau_i(V(H_i))(1-\theta+\theta^{3/2})\) and \(\tau_i(W_i)\leq 10\eps_{(i)}\theta \tau_i(V(H_i))\) for all \(\tau\in\cT_i\).
\end{claim}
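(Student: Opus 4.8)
\textbf{Proof plan for Claim~\ref{claim:nibblechomp}.} The whole claim is obtained by a single application of the Nibble Lemma (Lemma~\ref{lemma:nibble}) to $H_i$, run with the \emph{same} set $J^{*}$, with $n_i,D^{(i)},\eps_{(i)}$ in the roles of $n,D,\eps$, with $D_j^{(i)}$ in the role of $D_j$ for each $j\in J^{*}$ and $D_j$ in its own role for each $j\in[k+1]\setminus(J^{*}\cup\{1\})$, and with $\cT_i$ in the role of $\cT$. One then takes $M_i,W_i$ to be the matching and waste set the lemma returns, and \emph{defines} $n_{i+1},D^{(i+1)}$ to be the numbers $n',D'$ it produces, so that $H_{i+1}=H_i[V(H_i)\setminus(V(M_i)\cup W_i)]$ is $(n_{i+1},D^{(i+1)},\eps_{(i+1)})$-regular since the lemma's output error is $\eps'=\eps_{(i)}(1+\theta)=\eps_{(i+1)}$. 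The remaining conclusions follow immediately: $|W_i|\le 10\eps_{(i)}\theta n_i$ is verbatim from the lemma; multiplying the one-step bounds $n_i(1\pm(\theta+2\theta^{3/2}))$ and $D^{(i)}(1\pm(k\theta\mp\theta^{3/2}))$ from the lemma with the assumed bounds $n_i^{(\pm)},D^{(i,\pm)}$ on $n_i,D^{(i)}$ gives $n_{i+1}^{(-)}\le n_{i+1}\le n_{i+1}^{(+)}$ and $D^{(i+1,-)}\le D^{(i+1)}\le D^{(i+1,+)}$; for $j\in J^{*}$ the lemma yields $C_j(H_{i+1})\le D_j^{(i)}(1-(k-j+1)\theta+\theta^{3/2})=D_j^{(i+1)}$; for $j\in[k+1]\setminus(J^{*}\cup\{1\})$ we simply have $C_j(H_{i+1})\le C_j(H_i)\le D_j$ because $H_{i+1}$ is an induced subhypergraph of $H_i$; and the weight-function conclusions (with $V(H_{i+1})=V(H_i)\setminus(V(M_i)\cup W_i)$) are literally those of Lemma~\ref{lemma:nibble}.

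So the only real work is verifying the hypotheses of the Nibble Lemma with these parameters. The conditions \ref{pseudhyp:lowerbound}--\ref{pseudhyp:maxinvolvement} are precisely the three weight-function hypotheses assumed in the ``in addition'' part of the claim (with $D^{(i)}$ in place of $D$), so nothing is needed there. The requirement $1/D^{(i)},\theta\ll 1/k$ is immediate: $\theta=1/\log^2 D\ll 1/k$ since $1/D\ll1/k$, and $D^{(i)}\ge D^{(i,-)}\ge D\exp(-(k\theta+2\theta^{3/2})T)\ge D x^{-k}(x^{\sqrt\theta})^{-2}\ge D/(e^2x^k)\ge \log^{12}D/e^2$ using $\theta T\le\log x$, $\theta^{3/2}T\le\sqrt\theta\log x$, $(\ref{eq:chomp2})$, and $(\ref{eq:chomp1})$. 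Hypothesis \ref{hypothesis:epsleqtheta}, $\eps_{(i)}\le\theta$, follows from $\eps_{(i)}=\eps(1+\theta)^i\le \eps e^{\theta T}\le\eps x\le 1/\log^2 D=\theta$ by \ref{chomphyp:eps}.

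The main obstacle is hypotheses \ref{hypothesis:degratio} and \ref{hypothesis:codegratio}, i.e.\ $\eps_{(i)}^2\theta\,D^{(i)}/D_2^{\mathrm{Nib}}\ge\log^5(D^{(i)})$ and $\theta^2 D_j^{(i)}/D_{j+1}^{\mathrm{Nib}}\ge\log^5(D^{(i)})$ for $j\in J^{*}$, where $D_2^{\mathrm{Nib}}$ (resp.\ $D_{j+1}^{\mathrm{Nib}}$) is $D_2^{(i)}$ (resp.\ $D_{j+1}^{(i)}$) when the corresponding index lies in $J^{*}$, and $D_2$ (resp.\ $D_{j+1}$) otherwise. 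The point is that the compounding error factors accumulated over $i\le T$ steps contribute only bounded multiplicative constants, and the crucial manoeuvre is \emph{not} to bound the numerator and denominator codegrees separately --- that loses a spurious factor of order $x^{k}$ --- but to estimate the ratio of their compounding factors. In the ``both tracked'' case of \ref{hypothesis:degratio} one shows, using the estimates collected in $(\ref{eq:chomperror})$ and $1+y\le e^y$, that $\eps_{(i)}^2 D^{(i)}/D_2^{(i)}\ge (\eps^2 D/D_2)\,\rho^i$ where $\rho=\frac{(1+\theta)^2(1-k\theta-\theta^{3/2})}{1-(k-1)\theta+\theta^{3/2}}\ge\exp(\theta-2\theta^2-3\theta^{3/2})\ge 1$, so $\eps_{(i)}^2 D^{(i)}/D_2^{(i)}\ge\eps^2D/D_2\ge\log^8 D$ by \ref{chomphyp:trackingdegratio}, and then $\theta=\log^{-2}D$ gives at least $\log^6 D\ge\log^5 D\ge\log^5(D^{(i)})$ (the same computation covers $k=1$, where $J^{*}=\emptyset$ forces $D_2^{\mathrm{Nib}}=D_2$). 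The ``both tracked'' case of \ref{hypothesis:codegratio} is identical, with the ratio of the two codegree factors being $\ge\exp(-\theta-c\theta^{3/2})$, hence its $i$-th power $\ge \exp(-\theta T-c\theta^{3/2}T)\ge x^{-1}(x^{\sqrt\theta})^{-c}\ge e^{-c}x^{-1}$ by $(\ref{eq:chomp2})$, so $D_j^{(i)}/D_{j+1}^{(i)}\ge e^{-c}x^{-1}D_j/D_{j+1}\ge e^{-c}\log^{10}D$ by \ref{chomphyp:trackingcodegratio}, which survives the $\theta^2$ prefactor. When the ``denominator'' index is untracked (the cases $2\notin J^{*}$ in \ref{hypothesis:degratio} and $j+1\notin J^{*}$ in \ref{hypothesis:codegratio}) one instead uses $(1-(k-j+1)\theta+\theta^{3/2})^i\ge\exp(-(k-j+1)\theta T)\ge x^{-(k-j+1)}$ together with $D^{(i)}\ge D/(e^2x^k)$, so the accumulated loss is exactly the $x^{k-2}$, resp.\ $x^{k-j+1}$, factor that \ref{chomphyp:nontrackingdegratio}, resp.\ \ref{chomphyp:nontrackingcodegratio}, is designed to absorb (with the $\log$-budgets $\log^8,\log^9,\log^{10}$ in \ref{chomphyp:trackingdegratio}--\ref{chomphyp:nontrackingcodegratio} chosen precisely to leave room to spare against the $\theta$ and $\theta^2$ prefactors and the bound $\log^5(D^{(i)})\le\log^5 D$). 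This bookkeeping is routine but must be carried out case by case; everything else in the claim is bookkeeping of the kind already done in the first paragraph.
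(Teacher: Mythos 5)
Your proposal is correct and follows the same approach as the paper: a single application of Lemma~\ref{lemma:nibble} to $H_i$ with the same $J^{*}$, $\theta = 1/\log^2 D$, and the role of $D_j$ played by $D_j^{(i)}$ for $j\in J^{*}$ and by $D_j$ for $j\notin J^{*}$; then the output directly furnishes $M_i, W_i, n_{i+1}, D^{(i+1)}$ and all the required bounds and codegree conclusions, and all that remains is to verify \ref{hypothesis:degratio}--\ref{hypothesis:epsleqtheta} and \ref{pseudhyp:lowerbound}--\ref{pseudhyp:maxinvolvement} with the $i$-th-step parameters. Your bookkeeping via the one-step ratio $\rho$ (or its cousin for the codegree ratios) is the same computation the paper carries out, organised slightly differently: the paper folds $k=1$ into the ``$2\notin J^{*}$'' branch and notes the excess factor $e^{i\theta(2-k)}$ helps when $k=1$ and hurts when $k\geq 2$ (forcing minimisation at $i=T-1$), whereas you fold $k=1$ under ``$2\in J^{*}$''-style reasoning by observing $D_2^{\mathrm{Nib}}=D_2$; this is slightly loose as stated (your displayed formula for $\rho$ has a $D_2^{(i)}$-style compounding factor in the denominator that should be absent when $2\notin J^{*}$), but the conclusion, that the accumulated factor is $\geq 1$ for $k=1$, is correct and the substance of the verification is identical to the paper's.
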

\claimproof
We seek to apply Lemma~\ref{lemma:nibble} to~\(H_i\) with \(n_i, D^{(i)}, \eps_{(i)}, \theta=1/\log^2 D, D_j^{(i)}\,\, (j\in J^{*}), D_j\,\, (j\notin J^{*}), \cT_i\) playing the roles of \(n, D, \eps, \theta, D_j,\cT\) respectively.
We first check that we may assume \(1/D^{(i)}, \theta \ll 1/k\).
Since \(\theta=1/\log^2 D\), we may clearly assume \(\theta\ll 1/k\), using the assumption \(1/D\ll1/k\) of Lemma~\ref{lemma:chomp}.
Further,\COMMENT{Here using the earlier observation that \((D^{(i,-)})\) is a (strictly) decreasing sequence. Also used \(T-1 \leq T= \lfloor\frac{1}{\theta}\log x\rfloor \leq \frac{1}{\theta}\log x\).}
\begin{eqnarray}
D^{(i)} & \geq & D^{(i,-)}\geq D^{(T,-)} \stackrel{(\ref{eq:chomperror})}{\geq} D\exp(-T(k\theta+2\theta^{3/2}))\nonumber\\ & \geq & D\exp(-(\log x)(k+2\sqrt{\theta})) \stackrel{(\ref{eq:chomp2})}{\geq} \frac{D}{e^2 x^k} \stackrel{(\ref{eq:chomp1})}{\geq} \log^{11} D,\label{eq:chompDilarge}
\end{eqnarray}
so we may also assume \(1/D^{(i)}\ll1/k\).
We now check \ref{hypothesis:degratio}--\ref{hypothesis:epsleqtheta}.
To that end, if \(2\in J^{*}\), notice that\COMMENT{The first inequality uses \(D^{(i)}\geq D^{(i,-)}\). Used (\ref{eq:chomperror}) for the numerator bounds. For the denominator bound, used simply \(1+z\leq e^z\), i.e. \(1+ (-(k-1)\theta + \theta^{3/2})\leq \exp(-(k-1)\theta +\theta^{3/2}).\) Then \(2i\theta - 2i\theta^2 -ik\theta -2i\theta^{3/2}+i(k-1)\theta-i\theta^{3/2}=i\theta(2-k+k-1)-3i\theta^{3/2}-2i\theta^2 = i\theta(1 -3\theta^{1/2}-2\theta)\geq i\theta/2\geq 0\).}\COMMENT{At the end also used \(D^{(i)}\leq D^{(i,+)}\leq D^{(0,+)}\) since \((D^{(i,+)})\) is (strictly) decreasing, and \(D^{(0,+)}=D\).}
\begin{eqnarray*}
\frac{\eps_{(i)}^2\theta D^{(i)}}{D_2^{(i)}} & \geq & \frac{\eps^2 (1+\theta)^{2i}D(1-k\theta-\theta^{3/2})^i}{(\log^2 D)\cdot D_2(1-(k-1)\theta +\theta^{3/2})^i} \\ & \stackrel{(\ref{eq:chomperror})}{\geq} & \frac{\eps^2 D\exp(2i\theta(1-\theta)-i(k\theta+2\theta^{3/2}))}{(\log^2 D)\cdot D_2\exp(-i(k-1)\theta + i\theta^{3/2})} \geq \frac{\eps^2 D}{D_2\log^2 D} \stackrel{\ref{chomphyp:trackingdegratio}}{\geq} \log^5 D \\ & \geq & \log^5 D^{(i)}.
\end{eqnarray*}
As an aside, notice the excess~\(\exp(i\theta)\) term, which indicates the left side is minimum at the start of the iteration process and grows throughout; this is the expected behaviour since by the end of the process we expect roughly \(\eps\rightarrow\eps x\), \(D\rightarrow D/x^k\), and \(D_2\rightarrow D_2/x^{k-1}\) since \(2\in J^{*}\), so \(\eps^2 D/D_2\rightarrow \eps^2 Dx/D_2 \approx \eps^2 D\exp(T\theta)/D_2\).

If instead \(2\notin J^{*}\), notice that
\begin{eqnarray}\label{eq:chompcheckN1}
\frac{\eps_{(i)}^2 \theta D^{(i)}}{D_2} & \geq & \frac{\eps^2 (1+\theta)^{2i}D(1-k\theta-\theta^{3/2})^i}{D_2\log^2 D} \stackrel{(\ref{eq:chomperror})}{\geq} \frac{\eps^2 D}{D_2\log^2 D}\exp(2i\theta - 2i\theta^2 -ik\theta - 2i\theta^{3/2})\nonumber\\ & \geq & \frac{\eps^2 D}{D_2\log^2 D}\exp(i\theta(2-k)-4(T-1)\theta^{3/2})\nonumber\\ & \geq & \frac{\eps^2 D}{D_2\log^2 D}\exp(i\theta(2-k)-4\sqrt{\theta}\log x) \stackrel{(\ref{eq:chomp2})}{\geq}\frac{\eps^2 De^{i\theta(2-k)}}{e^4 D_2\log^2 D}.
\end{eqnarray}
If \(k=1\), then the right-hand side of~(\ref{eq:chompcheckN1}) is minimized when \(i=0\), so it suffices to see that \(\eps^2 D/(e^4 D_2 \log^2 D) \geq \log^5 D \geq \log^5 D^{(i)}\) by~\ref{chomphyp:trackingdegratio}.
If instead \(k\geq 2\), then the right-hand side of~(\ref{eq:chompcheckN1}) is minimized when \(i=T-1\), so the right-hand side of~(\ref{eq:chompcheckN1}) is at least\COMMENT{\(2-k=-(k-2)\)}
\[
\frac{\eps^2 D\exp(-(T-1)\theta(k-2))}{D_2\log^3 D} \geq \frac{\eps^2 D \exp(-(k-2)\log x)}{D_2\log^3 D} \stackrel{\ref{chomphyp:nontrackingdegratio}}{\geq} \log^5 D \geq\log^5 D^{(i)}.
\]
We deduce that~\ref{hypothesis:degratio} holds.
To check~\ref{hypothesis:codegratio}, fix \(j\in J^{*}\).
If \(j+1\in J^{*}\), notice that
\begin{eqnarray*}
\frac{\theta^2 D_j^{(i)}}{D_{j+1}^{(i)}} & = & \frac{D_j (1-(k-j+1)\theta + \theta^{3/2})^i}{(\log^4 D)\cdot D_{j+1}(1-(k-j)\theta + \theta^{3/2})^i} \stackrel{(\ref{eq:chomperror})}{\geq} \frac{D_j\exp(-i\theta(k-j+1))}{D_{j+1}\exp(-i\theta(k-j)+i\theta^{3/2})\log^4 D}\\ & = & \frac{D_j \exp(-i\theta-i\theta^{3/2})}{D_{j+1}\log^4 D} \geq \frac{D_j\exp(-(T-1)\theta(1+\sqrt{\theta}))}{D_{j+1}\log^4 D} \geq \frac{D_j e^{-(1+\sqrt{\theta})\log x}}{D_{j+1}\log^4 D}\\ & \stackrel{(\ref{eq:chomp2})}{\geq} & \frac{D_j}{exD_{j+1}\log^4 D} \stackrel{\ref{chomphyp:trackingcodegratio}}{\geq} \log^5 D \geq\log^5 D^{(i)}.
\end{eqnarray*}
If instead \(j+1\notin J^{*}\), then notice that
\begin{eqnarray*}
\frac{\theta^2 D_j^{(i)}}{D_{j+1}} & = & \frac{D_j (1-(k-j+1)\theta +\theta^{3/2})^i}{(\log^4 D)\cdot D_{j+1}} \stackrel{(\ref{eq:chomperror})}{\geq} \frac{D_j \exp(-i\theta(k-j+1))}{D_{j+1}\log^4 D}\\ & \geq & \frac{D_j \exp(-T\theta(k-j+1))}{D_{j+1}\log^4 D} \geq \frac{D_j}{x^{k-j+1}D_{j+1}\log^4 D} \stackrel{\ref{chomphyp:nontrackingcodegratio}}{\geq} \log^5 D \geq\log^5 D^{(i)}.
\end{eqnarray*}
We deduce that~\ref{hypothesis:codegratio} holds.
Finally, notice
\begin{equation}\label{eq:chompcheckN3}
\eps_{(i)}=\eps(1+\theta)^i \leq \eps e^{\theta T} \leq \eps x \stackrel{\ref{chomphyp:eps}}{\leq}\frac{1}{\log^2 D} = \theta,
\end{equation}
so~\ref{hypothesis:epsleqtheta} holds.
Further, if we have some~\(\cT_i\) as in the claim statement, then clearly~\ref{pseudhyp:lowerbound}--\ref{pseudhyp:maxinvolvement} are satisfied.
Now, Lemma~\ref{lemma:nibble} applied to~\(H_i\) yields the claim.\COMMENT{The size of the waste set is a direct translation. Lemma~\ref{lemma:nibble} gives \(n_{i+1}\leq n_i (1-\theta+2\theta^{3/2})\leq n_{i}^{(+)}(1-\theta +2\theta^{3/2})=n_{i+1}^{(+)}\). The other bounds in the output of Lemma~\ref{lemma:nibble} are the same, it's clear from the way I've defined \(n_{i}^{(-)}, D^{(i,-)}, \eps_{(i)}\) etc that the output just feeds into the parameters for timestep \(i+1\).}
\endclaimproof
We now finish the proof of Lemma~\ref{lemma:chomp} in the case we do not have a family~\(\cT\) of weight functions as in the second part of the statement, and return to the case in which there is such a~\(\cT\) afterward.
Notice \(H_0\coloneqq H\) is \((n_0, D^{(0)}, \eps_{(0)})\)-regular and \((k+1)\)-uniform, where \(n_0^{(-)}=n_0=n_0^{(+)}\) and \(D^{(0,-)}=D^{(0)}=D^{(0,+)}\), and further satisfies \(C_j(H_0)\leq D_j=D_j^{(0)}\) for \(j\in J^{*}\) and \(C_j(H_0)\leq D_j\) for \(j\in[k+1]\setminus (J^{*}\cup\{1\})\).
Applying Claim~\ref{claim:nibblechomp},~\(T\) times successively beginning with~\(H_0\), yields hypergraphs~\((H_i)_{i\in[T]_0}\), matchings~\((M_i)_{i\in[T-1]_0}\), and sets~\((W_i)_{i\in[T-1]_0}\).
Set \(H'\coloneqq H_T\), \(M\coloneqq\bigcup_{i\in[T-1]_0}M_i\), and \(W\coloneqq\bigcup_{i\in[T-1]_0}W_i\).
It is clear by construction that \(H'=H[V(H)\setminus(V(M)\cup W)]\).
We have
\begin{eqnarray}\label{eq:chompWsmall}
|W| & = & \sum_{i\in[T-1]_0}|W_i|\leq\sum_{i\in[T-1]_0}10\eps_{(i)}\theta n^{(i,+)}=\sum_{i\in[T-1]_0}10\eps (1+\theta)^i \theta n(1-\theta +2\theta^{3/2})^i\nonumber\\ & \leq & 10\eps\theta n\sum_{i\in[T-1]_0}e^{2i\theta^{3/2}} \leq 10\eps\theta nTe^{2T\theta^{3/2}} \leq 10\eps n\log x \cdot e^{2(\log x)\sqrt{\theta}} \stackrel{(\ref{eq:chomp2})}{\leq} 10e^2 \eps n\log x,
\end{eqnarray}
as required.
Set \(n'\coloneqq n_T\), \(D'\coloneqq D^{(T)}\), and recall that \(\eps'\coloneqq \eps x\geq \eps_{(T)}\) by~(\ref{eq:chompcheckN3}), so that~\(H'\) is \((n',D',\eps')\)-regular.
For \(j\in J^{*}\), we have \(C_j(H') \leq D_j^{(T)}\leq D_j\exp(-\theta T(k-j+1)+\theta^{3/2}T)\leq D_j\exp(-\theta((\log x)/\theta -1)(k-j+1)+\sqrt{\theta}\log x) \leq  D_j\exp(-(k-j+1)\log x + 2\sqrt{\theta}\log x) \leq e^2 D_j/x^{k-j+1}\),\COMMENT{Used \(\theta(k-j+1) \leq\sqrt{\theta}\log x\). All this requires is \(\log x \geq 1\) and \(\theta\ll 1/k\), which we have.} as required, by~(\ref{eq:chomp2}). 
The calculations to show that \(n/(e^3 x)\leq n'\leq e^3 n/x\) and \(D/(e^2 x^k)\leq D'\leq e^2 D/x^k\) are similar, so we omit them.\COMMENT{\(n'\geq n_T^{(-)}\stackrel{(\ref{eq:chomperror})}{\geq} n\exp(-T(\theta + 3\theta^{3/2}))\geq n\exp(-\log x (1+3\sqrt{\theta})) \stackrel{(\ref{eq:chomp2})}{\geq}n/(e^3 x).\) Also \(n'\leq n_T^{(+)}\leq n\exp(-T\theta+2T\theta^{3/2})\leq n\exp(-((\log x)/\theta -1)\theta +2(\log x)\theta^{1/2})\leq n\exp(-\log x+3\sqrt{\theta}\log x) \stackrel{(\ref{eq:chomp2})}{\leq} e^3 n/x,\) where we used \(\theta\leq \sqrt{\theta}\log x\) as in previous comment. Also \(D' \geq D^{(T,-)}\stackrel{(\ref{eq:chomperror})}{\geq} D\exp(-T(k\theta+2\theta^{2/3}))\geq D\exp(-\log x(k+2\sqrt{\theta}))\stackrel{(\ref{eq:chomp2})}{\geq} D/(e^2 x^k)\). Finally \(D'\leq D^{(T,+)} \leq D\exp(-k\theta T +\theta^{3/2}T)\leq D\exp(-k\theta((\log x)/\theta -1) +\sqrt{\theta}\log x)\leq D\exp(-k\log x +2\sqrt{\theta}\log x) \stackrel{(\ref{eq:chomp2})}{\leq} e^2 D/x^k\).}

Suppose instead that there is additionally a family~\(\cT\) and a constant~\(\delta\) satisfying \ref{chomppseudhyplowerbound}--\ref{chomppseudhyppolyremainder}.
For each \(\tau\in\cT\) and \(i\in[T]_0\), set \(m_{\tau}^{(i,+)}=\tau(V(H))(1-\theta+\theta^{3/2})^i\) and \(m_{\tau}^{(i,-)}=\tau(V(H))(1-\theta-\theta^{3/2})^i\).
Clearly each \(\tau\in\cT\) satisfies \(m_{\tau}^{(0,-)}=\tau(V(H))=m_{\tau}^{(0,+)}\), and hypotheses \ref{chomppseudhyplowerbound}--\ref{chomppseudhypmaxinvolvement} ensure the first iteration of Claim~\ref{claim:nibblechomp} proceeds since \(D^{(0)}=D\), \(\eps_{(0)}=\eps\), \(\theta=1/\log^2 D\), and yields that each \(\tau\in\cT\) satisfies \(m_{\tau}^{(1,-)}\leq \tau(V(H_1))\leq m_{\tau}^{(1,+)}\). 
Similarly, if each of the~\(T\) iterations of Claim~\ref{claim:nibblechomp} (recursively making~\(\tau_{i+1}\) the obvious restriction of~\(\tau_{i}\) each time; that is, set \(\tau_0=\tau\) and set~\(\tau_{i+1}\) to be the restriction of~\(\tau_i\) to the domain~\(V(H_{i+1})\)) proceed, then each \(\tau=\tau_0\in\cT\) satisfies\COMMENT{\(m_{\tau}^{(T,-)}=\tau(V(H))(1-\theta-\theta^{3/2})^T=\tau(V(H))(1-(\theta+\theta^{3/2}))^T\geq\tau(V(H)) (1-(\theta+2\theta^{3/2})+(\theta+2\theta^{3/2})^2)^T\geq \tau(V(H))\exp(-T(\theta+2\theta^{3/2}))\geq\tau(V(H))\exp(-(1+2\sqrt{\theta})\log x)=\tau(V(H))/x^{1+2\sqrt{\theta}}\geq \tau(V(H))/e^2 x\). Also \(m_{\tau}^{(T,+)}=\tau(V(H))(1-\theta+\theta^{3/2})^T\leq \tau(V(H))\exp(-T(\theta-\theta^{3/2}))\leq \tau(V(H))\exp(-((\log x)/\theta -1)(\theta-\theta^{3/2}))=\tau(V(H))\exp(-\log x+\theta^{1/2}\log x + \theta -\theta^{3/2})\leq\tau(V(H))\exp(-\log x + 2\sqrt{\theta}\log x)\leq e^2 \tau(V(H))/x\).} \(\tau(V(H))/e^2 x \leq m_{\tau}^{(T,-)}\leq \tau_T(V(H_T))\leq m_{\tau}^{(T,+)}\leq e^2 \tau(V(H))/x\), as required since \(\tau_T(V(H_T))=\tau(V(H_T))\).
To show that the~\(T\) iterations of Claim~\ref{claim:nibblechomp} proceed, we still need to prove (inductively) that:
\begin{enumerate}[(CP{i}1), topsep = 6pt]
\item \(\eps_{(i)}\theta\tau_i(V(H_i))\geq(\max_{v\in V(H_i)}\tau_i(v))\cdot\log^5 (D^{(i)})\) for all \(i\in[T-1]_0, \tau_i\in\cT_i\);\label{situatedpseudsetsize}
\item \(|\{v\in V(H_i)\colon\tau_i(v)>0\}|\leq(D^{(i)})^{\log^{5/4}(D^{(i)})}\) for all \(i\in[T-1]_0, \tau_i\in\cT_i\);\label{situatedpseudfamilysize}
\item Each vertex of~\(H_i\) is in~\(\{v\in V(H_i)\colon\tau_i(v)>0\}\) for at most~\((D^{(i)})^{\log^{5/4}(D^{(i)})}\) of the functions \(\tau_i\in\cT_i\), for each \(i\in[T-1]_0\).\label{situatedpseudinvolvement}
\end{enumerate}
As already noted, the \(i=0\) case of the above holds, so fix \(i\in[T-1]\) and \(\tau\in\cT\).
Recall \(\theta=1/\log^2 D\) and clearly \(\max_{v\in V(H_i)}\tau_i(v)\leq \max_{v\in V(H)}\tau(v)\eqqcolon\tau_{\text{max}}\), so~\ref{situatedpseudsetsize} clearly holds for~\(\tau\) if \(\eps_{(i)}m_{\tau}^{(i,-)}\geq\tau_{\text{max}}\log^7 D\) holds (inductively using \(\tau(V(H_i))\geq m_{\tau}^{(i,-)}\)), but we have \(\eps_{(i)}m_{\tau}^{(i,-)}=\eps(1+\theta)^i \tau(V(H))(1-\theta-\theta^{3/2})^i \geq \eps\tau(V(H))/e^3\), so~\ref{situatedpseudsetsize} follows from~\ref{chomppseudhyplowerbound}.\COMMENT{Notice \((1+\theta)^i (1-\theta-\theta^{3/2})^i \geq (1+\theta(1-\theta)+\theta^{2}(1-\theta)^{2})^i(1-(\theta+2\theta^{3/2})+(\theta+2\theta^{3/2})^2 )^i \geq \exp(i\theta(1-\theta) - i(\theta+2\theta^{3/2}))=\exp(-2i\theta^{3/2}-i\theta^2) \geq \exp(-3T\theta^{3/2})\geq \exp(-3\sqrt{\theta}\log x)\stackrel{(\ref{eq:chomp2})}{\geq} 1/e^3\).}
Moreover, clearly \(|\{v\in V(H_i)\colon\tau_i(v)>0\}|\leq|\{v\in V(H)\colon\tau(v)>0\}|\), and each vertex of~\(H_i\) is in the same number of the sets~\(\{v\in V(H_i)\colon\tau_i(v)>0\}\) for all \(0\leq j\leq i\), so~\ref{situatedpseudfamilysize},~\ref{situatedpseudinvolvement} follow from~\ref{chomppseudhypsupportsize},~\ref{chomppseudhypmaxinvolvement} if \(D^{\log^{6/5}D}\leq(D^{(i)})^{\log^{5/4}(D^{(i)})}\) for all \(i\in[T-1]_0\), which holds since \(D^{(i)}\stackrel{(\ref{eq:chompDilarge})}{\geq} D/e^2x^k \stackrel{\ref{chomppseudhyppolyremainder}}{\geq} D^{\delta}/e^2\geq D^{\delta/2}\), and therefore \((D^{(i)})^{\log^{5/4}(D^{(i)})}\geq D^{(\delta/2)\cdot((\delta/2)^{5/4}\log^{5/4}D)}\geq D^{\log^{6/5}D}\).

It remains only to show that \(\tau(W)\leq 10e^2 \eps\tau(V(H))\log x\) for each \(\tau\in\cT\).
For any such~\(\tau\), from the outputs of each application of Claim~\ref{claim:nibblechomp}, we have
\[
\tau(W)=\sum_{i\in[T-1]_0}\tau(W_i) \leq \sum_{i\in[T-1]_0}10\eps_{(i)}\theta\tau(V(H_i))\leq\sum_{i\in[T-1]_0}10\eps(1+\theta)^i \theta \tau(V(H))(1-\theta+\theta^{3/2})^i,
\]
which is at most~\(10e^2 \eps \tau(V(H))\log x\) as claimed\COMMENT{Indeed the \(e^2\) could be replaced with \(e\) due to one fewer appearance of \(\theta^{3/2}\)}, by the same logic as in~(\ref{eq:chompWsmall}).
\endproof
\section{Exhausting the codegree sequence}\label{section:exhausting}
The aim of this section is to prove Theorem~\ref{theorem:maintheorem}, which we restate now for convenience.

\mainThm*
We note that one can obtain the \(k=1\) case of Theorem~\ref{theorem:maintheorem} via Vizing's Theorem in the case that \(H\) is a simple graph (\(D_2=1\)) and \(\cT=\emptyset\).\COMMENT{If \(\eps<1/D\) then \(G\) is \(D\)-regular so in any optimal colouring there is a colour class with at least \(e(G)/(D+1)=nD/2(D+1)\) edges, covering at least \(nD/(D+1)\) vertices, leaving at most \(n(1-D/(D+1))=n/(D+1)\leq n/D\leq n/\sqrt{D}\) vertices uncovered. But since \(1/\eps > D>\sqrt{D}\) we have \(\min(\sqrt{D},1/\eps)=\sqrt{D}\) so \(B\leq \sqrt{D}\) by hypothesis so \(n/\sqrt{D}\leq n/B\leq nB^{-1+\gamma}\log^A D\) (here used \(B\geq 1\) so \(B^{\gamma}\geq 1\)). So now suppose \(\eps\geq 1/D\), whence \((1+\eps)D+1\leq 1+2\eps D\). Now any optimal colouring has a colour class with at least \(e(G)/((1+\eps)D+1)\geq (1-\eps)Dn/2(1+2\eps)D\geq\frac{n}{2}(1-3\eps)\) edges, thus leaving at most \(3\eps n\) vertices uncovered. If \(\min(\sqrt{D},1/\eps)=1/\eps\geq B\geq 1\), then \(3\eps n\leq 3nB^{-1}\leq nB^{-1+\gamma}\log^A D\). If \(\min(\sqrt{D},1/\eps)=\sqrt{D}\geq B\geq 1\) then \(3\eps n\leq 3n/\sqrt{D}\leq 3nB^{-1}\leq nB^{-1+\gamma}\log^A D\).}
However, since it is unclear how to easily extend this result to the case of multigraphs and \(\cT\neq\emptyset\), we still provide a proof of the \(k=1\) case of Theorem~\ref{theorem:maintheorem} using our machinery (Lemma~\ref{lemma:chomp}).
It is convenient for us to separate the \(k=1\) case, as the strategy is slightly different; specifically the hypotheses of the Chomp Lemma are not the same for \(k=1\) compared to \(k\geq 2\), so a different analysis is needed, and just one application of the Chomp Lemma suffices for \(k=1\), whereas it is not always possible to avoid\COMMENT{Imagine a codegree sequence in which getting the best possible result necessitates collapsing some intermediate codegree ratios and merging the clusters, and then carrying on to make further non-trivial progress. In this case the \(x\) of the Chomp Lemma is bottlenecked by the first codegree ratio to be collapsed, and so clearly we would need multiple applications of the Chomp Lemma in this scenario. If you want good error control in the final result, I think you even need arbitrarily many Chomps, as we do - it wouldn't necessarily suffice to create a different version of MCWA which always identifies the ratio which is currently the bottleneck for \(x\) and sets \(x\) to be that then updates the clusters; there could come a time when \(D/D_2\) is the limiting ratio, and \(D_2\) is in the same cluster as say \(D_3, D_4\), and \(4\in J^{*}\). Then you'd set \(x\) to collapse \(D\) to \(D_2\), after which the process stops. However \(D_2\) didn't drop at all in this application of the Chomp since \(2\notin J^{*}\), and \(D_4\) was in the process of dropping towards \(D_5\) (but didn't reach it as it wasn't the bottleneck ratio). The true MCWA we use would have feathered down each of the \(D_2, D_3, D_4\), keeping \(D_2\) quite close to wherever \(D_4\) drops to by the end of the process, allowing \(D\) to drop non-trivially further (creating a non-trivially larger matching) by the time \(D\) drops to \(D_2\).} using the Chomp Lemma many times for \(k\geq 2\). 

To prove the \(k\geq 2\) case, the rough idea is to apply the Chomp Lemma many times, each with the ``small'' value \(x=B^{\gamma^4}\).
At each timestep, we adjust~\(J^{*}\) to include precisely those indices~\(j\) for which \(D_j/D_{j+1}\) is currently large enough to accommodate putting \(j\in J^{*}\) for a Chomp with this value of~\(x\).
We show that this causes the codegree sequence to behave as a collection of degrading and merging ``clusters''.
This behaviour and the hypothesis on~\(B\) ensure that we may continue the process for essentially~\(\gamma^{-4}\) iterations, which suffices.
For a more in-depth proof sketch, see Section~\ref{section:sketch}.
\lateproof{Theorem~\ref{theorem:maintheorem}}
It suffices to prove Theorem~\ref{theorem:maintheorem} with the inclusion of a family~\(\cT\) as in the second part of the statement as one then obtains the statement without such a~\(\cT\) by setting \(\cT=\emptyset\).
We show that \(A=10/\gamma^4\) suffices.
If \(B\leq \log^{10/\gamma^4}D\) then the theorem is trivial\COMMENT{In this case \(nB^{-1+\gamma}\log^{10/\gamma^4}D \geq nB^{-1}\log^{10/\gamma^4}D\geq n\) (here using the assumption \(B\geq 1\) so that \(B^{\gamma}\geq 1\)), so the empty matching witnesses the first part of the result. Moreover clearly \(\tau(V(H))B^{-1}\leq \tau(V(H))\) (and here is where we had to add the stipulation \(B\geq 1\)) and \(\tau(V(H))B^{-1+\gamma}\log^{A}D\geq \tau(V(H))\), so put \(H'=H\), disjoint from the empty matching \(M\) and the conclusion holds}, so we may assume that
\begin{equation}\label{eq:bisbig}
B\geq\log^{10/\gamma^4}D
\end{equation}
throughout.
We note that we may assume \(D_j\geq 1\) for each \(j\in[k+1]\setminus\{1\}\).\COMMENT{The hypergraph must have at least some edge since \(1/D\ll 1\) and \(B\geq\log D\) implies \(\eps<1/2\), and any \(j\)-subset of this edge witnesses \(D_j\geq C_j(H)\geq 1\).}
Set \(\eps^{*}\coloneqq B^{-1+10k\gamma^{3}}\), and notice that \(\eps^{*}\geq\eps\), so~\(H\) is \((n,D,\eps^{*})\)-regular.

First, assume \(k=1\), so that \(\log^{10/\gamma^4}D\stackrel{(\ref{eq:bisbig})}{\leq} B\leq\min\{\sqrt{D/D_2},1/\eps\}\).
We seek to apply Lemma~\ref{lemma:chomp} to~\(H\) with \(x\coloneqq B^{1-\gamma^2}\), which clearly satisfies \(1/x\ll 1/k=1\) by~(\ref{eq:bisbig}), and with~\(\eps^{*}\) playing the role of~\(\eps\).
We set \(J^{*}=\emptyset\), so we need only check~\ref{chomphyp:trackingdegratio},~\ref{chomphyp:eps}, and~\ref{chomppseudhyplowerbound}--\ref{chomppseudhyppolyremainder}.
To that end, notice that \((\eps^{*})^2 D/D_2 = B^{20k\gamma^3}D/(B^{2}D_2)\geq B^{20k\gamma^3}\geq\log^8 D\) by~(\ref{eq:bisbig}), so~\ref{chomphyp:trackingdegratio} holds.
Further, \(\eps^{*}x = B^{-1+10k\gamma^3 +1-\gamma^2}\leq B^{-\gamma^2 /2}\leq 1/\log^2 D\) by~(\ref{eq:bisbig}), so~\ref{chomphyp:eps} holds.
Fix \(\tau\in\cT\).
With \(\tau_{\text{max}}\coloneqq\max_{v\in V(H)}\tau(v)\), we have
\[
\eps^{*}\tau(V(H))=B^{-1+10k\gamma^3}\tau(V(H))\stackrel{\ref{mainpseud:lowerbound}}{\geq}\tau_{\text{max}}\cdot B^{10k\gamma^3}\stackrel{(\ref{eq:bisbig})}{\geq}\tau_{\text{max}}\cdot e^3 \log^7 D,
\]
so~\ref{chomppseudhyplowerbound} holds.
Further, clearly \(D^{\log D}\leq D^{\log^{6/5}D}\), so \ref{mainpseud:supportsize}--\ref{mainpseud:maxinvolvement} imply \ref{chomppseudhypsupportsize}--\ref{chomppseudhypmaxinvolvement}.
Finally notice \(x^k = B^{1-\gamma^2}\leq D^{1-\gamma^2}\), so~\ref{chomppseudhyppolyremainder} holds with \(\delta\coloneqq \gamma^2\), say.
Then Lemma~\ref{lemma:chomp} yields a matching~\(M\) that covers all but at most \(e^3 n/x + 10e^2 \eps^{*}n\log x \leq e^3 n B^{-1+\gamma^2} + 10e^2 B^{-1+10k\gamma^3}n\log D\leq nB^{-1+\gamma}\log^A D\) vertices of~\(H\), and each \(\tau\in\cT\) satisfies \(\tau(V(H)\setminus V(M))\geq \tau(V(H))/(e^3 x)=\tau(V(H))B^{-1+\gamma^2}/e^3\geq \tau(V(H))B^{-1}\) by~(\ref{eq:bisbig}).
Finally, we have \(\tau(V(H)\setminus V(M))\leq e^3 \tau(V(H))/x +10e^2\eps^{*}\tau(V(H))\log x\), which is at most~\(\tau(V(H))B^{-1+\gamma}\log^A D\).\COMMENT{Compare this to the upper bound on the total number of uncovered vertices above - the expressions are so similar I felt it was OK not to even explicitly mention it}

Now assume \(k\geq 2\).
Set \(\eta\coloneqq\gamma^4\) and \(x\coloneqq B^{\eta}\).
For all \(t\in\mathbb{N}_{0}\) and \(\tau\in\cT\), put

\begin{minipage}{.3\linewidth}
 \begin{eqnarray*}
    n_t^{(-)} &\coloneqq& \frac{n}{(e^3 x)^t},\\
    D^{(t,-)} &\coloneqq& \frac{D}{(e^2 x^k)^t},\\
    m_{\tau}^{(t,-)} &\coloneqq& \frac{\tau(V(H))}{(e^2 x)^t},
  \end{eqnarray*}
\end{minipage}%
\begin{minipage}{.7\linewidth}
  \begin{eqnarray*}
    n_t^{(+)} &\coloneqq& \left(\frac{e^3}{x}\right)^t n,\hspace{20mm}\eps_{(t)} \coloneqq \eps^{*}x^t,\\
    D^{(t,+)} &\coloneqq& \left(\frac{e^2}{x^k}\right)^t D,\\
    m_{\tau}^{(t,+)} &\coloneqq& \left(\frac{e^2}{x}\right)^t \tau(V(H)).
  \end{eqnarray*}
\end{minipage}

Clearly~\((n_t^{(-)}), (n_t^{(+)}), (D^{(t,-)}), (D^{(t,+)}), (m_{\tau}^{(t,-)}), (m_{\tau}^{(t,+)})\) are strictly decreasing\COMMENT{\(x=B^{\eta}\) is at least some polylog \(D\) by~(\ref{eq:bisbig}) so \(e^3 /x \ll 1\)} and~\((\eps_{(t)})\) is strictly increasing.
\begin{claim}\label{claim:tacticalchomp}
Suppose that \(0\leq t<\frac{1}{\eta}-\frac{1}{\gamma^2}\) and~\(H_t\) is a \((n_t, D^{(t)}, \eps_{(t)})\)-regular subhypergraph of~\(H\) for some \(n_{t}^{(-)}\leq n_t \leq n_{t}^{(+)}\) and \(D^{(t,-)}\leq D^{(t)} \leq D^{(t,+)}\), and that \(D_2^{(t)}\geq D_3^{(t)}\geq\dots\geq D_{k+1}^{(t)}\) are numbers satisfying \(C_j(H_t)\leq D_j^{(t)}\) for each \(j\in[k+1]\setminus\{1\}\).
Let \(J_{t}^{*}\subseteq[k]\setminus\{1\}\) be a (permissibly empty) set.
Suppose further that:
\begin{enumerate}[(Ct1), topsep = 6pt]
\item \(\eps_{(t)}^2 \frac{D^{(t,-)}}{D_2^{(t)}} \geq B^{2k\gamma^4}\);\label{tacchompdeghyp}
\item \(\frac{D_j^{(t)}}{D_{j+1}^{(t)}}\geq B^{2k\gamma^4}\,\,\,\text{for all}\,\,j\in J_{t}^{*}\);\label{tacchompcodeghyp}
\item \(m_{\tau}^{(t,-)}\leq \tau(V(H_t))\leq m_{\tau}^{(t,+)}\) for all \(\tau\in\cT\).\label{tacchompsetsizehyp}
\end{enumerate}
Then there is a matching \(M_t\subseteq E(H_t)\) and a set \(W_t\subseteq V(H_t)\) of size \(|W_t|\leq 10e^2 \eps_{(t)}n_t \eta\log B\), together with numbers \(n_{t+1}^{(-)}\leq n_{t+1} \leq n_{t+1}^{(+)}\) and \(D^{(t+1,-)} \leq D^{(t+1)} \leq D^{(t+1,+)}\) such that the hypergraph \(H_{t+1}\coloneqq H_t[V(H_t)\setminus(V(M_t)\cup W_t)]\) is \((n_{t+1}, D^{(t+1)}, \eps_{(t+1)})\)-regular and satisfies \(m_{\tau}^{(t+1,-)}\leq \tau(V(H_{t+1}))\leq m_{\tau}^{(t+1,+)}\) and \(\tau(W_t)\leq 10e^2\eps_{(t)}m_{\tau}^{(t,+)}\eta\log B\) for all \(\tau\in\cT\).
Further, setting \(D_{j}^{(t+1)}\coloneqq D_j^{(t)}\) for all \(j\in[k+1]\setminus(J_t^{*}\cup\{1\})\) and \(D_{j}^{(t+1)}\coloneqq D_j^{(t)}e^2/x^{k-j+1}\) for all \(j\in J_t^{*}\), we have \(C_j(H_{t+1})\leq D_j^{(t+1)}\) for all \(j\in[k+1]\setminus\{1\}\), and \(D_2^{(t+1)}\geq D_3^{(t+1)}\geq\dots\geq D_{k+1}^{(t+1)}\).
\end{claim}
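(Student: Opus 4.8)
The plan is to deduce Claim~\ref{claim:tacticalchomp} from a single application of the Chomp Lemma (Lemma~\ref{lemma:chomp}) to~\(H_t\) with \(x\coloneqq B^{\eta}\), where \(\eps_{(t)}, D^{(t)}, n_t, (D_j^{(t)})_{j}\) and~\(J_t^{*}\) play the roles of \(\eps, D, n, (D_j)_{j}\) and~\(J^{*}\), the family \(\cT_t\coloneqq\{\tau|_{V(H_t)}\colon\tau\in\cT\}\) plays the role of~\(\cT\), and \(\delta\coloneqq 1/2\) plays the role of~\(\delta\). (Since~\(H\) is \((k+1)\)-uniform and~\(H_t\) is an induced subhypergraph of~\(H\), so is~\(H_t\).) The new parameters \(n_{t+1}\) and~\(D^{(t+1)}\) will be the values~\(n'\) and~\(D'\) output by the lemma, and \(\eps_{(t+1)}=\eps_{(t)}x\) is exactly the~\(\eps'\) of the lemma.

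Before checking the hypotheses of Lemma~\ref{lemma:chomp}, I would first establish the key estimate \(D^{(t)}\geq D^{\gamma^2/2}\). Since \(D_{k+1}\geq 1\) we have \(B\leq(D/D_{k+1})^{1/k}\leq D^{1/k}\) (when \(k=2\) instead use \(B\leq\sqrt{D/D_2}\leq D^{1/2}=D^{1/k}\)); as \(t<\tfrac{1}{\eta}-\tfrac{1}{\gamma^2}\) we get \(\eta t<1-\eta/\gamma^2=1-\gamma^2\), so \(x^{kt}=B^{\eta kt}<B^{k(1-\gamma^2)}\leq D^{1-\gamma^2}\). Moreover \((e^2)^t\leq(e^2)^{1/\gamma^4}\leq D^{\gamma^2/2}\) since \(1/D\ll\gamma\). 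Hence \(D^{(t)}\geq D^{(t,-)}=D/(e^2x^k)^t\geq D/(D^{\gamma^2/2}D^{1-\gamma^2})=D^{\gamma^2/2}\), as claimed. In particular \(1/D^{(t)}\ll 1/k\), and \(1/x=B^{-\eta}\ll1/k\) by~(\ref{eq:bisbig}), so the constant hierarchy required by Lemma~\ref{lemma:chomp} is in place (and \(1/D^{(t)}, 1/x\ll\delta=1/2\)).

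Next I would verify~\ref{chomphyp:trackingdegratio}--\ref{chomphyp:eps} and~\ref{chomppseudhyplowerbound}--\ref{chomppseudhyppolyremainder} of Lemma~\ref{lemma:chomp}. Using \(x^{k-2}, x^{k-j+1}, x\leq B^{k\gamma^4}\) and \(2k\gamma^4-k\gamma^4=k\gamma^4\), each of~\ref{chomphyp:trackingdegratio}--\ref{chomphyp:nontrackingcodegratio} reduces, via~\ref{tacchompdeghyp}--\ref{tacchompcodeghyp} and \(D^{(t)}\geq D^{(t,-)}\), to an inequality of the shape \((\text{polylog }D)\leq B^{k\gamma^4}\), which holds by~(\ref{eq:bisbig}) since \(k\gamma^4\cdot(10/\gamma^4)=10k\). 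For~\ref{chomphyp:eps}, \(\eps_{(t)}x=\eps^{*}x^{t+1}=B^{-1+10k\gamma^3}B^{\eta(t+1)}\) with \(\eta(t+1)<1-\gamma^2+\eta<1-\gamma^2/2\), so \(\eps_{(t)}x<B^{-\gamma^2/2+10k\gamma^3}<B^{-\gamma^2/4}\leq1/\log^2 D\) (using \(10k\gamma^3\ll\gamma^2\)). For~\ref{chomppseudhyplowerbound}, set \(\tau_{\text{max}}\coloneqq\max_{v\in V(H)}\tau(v)\); hypothesis~\ref{tacchompsetsizehyp} gives \(\tau(V(H_t))\geq m_\tau^{(t,-)}=\tau(V(H))/(e^2x)^t\), so \(\eps_{(t)}\tau(V(H_t))\geq\eps^{*}\tau(V(H))/(e^2)^t\geq B^{10k\gamma^3}\tau_{\text{max}}/(e^2)^t\) by~\ref{mainpseud:lowerbound}, which exceeds \(e^3\tau_{\text{max}}\log^7 D\) since \(B^{10k\gamma^3}\geq\log^{100k/\gamma}D\) by~(\ref{eq:bisbig}) while \((e^2)^t e^3\log^7 D\leq(e^2)^{1/\gamma^4}e^3\log^7 D\leq\log^8 D\) for~\(D\) large relative to~\(\gamma\). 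Hypotheses~\ref{chomppseudhypsupportsize}--\ref{chomppseudhypmaxinvolvement} follow from~\ref{mainpseud:supportsize}--\ref{mainpseud:maxinvolvement} together with \(D^{(t)}\geq D^{\gamma^2/2}\), which yields \((D^{(t)})^{\log^{6/5}(D^{(t)})}\geq D^{\log D}\); and~\ref{chomppseudhyppolyremainder} is \(x^k=B^{\eta k}\leq D^{\gamma^4}\leq(D^{(t)})^{1/2}\), again using \(D^{(t)}\geq D^{\gamma^2/2}\) and \(\gamma^4\leq\gamma^2/4\).

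Applying Lemma~\ref{lemma:chomp} then yields~\(M_t, W_t\) with \(|W_t|\leq10e^2\eps_{(t)}n_t\log x=10e^2\eps_{(t)}n_t\eta\log B\), together with \(n_{t+1}\coloneqq n'\) and \(D^{(t+1)}\coloneqq D'\) satisfying \(n_t/(e^3x)\leq n_{t+1}\leq e^3n_t/x\) and \(D^{(t)}/(e^2x^k)\leq D^{(t+1)}\leq e^2D^{(t)}/x^k\); feeding in \(n_t^{(\pm)}, D^{(t,\pm)}\) gives \(n_{t+1}^{(-)}\leq n_{t+1}\leq n_{t+1}^{(+)}\) and \(D^{(t+1,-)}\leq D^{(t+1)}\leq D^{(t+1,+)}\). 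The bounds on \(\tau(V(H_{t+1}))=\tau(V(H_t[V(H_t)\setminus(V(M_t)\cup W_t)]))\) and on \(\tau(W_t)\) follow the same way from~\ref{tacchompsetsizehyp} and the weight-function conclusions of Lemma~\ref{lemma:chomp}, and \(C_j(H_{t+1})\leq D_j^{(t+1)}\) is exactly the codegree conclusion of Lemma~\ref{lemma:chomp} with the stated definition of~\(D_j^{(t+1)}\). Finally, I would check \(D_2^{(t+1)}\geq\dots\geq D_{k+1}^{(t+1)}\) by comparing \(D_j^{(t+1)}\) with \(D_{j+1}^{(t+1)}\) according to the membership of~\(j, j+1\) in~\(J_t^{*}\): if neither lies in~\(J_t^{*}\) it is the given monotonicity of \((D_j^{(t)})\); if both do, it reduces to \(D_j^{(t)}/D_{j+1}^{(t)}\geq x\), immediate from~\ref{tacchompcodeghyp}; if \(j\in J_t^{*}\) but \(j+1\notin J_t^{*}\), it reduces to \(D_j^{(t)}/D_{j+1}^{(t)}\geq x^{k-j+1}/e^2\), again from~\ref{tacchompcodeghyp} and \(x^{k-j+1}\leq B^{k\gamma^4}\leq B^{2k\gamma^4}\); and if \(j\notin J_t^{*}\) but \(j+1\in J_t^{*}\) (so \(k-j\geq1\)), it holds since \(e^2/x^{k-j}\leq e^2/x\leq1\). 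The step I expect to be the main obstacle is the polynomial lower bound \(D^{(t)}\geq D^{\gamma^2/2}\): this is the one place where \(t<\tfrac{1}{\eta}-\tfrac{1}{\gamma^2}\) is used essentially, and it is what makes~\ref{chomppseudhyppolyremainder} (hence the pseudorandom conclusion of the Chomp Lemma) available; everything else is careful but routine bookkeeping with the compounding factors \(x^t\) and \((e^2)^t\), which stay bounded precisely because~\(t\) is bounded.
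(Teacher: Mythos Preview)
Your proposal is correct and follows essentially the same approach as the paper: apply Lemma~\ref{lemma:chomp} to~\(H_t\) with \(x=B^{\eta}\), \(\delta=1/2\), and the restricted weight functions~\(\cT_t\), first establishing \(D^{(t)}\geq D^{\gamma^2/2}\) from \(t<1/\eta-1/\gamma^2\) and \(B^k\leq D\), then reducing each of~\ref{chomphyp:trackingdegratio}--\ref{chomphyp:nontrackingcodegratio} to \(B^{k\gamma^4}\geq\text{polylog}(D)\), and finally verifying monotonicity of \((D_j^{(t+1)})_j\) by a case analysis on membership of \(j,j+1\) in~\(J_t^{*}\). The only cosmetic differences are that the paper bounds \(B^k\) via \(D/D_3\) (when \(k=2\)) or \(D/D_{k+1}\) (when \(k\geq 3\)) rather than directly by~\(D\), and it collapses your four monotonicity cases into two by first observing \(D_{j+1}^{(t+1)}\leq D_{j+1}^{(t)}\) regardless of whether \(j+1\in J_t^{*}\).
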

\claimproof
We seek to apply Lemma~\ref{lemma:chomp} to~\(H_t\) with \(n^{(t)}, D^{(t)}, \eps_{(t)}, D_j^{(t)}, J^{*}_t\) playing the roles of \(n, D, \eps, D_j, J^{*}\) respectively, with \(\cT_t=\{\tau_t\colon\tau\in\cT\}\) playing the role of~\(\cT\) where \(\tau_t\) is the restriction of~\(\tau\) to the domain~\(V(H_t)\subseteq V(H)\), and \(\delta\coloneqq 1/2\).
We first check that we may assume \(1/D^{(t)}\ll1/k\).
To that end, notice that\COMMENT{Here, using \(t\leq 1/\gamma^4 - 1/\gamma^2\) so \(e^{2t}\leq\log D\) and \(x^{kt}=B^{k\gamma^4 t}\leq B^{k\gamma^4(1/\gamma^4 - 1/\gamma^2)}=B^{k(1-\gamma^2)}\).}
\begin{equation}\label{eq:tacchomphierarchycheck}
D^{(t)}\geq D^{(t,-)}=\frac{D}{(e^2 x^k)^t}\geq\frac{D}{(\log D)B^{k(1-\gamma^2)}}.
\end{equation}
If \(k=2\), we have\COMMENT{In the \(k=2\) case, the middle expression in the definition of \(B\) contributes no terms, so we use \(B\leq\sqrt{D/D_2}\), then we use the hypothesis \(D_2\geq D_3\)} by hypothesis \(B\leq\sqrt{D/D_2}\leq\sqrt{D/D_3}\), so that \(B^k=B^2\leq D/D_3\), whence the right side of~(\ref{eq:tacchomphierarchycheck}) is at least\COMMENT{Uses \(D_3\geq 1\), as discussed at the start of the proof.} \(D^{\gamma^2}D_3^{1-\gamma^2}/\log D \geq D^{\gamma^2}/\log D\geq D^{\gamma^2 /2}\), whence we may assume \(1/D^{(t)}\ll 1/k\), using the assumption \(1/D\ll\gamma\ll1/k\) of Theorem~\ref{theorem:maintheorem}.
If, instead, \(k\geq 3\), then we use the hypothesis \(B\leq(D/D_{k+1})^{1/k}\) to see that the right side of~(\ref{eq:tacchomphierarchycheck}) is at least \(D^{\gamma^2}D_{k+1}^{1-\gamma^2}/\log D\geq D^{\gamma^2 /2}\), which again suffices,\COMMENT{Used \(D_{k+1}\geq 1\), as discussed at the start of the proof.}\COMMENT{This is where the proof ensures/employs that we can only use at most the total available space in the codegree sequence, i.e. \(D\) is still at least a bit above \(D_{k+1}\), even at the very end.} and we deduce that
\begin{equation}\label{eq:Dtgamma}
D^{(t)}\geq D^{\gamma^2 /2}
\end{equation}
for all \(k\geq 2\).
To check that \(1/x\ll 1/k\), it suffices to notice that \(x=B^{\eta}\geq\log^{10} D\) by~(\ref{eq:bisbig}).

It remains to check \ref{chomphyp:trackingdegratio}--\ref{chomphyp:eps} and \ref{chomppseudhyplowerbound}--\ref{chomppseudhyppolyremainder}.
Since \(k\geq 2\) (and \(x\geq 1\)), we have\COMMENT{Used \(k\geq 1\) and \(D^{(t)}\leq D^{(t,+)}\leq D^{(0,+)}=D\) at the end.}
\begin{eqnarray*}
\frac{\eps_{(t)}^2 D^{(t)}}{D_2^{(t)}}\geq \frac{\eps_{(t)}^2 D^{(t,-)}}{D_2^{(t)}x^{k-2}}\geq \frac{\eps_{(t)}^2 D^{(t,-)}}{D_2^{(t)}x^k}=\frac{\eps_{(t)}^2 D^{(t,-)}}{D_2^{(t)}B^{k\gamma^4}} \stackrel{\ref{tacchompdeghyp}}{\geq} B^{k\gamma^4}\stackrel{(\ref{eq:bisbig})}{\geq}\log^{10} D\geq\log^8 D^{(t)},
\end{eqnarray*}
so~\ref{chomphyp:trackingdegratio} and~\ref{chomphyp:nontrackingdegratio} hold (whether \(2\in J_t^{*}\) or otherwise).\COMMENT{\(k\geq 2\) by assumption. Then we simply ensure the LHS of~\ref{chomphyp:trackingdegratio} and~\ref{chomphyp:nontrackingdegratio} are both at least \(\log^8 D\), whether \(2\in J^{*}\) or not. We get to do this because we do a small Chomp each time, so even \(x^k\) still fits in the space that remains between \(D\) and \(D_2\).}
Similarly, fix \(j\in J^{*}_t\),\COMMENT{(and~\ref{chomphyp:trackingcodegratio} and~\ref{chomphyp:nontrackingcodegratio} hold vacuously if there are no \(j\in J_t^{*}\))} and observe that\COMMENT{First inequality uses \(j\leq k\) by definition of \(J^{*}\), so that \(k-j+1\geq 1\). Second inequality uses \(j\geq 1\), (we actually have \(j\geq 2\)) so that \(k-j+1\leq k\).}
\[
\frac{D_j^{(t)}}{xD_{j+1}^{(t)}}\geq\frac{D_j^{(t)}}{x^{k-j+1}D_{j+1}^{(t)}}\geq\frac{D_j^{(t)}}{x^{k}D_{j+1}^{(t)}}=\frac{D_j^{(t)}}{B^{k\gamma^4}D_{j+1}^{(t)}}\stackrel{\ref{tacchompcodeghyp}}{\geq} B^{k\gamma^4}\stackrel{(\ref{eq:bisbig})}{\geq}\log^{10} D\geq\log^{10} D^{(t)},
\]
so that~\ref{chomphyp:trackingcodegratio} and~\ref{chomphyp:nontrackingcodegratio} hold (whether \(j+1\in J_t^{*}\) or otherwise).
Moreover,\COMMENT{Used \(1\leq 1/(2\gamma^3) \Leftrightarrow \gamma^3\leq1/2\). Used \(10k\gamma^3\leq\gamma^2/4 \Leftrightarrow \gamma\leq 1/40k\).}
\[
\eps_{(t)}x=\eps^{*}x^{t+1}\leq\eps^{*}x^{1/\gamma^4 - 1/(2\gamma^2)}=B^{-1+10k\gamma^3}\cdot B^{1-\gamma^2/2}\leq B^{-\gamma^2/4}\stackrel{(\ref{eq:bisbig})}{\leq} \frac{1}{\log^2 D}\leq\frac{1}{\log^2 D^{(t)}},
\]
so~\ref{chomphyp:eps} holds.
For each \(\tau\in\cT\), setting \(\tau_{\text{max}}\coloneqq\max_{v\in V(H)}\tau(v)\), we have
\begin{eqnarray*}
\eps_{(t)}\tau_t(V(H_t)) & \stackrel{\ref{tacchompsetsizehyp}}{\geq} & \eps_{(t)}m_{\tau}^{(t,-)} =\frac{\eps^{*}x^t \tau(V(H))}{e^{2t}x^t} = \frac{\tau(V(H))B^{10k\gamma^3}}{e^{2t}B}\stackrel{\ref{mainpseud:lowerbound}}{\geq}\frac{\tau_{\text{max}}\cdot B^{10k\gamma^3}}{e^{2t}}\\ & \stackrel{(\ref{eq:bisbig})}{\geq} & \tau_{\text{max}}\cdot e^3 \log^7 (D^{(t)}),
\end{eqnarray*}
so~\ref{chomppseudhyplowerbound} holds.
Observe that
\begin{equation}\label{eq:DlogDbound}
(D^{(t)})^{\log^{6/5}(D^{(t)})} \stackrel{(\ref{eq:Dtgamma})}{\geq} D^{(\gamma^2 /2)\cdot(\gamma^2 /2)^{6/5}\log^{6/5}D}\geq D^{\log D}.
\end{equation}
Since \(|\{v\in V(H_t)\colon\tau_t(v)>0\}|\leq|\{v\in V(H)\colon\tau(v)>0\}|\) for all \(\tau\in\cT\) and each \(u\in V(H)\) is in the same number of sets \(\{v\in V(H_t)\colon\tau_t(v)>0\}\) as \(\{v\in V(H_{t'})\colon\tau_{t'}(v)>0\}\) for all \(0\leq t'\leq t\), it is clear from~(\ref{eq:DlogDbound}) that \ref{mainpseud:supportsize}--\ref{mainpseud:maxinvolvement} imply \ref{chomppseudhypsupportsize}--\ref{chomppseudhypmaxinvolvement}.
Finally, \(x^k = B^{k\gamma^4}\leq (D/D_2)^{(1/2)k\gamma^4}\leq D^{(1/2)k\gamma^4}\stackrel{(\ref{eq:Dtgamma})}{\leq} (D^{(t)})^{k\gamma^2}\leq (D^{(t)})^{1/2}=(D^{(t)})^{1-\delta}\), so~\ref{chomppseudhyppolyremainder} holds.

Lemma~\ref{lemma:chomp} yields a matching~\(M_t\) and a set~\(W_t\subseteq V(H_t)\) satisfying \(|W_t|\leq 10e^2 \eps_{(t)}n_t\eta\log B\), as claimed.\COMMENT{\(\log x = \log(B^{\eta})=\eta\log B\).}
Set \(n_{t+1}, D^{(t+1)}\) to be \(n', D'\), respectively, from the conclusion of Lemma~\ref{lemma:chomp}, so that \(H_{t+1}=H_t[V(H_t)\setminus(V(M_t)\cup W_t)]\) is \((n_{t+1}, D^{(t+1)}, \eps_{(t)}x)\)-regular (and note that \(\eps_{(t)}x=\eps_{(t+1)}\)).
Then \(n_{t+1}\leq e^3 n_t/x\leq e^3 n_t^{(+)}/x = n_{t+1}^{(+)}\), and similarly \(n_{t+1}\geq n_{t+1}^{(-)}\), and \(D^{(t+1,-)}\leq D^{(t+1)}\leq D^{(t+1,+)}\), and \(m_{\tau}^{(t+1,-)}\leq \tau(V(H_{t+1}))\leq m_{\tau}^{(t+1,+)}\) for all \(\tau\in\cT\).\COMMENT{\(n_{t+1}\geq n_t/(e^3 x) \geq n_t^{(-)}/(e^3 x) = n_{t+1}^{(-)}\). \(D^{(t+1)}\geq D^{(t)}/(e^2 x^k)\geq D^{(t,-)}/(e^2 x^k)=D^{(t+1,-)}\). \(D^{(t+1)}\leq e^2 D^{(t)}/x^k \leq e^2 D^{(t,+)}/x^k = D^{(t+1,+)}\). \(\tau(V(H_{t+1}))\geq m_{\tau}^{(t,-)}/e^2 x = m_{\tau}^{(t+1,+)}\) and same for the upper bound.}
Moreover, Lemma~\ref{lemma:chomp} yields \(\tau(W_t)=\tau_t(W_t)\leq 10e^2 \eps_{(t)}m_{\tau}^{(t,+)}\eta\log B\) for all \(\tau\in\cT\), here using~\ref{tacchompsetsizehyp}.
Further, clearly \(C_j(H_{t+1})\leq D_j^{(t)}=D_{j}^{(t+1)}\) for all \(j\in[k+1]\setminus(J_t^{*}\cup\{1\})\), since \(H_{t+1}\subseteq H_t\).
For \(j\in J_t^{*}\), Lemma~\ref{lemma:chomp} yields \(C_j(H_{t+1})\leq e^2 D_j^{(t)}/x^{k-j+1}=D_{j}^{(t+1)}\).

It remains only to check that \(D_2^{(t+1)}\geq D_3^{(t+1)}\geq\dots\geq D_{k+1}^{(t+1)}\).
To that end, fix \(j\in[k]\setminus\{1\}\).
Notice that \(D_{j+1}^{(t+1)}\leq D_{j+1}^{(t)}\) (whether \(j+1\in J_t^{*}\) or otherwise)\COMMENT{They're equal if \(j+1\notin J_t^{*}\). Notice \(k+1\) is never in \(J_t^{*}\) by definition of \(J_t^{*}\), so if we're assuming \(j+1\in J_t^{*}\), this means \(j+1\leq k\), i.e. \(j\leq k-1\), so \(D_{j+1}^{(t+1)}=e^2 D_{j+1}^{(t)}/x^{k-j} \leq D_{j+1}^{(t)}\), since \(j\leq k-1\) and \(x=\omega(1)\).}.
If \(j\notin J_t^{*}\) then \(D_j^{(t+1)}=D_j^{(t)}\geq D_{j+1}^{(t)}\geq D_{j+1}^{(t+1)}\), where we used the hypothesis \(D_j^{(t)}\geq D_{j+1}^{(t)}\).
If \(j\in J_t^{*}\), then\COMMENT{The first inequality holds with equality if \(j+1\notin J_t^{*}\), and otherwise we simply use \(D_{j+1}^{(t+1)}\leq D_{j+1}^{(t)}\). Used \(e^2\geq 1\), \(k-j+1\leq k \Leftrightarrow j\geq 1\).}
\[
\frac{D_{j}^{(t+1)}}{D_{j+1}^{(t+1)}}\geq\frac{e^2 D_{j}^{(t)}}{x^{k-j+1}D_{j+1}^{(t)}}\stackrel{\ref{tacchompcodeghyp}}{\geq} \frac{e^2 B^{2k\gamma^4}}{x^{k-j+1}}\geq \frac{B^{2k\gamma^4}}{x^k}=B^{k\gamma^4}\stackrel{(\ref{eq:bisbig})}{\geq}1,
\]
which completes the proof of the claim.
\endclaimproof
We now introduce an algorithm we call MINI-CHOMP-WHERE-ABLE, or MCWA for short.
Roughly speaking, the algorithm will begin with~\(H\), and in each timestep will identify which \(j\in[k]\setminus\{1\}\) are such that \(D_j^{(t)}/D_{j+1}^{(t)}\) is large enough to include \(j\in J_t^{*}\) for a ``Chomp'' with the ``small'' value \(x=B^{\eta}\) (or more accurately, to apply Claim~\ref{claim:tacticalchomp}).
The algorithm then applies Claim~\ref{claim:tacticalchomp} to obtain~\(H_{t+1}, M_t, W_t\) satisfying the obvious bounds, and produces the values \(D_j^{(t+1)}\) accordingly (depending on which~\(j\) are in~\(J_t^{*}\)).
\vspace{3mm}

\noindent\textbf{Algorithm} (MINI-CHOMP-WHERE-ABLE)
\newline\noindent \underline{Input}: The hypergraph~\(H\eqqcolon H_0\), which is \((n_0\coloneqq n, D^{(0)}\coloneqq D, \eps_{(0)})\)-regular (and notice that \(n_0^{(-)}=n_0=n_0^{(+)}\) and \(D^{(0,-)}=D^{(0)}=D^{(0,+)}\)), and the family~\(\cT\) of functions \(\tau\colon V(H)\rightarrow\mathbb{R}_{\geq0}\) as in the statement of Theorem~\ref{theorem:maintheorem} (and notice that each \(\tau\in\cT\) satisfies \(m_{\tau}^{(0,-)}=\tau(V(H_0))= m_{\tau}^{(0,+)}\)).
Set \(t\coloneqq 0\) and \(D_j^{(0)}\coloneqq D_j\) for all \(j\in[k+1]\setminus\{1\}\), so that \(C_j(H_0)\leq D_j^{(0)}\) for all such~\(j\) and \(D_2^{(0)}\geq D_3^{(0)}\geq\dots\geq D_{k+1}^{(0)}\).
\newline\noindent \underline{Step 1}: If \(t=\lflr\frac{1}{\eta}-\frac{1}{\gamma^2}\rflr\), then output \(t^{*}\coloneqq t\) and TERMINATE.
Else, go to Step 2.
\newline\noindent \underline{Step 2}: Set \(J_t^{*}\coloneqq \{j\in[k]\setminus\{1\}\colon D_j^{(t)}/D_{j+1}^{(t)}\geq B^{2k\gamma^4}\}\).
If any of~\ref{tacchompdeghyp},~\ref{tacchompcodeghyp}, and~\ref{tacchompsetsizehyp} are not satisfied (with this choice of~\(J_t^{*}\)), then output \(t^{*}\coloneqq t\) and TERMINATE.
Else, go to Step 3.
\newline\noindent \underline{Step 3}: (Apply Claim~\ref{claim:tacticalchomp} to) Obtain a matching \(M_t\subseteq E(H_t)\) and a set \(W_t\subseteq V(H_t)\) of size \(|W_t|\leq 10e^2\eps_{(t)}n_t \eta\log B\), together with numbers~\(n_{t+1}\) and~\(D^{(t+1)}\) satisfying \(n_{t+1}^{(-)}\leq n_{t+1}\leq n_{t+1}^{(+)}\) and \(D^{(t+1,-)}\leq D^{(t+1)}\leq D^{(t+1,+)}\) such that \(H_{t+1}\coloneqq H_{t}[V(H_t)\setminus(V(M_t)\cup W_t)]\) is \((n_{t+1}, D^{(t+1)}, \eps_{(t+1)})\)-regular, further satisfying \(m_{\tau}^{(t+1,-)}\leq \tau(V(H_{t+1}))\leq m_{\tau}^{(t+1,+)}\) and \(\tau(W_t)\leq 10e^2 \eps_{(t)}m_{\tau}^{(t,+)}\eta\log B\) for all \(\tau\in\cT\) and \(C_j(H_{t+1})\leq D_j^{(t+1)}\) for all \(j\in[k+1]\setminus\{1\}\), where \(D_j^{(t+1)}\coloneqq D_j^{(t)}\) for all \(j\in[k+1]\setminus(J_t^{*}\cup\{1\}),\) and \(D_{j}^{(t+1)}\coloneqq e^2 D_j^{(t)}/x^{k-j+1}\) for all \(j\in J_t^{*}\), and \(D_2^{(t+1)}\geq D_3^{(t+1)}\geq\dots\geq D_{k+1}^{(t+1)}\).
Update \(t\coloneqq t+1\) and go to Step 1.
\vspace{3mm}

We aim to show\COMMENT{This was originally in the main body, but it's effectively proven after the observations, so I reduce it to the comments as a sanity check: In particular, with \(t=0<\lfloor1/\eta - 1/\gamma^2\rfloor\) and the given construction of~\(J_0^{*}\), it is clear that MCWA only terminates with \(t^{*}=0\) if~\ref{tacchompdeghyp} fails when \(t=0\). However, since \(B\leq\sqrt{D/D_2}\) by hypothesis, we have \(\eps_{(0)}^2 D^{(0,-)}/D_2^{(0)}=(\eps^{*})^2 D/D_2=B^{-2+20k\gamma^3}D/D_2 \geq B^{20k\gamma^3}\geq B^{2k\gamma^4}\), so that~\ref{tacchompdeghyp} holds when \(t=0\), so MCWA does not terminate in the first step.} that MCWA terminates with \(t^{*}=\lflr 1/\eta - 1/\gamma^2\rflr\), i.e.\ it never terminates due to the failure of~\(H_t\) to satisfy~\ref{tacchompdeghyp}--~\ref{tacchompsetsizehyp} (the remaining hypotheses of Claim~\ref{claim:tacticalchomp} are clearly ensured by the initial conditions in the timestep \(t=0\), and ensured by the output of timestep~\(t-1\), Step~3, for other values of~\(t\)).
To that end, notice that the construction of~\(J_t^{*}\) ensures that~\ref{tacchompcodeghyp} can never fail, and~\ref{tacchompsetsizehyp} does not fail in the timestep \(t=0\), and is then ensured for timestep~\(t+1\) by the output of timestep~\(t\), Step 3, for all \(t\leq\lflr1/\eta -1/\gamma^2 \rflr\).
Moreover,~\(\eps_{(t)}\) and~\(D^{(t,-)}\) are clearly defined functions of \(\eps^{*}, D, t, B\), so to understand whether~\ref{tacchompdeghyp} is satisfied, it remains only to understand the evolution of~\(D_2^{(t)}\), which in turn depends on the evolution of the other codegrees \(D_j^{(t)}\).
To study the evolution of these parameters, we introduce the notions of semi-stuck and super-stuck indices, active and dormant clusters, and slowpoke indices, which we define now.

For \(t\in[t^{*}]_0\), we say that \(j\in[k]\setminus\{1\}\) is a \textit{super-stuck} index (at time~\(t\)) if \(D_j^{(t)}< D_{j+1}^{(t)}B^{2k\gamma^{4}}\).
We write~\(S_t\) for the set of super-stuck indices at time~\(t\).
Note that, by construction, \(J_t^{*}=\{2,3,\dots,k\}\setminus S_t\).
We say that~\(j\in[k]\setminus\{1\}\) is \textit{semi-stuck} (at time~\(t\)) if \(D_j^{(t)}<D_{j+1}^{(t)}B^{\gamma^3}\).
Clearly, if~\(j\) is super-stuck, then it is semi-stuck.
Notice that the set~\(\{2,3,\dots,k+1\}\) is partitioned into a set~\(\cC_t\) of subsets of consecutive indices, where the largest index \(j\in C\in\cC_t\) is not semi-stuck at time~\(t\), but all other indices in~\(C\) are semi-stuck at time~\(t\).
(In particular,~\(k+1\) is not semi-stuck by definition.)
For \(t\in[t^{*}]_0\) and \(j\in\{2,3,\dots,k+1\}\), we write \(C_t(j)\) for the element of~\(\cC_t\) containing~\(j\).
We call the elements of~\(\cC_t\) \textit{clusters}.
~\(C_t(k+1)\) is the \textit{dormant} cluster, and all other clusters are \textit{active} clusters.
Indices in a dormant (respectively active) cluster are \textit{dormant} (\textit{active}) \textit{indices}.
The largest index in a cluster (not the largest codegree, which will correspond to the smallest index since we maintain \(D_2^{(t)}\geq D_3^{(t)}\geq\dots\geq D_{k+1}^{(t)}\)) is called the \textit{slowpoke} (of that cluster, at that time~\(t\)).
Write~\(Z_t\) for the set of slowpokes at time~\(t\).
For an index \(j\in\{2,3,\dots,k+1\}\), we write \(z_t(j)\) for the slowpoke of~\(C_t(j)\).
We now make five simple observations about MCWA.
\begin{observation}\label{obsclustererror}
\(D_j^{(t)}\leq D_{z_t(j)}^{(t)}B^{k\gamma^3}\) for all \(j\in\{2,3,\dots,k+1\}\) and \(t\in[t^{*}]_0\).
\end{observation}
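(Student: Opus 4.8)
The plan is to prove Observation~\ref{obsclustererror} by chaining together the ``semi-stuck'' inequalities along the cluster containing~\(j\). Fix \(t\in[t^*]_0\) and \(j\in\{2,3,\dots,k+1\}\), and write \(z\coloneqq z_t(j)\) for the slowpoke of the cluster \(C_t(j)\). If \(j=z\), then the claimed bound is immediate since \(B\geq 1\) (indeed \(B\geq\log^{10/\gamma^4}D\) by~(\ref{eq:bisbig})), so we may assume \(j<z\).

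The key steps, in order, are as follows. First, recall from the definition of the partition~\(\cC_t\) into clusters that every index of \(C_t(j)\) other than the slowpoke~\(z\) is semi-stuck at time~\(t\); in particular each of \(j,j+1,\dots,z-1\) is semi-stuck, i.e.\ \(D_i^{(t)}<D_{i+1}^{(t)}B^{\gamma^3}\) for all \(j\leq i\leq z-1\). Second, chain these \(z-j\) inequalities together (telescoping the codegree factors) to obtain
\[
D_j^{(t)}<D_z^{(t)}\,B^{(z-j)\gamma^3}.
\]
Third, observe that since \(j,z\in\{2,3,\dots,k+1\}\) with \(j\leq z\), we have \(z-j\leq(k+1)-2=k-1<k\), so that \(B^{(z-j)\gamma^3}\leq B^{k\gamma^3}\) (using \(B\geq 1\)). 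Combining, we get \(D_j^{(t)}\leq D_{z_t(j)}^{(t)}B^{k\gamma^3}\), as required.

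I do not expect any genuine obstacle here: this is an elementary consequence of the definitions, and the only points requiring a little care are the trivial edge case \(j=z\) (where the telescoping product is empty) and the bound \(z-j\leq k-1\) on the width of a cluster, which follows simply from the fact that the index set \(\{2,\dots,k+1\}\) has size~\(k\). It is also worth noting that the inequalities used are available for every \(t\in[t^*]_0\), since the partition~\(\cC_t\) and the semi-stuck property are defined directly in terms of the quantities \(D_i^{(t)}\), which are well-defined outputs of MCWA at each such step; no further structural properties of the algorithm are needed for this observation.
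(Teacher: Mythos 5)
Your proof is correct and follows essentially the same argument as the paper: identify that every index from $j$ up to $z_t(j)-1$ is semi-stuck by the definition of the cluster partition, telescope the resulting $z_t(j)-j$ inequalities to get $D_j^{(t)} < D_{z_t(j)}^{(t)} B^{(z_t(j)-j)\gamma^3}$, and bound $z_t(j)-j \leq k-1 < k$. The only difference from the paper is that you spell out the edge case $j=z_t(j)$ and the width bound explicitly, which the paper leaves as "clear."
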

\obsproof
The result is clear if \(j=z_t(j)\).
Otherwise, since~\(j\) and~\(z_t(j)\) are in the same cluster of~\(\cC_t\) and \(z_t(j)\geq j\), we have that the indices \(j,j+1,\dots,z_t(j)-1\) are all semi-stuck, from which the observation follows.\COMMENT{\(D_j(t)<D_{j+1}^{(t)}B^{\gamma^3}\) since \(j\) is semi-stuck by the definition of the clusters, and similarly each ratio is at most~\(B^{\gamma^3}\). There are at most \(k\) of these consecutive ratios (in fact at most \(k-1\) since the largest possible cluster is \(\{2,3,\dots,k+1\}\)).}
\endclaimproof
\begin{observation}\label{obsclustershare}
If \(t\in[t^{*}-1]_0\) and \(j+1\in C_t(j)\), then \(j+1\in C_{t+1}(j)\).
\end{observation}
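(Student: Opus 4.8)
The plan is to reduce the statement to the single assertion that \emph{if $j$ is semi-stuck at time $t$ then $j$ is semi-stuck at time $t+1$}, and then to verify this by a short case split on whether $j\in J_t^{*}$. First I would observe that since $j+1\in C_t(j)$ and $j<j+1$, index $j$ is not the largest index of its cluster at time $t$, hence not its slowpoke, hence $j$ is semi-stuck at time $t$: $D_j^{(t)}<D_{j+1}^{(t)}B^{\gamma^3}$. Conversely, because each cluster of $\cC_{t+1}$ is an interval of consecutive indices whose largest member is its (unique non-semi-stuck) slowpoke, the cluster of $j$ at time $t+1$ has more than one element exactly when $j$ is semi-stuck at time $t+1$, and in that case it contains $j+1$; so $j+1\in C_{t+1}(j)$ if and only if $j$ is semi-stuck at time $t+1$. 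Thus it suffices to propagate semi-stuckness one step.

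Next I would recall the codegree update rule from Step~3 of MCWA (equivalently, the conclusion of Claim~\ref{claim:tacticalchomp}): $D_i^{(t+1)}=D_i^{(t)}$ for $i\notin J_t^{*}$ (i.e.\ $i$ super-stuck, which includes $i=k+1$), while $D_i^{(t+1)}=e^2D_i^{(t)}/x^{k-i+1}$ for $i\in J_t^{*}$, where $x=B^{\gamma^4}$ and, by~(\ref{eq:bisbig}), $x\geq\log^{10}D\geq e^2$. The two quantitative inputs I will use are $x\geq e^2$ and $3k\gamma^4\leq\gamma^3$, the latter holding since $\gamma\ll 1/k$.

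For the case $j\in J_t^{*}$: here $D_j$ gets divided by $x^{k-j+1}/e^2$, while $D_{j+1}$ gets divided by $1$ if $j+1\notin J_t^{*}$ and by $x^{k-j}/e^2$ if $j+1\in J_t^{*}$; in both subcases the divisor applied to $D_{j+1}$ is at most $x^{k-j+1}/e^2$ (using $x\geq e^2$ and $j\le k$), so $D_j^{(t+1)}/D_{j+1}^{(t+1)}\leq D_j^{(t)}/D_{j+1}^{(t)}<B^{\gamma^3}$ and $j$ remains semi-stuck (this is the ``$D_j$ closes the gap'' phenomenon from the sketch). For the case $j\notin J_t^{*}$: then $j$ is super-stuck at time $t$, so $D_j^{(t)}<D_{j+1}^{(t)}B^{2k\gamma^4}$ and $D_j^{(t+1)}=D_j^{(t)}$, whereas $D_{j+1}^{(t+1)}\geq e^2 D_{j+1}^{(t)}/x^{k}$ regardless of whether $j+1\in J_t^{*}$ (using $k-j\leq k$ and $x^k\geq e^2$). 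Hence $D_j^{(t+1)}/D_{j+1}^{(t+1)}\leq (D_j^{(t)}/D_{j+1}^{(t)})\,x^k/e^2<B^{2k\gamma^4}B^{k\gamma^4}/e^2=B^{3k\gamma^4}/e^2\leq B^{\gamma^3}$, so $j$ is again semi-stuck at time $t+1$.

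The computation is entirely routine; the only subtlety worth flagging is the super-stuck case, where the ratio $D_j/D_{j+1}$ genuinely can grow (by up to a factor $x^k$) because $D_j$ is frozen while $D_{j+1}$ keeps shrinking. This is precisely why the argument must exploit the slack between the semi-stuck threshold $B^{\gamma^3}$ and the strictly smaller super-stuck threshold $B^{2k\gamma^4}$, which is large enough to absorb the factor $x^k=B^{k\gamma^4}$ thanks to $3k\gamma^4\leq\gamma^3$. No other step presents any difficulty.
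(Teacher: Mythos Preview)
Your proof is correct and takes essentially the same approach as the paper: both first observe that $j+1\in C_t(j)$ is equivalent to $j$ being semi-stuck at time $t$, then verify semi-stuckness propagates by case analysis on membership in $J_t^{*}$, using the super-stuck threshold $B^{2k\gamma^4}$ to absorb the possible growth factor $x^k=B^{k\gamma^4}$ in the case $j\notin J_t^{*}$. The only cosmetic difference is that the paper splits into four cases (on the pair $(j,j+1)$) while you split into two cases on $j$ alone and bound $D_{j+1}^{(t+1)}$ uniformly over the two subcases; the arithmetic is identical.
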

\obsproof
Since \(j+1\in C_t(j)\), we deduce from the definition of~\(\cC_t\) that~\(j\) is semi-stuck, so \(D_j^{(t)}<D_{j+1}^{(t)}B^{\gamma^3}\).
If \(j,j+1\notin J_t^{*}\), then \(D_j^{(t)}=D_{j}^{(t+1)}\) and \(D_{j+1}^{(t)}=D_{j+1}^{(t+1)}\) so clearly \(D_{j}^{(t+1)}<D_{j+1}^{(t+1)}B^{\gamma^3}\), so~\(j\) is still semi-stuck at time~\(t+1\), so that \(j+1\in C_{t+1}(j)\).
If \(j\in J_t^{*}, j+1\notin J_t^{*}\), then clearly \(D_{j+1}^{(t+1)}=D_{j+1}^{(t)}\) and \(D_j^{(t+1)}<D_j^{(t)}\) so the observation follows in this case also.\COMMENT{\(D_j^{(t+1)}<D_j^{(t)}<D_{j+1}^{(t)}B^{\gamma^3}=D_{j+1}^{(t+1)}B^{\gamma^3}\).}
If \(j,j+1\in J_t^{*}\), then
\[
\frac{D_j^{(t+1)}}{D_{j+1}^{(t+1)}}=\frac{e^2 D_j^{(t)}}{x^{k-j+1}}\cdot\frac{x^{k-j}}{e^2 D_{j+1}^{(t)}}=\frac{D_j^{(t)}}{xD_{j+1}^{(t)}}<\frac{D_j^{(t)}}{D_{j+1}^{(t)}}<B^{\gamma^3},
\]
so again~\(j\) is semi-stuck at time~\(t+1\), so \(j+1\in C_t(j)\).
Finally, if \(j\notin J^{*}_t\), \(j+1\in J_t^{*}\), then~\(j\) is super-stuck at time~\(t\) by construction of~\(J_t^{*}\), so \(D_{j}^{(t)}<D_{j+1}^{(t)}B^{2k\gamma^4}\), whence
\[
\frac{D_j^{(t+1)}}{D_{j+1}^{(t+1)}}=\frac{D_j^{(t)}x^{k-j}}{e^2 D_{j+1}^{(t)}}\leq\frac{D_{j}^{(t)}x^k}{D_{j+1}^{(t)}}=\frac{D_j^{(t)}}{D_{j+1}^{(t)}}B^{k\gamma^4}<B^{\gamma^3},
\]
which completes the proof of the observation.
\endclaimproof
As an aside, we remark that Observation~\ref{obsclustershare} shows that clusters never lose elements.
Once any pair of indices \(j<j'\) are in a cluster together, (the definition of the clusters means~\(\{j,j+1,\dots,j'-1\}\) are currently semi-stuck and so) they remain always in a cluster together, so the clusters~\(\cC_t\) remain the same until some future timestep in which possibly two (consecutive) clusters (fully) ``merge'', so~\(|\cC_t|\) can only decrease over time.
\begin{observation}\label{obsalwaysslowpoke}
If \(t\in [t^{*}]_0\) and \(j\in Z_t\), then \(j\in Z_{t'}\) for all \(0\leq t'\leq t\).
\end{observation}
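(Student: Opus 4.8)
The plan is to deduce Observation~\ref{obsalwaysslowpoke} from Observation~\ref{obsclustershare} by a trivial downward induction on $t$. It suffices to prove that for every $t\in[t^{*}]$ and every $j\in Z_t$ we have $j\in Z_{t-1}$; iterating this statement then yields $j\in Z_{t'}$ for all $0\leq t'\leq t$, and the case $t=0$ is vacuous. The substantive point is the ``clusters never lose elements'' phenomenon recorded in (the remark after) Observation~\ref{obsclustershare}, translated into the language of slowpokes.

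First I would dispose of the index $j=k+1$: since every cluster in $\cC_t$ is a subset of $\{2,3,\dots,k+1\}$, the cluster $C_t(k+1)$ always has $k+1$ as its largest element, so $k+1\in Z_t$ for every $t\in[t^{*}]_0$ and there is nothing to prove. So assume $j\in[k]\setminus\{1\}$ and $j\in Z_t$. The key elementary translation is that, because each cluster is a set of \emph{consecutive} indices, $j$ is the largest index of $C_t(j)$ if and only if $j+1\notin C_t(j)$ (if $C_t(j)$ contained some index larger than $j$ it would, being an interval of integers containing $j$, also contain $j+1$); thus $j\in Z_t$ is equivalent to $j+1\notin C_t(j)$. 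I would then argue by contraposition: if $j\notin Z_{t-1}$, then $j+1\in C_{t-1}(j)$; since $1\leq t\leq t^{*}$ we have $t-1\in[t^{*}-1]_0$, so Observation~\ref{obsclustershare} applied with $t-1$ in place of $t$ gives $j+1\in C_t(j)$, contradicting $j+1\notin C_t(j)$. Hence $j\in Z_{t-1}$, completing the inductive step.

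I do not expect a real obstacle here: the only content beyond invoking Observation~\ref{obsclustershare} is the harmless book-keeping identification of ``$j$ is a slowpoke at time $t$'' with ``$j+1\notin C_t(j)$'' for $j\leq k$ (and with an always-true statement for $j=k+1$), plus checking the index range $t-1\in[t^{*}-1]_0$ needed to apply Observation~\ref{obsclustershare}. The conceptual work — that once an index is absorbed into a cluster together with a larger index it stays absorbed, so a slowpoke at time $t$ was already a slowpoke at every earlier time — has already been done in Observation~\ref{obsclustershare}.
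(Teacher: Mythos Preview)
Your proposal is correct and is essentially the same argument as the paper's: both reduce to Observation~\ref{obsclustershare} via the equivalence ``$j$ is a slowpoke at time $t$'' $\Leftrightarrow$ ``$j+1\notin C_t(j)$'' (for $j\leq k$), with the paper phrasing it as a direct contradiction (assume $j\notin Z_{t'}$ for some $t'<t$, then iterate Observation~\ref{obsclustershare} $t-t'$ times to get $j+1\in C_t(j)$) rather than your downward induction. Your explicit treatment of $j=k+1$ is a minor addition the paper leaves implicit.
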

\obsproof
Suppose for a contradiction that \(j\notin Z_{t'}\) for some \(0\leq t'<t\).
Then~\(j+1\in C_{t'}(j)\) by the definition of the slowpokes.
Inductively applying Observation~\ref{obsclustershare}~\(t-t'\) times, we obtain \(j+1\in C_t(j)\); a contradiction.
\endclaimproof
\begin{observation}\label{obsactiveslowpokes}
If \(t\in[t^{*}]_0\) and \(j\in Z_t \setminus\{k+1\}\), then \(D_j^{(t)}=D_j(e^2/x^{k-j+1})^t\).
\end{observation}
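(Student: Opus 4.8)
The plan is to show that, under the hypotheses, the index $j$ lies in $J_{t'}^{*}$ for every $0\le t'\le t-1$, so that $D_j^{(t)}$ is multiplied by the ``full'' factor $e^{2}/x^{k-j+1}$ at every single timestep up to~$t$; chaining these multiplications then yields the claimed formula. Concretely, I would proceed as follows. First, since $j\in Z_t$, Observation~\ref{obsalwaysslowpoke} gives $j\in Z_{t'}$ for all $0\le t'\le t$; that is, $j$ is a slowpoke at each of these times. By definition of the clusters (the largest index of a cluster is not semi-stuck) and of the slowpokes (the slowpoke is the largest index of its cluster), it follows that $j$ is not semi-stuck at any time $t'\le t$, and hence not super-stuck at any such time (recall super-stuck implies semi-stuck). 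Thus $j\notin S_{t'}$ for all $t'\le t$. Moreover, since $j\neq k+1$ and $j\in[k+1]\setminus\{1\}$, we have $j\in\{2,3,\dots,k\}$, so using the identity $J_{t'}^{*}=\{2,3,\dots,k\}\setminus S_{t'}$ we conclude $j\in J_{t'}^{*}$ for all $0\le t'\le t-1$.

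It then remains only to unwind the recursion defining the~$D_j^{(\cdot)}$. In Step~3 of MCWA (equivalently, in Claim~\ref{claim:tacticalchomp}), for each $t'$ with $0\le t'\le t-1$ we have $j\in J_{t'}^{*}$, and therefore $D_j^{(t'+1)}=e^{2}D_j^{(t')}/x^{k-j+1}$. Combining these $t$ identities with the initial value $D_j^{(0)}=D_j$ gives
\[
D_j^{(t)}=D_j^{(0)}\left(\frac{e^{2}}{x^{k-j+1}}\right)^{t}=D_j\left(\frac{e^{2}}{x^{k-j+1}}\right)^{t},
\]
as required. I do not anticipate any genuine obstacle here: the statement is essentially a bookkeeping consequence of Observation~\ref{obsalwaysslowpoke} together with the definitions of super-stuck, semi-stuck, slowpoke, and $J_t^{*}$, and the only point needing (routine) care is the implication ``slowpoke $\Rightarrow$ not semi-stuck $\Rightarrow$ not super-stuck $\Rightarrow$ in $J_t^{*}$'' at every intermediate timestep, which is exactly what Observation~\ref{obsalwaysslowpoke} supplies.
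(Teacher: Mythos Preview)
Your proposal is correct and follows essentially the same route as the paper: invoke Observation~\ref{obsalwaysslowpoke} to get that $j$ is a slowpoke at every earlier time, deduce ``not semi-stuck $\Rightarrow$ not super-stuck $\Rightarrow$ $j\in J_{t'}^{*}$'' for each $0\le t'\le t-1$, and then chain the update rule $D_j^{(t'+1)}=e^{2}D_j^{(t')}/x^{k-j+1}$. The only difference is that you spell out the final recursion explicitly, whereas the paper leaves it implicit.
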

\obsproof
By Observation~\ref{obsalwaysslowpoke}, we have \(j\in Z_{t'}\setminus\{k+1\}\) for all \(0\leq t'\leq t\), which means~\(j\) was not semi-stuck for any such~\(t'\), and thus was not super-stuck.
Since~\(J_{t'}^{*}\) is constructed to be the set of all non-super-stuck indices in~\(\{2,3,\dots,k\}\), we deduce that \(j\in J_{t'}^{*}\) for all \(0\leq t'<t\), which yields the observation. 
\endclaimproof
\begin{observation}\label{obskey}
\(D_2^{(t)}\leq B^{2k\gamma^3}\max_{j\in[k+1]\setminus\{1\}}\left(\frac{D_j}{x^{t(k-j+1)}}\right)\) for all \(t\in[t^{*}]_0\).\COMMENT{And indeed, basically is this expression, i.e. this isn't a wasteful upper bound. It takes a few more observations to prove that but I did it whilst playing with MCWA, but we don't need more than an upper bound for the proof, so it isn't typed. Essentially the initial slowpokes all eventually become \(D_j/x^{t(k-j+1)}\); those where this expression becomes smaller than the corresponding expression for slowpoke \(j'>j\) are no longer slowpokes after that time, the clusters merge. The result is that the slowpoke \(j\) for the furthest left cluster is the slowpoke that maximizes \(D_j/x^{t(k-j+1)}\) among initial slowpokes. Now, if there was a non-initial-slowpoke index \(j'\) with \(D_{j'}/(x^{t(k-j'+1)})>D_j/x^{t(k-j+1)}\) then, since it wasn't a slowpoke, it wasn't initially much more than its corresponding initial slowpoke, which in turn has value \(D_{j'}/x^{t(k-j'+1)}\) at most the maximum such value over slowpokes at time \(t\), which is now to say that even the maximum over all initial indices (slowpoke or otherwise) is within the margin of error (say \(B^{k\gamma^3}\)) of the maximum \(D_j/x^{t(k-j+1)}\) over initial slowpokes, the latter being not much less than \(D_2^{(t)}\), which demonstrates that \(D_2^{(t)}\) basically is this expression.}
\end{observation}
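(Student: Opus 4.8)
The plan is to reduce the bound on $D_2^{(t)}$ to a bound on the codegree of the slowpoke $z_t(2)$ of the cluster containing the index $2$ at time $t$, and then exploit the fact that slowpokes decay at a clean, predictable rate. First I would apply Observation~\ref{obsclustererror} with $j=2$ to get $D_2^{(t)}\leq D_{z_t(2)}^{(t)}B^{k\gamma^3}$, so it suffices to control $D_{z_t(2)}^{(t)}$.

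Next, I would split into two cases according to whether $z_t(2)$ is active or dormant. If $z_t(2)\in Z_t\setminus\{k+1\}$, then Observation~\ref{obsactiveslowpokes} gives $D_{z_t(2)}^{(t)}=D_{z_t(2)}(e^2/x^{\,k-z_t(2)+1})^t=D_{z_t(2)}\,e^{2t}/x^{\,t(k-z_t(2)+1)}$; the only mildly fiddly point is absorbing the compounding constant $e^{2t}$, but since $t\leq\lfloor 1/\eta-1/\gamma^2\rfloor\leq 1/\gamma^4$ we have $e^{2t}\leq\log D$ (exactly as in the bound following~(\ref{eq:tacchomphierarchycheck})) and $\log D\leq B^{k\gamma^3}$ by~(\ref{eq:bisbig}), so $D_2^{(t)}\leq D_{z_t(2)}^{(t)}B^{k\gamma^3}\leq B^{2k\gamma^3}\,D_{z_t(2)}/x^{\,t(k-z_t(2)+1)}\leq B^{2k\gamma^3}\max_{j\in[k+1]\setminus\{1\}}(D_j/x^{\,t(k-j+1)})$. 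If instead $z_t(2)=k+1$, then $C_t(2)=C_t(k+1)$ is the dormant cluster; since $J_{t'}^{*}\subseteq[k]\setminus\{1\}$ for every $t'$, the index $k+1$ is never put in any $J_{t'}^{*}$, so Step~3 of MCWA leaves the top codegree untouched, i.e.\ $D_{k+1}^{(t)}=D_{k+1}$, and therefore $D_2^{(t)}\leq D_{k+1}B^{k\gamma^3}\leq B^{2k\gamma^3}D_{k+1}=B^{2k\gamma^3}\bigl(D_{k+1}/x^{\,t(k-(k+1)+1)}\bigr)$, which is again at most the claimed right-hand side (here $x^{\,t\cdot 0}=1$).

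I do not expect a genuine obstacle: essentially all of the content is already packaged into Observations~\ref{obsclustererror}--\ref{obsactiveslowpokes}, and the one thing to watch is that the accumulated multiplicative slack — the powers of $e^2$ picked up from each mini-Chomp and the $B^{k\gamma^3}$ spread within the cluster — stays below the advertised $B^{2k\gamma^3}$, which it does comfortably thanks to~(\ref{eq:bisbig}) together with the a priori bound $t\leq 1/\gamma^4$.
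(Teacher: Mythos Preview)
Your proposal is correct and follows essentially the same argument as the paper: apply Observation~\ref{obsclustererror} to pass from $D_2^{(t)}$ to $D_{z_t(2)}^{(t)}$, split according to whether $z_t(2)=k+1$ (dormant) or not (active), and in the active case use Observation~\ref{obsactiveslowpokes} together with $e^{2t}\leq\log D\leq B^{k\gamma^3}$ from~(\ref{eq:bisbig}). The paper's proof is structured identically, with the same case split and the same absorption of the $e^{2t}$ factor into the spare $B^{k\gamma^3}$.
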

\obsproof
If~\(2\) is a dormant index at time~\(t\), then
\begin{eqnarray*}
D_{2}^{(t)} & \stackrel{\text{Obs}~\ref{obsclustererror}}{\leq} & D_{k+1}^{(t)}B^{k\gamma^3} = D_{k+1}B^{k\gamma^3}=B^{k\gamma^3}\left(\frac{D_{k+1}}{x^{t(k-(k+1)+1)}}\right)\\ & \leq & B^{2k\gamma^3}\max_{j\in[k+1]\setminus\{1\}}\left(\frac{D_j}{x^{t(k-j+1)}}\right),
\end{eqnarray*}
where we used the fact that~\(k+1\) is never in~\(J_{t'}^{*}\) by construction.
If, instead,~\(2\) is an active index at time~\(t\), then~\(C_t(2)\) is an active cluster, and~\(z_t(2)\) is an active slowpoke index, whence\COMMENT{\(e^{2t}\leq e^{2(1/\eta - 1/\gamma^2)}\leq\log D\) and \(B^{k\gamma^3}\stackrel{(\ref{eq:bisbig})}{\geq} \log D\).}
\begin{eqnarray*}
D_2^{(t)} & \stackrel{\text{Obs}~\ref{obsclustererror}}{\leq} & D_{z_t(2)}^{(t)}B^{k\gamma^3} \stackrel{\text{Obs}~\ref{obsactiveslowpokes}}{=} B^{k\gamma^3}D_{z_t(2)}\left(\frac{e^2}{x^{k-z_t(2)+1}}\right)^t\stackrel{(\ref{eq:bisbig})}{\leq}B^{2k\gamma^3}\left(\frac{D_{z_t(2)}}{x^{t(k-z_t(2)+1)}}\right)\\ & \leq & B^{2k\gamma^3}\max_{j\in[k+1]\setminus\{1\}}\left(\frac{D_j}{x^{t(k-j+1)}}\right),
\end{eqnarray*}
as required.
\endclaimproof
Armed with Observation~\ref{obskey}, we will now show that MCWA does not terminate due to the failure of~\ref{tacchompdeghyp}, which, together with the fact that~\ref{tacchompcodeghyp}--\ref{tacchompsetsizehyp} never fail as discussed, will ensure that MCWA terminates with \(t^{*}=\lflr1/\eta-1/\gamma^2\rflr\).
To that end, assume that \(0\leq t<1/\eta-1/\gamma^2\), set~\(r(t)\) to be the smallest~\(j\) which attains \(\max_{j\in[k+1]\setminus\{1\}}(D_j/x^{t(k-j+1)})\), and observe that
\begin{eqnarray}\label{eq:thelimitingcodegree}
\frac{\eps_{(t)}^2 D^{(t,-)}}{D_2^{(t)}} & = & (\eps^{*})^2 x^{t(2-k)}\frac{D}{e^{2t}D_2^{(t)}}\stackrel{(\ref{eq:bisbig})}{\geq} B^{-2+10k\gamma^3}x^{t(2-k)}\frac{D}{D_2^{(t)}}\nonumber\\ & \stackrel{\text{Obs}~\ref{obskey}}{\geq} & \frac{DB^{-2+8k\gamma^3}x^{t(2-k)}}{\max_{j\in[k+1]\setminus\{1\}}(D_j/x^{t(k-j+1)})} = \frac{DB^{-2+8k\gamma^3}x^{t(3-r(t))}}{D_{r(t)}}
\end{eqnarray}
If \(r(t)=2\), then the right side of~(\ref{eq:thelimitingcodegree}) is \(DB^{-2+8k\gamma^3}x^t/D_2\geq DB^{-2+8k\gamma^3}/D_2\geq B^{8k\gamma^3}\geq B^{2k\gamma^4}\), so that~\ref{tacchompdeghyp} holds, where we used the hypothesis \(B\leq\sqrt{D/D_2}\).
If \(r(t)=3\), then the right side of~(\ref{eq:thelimitingcodegree}) is \(DB^{-2+8k\gamma^3}/D_3\geq DB^{-2+8k\gamma^3}/D_2\geq B^{2k\gamma^4}\) as before, so that~\ref{tacchompdeghyp} holds, where we used the hypothesis \(D_2\geq D_3\).
Finally, if \(r(t)\geq 4\), we use \(x^t=B^{\eta t}\leq B\) to see that the right side of~(\ref{eq:thelimitingcodegree}) is \(DB^{-2+8k\gamma^3}/D_{r(t)}x^{t(r(t)-3)}\geq DB^{-2+8k\gamma^3}/D_{r(t)}B^{r(t)-3} = DB^{8k\gamma^3}/D_{r(t)}B^{r(t)-1}\geq B^{8k\gamma^3}\geq B^{2k\gamma^4}\), so that~\ref{tacchompdeghyp} holds, where we used the hypothesis that \(B\leq(D/D_{r(t)})^{1/(r(t)-1)}\) since \(r(t)\geq 4\).

We conclude that MCWA terminates with \(t^{*}=\lflr1/\eta-1/\gamma^2\rflr\), yielding hypergraphs~\((H_t)\) for \(t\in[t^{*}]_0\), matchings~\((M_t\subseteq E(H_t))\), and sets~\((W_t\subseteq V(H_t))\) for \(t\in[t^{*}-1]_0\).
Set \(H'\coloneqq H_{t^{*}}\), \(M\coloneqq\bigcup_{t}M_t\), and \(W\coloneqq\bigcup_t W_t\).
By construction, \(H'=H[V(H)\setminus(V(M)\cup W)]\), so that~\(M\) is a matching covering all but at most~\(|V(H_{t^{*}})|+|W|\) vertices of~\(H\).
Using \(B\leq\sqrt{D/D_2}\leq D\), we have\COMMENT{Clearly the penultimate expression is \(\leq nB^{-1}\log^3 D (B^{2\gamma^2 + 10k\gamma^3}) \leq nB^{-1+\gamma}\log^A D\), recalling we were using \(A=10/\gamma^4\).}
\begin{eqnarray}\label{eq:finishingup}
|V(H_{t^{*}})|+|W| & \leq & n_{t^{*}}^{(+)} +\sum_{t\in[t^{*}-1]_0}|W_t|\leq \frac{e^{2t^{*}}n}{x^{1/\eta-2/\gamma^2}} + \sum_{t\in[t^{*}-1]_0}10e^2\eps_{(t)}n_t^{(+)}\eta\log B \nonumber\\ & \leq & nB^{-1+2\gamma^2}\log D + \sum_{t\in[t^{*}-1]_0}10e^2\eps^{*}n\eta\log^2 D \nonumber\\ & \leq & nB^{-1+2\gamma^2}\log D + \eps^{*}n\log^3 D = nB^{-1+2\gamma^2}\log D + nB^{-1+10k\gamma^3}\log^3 D \nonumber\\ & \leq & nB^{-1+\gamma}\log^A D.
\end{eqnarray}
Finally, for each \(\tau\in\cT\) we have \(\tau(V(H)\setminus V(M))=\tau(V(H'))+\tau(W\setminus V(M))\leq m_{\tau}^{(t^{*},+)} + \tau(W)\leq m_{\tau}^{(t^{*},+)}+\sum_{t\in[t^{*}-1]_0}10e^2 \eps_{(t)}m_{\tau}^{(t,+)}\eta\log B\), which is at most \(\tau(V(H))B^{-1+\gamma}\log^A D\), via similar logic\COMMENT{\(m_{\tau}^{(t^{*},+)}+\sum_{t\in[t^{*}-1]_0}10e^2 \eps_{(t)}m_{\tau}^{(t,+)}\eta\log B \leq \tau(V(H))\log D/ (x^{1/\eta - 2\gamma^2}) + 10e^2 \eta\log B\sum_{t\in[t^{*}-1]_0}\eps^{*}x^t (e^{2t}/x^t)\tau(V(H)) \leq \tau(V(H))B^{-1+2\gamma^2}\log D + (\log^2 D)\eps^{*}\tau(V(H))=\tau(V(H))B^{-1+2\gamma^2}\log D + (\log^2 D)B^{-1+10k\gamma^3}\tau(V(H))\leq \tau(V(H))B^{-1+\gamma}\log^A D\)} to~(\ref{eq:finishingup}).
On the other hand, \(\tau(V(H)\setminus V(M))\geq m_{\tau}^{(t^{*},-)}\geq \tau(V(H))/x^{1/\eta - 1/\gamma^2}\log D =\tau(V(H))B^{-1+\gamma^2}/\log D\stackrel{(\ref{eq:bisbig})}{\geq} \tau(V(H))/B\), as claimed.\COMMENT{Clearly, we could say marginally better here if we cared to, but I don't think it's important}
\endproof

\providecommand{\bysame}{\leavevmode\hbox to3em{\hrulefill}\thinspace}
\providecommand{\MR}{\relax\ifhmode\unskip\space\fi MR }
\providecommand{\MRhref}[2]{%
  \href{http://www.ams.org/mathscinet-getitem?mr=#1}{#2}
}
\providecommand{\href}[2]{#2}

\end{document}